\documentclass{amsart}
\usepackage[foot]{amsaddr}
\usepackage{float, graphicx}
\usepackage[]{epsfig}
\usepackage{amsmath, amsthm, amssymb,}
\usepackage{epsfig}
\usepackage{verbatim}
\usepackage{multicol}
\usepackage{url}
\usepackage{latexsym}
\usepackage{mathrsfs}
\usepackage[colorlinks, bookmarks=true]{hyperref}
\usepackage{graphicx}
\usepackage{enumerate}
\usepackage[normalem]{ulem}
\usepackage{bm}
\usepackage{stmaryrd}
\usepackage{enumitem}
\usepackage{mathtools}

\flushbottom
\usepackage{color}
\usepackage{xcolor}
\usepackage{geometry}
 \geometry{
 a4paper,
 top=0.8in,
}

\makeatletter

\makeatother


\oddsidemargin=0in
\evensidemargin=0in
\textwidth=6.5in
\setlength{\unitlength}{1cm}
\setlength{\parindent}{0.6cm}

\numberwithin{equation}{section}

\newcommand\cA{{\mathcal A}}
\newcommand\cB{{\mathcal B}}
\newcommand{\cC}{{\mathcal C}}
\newcommand{\cD}{{\mathcal D}}
\newcommand{\cE}{{\mathcal E}}

\newcommand{\cG}{{\mathcal G}}

\newcommand{\cL}{{\mathcal L}}

\newcommand\cW{{\mathcal W}}

\newcommand{ \sfx }{{\mathsf x}}

\newcommand{ \sft }{{\mathsf t}}

\newcommand{ \sfh}{{\mathsf h}}

\newcommand{\sfD}{{\mathsf D}}

\newcommand{\sfH}{{\mathsf H}}

\newcommand{\fa}{{\mathfrak a}}
\newcommand{\fm}{{\mathfrak m}}
\newcommand{\fA}{{\mathfrak A}}
\newcommand{\fU}{{\mathfrak U}}
\newcommand{\fb}{{\mathfrak b}}
\newcommand{\fc}{{\mathfrak c}}
\newcommand{\fC}{{\mathfrak C}}
\newcommand{\fL}{{\mathfrak L}}
\newcommand{\fF}{{\mathfrak F}}
\newcommand{\fd}{{\mathfrak d}}

\newcommand{\fr}{{\mathfrak r}}

\newcommand{\ft}{{\mathfrak t}}

\newcommand{\fs}{{\mathfrak s}}

\newcommand{\fM}{{\mathfrak M}}

\newcommand{\fD}{{\mathfrak D}}
\newcommand{\fP}{{\mathfrak P}}
\newcommand{\fR}{{\mathfrak R}}

\newcommand{\bms}{\bm{s}}
\newcommand{\bme}{{\bm{e}}}

\newcommand{\bmv}{{\bm{v}}}

\newcommand{\bmx}{{\bm{x}}}

\newcommand{\rd}{{\rm d}}

\newcommand{\ri}{\mathrm{i}}

\newcommand{\bC}{{\mathbb C}}
\newcommand{\bD}{{\mathbb D}}

\newcommand{\bE}{\mathbb{E}}
\newcommand{\bH}{\mathbb{H}}

\newcommand{\bP}{\mathbb{P}}
\newcommand{\bQ}{\mathbb{Q}}
\newcommand{\bR}{{\mathbb R}}

\newcommand{\bZ}{\mathbb{Z}}

\newcommand{\al}{\alpha}

\newcommand{\la}{\lambda}

\DeclareMathOperator{\supp}{supp}
\DeclareMathOperator{\dist}{dist}

\DeclareMathOperator{\OO}{\mathcal{O}}
\DeclareMathOperator{\oo}{o}
\DeclareMathOperator{\argmax}{argmax}

\DeclareMathOperator{\Res}{Res}

\renewcommand{\Re}{\mathop{\mathrm{Re}}}
\renewcommand{\Im}{\mathop{\mathrm{Im}}}

\newcommand{\deq}{\mathrel{\mathop:}=} 
 
\renewcommand{\leq}{\leqslant}
\renewcommand{\geq}{\geqslant}

\newcommand{\ceil}[1]  {\lceil  {#1} \rceil}



\newcommand{\del}{\partial}


\newcommand{\qq}[1]{[\![{#1}]\!]}

\newcommand{\beq}{\begin{equation}}
\newcommand{\eeq}{\end{equation}}



\newcommand{\Adm}{\mathrm{Adm}}
\setcounter{tocdepth}{1}
\pagestyle{plain}


\usepackage{tikz}
\usetikzlibrary{arrows}
\usepackage{cleveref}

\DeclareMathOperator{\Real}{Re}		
\DeclareMathOperator{\Imaginary}{Im}

\DeclareMathOperator{\north}{no}
\DeclareMathOperator{\so}{so}
\DeclareMathOperator{\ea}{ea}
\DeclareMathOperator{\we}{we}

\newtheorem{thm}{Theorem}[section]
\newtheorem{prop}[thm]{Proposition}
\newtheorem{lem}[thm]{Lemma}

\theoremstyle{remark}
\newtheorem{rem}[thm]{Remark}
\theoremstyle{definition}
\newtheorem{definition}[thm]{Definition}
\newtheorem{assumption}[thm]{Assumption}

\title{Edge Statistics for Lozenge Tilings of Polygons, I:\\
 Concentration of Height Function on Strip Domains}

    \author{Jiaoyang Huang}
    \address{University of Philadelphia, PA}
    \email{huangjy@wharton.upenn.edu}
    

\begin{document}

\begin{abstract}
		
		In this paper we study uniformly random lozenge tilings of strip domains. Under the assumption that the limiting arctic boundary has at most one cusp, we prove a nearly optimal concentration estimate for the tiling height functions and arctic boundaries on such domains: with overwhelming probability the tiling height function is within $n^\delta$ of its limit shape, and the tiling arctic boundary is within $n^{1/3+\delta}$ to its limit shape, for arbitrarily small $\delta>0$. This concentration result will be used in \cite{AH2} to prove that the edge statistics of  simply-connected polygonal domains, subject to a technical assumption on their limit shape, converge to the Airy line ensemble.
		
	\end{abstract}

\maketitle
{
  \hypersetup{linkcolor=black}
  \tableofcontents
}

	\section{Introduction}

	\label{Introduction}

	A central feature of random lozenge tilings is that they exhibit boundary-induced phase transitions. Depending on the shape of the domain, they can admit \emph{frozen} regions, where the associated height function is flat almost deterministically, and \emph{liquid} regions, where the height function appears more rough and random. We refer to the papers \cite{TSP} and \cite{ECM} for some of the earlier analyses of this phenomenon in lozenge tilings of hexagonal domains, and to the book \cite{RT} for a comprehensive review. 
	
	The interface or edge,  is the boundary between frozen and liquid regions, and in the limit it converges to a non-random curve, the \emph{arctic curve}.  A thorough study of arctic curves on arbitrary polygons was pursued in \cite{LSCE, DMCS}, where it was shown that their limiting trajectories are algebraic curves. Before the limit, it was predicted that the fluctuations of tiling boundary curves are of order $n^{1/3}$ and $n^{2/3}$ in the directions transverse and parallel to their limiting trajectories, respectively. And after taking appropriate scaling limit, they converge to the \emph{Airy$_2$ process}, a universal scaling limit introduced in \cite{SIDP}. We refer to \cite{EFLS} for a detailed survey. 
	
	Following the initial works \cite{SFRM,NPRTRM,ACP}, this prediction has been established for various tiling models on specific domains. For example, we refer to \cite{CFPALG,RSPPP,SFFC} for certain $q$-weighted random plane partitions; \cite{DP} for lozenge tilings of hexagons; and \cite{ARS,UEFDIPS} for lozenge tilings of trapezoids (hexagons with cuts along a single side). These results are all based on exact and analyzable formulas, specific to the domain of study, for the correlation kernel for which the tiling forms a determinantal point process. Although for lozenge tilings of arbitrary polygonal domains such explicit formulas are not known, it is believed the edge universality results, namely the convergence of the tiling boundary curves to the Airy$_2$ process, still hold; see Conjecture 18.7 of the book \cite{RT}.

For the above edge universality results to hold, it is necessary that the tiling arctic boundary concentrates around its limit shape, with fluctuations bounded by $n^{1/3+\delta}$. One challenge to prove the  edge universality results is the lack of such concentration estimates for general domains. In this paper we study uniformly random lozenge tilings of strip domains. This is equivalent to a family of $m$ non-intersecting random Bernoulli walks conditioned to start and end at specified locations.  Under the assumption that the limiting arctic boundary has at most one cusp (see the left and middle of \Cref{walkscusp} for examples) and some other technical assumptions, we prove a concentration estimate for the tiling height functions and arctic boundaries on such domains. An informal formulation of this result is provided as follows; we refer to Theorem \ref{estimategamma} below for a more precise statement.
\begin{thm}\label{t:infoconcentration}
For uniformly random lozenge tilings of strip domains (equivalently, non-intersecting random Bernoulli bridges), with limiting arctic boundary containing at most one cusp, it holds that  for any $\delta>0$
the tiling height function is within $n^\delta$ of its limit shape, and the tiling arctic boundary is within $n^{1/3+\delta}$ of its limit shape with overwhelming probability.
\end{thm}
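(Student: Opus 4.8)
The plan is to work throughout in the language of $m$ non-intersecting Bernoulli bridges with prescribed endpoints, so that the tiling height function $H_N$ becomes, up to an affine change of variables, the counting function of the particle configuration along each time slice, and the arctic boundary becomes the trajectory of the extremal paths. The existence of the limit shape $h$ and its variational characterization (via \cite{ECM} and the associated complex Burgers equation) are taken as given, so that the content of \Cref{estimategamma} is a quantitative rate. I would prove the two estimates in sequence: first the $n^{\delta}$ rigidity of $H_N$, and then, using it together with the Gibbs property of the path ensemble, the $n^{1/3+\delta}$ rigidity of the arctic boundary; the technical assumptions on the limit shape enter at several points to supply the regularity these estimates require.

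For the height-function concentration I would track the empirical Stieltjes transform $G_t(z)=\tfrac1N\sum_i(z-x_i(t))^{-1}$ along the time direction. Deterministically its evolution is governed by the complex Burgers equation whose solution encodes $h$; the fluctuation $G_t(z)-m_t(z)$ satisfies, by the discrete loop (Nekrasov) equations for the time-$t$ configuration together with the interlacing constraint between adjacent slices, an approximate recursion whose inhomogeneity is quadratic in the fluctuation. Starting from the crude $\sqrt{N}$-scale a priori bound that is automatic from the determinantal structure (a particle count over an interval is a sum of independent Bernoulli variables, hence has variance $\OO(N)$) and iterating this self-improving estimate on a grid of spectral parameters at distance $\gtrsim n^{-1+\delta}$ from the support, one pins $G_t$ to within $n^{\delta}/N$ of $m_t$ uniformly in $t$; a contour-integral / Helffer--Sj\"ostrand argument then converts this into the $n^{\delta}$ rigidity of $H_N$. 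The delicate point is the behavior near the boundary of the liquid region, where the limiting density vanishes like a square root, and especially near the cusp, where it vanishes like a cube root and the standard square-root edge analysis must be replaced by a tailored local estimate; this is exactly where the hypothesis of at most one cusp is used, since it confines the anomalous behavior to a single neighborhood.

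For the arctic boundary I would treat the two directions of deviation separately. That the extremal path does not dip more than $n^{1/3+\delta}$ \emph{below} the limiting arctic curve follows directly from the $n^{\delta}$ rigidity of $H_N$, using that near a generic edge point the limiting density vanishes like a square root: a liquid region receded by transverse distance $\ell$ would create a height deficit of order $\ell^{3/2}N^{-1/2}$ in $H_N$, and requiring this to be $\leq n^{\delta}$ forces $\ell\lesssim n^{1/3+\delta}$. The opposite bound --- that the extremal path does not \emph{overshoot} the arctic curve by more than $n^{1/3+\delta}$ --- is a large-deviation statement about an individual path that does \emph{not} follow from rigidity of the integrated density alone; here I would use the Gibbs property (conditionally on all the other paths, the top path is a Bernoulli bridge conditioned to stay above the second path) together with monotone, FKG-type couplings obtained by inserting or deleting lozenges to compare the strip ensemble with trapezoidal domains, for which the extremal-path one-point distribution is explicit and Airy-governed (\cite{ARS,UEFDIPS,DP}). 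The comparison shows that such an overshoot would force an event of negligible probability, which closes the argument.

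The step I expect to be the main obstacle is the treatment of the cusp. Near a cusp the local statistics are Pearcey rather than Airy, the curvature of the arctic curve degenerates, and the clean dictionary between height fluctuations and transverse fluctuations of the edge breaks down; a multi-scale decomposition around the cusp, combined with the single-cusp hypothesis and the assumptions on the limit shape, is what makes both the loop-equation bootstrap and the subsequent comparison argument go through uniformly in $t$. A secondary difficulty is that, since no exact formula is available for general strip boundary data, every estimate outside the trapezoid-comparable regime must be carried by the loop equations alone, which demands careful uniform control of all error terms.
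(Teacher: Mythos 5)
Your overall strategy for the height-function part (loop equations, complex Burgers characteristics, multi-scale bootstrap) is in the same spirit as the paper's, but both of the key technical ideas you import are unavailable for strip domains, and the arctic-boundary argument has a structural gap.

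First, your bootstrap at a fixed time slice leans on "the discrete loop (Nekrasov) equations for the time-$t$ configuration." Nekrasov's equations apply to discrete $\beta$-ensembles, i.e.\ measures of the form $V(\boldsymbol{x})^\beta \prod_i w(x_i)$. The fixed-time marginal of non-intersecting Bernoulli bridges with \emph{two} general (non-packed) boundaries is a product $\det[\phi_i(x_j)]\det[\psi_i(x_j)]$ of Lindstr\"om--Gessel--Viennot counts, i.e.\ a biorthogonal ensemble, not a discrete $\beta$-ensemble; only when one boundary is packed (the trapezoid case) does one of the determinants degenerate to a Vandermonde. So the fixed-time Nekrasov machinery you want to invoke is not available here. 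The paper circumvents precisely this obstruction by analyzing the \emph{one-step transition probability} of the NIB process (the "dynamical loop equation" of \cite{NRWLT}) rather than the fixed-time marginal, and then propagating estimates along characteristics; this choice of object is the crux of the argument, not an implementation detail.

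Second, your treatment of the arctic boundary is split into a lower bound (deduced from height rigidity and square-root vanishing of the density, which is fine) and an upper bound proved by "monotone, FKG-type couplings obtained by inserting or deleting lozenges to compare the strip ensemble with trapezoidal domains." This does not go through: a trapezoid by definition has its north side packed, so a monotone coupling between the strip (with a general, non-packed north boundary) and a trapezoid necessarily moves the arctic curve macroscopically, not by $n^{1/3+\delta}$. There is nothing explicitly solvable that sandwiches the strip at the mesoscopic scale. In the paper, the arctic-boundary concentration is not a separate large-deviation argument at all — it is the statement that the Stieltjes transform is rigid in a \emph{frozen} spectral domain $\mathscr D_t^{\mathrm F}$, which forces the intervals outside the $n^{-2/3+\delta}$-enlarged liquid region to be exactly void or saturated (see the fourth statement of \Cref{p:gap} and the second part of \Cref{estimategamma}). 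That is, both estimates in the theorem come from the same loop-equation analysis, merely evaluated at different spectral parameters; there is no need, and no room, for a trapezoid comparison. Finally, your remark that the cusp requires a "tailored local estimate" is correct in spirit, but the paper's quantitative input here — the cube-root behavior of $f^*_t$ near the cusp and the square-root-with-$(t_c-t)^{-1/4}$-prefactor scaling in \Cref{p:ftzbehave} — is exactly what feeds the choice of the enlarged interval $I_t^+$ and the distance function $\tau$; without this the bootstrap constants degenerate near the cusp and the argument does not close.
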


The above concentration result is optimal up to the $n^\delta$ factor, and it will be used in Part II of this series \cite{AH2} to prove the edge universality for random lozenge tilings: the edge statistics of simply-connected polygonal domains, subject to a technical assumption on their limit shape (that we believe to hold generically), converge to the Airy line ensemble at any point (that is not a cusp or tangency location) around the arctic boundary. Recently, universality results for lozenge tilings at other points inside the domain  have been established. For example, it was shown that local statistics in the interior of the liquid region converge to the unique translation-invariant, ergodic Gibbs measure of the appropriate slope for hexagons \cite{DP,gorin2008nonintersecting,borodin2010q}, domains covered by finitely many trapezoids \cite{ARS,gorin2017bulk}, bounded perturbations of these \cite{LLTS}, and finally for general domains \cite{ULS}. It was also recently shown in \cite{ERT} that, at tangency locations between the arctic boundary and sides of general domains, the limiting statistics converge to the corners process of the Gaussian Unitary Ensemble. 
	
	The concentration results imply that particles of non-intersecting random Bernoulli walks conditioned to start and end at specified locations strongly concentrate around their classical locations. Such concentration phenomenon is ubiquitous in random matrix theory, and is known as \emph{eigenvalue rigidity}. In the context of Wigner matrices, it was first proven in \cite{erdHos2012rigidity}, and later for more general classes of random matrices \cite{erdHos2013local,knowles2013isotropic,alex2014isotropic,he2018isotropic,bao2017convergence, ajanki2019stability, bao2020spectral}.

	To prove Theorem \ref{t:infoconcentration}, for $m$ non-intersecting random Bernoulli walks conditioned to start and end at specified locations (equivalently, random lozenge tilings of a strip), we follow \cite{HFRTNRW} and approximate the random bridge model by a family of non-intersecting Bernoulli random walks with a space and time dependent drift, and derive a discrete stochastic differential equation for the empirical particle density. A dynamical version of loop equations has been developed in \cite{HFRTNRW}, to control the martingale term and errors in the stochastic differential equation on a \emph{macroscopic} scale. And a macroscopic central limit theorem has been proven in \cite{HFRTNRW} for random lozenge tilings of polygonal domains with exactly one horizontal upper boundary edge.

	In the past, loop equations have served as important tools to study fluctuations for interacting particle systems. They were introduced to the mathematical community in \cite{MR1487983} by Johansson  to derive macroscopic central limit theorems for continuous $\beta$-ensembles; see also \cite{borot-guionnet2, MR3010191,KrSh}. For discrete $\beta$-ensembles, \cite{ADE} proved macroscopic central limit theorems through making using of a family of discrete loop equations originating from \cite{Nekrasov, Nek_PS,Nek_Pes}, which could be analyzed similarly to the continuous ones. Loop equations have also played important roles in understanding rigidity phenomena and local statistics for $\beta$-ensembles \cite{MR3253704,MR3192527,MR3987722,MR3351052,bourgade2021optimal}.

To prove the concentration results on optimal scale, Theorem \ref{t:infoconcentration}, we need to analyze the discrete stochastic differential equation for the empirical particle density of non-intersecting Bernoulli random walks on any \emph{mesoscopic} scale. We analyze the dynamical loop equations by a multi-scale analysis, which gives nearly optimal estimates of the martingale term and errors in the discrete stochastic differential equation on any mesoscopic scale. With this as an input, we analyze the discrete stochastic differential equation by studying its behavior along the characteristics of the limiting complex Burgers equation, an idea which has previously been used in \cite{MR4009708, adhikari2020dyson,bourgade2018extreme,huang2020edge} to study Dyson Brownian motion. However, the fact that non-intersecting Bernoulli random walks behave differently close to cusps and tangency locations poses a new challenge. We need to construct the spectral domains for the characteristics and estimate the error terms of the stochastic differential equation  adapted to these singularities.   Through this framework, our results also give optimal estimates of extreme Bernoulli walks close to these singular locations for Bernoulli random walks with at most one cusp.

	\begin{figure}
		
		\begin{center}		
			
			\begin{tikzpicture}[
				>=stealth,
				auto,
				style={
					scale = .425
				}
				]
				
				\draw[black, very thick] (-4, 0) -- (5.5, 0); 
				\draw[black, very thick] (-4, 3) -- (5.5, 3);
				
				\draw[black, thick] (-3, 0) arc (180:120:3.464);
				\draw[black, thick] (3.928, 0) arc (0:60:3.464);
				
				\draw[black, thick] (-1, 0) arc (-60:0:1.732);
				\draw[black, thick] (.75, 0) arc (240:180:1.732);
				
				\draw[black, very thick] (8, 0) -- (17.5, 0); 
				\draw[black, very thick] (8, 3) -- (17.5, 3);
				
				\draw[black, thick] (9, 3) arc (180:240:3.464);
				\draw[black, thick] (15.928, 3) arc (0:-60:3.464);
				
				\draw[black, thick] (11, 3) arc (60:0:1.732);
				\draw[black, thick] (12.75, 3) arc (120:180:1.732);
				
				\draw[black, very thick] (20, 0) -- (29.5, 0); 
				\draw[black, very thick] (20, 3) -- (29.5, 3);
				
				\draw[black, thick] (21, 0) arc (180:120:3.464);
				\draw[black, thick] (27.928, 0) arc (0:60:3.464);
				
				\draw[black, thick] (24.5, 0) arc (-60:0:1.15);
				\draw[black, thick] (25.65, 0) arc (240:180:1.15);
				
				\draw[black, thick] (23.35, 3) arc (60:0:1.15);
				\draw[black, thick] (24.5, 3) arc (120:180:1.15);

			\end{tikzpicture}
			
		\end{center}
		
		\caption{\label{walkscusp} Shown to the left and middle are arctic boundaries exhibiting a single cusp. Shown to the right is an arctic boundary exhibiting two cusps that point in opposite directions.}
		
	\end{figure}
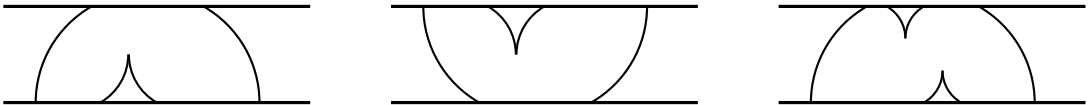 
		
	The remainder of this paper is organized as follows. In \Cref{Walks} we define the model, the associated variational problem, and state our main results. In Section \ref{s:CBE0} we collect some properties of complex slopes associated with tilings of double sided trapezoid domains with general boundary height function. 
	In Section \ref{s:nbb} we introduce a weighted version of non-intersecting Bernoulli bridges, and use it to study tilings on the double-sided trapezoid domains. We prove our main result Theorem \ref{estimategamma}, using concentration estimates  for this weighted non-intersecting Bernoulli bridge model. The concentration estimates  for weighted non-intersecting Bernoulli bridge model is proven in Section \ref{s:pcW} assuming \Cref{p:improve} below.
	Finally, \Cref{p:improve} is proven in Sections \ref{s:loopeq} and \ref{s:improveEE} by analyzing the dynamical loop equations.
	In Appendix \ref{s:thin}, we check that thin slices of polygonal domains provide examples of strips with limiting continuum height function satisfies our assumptions. In Appendices \ref{s:initialEs}, \ref{s:proof} and \ref{QEquation}, we give the proofs of some propositions from  Section \ref{s:CBE0}. In Appendix \ref{s:ansatz}, we sketch the derivation of transition probability of the weighted non-intersecting Bernoulli bridge model we used in Section \ref{s:nbb}.

\subsection*{Notations}	Throughout, we let $\overline{\mathbb{C}} = \mathbb{C} \cup \{ \infty \}$, $\mathbb{H}^+ = \big\{ z \in \mathbb{C} : \Imaginary z > 0 \big\}$, and $\mathbb{H}^- = \{ z \in \mathbb{C} : \Imaginary z < 0 \}$ denote the  compactified complex plane, upper complex plane, and lower complex plane, respectively. For any real numbers $a, b \in \mathbb{R}$, we set $a\vee b=\max\{a,b\}$ and $a\wedge b=\min\{a,b\}$. For any integer $n \geq 1$, we also denote $\bZ_n=\{\cdots, -2/n, -1/n, 0,1/n, 2/n,\cdots\}$ to be the set of integers, rescaled by $n$. We denote the set $\qq{1, r}=\{1,2,\cdots,r\}$. We write $X = \OO(Y )$ or $X\lesssim Y$  if  $|X| \leq \fC Y$;  $X\gtrsim Y$ if $X \geq  Y/\fC$; and $X\asymp Y$ if  $ Y/\fC \leq |X| \leq  \fC Y$ for some universal constant $\fC>0$.  We write $X = \oo(Y )$ or $X \ll Y$ if the ratio $|X|/Y$ tends to $0$, and $X = \oo(Y )$ or $X \gg Y$ if $Y/X$ tends to $0$. For two probability measures $\mu$ and $\nu$, we let $d(\mu,\nu)$ be any metric compatible with the weak topology.
We further denote by $|u - v|$ the Euclidean distance between any elements $u, v \in \mathbb{R}^2$. For any subset $\mathfrak{R} \subseteq \mathbb{R}^2$, we let $\partial \mathfrak{R}$ denote its boundary, $\overline{\mathfrak{R}}$ denote its closure.  For any real number $c \in \mathbb{R}$, we also define the rescaled set $c \mathfrak{R} = \{ cr : r \in \mathfrak{R} \}$, and for any $u \in \mathbb{R}^2$, we define the shifted set $\mathfrak{R} + u = \big\{ r + u : r \in \mathfrak{R} \big\}$. We say an event holds with overwhelming probability, if it holds with probability $1-n^{-D}$ for any constant $D > 1$ (assuming $n$ is sufficiently large).

	\subsection*{Acknowledgements}

 The work of Jiaoyang Huang was partially supported by the Simons Foundation as a Junior Fellow at the Simons Society of Fellows and NSF grant DMS-2054835. Jiaoyang Huang would like to thank Amol Aggarwal for fruitful discussions and careful reading of the manuscript, which has significantly improved the presentation. Jiaoyang Huang also wants to thank Vadim Gorin and Erik Duse for enlightening discussions, and the anonymous referees for helpful comments and suggestions.

	\section{Main Results} 
	
	\label{Walks}

	\subsection{Tilings and Height Functions}
	
	\label{FunctionWalks}
	
	We denote by $\mathbb{T}$ the \emph{triangular lattice}, namely, the graph whose vertex set is $\mathbb{Z}^2$ and whose edge set consists of edges connecting $(x, t), (x', t') \in \mathbb{Z}^2$ if $(x' - x, t' - t) \in \{ (1, 0), (0, 1), (1, 1)\}$. The axes of $\mathbb{T}$ are the lines $\{ x = 0 \}$, $\{ t = 0 \}$, and $\{ x  = t \}$, and the faces of $\mathbb{T}$ are triangles with vertices of the form $\big\{ (x, t), (x + 1, t), (x + 1, t + 1) \big\}$ or $\big\{ (x, t), (x, t + 1), (x + 1, t + 1) \big\}$. A \emph{domain} $\mathsf{R} \subseteq \mathbb{T}$ is a simply-connected induced subgraph of $\mathbb{T}$. With a slight abuse of notation, we also denote by $\mathsf{R} \subseteq \mathbb{R}^2$ the union of triangular faces with vertices in $\mathsf R$. When viewing $\mathsf{R}$ as a vertex set, the \emph{boundary} $\partial \mathsf{R} \subseteq \mathsf{R}$ is the set of vertices $\mathsf{v} \in \mathsf{R}$ adjacent to a vertex in $\mathbb{T} \setminus \mathsf{R}$; when viewing $\mathsf{R}$ as a union of triangular faces, $\partial \mathsf{R}$ is the union of its boundary edges. 

	A \emph{dimer covering} of a domain $\mathsf{R} \subseteq \mathbb{T}$ is defined to be a perfect matching on the dual graph of $\mathsf{R}$. A pair of adjacent triangular faces in any such matching forms a parallelogram, which we will also refer to as a \emph{lozenge} or \emph{tile}. Lozenges can be oriented in one of three ways; see the right side of \Cref{tilinghexagon} for all three orientations. We refer to the topmost lozenge there (that is, one with vertices of the form $\big\{ (x, t), (x, t + 1), (x + 1, t + 2), (x + 1, t + 1) \big\}$) as a \emph{type $1$} lozenge. Similarly, we refer to the middle (with vertices of the form $\big\{ (x, t), (x + 1, t), (x + 2, t + 1), (x + 1, t + 1) \big\}$) and bottom (vertices of the form $\big\{ (x, t), (x, t + 1), (x + 1, t + 1), (x + 1, t) \big\}$) ones there as \emph{type $2$} and \emph{type $3$} lozenges, respectively. A dimer covering of $\mathsf{R}$ can equivalently be interpreted as a tiling of $\mathsf{R}$ by lozenges of types $1$, $2$, and $3$. Therefore, we will also refer to a dimer covering of $\mathsf{R}$ as a \emph{(lozenge) tiling}. We call $\mathsf{R}$ \emph{tileable} if it admits a tiling.
	
	Associated with any tiling of $\mathsf{R}$ is a \emph{height function} $\mathsf{H}: \mathsf{R} \rightarrow \mathbb{Z}$, namely, a function on the vertices of $\mathsf{R}$ that satisfies 
	\begin{align*}
		\mathsf{H} (\mathsf{v}) - \mathsf{H} (\mathsf{u}) \in \{ 0, 1 \}, \quad \text{whenever $\mathsf{u} = (x, t)$ and $\mathsf{v} \in \big\{ (x + 1, t), (x, t - 1), (x + 1, t + 1) \big\}$},
	\end{align*}
	
	\noindent for some $(x, t) \in \mathbb{Z}^2$.

	We refer to the restriction $\mathsf{h} = \mathsf{H}|_{\partial \mathsf{R}}$ as a \emph{boundary height function}. For any boundary height function $\mathsf{h} : \partial \mathsf{R} \rightarrow \mathbb{Z}$, let $\mathscr{G} (\mathsf{h})$ denote the set of all height functions $\mathsf{h}: \mathsf{R} \rightarrow \mathbb{Z}$ with $\mathsf{H}|_{\partial \mathsf{R}} = \mathsf{h}$. In what follows, any height function on a domain $\mathsf{R} \subseteq \mathbb{T}$ will always implicitly be extended by linearity to the faces of $\mathsf{R}$, so that it may be viewed as a piecewise linear function on $\mathsf{R} \subseteq \mathbb{R}^2$.
	
	\begin{figure}
		
		\begin{center}		
			
			\begin{tikzpicture}[
				>=stealth,
				auto,
				style={
					scale = .5
				}
				]		
				
				\draw[-, black] (15, 9) node[above, scale = .7]{$\mathsf{H}$}-- (15, 7) node[right, scale = .7]{$\mathsf{H}$} -- (13, 5) node[below, scale = .7]{$\mathsf{H}$} -- (13, 7) node[left, scale = .7]{$\mathsf{H}$} -- (15, 9); 
				\draw[-, dashed, black] (13, 7) -- (15, 7);
				
				\draw[-, black] (14, 3) node[above, scale = .7]{$\mathsf{H}$} -- (16, 3) node[right, scale = .7]{$\mathsf{H} + 1$} -- (14, 1) node[below, scale = .7]{$\mathsf{H} + 1$} -- (12, 1) node[left, scale = .7]{$\mathsf{H}$}-- (14, 3);
				\draw[-, dashed, black] (14, 1) -- (14, 3);
				
				\draw[-, black] (13, -3) node[below, scale = .7]{$\mathsf{H}$} -- (13, -1) node[above, scale = .7]{$\mathsf{H}$} -- (15, -1) node[above, scale = .7]{$\mathsf{H} + 1$} -- (15, -3) node[below, scale = .7]{$\mathsf{H} + 1$} -- (13, -3);
				\draw[-, dashed, black] (13, -3) -- (15, -1);
				
				\draw[-, black] (-9, 1.5) node[below, scale = .7]{$\mathsf{H}$} -- (-7, 1.5) node[below, scale = .7]{$\mathsf{H} + 1$} -- (-5, 3.5) node[right, scale = .7]{$\mathsf{H} + 1$} -- (-5, 5.5) node[above, scale = .7]{$\mathsf{H} + 1$} -- (-7, 5.5) node[above, scale = .7]{$\mathsf{H}$} -- (-9, 3.5) node[left, scale = .7]{$\mathsf{H}$} -- (-9, 1.5);
				\draw[-, black] (-9, 3.5) -- (-7, 3.5)node[right, scale = .7]{$\mathsf{H}+1$} -- (-7, 1.5);
				\draw[-, black] (-7, 3.5) -- (-5, 5.5);
				
				\draw[-, black] (0, 0) -- (5, 0);
				\draw[-, black] (0, 0) -- (0, 4);
				\draw[-, black] (5, 0) -- (8, 3);
				\draw[-, black] (0, 4) -- (3, 7);
				\draw[-, black] (8, 3) -- (8, 7); 
				\draw[-, black] (8, 7) -- (3, 7);	
				
				\draw[-, black] (1, 0) -- (1, 3) -- (0, 3) -- (1, 4) -- (1, 5) -- (2, 5) -- (4, 7);
				\draw[-, black] (0, 2) -- (2, 2) -- (2, 0);
				\draw[-, black] (0, 1) -- (3, 1) -- (3, 0) -- (4, 1) -- (6, 1);
				\draw[-, black] (2, 6) -- (5, 6) -- (6, 7) -- (6, 6) -- (8, 6);
				\draw[-, black] (4, 6) -- (5, 7);
				\draw[-, black] (1, 4) -- (2, 4) -- (2, 5); 
				\draw[-, black] (2, 4) -- (3, 5) -- (3, 6);
				\draw[-, black] (1, 3) -- (2, 4);
				\draw[-, black] (2, 2) -- (3, 3) -- (3, 4) -- (2, 4) -- (2, 3) -- (1, 2);
				\draw[-, black] (2, 3) -- (3, 3) -- (3, 2) -- (4, 3) -- (4, 6) -- (5, 6) -- (5, 3) -- (4, 3);
				\draw[-, black] (3, 5) -- (5, 5) -- (6, 6);
				\draw[-, black] (7, 7) -- (7, 4) -- (8, 4) -- (7, 3) -- (7, 2) -- (6, 2) -- (4, 0);
				\draw[-, black] (8, 5) -- (7, 5) -- (6, 4) -- (6, 5) -- (7, 6);
				\draw[-, black] (3, 3) -- (4, 4) -- (6, 4);
				\draw[-, black] (3, 4) -- (4, 5);
				\draw[-, black] (6, 5) -- (5, 5);
				\draw[-, black] (7, 4) -- (6, 3) -- (7, 3);
				\draw[-, black] (2, 1) -- (3, 2) -- (5, 2) -- (6, 3);
				\draw[-, black] (3, 1) -- (5, 3) -- (6, 3) -- (6, 4); 
				\draw[-, black] (4, 1) -- (4, 2); 
				\draw[-, black] (5, 1) -- (5, 2); 
				\draw[-, black] (6, 2) -- (6, 3); 
				
				\draw[] (0, 0) circle [radius = 0] node[below, scale = .5]{$0$};
				\draw[] (1, 0) circle [radius = 0] node[below, scale = .5]{$1$};
				\draw[] (2, 0) circle [radius = 0] node[below, scale = .5]{$2$};
				\draw[] (3, 0) circle [radius = 0] node[below, scale = .5]{$3$};
				\draw[] (4, 0) circle [radius = 0] node[below, scale = .5]{$4$};
				\draw[] (5, 0) circle [radius = 0] node[below, scale = .5]{$5$};
				
				\draw[] (0, 1) circle [radius = 0] node[left, scale = .5]{$0$};
				\draw[] (1, 1) circle [radius = 0] node[above = 3, left = 0, scale = .5]{$1$};
				\draw[] (2, 1) circle [radius = 0] node[above = 3, left = 0, scale = .5]{$2$};
				\draw[] (3, 1) circle [radius = 0] node[left = 1, above = 0, scale = .5]{$3$};
				\draw[] (4, 1) circle [radius = 0] node[right = 1, below = 0, scale = .5]{$3$};
				\draw[] (5, 1) circle [radius = 0] node[right = 1, below = 0, scale = .5]{$4$};
				\draw[] (6, 1) circle [radius = 0] node[below, scale = .5]{$5$};
				
				\draw[] (0, 2) circle [radius = 0] node[left, scale = .5]{$0$};
				\draw[] (1, 2) circle [radius = 0] node[above = 3, left = 0, scale = .5]{$1$};
				\draw[] (2, 2) circle [radius = 0] node[left = 1, above = 0, scale = .5]{$2$};
				\draw[] (3, 2) circle [radius = 0] node[left = 3, above = 0, scale = .5]{$2$};
				\draw[] (4, 2) circle [radius = 0] node[below = 3, right = 0, scale = .5]{$3$};
				\draw[] (5, 2) circle [radius = 0] node[below = 1, right = 0, scale = .5]{$4$};
				\draw[] (6, 2) circle [radius = 0] node[right = 1, below = 0, scale = .5]{$4$};
				\draw[] (7, 2) circle [radius = 0] node[below, scale = .5]{$5$};
				
				\draw[] (0, 3) circle [radius = 0] node[left, scale = .5]{$0$};
				\draw[] (1, 3) circle [radius = 0] node[left = 1, above = 0, scale = .5]{$1$};
				\draw[] (2, 3) circle [radius = 0] node[above = 1, left = 0, scale = .5]{$1$};
				\draw[] (3, 3) circle [radius = 0] node[left = 3, above = 0, scale = .5]{$2$};
				\draw[] (4, 3) circle [radius = 0] node[right = 3, above = 0, scale = .5]{$2$};
				\draw[] (5, 3) circle [radius = 0] node[right = 3, above = 0, scale = .5]{$3$};
				\draw[] (6, 3) circle [radius = 0] node[below = 3, right = 0, scale = .5]{$4$};
				\draw[] (7, 3) circle [radius = 0] node[below = 1, right = 0, scale = .5]{$5$};
				\draw[] (8, 3) circle [radius = 0] node[right, scale = .5]{$5$};
				
				\draw[] (0, 4) circle [radius = 0] node[left = 1, scale = .5]{$0$};
				\draw[] (1, 4) circle [radius = 0] node[above = 1, left = 0, scale = .5]{$0$};
				\draw[] (2, 4) circle [radius = 0] node[left = 3, above = 0, scale = .5]{$1$};
				\draw[] (3, 4) circle [radius = 0] node[above, scale = .5]{$2$};
				\draw[] (4, 4) circle [radius = 0] node[above = 1, left = 0, scale = .5]{$2$};
				\draw[] (5, 4) circle [radius = 0] node[left = 3, above = 0, scale = .5]{$3$};
				\draw[] (6, 4) circle [radius = 0] node[left = 3, above = 0, scale = .5]{$4$};
				\draw[] (7, 4) circle [radius = 0] node[right = 3, above = 0, scale = .5]{$4$};
				\draw[] (8, 4) circle [radius = 0] node[right, scale = .5]{$5$};
				
				\draw[] (1, 5) circle [radius = 0] node[above, scale = .5]{$0$};
				\draw[] (2, 5) circle [radius = 0] node[left = 1, above = 0, scale = .5]{$1$};
				\draw[] (3, 5) circle [radius = 0] node[above = 2, left = 0, scale = .5]{$1$};
				\draw[] (4, 5) circle [radius = 0] node[left = 3, above = 0, scale = .5]{$2$};
				\draw[] (5, 5) circle [radius = 0] node[left = 3, above = 0, scale = .5]{$3$};
				\draw[] (6, 5) circle [radius = 0] node[left = 1, above = 0, scale = .5]{$4$};
				\draw[] (7, 5) circle [radius = 0] node[right = 3, above = 0, scale = .5]{$4$};
				\draw[] (8, 5) circle [radius = 0] node[right, scale = .5]{$5$};			
				
				\draw[] (2, 6) circle [radius = 0] node[above, scale = .5]{$0$};
				\draw[] (3, 6) circle [radius = 0] node[above, scale = .5]{$1$};
				\draw[] (4, 6) circle [radius = 0] node[left = 2, above = 0, scale = .5]{$2$};
				\draw[] (5, 6) circle [radius = 0] node[left = 2, above = 0, scale = .5]{$3$};
				\draw[] (6, 6) circle [radius = 0] node[right = 3, above = 0, scale = .5]{$3$};
				\draw[] (7, 6) circle [radius = 0] node[right = 3, above = 0, scale = .5]{$4$};
				\draw[] (8, 6) circle [radius = 0] node[right, scale = .5]{$5$};
				
				\draw[] (3, 7) circle [radius = 0] node[above, scale = .5]{$0$};
				\draw[] (4, 7) circle [radius = 0] node[above, scale = .5]{$1$};
				\draw[] (5, 7) circle [radius = 0] node[above, scale = .5]{$2$};
				\draw[] (6, 7) circle [radius = 0] node[above, scale = .5]{$3$};
				\draw[] (7, 7) circle [radius = 0] node[above, scale = .5]{$4$};
				\draw[] (8, 7) circle [radius = 0] node[above, scale = .5]{$5$};
				
			\end{tikzpicture}
			
		\end{center}
		
		\caption{\label{tilinghexagon} Depicted to the right are the three types of lozenges. Depicted in the middle is a lozenge tiling of a hexagon. One may view this tiling as a packing of boxes (of the type depicted on the left) into a large corner, which gives rise to a height function (shown in the middle). }
		
	\end{figure}
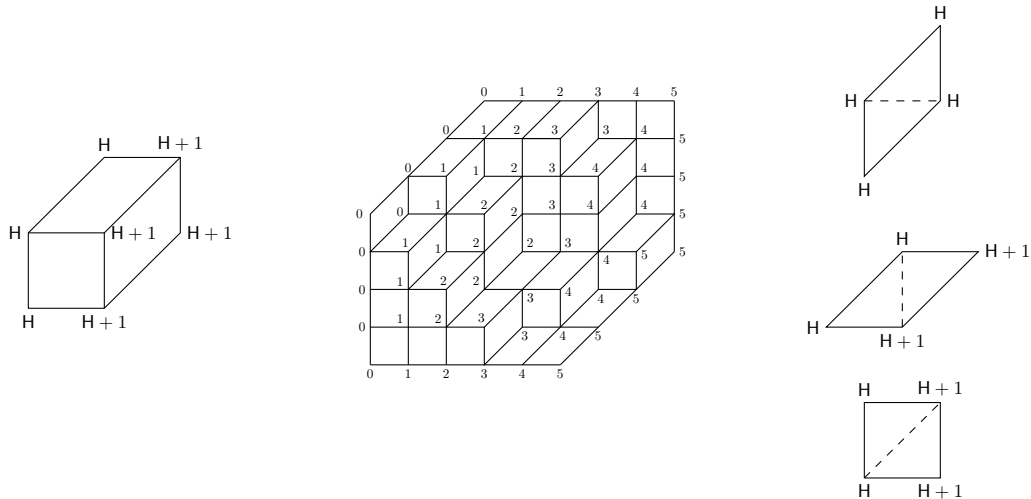

	For a fixed vertex $\mathsf{v} \in \mathsf{R}$ and integer $h_0 \in \mathbb{Z}$, one can associate with any tiling of $\mathsf{R}$ a height function $\mathsf{H}: \mathsf{R} \rightarrow \mathbb{Z}$ as follows. First, set $\mathsf{H} (\mathsf{v}) = h_0$, and then define $\mathsf{H}$ at the remaining vertices of $\mathsf{R}$ in such a way that the height functions along the four vertices of any lozenge in the tiling are of the form depicted on the right side of \Cref{tilinghexagon}. In particular, we require that $\mathsf{H} (x + 1, t) = \mathsf{H} (x, t)$ if and only if $(x, t)$ and $(x + 1, t)$ are vertices of the same type $1$ lozenge, and that $\mathsf{H} (x, t) - \mathsf{H} (x, t + 1) = 1$ if and only if $(x, t)$ and $(x, t + 1)$ are vertices of the same type $2$ lozenge. Since $\mathsf{R}$ is simply-connected, a height function on $\mathsf{R}$ is uniquely determined by these conditions (and the value of $\mathsf{H}(\mathsf{v}) = h_0$).
	
	We refer to the right side of \Cref{tilinghexagon} for an example; as depicted there, we can also view a lozenge tiling of $\mathsf{R}$ as a packing of $\mathsf{R}$ by boxes of the type shown on the left side of \Cref{tilinghexagon}. In this case, the value $\mathsf{H} (\mathsf{u})$ of the height function associated with this tiling at some vertex $\mathsf{u} \in \mathsf{R}$ denotes the height of the stack of boxes at $\mathsf{u}$. Observe in particular that, if there exists a tiling $\mathscr{M}$ of $\mathsf{R}$ associated with some height function $\mathsf{H}$, then the boundary height function $\mathsf{h} = \mathsf{H} |_{\partial \mathsf{R}}$ is independent of $\mathscr{M}$ and is uniquely determined by $\mathsf{R}$ (except for a global shift, which was above fixed by the value of $\mathsf{H}(\mathsf{v}) = h_0$).

	\subsection{Limit Shapes and Arctic Boundaries}

	\label{HeightLimit} 
	
	To analyze the limits of height functions of random tilings, it will be useful to introduce continuum analogs of the notions considered in \Cref{FunctionWalks}. So, set 
	\begin{flalign} \label{t}
		\mathcal{T} = \big\{ (s, t) \in (0,1) \times (-1,0): s+t>0 \big\} \subset \mathbb{R}^2, 
	\end{flalign} 

	\noindent and its closure $\overline{\mathcal{T}} = \big\{ (s, t) \in [0,1] \times [-1,0]: s+t\geq 0 \big\}$. We interpret $\overline{\mathcal{T}}$ as the set of possible gradients, also called \emph{slopes}, for a continuum height function; $\mathcal{T}$ is then the set of ``non-frozen'' or ``liquid'' slopes, whose associated tilings contain tiles of all types. For any simply-connected subset $\mathfrak{R} \subset \mathbb{R}^2$, we say that a function $H : \mathfrak{R} \rightarrow \mathbb{R}$ is \emph{admissible} if $H$ is $1$-Lipschitz and $\nabla H(u) \in \overline{\mathcal{T}}$ for almost all $u \in \mathfrak{R}$. We further say a function $h: \partial \mathfrak{R} \rightarrow \mathbb{R}$ \emph{admits an admissible extension to $\mathfrak{R}$} if $\Adm (\mathfrak{R}; h)$, the set of admissible functions $H: \mathfrak{R} \rightarrow \mathbb{R}$ with $H |_{\partial \mathfrak{R}} = h$, is not empty.
	
	We say that a sequence of domains $\mathsf{R}_1, \mathsf{R}_2, \ldots \subset \mathbb{T}$ \emph{converges} to a simply-connected subset $\mathfrak{R} \subset \mathbb{R}^2$ if $n^{-1} \mathsf{R}_n \subseteq \mathfrak{R}$ for each $n \geq 1$ and  $\lim_{n \rightarrow \infty} \dist (n^{-1} \del\mathsf{R}_n, \del\mathfrak{R}) = 0$. We further say that a sequence $\mathsf{h}_1, \mathsf{h}_2, \ldots $ of boundary height functions on $\mathsf{R}_1, \mathsf{R}_2, \ldots $, respectively, \emph{converges} to a boundary height function $h : \partial \mathfrak{R} \rightarrow \mathbb{R}$ if $\lim_{n \rightarrow \infty} n^{-1} \mathsf{h}_n (n v_n) = h (v)$ if $v_n$ is any point in $n^{-1} \del\mathsf{R}_n$ with limit $v\in \mathfrak \del R$.  
	
	To state results on the limiting height function of random tilings, for any $x \in \mathbb{R}_{\geq 0}$ and $(s, t) \in \overline{\mathcal{T}}$ we denote the \emph{Lobachevsky function} $L: \mathbb{R}_{\geq 0} \rightarrow \mathbb{R}$ and the \emph{surface tension} $\sigma : \overline{\mathcal{T}} \rightarrow \mathbb{R}$ by 
	\begin{flalign}
		\label{sigmal} 
		L(x) = - \displaystyle\int_0^x \log |2 \sin z| \mathrm{d} z; \qquad \sigma (s, t) = \displaystyle\frac{1}{\pi} \Big( L(\pi (1-s)) + L (- \pi t) + L \big( \pi ( s + t) \big) \Big).
	\end{flalign}
	
	\noindent For any $H \in \Adm (\mathfrak{R})$, we further denote the \emph{entropy functional}
	\begin{flalign}
		\label{efunctionh} 
		\mathcal{E} (H) = \displaystyle\int_{\mathfrak{R}} \sigma \big( \nabla H (z) \big) \mathrm{d}z.
	\end{flalign}
	
	The following variational principle of \cite{VPT}  states that the height function associated with a uniformly random tiling of a sequence of domains corresponding to $\mathfrak{R}$ converges to the maximizer of $\mathcal{E}$ with high probability.

	\begin{lem}[{\cite[Theorem 1.1]{VPT}}]
		
		\label{hzh} 
		
		Let $\mathsf{R}_1, \mathsf{R}_2, \ldots \subset \mathbb{T}$ denote a sequence of tileable domains, with associated boundary height functions $\mathsf{h}_1, \mathsf{h}_2, \ldots $, respectively. Assume that they converge to a simply-connected subset $\mathfrak{R} \rightarrow \mathbb{R}^2$ with piecewise smooth boundary, and a boundary height function $h : \partial \mathfrak{R} \rightarrow \mathbb{R}$, respectively. Denoting the height function associated with a uniformly random tiling of $\mathsf{R}_n$ with boundary height function $\mathsf h_n$ by $\mathsf{H}_n$, we have
		\begin{flalign*}
			\displaystyle\lim_{n \rightarrow \infty} \mathbb{P} \bigg( \displaystyle\max_{\mathsf{v} \in \mathsf{R}_n} \big| n^{-1} \mathsf{H}_n (\mathsf{v}) - H^* (n^{-1} \mathsf{v}) \big| > \varepsilon \bigg) = 0,
		\end{flalign*}  
	
		\noindent where $H^*$ is the unique maximzer of $\mathcal{E}$ on $\mathfrak{R}$ with boundary data $h$,
		\begin{flalign}
			\label{hmaximum}
			H^* = \displaystyle\argmax_{H \in \Adm (\mathfrak{R}; h)} \mathcal{E} (H).
		\end{flalign}
	\end{lem}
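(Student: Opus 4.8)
The plan is to reduce the statement to sharp exponential asymptotics for the number of tilings and then conclude by a soft large‑deviations comparison, following the scheme of \cite{VPT}. Write $Z_n$ for the number of lozenge tilings of $\mathsf R_n$. The first ingredient is the \emph{local entropy} estimate: for a slope $p\in\mathcal T$, the number of tilings of an $N\times N$ torus (or box) whose boundary height function is asymptotically affine of gradient $p$ is $\exp\!\big(N^2(\sigma(p)+o(1))\big)$, with $\sigma$ given by \eqref{sigmal}; moreover $\sigma$ extends continuously to $\overline{\mathcal T}$, vanishes at its three corners, and — after the affine substitution $\alpha=\pi s$, $\beta=-\pi t$, $\gamma=\pi(1-s+t)$, which rewrites $\sigma$ as $\pi^{-1}\big(L(\alpha)+L(\beta)+L(\gamma)\big)$ with $\alpha+\beta+\gamma=\pi$ — is \emph{strictly} concave on $\mathcal T$, its Hessian being negative definite by the identity $\cot\alpha\cot\beta+\cot\beta\cot\gamma+\cot\gamma\cot\alpha=1$ for angles summing to $\pi$. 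Consequently $\mathcal E$ is strictly concave and, being a concave integrand of the gradient, weakly upper semicontinuous; as $\Adm(\mathfrak R;h)$ is compact by Arzel\`a--Ascoli (all its members being $1$‑Lipschitz with fixed boundary values), $\mathcal E$ has a \emph{unique} maximizer, namely $H^*$, and for every $\varepsilon>0$ the gap
\begin{flalign*}
c_\varepsilon\deq\mathcal E(H^*)-\sup\big\{\mathcal E(H)\,:\,H\in\Adm(\mathfrak R;h),\ \|H-H^*\|_\infty\ge\varepsilon\big\}
\end{flalign*}
is strictly positive.

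Next come the matching upper and lower bounds on $Z_n$. For the \emph{upper bound}, cover $\mathfrak R$ by a grid of mesh $\eta=\eta(n)\to0$ slowly; a tiling of $\mathsf R_n$ has, on each cell $\mathsf Q_i$, an average gradient $p_i\in\overline{\mathcal T}$, and the associated piecewise‑affine function $\widetilde H$ is admissible up to $O(\eta)$, has boundary data within $O(\eta)$ of $h$, and satisfies $\|\widetilde H-n^{-1}\mathsf H_n\|_\infty=O(\eta)$. Deleting the $O(n\eta)$ tiles straddling each $\partial\mathsf Q_i$ decouples the cells, so by the local estimate the number of tilings realizing a prescribed array $(p_i)$ is at most $\prod_i\exp\!\big((n\eta)^2(\sigma(p_i)+o(1))\big)=\exp\!\big(n^2(\mathcal E(\widetilde H)+o(1))\big)$; since the Lipschitz bound leaves only $\exp(o(n^2))$ arrays, summing over those whose $\widetilde H$ is $\varepsilon/2$‑far from $H^*$ yields
\begin{flalign*}
\#\big\{\text{tilings with }\|n^{-1}\mathsf H_n-H^*\|_\infty>\varepsilon\big\}\le\exp\!\big(n^2(\mathcal E(H^*)-c_\varepsilon+o(1))\big),
\end{flalign*}
where the $O(\eta)$ slack is absorbed by shrinking $\varepsilon$ inside $c_\varepsilon$. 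For the \emph{lower bound} $Z_n\ge\exp\!\big(n^2(\mathcal E(H^*)-o(1))\big)$: mollify $H^*$ and correct it in an $O(\delta)$ boundary layer to get a smooth admissible $H_\delta$ with $\|H_\delta-H^*\|_\infty<\delta$; Jensen's inequality for the concave $\sigma$ shows mollification does not lower $\int\sigma(\nabla\cdot)$ off the boundary layer, and, $\sigma$ being bounded and continuous up to $\partial\mathcal T$, one gets $\mathcal E(H_\delta)\ge\mathcal E(H^*)-o_\delta(1)$. On a fine cellular refinement, insert into each cell a tiling close to the periodic ``brick‑wall'' pattern of the local gradient of $H_\delta$, match the patterns along cell walls, and complete the tiling through the frozen/arctic region; this produces at least $\exp\!\big(n^2(\mathcal E(H_\delta)-o(1))\big)$ tilings of $\mathsf R_n$ with boundary $\mathsf h_n$. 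Dividing the two estimates,
\begin{flalign*}
\mathbb P\big(\|n^{-1}\mathsf H_n-H^*\|_\infty>\varepsilon\big)\le\exp\!\big(-n^2(c_\varepsilon-o(1))\big),
\end{flalign*}
whose right-hand side tends to $0$ as $n\to\infty$; this proves the lemma.

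The genuinely hard parts are the two inputs above. Establishing the closed form \eqref{sigmal} for $\sigma$ requires exact enumeration — evaluating the dimer partition function on a torus (or a suitable family of boxes) at fixed slope via the Kasteleyn matrix and extracting the exponential rate from the asymptotics of the resulting determinant/theta factor, which is where the Lobachevsky function $L$ appears — preceded by a subadditivity argument ensuring that the per‑area entropy exists at all. The constructive lower bound is delicate precisely where the variational problem is interesting: near the arctic boundary $\nabla H^*$ degenerates to $\partial\mathcal T$, where $\sigma$ is only H\"older and the periodic patterns cease to be genuinely two‑dimensional, so one must build tilings that track $H^*$ across the frozen--liquid interface and glue the cellwise patterns into a single global tiling of $\mathsf R_n$ with \emph{exactly} the prescribed boundary height, keeping the entropy lost to this surgery $o(n^2)$. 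By contrast, the strict concavity of $\sigma$ and the compactness/upper‑semicontinuity facts driving the final comparison are routine.
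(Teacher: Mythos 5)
The paper does not prove this lemma; it is cited verbatim from \cite{VPT} (Cohn--Kenyon--Propp), with the uniqueness of the maximizer attributed separately to Proposition~4.5 of \cite{MCFARS} (De Silva--Savin). There is therefore no proof in the paper against which to compare, and what you have written is a reconstruction of the CKP variational-principle argument. As a sketch it is a faithful outline of that argument — the surface tension formula \eqref{sigmal} via exact dimer enumeration, the coarse-graining upper bound, the constructive lower bound, and the final exponential comparison — and you correctly flag the two genuinely hard inputs.

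Two points deserve flagging. First, your derivation of uniqueness ``from strict concavity'' glosses over a real subtlety: the Hessian computation you quote is valid only on the open triangle $\mathcal T$. On each edge of $\partial\mathcal T$ the surface tension is identically zero — for instance, for $u\in[0,1]$,
\begin{flalign*}
\sigma(0,-u)=\tfrac{1}{\pi}\Big(L(\pi u)+L\big(\pi(1-u)\big)\Big)=\tfrac{1}{\pi}\Big(L(\pi u)-L(\pi u)\Big)=0,
\end{flalign*}
using $L(\pi-x)=-L(x)$ — so $\sigma$ is merely affine, not strictly concave, along the edges, and the midpoint inequality $\sigma\big(\tfrac{p_1+p_2}{2}\big)\ge\tfrac12\big(\sigma(p_1)+\sigma(p_2)\big)$ is an equality whenever $p_1,p_2$ lie on a common edge. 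Your argument therefore does not rule out two maximizers whose gradients differ on a positive-measure set inside the frozen region; one needs either a separate argument there or the Euler--Lagrange/comparison-principle route of De Silva--Savin, which is what the paper actually invokes. Second, the positivity $c_\varepsilon>0$ needs $\mathcal E$ to be upper semicontinuous in a topology for which $\Adm(\mathfrak R;h)$ is compact; you invoke Arzel\`a--Ascoli (uniform convergence) but do not say why $\mathcal E$ is u.s.c.\ there. The point is that uniform convergence of uniformly Lipschitz functions gives weak-$*$ convergence of the gradients in $L^\infty$, and the map $p\mapsto\int\sigma(p)$ is weak-$*$ u.s.c.\ because $\sigma$ is concave and bounded; this should be stated, since $\nabla H$ is certainly not continuous in $H$ for the uniform norm.
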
 

	\noindent The fact that there is a unique maximizer described as in \eqref{hmaximum} follows from Proposition 4.5 of \cite{MCFARS}. 
	
	\subsection{Complex Burgers Equation}

	\begin{figure}
	\begin{center}
	 \includegraphics[scale=0.3,trim={0cm 5cm 0 7cm},clip]{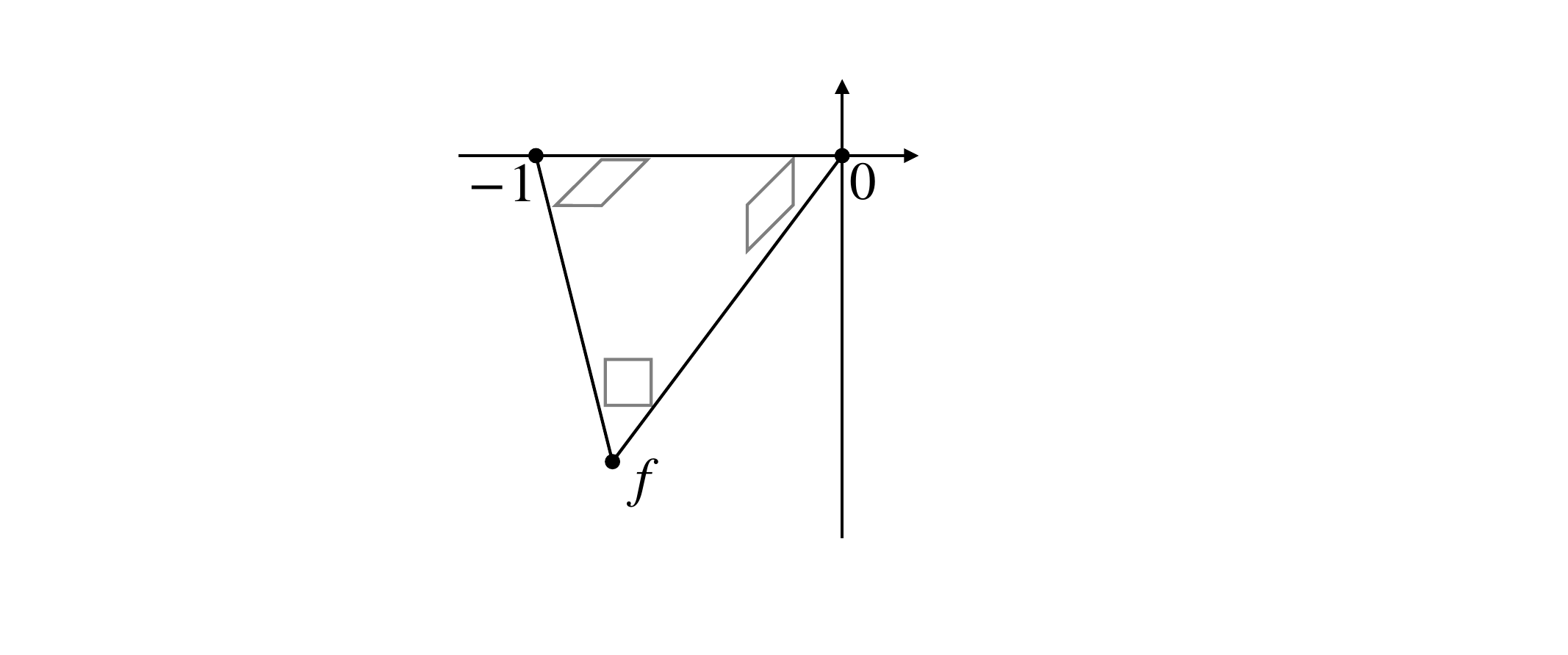}
	 \caption{Shown above the complex slope $f = f (u)$.}
	 \label{slope1}
	 \end{center}
	 \end{figure}	
	Under a suitable change of coordinates, this maximizer $H^*$ in \eqref{hmaximum} solves a complex variant of the Burgers equation, which makes it amenable to further analysis. To explain this point further, we require some additional notation. For any simply-connected open subset $\mathfrak{R} \subset \mathbb{R}^2$ with Lipschitz boundary, and boundary height function $h: \partial \mathfrak{R} \rightarrow \mathbb{R}^2$ admitting an admissible extension to $\mathfrak{R}$, define the \emph{liquid region} $\mathfrak{L} = \mathfrak{L} (\mathfrak{R}; h) \subset \mathfrak{R}$ 
	\begin{align}\begin{split}
		\label{al} 
		&\mathfrak{L} = \big\{ u=(x,t) \in \mathfrak{R}: \big( \partial_x H^* (u), \partial_t H^* (u) \big) \in \mathcal{T} \big\},
\end{split}	\end{align}
and the \emph{arctic boundary} $\mathfrak{A} = \mathfrak{A} (\mathfrak{R}; h) \subset \overline{\mathfrak{R}}$ as the set of points $ u=(x,t) \in \del \fL$, such that for any sequence of points $u_n \in \fL$ converging to $u$,
\begin{align}\begin{split}
		\label{e:arctic} 
		   \big( \partial_x H^* (u_n), \partial_t H^* (u_n) \big) \rightarrow \del\mathcal{T}, 
\end{split}	\end{align}
	where $H^*$ is as in \eqref{hmaximum}. The complement of the liquid region $\fR\setminus \fL$ is called the \emph{frozen region}. 
Then we define the \emph{complex slope} $f^*: \mathfrak{L} \rightarrow \mathbb{H}^-$ by, for any $u \in \mathfrak{L}$, setting $f^*(u) \in \mathbb{H}^-$ to be the unique complex number satisfying 
	\begin{flalign}
		\label{fh}
		\arg^* f^*(u) = - \pi \partial_x H^* (u); \qquad \arg^* \big( f^*(u) + 1 \big) = \pi \partial_t H^* (u),
	\end{flalign}

	\noindent where for any $z \in \overline{\mathbb{H}^-}\setminus \{0\}$ we have set $\arg^* z = \theta \in [-\pi, 0]$ to be the unique number in $[-\pi, 0]$ satisfying $e^{-\mathrm{i} \theta} z \in \mathbb{R}_{> 0}$; see \Cref{slope1} for a depiction, where there we interpret $1 - \partial_x H^* (u)$ and $-\partial_t H^* (u)$ as the approximate proportions of tiles of types $1$ and $2$ around $nu \in \mathsf{R}_n$, respectively (which follows from the definition of the height function from \Cref{FunctionWalks}).

		 The following result from \cite{LSCE} indicates that the complex slope $f^*$ satisfies the complex Burgers equation on the liquid region.
	  \begin{prop}[{\cite[Theorem 1]{LSCE}}]
	 	
	 	\label{fequation}
	 	
	 	For any $(x, t) \in \mathfrak{L}$, let $f^*_t (x) = f^* (x, t)$ we have
	 	\begin{flalign}
	 		\label{ftx}
	 		\partial_t f^*_t (x) + \partial_x f^*_t (x) \displaystyle\frac{f^*_t (x)}{f^*_t (x) + 1} = 0.
	 	\end{flalign} 
	 	 	
	 	\end{prop}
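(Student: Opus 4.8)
The plan is to deduce the complex Burgers equation \eqref{ftx} from the Euler--Lagrange equation of the variational problem in \Cref{hzh}, following \cite{LSCE}. Since $\sigma$ is real-analytic with nondegenerate Hessian on the open convex set $\mathcal{T}$, standard interior elliptic regularity for the quasilinear equation $\div\big(\nabla\sigma(\nabla H^*)\big)=0$ shows that the maximizer $H^*$ is real-analytic on the liquid region $\mathfrak{L}$ of \eqref{al}, and that there it satisfies the Euler--Lagrange equation
\begin{flalign}\label{eq:EL}
\partial_x\big[(\partial_1\sigma)(\nabla H^*)\big]+\partial_t\big[(\partial_2\sigma)(\nabla H^*)\big]=0 \qquad\text{on }\mathfrak{L},
\end{flalign}
where $(x,t)$ are the coordinates on $\mathfrak{R}$ (so $\nabla H^*=(\partial_x H^*,\partial_t H^*)$) and $\partial_1\sigma,\partial_2\sigma$ denote the partial derivatives of $\sigma(s,t)$ in its two slots.

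Next I would make \eqref{eq:EL} explicit. From $L'(x)=-\log|2\sin x|$ and \eqref{sigmal} one computes
\begin{flalign*}
(\partial_1\sigma)(s,t)=\log\left|\frac{\sin\pi(1-s+t)}{\sin\pi s}\right|,\qquad
(\partial_2\sigma)(s,t)=\log\left|\frac{\sin\pi t}{\sin\pi(1-s+t)}\right|,
\end{flalign*}
each a difference of two of the three numbers $\log|2\sin\pi s|$, $\log|2\sin\pi t|$, $\log|2\sin\pi(1-s+t)|$. The geometric input links these to the complex slope: for $u\in\mathfrak{L}$ the points $0$, $-1$ and $f(u)$ are the vertices of a triangle contained in $\overline{\mathbb{H}^-}$ whose interior angles are $\pi s$, $-\pi t$, $\pi(1-s+t)$ (the local tile proportions, times $\pi$) for the corresponding $(s,t)\in\mathcal{T}$, while the edge from $0$ to $-1$ has length $1$. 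Indeed $\arg^*f(u)=-\pi\partial_x H^*$ fixes the direction of the edge from $0$ to $f(u)$ and $\arg^*(f(u)+1)=\pi\partial_t H^*$ fixes the direction of the edge from $-1$ to $f(u)$, as in \eqref{fh}. The law of sines then expresses $|f(u)|$ and $|f(u)+1|$ as ratios of those three sines, and comparing with the display above identifies $(\partial_1\sigma)(\nabla H^*)$ and $(\partial_2\sigma)(\nabla H^*)$ with $\pm\log|f+1|$ and $\pm\log|f|$ up to additive constants (killed by the derivatives in \eqref{eq:EL}), with the signs determined by the orientation conventions of \eqref{al}--\eqref{fh}.

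Substituting this dictionary into \eqref{eq:EL} turns it into $\partial_x\log|f+1|+\partial_t\log|f|=0$ on $\mathfrak{L}$, which is exactly the real part of the identity
\begin{flalign*}
\partial_t\log f+\partial_x\log(f+1)=0 .
\end{flalign*}
The imaginary part of this identity is $\partial_t\arg f+\partial_x\arg(f+1)=-\pi\,\partial_t\partial_x H^*+\pi\,\partial_x\partial_t H^*=0$, which holds automatically by equality of mixed partials. Hence the full identity holds on $\mathfrak{L}$; multiplying it by $f$ gives $\partial_t f+(\partial_x f)\,\frac{f}{f+1}=0$, i.e.\ \eqref{ftx}.

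I expect the geometric dictionary in the second paragraph to be the step requiring the most care: one has to determine precisely which interior angle of the triangle $0,-1,f(u)$ sits at which vertex, together with the exact branches and signs coming from the definition of $\arg^*$ and from the relation in \eqref{al}--\eqref{fh} among $\nabla H^*$, the slope domain $\overline{\mathcal{T}}$, and the tile proportions, so that \eqref{eq:EL} becomes the real part of $\partial_t\log f+\partial_x\log(f+1)=0$ with exactly the sign and the factor $f/(f+1)$ of \eqref{ftx} (rather than, say, $1/(f+1)$ with the opposite sign). A secondary point is the interior regularity of $H^*$ on $\mathfrak{L}$ that makes \eqref{eq:EL} meaningful, which follows from smoothness and nondegeneracy of $\sigma$ on $\mathcal{T}$ by standard elliptic theory, as in \cite{LSCE}.
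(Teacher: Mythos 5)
This statement is cited from Kenyon--Okounkov \cite{LSCE} rather than proved in the paper, and your plan — derive the complex Burgers equation from the Euler--Lagrange equation of the variational problem via a trigonometric dictionary between $\nabla\sigma$ and $\log|f|,\log|f+1|$, then note the imaginary part is automatic by equality of mixed partials — is exactly the route taken in that reference. The overall architecture is sound: your formulas for $\partial_1\sigma,\partial_2\sigma$ are correctly computed from \eqref{sigmal}, and your argument that the imaginary part of $\partial_t\log f+\partial_x\log(f+1)=0$ follows from $\arg^* f=-\pi\partial_x H^*$, $\arg^*(f+1)=\pi\partial_y H^*$ and mixed partials is correct.

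However, the geometric dictionary as you've written it is wrong, and in precisely the place you flagged. For $f(u)\in\mathbb{H}^-$ with $\arg^* f(u)=-\pi s$ and $\arg^*(f(u)+1)=\pi t$ (so $s=\partial_x H^*\in(0,1)$, $t=\partial_y H^*\in(-1,0)$), a direct computation with unit vectors gives that the interior angles of the triangle with vertices $0,-1,f(u)$ are $\pi(1-s)$ at $0$ (opposite the side of length $|f+1|$), $-\pi t$ at $-1$ (opposite $|f|$), and $\pi(s+t)$ at $f(u)$ (opposite the unit side) — not $\pi s$, $-\pi t$, $\pi(1-s+t)$ as you claim. The law of sines then yields $|f+1|=\sin\pi(1-s)/\sin\pi(s+t)$ and $|f|=\sin(-\pi t)/\sin\pi(s+t)$, which do \emph{not} match $\partial_1\sigma=\log|\sin\pi(1-s+t)/\sin\pi s|$ and $\partial_2\sigma=\log|\sin\pi t/\sin\pi(1-s+t)|$ as written (the denominators $\sin\pi(s+t)$ vs.\ $\sin\pi(1-s+t)$ differ). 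The issue seems to trace back to the fact that the triple $(s,-t,1-s+t)$ appearing in \eqref{sigmal} and in the constraint $0<s-t<1$ defining $\mathcal{T}$ in \eqref{t} does not coincide with the tile proportions $(1-s,-t,s+t)$ described in the text after \eqref{fh} — indeed $(1,-1)$ is listed as a frozen gradient in \Cref{pa1} even though $(1,-1)\notin\overline{\mathcal{T}}$ with the stated constraint. Once the slope parametrization in \eqref{t}--\eqref{sigmal} is reconciled with the tile-proportion description (equivalently, once the constraint is taken as $0<s+t<1$ and the Lobachevsky arguments as $\pi(1-s),-\pi t,\pi(s+t)$), your law-of-sines identifications become $\partial_1\sigma=\log|f+1|$ and $\partial_2\sigma=\log|f|$ exactly, and the Euler--Lagrange equation becomes $\partial_x\log|f+1|+\partial_t\log|f|=0$, the real part of \eqref{ftx}, as you intend. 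So the plan is salvageable, but the specific angle identifications stated in your second paragraph do not hold for the triangle $0,-1,f(u)$ and would, if used as written, produce an incorrect law-of-sines dictionary.
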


The following result from \cite{DMCS} gives regularity of the arctic curve.
	 \begin{thm}[{\cite[Theorem 1.2, Theorem 1.5, Theorem 1.6]{DMCS}}]
	 \label{t:localr}
	 	The arctic curve $\fA$ is piecewise analytic. Take any connected component $\Gamma$ of $\fA$. There are  at most finitely many singularities $\{\xi_j\}_{j=1}^m\subset \fA$ on $\Gamma$, and they are all
either first order (inward) cusps or tacnodes. The complex slope $f^*$ extends continuously to $\Gamma$. For any $(x, t) \in \Gamma$, $f_t^*(x)\in \bR\cup \{\infty\}$ and the slope of the arctic boundary at $(x,t)$ is given by
\begin{align}\label{e:slope}
\frac{f_t^*(x)+1}{f_t^*(x)}
\end{align} 	 	
    \end{thm} 
For a nonsingular point $(x, t) \in \mathfrak{A}$, we call it a \emph{tangency location} of $\mathfrak{A}$, if the tangent line to $\mathfrak{A}$ has slope in $\{ 0, 1, \infty \}$.

\subsection{Lozenge Tiling of Strip Domains}
%
%
%

	In this section, we give precise definitions of strip domains (also called ``double-sided trapezoid domains'').
	These domains are different from the ones considered in earlier works, such as \cite{ARS,AURTPF,UEFDIPS}, since they will accommodate non-frozen boundary conditions along both their north and south edges (instead of only their south ones).
	
	We fix a real number $\mathfrak{t}>0$, and linear functions $\mathfrak{a}, \mathfrak{b} : [0, \mathfrak{t}]\mapsto \bR$ with $\mathfrak{a}' (s), \mathfrak{b}' (s) \in \{ 0, 1 \}$, such that $\mathfrak{a} (s) \leq \mathfrak{b} (s)$ for each $s \in [0, \mathfrak{t}]$. Define the double-sided trapezoid domain
	\begin{flalign}
		\label{d}
		\mathfrak{D} = \mathfrak{D} (\mathfrak{a}, \mathfrak{b}; \mathfrak{t}) = \big\{ (x, t) \in \mathbb{R} \times [0,\ft] : \mathfrak{a} (t) \leq x \leq \mathfrak{b} (t) \big\},
	\end{flalign}

	\noindent and denote its four boundaries by 
	\begin{flalign}
		\label{dboundary}
		\begin{aligned} 
		&\partial_{\so} (\mathfrak{D}) = \big\{ (x,t) \in {\mathfrak{D}}: t = 0 \big\}; \qquad \quad
		\partial_{\north} (\mathfrak{D}) = \big\{ (x, t) \in {\mathfrak{D}}: t = \mathfrak{t} \big\}; \\ 
		& \partial_{\we} (\mathfrak{D}) = \big\{ (x, t) \in {\mathfrak{D}}: x = \mathfrak{a} (t) \big\}; \qquad 
		\partial_{\ea} (\mathfrak{D}) = \big\{ (x, t) \in {\mathfrak{D}}: x = \mathfrak{b} (t) \big\}.
		\end{aligned} 
	\end{flalign}

	\noindent We refer to \Cref{ddomain} for a depiction.

	\begin{figure}
		
		\begin{center}		
			
			\begin{tikzpicture}[
				>=stealth,
				auto,
				style={
					scale = .52
				}
				]
				
				\draw[black, very thick] (0, 0) node[left, scale = .7]{$y =0$} -- (4, 0) -- (7, 3) -- (0, 3) node[left, scale = .7]{$y = \mathfrak{t}$} -- (0, 0);
				\draw[black, very thick] (6.5, 0) -- (10.5, 0) -- (13.5, 3) -- (9.5, 3) -- (6.5, 0);
				\draw[black, very thick] (15, 0) -- (19, 0) -- (19, 3) -- (15, 3) -- (15, 0);
				\draw[black, very thick] (20.5, 0) -- (25.5, 0) -- (25.5, 3) -- (23.5, 3) -- (20.5, 0);
				
				\draw[] (3.5, 3) node[above, scale = .7]{$\partial_{\north} (\mathfrak{D})$};
				\draw[] (5.5, 1.5) node[below = 2, right, scale = .7]{$\partial_{\ea} (\mathfrak{D})$};
				\draw[] (0, 1.5) node[left, scale = .7]{$\partial_{\we} (\mathfrak{D})$};
				\draw[] (2, 0) node[below, scale = .7]{$\partial_{\so} (\mathfrak{D})$};	
				
			\end{tikzpicture}
			
		\end{center}
		
		\caption{\label{ddomain} Shown above are the four possibilities for $\mathfrak{D}$.}
		
	\end{figure}
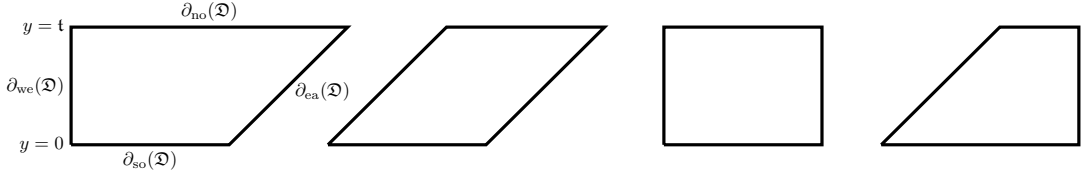 

	Let $h: \partial \mathfrak{D} \rightarrow \mathbb{R}$ denote a function admitting an admissible extension to $\mathfrak{D}$. We fix a real number $\fm>0$, and assume throughout that 
	\begin{align}\label{e:defhh}
	h\equiv 0 \text{  on } \partial_{\we} (\mathfrak{D}),\quad h\equiv \fm \text{  on } \partial_{\ea} (\mathfrak{D}).
	\end{align}
 Let $H^* \in \Adm (\mathfrak{D}; h)$ denote the maximizer of $\mathcal{E}$ as in \eqref{hmaximum}, and let the liquid region $\mathfrak{L} = \mathfrak{L} (\mathfrak{D}; h)$ and arctic boundary $\mathfrak{A} = \mathfrak{A} (\mathfrak{D}; h)$ be as below \eqref{al}. Recall that a point on $\mathfrak{A}$ is a \emph{tangency location} if the tangent line to $\mathfrak{A}$ through it has slope in $\{ 0, 1, \infty \}$. 
	
	We may then define the complex slope $f^*: \mathfrak{L} \rightarrow \mathbb{H}^-$ as in \eqref{fh}, which upon denoting $f^*_t (x) = f^* (x, t)$ satisfies the complex Burgers equation \eqref{ftx}. Further let $\mathfrak{L}_{\north} = \mathfrak{L}_{\north} (\mathfrak{D}; h)$ denote the interior of $\overline{\mathfrak{L}} \cap \partial_{\north} (\mathfrak{D})$, and let $\mathfrak{L}_{\so} = \mathfrak{L}_{\so} (\mathfrak{D}; h)$ denote the interior of $\overline{\mathfrak{L}} \cap \partial_{\so} (\mathfrak{D})$; these are the extensions of the liquid region to the north and south boundaries of $\mathfrak{L}$. For all $t \in (0, \mathfrak{t})$, we define slices of the liquid region (along the horizontal line $y = t$) by
	\begin{flalign}\label{e:defItIt}
		I_t^* = \big\{ x : (x, t) \in \overline{\mathfrak{L}} \big\}; \qquad I^*_{0} = \overline{\mathfrak{L}}_{\so}; \qquad I^*_{\mathfrak{t}} = \overline{\mathfrak{L}}_{\north}.
	\end{flalign}

	We need to formulate certain conditions on the limit shape $H^*$. We will study the following two cases ( We refer to \Cref{dldomain} for a depiction). 
	In the first case, we call $\partial_{\north} (\mathfrak{D})$ \emph{packed} (with respect to $h$) if $\partial_x h (v) = 1$ for each $v \in \partial_{\north} (\mathfrak{D})$. This case has been extensively studied in \cite{ARS,AURTPF}, due to its exact solvability. In this case, the arctic curve is tangent to $\del_{\north}(\mathfrak{D})$, and $\overline{\mathfrak{L}} \cap \partial_{\north} (\mathfrak{D})$ constitutes a single point, and so $\mathfrak{L}_{\north}$ is empty. We refer to \Cref{dldomain} for a depiction. 
	
	In the second case, $\mathfrak{L}_{\north} = I^*_{\mathfrak{t}}$ is a single interval. Take $\ft'>\ft$. We denote the extended double-sided trapezoid domain $\fD'$ on the time strip $[0,\ft']$:
\begin{align*}
\fD'=\Big\{ (x,t): t\in [0,\ft'], x\in [ \mathfrak{a}(t), \mathfrak{b}(t)] \Big\}, \quad \fa'(t)\in\{0,1\},\quad \fb'(t)\in \{0,1\}.
\end{align*}
We say that $H^*$ can be \emph{extended to time $\mathfrak{t}'$} if there exists an extended height function $\tilde{H}^*: \mathfrak{D}' \rightarrow \mathbb{R}$, which agrees with $H^*$ on $\mathfrak{D}$ and it is  the minimizer of the variational principle
\begin{align}\label{e:varW}
\tilde H^*=\arg\sup_{ H\in \Adm({\fD'};h)}\int_ 0^ {\ft'}\int_\bR\sigma(\nabla  H)\rd x \rd t,
\end{align}
where $\tilde h=\tilde H^*$ on $\del\fD'$ with $\tilde h\equiv 0$ on $\del_{\rm we}(\mathfrak D')$,  and $\tilde h\equiv \fm$ on $\del_{\rm ea}(\mathfrak D')$. 
The corresponding liquid region $\tilde \cL$ is  simply-connected, open subset $\tilde{\mathfrak{L}} \subset \mathbb{R}^2$ containing $\mathfrak{L}$, such that the set $\big\{ x: (x, \mathfrak{t}') \in \tilde{\mathfrak{L}} \big\}$ is non-empty and connected. The complex slope $\tilde{f}^*: \tilde{\mathfrak{L}} \rightarrow \mathbb{H}^-$ extends $f^*_t (x)$, and satisfies the complex Burgers equation \eqref{ftx}. We denote the corresponding arctic boundary by $\tilde \fA$  as in \eqref{e:arctic}. For all $t \in [0, \mathfrak{t'}]$, we define slices of the extended liquid region (along the horizontal line at time $t$) still as $I_t^*$ as in \eqref{e:defItIt}.


		\begin{assumption}
		\label{xhh} 
		
		Assume the following constraints hold.
		
		\begin{enumerate} 
			\item The boundary height function $h$ is constant along both $\partial_{\ea} (\mathfrak{D})$ and $\partial_{\we} (\mathfrak{D})$ as in \eqref{e:defhh}.
			\item Either $\partial_{\north} (\mathfrak{D})$ is packed with respect to $h$ or $\mathfrak{L}_{\north} = I^*_{\mathfrak{t}}$ is a single interval and there exists $\mathfrak{t}' > \mathfrak{t}$ such that $H^*$ admits an extension to time $\mathfrak{t}'$.
			\item $\mathfrak{L}_{\north} = I^*_{\mathfrak{t}}$ is a single interval and there exists $\mathfrak{t}' > \mathfrak{t}$ such that $H^*$ admits an extension to time $\mathfrak{t}'$, we further assume the following holds:
			\begin{enumerate}
			\item The arctic curve $\tilde \fA$ contains at most one cusp location. Either $
I^*_t= \big[ E_1(t), E_2(t) \big],
$
for $0\leq t\leq \ft'$, $\tilde \fA=\{(E_1(t), t), (E_2(t),t): 0\leq t\leq \ft'\}$ and there is no cusp. Or there is one cusp point at $(E_c, t_c)$, then for $t_c\leq t\leq \ft'$, the slice $I^*_t$ is a single interval
$
I^*_t= \big[ E_1(t), E_2(t) \big],
$
 for $0\leq t\leq t_c$, the slice $I^*_t$ consists of two intervals
$
I^*_t= \big[ E_1(t), E'_1(t) \big] \cup \big[ E'_2(t), E_2(t) \big]$, and $\tilde \fA=\{(E_1(t), t), (E_2(t),t): 0\leq t\leq \ft'\}\cup \{(E'_1(t), t), (E'_2(t),t): 0\leq t\leq t_c\}$ (See Figure \ref{f:It}). We assume that the cusp location is not a tangency location.

\item Any tangency location along $\tilde \fA$ is of the form $\min I_t^*$ or $\max I_t^*$, for some $t \in (0, \ft)$. At most one tangency location is of the form $\min I_t^*$, and at most one is of the form $\max I_t^*$. Moreover, these tangent locations are contained in either $\del_{\we}(\fD')$ or $\del_{\ea}(\fD')$.

\item On the frozen region $\fD'\setminus \tilde \fL$, $\nabla \tilde H^*(x,t)$ is piecewise constant, taking values in $\{(0,0), (1,0), (1,-1)\}$. 

\item The extended complex slope $\tilde f^*$ extends to the closure of $\tilde{\fL}$.	
Fix any $(x_0,t_0)$ in the closure of $\tilde \fL$. There exists a neighborhood $\fU\subset \bR^2$ of $(x_0,t_0)$, and a real analytic function $Q_0$ \footnote{Here, a function $g$ defined on a domain of $\bC$ is called real analytic if it is analytic and $\overline{g(z)}=g(\overline z)$.}  in one variable, such that for any $(x,t)$ in the intersection of $\fU$ and the closure of $\tilde\fL$,	 	\begin{flalign}
	 		\label{q0f} 
	 		Q_0 \big( \tilde f^*_t (x) \big) = x \big( \tilde f^*_t (x) + 1 \big) - t \tilde f^*_t (x).
	 	\end{flalign}
Moreover, $(x,t)\in \tilde \fA$ if and only if $w=\tilde f_t^*(x)$ is a double root of \eqref{q0f}: $Q_0(w)=x(w+1)-tw$.

\item For any $0\leq t\leq \ft$ the map
			\begin{align}\label{e:injecthi}
			(x,r)\in \tilde \fL\cap \left(\bR\times [t, \ft']\right) \mapsto x+(t-r)\frac{\tilde f^*_r(x)}{\tilde f^*_r(x)+1}\in \bH^+
			\end{align}
is a bijection to its image.

			\end{enumerate}
	\end{enumerate}			
	\end{assumption}

	Let us briefly comment on these constraints. The first guarantees that the associated tiling is one of a double-sided trapezoid, as depicted in \Cref{ddomain}. The second states the limit shape is one of the two cases in \Cref{dldomain}. Item (a) in the third statement guarantees that the arctic boundary for the tiling has only one cusp, and the cusp location is not a tangency location. Item (b) in the third statement implies that there are at most two tangency locations along the arctic boundary (and are along the leftmost and rightmost components of the arctic curve, if they exist); Item (c), (d) and (e) in the third statement are technical. Item (c) holds if there exists a height function $H^\circ$ on $\fD'$ extending the boundary data $\tilde h$, and all the slopes $\nabla H^\circ(x,t)\in \mathcal T$; see \cite[Proposition 7.10]{RT}. In particular Item (c) and (d) hold for any polygonal domain and slices of (an explicit perturbation of) one given by a polygonal domain. 
For \eqref{e:injecthi}, it is crucial that the limit shape does not have cusps pointing at opposite directions. In our case, the limit shape has at most one cusp, it could in principle be weakened; we impose it since doing so will substantially simplify our proofs later.

	  For our main application in Part II of this series \cite{AH2}, we will take the limit shape from the tiling of a thin slice of (an explicit perturbation of) one given by a polygonal domain $\fP$, and Assumption \ref{xhh} holds. We postpone a detailed discussion to Appendix \ref{s:thin}.

	\subsection{Main Results}

		Now let $n \geq 1$ be an integer; denote $\mathsf{t} = \mathfrak{t} n$, and $m=\fm n$ (recall from \eqref{e:defhh}). Suppose $m\in \bZ_>0$, $\mathsf{D} = n \mathfrak{D} \subset \mathbb{T}$, so that $ \mathsf{t} \in \mathbb{Z}_{> 0}$. Let $\mathsf{h}: \partial \mathsf{D} \rightarrow \mathbb{Z}$ denote a boundary height function.  We further fix a real number $\delta > 0$ and a (large) positive integer $n$.
			We define the augmented variant of $\mathfrak{L}$ by
	\begin{flalign*}
		\mathfrak{L}_+ (\mathfrak{D}) =\mathfrak{L}_+^{\delta} (\mathfrak{D}) = \left( \mathfrak{L} \cup \bigcup_{u \in \mathfrak{A}} \big\{ v \in \mathbb{R}^2 : |v - u| \leq n^{\delta-2/3}\big\}\right)\cap \mathfrak{D}
	\end{flalign*}
		 Below, we view the quantities $0 < \mathfrak{t} < \mathfrak{t}'$, functions $\mathfrak{a}, \mathfrak{b}$, and domain $\mathfrak{D}$ as independent of $n$. The next assumption indicates how the tiling boundary data $\mathsf{h}$ approximates $h$ (recall from \eqref{e:defhh}) along $\partial \mathfrak{D}$.

	\begin{assumption} 
		\label{xhh2} 
		
		Adopt \Cref{xhh}, and assume the following on how $\mathsf{h}$ converges to $h$.
		
		\begin{enumerate}
			\item For each $v \in \partial \mathfrak{D}$, we have $\big| \mathsf{h} (nv) - n h(v) \big| < n^{\delta / 50}$.
			\item For each $v \in \partial_{\ea} (\mathfrak{D}) \cup \partial_{\we} (\mathfrak{D})$, and each $v \in \partial_{\north} \mathfrak{D}\cup \partial_{\so} \mathfrak{D}$ such that $v \notin \mathfrak{L}_+^{\delta / 50} (\mathfrak{D})$, we have $\mathsf{h} (nv) = n h(v)$. 
			\item If $\partial_{\north} (\mathfrak{D})$ is packed, then for each $v \in \partial_{\north} (\mathfrak{D})$ we have $\mathsf{h} (nv) = n h(v)$.
		\end{enumerate}
	
	\end{assumption}

	The first assumption states that $\mathsf{h}$ approximates its limit shape; the second and third state that it coincides with its limit shape in the frozen region. Recalling that $\mathscr{G} (\mathsf{h})$ denotes the set of height functions on $\mathsf{D}$ with boundary data $\mathsf{h}$, we can now state our main result on  the  concentration estimate for a uniformly element of $\mathscr{G} (\mathsf{h})$.

	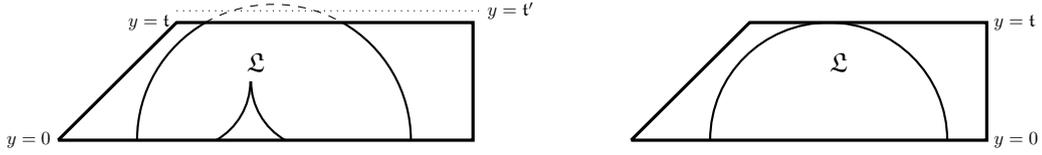
\begin{figure}
		
		\begin{center}		
			
			\begin{tikzpicture}[
				>=stealth,
				auto,
				style={
					scale = .52
				}
				]
				
				\draw[black, very thick] (-5, 0) node[left, scale = .7]{$y =0$}-- (5.5, 0) -- (5.5, 3) -- (-2, 3) node[left, scale = .7]{$y = \ft$} -- (-5, 0);
				\draw[black, very thick] (9.5, 0) -- (18.5, 0) node[right, scale = .7]{$y = 0$} -- (18.5, 3) node[right, scale = .7]{$y = \ft$} -- (12.5, 3) -- (9.5, 0);
				\draw[dotted] (-2, 3.3) -- (5.7, 3.3) node[right, scale = .7]{$y = \mathfrak{t}'$};
				
				\draw[black, thick] (-3, 0) arc (180:120:3.464);
				\draw[black, thick] (3.928, 0) arc (0:60:3.464);
				\draw[black, thick] (11.5, 0) arc (180:0:3);
				
				\draw[black, thick] (-1, 0) arc (-60:0:1.732);
				\draw[black, thick] (.75, 0) arc (240:180:1.732);
				
				\draw[black, dashed] (-1.278, 3) arc (120:60:3.464);
				
				\draw[] (0, 2) circle [radius = 0] node[]{$\mathfrak{L}$};
				\draw[] (14.75, 2) circle [radius = 0] node[]{$\mathfrak{L}$};
				

			\end{tikzpicture}
			
		\end{center}
		
		\caption{\label{dldomain} Shown to the left is an example of limit shape admitting an extension to time $\mathfrak{t}' > \ft$; shown to the right is a liquid region that is packed with respect to $h$.}
		
	\end{figure} 
	
	\begin{thm}
		
		\label{estimategamma}
		
		Adopt \Cref{xhh} and \Cref{xhh2}. Let $\mathsf{H} : \mathsf{D} \rightarrow \mathbb{Z}$ denote a uniformly random element of $\mathscr{G} (\mathsf{h})$. Then, the following two statements hold with overwhelming probability, i.e., $1-n^{-D}$ for any constant $D > 1$ (assuming $n$ is sufficiently large):
		
		\begin{enumerate} 
			\item We have $\big| \mathsf{H} (nu) - n H^* (u) \big| < n^{\delta}$, for any $u \in \mathfrak{D}$.
			\item For any $u \in \mathfrak{D} \setminus \mathfrak{L}_+^{\delta} (\mathfrak{D})$, we have $\mathsf{H} (nu) =n H^* (u)$.
		\end{enumerate}  
	
	\end{thm}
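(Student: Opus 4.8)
The plan is to reduce \Cref{estimategamma} to a rigidity statement for the non-intersecting paths encoding the tiling, then to a concentration estimate for the \emph{weighted} non-intersecting Bernoulli bridge model of \Cref{s:nbb}, and finally to transfer the latter back. \textbf{Step 1 (from tilings to path rigidity).} A uniformly random $\mathsf{H}\in\mathscr{G}(\mathsf{h})$ on $\mathsf{D}=n\mathfrak{D}$ corresponds, under the standard dimer--path correspondence, to a system of $m$ non-intersecting Bernoulli paths whose configurations along $\partial_{\so}(\mathsf{D})$ and $\partial_{\north}(\mathsf{D})$, and whose containment in $\mathsf{D}$, are fixed by $\mathsf{h}$; moreover $\mathsf{H}(nu)$ at $nu=(ns,nt)$ equals, up to an explicit affine function of $nu$ determined by $\mathsf{h}$, the number of paths strictly to the left of $ns$ in the slice $\{y=nt\}$. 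Hence, after a union bound over the polynomially many relevant points, assertion (1) is equivalent to the statement that, with overwhelming probability, every particle in each slice $\{y=nt\}$ lies within $n^\delta$ of its classical location $n\gamma_i(t)$, where the $\gamma_i(t)$ are read off from the limiting density profile $\rho_t$ associated with $f^*_t$ via \eqref{fh}; and assertion (2) is the statement that, away from the $n^{1/3+\delta}$-neighbourhood $\mathfrak{L}_+^\delta(\mathfrak{D})$ of $\mathfrak{A}$, the path configuration coincides \emph{exactly} with the frozen (packed or empty) one dictated by $\nabla H^*\in\partial\mathcal{T}$.

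\textbf{Step 2 (the weighted model and the drift ansatz).} Following \cite{HFRTNRW}, the double conditioning of the non-intersecting Bernoulli walks on the $\partial_{\so}$ and $\partial_{\north}$ data is encoded as a space--time dependent drift: using \Cref{xhh}(2) to extend $H^*$ past time $\mathfrak{t}$ to time $\mathfrak{t}'$, and \Cref{xhh}(5) (the algebraic structure \eqref{qf}--\eqref{q0f} together with the injectivity \eqref{e:injecthi}) to propagate the complex slope along the characteristics of \eqref{ftx}, one defines ``bridge weights'' $\psi_t(x)$ with $n^{-1}\log\psi_{\lfloor nt\rfloor}(\lfloor nx\rfloor)$ converging to an explicit function whose gradient is expressible through $\widetilde f^*$. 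Tilting the law of the $m$ non-intersecting Bernoulli bridges on $[0,\mathsf{t}']$ by $\psi$ along each path produces the weighted non-intersecting Bernoulli bridge model: a non-intersecting Bernoulli random walk with explicit space--time drift, whose empirical particle density obeys a discrete stochastic differential equation and whose hydrodynamic limit is precisely the slope field of $H^*$. One then shows that overwhelming-probability events transfer between this model and the uniform tiling of $\mathsf{D}$, through a Radon--Nikodym comparison on the event that all paths stay within $O(n^{1/3+\delta})$ of $\mathfrak{A}$; the requisite control of the density crucially uses \Cref{xhh2}, and the weights themselves are derived in \Cref{s:ansatz}.

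\textbf{Step 3 (concentration for the weighted model and conclusion).} The key input, proven in \Cref{s:pcW} modulo \Cref{p:improve}, is that in the weighted model the empirical density is within $n^\delta$ of its hydrodynamic limit on every mesoscopic scale with overwhelming probability, the individual particles are rigid at scale $n^\delta$, and particles off the $n^{1/3+\delta}$-neighbourhood of $\mathfrak{A}$ occupy exactly the packed/empty positions. Transferring this via Step 2 and translating through the dictionary of Step 1 yields assertion (1). For assertion (2), combine the edge rigidity of the extreme particles just obtained with a deterministic argument: in a frozen region $\nabla H^*\in\partial\mathcal{T}$ one lozenge type is absent in the limit, and the presence of such a lozenge off $\mathfrak{L}_+^\delta(\mathfrak{D})$ would force a monotone lattice ``defect path'' of forbidden lozenges stretching from that point to $\mathfrak{A}$, contradicting the rigidity just established; together with \Cref{xhh2}(2)--(3), which pin $\mathsf{h}$ to $nh$ on the frozen part of $\partial\mathfrak{D}$, this forces $\mathsf{H}(nu)=nH^*(u)$ there.

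\textbf{Main obstacle.} The difficulty is uniformity near the single cusp of $\mathfrak{A}$ and the (at most two) tangency locations, where $f^*$ degenerates: $\arg^* f^*_t$ reaches an endpoint of $[-\pi,0]$, the characteristics of \eqref{ftx} accumulate, and derivatives of the drift blow up. Constructing the bridge weights $\psi_t(x)$, the spectral/characteristic domains, and the error bounds for the discrete stochastic differential equation so that they remain valid right at these singular points --- so that the weighted model genuinely has hydrodynamic profile $H^*$ with fluctuation corrections $o(n^\delta)$ --- is where the work concentrates; this is exactly why \Cref{xhh} allows only one cusp, forbids it from being a tangency location, imposes \eqref{qf}--\eqref{q0f}, and requires \eqref{e:injecthi} (which rules out the opposite-pointing double cusp on the right of \Cref{walkscusp}). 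That analysis, via a multi-scale study of the dynamical loop equations, is carried out in \Cref{s:loopeq} and \Cref{s:improveEE}; the proof of \Cref{estimategamma} itself is the reduction that feeds those estimates back into the uniform measure.
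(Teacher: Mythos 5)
Your high-level reduction to the weighted non-intersecting Bernoulli bridge model and its concentration estimate (\Cref{p:rigidityBB}) is the right strategy, and that part of your sketch matches the paper. The genuine gap is in Step~2: the transfer from the weighted model back to the uniformly random tiling. You propose a ``Radon--Nikodym comparison on the event that all paths stay within $O(n^{1/3+\delta})$ of $\mathfrak{A}$,'' but this cannot work as stated. The weighted model is, by construction \eqref{e:sampleNt}, a \emph{mixture} over the random top boundary configuration $\bmx_{\ft}$ of uniform non-intersecting Bernoulli bridges from $\bmx_0$ to $\bmx_{\ft}$, while the uniform tiling of $\mathsf{D}$ is a single such uniform bridge with a \emph{fixed} top boundary given by $\sfh|_{\partial_{\north}\mathsf D}$. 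The Radon--Nikodym derivative of a fixed-endpoint bridge law with respect to this mixture is not close to $1$ even on the rigidity event --- it either vanishes (if $\bmx_{\ft}$ disagrees with the fixed boundary) or is of inverse order the probability mass $\bP(\bmx_{\ft})$, which is exponentially small in $n$. Restricting to a high-probability event does not repair this: overwhelming-probability bounds are not stable under exponential density changes. The argument would also be circular, since one would need the rigidity of $\bmx_{\ft}$ to control the RN derivative, yet the RN derivative is what is supposed to transport the rigidity.

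What the paper actually uses in place of your RN step is the \emph{monotone coupling} for non-intersecting Bernoulli walks (\Cref{comparewalks} and \Cref{heightcompare}). Since the weighted model, conditioned on $\bmx_{\ft}$, is exactly a uniform bridge, and since the rigidity estimate \eqref{e:boundaryc2} (together with \Cref{xhh2}) shows that the random top boundary height $\beta(\cdot;\bmx_{\ft})$ and the deterministic one $n^{-1}\sfh(n\cdot,n\ft)$ differ by at most $O(n^{3\fd-1}+n^{\delta/2-1})$ uniformly, the coupling lemma sandwiches $n^{-1}\sfH$ between $H\pm(n^{3\fd-1}+n^{\delta/2-1})$; this gives statement~(1) directly. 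For statement~(2), the paper again uses the coupling, but applied to \emph{horizontal shifts} of the boundary data: the square-root vanishing of $\rho_{\ft}^*$ near $E_i(\ft)$ (from \Cref{derivativeh}) converts the small vertical height discrepancy into a small $n^{-2/3+O(\fd+\delta)}$ horizontal shift \eqref{e:heightshift}, whence a coupling in the shifted domain pins $\sfH$ exactly to $nH^*$ off $\fL_+^{\delta}(\fD)$ via \eqref{e:couplingfrozen} and the frozen part of \eqref{e:Optrigidity}. Your ``defect path'' combinatorial argument for~(2) is not what the paper does and would require separate justification that a single stray tile propagates all the way to $\mathfrak{A}$; the shift-and-couple argument is both shorter and directly quantitative. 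In short: replace the Radon--Nikodym step by the monotone coupling (applied once vertically for~(1), once to horizontal shifts for~(2)) and your outline becomes the paper's proof.
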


In the following, we give an overview for the proof of \Cref{estimategamma}. The complex slopes which characterize the limit shapes of lozenge tilings, and their analytic extensions play fundamental role in our proof. Section \ref{s:CBE0} is technical, where we collect properties of complex slopes and their analytic extensions. 

	The proof of \Cref{estimategamma} is given in Section \ref{s:nbb}. Lozenge tilings of strip domains are equivalent to non-intersecting random Bernoulli walks conditioned to start and end at specified locations. We follow \cite{HFRTNRW} and approximate the random bridge model by a family of non-intersecting Bernoulli random walks with a space and time dependent drift in Sections \ref{s:NBwe} and \ref{s:NBBdefine}. The drifts of these  random walks are explicitly given by analytic extensions of complex slopes. In particular, we need to understand complex slopes with general boundary data; their properties are collected in Section \ref{s:CBE}.  A key intermediate result is the optimal rigidity \Cref{p:rigidityBB} for these non-intersecting Bernoulli random walks with drifts. Using it as input, together with a monotonicity coupling of lozenge tilings to non-intersecting Bernoulli random walks, we conclude the  optimal rigidity for lozenge tilings of strip domains in Section \ref{s:bridgeop}. 
	
	The proof of \Cref{p:rigidityBB} for the optimal rigidity of non-intersecting Bernoulli random walks is given in Sections \ref{s:pcW}, with intermediate results proven in Sections \ref{s:loopeq} and \ref{s:improveEE}. 
	We analyze the discrete stochastic differential equation for the empirical particle density of non-intersecting Bernoulli random walks. The results from Sections \ref{s:loopeq} and \ref{s:improveEE} are collected in Proposition \ref{p:improve}, which gives nearly optimal controls of the martingale term and errors in the discrete stochastic differential equation on any mesoscopic scale. With them as an input, we analyze the discrete stochastic differential equation by studying its behavior along the characteristics of the limiting complex Burgers equation. This leads to the optimal rigidity \Cref{p:rigidityBB}.

	These intermediate results proven in Sections \ref{s:loopeq} and \ref{s:improveEE} are for integrable transition probabilities, which satisfy the \emph{dynamical loop equations}. In Sections \ref{ProofEquation1} and \ref{EquationsEstimate}, we introduce a general family of transition probabilities, which includes those of non-intersection Bernoulli random walks as a special case and enjoy the dynamical loop equations. In Sections \ref{s:LRE} and \ref{ProofEquation}, utilizing the loop equations and a multi-scale analysis, we obtain estimates of these transition probabilities, which is optimal in the liquid region.  Through bootstrapping we obtain optimal estimates in the frozen region as well in Section \ref{s:improveEE}.

\section{Complex Burgers Equation}
\label{s:CBE0}

We recall the double-sided trapezoid domain $\fD$ on the  strip $[0,\mathfrak{t}]$ from \eqref{d}. Under \Cref{xhh}, either the north boundary $\del_{\rm no}(\fD)$ of $\fD$ is packed, or $\mathfrak{L}_{\north} = I^*_{\mathfrak{t}}$ is a single interval and $H^*$ admits an extension to time $\ft'$ with $\ft'>\ft$. The first of these cases corresponds to lozenge tilings of trapezoid domains with tightly packed north boundary, whose fluctuations have been studied extensively by this point \cite{ARS,AURTPF}, due to its exact solvability. We will therefore focus on the second case, and remark the minor necessary changes for the first case; see Remark \ref{r:trapzoid1} below. 
In this section, we collect some properties of complex slopes associated with tilings of double sided trapezoid domains with general boundary height function. 

In the rest of this paper, we assume that $H^*$ admits an extension to time $\mathfrak{t}'$. By slight abusing of notations, we denote the extended height function, its liquid region, arctic boundary and complex slope as $H^*, \fL(\fD'), \fA(\fD')$ and $f^*$. Thanks to Item (a) and (c) in the third statement of \Cref{xhh}, 
\begin{align}\label{e:fLA}
\fL(\fD')=\{(x,t)\in \fD': 0<\del_x H^*(x,t)<1\},\quad
\fA(\fD')=\overline{\{(x,t)\in \del \fL(\fD'): 0<t<\ft'\}}. 
\end{align}
We recall the relation between the slope of the arctic boundary and the complex slope from \eqref{e:slope}. Thanks to Item (b) in the third statement of \Cref{xhh}, there is no horizontal tangency location (slope equals $0$) along the arctic boundary $\fA(\fD')$. Thus, the slope of the tangent line at every point along this arctic boundary is bounded away from $0$. Thus it follows that $f_t^*(x)$ is bounded away from $-1$, and
\begin{align}\label{e:ft+1b}
\big| f_t^*(x)+1 \big|\gtrsim 1,\quad \left|\frac{f_t^*(x)}{f_t^*(x)+1}\right|\lesssim 1.
\end{align}
which will be used repeatedly in this section.

For any $0\leq t\leq \ft$, we denote the part of the trapezoid domain restricted to the time strip $[t,\ft']$ as
\begin{align}\label{e:defDs}
\fD^t=\fD'\cap \big( \bR\times [t,\ft'] \big) =\Big\{ (x,r): r\in [t,\ft'], x\in [ \mathfrak{a}(r), \mathfrak{b}(r)] \Big\}.
\end{align}
There is a map from the liquid region $\fL(\fD')\cap \fD^t$ to the pair $(f_t, z)$,
\begin{align}\label{e:xrmap}
(x,r)\in \fL(\fD')\cap \fD^t\mapsto (f_t,z)=\left(f_r^*( x),  x + (t-r)\frac{   f^*_r( x)}{ f^*_r( x)+1}\right) \in \mathbb{H}^-\times \bH^+.
\end{align}
By \Cref{t:localr} and \Cref{xhh}, the above map extends continuously to the boundary of the liquid region.  
We remark that in \eqref{e:xrmap}, we have 
\begin{flalign*} 
	\Im[f^*_r(x)]< 0; \qquad  \Im \bigg[ \displaystyle\frac{f^*_r( x )}{( f^*_r( x)+1)} \bigg]= \displaystyle\frac{\Imaginary \big[ f^*_r( x ) \big]}{| f^*_r( x)+1|^2}< 0.
\end{flalign*} 
By Assumption \ref{e:injecthi}, if we further project the map \eqref{e:xrmap} to the second coordinate, 
\begin{align}\label{e:proj0}
(x,r)\in \fL(\fD')\cap \fD^t\mapsto z= x + (t-r)\frac{   f^*_r( x)}{ f^*_r( x)+1} \in \bH^+,
\end{align}
we get an injection. Thus we can view $f_t$ on the righthand side of \eqref{e:xrmap} as a function of $z$, i.e. $f_t=f_t(z)$. By taking $r=t$ in \eqref{e:xrmap}, we get $f_t(x)=f_t^*(x)$ for $(x,t)\in \overline{\mathfrak L(\mathfrak D')}$.  $f_t(z)$ can be viewed as an extension of $f_t^*(x)$ to the complex plane. Moreover, The map \eqref{e:xrmap} and its complex conjugate $(\overline f, \overline z) \in \mathbb{H}^+\times \bH^-$
glue along the arctic curve to a map from the double of
the liquid region $\fL(\fD') \cap \fD^t\mapsto (f_t,z)$ to a domain $\mathscr U_t\subseteq \bC$.

The next proposition collects some properties of $f_t(z)$.
\begin{prop}
Under \Cref{xhh},  the function $f_t(z)$ as defined in \eqref{e:xrmap} is a holomorphic function of $z$ and satisfies the complex Burgers equation
\begin{align}\label{e:Burgeq}
 \del_{t}  f_t( z)+\del_{z}  f_t( z ) \frac{f_t( z )}{ f_t( z)+1}=0.
\end{align}
\end{prop}
\begin{proof}
Denote $z=z(x,r,t)=x+(t-r) f_r^*(x)/(f_r^*(x)+1)$. First we show that $\del_x z= 0$ implies that $\del_r z=0$ and $(x,r)$ is a critical point of the map $(x,r)\mapsto z$. The complex Burgers equation \eqref{ftx} gives
\begin{align}\begin{split}\label{e:dzhi}
&\phantom{{}={}}\del_r z=\del_r\left(x+(t-r) \frac{f_r^*(x)}{f_r^*(x)+1}\right)
=- \frac{f_r^*(x)}{f_r^*(x)+1}+(t-r)\frac{\del_r f_r^*(x)}{(f_r^*(x)+1)^2}\\
&=- \frac{f_r^*(x)}{f_r^*(x)+1}-(t-r) \frac{f_r^*(x)}{f_r^*(x)+1}\frac{\del_x f_r^*(x)}{(f_r^*(x)+1)^2}
=- \frac{f_r^*(x)}{f_r^*(x)+1}\del_x\left(x+(t-r) \frac{f_r^*(x)}{f_r^*(x)+1}\right)=0.
\end{split}\end{align}
Since $f_r^*(x)$ is analytic in $x,r$, so is $z(x,r,t)$. The collection of critical points are discrete. Next we can view $f_r^*(x)=f_t(z(x,r,t))$ as a function of $x,r,t$.  By taking derivatives on both sides we get
\begin{align}\begin{split}\label{e:dzft}
\del_x f_r^*(x)=\del_z f_t \del_x z+\del_{\overline z} f_t \del_x \overline z,\quad\del_r f_r^*(x)=\del_z f_t \del_r z+\del_{\overline z} f_t \del_r \overline z,\quad \del_t f_t+\del_z f_t \del_t z+\del_{\overline z} f_t \del_t \overline z=0.
\end{split}\end{align}
The expression \eqref{e:dzhi} implies that $\del_r z=-(\del_x z) f_r^*(x)/(f_r^*(x)+1)$. Using this as input, we can solve for $\del_{\overline z}f_t$ using the first  two relations in \eqref{e:dzft}, 
\begin{align*}
0=\del_{\overline z} f_t \del_x \overline z \Im\left[\frac{f^*_r(x)}{f_r^*(x)+1}\right].
\end{align*}
When $\del_x \overline z\neq 0$ ($(x,r)$ is not a critical point), and inside liquid region $\Im[f_r^*(x)]<0$, we conclude that $\del_{\overline z}f_t=0$, and $f_t(z)$ is  holomorphic at $z$. The set of critical points is discrete, and from our construction $f_t(z)$ is continuous as a function $z$. By Riemann's theorem on removable singularities, we conclude that $f_t(z)$ is a holomorphic function of $z$.  Using the last relation in \eqref{e:dzft}, we get the complex Burgers equation \eqref{e:Burgeq}.
\end{proof}

 The complex Burgers equation \eqref{e:Burgeq} now can be solved using the characteristic flow, 
\begin{align}\label{e:ccff}
\del_t f^*_t(z_t(u))=0,\qquad \text{where $z_t(u)$ satisfies} \qquad \del_t z_t(u)= \displaystyle\frac{f^*_t(z_t(u))}{f^*_t(z_t(u))+1}, \quad z_0(u)=u,
\end{align}

\noindent In particular, $z_t$ is linear in $t$, given explicitly by
\begin{flalign}
	\label{zlineart}
	z_t (u) = u + t \displaystyle\frac{f^*_0 (u)}{f^*_0 (u) + 1}.
\end{flalign}

\noindent If the context is clear, we will simply write $z_t(u)$ as $z_t$, omitting its dependence on the initial value $u$.


From the third statement in \Cref{xhh},  the arctic curve $\fA(\fD')$ has at most one cusp point. In the rest of this section, we study the most general case provided as follows.

\begin{assumption}\label{a:xhh}
Under \Cref{xhh} and that $H^*$ admits an extension to time $\ft'$ with $\ft'>\ft$,  we further restrict ourselves to the case: 
 The arctic curve $\fA(\fD')$ has a cusp point at $(E_c, t_c)$, and it has two tangent locations 
 at $(E_1(t_1),t_1)$ and $(E_2(t_2),t_2)$ with slopes $1$ and $\infty$ respectively 
 \end{assumption}

\begin{rem}\label{r:replace}
Under \Cref{xhh} and \Cref{a:xhh}, these two tangent locations $(E_1(t_1),t_1)$ and $(E_2(t_2),t_2)$ are on the west $\del_{\rm we}\fD'$ and east $\del_{\rm ea}\fD'$ boundary of $\fD'$ respectively. It follows that $\fa'(t)=1$ and $\fb'(t)=0$ (otherwise the west and east boundary will not be outside the liquid region). We conclude that the arctic curve $\fA(\fD')$ is tangent to the west $\del_{\rm we}\fD'$ and east $\del_{\rm ea}\fD'$ boundary of $\fD'$
at $(E_1(t_1),t_1)$ and $(E_2(t_2),t_2)$ respectively. 

Next we show that on each connected component of frozen region, $\nabla H^*$ is constant and takes values in $\{(0,0), (1,0), (1,-1)\}$. We will only prove this for the region $\{(x,t)\in \fD': 0\leq t\leq t_1, \fa(t)\leq x\leq E_1(t)\}$. By the third statement in \Cref{xhh}, the complex slope $f_t^*(x)$ extends continuously to the arctic curve $\{(E_1(t), t): 0\leq t\leq \ft'\}$. Thanks to the relation \eqref{e:slope} between $f_t^*(x)$ and the slope, we have $f_t^*(E_1(t))>0$ for $0\leq t< t_1$.  Then by the defining relation \eqref{fh} of the complex slope, we have $\nabla H^*(E_1(t), t)=(0,0)$ for  $0\leq t< t_1$. It follows that $H^*$ is constant on the arctic curve $\{(E_1(t), t): 0\leq t< t_1\}$. Moreover on the west boundary of $\fD'$, $H^*(\fa(t), t)\equiv 0$. As a consequence, $H^*\equiv 0$ on the boundary of the region $\{(x,t)\in \fD': 0\leq t\leq t_1, \fa(t)\leq x\leq E_1(t)\}$. We conclude that $H^*\equiv 0$ on $\{(x,t)\in \fD': 0\leq t\leq t_1, \fa(t)\leq x\leq E_1(t)\}$ and $\nabla H^*=(0,0)$. 
\end{rem}

We recall the time slices of the liquid region from \eqref{e:defItIt} and \eqref{e:fLA}
\begin{align}\label{e:defI*_t}
I^*_t\deq \big\{x: (x,t)\in \overline{\fL(\fD') }\big\}=\overline{\{\fa(t)<x<\fb(t): 0<\del_x H(x,t)<1\}},\quad 0\leq t\leq \ft'.
\end{align}
The complement $[\fa(t),\fb(t)]\setminus I^*_t$ consists of several intervals. On each interval we either have $\del_xH_t^*(x,t)\equiv 0$ 
or $\del_xH_t^*(x,t)\equiv 1$.
In the former case, we call the interval a \emph{void region}, and in the latter case we call it a \emph{saturated region}.

\begin{figure}
		
		\begin{center}		
			
			\begin{tikzpicture}[
				>=stealth,
				auto,
				style={
					scale = .52
				}
				]
				
				\draw[black, very thick] (-4.9, 0) node[left, scale = .7]{$y = 0$}-- (3.45, 0)--(3.45,3)--(-1.9,3)node[left, scale = .7]{$y = \ft'$}-- (-4.934877870428601, 0);
				
				\draw[black, very thick] (8.6, 0) node[left, scale = .7]{$y = 0$}-- (18, 0);
				\draw[black, very thick] (8.6, 3) node[left, scale = .7]{$y = \ft'$}-- (18, 3);
				
				\draw[black] (10.4, 2)-- (17.2, 2);
				\draw[] (13.6, 2) node[above, scale = .7]{$I_t^+$};
				
				\draw[black,fill=black] (-2.65,2.25) circle (.1);
				\draw[](-2.7,2.25) node[left, scale=.7]{$(E_1(t_1),t_1)$};
				
				\draw[black,fill=black] (3.45,1.5) circle (.1);
				\draw[](3.45,1.5) node[right, scale=.7]{$(E_2(t_2),t_2)$};
				
				\draw[black,fill=black] (-0.1,1.5) circle (.1);
				\draw[](-0.1,1.5) node[above, scale=.7]{$(E_c,t_c)$};
				
				\draw[](-3.6,0.4) node[right, scale=.7]{$E_1(t)$};
				\draw[](-2,0.4) node[right, scale=.7]{$E_1'(t)$};
				\draw[](0.4,0.4) node[right, scale=.7]{$E_2'(t)$};
				\draw[](1.8,0.4) node[right, scale=.7]{$E_2(t)$};

				\draw[black, thick] (-3.5, 0) arc (180:110:3.2);

				\draw[black, thick] (3, 0) arc (-30:30:3);

				\draw[black, thick] (-1, 0) arc (-60:0:1.732);
				\draw[black, thick] (.75, 0) arc (240:180:1.732);
				
				\draw[black, thick] (10, 0) arc (180:110:3.2);
				\draw[black, dashed] (9.7, 0) arc (180:110:3.2);
				
				\draw[black, thick] (16.5, 0) arc (-30:30:3);
				\draw[black, dashed] (16.8, 0) arc (-30:30:3);

				\draw[black, thick] (12.5, 0) arc (-60:0:1.732);
				\draw[black, thick] (14.25, 0) arc (240:180:1.732);
				\draw[black, dashed] (12.8, 0) arc (-60:-33:1.732);
				\draw[black, dashed] (13.95, 0) arc (240:212:1.732);	
		
				\draw[] (1.5, 1.2)  node[scale=0.7]{$\mathfrak{L}(\fD')$};
				\draw[] (15, 1.2)  node[scale=0.7]{$\mathfrak{L}(\fD')$};
				

			\end{tikzpicture}
			
		\end{center}
		
		\caption{\label{f:It} Shown to the left is the liquid region $\fL(\fD')$, the arctic curve $\fA(\fD')$, the tangency locations $(E_1(t_1), t_1), (E_2(t_2), t_2)$, and the cusp $(E_c,t_c)$. Shown to the right is the enlarged liquid region, with its time slices $I_t^+$ as in \eqref{e:defI_t+1}.}
		
	\end{figure}
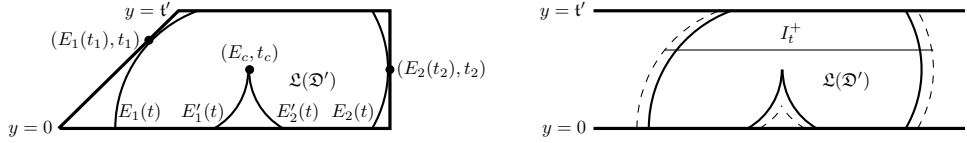

\subsection{Complex Burgers Equation With General Boundary}\label{s:CBE}

We recall $\fD^t$ from \eqref{e:defDs}. We call $\beta: \{x: (x,t)\in \del_{\rm so}\fD^t\}\mapsto [0,\fm]$ a \emph{boundary profile} on the south  boundary $\del_{\rm so}\fD^t$ of $\fD^t$, if it satisfies $\beta(\fa(t))=0$, $\beta(\fb(t))=\fm$ (recall from \eqref{e:defhh}), and it is non-decreasing and is Lipschitz with Lipschitz constant $1$.

In this section, we consider the following variational problem 
on $\fD^t$. Define $\Adm({\fD^t};\beta) = \Adm (\mathfrak{D}^t; \beta, h)$ to be the set of functions $H \in \Adm (\mathfrak{D}^t)$ such that the restriction of $H$ to the south boundary $\del_{\rm so}\fD^t$ is given by $H(x,t)=\beta(x)$, and the restriction of $H$ on the remaining boundary of $\fD^t$ (i.e., $\del\fD^t\setminus \del_{\rm so}\fD^t$) is given by $h$ as defined below \eqref{e:varW}. Then, set
\begin{align}\label{e:varWt}
W_ t ( \beta )=\sup_{ H\in \Adm({\fD^t};\beta)}\int_ t^ {\ft'}\int_\bR\sigma(\nabla  H)\rd x \rd r .
\end{align}
We denote the maximizer of \eqref{e:varWt} by $ H_r(x;\beta,t)$ and the corresponding complex slope by $ f_r( x ;\beta, t)$, defined by
\begin{align}\label{e:ft2}
(\del_{x}  H_r(x;\beta,t), \del_{r}  H_t(x;\beta,t))=\frac{1}{\pi}\left(-\arg^*( f_r( x ;\beta, t)),\arg^*( f_r( x ;\beta, t)+1)\right),
\end{align}
then $ f_r( x ;\beta, t)$ satisfies the complex Burgers equation
\begin{align}\label{e:burgereq22}
 \frac{\del_{r}  f_r( x ;\beta, t)}{ f_r( x ;\beta, t)}+\frac{\del_{x}  f_r( x ;\beta, t)}{ f_r( x ;\beta, t)+1}=0,
\end{align}
for $(x,r)$ in the liquid region
\begin{align}\label{e:defliquid}
\fL({\fD^t};\beta)\deq \big\{ (x,r)\in \cD^t:  \big( \partial_x H^* (x;\beta,t), \partial_r H_r^* (x;\beta,t) \big) \in \mathcal{T},  t<r<\ft' \big\}.
\end{align}

 
Similarly to \eqref{e:xrmap}, we consider the following map from the liquid region $\fL(\fD^t;\beta)$ to $\mathbb{H}^-\times \bH^+$,
\begin{align}\label{e:idliq}
(x,r)\in \fL(\fD^t;\beta)\mapsto (f,z)=\left(f_r( x ;\beta, t),  x - (r-t)\frac{   f_r( x ;\beta, t)}{ f_r( x ;\beta, t)+1}\right) \in \mathbb{H}^-\times \bH^+,
\end{align}
We further project the map \eqref{e:idliq}  to the $z$ coordinate:
\begin{align}\label{e:proj}
(x,r)\in \fL(\fD^t;\beta) \mapsto z=x -(r-t)\frac{   f_r( x ;\beta, t)}{ f_r( x ;\beta, t)+1} \in \bH^+.
\end{align} 

Thanks to Item (c) in the third statement in \Cref{xhh},  for $\beta(x)=H_t^*(x)$,  the  boundary of the liquid region $\fL(\fD^t;\beta)$ consists of three parts: the north boundary, the south boundary, and the arctic curve boundary
\begin{align}\label{e:boundaryxtc}
\big\{(x,\ft')\in  \overline{\fL({\fD^t};\beta)} \big\}\cup \big\{ (x,t)\in \overline{\fL({\fD^t};\beta)} \big\}\cup \overline{\big\{ (x,r)\in \del \fL({\fD^t};\beta): r\in(t,\ft') \big\}}.
\end{align}
We will prove in \Cref{p:fdecompose} below that, for $\beta(x)$ sufficiently close to the height profile $H_t^*(x)$,  
the decomposition \eqref{e:boundaryxtc} is also true. In particular the arctic boundary $\fA(\fD^t;\beta)= \overline{\big\{ (x,r)\in \del \fL({\fD^t};\beta): r\in(t,\ft') \big\}}$. Moreover, $f_r(x;\beta,t)$ extends continuously to the boundary of $\fL(\fD^t;\beta)$ and  the projection map \eqref{e:proj} is a bijection. See Figure \ref{f:riemannS} for an illustration. We establish Proposition \ref{p:fdecompose} for $\beta(x)=H_t^*(x)$ in Appendix \ref{s:initialEs} and the general case in Appendix \ref{s:proof} below.

\begin{prop}\label{p:fdecompose}
Fix any $0\leq t\leq \ft$, and a boundary profile $\beta$ on the south boundary $\del_{\rm so}\fD^t$ of $\fD^t$. Let $m(z;\beta,t)$ be the Stieltjes transform of the boundary profile $\beta$,
\begin{align}\label{e:mta}
m(z;\beta,t)=\int_{\fa(t)}^{\fb(t)} \frac{\del_{x} \beta(x)}{z- x }\rd  x.
\end{align}
Assume that the boundary profile $\beta$ is sufficiently close to $H^*_t$, that is, $d(\del_x \beta(x), \del_x H_t^*(x))\leq \fc$, for some small constant $\mathfrak{c} > 0$. Then the following statements hold. 

\begin{enumerate} 
	
	\item \label{11} 
	The projection map \eqref{e:proj} extends to the  boundary \eqref{e:boundaryxtc} of the liquid region $\mathfrak L(\fD^t;\beta)$, and it maps
the north boundary to a curve in the upper half-plane, and the remaining boundary is mapped bijectively to an interval in $\bR$. The map \eqref{e:proj} and its complex conjugate together give a bijection from two copies of the liquid region $\fL(\fD^t, \beta)$, glued along the arctic curve, with a domain $\mathscr U_t^{\beta}\subseteq \bC$. 

\item  \label{22} We can interpret $f$ in \eqref{e:idliq} as a function of $z\in \mathscr U_t^{\beta}$, $f=f(z;\beta,t)$. For $x\in \mathscr U_t^{\beta}\cap \bR$ with $x<\fa(t)$, $f(x;\beta,t)\in (-\infty, -1)$ and for $x\in \mathscr U_t^{\beta}\cap \bR$ with $x>\fb(t)$, we have $f(x;\beta,t)\in (-1,0)$. The following quantity is uniformly bounded away from $0$,
		\begin{align}\label{e:imratio0}
			\Bigg| \displaystyle\frac{1}{\Imaginary z} \cdot  \Imaginary \bigg[ \displaystyle\frac{f(z;\beta,t)}{f(z;\beta,t)+1} \bigg] \Bigg| \gtrsim 1,\quad \text{for $z\in \mathscr U_t^{\beta}$}.
		\end{align}

\item \label{33} 
On $\mathscr U_t^{\beta}$, 
 $f(z;\beta,t)$ has a decomposition 
\begin{align}\label{e:gszmut}
 f(z;\beta, t)=e^{m(z;\beta, t)}g (z;\beta, t)=e^{\tilde m(z;\beta, t)}\tilde g (z;\beta, t),
\end{align}
where 
\begin{align}\begin{split}\label{e:deftgt}
\tilde m(z;\beta,t) & \deq m(z;\beta,t)-\log \big(z-\fa(t) \big)+\log \big(\fb(t)-z \big),\\
\log  \tilde g(z;\beta,t) & \deq\log g(z;\beta,t)+\log \big(z-\fa(t) \big)-\log \big(\fb(t)-z \big).
\end{split}\end{align}
The function $\tilde m(z;\beta,t)$ is the Stieltjes transform of the measure $\del_{x}  \beta+\bm1_{(-\infty,\fa(t)]}+\bm1_{[\fb(t),\infty)}$. The function $\tilde g (z;\beta, t)$ is real analytic on $\tilde {\mathscr U}_t^{\beta}:= \mathscr U_t^{\beta}\cup \{x: (x,t)\in \overline{\fL(\fD^t;\beta)}\}$, i.e.,
$\tilde g(\bar z;\beta, t)=\overline{\tilde g(z;\beta, t)}$. Moreover, it is positive on $\tilde {\mathscr U}_t^{\beta}\cap \bR$, and it does not have zeros in $\tilde {\mathscr U}_t^{\beta}$.
The function $g(z;\beta, t)$ is meromorphic on $\tilde {\mathscr U}_t^{\beta}$. It is positive on $(\fa(t), \fb(t))$, has a simple pole at $\fa(t)$ and a simple zero at $\fb(t)$, with no other poles or zeros.

\item \label{44} For any $z\in \tilde{\mathscr U}_t^{\beta}$ we have the following perturbation formula
\begin{align}\label{e:lngsbound}
\big| \del_z^k\log g(z;\beta,t)-\del_z^k\log g(z;H_t^*,t) \big|\lesssim \oint_{\omega}  \big| m(w;\beta,t)-m(w;H^*_t,t) \big|  |\rd w|,\quad 0\leq k\leq 2,
\end{align}
where $\omega$ is a contour enclosing $[\fa(t),\fb(t)]$, and the implicit constant is uniform in $\beta$ satisfying $d(\del_x \beta, \del_x H_t^*(x)) < \mathfrak{c}$.

\end{enumerate}

\end{prop}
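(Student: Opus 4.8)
The plan is to establish the proposition in two stages, as the paper's organization suggests: first the base case $\beta=H_t^*$, and then the general case $d(\del_x\beta,\del_x H_t^*)\le\fc$ by perturbing around it. The unifying device is the characteristic solution of \eqref{e:burgereq22}: if $F$ denotes the complex slope prescribed on the south slice $\{r=t\}$, then the characteristic through a point $u$ is $z_r(u)=u+(r-t)F(u)/(F(u)+1)$, the physical slope along it is $F(u)$, and the projection \eqref{e:proj} sends $(z_r(u),r)$ back to $u$. Consequently \eqref{e:proj} is injective precisely when characteristics do not cross inside $\fD^t$, in which case $\mathscr U_t^{\beta}$ (after doubling along the arctic curve) is the set of admissible initial points $u$, one has $f(z;\beta,t)=F(z)$, and $\bigl|(\Im z)^{-1}\,\Im[f(z;\beta,t)/(f(z;\beta,t)+1)]\bigr|=1/(r(z)-t)$, which is $\gtrsim1$ since $r(z)-t\le\ft'$ is bounded. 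Thus Items \ref{11} and \ref{22} both reduce to the bijectivity of \eqref{e:proj}.

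\emph{Base case $\beta=H_t^*$.} Uniqueness of the maximizer of \eqref{e:varWt} forces $f_r(x;H_t^*,t)=f^*_r(x)$, so the bijectivity of \eqref{e:proj0}, and the gluing of it with its conjugate to a homeomorphism onto $\mathscr U_t^{H_t^*}\subseteq\bC$, are exactly Items (a) and (c) in the fifth statement of \Cref{xhh}, with \eqref{e:ft+1b} supplying the boundedness used above; this gives Items \ref{11}, \ref{22}. For Item \ref{33} I would set $g(z;H_t^*,t)\deq f^*_t(z)e^{-m(z;H_t^*,t)}$. On the real segments of $\tilde{\mathscr U}_t^{\beta}$ lying in $(\fa(t),\fb(t))$ one has $\arg^* f^*_t(x\pm\ri0)=-\pi\del_x H^*_t(x)=\mp\Im m(x\pm\ri0;H^*_t,t)$, so $g$ is real there, and $\tilde g\deq f^*_t e^{-\tilde m}$ (equivalently $\tilde g=g\,(z-\fa(t))/(\fb(t)-z)$) is a real-analytic function — automatic here since $f^*_t$ is algebraic with real-analytic defining equation $Q$, and in general obtainable by Schwarz reflection. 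The only possible singularities sit at $\fa(t),\fb(t)$, where $m$ carries a logarithmic term with coefficient the (frozen, hence $\{0,1\}$-valued) boundary value of $\del_x\beta$; subtracting exactly these terms — the passage from $m$ to $\tilde m$ in \eqref{e:deftgt} — makes $\tilde g$ analytic and leaves $g$ with precisely a simple pole at $\fa(t)$ and a simple zero at $\fb(t)$. Positivity of $\tilde g$ on $\tilde{\mathscr U}_t^{\beta}\cap\bR$ and its zero-freeness on $\tilde{\mathscr U}_t^{\beta}$ then follow by tracking signs and using that $f^*$ avoids $\{0,-1,\infty\}$ everywhere on $\tilde{\mathscr U}_t^{\beta}$ ($\Im f^*<0$ in the interior; on the boundary, \eqref{e:ft+1b} together with the explicit frozen-phase values of $f^*$ to the left of $\fa(t)$ and to the right of $\fb(t)$).

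\emph{General $\beta$.} Here I would adopt the ansatz dictated by Item \ref{33}: the south-slice slope is $F=e^{m(\cdot;\beta,t)}g_0$ with $g_0$ real analytic and zero-free near $[\fa(t),\fb(t)]$, with a simple pole at $\fa(t)$ and a simple zero at $\fb(t)$, and impose that the characteristic flow $z_r(u)=u+(r-t)F(u)/(F(u)+1)$ sweep out $\fD^t$ while producing the prescribed frozen/packed data $h$ along $\partial_{\north}\fD^t\cup\partial_{\we}\fD^t\cup\partial_{\ea}\fD^t$. This becomes a functional equation $\Phi\bigl(g_0;m(\cdot;\beta,t)\bigr)=0$ for $g_0$ in a Banach space of analytic functions with the prescribed pole/zero structure, solved by $\bigl(g(\cdot;H_t^*,t);m(\cdot;H_t^*,t)\bigr)$ by the base case. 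Because the base-case map is a diffeomorphism and $f^*+1$, $f^*/(f^*+1)$ are bounded above and below by \eqref{e:ft+1b}, the linearization $\del_{g_0}\Phi$ is invertible there, so a quantitative perturbation argument (the implicit function theorem) yields a unique nearby solution $g_0=g(\cdot;\beta,t)$, depending analytically on the datum $m(\cdot;\beta,t)$; this determines $\mathscr U_t^{\beta}$, and since \eqref{e:imratio0} and the liquid-region topology (one cusp, tangencies of slopes $1$ and $\infty$) are open conditions stable under small perturbations, Items \ref{11}--\ref{33} persist throughout the $\fc$-ball. For Item \ref{44}, since $g(z;\beta,t)$ depends on $\beta$ only through $m(\cdot;\beta,t)$ and $\log g(\cdot;\beta,t)$ is recovered on $\tilde{\mathscr U}_t^{\beta}$ by a Cauchy-type integral over a fixed contour $\omega$ enclosing $[\fa(t),\fb(t)]$ with integrand depending to leading order linearly on $m$, differentiating at most twice in $z$ and subtracting the base case gives \eqref{e:lngsbound}, with a constant uniform over the $\fc$-ball since all quantities in the integral are uniformly controlled there by \eqref{e:ft+1b} and \eqref{e:imratio0}.

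The main obstacle will be the general-$\beta$ step: one must prove that perturbing the south profile creates no new cusp or tangency and that \eqref{e:proj} remains a \emph{global} bijection — not merely a local diffeomorphism — uniformly over the entire $\fc$-ball. This is exactly where the hypothesis that the limit shape has at most one cusp (in particular, no two cusps pointing in opposite directions) and the quantitative non-degeneracy \eqref{e:ft+1b}, \eqref{e:imratio0} are genuinely used, and it is also the source of the uniformity needed in \eqref{e:lngsbound}; the bookkeeping is most delicate near the tangency points $(E_1(t_1),t_1)$ and $(E_2(t_2),t_2)$, where $f^*$ approaches the edge of its range and the logarithmic factors separated in \eqref{e:deftgt} must be tracked carefully.
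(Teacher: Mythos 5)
Your proposal correctly identifies the high-level structure — a base case $\beta=H_t^*$ followed by a perturbation argument — and correctly flags global bijectivity of the projection \eqref{e:proj} as the crux. But the perturbation step, which carries essentially all the difficulty, is left as an abstract implicit-function-theorem framework with the hard parts unverified, and as written it has gaps.

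First, you never specify the functional equation $\Phi(g_0;m)=0$ or show that its linearization is invertible in a Banach space of analytic functions; asserting this ``because the base-case map is a diffeomorphism'' is not a proof. The paper's answer is concrete: it introduces the Schiffer kernel $B(z,z';\theta)$ of the (deforming) domain $\mathscr D_\theta$, writes the $\theta$-derivative of $\log f$ as the integral $-\int B(z,y;\theta)(\beta^1(y)-\beta^0(y))\,\rd y$ (equation \eqref{e:dlogf}), couples this with an ODE \eqref{e:dthetaw} for the moving north-boundary curve $\mathscr C(x;\theta)$, and then solves the resulting nonlinear system by the Cauchy method of majorants. The Schiffer kernel is precisely what makes the ``linearized'' problem explicit, uniformly bounded (via $|\del_z^k K(z,z';\theta)|\lesssim 1$), and stable; without it, invertibility of your abstract $\del_{g_0}\Phi$ is an unproven assertion. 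Second, ``global bijectivity is an open condition stable under small perturbations'' is not a correct argument: being a local diffeomorphism is open, but global injectivity is not. The paper proves it in Lemma \ref{l:construct} by a global homeomorphism criterion (local homeomorphism plus injectivity on the boundary, Theorem \ref{t:hom}) combined with a strong maximum principle for $\Im[\chi(z)]/\Im[z]$, which shows that characteristics do not cross and that the time-slices of the liquid region are honest intervals with the right ordering; this is a genuine global argument, not a perturbative one. Third, you do not verify that the slope produced by the perturbation argument is actually the extremizer of the variational problem \eqref{e:varWt}; the paper's Lemma \ref{l:construct} supplies exactly this, by constructing the limiting height function from the solution and invoking \cite[Theorem~8.3]{DMCS}. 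Finally, for Item \ref{44} your ``Cauchy-type integral with leading-order linear dependence on $m$'' needs to be made precise: the paper obtains \eqref{e:lngsbound} by integrating the Schiffer-kernel equation in $\theta$ and writing the result as a contour integral against $m(\cdot;\beta,t)-m(\cdot;H_t^*,t)$ (see \eqref{e:lngs}), with the uniform bound on $K$ supplying the claimed uniformity. So the approach is plausible in outline but the proposal does not constitute a proof; the missing content is exactly the Schiffer-kernel deformation, the majorant analysis, and Lemma \ref{l:construct}.
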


We recall the continuum limit height function $H_t^*(x)$ associated with the trapezoid domain $\fD'$ from \eqref{e:varW}. 
 If we take  $\beta(x)=H_t^*(x)$  in \eqref{e:varWt}, the solution of the variational principle \eqref{e:varWt} coincides with that of \eqref{e:varW}, and we define
\begin{align}\label{e:limits}
 \rho_t^*(x):=\del_x H_t^*(x),\quad f(z;H_t^*,t)=f_t^*(z).
\end{align}
We recall the liquid region 
$\fL(\fD')=\{(x,t)\in \fD': t\in[0,\ft'],0<\del_x H_t^*(x)<1\}$. Via the map \eqref{e:proj},  we identify the gluing of two copies of $\fL(\fD')\cap \fD^t$ along the arctic curve with a domain $\mathscr U_t:=\mathscr U_t^{H_t^*}$. The complex slope $f_t^*(x)$ extends to $f_t^*(z)$ on $\mathscr U_t$. 
By \Cref{p:fdecompose}, and we can decompose $f_t^*(z)$ as
\begin{align}\label{e:decompft*}
\log f_t^*(z)=m_t^*(z)+\log g_t^*(z),\quad g_t^*(z)=g(z;H_t^*,t), \quad m_t^*(z)=\int \frac{\rho_t^*(x)}{z-x}\rd x.
\end{align}
With this notation, the complex Burgers equation from \eqref{e:Burgeq} can be written as
\begin{align}\label{e:newBB}
\del_t m^*_t(z)+\del_t \log g^*_t(z)+\del_z \log (f_t^*(z)+1)=0.
\end{align}
Similarly to \eqref{e:deftgt}, we can also write \eqref{e:decompft*} as
\begin{align}\label{e:decompft*2}
\log f_t^*(z)=\tilde m_t^*(z)+\log\tilde g_t^*(z),
\end{align}
where $\tilde m_t^*=m_t^*-\log(z-\fa(t))+\log(\fb(t)-z)$, and $\log  \tilde g^*_t=\log  g^*_t+\log \big( z-\fa(t) \big)-\log \big( \fb(t)-z \big)$ is analytic in $\tilde {\mathscr U}_t:=\tilde {\mathscr U}_t^{H_t^*}$ and does not have zeros. Thus, $|\tilde g_t^*(z)|$ is uniformly bounded away from $0$ and $\infty$ on $\tilde {\mathscr U}_t$, namely, $|\tilde g_t^*(z)|\asymp 1$.

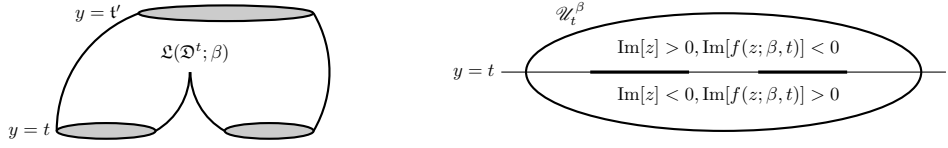
\begin{figure}
		
		\begin{center}		
			
			\begin{tikzpicture}[
				>=stealth,
				auto,
				style={
					scale = .52
				}
				]
				
				\draw [black,thick, fill=gray!40!white](0.78,	3) ellipse (2.21 and 0.2) ;
				\draw [] (-1.7, 3) node[left, scale=.7]{$y=\ft'$};


				\draw [black,thick, fill=gray!40!white] (-2.25,	0) ellipse (1.25 and 0.2);
				\draw [black,thick, fill=gray!40!white] (1.875,	0) ellipse (1.125 and 0.2);

				\draw[black, thick] (-3.5, 0) arc (180:110:3.2);
				\draw [] (-3.5, 0) node[left, scale=.7]{$y=t$};
				
				\draw[black, thick] (3, 0) arc (-30:30:3);

				\draw[black, thick] (-1, 0) arc (-60:0:1.732);
				\draw[black, thick] (.75, 0) arc (240:180:1.732);
				\draw[] (0, 2) node[scale=0.7]{$\fL(\fD^t;\beta)$};
				
				\draw[black] (7.75, 1.5) node[left, scale = .7]{$y = t$}-- (19, 1.5);
				\draw[black, very thick] (10, 1.5) -- (12.5, 1.5);
				\draw[black, very thick] (14.25, 1.5) -- (16.5, 1.5);
				\draw[black, thick] (13.375,1.5) ellipse (5 and 1.5);

				\draw[] (9.5, 3)  node[ scale=0.7]{$\mathscr U_t^{\beta}$};
				\draw[] (13.5, 2.5)  node[below, scale=0.7]{$\Im[z]>0, \Im[f(z;\beta,t)]<0$};
				\draw[] (13.5, 1.3)  node[below, scale=0.7]{$\Im[z]<0, \Im[f(z;\beta,t)]>0$};

			\end{tikzpicture}
			
		\end{center}
		
		\caption{\label{f:riemannS} Shown to the left is the gluing of two copies of the liquid regions $\fL(\fD^t;\beta)$;
Shown to the right is an illustration of the map \eqref{e:proj} and its complex conjugate. It maps the gluing of two copies of the liquid region $\fL(\fD^t;\beta)$ bijectively to a domain $\mathscr U_t^{\beta}$.
		}
		
	\end{figure}

In the following proposition, we collect some quantitative estimates for  the extended complex slope $f_t^*(z)$ around the arctic boundary, under  \Cref{a:xhh}.  We postpone its proof to Appendix \ref{QEquation} below. 

\begin{prop}\label{p:ftzbehave}
Under  \Cref{a:xhh}, we have 
\begin{align}\label{e:ft+1b2}
\big| f_t^*(z)+1 \big|\gtrsim 1, \quad \left|\frac{f^*_t(z)}{f_t^*(z)+1}\right|\lesssim1.
\end{align}

\noindent Moreover, the following holds with a sufficiently small constant $\fc>0$.
\begin{enumerate}
\item \label{1} If $\big( E(t),t \big)\in \fA(\fD')$ is bounded away from cusp points, in a small neighborhood of $E(t)$, then the quantity $f_t^*(z)/(f_t^*(z)+1)$ exhibits the square root behavior
\begin{align}\label{e:squareeq}
\frac{f_t^*(z)}{f_t^*(z)+1}=\frac{f_t^*(E(t))}{f_t^*(E(t))+1}+\sqrt{\fC \big( z-E(t) \big)}+\OO \big( |z-E(t)|^{3/2} \big),\quad \big| z-E(t) \big| \leq \fc,
\end{align}
where $|\fC|\asymp 1$ may depend on $t$, and for $\Imaginary z>0$, the square root has negative imaginary part. If we further assume that $\big( E(t),t \big)\in \fA(\fD')$ is bounded away from tangency locations, then 
\begin{align}\label{e:mtsquare}
-\sin(\Im[m_t^*(z)])
\asymp -\sin(\Im[\log f_t^*(z)]) \asymp
\left\{
\begin{array}{cc}
\frac{\eta}{\sqrt{|\kappa|+\eta}}, & \kappa\geq 0,\\
\sqrt{|\kappa|+\eta}, & \kappa \leq 0,
\end{array}
\right.
\end{align}
where $\eta=\Imaginary z\geq 0$, and $\kappa=\dist(\Re z, I_t^*)$ if $\Re z \not\in I_t^*$; otherwise, $\kappa=-\dist(\Re z, \bR\setminus I_t^*)$.

\item \label{2} Around the tangency location $\big( E_1(t_1),t_1 \big)$, we have $\big| \fa(t)-E_1(t) \big|\asymp |t_1-t|^2$. If $(z,t)=(E_1(t)-\kappa+\ri\eta,t)$ is in a neighborhood of the tangency location $\big( E_1(t_1), t_1 \big)$, i.e., $\big| z-E_1(t) \big|, |t-t_1|\leq \fc$, then for $t\leq t_1$ the interval $[ \fa(t), E(t) ]$ is a void region,
\begin{align}\label{e:mtbehave2copy}
-\arg^* f_t^*(z)\asymp\left\{
\begin{array}{cc}
\frac{\eta}{\sqrt{|\kappa|+\eta}(\sqrt{|\kappa|+\eta}+|t-t_1|)} +\arg(z-\fa(t)), & \kappa\geq 0,\\
\frac{\sqrt{|\kappa|+\eta}}{\sqrt{|\kappa|+\eta}+|t-t_1|}, & \kappa \leq 0,
\end{array}
\right.
\end{align}
and
\begin{align}\label{e:mtbehave1copy}
-\Im[m_t^*(z)]
\asymp
\left\{
\begin{array}{cc}
\frac{\eta}{\sqrt{|\kappa|+\eta}(\sqrt{|\kappa|+\eta}+|t-t_1|)}, & \kappa\geq 0,\\
\frac{\sqrt{|\kappa|+\eta}}{\sqrt{|\kappa|+\eta}+|t-t_1|}, & \kappa \leq 0.
\end{array}
\right.
\end{align}
For $t\geq t_1$, the interval $[\fa(t), E(t)]$ is a saturated region, and 
\begin{align}\label{e:mtbehave3copy}
\pi+\Im[\tilde m_t^*(z)]
\asymp \pi+\arg^*f_t^*(z)\asymp
\left\{
\begin{array}{cc}
\frac{\eta}{\sqrt{|\kappa|+\eta}(\sqrt{|\kappa|+\eta}+|t-t_1|)}, & \kappa\geq 0,\\
\frac{\sqrt{|\kappa|+\eta}}{\eta/\sqrt{|\kappa|+\eta}+|t-t_1|}, & \kappa \leq 0.
\end{array}
\right.
\end{align}
The analogous statement holds for $(z,t)= \big( E_2(t)+\kappa+\ri\eta,t \big)$  in a neighborhood of the tangency location $(E_2(t_2), t_2)$.
\item  \label{3}
Around the cusp singularity $(E_c,t_c)$, let $c(t)=E_c+(t-t_c) f_{t_c}^*(E_c)/ \big( f_{t_c}^*(E_c)+1 \big)$. If  $0\leq t_c-t\leq \fc$, then we have $E_2'(t)-E_1'(t)\asymp (t_c-t)^{3/2}$.  For $\big| z-E_1'(t) \big| \leq \fc(t_c-t)^{3/2}$, we have 
\begin{align}\label{e:cubiceq}
\frac{f_t^*(z)}{f_t^*(z)+1}=\frac{f_t^*(E_1'(t))}{f_t^*(E_1'(t))+1}+\frac{\sqrt{\fC(z-E'_1(t))}}{(t_c-t)^{1/4}} +\OO\left((t_c-t)^{1/4}|z-E_1'(t)|^{1/2}+\frac{|z-E_1'(t)|}{t_c-t}\right),
\end{align}
where $|\fC|\asymp 1$, and for $\Imaginary z>0$, the square root has negative imaginary part. It follows that
\begin{align}\label{e:mtcubic}
-\sin(\Im[m_t^*(z)])
\asymp -\sin(\Im[\log f_t^*(z)]) \asymp
\left\{
\begin{array}{cc}
\frac{\eta}{\sqrt{|\kappa|+\eta}(t_c-t)^{1/4}}, & \kappa\geq 0,\\
\frac{\sqrt{|\kappa|+\eta}}{(t_c-t)^{1/4}}, & \kappa \leq 0. 
\end{array}
\right.
\end{align}

\noindent The analogous statement holds for $|z-E_2'(t)|\leq \fc(t_c-t)^{3/2}$. 

If $z$ is farther from the interval $[E_1'(t), E_2'(t)]$, namely, $\fc(t_c-t)^{3/2}\leq \dist \big(z, [E_1'(t), E_2'(t)] \big) \leq \fc$, then $f_t^*(z)/ \big( f_t^*(z)+1 \big)$ exhibits the cube root behavior
\begin{align}\label{e:localft3}
-\Im \bigg[ \displaystyle\frac{f^*_t(z)}{f^*_t(z)+1} \bigg] \asymp \big| z-c(t) \big|^{1/3}.
\end{align}

\end{enumerate}
\end{prop}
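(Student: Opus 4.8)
The plan is to reduce every estimate to a local analysis of the algebraic equation defining $f^*_t$, using \Cref{p:fdecompose} to pass between $f^*_t$ and the Stieltjes transforms $m^*_t,\tilde m^*_t$. The bound \eqref{e:ft+1b2} follows from \eqref{e:ft+1b}: under \Cref{a:xhh} there is no horizontal tangency, so $f^*_t$ stays away from $-1$ on the compact closure of the glued double of the liquid region, and analyticity of $f^*_t$ on $\tilde{\mathscr U}_t$ together with a maximum-principle argument upgrades this to $|f^*_t(z)+1|\gtrsim 1$, whence the second inequality. For the remaining estimates, recall from Item (a) of the fifth statement of \Cref{xhh} that $f^*_t(z)$ is the relevant branch of the curve $Q_0(f)=z(f+1)-tf$, i.e.\ of $z=\Phi_t(f):=(Q_0(f)+tf)/(f+1)$. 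We pass to the coordinate $F=f/(f+1)$, which stays finite even at tangency locations ($f=\infty$ gives $F=1$, $f=0$ gives $F=0$, while $f=-1$, excluded here, would give $F=\infty$): writing $R(F):=(1-F)\,Q_0\!\big(F/(1-F)\big)$, real analytic near $F=0$ and $F=1$ by regularity of the curve at the tangency $f$-values, we obtain $z=\Psi_t(F):=tF+R(F)$ with $\partial_F\Psi_t=t+R'(F)$. By \eqref{e:BQeq} and Item (b) of the fifth statement of \Cref{xhh}, $(E(t),t)$ lies on the arctic boundary exactly when $F_0:=f^*_t(E(t))/(f^*_t(E(t))+1)$ is a critical point of $\Psi_t$, i.e.\ $R'(F_0)=-t$; the order of vanishing of $\partial_F\Psi_t$ there distinguishes regular edges (order $1$) from the cusp (order $2$), and tangency locations are the regular edges at which $F_0\in\{0,1\}$. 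Finally \Cref{p:fdecompose} with $\beta=H^*_t$ gives $\log f^*_t=m^*_t+\log g^*_t=\tilde m^*_t+\log\tilde g^*_t$ with $\tilde g^*_t$ real analytic, nonvanishing and $|\tilde g^*_t|\asymp 1$, while $g^*_t$ and $\tilde g^*_t$ differ by the elementary factor $(\fb(t)-z)/(z-\fa(t))$; this converts every claim on $m^*_t,\tilde m^*_t,\arg^*f^*_t$ into one on $\log f^*_t$, up to the explicit terms $\pm\log(z-\fa(t)),\pm\log(\fb(t)-z)$ that are $O(\eta)$ away from $\fa(t),\fb(t)$ and are responsible for the $\arg(z-\fa(t))$ contributions in \eqref{e:mtbehave2copy}--\eqref{e:mtbehave3copy}.

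For Item \eqref{1}, near a regular edge bounded away from cusps, $\partial_F^2\Psi_t(F_0)=R''(F_0)$ is nonzero and bounded above and below uniformly by compactness; inverting $z-E(t)=\tfrac12R''(F_0)(F-F_0)^2+O(|F-F_0|^3)$ gives \eqref{e:squareeq} with $|\fC|\asymp 1$, the branch of the root fixed by $f^*_t(z)\in\mathbb H^-$ for $\Im z>0$. If the edge is also bounded away from tangency locations then $f^*_t(E(t))$ and $f^*_t(E(t))+1$ are both $\asymp 1$, so $\log f^*_t(z)-\log f^*_t(E(t))\asymp\sqrt{\fC(z-E(t))}$; since $\log g^*_t$ is real analytic near $E(t)$ this gives $\Im m^*_t(z)=\Im\log f^*_t(z)+O(\eta)$, and evaluating $\Im\sqrt{\fC(\kappa+\ri\eta)}$ in the regimes $\kappa\ge 0$ and $\kappa\le 0$ yields \eqref{e:mtsquare}.

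For Items \eqref{2} and \eqref{3} we run the same inversion at a degenerate configuration. Near the cusp $(E_c,t_c)$, $F_c$ is finite and nonzero with $R''(F_c)=0$, $R'''(F_c)\ne 0$, so for $t<t_c$ the equation $R'(F)=-t$ has two simple roots $F'_1(t),F'_2(t)$ at mutual $F$-distance $\asymp(t_c-t)^{1/2}$, whence $E'_2(t)-E'_1(t)=\Psi_t(F'_2)-\Psi_t(F'_1)\asymp(t_c-t)^{3/2}$ and $\partial_F^2\Psi_t(F'_j)\asymp(t_c-t)^{1/2}$. Inverting at each scale gives the two regimes: for $|z-E'_1(t)|\lesssim(t_c-t)^{3/2}$ one sees square-root branching at the simple critical point $F'_1(t)$ with the anomalous prefactor $(t_c-t)^{-1/4}$ coming from $\partial_F^2\Psi_t(F'_1)\asymp(t_c-t)^{1/2}$, which is \eqref{e:cubiceq}; for $(t_c-t)^{3/2}\ll\dist(z,[E'_1(t),E'_2(t)])\lesssim 1$ the cubic term $\tfrac16R'''(F_c)(F-F_c)^3$ dominates the $(t-t_c)$-linear term in $z-c(t)=\Psi_t(F)-\Psi_t(F_c)$ (note $c(t)=\Psi_t(F_c)$), so $F-F_c\asymp|z-c(t)|^{1/3}$ and \eqref{e:localft3} follows; \eqref{e:mtcubic} then comes from converting through $g^*_t$, which is real analytic near the cusp. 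Near the west tangency $(E_1(t_1),t_1)$ we have $F_0=1$ at $t=t_1$; expanding $\Psi_t$ at $F=1$ with $\tau=t-t_1$ and $\xi=F-1$ gives the two-parameter model $z-E_1(t_1)=\tau+\tau\xi+\tfrac12R''(1)\xi^2+O(|\xi|^3+|\tau||\xi|^2)$ with $R''(1)\asymp 1$, from which we read off $|\fa(t)-E_1(t)|\asymp(t-t_1)^2$ (the value $z=\fa(t)$ is the one at which $\xi=0$, i.e.\ $f=\infty$, matching the simple pole of $g^*_t$ at $\fa(t)$); solving for $\xi=\xi(z)$, hence $f^*_t(z)$, separately for $t\le t_1$ and $t\ge t_1$ and for $\kappa\ge 0$ and $\kappa\le 0$, and combining $\Im\log f^*_t$ with the $-\log(z-\fa(t))$ from $\log g^*_t$, yields \eqref{e:mtbehave2copy}, \eqref{e:mtbehave1copy} (void side, $t\le t_1$) and \eqref{e:mtbehave3copy} (saturated side, $t\ge t_1$); the tangency $(E_2(t_2),t_2)$ is handled identically with $F_0=0$ and the pole of $g^*_t$ at $\fa(t)$ replaced by its zero at $\fb(t)$. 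The case where $\del_{\rm no}(\fD)$ is packed rather than extendable is strictly simpler, cf.\ Remark \ref{r:trapzoid1}.

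The main difficulty will be Item \eqref{2}: near the west tangency the arctic point $E_1(t)$, the pole $\fa(t)$ of $g^*_t$, and the special value $f=\infty$ all collide as $t\to t_1$, with $|E_1(t)-\fa(t)|\asymp(t-t_1)^2$, so the three scales $\eta$, $|\kappa|$ and $|t-t_1|$ must be tracked simultaneously and the void and saturated sides treated separately; extracting the sharp two-sided bounds \eqref{e:mtbehave1copy}--\eqref{e:mtbehave3copy} uniformly in $t$ near $t_1$ from this two-parameter model is the delicate point. A secondary issue is to justify that $R$ is real analytic at $F=0,1$, i.e.\ that the curve is regular at the tangency $f$-values; this uses Item (a) of the fifth statement of \Cref{xhh} together with the description of the arctic boundary near tangency locations established in Appendix \ref{s:initialEs}.
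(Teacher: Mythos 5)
Your proposal follows essentially the same route as the paper's own proof in Appendix~\ref{QEquation}: the change of variable $F=f/(f+1)$ and the auxiliary function $R(F)=(1-F)Q_0\bigl(F/(1-F)\bigr)$ are exactly the paper's $\chi_t^*$ and $Q_1$, the characterization of edge/cusp/tangency points by the order of vanishing of $\partial_F\Psi_t=t+R'$ is identical, and the local Taylor inversions at each singularity type (square root at a regular edge, square root with $(t_c-t)^{-1/4}$ blow-up near the cusp edges, cube root in the cusp far field, quadratic in $\xi=F-1$ near the tangency) are the ones the paper carries out. The only minor technical deviation is on the void side of the tangency, where you invert the quadratic $\tau\xi+\tfrac12 R''(1)\xi^2=z-\fa(t)$ directly while the paper instead re-uses the square-root expansion at $E_1(t)$ and eliminates $\chi_t^*(E_1(t))$ via $\chi_t^*(\fa(t))=1$ (equation \eqref{e:newft}); these are equivalent.
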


	\begin{rem}
		
		\label{derivativeh} 
		
		Suppose $u = (x, t) \in \mathfrak{L}(\fD')$ is bounded away from a cusp or tangency location of $\mathfrak{A}(\fD')$; let $d_u = \inf \big\{ |x - x_0| : (x_0, t) \in \mathfrak{A}(\fD') \big\}$. Then, the fact that $f^*_t (x_0) \in \mathbb{R}$ if $(x_0, t) \in \mathfrak{A}(\fD')$ from \Cref{t:localr}, the first statement of \Cref{p:ftzbehave}, and \eqref{fh} together imply that there exists a small constant $\fc > 0$ such that $\fc d_u^{1/2} < \big| \partial_x H^* (x, t) - \partial_x H^* (x_0, t) \big| < \fc^{-1} d_u^{1/2}$.
		
	\end{rem}

\section{Weighted Non-intersecting Bernoulli Bridges}\label{s:nbb}
In this section we first explain the correspondence between lozenge tilings of double-sided trapezoid domains and non-intersecting Bernoulli walk ensembles in Section \ref{s:NBwe}. In that way, we can interpret random lozenge tilings of double-sided trapezoid domains as uniformly random non-intersecting Bernoulli walks (or bridges) with specified starting and ending data.
%
%
Then we introduce a weighted version of non-intersecting Bernoulli bridges, and use it to study tilings (equivalently, height functions) on the double-sided trapezoid domain $\fD$ on the  strip $[0, \ft]$, under Assumptions \ref{a:xhh} and \ref{xhh2}, in Section \ref{s:NBBdefine}. Then \Cref{estimategamma} will follow from concentration estimates for this weighted non-intersecting Bernoulli bridge model.

\subsection{Non-intersecting Bernoulli walk ensembles}\label{s:NBwe}

	A  \emph{Bernoulli walk} is a walk $\mathsf{q} = \big( \mathsf{q} (0), \mathsf{q} (1), \ldots , \mathsf{q} (\sft) \big) \in \mathbb{Z}^{\sft +1}$ such that $\mathsf{q} (r + 1) - \mathsf{q} (r) \in \{ 0, 1 \}$ for each $r \in \qq{0, \sft - 1}$; viewing $r$ as a time index, $\big( \mathsf{q} (r) \big)$ denotes the space-time trajectory for a Bernoulli walk, which may either not move or jump to the right at each step. For this reason, the interval $\qq{0, \sft}$ is called the \emph{time span} of the Bernoulli walk $\mathsf{q}$, and a step $(r,r + 1)$ of this Bernoulli walk may be interpreted as an ``non-jump'' or a ``right-jump'' if $\mathsf{q} (r + 1) = \mathsf{q} (r)$  or $\mathsf{q} (r + 1) = \mathsf{q} (r)+1$, respectively. A family of Bernoulli walks $\mathsf{Q} = \big( \mathsf{q}_1, \mathsf{q}_{2}, \ldots , \mathsf{q}_m \big)$ is called \emph{non-intersecting} if $\mathsf{q}_i (r) < \mathsf{q}_j (r)$ whenever $1 \leq i < j \leq m$ and $0\leq r\leq \sft$. 
	
	We recall the double-sided trapezoid domain $\fD$ from \eqref{d}, and the boundary height function $h$ satisfying \eqref{e:defhh}. Now let $n \geq 1$ be an integer; denote $\mathsf{t} = \mathfrak{t} n$, and $m=\fm n$ (recall from \eqref{e:defhh}). Suppose $m\in \bZ_>0$, $\mathsf{D} = n \mathfrak{D} \subset \mathbb{T}$, so that $ \mathsf{t} \in \mathbb{Z}_{> 0}$. Let $\mathsf{h}: \partial \mathsf{D} \rightarrow \mathbb{Z}$ denote a boundary height function satisfying Assumption \ref{xhh2}. It is necessary that $\sfh=0$ on $\del_{\rm we}\sfD$ and $\sfh=m$ on $\del_{\rm es}\sfD$. 
	
	Now take any height function $\mathsf{H}: \mathsf{D} \rightarrow \mathbb{T}$ corresponding to a tiling $\mathscr{M}$ of $\mathsf{D}$ with boundary height function $\sfh$. We may interpret $\mathscr{M}$ as a family of $m$ particle non-intersecting Bernoulli walks by first omitting all type $1$ lozenges from $\mathscr{M}$, and then viewing any type $2$ or type $3$ tile as a right-jump or non-jump of a Bernoulli walk, respectively; see \Cref{walksfigure} for a depiction. 
	
	It will be useful to set more precise notation on this correspondence. Recalling that $\mathsf{H}$ is extended by linearity to the faces of $\mathsf{D}$, $\partial_x \mathsf{H} (x, t) \in \{ 0, 1 \}$ for all $(x, t)$. In particular, there exist integers $\mathsf{x}_{1} (t) < \mathsf{x}_{2} (t) < \cdots < \mathsf{x}_{m} (t)$ such that 
	\begin{flalign}\label{e:gobetween}
		\partial_x \mathsf{H} (x, t) = \displaystyle\sum_{i = 1}^{m} \bm1 \Big( x \in \big[ \mathsf{x}_i (t), \mathsf{x}_i (t) + 1 \big] \Big),
	\end{flalign}

	\noindent which are those such that $\mathsf{H} \big( \sfx_i (t) + 1, t \big) = \mathsf{H} \big( \mathsf{x}_i (t), t \big) + 1$.  This defines a Bernoulli walk $\mathsf{x}_i = \big( \mathsf{x}_i (t) \big)$, and a non-intersecting ensemble of Bernoulli walks $\mathsf{X} = \big( \mathsf{x}_1, \mathsf{x}_{2}, \ldots , \mathsf{x}_m \big)$.

	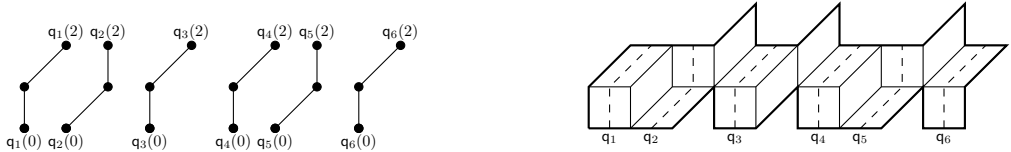
\begin{figure}	
		\begin{center}		
			\begin{tikzpicture}[
				>=stealth,
				auto,
				style={
					scale = .55
				}
				]
				
				\draw[-, black]  (1.5, 2) -- (1.5, 3) -- (2.5, 4) ; 
				\draw[-, black] (2.5, 2) -- (3.5, 3) -- (3.5, 4); 
				\draw[-, black] (4.5, 2) -- (4.5, 3) -- (5.5, 4);
				\draw[-, black] (6.5, 2) -- (6.5, 3) -- (7.5, 4) ; 
				\draw[-, black]  (7.5, 2) -- (8.5, 3) -- (8.5, 4);
				\draw[-, black]  (9.5, 2) -- (9.5, 3) -- (10.5, 4) ;
				
				\filldraw[fill=black] (1.5, 2) circle [radius = .1] node[below , scale = .6]{$\mathsf{q}_{1} (0)$};	
				\filldraw[fill=black] (2.5, 2) circle [radius = .1] node[ below, scale = .6]{$\mathsf{q}_{2} (0)$};	
				\filldraw[fill=black] (4.5, 2) circle [radius = .1] node[below, scale = .6]{$\mathsf{q}_3 (0)$};	
				\filldraw[fill=black] (6.5, 2) circle [radius = .1] node[below, scale = .6]{$\mathsf{q}_4 (0)$};	
				\filldraw[fill=black] (7.5, 2) circle [radius = .1] node[below, scale = .6]{$\mathsf{q}_5 (0)$};	
				\filldraw[fill=black] (9.5, 2) circle [radius = .1] node[below, scale = .6]{$\mathsf{q}_6 (0)$};

				\filldraw[fill=black] (1.5, 2) circle [radius = .1];
				\filldraw[fill=black] (2.5, 2) circle [radius = .1];
				\filldraw[fill=black] (4.5, 2) circle [radius = .1];
				\filldraw[fill=black] (6.5, 2) circle [radius = .1];
				\filldraw[fill=black] (7.5, 2) circle [radius = .1];
				\filldraw[fill=black] (9.5, 2) circle [radius = .1];
				
				\filldraw[fill=black] (1.5, 3) circle [radius = .1];
				\filldraw[fill=black] (3.5, 3) circle [radius = .1];
				\filldraw[fill=black] (4.5, 3) circle [radius = .1];
				\filldraw[fill=black] (6.5, 3) circle [radius = .1];
				\filldraw[fill=black] (8.5, 3) circle [radius = .1];
				\filldraw[fill=black] (9.5, 3) circle [radius = .1];
				
				\filldraw[fill=black] (2.5, 4) circle [radius = .1]node[above , scale = .6]{$\mathsf{q}_{1} (2)$};
				\filldraw[fill=black] (3.5, 4) circle [radius = .1]node[above , scale = .6]{$\mathsf{q}_{2} (2)$};
				\filldraw[fill=black] (5.5, 4) circle [radius = .1]node[above , scale = .6]{$\mathsf{q}_{3} (2)$};
				\filldraw[fill=black] (7.5, 4) circle [radius = .1]node[above , scale = .6]{$\mathsf{q}_{4} (2)$};
				\filldraw[fill=black] (8.5, 4) circle [radius = .1]node[above , scale = .6]{$\mathsf{q}_{5} (2)$};
				\filldraw[fill=black] (10.5, 4) circle [radius = .1]node[above , scale = .6]{$\mathsf{q}_{6} (2)$};

				\draw[-, black, thick] (15, 2) -- (15, 3) -- (16, 4) -- (17, 4) -- (18, 4) -- (19, 5) -- (19, 4) -- (20, 4)  -- (21, 5) -- (21, 4) -- (23, 4)  -- (24, 5)--(24,4) -- (25, 4) -- (24, 3) -- (24, 2)  -- (23, 2) --(23,3)--(22,2)-- (20, 2) --(20,3)-- (19, 2) -- (18, 2) -- (18, 3) -- (17, 2) -- (15, 2);
				
				\draw[-, black, dashed] (15.5, 2)node[below, scale = .6]{$\mathsf{q}_{1}$}  -- (15.5, 3) -- (16.5, 4); 
				\draw[-, black, dashed] (16.5, 2) node[below , scale = .6]{$\mathsf{q}_{2}$} -- (17.5, 3) -- (17.5, 4); 
				\draw[-, black, dashed] (18.5, 2) node[ below , scale = .6]{$\mathsf{q}_3$} -- (18.5, 3) -- (19.5, 4);
				\draw[-, black, dashed] (20.5, 2) node[below, scale = .6]{$\mathsf{q}_4$}-- (20.5, 3) -- (21.5, 4) ; 
				\draw[-, black, dashed]  (21.5, 2)node[below, scale = .6]{$\mathsf{q}_5$} -- (22.5, 3) -- (22.5, 4);
				\draw[-, black, dashed]  (23.5, 2)node[below, scale = .6]{$\mathsf{q}_6$} -- (23.5, 3) -- (24.5, 4);
				\draw[-, black] (18, 3) -- (18, 4);
				\draw[-, black] (16, 2) -- (17, 3) -- (17, 4) ; 
				\draw[-, black] (18, 2) -- (18, 3) -- (19, 4);
				\draw[-, black]  (20, 2) -- (20, 3) -- (21, 4) ; 
				\draw[-, black] (21, 2) -- (22, 3) -- (22, 4) ;
				\draw[-, black ] (23, 2) -- (23, 3) -- (24, 4);

				\draw[-, black] (15, 3) -- (16, 3); 
				\draw[-, black] (16, 4) -- (17, 4); 
				\draw[-, black] (16, 3) -- (17, 4);
				
				\draw[-, black] (17, 2) -- (18, 3) -- (18, 2);
				
				\draw[-, black] (20, 4) -- (19, 4);
				\draw[-, black] (20, 4) -- (19, 3) -- (19, 2);
				\draw[-, black] (18, 2) -- (19, 2);
				\draw[-, black] (18, 3) -- (19, 3);
				\draw[-, black] (19, 2) -- (20, 3) -- (20, 4);
				\draw[-, black] (21, 2) -- (21, 3) -- (22, 4) ;
				\draw[-, black]  (22, 2) -- (23, 3) -- (23, 4) ;

				\draw[-, black] (20, 2) -- (22, 2);
				\draw[-, black] (20, 3) -- (21, 3);
				\draw[-, black] (21, 4) -- (23, 4);
				\draw[-, black] (22, 3) -- (23, 3);

				\draw[-, black] (17, 3) -- (18, 3);
				\draw[-, black] (24, 3) -- (25, 4);
				\draw[-, black] (23, 2) -- (24, 2); 
				\draw[-, black] (23, 3) -- (24, 3); 
				\draw[-, black] (24, 4) -- (25, 4);
				\draw[-, black] (16, 2) -- (16, 3);		
			\end{tikzpicture}	
		\end{center}
		\caption{\label{walksfigure} Depicted to the left is an ensemble $\mathsf{Q} = \big( \mathsf{q}_{1}, \mathsf{q}_{2}, \mathsf{q}_3, \mathsf{q}_4, \mathsf{q}_5, \mathsf{q}_6 \big)$ consisting of six non-intersecting Bernoulli walks. Depicted to the right is an associated lozenge tiling. }		
	\end{figure}

	To proceed, we require some additional notation on non-intersecting Bernoulli walk ensembles. Let $\mathsf{X} = (\mathsf{x}_1, \mathsf{x}_{2}, \ldots , \mathsf{x}_m)$ denote a family of non-intersecting Bernoulli walks, each with time span $\qq{0,\sft}$, so that $\mathsf{x}_j = \big( \mathsf{x}_j (0), \mathsf{x}_j (1), \ldots , \mathsf{x}_j (\sft) \big)$ for each $j \in \qq{1, m}$. Given functions $\mathsf{f}, \mathsf{g}: \qq{0, \sft} \rightarrow \mathbb{R}$, we say that $\mathsf{X}$ has $(\mathsf{f}; \mathsf{g})$ as a \emph{boundary condition} if $\mathsf{f} (r) \leq \mathsf{x}_j (r) \leq \mathsf{g} (r)$ for each $r \in \qq{0, \sft}$. We refer to $\mathsf{f}$ and $\mathsf{g}$ as a \emph{left boundary} and \emph{right boundary} for $\mathsf{X}$, respectively, and allow $\mathsf{f}$ and $\mathsf{g}$ to be $-\infty$ or $\infty$. We further say that $\mathsf{X}$ has \emph{entrance data} $\mathsf{d} = (\mathsf{d}_1, \mathsf{d}_{2}, \ldots , \mathsf{d}_m)$ and \emph{exit data} $\mathsf{e} = (\mathsf{e}_1, \mathsf{e}_{2}, \ldots , \mathsf{e}_m)$ if $\mathsf{x}_j (0) = \mathsf{d}_j$ and $\mathsf{x}_j (\sft) = \mathsf{e}_j$, for each $j \in \qq{1, m}$. Then, there is a finite number of non-intersecting Bernoulli walk ensembles with any given entrance and exit data $(\mathsf{d}; \mathsf{e})$ and (possibly infinite) boundary conditions $(\mathsf{f}; \mathsf{g})$.

	The following lemma from \cite{LSRT} provides a monotonicity property for non-intersecting Bernoulli walk ensembles randomly sampled under the uniform measure on the set of such families with prescribed entrance, exit, and boundary conditions. In what follows, for any functions $\mathsf{f}, \mathsf{f}': \qq{0,\sft} \rightarrow \mathbb{R}$ we write $\mathsf{f} \leq \mathsf{f}'$ if $\mathsf{f} (r) \leq \mathsf{f}' (r)$ for each $r \in \qq{0,\sft}$. Similarly, for any sequences $\mathsf{d} = (\mathsf{d}_1, \mathsf{d}_{2}, \ldots , \mathsf{d}_m) \subset \mathbb{R}$ and $\mathsf{d}' = (\mathsf{d}_1', \mathsf{d}_{2}', \ldots , \mathsf{d}_m') \subset \mathbb{R}$, we write $\mathsf{d} \leq \mathsf{d}'$ if $\mathsf{d}_j \leq \mathsf{d}_j'$ for each $j \in \qq{1,m}$.

	\begin{lem}[{\cite[Lemma 18]{LSRT}}]
		
		\label{comparewalks}
		
		Fix integers $\sft,m\geq 1$; functions $\mathsf{f}, \mathsf{f}, \mathsf{g}, \mathsf{g}' : \qq{0,\sft} \rightarrow \mathbb{R}$; and $m$-tuples $\mathsf{d}, \mathsf{d}', \mathsf{e}, \mathsf{e}'$ with coordinates indexed by $\qq{1,m}$. Let $\mathsf{X} = (\mathsf{x}_1, \mathsf{x}_{2}, \ldots , \mathsf{x}_m)$ denote a uniformly random non-intersecting Bernoulli walk ensemble with boundary data $(\mathsf{f}; \mathsf{g})$; entrance data $\mathsf{d}$; and exit data $\mathsf{e}$. Define $\mathsf{X}' = (\mathsf{x}_1', \mathsf{x}_{2}', \ldots , \mathsf{x}_m')$ similarly, but with respect to $(\mathsf{f}'; \mathsf{g}')$ and $ (\mathsf{d}'; \mathsf{e}')$. If $\mathsf{f} \leq \mathsf{f}'$, $\mathsf{g} \leq \mathsf{g}'$, $\mathsf{d} \leq \mathsf{d}'$, and $\mathsf{e} \leq \mathsf{e}'$, then there exists coupling between $\mathsf{X}$ and $\mathsf{X}'$ such that $\mathsf{x}_j \leq \mathsf{x}_j'$ almost surely, for each $j \in \qq{1,m}$.
		
	\end{lem}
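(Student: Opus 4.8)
The plan is to prove this the way such monotone couplings are classically established (this is \cite[Lemma 18]{LSRT}): realize the uniform measure on non-intersecting Bernoulli walk ensembles as the stationary distribution of a \emph{monotone} single-site Glauber dynamics, and then run the two dynamics with a shared source of randomness so that the partial order is preserved for all time. Fix boundary data $(\mathsf{f};\mathsf{g})$, entrance data $\mathsf{d}$, and exit data $\mathsf{e}$, and let $\Omega$ be the finite set of non-intersecting Bernoulli walk ensembles with these data (assumed nonempty, else there is nothing to prove), partially ordered by $\mathsf{X}\preceq\mathsf{Y}$ iff $\mathsf{x}_j(r)\leq \mathsf{y}_j(r)$ for all $j\in\qq{1,m}$ and $r\in\qq{0,\sft}$. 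A short check --- the coordinatewise minimum of two Bernoulli walks is a Bernoulli walk, and the coordinatewise minimum of two non-intersecting ensembles (with the common constraints) is again such an ensemble --- shows that $\Omega$ is closed under coordinatewise $\min$ and $\max$, so it is a finite distributive lattice, with a unique minimal element $\underline{\mathsf{X}}$.

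Next I would introduce the dynamics on $\Omega$: at each step sample $(j,r)$ uniformly from $\qq{1,m}\times\qq{1,\sft-1}$ and an independent fair bit $b\in\{-1,+1\}$, propose $\mathsf{x}_j(r)\mapsto \mathsf{x}_j(r)+b$, and accept the proposal if and only if the resulting ensemble is still in $\Omega$ (still Bernoulli, still non-intersecting, still within $(\mathsf{f};\mathsf{g})$); otherwise do nothing. These moves are exactly the elementary lozenge rotations of the associated tiling, which generate the covering relations of the lattice $\Omega$. The chain is aperiodic (at any site at most one direction of move is ever feasible, so there is always a self-loop), it is reversible with respect to the uniform measure on $\Omega$ (an accepted move and its reverse are proposed with equal probability), and it is irreducible because every element of a finite distributive lattice is joined to $\underline{\mathsf{X}}$ by a descending chain of covering relations. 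Hence its unique stationary law is uniform on $\Omega$.

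Then I would set up the grand coupling. Run this dynamics on $\Omega$ and the analogous dynamics on the set $\Omega'$ attached to $(\mathsf{f}';\mathsf{g}')$, $\mathsf{d}'$, $\mathsf{e}'$, driven by the \emph{same} sequence of pairs $(j,r)$ and bits $b$. The update is monotone across the two state spaces: only coordinate $(j,r)$ can move, by at most one, and if $\mathsf{X}\preceq\mathsf{X}'$ beforehand then afterward --- in the decrement case --- the lower ensemble drops to at most $\mathsf{x}_j(r)-1$, while the upper ensemble either drops to $\mathsf{x}'_j(r)-1\geq \mathsf{x}_j(r)-1$ or, if its own constraints block the move, stays at $\mathsf{x}'_j(r)\geq \mathsf{x}_j(r)> \mathsf{x}_j(r)-1$; the increment case is symmetric. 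Because $\mathsf{d}\leq\mathsf{d}'$, $\mathsf{e}\leq\mathsf{e}'$, $\mathsf{f}\leq\mathsf{f}'$, $\mathsf{g}\leq\mathsf{g}'$, the coordinatewise minimum of any $\mathsf{X}'\in\Omega'$ with an element of $\Omega$ lies in $\Omega$, which forces $\underline{\mathsf{X}}\preceq\underline{\mathsf{X}}'$; starting the coupled chains from $\underline{\mathsf{X}}$ and $\underline{\mathsf{X}}'$ thus keeps $\mathsf{X}^{(N)}\preceq\mathsf{X}'^{(N)}$ for every $N$. By finiteness and ergodicity, the marginal laws converge to the two uniform measures, and a subsequential limit of the joint law --- supported on the finite set $\{(\mathsf{X},\mathsf{X}'):\mathsf{X}\preceq\mathsf{X}'\}$ --- is the desired coupling under which $\mathsf{x}_j\leq\mathsf{x}'_j$ for all $j$.

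The main obstacle, such as it is, is the monotonicity of the accept/reject update \emph{across two distinct state spaces}, precisely when the two ensembles differ at or adjacent to the resampled site; this is the one place where one genuinely uses that a move blocked for one chain still leaves it on the correct side of the other, and, relatedly, that the minimal lattice element is monotone in the boundary data. Both are elementary finite case analyses; everything else is the standard machinery of monotone Markov chains on distributive lattices of planar dimer/path configurations.
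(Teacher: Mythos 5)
The paper does not reprove this lemma; it is cited directly from \cite{LSRT}, and your Glauber-dynamics/grand-coupling argument is the standard proof found there (and in the broader literature on monotone surface models), so the strategy is correct and essentially the expected one.

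One substantive remark on the write-up: your case analysis in the coupling step only treats the sub-case where the \emph{lower} chain's decrement is accepted. The dangerous sub-case --- lower blocked, upper accepted, with $\mathsf{x}_j(r) = \mathsf{x}'_j(r)$ --- is the one that would actually break monotonicity, and you flag it informally (``a move blocked for one chain still leaves it on the correct side of the other'') but do not argue it. To close the gap one checks that each reason the lower decrement can be rejected forces the upper decrement to be rejected as well when the two values agree at $(j,r)$: (i) if blocked by $\mathsf{x}_{j-1}(r) = \mathsf{x}_j(r)-1$, then from $\mathsf{x}_{j-1}(r) \leq \mathsf{x}'_{j-1}(r) < \mathsf{x}'_j(r) = \mathsf{x}_j(r)$ one gets $\mathsf{x}'_{j-1}(r) = \mathsf{x}_j(r)-1$ and the upper is also blocked; (ii) if blocked by the Bernoulli constraint $\mathsf{x}_j(r-1) = \mathsf{x}_j(r)$, then $\mathsf{x}_j(r) = \mathsf{x}_j(r-1) \leq \mathsf{x}'_j(r-1) \leq \mathsf{x}'_j(r) = \mathsf{x}_j(r)$ squeezes $\mathsf{x}'_j(r-1)=\mathsf{x}'_j(r)$, so the upper is blocked (similarly with $r+1$); (iii) if blocked by $\mathsf{x}_j(r)-1 < \mathsf{f}(r)$, then $\mathsf{x}'_j(r)-1 = \mathsf{x}_j(r)-1 < \mathsf{f}(r) \leq \mathsf{f}'(r)$ blocks the upper. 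The increment case is symmetric using $\mathsf{g}\leq\mathsf{g}'$ and the particle $\mathsf{x}_{j+1}$. With this filled in, the proof is complete.
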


	\begin{rem}
		
		\label{heightcompare}
		
		An equivalent way of stating \Cref{comparewalks} (as was done in \cite{LSRT}) is through the height functions associated with the Bernoulli walk ensembles $\mathsf{X}$ and $\mathsf{X}'$. Specifically, let $\mathsf{D} \subset \mathbb{T}$ be a finite domain, and let $\mathsf{h}, \mathsf{h}' : \partial \mathsf{H} \rightarrow \mathbb{Z}$ denote two boundary height functions such that $\mathsf{h} (v) \geq \mathsf{h}' (v)$, for each $v \in \partial \mathsf{D}$. Let $\mathsf{H}, \mathsf{H}' : \mathsf{D} \rightarrow \mathbb{Z}$ denote two uniformly random height functions on $\mathsf{D}$ with boundary data $\mathsf{H} |_{\partial \mathsf{D}} = \mathsf{h}$ and $\mathsf{H}' |_{\partial \mathsf{D}} = \mathsf{h}'$. Then, \Cref{comparewalks} implies (and is equivalent to) the existence of a coupling between $\mathsf{H}$ and $\mathsf{H}'$ such that $\mathsf{H} (\mathsf{u}) \geq \mathsf{H}' (\mathsf{u})$ almost surely, for each $\mathsf{u} \in \mathsf{D}$. 
		
	\end{rem}

\subsection{Weighted Non-intersecting Bernoulli Bridges}\label{s:NBBdefine}
In this section, we introduce a weighted measure on non-intersecting Bernoulli bridges, which corresponds to non-intersecting Bernoulli bridges with random boundary data at the final time $\ft$.

To implement this, in the rest of this paper, we denote $\bZ_n=\{\cdots, -2/n, -1/n, 0,1/n, 2/n,\cdots\}$ to be the set of integers rescaled by $n$, and rescale the non-intersecting Bernoulli walk ensemble \eqref{e:gobetween} (both time and space) by a factor $n$. Then time and particle locations are all indexed by $\bZ_n$.

A \emph{particle configuration} is an increasing sequence $\bmx=(x_1<x_2< \cdots<x_m) \in \bZ_n^m $. We encode any such particle configuration by the empirical measure 
\begin{align}\label{e:emprho}
\rho(x; \bmx)=\sum_{i=1}^m \bm1(x\in [x_i, x_i+1/n]).
\end{align}
For any $t\in [0, \ft)\cap \bZ_n$ and particle configuration $\bmx\subseteq [ \fa(t), \fb(t)-1/n]$, we construct a boundary profile $\beta(x)=\beta(x;\bmx)$ corresponding to $\bmx$ with the empirical measure \eqref{e:emprho}:
\begin{align}\label{e:defbx}
\beta(x;\bmx)= \int_{\fa(t)}^ x  \rho( x;\bmx )\rd  x,\quad \text{meaning} \quad \del_x\beta(x;\bmx)=\rho(x;\bmx).
\end{align}
By construction it satisfies $\beta(x_i+1/n;\bmx)=i/n$ for $1\leq i\leq m$.

 We will choose the weights in a way that the height functions of weighted non-intersecting Bernoulli bridges at time $\ft$ concentrate around $H_{\ft}^*(x)$. Then \Cref{estimategamma} will follow from concentration estimates for this weighted non-intersecting Bernoulli bridge model.
We define
\begin{align}\begin{split}\label{e:defYt+}
&\phantom{{}={}}Y_{t}(\bmx)
\deq W_t(\beta(x;\bmx))-\frac{1}{2}\int \log |x-y|\rho(x;\bmx)\rho(y;\bmx)\rd x\rd y\\
&\qquad \qquad \qquad +\int \int_{\fa(t)}^x \log(x-y)\rho(x;\bmx)\rd y\rd x 
+\int\int_x^{\fb(t)}\log (y-x)\rho(x;\bmx)\rd y\rd x,
\end{split}\end{align}

\noindent For any $\bmx=(x_1,x_2,\cdots, x_m)\subseteq \big[ \fa(\ft), \fb(\ft) \big]$, we then define the partition function
\begin{align}\label{e:deftN}
N_{\ft}(\bmx)=\frac{V(\bmx)e^{n^2Y_{\ft}(\bmx)}}{\prod_{j=1}^m \Gamma_n \big(x_j-\fa(\ft) \big) \Gamma_n \big( \fb(\ft)-x_j-1/n \big)},
\end{align}
where $V(\bmx)=\prod_{i<j} (x_j-x_i)$ is the Vandermonde determinant, $\Gamma_n(k/n)=(k/n)((k-1)/n)\cdots (1/n)=k!/n^k$, and for any $\bmx\not \subseteq \big[ \fa(\ft), \fb(\ft) \big]$, we simply set $N_{\ft}(\bmx)=0$.
Then for any $t\in[0,\ft)\cap \bZ_n$, and $\bmx=(x_1,x_2,\cdots, x_m)\subseteq [ \fa(t), \fb(t)]$, $N_t(\bmx)$ is defined recursively as
\begin{align}\label{e:recurss}
N_{t}(\bmx)=\sum_{\bme\in\{0,1\}^m} N_{t+1/n}(\bmx+\bme/n).
\end{align}

\noindent We then define a family of weighted non-intersecting Bernoulli bridge $\big\{\bmx_t=(x_1(t), x_2(t), \cdots, x_m(t)) \big\}_{t\in [0,\ft]\cap \bZ_n}$ whose transition probability is given by 
\begin{align}\label{e:wNBB}
\bP(\bmx_{t+1/n}=\bmx+\bme/n|\bmx_t=\bmx)=\frac{N_{t+1/n}(\bmx+\bme/n)}{N_t(\bmx)},\quad \bme\in \{0,1\}^m,
\end{align}
for any $t\in [0, \ft)\cap \bZ_n$ and $\bmx\subseteq [\fa(t), \fb(t)]$. This weighted model is one for non-intersecting Bernoulli bridges with random boundary data at time $\ft$. Indeed, given any initial particle configuration $\bmx_0\subseteq [ \fa(0), \fb(0)]$, it can be sampled in two steps. First we sample the boundary configuration $\bmx_\ft$ using \eqref{e:deftN}:
\begin{align}\label{e:sampleNt}
\bP(\bmx_\ft)=\frac{\#\{\text{non-intersecting Bernoulli bridges from $\bmx_0$ to $\bmx_\ft$}\} \cdot N_\ft(\bmx_\ft)}{\sum_{\bmx}\#\{\text{non-intersecting Bernoulli bridges from $\bmx_0$ to $\bmx$} \} \cdot N_\ft(\bmx)}.
\end{align}
 Then we sample a non-intersecting Bernoulli bridge with boundary data $\bmx_0, \bmx_\ft$ uniformly from all possible non-intersecting Bernoulli bridges from $\bmx_0$ to $\bmx_\ft$. 


The recursion \eqref{e:recurss} can be solved using the Feynman--Kac formula. The transition probability \eqref{e:wNBB} is approximated through the following proposition. Its proof is essentially the same as in \cite[Section 9]{NRWLT}; we outline it in Appendix \ref{s:ansatz} below.

\begin{prop}\label{p:formula}
Adopt  \Cref{a:xhh}.
Given any particle configuration $\bmx \subseteq [ \fa(t), \fb(t)]$ with density $\rho (x; \boldsymbol{x})$, define the boundary profile $\beta$ by $\del_x \beta=\rho(x;\bmx)$. Assume that $\beta$ is close to the continuum limiting profile at time $t$, i.e., $d \big( \rho(x;\bmx), \del_x H_t^*(x) \big) \leq \fc$ for some small constant $\fc$. Then the transition probability \eqref{e:wNBB} is given by
\begin{align}\label{e:defLtnew}
 \frac{1}{Z_t(\bmx)}\frac{V(\bmx+\bme/n)}{V(\bmx)}\prod_{i=1}^m\phi^+(x_i;\beta,t)^{e_i}\phi^-(x_i;\beta,t)^{1-e_i} \exp \Bigg( \displaystyle\sum_{1 \leq i,j \leq m} \frac{e_ie_j}{n^2}\kappa(x_i,x_j;\beta,t)+\OO(1/n) \Bigg),
\end{align}
where 
\begin{align}\label{e:defvphit}
\phi^+(z;\beta,t)= \frac{\fb(t)-1/n-z}{\fb(t)-z}\varphi^+(z;\beta,t)e^{\frac{1}{n}\psi_t^+(z;\beta,t)},\quad 
\phi^-(z;\beta,t)=\varphi^-(z;\beta,t).
\end{align}
and
\begin{align}\label{e:defvarphi}
\varphi^-(z;\beta,t)=z-\fa(t), \quad \varphi^+(z;\beta,t)= \tilde g(z;\beta,t) \big( \fb(t)-z \big),
\end{align}
where $\tilde g(z;\beta,t)$ is from \eqref{e:deftgt}; $\psi^+(z;\beta,t)$ and $\kappa(z,z';\beta,t)$ are analytic in $z,z'$ and uniformly bounded in a neighborhood of $[ \fa(t), \fb(t)]$; and the error term of size $\OO(1/n)$ uniform in $\beta$. 
\end{prop}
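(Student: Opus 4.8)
The plan is to establish \eqref{e:defLtnew} by solving the recursion \eqref{e:recurss} via a Feynman--Kac representation and then performing a steepest-descent / saddle-point analysis of the resulting sum, following the strategy of \cite[Section 9]{NRWLT}. First I would rewrite the partition function $N_t(\bmx)$ through \eqref{e:recurss} as a sum over admissible increment sequences, which by the Lindstr\"om--Gessel--Viennot lemma can be expressed through a single determinant / contour-integral formula. The key observation is that $Y_t$ in \eqref{e:defYt+} was defined precisely so that $n^{-2}\log N_t(\bmx)$ is, to leading order, the variational energy $W_t(\beta(x;\bmx))$ together with the logarithmic-interaction corrections coming from the Vandermonde $V(\bmx)$ and the Gamma-factor normalization; this is the content of the ansatz in Appendix \ref{s:ansatz}. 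Thus one posits $N_t(\bmx) = V(\bmx)\exp\big(n^2 W_t(\beta(x;\bmx)) + n\,(\text{first order}) + \OO(1)\big)$ and verifies that this form is consistent with \eqref{e:recurss}.

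The next step is to substitute this ansatz into \eqref{e:wNBB} and expand $W_{t+1/n}(\beta(x;\bmx+\bme/n)) - W_t(\beta(x;\bmx))$ in powers of $1/n$. The time-derivative of $W$ is governed by the complex Burgers equation \eqref{e:newBB}, while the spatial variation under a shift $\bmx \mapsto \bmx + \bme/n$ produces the functional derivative $\delta W_t/\delta\beta$ evaluated at the particle locations $x_i$. By the first-order optimality condition for the variational problem \eqref{e:varWt}, this functional derivative is expressible through $\arg^* f_t(x;\beta,t)$, equivalently through $\log f$ and hence, using the decomposition \eqref{e:gszmut}--\eqref{e:deftgt}, through $m(z;\beta,t)$ and $\tilde g(z;\beta,t)$. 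The linear-in-$e_i$ terms then assemble into the product $\prod_i \phi^+(x_i;\beta,t)^{e_i}\phi^-(x_i;\beta,t)^{1-e_i}$ with $\phi^\pm$ as in \eqref{e:defvphit}--\eqref{e:defvarphi}: the factor $z-\fa(t)$ is the ``void'' boundary contribution from $\tilde m = m - \log(z-\fa(t)) + \log(\fb(t)-z)$, the factor $\tilde g(z;\beta,t)(\fb(t)-z)$ is the regular part, and the prefactor $(\fb(t)-1/n-z)/(\fb(t)-z)$ together with $e^{\psi^+/n}$ absorbs the discretization corrections coming from the Gamma functions $\Gamma_n$ and the finite-difference nature of \eqref{e:recurss}. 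The quadratic-in-$\bme$ terms come from the second functional derivative $\delta^2 W_t/\delta\beta^2$, which is the (finite part of the) logarithmic kernel; this produces $\exp(n^{-2}\sum_{i,j} e_ie_j \kappa(x_i,x_j;\beta,t))$, and analyticity of $\psi^+$ and $\kappa$ in a neighborhood of $[\fa(t),\fb(t)]$ follows from the analyticity of $\tilde g$ and $m$ guaranteed by \Cref{p:fdecompose}.

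The normalization $Z_t(\bmx)$ is then simply the sum of the explicit expression over $\bme\in\{0,1\}^m$, so no further argument is needed for it; one only needs that the stated formula is indeed a probability distribution after dividing by $Z_t$, which is automatic. The closeness hypothesis $d(\rho(x;\bmx),\del_x H_t^*(x)) \le \fc$ is used exactly where \Cref{p:fdecompose} is invoked: it guarantees that the liquid region $\fL(\fD^t;\beta)$, the domain $\mathscr U_t^\beta$, and the functions $g(z;\beta,t)$, $\tilde g(z;\beta,t)$ all behave uniformly like their $\beta = H_t^*$ counterparts, and in particular that $\tilde g$ has no zeros so that $\log\tilde g$ and hence $\phi^+$ are well-defined and bounded.

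The main obstacle I anticipate is controlling the error term uniformly at order $\OO(1/n)$: the Feynman--Kac sum must be analyzed by contour deformation to a steepest-descent contour, and one must check that the contour can be chosen uniformly in $\beta$ (using the uniform geometry from \Cref{p:fdecompose}) and that the contributions away from the saddle are exponentially small, while the Gaussian fluctuations around the saddle contribute only at the claimed order. Matching the discrete boundary behavior near $\fa(t)$ and $\fb(t)$ — where the Gamma functions $\Gamma_n(x_j - \fa(\ft))$ and $\Gamma_n(\fb(\ft) - x_j - 1/n)$ interact with the poles/zeros of $g(z;\beta,t)$ at $\fa(t)$ and $\fb(t)$ identified in \Cref{p:fdecompose}\eqref{33} — is the delicate point, and is precisely why the prefactor $(\fb(t)-1/n-z)/(\fb(t)-z)$ and the separate treatment of $\varphi^-(z;\beta,t) = z - \fa(t)$ appear in \eqref{e:defvphit}. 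Since the computation is essentially identical to \cite[Section 9]{NRWLT} once these inputs are in place, I would carry out the bookkeeping in Appendix \ref{s:ansatz} and only highlight the modifications needed to accommodate the double-sided boundary (the extra factor involving $\fb(t)$ and the east-boundary height $\fm$).
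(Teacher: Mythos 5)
Your proposal correctly identifies the high-level strategy: you set up the ansatz in which $n^{-2}\log N_t(\bmx)$ is to leading order $Y_t(\bmx)$, solve the recursion via a Feynman--Kac representation, expand the increment of $Y$ to order $1/n$ to extract the linear (giving $\phi^\pm$) and quadratic (giving $\kappa$) terms, and invoke \Cref{p:fdecompose} to deduce analyticity and uniformity in $\beta$, with the boundary factors $z-\fa(t)$ and $\fb(t)-z$ matching the pole/zero structure of $g(z;\beta,t)$. All of that is exactly what the paper's proof does.

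However, two points of the framing mischaracterize the mechanism. First, the Lindstr\"om--Gessel--Viennot determinant and the ``steepest-descent / saddle-point analysis of the resulting sum'' play no role here: once one multiplies by the $n$-dependent weight $e^{n^2 Y_\ft}$ the determinant structure is gone, and the recursion \eqref{e:recurss} is instead analyzed through the Markov-chain Feynman--Kac formula (\Cref{p:FK}) and the $1/n$-expansion of $Y$ (\Cref{p:YYdif}); there is no contour to deform. Second, the proposal does not articulate the paper's crucial \emph{two-stage} ansatz: the correction $E_s(\bmx)$ produced by the Feynman--Kac formula is further split as $E_s(\bmx)=e^{F_s(\bmx)}E_s^{(1)}(\bmx)$, where $F_s$ is the deterministic value of the Feynman--Kac functional along the limiting characteristic and $E_s^{(1)}$ is a genuine fluctuation term whose increments are controlled to order $1/n$ by \Cref{p:Et1bb}. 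It is precisely this $F_s$ whose functional derivative $\partial_\beta F_t(\beta)(z)$ contributes to $\psi_t^+$ in \eqref{e:defvphit}, not merely the Gamma-function discretization corrections that you attribute it to. If you tried to push the steepest-descent plan literally, you would get stuck because there is no contour integral to optimize; the actual argument needs the $E_s^{(1)}$ decomposition to isolate a bounded, $\bme$-independent correction that can be absorbed into the $\OO(1/n)$ error.
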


\subsection{Optimal Rigidity for Weighted Non-intersecting Bernoulli Bridges}\label{s:bridgeop}
We denote the height function for the weighted non-intersecting Bernoulli bridge model as $H(x,t)=H_t(x)$, so that
\begin{align}\label{e:defHt}
H(x,t)=H_t(x)=\int_{\fa(t)}^x \rho(y;\bmx_t)\rd y,\quad \del_x H(x,t)=\rho(x;\bmx_t)=\sum_{i=1}^m \bm1([x_i(t),x_i(t)+1/n]),
\end{align}
where $\bmx_t= \big( x_1(t), x_2(t), \cdots, x_m(t) \big)$.

To understand the height fluctuations around the boundary of the liquid region, we need an enlarged version of 
$I^*_t$ as defined in \eqref{e:defI*_t}. We recall that under \Cref{a:xhh}, for $t_c\leq t\leq \ft'$, the slice $I^*_t$ is a single interval
$
I^*_t= \big[ E_1(t), E_2(t) \big],
$
and for $0\leq t\leq t_c$, the slice $I^*_t$ consists of two intervals
$
I^*_t= \big[ E_1(t), E'_1(t) \big] \cup \big[ E'_2(t), E_2(t) \big].
$ 
In the rest of this section, we fix an arbitrarily small constant $\fd>0$. For any $\big( E(t),t \big)= \big( E_i(t),t \big)$ on the arctic curve $\fA(\fD')$ with $i \in \{ 1,2 \}$, we define the distance function 
\begin{align}\label{e:disf}
\tau \big( E(t),t \big)=
n^{-2/3+6\fd}|t-t_i|^{2/3}\vee n^{ -1+10\fd}.
\end{align}

\noindent Moreover, for any $\big( E(t),t \big)= \big( E'_i(t), t \big)$ on the arctic curve $\fA(\fD')$ with $i \in \{ 1,2 \}$ and $t\leq t_c$,  we define the distance function
\begin{align*}
\tau \big( E(t),t \big)=
n^{-2/3+6\fd}(t_c-t)^{1/6}. 
\end{align*}
We then define the enlarged intervals  
\begin{align}\label{e:defI_t+1}
	\begin{aligned}
I_t^+ = \big[ E_1  (t) - \tau (E_1 (t), t), E_2 (t) + \tau (E_2 (t), t) \big], & \qquad \text{for $t_c \leq t \leq \mathfrak{t}'$}; \\
I_t^+= \big[ E_{1}(t)-\tau(E_{1}(t), t), E'_{1}(t)+\tau(E'_{1}(t), t) \big] \cup \big[ E'_{2}(t)-\tau(E'_{2}(t), t), E_{2}(t)+\tau(E_{2}(t), t) \big], & \qquad \text{for $0 \leq t < t_c$}.
\end{aligned} 
\end{align}
We remark that from the third statement in \Cref{p:ftzbehave}, $E_2'(t)-E_1'(t)\asymp (t_c-t)^{3/2}$. Thus for $t\geq t_c-n^{-1/2+6\fd}$, \eqref{e:defI_t+1} reduces to a single interval, 
\begin{align}\label{e:defI_t+2}
&I_t^+= \Big[E_{1}(t)-\tau \big( E_{1}(t), t \big), E_{2}(t)+\tau \big(E_{2}(t), t \big) \Big].
\end{align}
See Figure \ref{f:It} for an illustration.

In Sections \ref{s:pcW}, we will prove the following concentration result on the height function for the weighted non-intersecting Bernoulli bridge model \eqref{e:wNBB}. We remark that our proof applies to any Markov process with transition probability in the form \eqref{e:defLtnew}.
\begin{prop}\label{p:rigidityBB}
Fix an arbitrarily small constant $\fd>0$, and adopt  \Cref{a:xhh} and \Cref{xhh2}.
For any initial data $\bmx_0\subseteq \big[ \fa(0),\fb(0) \big]$ with the corresponding height function $H_0(x)$ satisfying 
\begin{align}\label{e:boundaryc0}
	\begin{aligned}
\big|H_0(x)- H_0^*(x) \big| \leq n^{\mathfrak{d}/3 - 1},\qquad & \text{if $\dist(x, I_0^*)\leq n^{\fd/3-2/3}$}; \\
H_0(x)= H_0^*(x),\qquad & \text{if $\dist(x, I_0^*)\geq n^{\fd/3-2/3}$},
\end{aligned}
\end{align}

\noindent with overwhelming probability the height function $H(x,t)$ for the weighted non-intersecting Bernoulli bridge \eqref{e:wNBB} satisfies, for $0\leq t\leq \ft$, 
\begin{align}\label{e:Optrigidity}
	\begin{aligned}
\big| H(x,t)- H^*(x , t) \big| \leq n^{3\fd - 1},\qquad \text{if $x\in I_t^+$}, \\
H(x,t)= H^*(x ,  t ),\qquad \text{if $x\not\in I_t^+$}.
\end{aligned} 
\end{align}
\end{prop}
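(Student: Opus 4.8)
The plan is to prove \eqref{e:Optrigidity} by a dynamic argument: derive a discrete stochastic differential equation for the empirical Stieltjes transform $m_t(z)=\int \rho(x;\bmx_t)/(z-x)\,\rd x$ of the weighted non-intersecting Bernoulli bridge, and then propagate the initial bound \eqref{e:boundaryc0} forward in time $t\in[0,\ft]\cap\bZ_n$ along the characteristics of the limiting complex Burgers equation \eqref{e:newBB}. First I would use Proposition \ref{p:formula} to expand, to leading order, the one-step conditional expectation and variance of the increments $\bme/n$; the product $\prod_i \phi^+(x_i;\beta,t)^{e_i}\phi^-(x_i;\beta,t)^{1-e_i}$ together with the Vandermonde ratio $V(\bmx+\bme/n)/V(\bmx)$ gives that the conditional jump probabilities are, to leading order, determined by the complex slope $f_t^*$ (through $\tilde g$ in \eqref{e:defvarphi}), so that $\bE[m_{t+1/n}(z)-m_t(z)\mid \bmx_t]$ matches the drift $-\big(\del_t\log g^*_t(z)+\del_z\log(f^*_t(z)+1)\big)/n$ predicted by \eqref{e:newBB}, up to an error that is quadratic in the current deviation $m_t(z)-m^*_t(z)$ plus a $\OO(n^{-2})$ correction. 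This reduces the proposition to a self-improving estimate: if $|H(\cdot,s)-H^*(\cdot,s)|$ is controlled by $n^{3\fd-1}$ on $I_s^+$ (and exactly equal off it) for all $s\le t$, then the same holds at time $t+1/n$ with overwhelming probability.

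The mechanism for the self-improvement is the multi-scale analysis of the dynamical loop equations — this is exactly the content of \Cref{p:improve} (cited in the introduction and in the organizational paragraph), which is assumed available here. Concretely, I would: (i) fix a spectral parameter $z=E+\ri\eta$ with $\eta$ ranging over a dyadic mesoscopic scale $n^{-1+c\fd}\le \eta\le n^{-\fc}$ and $E$ in the relevant slice, and track $|m_t(z)-m^*_t(z)|$; (ii) solve the characteristic equation \eqref{e:ccff} backward from $(z,t)$ to a time $t_0\le t$ at which the characteristic has moved distance $\gtrsim\eta$ from the arctic boundary (this is where the geometry of \Cref{a:xhh} — one cusp, two tangencies — enters, and the estimates of \Cref{p:ftzbehave} on $-\sin\Im[\log f^*_t]$, together with the square-root \eqref{e:squareeq}, tangency \eqref{e:mtbehave2copy}, and cubic/cusp \eqref{e:cubiceq},\eqref{e:mtcubic} behaviors, quantify how fast characteristics are repelled from the singular locus); (iii) along this characteristic the leading drift cancels by construction, so $m_t(z_t)-m^*_t(z_t)$ is a sum of martingale increments plus deterministic errors, each controlled by the multi-scale input $\Cref{p:improve}$; (iv) a Grönwall/bootstrap in $t$, plus a union bound over a fine net of $(E,\eta)$ and a high-moment (or exponential martingale) estimate, upgrades this to the overwhelming-probability statement. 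Converting the Stieltjes-transform bound back to a height-function bound is the standard Helffer–Sjöstrand / contour argument, and the ``$=H^*$ off $I_t^+$'' part follows from the frozen-region rigidity: the monotone coupling of \Cref{comparewalks}/\Cref{heightcompare} sandwiches the walk between deterministic tilings that are frozen outside the enlarged arctic region, using the exact boundary agreement in \Cref{xhh2}(2)--(3) and the fact that $\tau(E(t),t)$ in \eqref{e:disf} is chosen larger than the $n^{1/3}$-scale fluctuation plus the extreme-walk overshoot near tangencies/cusps.

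The main obstacle, as the introduction flags, is step (ii)–(iii) near the cusp $(E_c,t_c)$ and the two tangency locations $(E_1(t_1),t_1)$, $(E_2(t_2),t_2)$: the characteristics of \eqref{e:newBB} degenerate there, the spectral domain $\mathscr U_t^{\beta}$ from Proposition \ref{p:fdecompose} pinches, and the naive error bounds in the discrete SDE blow up. I expect the bulk of the work is in designing the family of spectral contours adapted to these singularities — choosing $\eta$-dependent cutoffs matching the $|t-t_i|^2$ scaling near a tangency (so that $|\fa(t)-E_1(t)|\asymp|t_1-t|^2$ from \eqref{e:mtbehave2copy} is respected) and the $(t_c-t)^{3/2}$ scaling of the pre-cusp gap $E_2'(t)-E_1'(t)$ — and showing that the martingale and error terms, when integrated along characteristics that only slowly escape the singular neighborhoods, still sum to $o(n^{3\fd-1})$ in the height function. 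This is precisely where the enlargement $\tau$ in \eqref{e:disf}, with its $n^{6\fd}$ and $n^{10\fd}$ safety factors, is calibrated; verifying that these exponents are compatible with the loss incurred in the multi-scale scheme of \Cref{p:improve} is the delicate bookkeeping at the heart of the proof.
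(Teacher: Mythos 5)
Your overall architecture matches the paper's: a discrete stochastic equation for the empirical Stieltjes transform $m_t(z)$ (Section \ref{s:DEq}); propagation along the characteristics $z_t(u)$ of the limiting Burgers equation \eqref{e:newBB}; control of the martingale and drift errors via the dynamical loop equations of \Cref{p:improve} (Propositions \ref{p:dynamic} and \ref{c:error}); a stopping-time formulation \eqref{stoptime2} standing in for your ``self-improving estimate''; a union bound over the mesoscopic net $\mathscr L$ from \eqref{def:L}; and recovery of height-function rigidity from Stieltjes-transform rigidity. Your discussion of how the enlargement $\tau$ in \eqref{e:disf} is calibrated against the rate at which characteristics escape the singular locus near cusps and tangencies is also on target, corresponding to the first three statements of \Cref{p:gap} and the integral estimates of \Cref{p:integral}.

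There is, however, a genuine gap in your treatment of the frozen-region part $H(x,t)=H^*(x,t)$ for $x\notin I_t^+$. You propose to obtain it from the monotone coupling of \Cref{comparewalks}/\Cref{heightcompare} by sandwiching the weighted walk between deterministic tilings that are frozen outside the enlarged arctic region. But that lemma couples uniformly random non-intersecting ensembles with \emph{deterministic} entrance and exit data, whereas the weighted process \eqref{e:wNBB} has a \emph{random} endpoint $\bmx_\ft$ sampled via \eqref{e:sampleNt}. To invoke the coupling you would first need to know $\bmx_\ft$ is pinned to $H^*_\ft$ on the frozen region — which is exactly the $t=\ft$ case of the proposition you are proving, so the argument is circular. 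The paper instead proves the frozen-region statement directly (fourth statement of \Cref{p:gap}): for $t\le\sigma$ one evaluates $m_t$ at a point $z=E(t)\mp\kappa+\ri\eta$ in $\mathscr D_t^{\rm F}$ with $\kappa,\eta$ calibrated to the local behavior of $m_t^*$ from \eqref{e:mtbehave1copy}--\eqref{e:mtcubic}, and uses the stopping-time bound on $\Delta_t$ to show $|\Im m_t(z)|\ll 1/(n\eta)$, contradicting \eqref{e:sumerror} if a particle lay beyond $\tau(E(t),t)$. The monotone coupling is only deployed one level up, in the proof of \Cref{estimategamma}, \emph{after} \Cref{p:rigidityBB} has already supplied control of $\bmx_\ft$. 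If you want to keep your coupling-based route, you would still need an independent Stieltjes-transform argument to first pin down $\bmx_\ft$, at which point you have essentially reproduced the paper's direct argument anyway.
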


Given this, we can quickly prove \Cref{estimategamma}. 

 \begin{proof}[Proof of \Cref{estimategamma}]
We recall that the weighted non-intersecting Bernoulli random walks can be sampled in two steps. First we sample the boundary configuration $\bmx_\ft$ using \eqref{e:sampleNt}. Then we sample a non-intersecting Bernoulli bridge with boundary data given by $\bmx_0$ and $\bmx_\ft$. 
 
We fix the initial data $\bmx_0$ corresponding to boundary height function $\sfh$ as in \Cref{xhh2} restricted to ${\del_{\so}\fD}$. And we take $\mathfrak{d} =\delta/15$ in \Cref{p:rigidityBB}, then \Cref{xhh2} implies \eqref{e:boundaryc0}. By the $t = \mathfrak{t}$ case of \eqref{e:Optrigidity}, a boundary configuration $\bmx_\ft$ sampled from \eqref{e:sampleNt} satisfies with overwhelming probability the bound 
\begin{align}\label{e:boundaryc2}
\big| \beta(x;\bmx_\ft)- H^*(x , \ft ) \big| \leq n^{3\fd- 1},\quad \text{if $x\in I_\ft^+$;} \qquad \qquad \beta(x;\bmx_\ft)= H^*(x ,  \ft ),\quad \text{if $x\not\in I_\ft^+$}.
\end{align}
Moreover, for the (fixed) boundary data $\bmx_0, \bmx_\ft$, the height functions $H(x,t)$ for non-intersecting Bernoulli bridges between $\bmx_0$ and $\bmx_\ft$ satisfy the optimal rigidity estimates \eqref{e:Optrigidity} for $0 \leq t \leq \mathfrak{t}$. In the following, we fix a sample of such boundary configuration $\bmx_\ft$. 

The boundary height function estimates \eqref{e:boundaryc0} and \eqref{e:boundaryc2} imply that 
\begin{align}\label{e:betaxt}
\beta(x;\bmx_t)-(n^{3\fd - 1} +n^{\delta/2 - 1}) \leq n^{-1} \sfh(nx,nt) \leq \beta(x;\bmx_t)+ (n^{3\fd - 1}+n^{\delta/2 - 1}),\quad t\in\{0,\ft\}. 
\end{align}
 By \Cref{comparewalks} (and \Cref{heightcompare}), we may couple $H$ and $\mathsf{H}$ so that with overwhelming probability we have
\begin{align}
  H(x,t)- (n^{3 \mathfrak{d} - 1} + n^{\delta / 2 - 1}) \leq  n^{-1} \sfH(nx,nt) \leq  H(x,t) + (n^{3 \mathfrak{d} - 1}+n^{\delta/2 - 1}),
\end{align}
for each $(x,t) \in \fD$. Thus, we conclude from this coupling and \eqref{e:Optrigidity} that with overwhelming probability
 \begin{align*}
 \big| n^{-1} \sfH(nx,nt)- H^*(x ,  t ) \big|
 \leq \big| n^{-1} \sfH(nx,nt)-  H(x ,  t ) \big|+ \big| H(x,t)- H^*(x ,  t ) \big|\leq n^{\delta/2 - 1}+2n^{3\fd - 1} \leq n^{\delta - 1}, 
 \end{align*}
for any $ (x,t)\in \fD$, provided that $\delta > 4 \mathfrak{d}$.
This gives the first statement in \Cref{estimategamma}.

For the second statement in \Cref{estimategamma}, we need to show that $n^{-1} \sfH(nx,nt)=H^*(x,t)$ for any $(x,t)\in \fD\setminus \fL_+^\delta(\fD)$. For any time $0\leq t\leq \ft$, the time slice $\{x: (x,t)\in \fD\setminus \fL_+^\delta(\fD)\}$ consists of several intervals. On each interval either $\del_x H^*(x,t)\equiv 0$, or $\del_x H^*(x,t)\equiv 1$, see \Cref{r:replace}. We denote by $J_t$ one such interval, with $\del_x H^*(x,t)\equiv 0$ corresponding to a void region, and prove  $n^{-1} \sfH(nx,nt)= H^*(x ,  t )$ for $x\in J_t$. The proof in the other case when $\del_x H^*(x,t)\equiv 1$ on $J_t$ is very similar and thus omitted.

In the following we first show that 
\begin{align}\begin{split}\label{e:heightshift}
&\beta\left(x-\frac{n^{7\fd}+n^{\delta/2}}{n^{2/3}};\bmx_t\right)\leq n^{-1} \sfh(nx,nt) \leq \beta\left(x+\frac{n^{7\fd}+n^{\delta/2}}{n^{2/3}};\bmx_t\right),\quad t\in \{0,\ft\}.
\end{split}\end{align}
We prove \eqref{e:heightshift} for $t=\ft$; the case $t=0$ is essentially the same.
The measure $\del_x H^*(x,\ft)=\rho^*_\ft(x)$ is supported on $I_\ft^*=[E_1(\ft),E_2(\ft)]$. Thanks to \Cref{derivativeh}, it has square root behavior close to the edges $E_1(\ft)$ and $E_2(\ft)$. In particular, for $x\leq E_2(\ft)-\fC(n^{2\fd}+n^{\delta/3})/n^{2/3}$, \eqref{e:betaxt} and this square root behavior together imply
\begin{align}\label{e:betalow1}
\beta(x;\bmx_\ft)\leq H^*(x,\ft) + n^{3\fd - 1}+n^{\delta/2 - 1} \leq H^*\left(x+\frac{\fC(n^{2\fd}+n^{\delta/3})}{n^{2/3}},\ft\right),
\end{align}
provided the constant $\fC$ is large enough. For $x\geq E_2(\ft)-\fC(n^{2\fd}+n^{\delta/3})/n^{2/3}$, the second statement of \eqref{e:boundaryc2} implies that
\begin{align}\label{e:betalow2}
\beta(x;\bmx_\ft)
\leq \beta\left(x+\frac{\fC(n^{2\fd}+n^{\delta/3}+n^{6\fd})+n^{\delta/2}}{n^{2/3}};\bmx_\ft\right)=H^*\left(x+\frac{\fC(n^{2\fd}+n^{\delta/3}+n^{6\fd})+n^{\delta/2}}{n^{2/3}} ,  \ft\right ).
\end{align}
The lower bound in \eqref{e:heightshift} follows from combining \eqref{e:betalow1} and \eqref{e:betalow2}, and recalling that $n^{-1} \sfh(nx,n\ft)  = H^*(x,\ft)$. The upper bound in \eqref{e:heightshift} can be proven in the same way. With \eqref{e:heightshift} as input, the coupling  \Cref{heightcompare} gives
\begin{align}\begin{split}\label{e:couplingfrozen}
& H\left(x-\frac{n^{7\fd}+n^{\delta/2}}{n^{2/3}},t\right)\leq n^{-1} \sfH(nx,nt) \leq H\left(x+\frac{n^{7\fd}+n^{\delta/2}}{n^{2/3}},t\right),
\end{split}\end{align}
for any $(x,t)\in \fD$. For $x\in J_t$, $H^*(x,t)$ is a constant. This, with the second statement of \eqref{e:Optrigidity}, together imply that with overwhelming probability the left and right sides of \eqref{e:couplingfrozen} coincide, and both equal $H^*(x,t)$, provided we take $\delta>14\fd$. Thus, the second statement in \Cref{estimategamma} follows.
\end{proof}

\section{Optimal Concentration for Weighted Non-intersecting Bernoulli Bridges}\label{s:pcW}
In this section we prove the concentration estimate for the weighted non-intersecting Bernoulli bridge model, \Cref{p:rigidityBB}. 
We recall the weighted Bernoulli bridges from Section \ref{s:NBBdefine}.  The weighted Bernoulli bridges with initial data given by the particle configuration $\bmx_0=\big\{ x_1(0), x_2(0),\cdots, x_{m}(0) \big\} \subseteq \big[ \fa(0),\fb(0) \big]$ is a Markov process $\big\{ \bmx_t=(x_1(t), x_2(t), \cdots, x_m(t)) \big\}_{t\in[0, \ft]\cap \bZ_n}$ with transition probability given by \eqref{e:defLtnew} 
\begin{align}\begin{split}\label{e:transitDE}
&\phantom{{}={}}\bP(\bmx_{t+1/n}=\bmx+\bme/n|\bmx_t=\bmx) =:a_t(\bme;\bmx)\\
&= \frac{1}{Z_t(\bmx)}\frac{V(\bmx+\bme/n)}{V(\bmx)}\prod_{i=1}^m\phi^+(x_i;\beta,t)^{e_i}\phi^-(x_i;\beta,t)^{1-e_i}  \exp \Bigg(\sum_{1 \leq i,j \leq m} \frac{e_ie_j}{n^2}\kappa(x_i,x_j;\beta,t)+\OO(1/n) \Bigg),
\end{split}\end{align}
where the height profile $\beta$ is determined by $\boldsymbol{x}_t = \boldsymbol{x}$ through $\del_x \beta=\rho(x;\bmx)$.
We will study the dynamics of the  Stieltjes transform of its empirical particle density
\begin{align}\begin{split}\label{e:defrhot}
&\rho(x;\bmx_t)=\del_x H_t(x)=\sum_{i=1}^{m}\bm1 \Big( x\in \big[x_i(t), x_i(t)+1/n\big] \Big),\\
& m_t(z)=\int \frac{\rho(x;\bmx_t)}{z-x}\rd x=\int^{z}_{z-1/n}\sum_{i=1}^{m}\frac{1}{u-x_i(t)} \rd u.
\end{split}\end{align}
We recall the continuum limit height function $H^*_t(x)$, the limiting particle density $\rho_t^*(x)$ and its Stieltjes transform from \eqref{e:limits} and \eqref{e:decompft*}
\begin{align}
\rho_t^*(x)=\del_x H_t^*(x),\quad m_t^*(z)=\int\frac{\rho_t^*(x)}{z-x}\rd x.
\end{align}

Concentration for the height function will eventually follow from  showing that the difference of $m_t(z)$ and $m_t^*(z)$
\begin{align}\label{e:defDt}
\Delta_t(z)=m_t(z)-m_t^*(z),
\end{align}
is small on a certain spectral domain. In the Section \ref{s:De} we construct this spectral domain. We collect some preliminary estimates in Section \ref{s:PreR}. In Section \ref{s:DEq} we derive dynamical equations for $\Delta_t(z)$. Finally in Section \ref{s:OptimalRigidity} we analyze the dynamical equations for $\Delta_t(z)$ and prove \Cref{p:rigidityBB}.

\subsection{Spectral Domain}\label{s:De}

We recall the discussion after \eqref{e:limits}. Two copies of $\fL(\fD')\cap \fD^t$ glued along the arctic curve is mapped bijectively to the domain $\mathscr U_t$ via \eqref{e:proj}. For any $0\leq t\leq \ft$ and $0\leq r\leq \ft'-t$, we define the domain ${\mathscr D}_t(r)$ as
\begin{align}\begin{split}\label{e:defDtr}
 \Bigg\{ x+ (t - s) \displaystyle\frac{f^*_s(x)}{f^*_s(x)+1}\in \mathscr U_t:  (x,s)\in \fL(\fD'), t+r\leq s\leq \ft' \Bigg\},
\end{split}\end{align}
together with its complex conjugate. Under the identification of the liquid region $\fL(\fD')\cap \fD^t$ with $\mathscr U_t$ via \eqref{e:proj}, $\mathscr D_t(r)$ corresponds to the region $\{(x,s)\in \fL(\fD'): t+r\leq s\leq \ft'\}$.  See Figure \ref{f:Dtr} for an illustration.

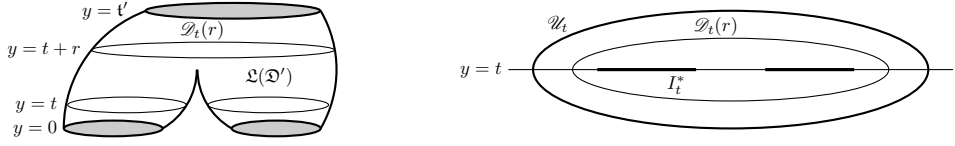
\begin{figure}
		
		\begin{center}		
			
			\begin{tikzpicture}[
				>=stealth,
				auto,
				style={
					scale = .52
				}
				]
				
				\draw [black,thick, fill=gray!40!white](0.78,	3) ellipse (2.21 and 0.2) ;
				\draw [] (-1.7, 3) node[left, scale=.7]{$y=\ft'$};
				\draw [black](0.275,	2) ellipse (3.075 and 0.2);
				\draw[] (-2.9, 2) node[left, scale = .7]{$y=t+r$};
				\draw[] (0, 2.1) node[above, scale = .7]{$\mathscr D_t(r)$};

				\draw [black,thick, fill=gray!40!white] (-2.25,	0) ellipse (1.25 and 0.2);
				\draw [black,thick, fill=gray!40!white] (1.875,	0) ellipse (1.125 and 0.2);
				
				\draw [black] (-1.9,	0.6) ellipse (1.5 and 0.2);
				\draw [black] (1.675,	0.6) ellipse (1.525 and 0.2);
				\draw[] (-3.5, 0.6) node[left, scale = .7]{$y=t$};

				\draw[black, thick] (-3.5, 0) arc (180:110:3.2);
				\draw [] (-3.5, 0) node[left, scale=.7]{$y=0$};
				
				\draw[black, thick] (3, 0) arc (-30:30:3);

				\draw[black, thick] (-1, 0) arc (-60:0:1.732);
				\draw[black, thick] (.75, 0) arc (240:180:1.732);
				\draw[] (1.7, 1.3) node[scale=0.7]{$\mathfrak{L}(\fD')$};
				
				\draw[black] (7.75, 1.5) node[left, scale = .7]{$y = t$}-- (19, 1.5);
				\draw[black, very thick] (10, 1.5) -- (12.5, 1.5);
				\draw[black, very thick] (14.25, 1.5) -- (16.5, 1.5);
				\draw[black, thick] (13.375,1.5) ellipse (5 and 1.5);
				
				\draw[black] (13.375,1.5) ellipse (4 and 0.8);

				\draw[] (13, 2.2)  node[above, scale=0.7]{$\mathscr D_t(r)$};
				\draw[] (12, 1.5)  node[below, scale=0.7]{$I^*_t$};
				\draw[] (9, 2.3)  node[above, scale=0.7]{$\mathscr U_t$};

			\end{tikzpicture}
			
		\end{center}
		
		\caption{\label{f:Dtr} 
		Shown to the left is the gluing of two copies of the liquid region $\fL(\fD')$ (with $s=0$ and $\beta=H_0^*(x)$), which can be identified with $\mathscr U_t$ via the map \eqref{e:proj}. Under this identification, $\mathscr D_t(r)$ as defined in \eqref{e:defDtr} corresponds to the region $\{(x,s)\in \fL(\fD'): t+r\leq s\leq \ft'\}$; shown to the right is an illustration of $\mathscr D_t(r)$ inside $\mathscr U_t$.
		}
		
	\end{figure}

In the rest of this section, we fix a small constant $0<\fr<\ft'-\ft$.
Since the spectral domain $\mathscr D_t(\fr)$ is bounded away from $I_t^*$, we will see that the behavior of $\Delta_t(z)$ for $z\in\mathscr D_t(\fr)$ is relatively easy to understand.
To understand the Stieltjes transform close to $I_t^*$, we define the \emph{spectral domains} (observe that we exclude $\mathscr D_t(\fr)$ from them):
\begin{align}\begin{split}\label{e:bulkdomain}
{\mathscr D}_t^{L}=
\Big\{ z\in \mathscr U_t: n |  \Imaginary [z]\Im [f_t^*]/f_t^*|\geq n^{ 2\fd}/2 \Big\}\setminus {\mathscr D}_t(\fr).
\end{split}\end{align}
We recall that under \Cref{a:xhh}, for $t_c\leq t\leq \ft'$, the slice $I^*_t$ is a single interval
$
I^*_t= \big[ E_1(t), E_2(t) \big],
$
and for $0\leq t\leq t_c$, the slice $I^*_t$ consists of two intervals
$
I^*_t= \big[ E_1(t), E'_1(t) \big] \cup \big[ E'_2(t), E_2(t) \big].
$ 
We define ${\mathscr D}_t^{\rm F}$ as the union of  
\begin{align}\begin{split}\label{e:edgedomain1}
&\phantom{\bigcup{}} \Big\{z\in \mathscr U_t: \min\big\{(n\Im z)^2, n\sqrt{\dist(z, I_t^*) |\Im z|} \big\} \cdot |\Im [f_t^*]/f_t^*|\geq n^{ 2\fd},\Re z \leq E_1(t)-\tau \big( E_1(t),t \big) \Big\}\\
&\cup \Big\{z\in \mathscr U_t: \min \big\{(n\Im z)^2, n\sqrt{\dist(z, I_t^*) |\Im z|} \big\} \cdot |\Im [f_t^*]/f_t^*|\geq n^{ 2\fd}, \Re z \geq E_{2}(t)+\tau \big(E_2(t),t \big) \Big\}
\end{split}\\
\begin{split}
& \cup\bm1(t\leq t_c-n^{-1/2+\fd})  \bigg\{z\in \mathscr U_t:  \min \big\{(n\Im z)^2, n\sqrt{\dist(z, I_t^*) |\Im z|} \big\} \cdot |\Im f_t^*/f_t^*|\geq n^{2 \fd}, \\
&\qquad \qquad \qquad \qquad \qquad \qquad \qquad \quad \phantom{\bigcup{}}\Re z\in \Big[E'_{1}(t)+\tau \big( E'_1(t),t \big), E_{2}'(t)+\tau \big(E'_2(t), t \big) \Big] \bigg\}\setminus {\mathscr D}_t(\fr).\label{e:edgedomain2}
\end{split}\end{align}

\noindent Estimates of Stieltjes transform on ${\mathscr D}_t^{\rm L}$ will give the information of particles inside the liquid region, and the estimates on ${\mathscr D}_t^{\rm F}$ will be used to control the location of particles in the frozen region. We refer to Figure \ref{f:D_t} for a depiction.

\begin{rem}
In context of Wigner matrices \cite{DRT}, the spectral domains are given in the bulk by $\big\{ z\in \bH^+: \Im[m_{sc}(z)]\Im[z]\gg 1/n \big\}$ and at the edge by $\big\{ z\in \bH^+: \Im[m_{sc}(z)]\sqrt{\Im[z]\dist(z, [-2,2])}\gg 1/n \big\}$, where $m_{sc}$ denotes the Stieltjes transform of the semi-circle distribution. In our construction, $\big| \Im [f_t^*] / f_t^* \big| =-\sin \big( \arg^*( f_t^*) \big)$ plays the same role as the Stieltjes transform of the semi-circle distribution; this quantity $| \Imaginary f_t^*/f_t^*|$ will also take into account the singular behaviors of $f_t^*$ close to the tangency locations.
\end{rem}

Thanks to \Cref{p:ftzbehave}, we have explicit estimates for $\big| \Im[f_t^*(z)]/f_t^*(z) \big| =-\sin \big( \arg^*( f_t^*(z)) \big)$ close to the arctic curve. We can rewrite the condition \eqref{e:bulkdomain} as $\big| \sin(\arg^*( f_t^*(z))) \big| \geq n^{2\fd-1}/|\Im z|$. For $z$ in the domain \eqref{e:edgedomain1}, say $z=E_1(t)-\kappa+\ri\eta \in \mathbb{H}^+$, \eqref{e:mtbehave2copy} gives
\begin{align}\label{e:f/fedge}
\left|\frac{\Im f_t^*(z)}{f_t^*(z)}\right|\asymp \frac{\eta}{\sqrt{\kappa+\eta} \big( \sqrt{\kappa+\eta}+|t_1-t| \big)}+\sin\arg \big( z-\fa(t) \big).
\end{align}
For $\kappa+\eta\leq \fc(t_1-t)^2$, we can rewrite the condition in \eqref{e:edgedomain1} as
\begin{align*}
\min\bigg\{(n\eta)^2, n\sqrt{(\kappa+\eta)\eta}\bigg\}\frac{\eta}{\sqrt{\kappa+\eta}|t_1-t|}\gtrsim n^{2\fd},
\end{align*}
and it simplifies to
$\eta\gtrsim \big( n^{2\fd - 1}|t_1-t| \big)^{2/3}$. For $z$ in the domain \eqref{e:edgedomain2}, say $z=E'_1(t)+\kappa+\ri\eta \in \mathbb{H}^+$, with $\kappa+\eta\leq \fc(t_c-t)^{3/2}$, \eqref{e:mtcubic} gives
\begin{align}\label{e:f/fcusp}
\left|\frac{\Im f_t^*(z)}{f_t^*(z)}\right|\asymp \frac{\eta}{(t_c-t)^{1/4}\sqrt{\kappa+\eta}},
\end{align}
and we can rewrite the condition in \eqref{e:edgedomain2} as
$\eta\gtrsim \big( n^{2\fd - 1}|t_c-t|^{1/4})^{2/3}$.

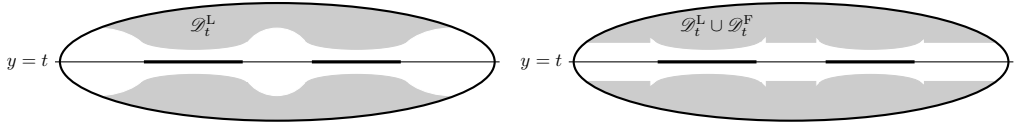
\begin{figure}
		
		\begin{center}		
			
			\begin{tikzpicture}[
				>=stealth,
				auto,
				style={
					scale = .52
				}
				]

				\draw[white, fill=gray!40!white] (0.375,1.5) ellipse (5.5 and 1.5);
				\draw[white, fill=white] (0.3625,1.5) ellipse (0.9 and 0.86 );
				\draw[white, fill=white] (4.8,1.5) ellipse (1.4 and 0.8625);
				\draw[white, fill=white] (-4.2,1.5) ellipse (1.4 and 0.8625);
				
				\draw[white, fill=white] (-3.2,1.0) rectangle ++(2.9,1);
				\draw[white, fill=white] (1.025,1.0) rectangle ++(2.65,1);
				\draw[gray!40!white, fill=gray!40!white] (2.38,2.07) ellipse (1.29 and 0.25);
				\draw[gray!40!white, fill=gray!40!white] (2.38,0.93) ellipse (1.29 and 0.25);
				\draw[gray!40!white, fill=gray!40!white] (-1.73,2.07) ellipse (1.34 and 0.25);
				\draw[gray!40!white, fill=gray!40!white] (-1.73,0.93) ellipse (1.34 and 0.25);
			
				\draw[black, thick] (0.375,1.5) ellipse (5.5 and 1.5);
				\draw[] (-1.5, 2)  node[above, scale=0.7]{$\mathscr D^{\rm L}_t$};
				
				\draw[black] (-5.25, 1.5) node[left, scale = .7]{$y = t$}-- (6, 1.5);
				\draw[black, very thick] (-3, 1.5) -- (-0.5, 1.5);
				\draw[black, very thick] (1.25, 1.5) -- (3.5, 1.5);

				\draw[white, fill=gray!40!white] (13.375,1.5) ellipse (5.5 and 1.5);
				\draw[white, fill=white] (13.3625,1.5) ellipse (0.9 and 0.86 );
				\draw[white, fill=white] (17.8,1.5) ellipse (1.4 and 0.8625);
				\draw[white, fill=white] (8.8,1.5) ellipse (1.4 and 0.8625);
				
				\draw[white, fill=white] (9.8,1.0) rectangle ++(2.9,1);
				\draw[white, fill=white] (14.025,1.0) rectangle ++(2.65,1);
				\draw[gray!40!white, fill=gray!40!white] (15.38,2.07) ellipse (1.29 and 0.25);
				\draw[gray!40!white, fill=gray!40!white] (15.38,0.93) ellipse (1.29 and 0.25);
				\draw[gray!40!white, fill=gray!40!white] (11.27,2.07) ellipse (1.34 and 0.25);
				\draw[gray!40!white, fill=gray!40!white] (11.27,0.93) ellipse (1.34 and 0.25);
				
				\draw[gray!40!white, fill=gray!40!white] (8,2) rectangle ++(1.8,0.5);
				\draw[gray!40!white, fill=gray!40!white] (8,0.5) rectangle ++(1.8,0.5);
				
				\draw[gray!40!white, fill=gray!40!white] (12.75,2) rectangle ++(1.25,0.5);
				\draw[gray!40!white, fill=gray!40!white] (12.75,0.5) rectangle ++(1.25,0.5);
				
				\draw[gray!40!white, fill=gray!40!white] (16.75,2) rectangle ++(1.8,0.5);
				\draw[gray!40!white, fill=gray!40!white] (16.75,0.5) rectangle ++(1.8,0.5);

				\draw[] (11.5, 2)  node[above, scale=0.7]{$\mathscr D^{\rm L}_t\cup \mathscr D^{\rm F}_t$};

				 \fill[white,even odd rule] (13.375,1.5) ellipse (5.5 and 1.5) (13.375,1.5) ellipse (6 and 2.5);
				 \draw[black, thick] (13.375,1.5) ellipse (5.5 and 1.5);

				\draw[black] (7.75, 1.5) node[left, scale = .7]{$y = t$}-- (19, 1.5);
				\draw[black, very thick] (10, 1.5) -- (12.5, 1.5);
				\draw[black, very thick] (14.25, 1.5) -- (16.5, 1.5);

			\end{tikzpicture}
			
		\end{center}
		
		\caption{\label{f:D_t} Shown to the left is an illustration of the spectral domain $\mathscr D_t^{\rm L}$; shown to the right is an illustration of the spectral domain $\mathscr D_t^{\rm L}\cup \mathscr D_t^{\rm F}$.}
		
	\end{figure}

Next, we define the control parameter
\begin{align}\label{e:deftz2}
\fM(t):=M e^{M t}n^{ {\fd/3}},
\end{align}
and the stopping time
\begin{align}\begin{split}\label{stoptime2}
\sigma&\deq \inf_{0\leq t\in \bZ_n}\left\{\exists z\in {\mathscr D}_t(\fr): \left|\Delta_t(z)\right|\geq \frac{\fM(t)}{n}\right\} \wedge\inf_{0\leq t\in \bZ_n}\left\{\exists z\in {\mathscr D}_t^{\rm L}: \left|\Delta_t(z)\right|\geq \frac{M n^{ \fd}}{n\Im z}\right\}\\
& \qquad \wedge\inf_{0\leq t\in \bZ_n}\left\{\exists z\in {\mathscr D}_t^{\rm F} : \left|\Delta_t(z)\right|\geq \frac{ M n^{ \fd}}{n\sqrt{\Im[z]\dist(z, I_t^*)}}+\frac{M n^\fd}{(n\Im z)^2}\right\}\wedge \ft,
\end{split}\end{align}
where the large constant $M>0$ will be chosen later, and $a\wedge b=\min\{a,b\}$.

In the rest of this section, we restrict our analysis to complex numbers $z$ in these spectral domains satisfying $z \in \mathbb{H}^+$; the estimates we show on these quantities will follow for $z \in \mathbb{H}^-$ by symmetry, so we will not comment on this case in the below. We further recall the characteristic $z_t (u)$ from \eqref{e:ccff} and \eqref{zlineart}. Then $\Im[f^*_0(u)]=\Im[f^*_t(z_t(u))]\leq 0$, and $\Im[z_t(u)]=\Imaginary u+t \Im[f_0^*(u)]/|f_0^*(u)+1|^2$ is nonincreasing in $t$.

\begin{prop}\label{p:gap}
	
Adopt the assumptions of \Cref{p:rigidityBB}; fix $u \in \mathbb{H}^+$; and abbreviate $z_s=z_s(u)$ for $s \in [0, \ft]$. Then the following statements hold.

\begin{enumerate} 
	
	\item If $z_t$ is in the domain \eqref{e:edgedomain1}, then for any $0\leq s\leq t$, 
\begin{align}\label{e:gap1}
\dist(z_s, I^+_s) \gtrsim \Im z_s+\sqrt{\dist(z_s, I_s^*)}(t-s).
\end{align}

\item If $z_t$ is in the domain \eqref{e:edgedomain2}, then for any $0\leq s\leq t\leq t_c$, 
\begin{align}\label{e:gap2}
\dist(z_s, I^+_s) \gtrsim \Im z_s +\sqrt{\dist(z_s,I_s^*)}\frac{t-s}{(t_c-s)^{1/4}}.
\end{align}

\item If $z_t \in {\mathscr D}_t(\fr)\cup{\mathscr D}_t^{\rm L}\cup{\mathscr D}_t^{\rm F}$,  then for any $0\leq s\leq t$, we have $z_s \in {\mathscr D}_s(\fr)\cup{\mathscr D}_s^{\rm L}\cup{\mathscr D}_s^{\rm F}$.

\item Recall  $I_t^+$ from \eqref{e:defI_t+1}, and the height function $H_t(x)$ corresponding to the particle configuration $\bmx_t$ from \eqref{e:defHt}. If we further assume that $t\in [0,\sigma)\cap \bZ_n$ then we have 
\begin{align}\label{e:Htbound}
	\begin{aligned}
|H_t(x)-H_t^*(x)|\leq n^{3\fd - 1},\qquad & \text{if $x\in I_t^+$}; \\
H_t(x)=H_t^*(x),\qquad & \text{if $x\not\in I_t^+$}.
\end{aligned}
\end{align}

\end{enumerate} 
\end{prop}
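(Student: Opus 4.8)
The plan is to treat the four statements in order, since the rigidity bound \eqref{e:Htbound} in item (4) will be a consequence of the geometric control of the characteristics in items (1)--(3) together with an inversion of the Stieltjes transform $\Delta_t$ on the spectral domains.

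\emph{Items (1)--(3): propagation of the characteristic along the spectral domains.} First I would exploit the explicit linearity of the characteristic flow, $z_s (u) = u + s\, f_0^*(u)/(f_0^*(u)+1)$ from \eqref{zlineart}, equivalently $z_s = z_t + (s-t)\, f_t^*(z_t)/(f_t^*(z_t)+1)$, which holds because $f_t^*$ is constant along the flow. Writing $w(z) = f_t^*(z)/(f_t^*(z)+1)$, the point is that $z_s$ moves in the fixed complex direction $-w(z_t)$ as $s$ decreases from $t$. To control $\dist(z_s, I_s^+)$ from below I would split according to whether $z_t$ sits near a tangency location $(E_1(t_1), t_1)$ (domain \eqref{e:edgedomain1}) or near the cusp (domain \eqref{e:edgedomain2}). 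In the first case I invoke the square-root estimates \eqref{e:squareeq} and the tangency asymptotics \eqref{e:mtbehave2copy}--\eqref{e:mtbehave1copy} from \Cref{p:ftzbehave} to lower-bound $|\Im w(z_t)|$ and $|\Re w(z_t)|$ in terms of $\kappa = \dist(\Re z_t, I_t^*)$, $\eta = \Im z_t$, and $|t-t_1|$; this yields that the real part of $z_t$ drifts away from the relevant edge $E_s(\cdot)$ at rate $\asymp \sqrt{\dist(z_s, I_s^*)}$ per unit time, while $\Im z_s$ stays comparable to $\Im z_t$ for $s$ in the relevant window, giving \eqref{e:gap1}; one also needs to compare the drift of the edge $E(s)$ itself (which moves at speed $f_s^*(E(s))/(f_s^*(E(s))+1)$, i.e.\ the same characteristic speed evaluated on the boundary) against that of $z_s$, and to check $\tau(E_i(s),s) \lesssim$ the gained distance using the definition \eqref{e:disf}. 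In the cusp case I instead use the cube-root/square-root estimates \eqref{e:cubiceq}--\eqref{e:localft3}, together with $E_2'(s) - E_1'(s) \asymp (t_c - s)^{3/2}$ and $\tau = n^{-2/3+6\fd}(t_c-s)^{1/6}$, to get the extra $(t_c-s)^{-1/4}$ factor in \eqref{e:gap2}. Item (3) is then essentially a closure statement: the defining inequalities of $\mathscr D_t^{\rm L}$ and $\mathscr D_t^{\rm F}$ are expressed through $|\Im[f_t^*]/f_t^*| = -\sin(\arg^* f_t^*)$ and $\dist(z, I_t^*)$, and since $f_s^*(z_s) = f_t^*(z_t)$ is constant along the flow while $\Im z_s$ is nonincreasing and $\dist(z_s, I_s^*)$ grows (by items (1)--(2) and the analogous bulk estimate), each of the relevant products $\min\{(n\Im z_s)^2, n\sqrt{\dist(z_s,I_s^*)|\Im z_s|}\}\cdot|\Im f_s^*/f_s^*|$ is monotone in the right direction; one checks case by case that membership of $z_t$ in $\mathscr D_t(\fr)\cup\mathscr D_t^{\rm L}\cup\mathscr D_t^{\rm F}$ forces $z_s$ into the corresponding union at the earlier time.

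\emph{Item (4): rigidity of the height function before the stopping time.} Here I would argue that for $t \in [0,\sigma)\cap\bZ_n$, the definition \eqref{stoptime2} of $\sigma$ gives, simultaneously for all $z$ in the three spectral domains, the bounds $|\Delta_t(z)| \le \fM(t)/n$ on $\mathscr D_t(\fr)$, $|\Delta_t(z)|\le Mn^\fd/(n\Im z)$ on $\mathscr D_t^{\rm L}$, and $|\Delta_t(z)|\le Mn^\fd/(n\sqrt{\Im z\,\dist(z,I_t^*)}) + Mn^\fd/(n\Im z)^2$ on $\mathscr D_t^{\rm F}$. I then convert these Stieltjes-transform estimates into an estimate on $H_t(x) - H_t^*(x) = \int (\rho(y;\bmx_t) - \rho_t^*(y))\,\id y$ by a standard Helffer--Sj\"ostrand / contour-integration argument: integrating $\Delta_t$ against a suitable test function (or writing $H_t - H_t^*$ as a contour integral of $\Delta_t$ over a rectangle of height $\asymp n^{-2/3+\fd}$ or $n^{-1+\fd}$ around the point $x$), the liquid-region bound on $\mathscr D_t^{\rm L}$ gives the $n^{3\fd-1}$ control for $x \in I_t^+$ that is bounded away from the edges, the edge bound on $\mathscr D_t^{\rm F}$ handles $x \in I_t^+$ near a tangency location or cusp (using the sharper $\dist(z,I_t^*)$-dependent scale, and the geometric gap \eqref{e:gap1}/\eqref{e:gap2} to keep the contour inside $\mathscr D_t^{\rm F}$), and for $x \notin I_t^+$ the fact that $\rho_t^*$ is supported in $I_t^*$ together with the $\mathscr D_t^{\rm F}$ estimate forces $\rho(\cdot;\bmx_t)$ to have no mass there with overwhelming probability, giving the exact equality $H_t(x) = H_t^*(x)$. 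The combinatorial input is that $H_t$ is integer-valued on $\bZ_n$ (up to the $1/n$ rescaling), so an estimate below $1/(2n)$ upgrades to an exact identity; this is where the initial condition \eqref{e:boundaryc0} is used at $t=0$ to seed the induction.

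\emph{Main obstacle.} The routine parts are the Stieltjes inversion in item (4) and the bulk case of item (3). The genuinely delicate step is the quantitative propagation estimates \eqref{e:gap1}--\eqref{e:gap2} near the tangency locations and the cusp: one must track how the characteristic $z_s$, the moving edges $E_i(s), E_i'(s)$, and the (time-dependent, shrinking near the cusp) enlargement radius $\tau$ all interact, using the precise square-root and cube-root asymptotics of \Cref{p:ftzbehave} with their exact exponents of $|t - t_i|$ and $(t_c - t)$. Getting the powers of $n$ in the definitions of $\tau$ (i.e.\ $n^{-2/3+6\fd}|t-t_i|^{2/3}$ and $n^{-2/3+6\fd}(t_c-t)^{1/6}$) to be exactly what is needed so that the gained distance dominates $\tau$ at \emph{every} earlier time $s$, and simultaneously so that the spectral domains are preserved, is the crux; this is why \Cref{a:xhh} is set up to isolate the worst case (one cusp, two tangencies of slopes $1$ and $\infty$), and the argument for all milder configurations is the same or easier.
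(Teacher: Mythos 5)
Your proposal follows essentially the same route as the paper's proof: items (1)--(2) are obtained by differentiating the characteristic and the moving edge along the flow and using the square-root/cube-root asymptotics of \Cref{p:ftzbehave}, item (3) is a closure argument split by domain type, and item (4) uses the stopping-time bounds on $\Delta_t$ to invert the Stieltjes transform inside $I_t^+$ and a contradiction argument on $\Im m_t$ at carefully chosen scales to force $H_t = H_t^*$ outside $I_t^+$. So the architecture matches; a few imprecisions are worth noting.

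First, the decisive mechanism for \eqref{e:gap1}--\eqref{e:gap2} is not merely ``the real part drifts away at rate $\sqrt{\dist(z_s,I_s^*)}$ while $\Im z_s$ stays comparable.'' In fact $\eta_s = \Im z_s$ is nonincreasing in $s$ and can grow unboundedly going backward, and the derivative $\partial_r\kappa_r \lesssim -\sqrt{\kappa_r+\eta_r}$ already involves $\eta_r$, so a naive integration in $\kappa$ alone is circular. The trick is to couple: $\partial_r\eta_r \lesssim -\eta_r/\sqrt{\kappa_r+\eta_r}$ and $\partial_r\kappa_r \lesssim -\sqrt{\kappa_r+\eta_r}$ sum to the closed ODE $\partial_r(\kappa_r+\eta_r) \lesssim -\sqrt{\kappa_r+\eta_r}$, which integrates to $\sqrt{\kappa_s+\eta_s}-\sqrt{\kappa_t+\eta_t}\gtrsim t-s$ and $(\kappa_s+\eta_s)-(\kappa_t+\eta_t)\gtrsim(t-s)\sqrt{\kappa_s+\eta_s}$. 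You then compare this gain against $\tau(E(s),s)-\tau(E(t),t)$. You correctly identify the scale bookkeeping as the crux, but the coupling of $\kappa$ and $\eta$ is the step your outline leaves implicit, and it is exactly what keeps the estimate from being circular. Second, item (4) is deterministic on the event $t < \sigma$: the bounds used are those built into the definition of $\sigma$, not the initial condition \eqref{e:boundaryc0} (which seeds the later stopping-time closure argument, not this proposition). Third, the exclusion of particles outside $I_t^+$ is not a soft ``no mass'' statement; one has to pick $\kappa\asymp n^{6\fd-2/3}$-scale and a matching $\eta$ so that $E(t)\mp\kappa+\mathrm{i}\eta$ lies in $\mathscr D_t^{\rm F}$, and then compare the stopping-time bound on $|\Delta_t|$ against the deterministic estimate $-\Im m_t \geq 1/(2n\eta)$ forced by a hypothetical particle within distance $\eta$; the integer-valuedness you invoke is not what drives it. These are filling-in-the-details corrections; the strategy is right.
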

\begin{proof}

We begin by establishing the first statement of the proposition. For $z_t$ in the domain \eqref{e:edgedomain1}, we consider the case when $\Re z_t \leq E_{1}(t)$, for the case when $\Real z_t \geq E_2 (t)$ is addressed very similarly. We denote $E(r)\deq E_1(r)$ and $z_r=E_1(r)-\kappa_r+\ri\eta_r$ for $r \in [s, t]$.
 From \eqref{e:squareeq}, $f^*_r(z)/(f^*_r(z)+1)$ has square root behavior in a small neighborhood of $E(r)$,
\begin{align}\label{e:localft}
\displaystyle\frac{f^*_r(z)}{f^*_r(z)+1} = \displaystyle\frac{f^*_r \big( E(r) \big)}{f^*_r \big(E(r) \big)+1}+\fC\sqrt{E(r)-z}+\OO \big( |z-E(r)| \big),\quad |z-E(r)|\leq \fc,
\end{align}
where $\fC\gtrsim 1$ and the square root is the branch with nonpositive imaginary part. 
Moreover, by \eqref{e:slope}, $E(r)$ satisfies the differential equation 
\begin{align}\label{e:edgeEqn}
\del_r E(r)= \displaystyle\frac{f^*_r \big( E(r) \big)}{f^*_r \big( E(r) \big)+1}.
\end{align}
By taking difference between \eqref{e:edgeEqn} and  the characteristic flow $\del_r z_r=f^*_r(z_r)/ \big( f^*_r(z_r)+1 \big)$, and then taking the real part we get
\begin{align}\label{e:diff}
\del_r \kappa_r
=\Re \bigg[ \displaystyle\frac{f^*_r(E(r))}{f^*_r \big(E(r) \big)+1} - \displaystyle\frac{f^*_r(z_r)}{f^*_r(z_r)+1} \bigg].
\end{align}

If there is some time $s\leq r\leq t$ such that $\Im z_r \geq \fc/2$, then $\Im z_s \geq \Im z_r \geq \fc/2$, and so
\begin{align*}
\dist(z_s, I^+_s)
\geq \Im z_s \gtrsim \Im z_s +\sqrt{\dist(z_s,I_s^*)}(t-s),
\end{align*}

\noindent holds trivially. In the following, we assume that for all $s\leq r\leq t$ we have $\Im z_r \leq \fc/2$. 
If $\big| z_r-E(r) \big| \leq \fc$ and $\kappa_r\geq 0$, then \eqref{e:localft} and \eqref{e:diff} together give
\begin{align}\label{e:diff2}
\del_r \kappa_r
=\Re \bigg[ \displaystyle\frac{f^*_r(E(r))}{f^*_r \big( E(r) \big)+1} - \displaystyle\frac{f^*_r(z_r)}{f^*_r(z_r)+1} \bigg] \lesssim -\sqrt{\eta_r+\kappa_r}.
\end{align}
Since $\kappa_t\geq 0$, we conclude that  $\kappa_r$ will remain positive for $s\leq r\leq t$. In the following we further assume that for all $s\leq r\leq t$, we have $0\leq \kappa_r\leq\fc/2$. Otherwise, we can simply repeat the same argument with $t=r$ such that $\kappa_r= \fc/2$ (if $r=s$, then $\kappa_s \geq \fc/2 \gtrsim 1$, and so \eqref{e:gap1} holds again). Under this assumption, $\big| z_r-E(r) \big|\leq \fc$ and \eqref{e:diff2} holds for $s\leq r\leq t$.
Moreover, the characteristic flow gives (using \eqref{e:mtsquare})
\begin{align}\label{e:diff2a}
\del_s \eta_s
=\Im \bigg[ \displaystyle\frac{f^*_s(z_s)}{f^*_s(z_s)+1} \bigg] \lesssim -\frac{\eta_s}{\sqrt{\kappa_s+\eta_s}}.
\end{align}
By taking sum of \eqref{e:diff2} and \eqref{e:diff2a}, we have
\begin{align}\label{e:diff2b}
\del_s (\kappa_s+\eta_s)\lesssim -\sqrt{\eta_s+\kappa_s}.
\end{align}
The differential equation \eqref{e:diff2b} can be solved directly; it gives
\begin{align}\label{e:diff2c}
\sqrt{\kappa_s+\eta_s}-\sqrt{\kappa_t+\eta_t}\gtrsim t-s,\quad (\kappa_s+\eta_s)-(\kappa_t+\eta_t)\gtrsim (t-s)\sqrt{\kappa_s+\eta_s}.
\end{align}
We also notice that $\kappa_t\geq \tau \big( E(t),t \big)$, since $z_t$ is in the domain \eqref{e:edgedomain1}. Recall from \eqref{e:disf} that if $|t_1-r|\geq n^{6\fd-1/2}$ then we have
$\tau \big(E(r),r \big)=(t_1-r)^{2/3}/n^{2/3- 6 \fd}$; otherwise we have $\tau \big( E(t),t \big)=n^{10\fd-1}$. Thus, for $|t_1-t|, |t_1-s|\geq n^{6\fd-1/2}$
\begin{align}\label{e:diff2d}
\tau \big( E(s),s \big)- \tau \big( E(t),t \big)= \frac{(t_1-s)^{2/3}-(t_1-t)^{2/3}}{n^{2/3-6\fd}}\ll (t-s)\sqrt{\kappa_s+\eta_s}.
 \end{align}
Similarly, for the other cases, one can quickly verify that $\big| \tau(E(s),s)- \tau(E(t),t) \big|\ll(t-s)\sqrt{\kappa_s+\eta_s}$. Hence,
\begin{align}
	\label{distancezi} 
	\begin{aligned}
&\dist(z_s, I_s^+)  \gtrsim \eta_s + \max \Big\{ \eta_s + \kappa_s - \tau \big( E(s), s \big), 0 \Big\} \gtrsim \eta_s + \max \Big\{ \eta_t + \kappa_t - \tau \big( E(s), s \big), 0 \Big\} + (t - s) \sqrt{\kappa_s + \eta_s} \\
& \gtrsim \eta_s + \max \Big\{ \eta_t + \kappa_t - \tau \big( E(t), t \big), 0 \Big\} + (t - s) \sqrt{\kappa_s + \eta_s}  \gtrsim \eta_s+(t-s)\sqrt{\kappa_s+\eta_s},
\end{aligned}
\end{align}

\noindent This gives \eqref{e:gap1}.

We next establish the second statement of the proposition.  For $z_t$ in the domain \eqref{e:edgedomain2}, we have $\Re z_t \in \big[ E_{1}'(t), E'_{2}(t) \big]$. We denote $E(r)\deq E_{1}'(r)$, $E'(r)\deq E_{2}'(r)$ and $z_r=E(r)+\kappa_r+\ri\eta_r$ for any $s\leq r\leq t\leq t_c$. 
Take small $\fc>0$ which will be chosen later. First assume that  $r=\inf\{\tau \in [s, t]: \Imaginary z_\tau \geq \mathfrak{c} (t_c - \tau)^{3/2}/2\}$ exists. Next we show that $\Im z_s \gtrsim (t_c-s)^{3/2}$. If $r=s$, there is nothing to prove. Otherwise $r>s$, then we have $\Im z_r = \fc(t_c-r)^{3/2}/2$ and $\del_r \Im z_r\geq  -(3\fc/4)(t_c-r)^{1/2}$.  From \eqref{e:localft3}, $f^*_r(z)/ \big( f^*_r(z)+1 \big)$ has cube root behavior 
\begin{align}\label{e:localft3copy}
-\Imaginary \bigg[ \displaystyle\frac{f^*_r(z)}{f^*_r(z) + 1} \bigg]\geq C\Im[z]^{1/3},
\end{align}
for $\Imaginary z\geq \fc(t_c-r)^{3/2}/2$. Hence, if $\Im z_r \geq \fc(t_c-r)^{3/2}/2$, then \eqref{e:localft3copy} yields
\begin{align*}
\del_r \eta_r
\leq -C(\fc/2)^{1/3}(t_c-r)^{1/2}.
\end{align*}
This contradicts with $\del_r \Im z_r\geq  -(3\fc/4)(t_c-r)^{1/2}$, provided we take $(\fc/2)^{2/3}\leq (2C/3)$. 
It follows that we always have $\Im z_s \gtrsim (t_c-s)^{3/2}$, and so
\begin{align*}
\dist(z_s, I_s^+)\geq \Im z_s \gtrsim \Im z_s +\sqrt{\dist(z_s, I_s^*)}\frac{t-s}{(t_c-s)^{1/4}},
\end{align*}

\noindent where we have used the fact that $\kappa_s \leq \big| E_1' (s) - E_2' (s) \big| \lesssim (t_c - s)^{3/2}$, by the third statement of \Cref{p:ftzbehave}.

Next, we assume that $\Im[z_r]\leq \fc(t_c-r)^{3/2}$ for $r\in [s,t]$. If $\big| z_r - E(r) \big| \leq \mathfrak{c} (t_c - r)^{3/2}$ for some $r \in [s, t]$, then, from \eqref{e:cubiceq}, $f^*_r(z)/ \big( f^*_r(z)+1 \big)$ has square root behavior with a constant depending on $t_c-r$, namely
\begin{align}\label{e:localft2}
	\frac{f^*_r(z)}{f^*_r(z)+1}=\frac{f^*_r \big( E(r) \big)}{f^*_r \big( E(r) \big)+1}+\frac{\fC\sqrt{z-E(r)}}{(t_c-r)^{1/4}}+\OO\left((t_c-r)^{1/4} \big| z-E(r) \big|^{1/2}+\frac{\big| z-E(r) \big|}{t_c-r}\right),
\end{align}
where the square root is the branch with nonpositive imaginary part. Similarly to \eqref{e:diff2}, we have
\begin{align}\label{e:diff3a}
	\del_r \kappa_r
	=\Re\left[\frac{f^*_r \big( E(r) \big)}{f^*_r \big( E(r) \big)+1}-\frac{f^*_r(z_r)}{f^*_r(z_r)+1}\right]\lesssim -\frac{\sqrt{\eta_r+\kappa_r}}{(t_c-r)^{1/4}}.
\end{align}

\noindent Thanks to \eqref{e:diff3a}, $\kappa_r$ will remain positive for $s\leq r\leq t$. In the following we further assume that for all $s\leq r\leq t$ we have $0\leq \kappa_r\leq \fc(t_c-r)^{3/2}/2$. Otherwise, we can simply repeat the same argument as used above with $t=r$ such that $\kappa_r=\fc(t_c-r)^{3/2}/2$ (if $r=s$, then $\kappa_s \geq (t_c-s)^{3/2}$, and so \eqref{e:gap2} holds again). Under this assumption, we have $\big| z_r-E(r) \big|\leq \fc(t_c-r)^{3/2}$ and \eqref{e:diff3a} holds for each $r \in [s, t]$. Moreover, the characteristic flow gives (by \eqref{e:mtcubic})
\begin{align}\label{e:diff3b}
\del_r \eta_r
=\Im\left[\frac{f^*_r(z_r)}{f^*_r(z_r)+1}\right]\lesssim -\frac{\eta_r}{(t_c-r)^{1/4}\sqrt{\kappa_r+\eta_r}}.
\end{align}
By taking sum of \eqref{e:diff3a} and \eqref{e:diff3b}, we have
\begin{align}\label{e:diff3c}
\del_r (\kappa_r+\eta_r)\lesssim  -\frac{\sqrt{\kappa_r+\eta_r}}{(t_c-r)^{1/4}}.
\end{align}
The differential equation \eqref{e:diff3c} can be solved directly by
\begin{align}\label{e:kskt}
\sqrt{\kappa_s+\eta_s}-\sqrt{\kappa_t+\eta_t}\gtrsim (t_c-s)^{3/4}-(t_c-t)^{3/4},\quad (\kappa_s+\eta_s)-(\kappa_t+\eta_t)\gtrsim \frac{(t-s)}{(t_c-s)^{1/4}}\sqrt{\kappa_s+\eta_s}.
\end{align}
We recall from \eqref{e:defI_t+2}, if $t_c - t\leq n^{6\fd-1/2}$,  $I_t^+$ is a single interval and the  domain \eqref{e:edgedomain2} is empty. For $t_c - t\geq n^{6\fd-1/2} \gg n^{9 \mathfrak{d} / 2 - 1/2}$ we have
\begin{align}\label{e:sigkskt}
\tau \big( E(s),s \big)- \tau \big( E(t),t \big)=\frac{(t_c-s)^{1/6}-(t_c-t)^{1/6}}{n^{2/3-6\fd}}\ll \frac{(t-s)}{(t_c-s)^{1/4}}\sqrt{\kappa_s+\eta_s}.
 \end{align}
We also notice that $\kappa_t\geq \tau \big( E(t),t \big)$.

\noindent Then, similarly to as in \eqref{distancezi}, it follows from combining \eqref{e:kskt} and \eqref{e:sigkskt} that
\begin{align*}
\dist(z_s, I_s^+)\geq \eta_s+ \max \Big\{ \kappa_s-\tau \big(E(s),s) \big), 0 \Big\} \gtrsim \eta_s+ \frac{(t-s)}{(t_c-s)^{1/4}}\sqrt{\kappa_s+\eta_s},
\end{align*}

\noindent which gives \eqref{e:gap2}.

We now address the third statement of the proposition. If $z_t\in \mathscr D_t(\fr)$, from our definition \eqref{e:defDtr}, we have $z_s\in \mathscr D_s(\fr)$. If $z_t\in \mathscr D_t^{\rm L}$, from the definition of the characteristic flow \eqref{e:ccff}, we have $\Im z_s \geq \Im z_t $ and $f_s^*(z_s)=f_t^*(z_t)$. Therefore, 
\begin{flalign*} 
	n\Im[z_s] \bigg| \displaystyle\frac{\Im[f_s^*(z_s)]}{f_s^* (z_s)} \bigg|\geq n\Im[z_t] \bigg| \displaystyle\frac{\Im[f_t^*(z_t)]}{f_t^* (z_t)} \bigg| \geq \displaystyle\frac{n^{2\fd}}{2}, 
\end{flalign*} 

\noindent and thus $z_s\in \mathscr D_s(\fr)\cup \mathscr D_s^{\rm L}$. Finally, if $z_t\in \mathscr D_t^{\rm F}$, we show that $z_s\in \mathscr D_s(\fr)\cup  \mathscr D_s^{\rm L} \cup \mathscr D_s^{\rm F}$. We discuss the case that $z_t$ is in the domain \eqref{e:edgedomain1}, namely, $z_t=z_t(u)=E_1(t)-\kappa_t+\ri \eta_t$ with $\kappa_t > 0$; the other cases follow from the same argument. If $\Im z_s \geq \mathfrak{c}$ for some constant $\mathfrak{c}$, then we have $z_s\in \mathscr D_s(\fr)$; so, let us suppose that $\Im z_s\leq \mathfrak{c}$; then set $z_s=E_1(s)-\kappa_s+\ri \eta_s$. 
Thanks to \eqref{e:diff2}, we have that $\kappa_s\geq \kappa_t$, and $\dist(z_s, I_s^*)\geq \dist(z_t, I_t^*)$.
If $\eta_s\geq \kappa_s$, then 
\begin{flalign*} 
	n\Im[z_s] \bigg| \displaystyle\frac{\Im[f_s^*(z_s)]}{f_s^* (z_s)} \bigg |\geq \displaystyle\frac{1}{2} n\sqrt{\dist(z, I_s^*)\Im[z_s]} \cdot \bigg| \displaystyle\frac{\Im[f_s^*(z_s)]}{f_s^* (z_s)} \bigg| \geq \displaystyle\frac{1}{2} n \sqrt{ \dist (z_t, I_t^*) \Imaginary [z_t]} \cdot \bigg| \displaystyle\frac{\Imaginary [f_t^* (z_t)]}{f_t^* (z_t)} \bigg| \geq \displaystyle\frac{n^{2 \mathfrak{d}}}{2},
\end{flalign*} 

\noindent which gives $z_s\in \mathscr D_s(\fr)\cup\mathscr D_{s}^{\rm L}$. If instead $\eta_s\leq \kappa_s$, then \eqref{e:diff2} gives that $\del_r \kappa_r\lesssim -\sqrt{\eta_r+\kappa_r}\leq -\sqrt{\kappa_r}$ for $s\leq r\leq t$. As in \eqref{e:diff2c}, we can solve this differential equation to obtain the bound 
\begin{align*}
\kappa_s-\kappa_t\gtrsim (t-s)\sqrt{\kappa_s}\gtrsim (t-s)\sqrt{\kappa_s+\eta_s}\gg \tau \big( E(s),s \big)-\tau \big( E(t),t \big),
\end{align*}
where we used \eqref{e:diff2d} for the last inequality. Since $\kappa_t\geq \tau(E(t),t)$, it follows by rearranging that
$\kappa_s\geq \tau \big( E(s),s \big)$. We conclude that  $z_s\in \mathscr D_s(\fr)\cup\mathscr D_s^{\rm F}$.

It remains to establish the fourth statement of the proposition. From our construction of the stopping time \eqref{stoptime2} for $t\in[0,\sigma)\cap \bZ_n$, uniformly for any $z\in{\mathscr D}_t^{\rm L}\cup \mathscr D_t(\fr)$ as defined in \eqref{e:bulkdomain}, we have 
\begin{align}\label{e:mtbb}
|\Delta_t(z)|\leq \frac{(\log n) n^\fd}{n\Imaginary z}. 
\end{align}
Thanks to \Cref{p:ftzbehave}, we have always $|\Im[f^*_t(z)]/f^*_t(z)|=|\sin(\Im[\log f_t^*(z)])|\gtrsim |\sin(\Im[m_t^*(z)])|$. Therefore, ${\mathscr D}_t^{\rm L}\cup \mathscr D_t(\fr)$ contains $z$ satisfying $\Imaginary z \gtrsim n^{2\fd-1}/|\sin(\Im[m_t^*(z)])|$; thus, \eqref{e:mtbb} holds whenever $\Imaginary z \gtrsim n^{2\fd-1}/|\sin(\Im[m_t^*(z)])|$. By a standard argument \cite[Corollary 3.2]{MR4009708}, this implies the first half of \eqref{e:Htbound}: $|H_t(x)-H_t^*(x)|\leq n^{3 \mathfrak{d}}/n$.

For the second half of \eqref{e:Htbound}, we need to show that the intervals $\big\{ x : \fa(t)\leq x\leq E_1(t)-\tau(E_1(t),t) \big\}$, $\big\{ x :  E_{2}(t)-\tau(E_{2}(t),t)\leq x\leq \fb(t) \big\}$ and $\big\{ x : E'_{1}(t)+\tau(E'_{1}(t),t)\leq x\leq E'_{2}(t) + \tau(E'_{2}(t),t) \}$ are either void or saturated. 

We first prove it for $\fa(t)\leq x\leq E_1(t)-\tau \big(E_1(t),t \big)$; the case $ E_{2}(t) + \tau \big(E_{2}(t),t \big)\leq x\leq \fb(t)$ follows from the same argument and will therefore be omitted. Let $E(t)=E_1(t)$. We recall $\tau \big( E(t), t \big)=n^{-2/3+6\fd}|t-t_1|^{2/3}\vee n^{ -1+10\fd}$ from \eqref{e:disf} and $ \big|E(t)-\fa(t) \big|\asymp (t-t_1)^2$ from the second statement of \Cref{p:ftzbehave}. Thus, if $|t-t_1| \le \mathfrak{c} n^{5\fd-1/2}$ for sufficiently small $\mathfrak{c} > 0$, then $\big\{ x : \fa(t)\leq x\leq E_1(t)-\tau(E_1(t),t) \big\}$ is empty, so there is nothing to prove. Otherwise, for $|t-t_1| \geq \mathfrak{c} n^{5\fd-1/2}$, we fix $\mathfrak{b} = 6 \mathfrak{d}$, and we take $\kappa=n^{\fb - 2/3} |t-t_1|^{2/3}\geq \tau \big( E(t), t \big)$ and $\eta=n^{3\fb/13 - 2/3}  |t-t_1|^{2/3}$. Using \eqref{e:f/fedge}, we can check $E(t)-\kappa+\ri\eta\in \mathscr D_t^{\rm F}\cup \mathscr D_t(\fr)$. Thus, for $t\leq \sigma$, we have
\begin{align}\label{e:mtbound}
    \Big| m_t \big(E(t)-\kappa+\ri \eta \big)- m^*_t \big( E(t)-\kappa+\ri \eta \big) \Big| \lesssim\frac{n^{ 2\fd}}{n\sqrt{\eta(\kappa+\eta)}}.
\end{align}
Thanks to the square root behavior of $m^*_t(z)$ from \eqref{e:mtbehave1copy}, if $t\leq t_1$, then $\big[ \fa(t), E(t) \big]$ is a void region, and
\begin{align}\label{e:mtbb2}
-\Im m_t^*\big( E(t)-\kappa+\ri \eta \big)
\asymp \frac{\eta}{\sqrt{\kappa+\eta} \big(\sqrt{\kappa+\eta}+(t_1-t) \big)}.
\end{align}
By our choice $\kappa=n^{\fb - 2/3} |t-t_1|^{2/3}\geq \tau \big( E(t), t \big)$ with $\fb\geq 6\fd$, and $\eta=n^{3\fb/13 - 2/3} |t-t_1|^{2/3}$, \eqref{e:mtbound} and \eqref{e:mtbb2} imply that  $-\Im[ m_t(E_1 (t)-\kappa+\ri \eta)]\ll 1/n\eta$. 
However, if there existed a particle $x_i(t)$ such that $\big| x_i(t)-E(t)+\kappa \big| \leq \eta$, we would have
 that 
\begin{align}\label{e:sumerror}
   - \Im \Big[m_t \big( E(t)-\kappa+\ri \eta \big) \Big]=\frac{1}{n}\sum_{i=1}^n \frac{\eta}{ \big( x_i(t)-\kappa-E(t) \big)^2+\eta^2}\geq \frac{1}{2n\eta}.
\end{align}
This leads to a contradiction, and we conclude that there is no particle on $\big[ \fa(t), E(t)-\tau(E(t), t) \big]$.
If $t\geq t_1$, then $\big[ \fa(t), E(t) \big]$ is a saturated region, and \eqref{e:mtbehave3copy} gives
\begin{align}\label{e:pimtbb}
 \pi+\Im \tilde m_t^* \big( E(t)-\kappa+\ri \eta \big)
\asymp \frac{\eta}{\sqrt{\kappa+\eta} \big(\sqrt{\kappa+\eta}+(t_1-t) \big)}.
\end{align}
With our choice $\kappa=n^{\fb - 2/3} |t-t_1|^{2/3}$ with $\fb\geq 6\fd$, and $\eta=n^{3\fb/13 - 2/3} |t-t_1|^{2/3}$, \eqref{e:mtbound} and \eqref{e:pimtbb} implies that  $\pi+\Im[ \tilde m_t(E(t)-\kappa+\ri \eta)]\ll 1/n\eta$. 
Similarly to \eqref{e:sumerror}, we conclude that  $\big[ \fa(t), E(t)-\tau(E(t), t) \big]$ is fully packed.

For the interval $E'_{1}(t)+\tau(E'_{1}(t),t)\leq x\leq E'_{2}(t)-\tau(E'_{2}(t),t)$, we take $E(t)=E'_{1}(t)$,  $E'(t)=E'_{2}(t)$,  $\kappa=n^{\fb - 2/3} (t_c-t)^{1/6} \geq \tau \big( E(t), t \big)$ with $\fb\geq 6\fd$, and $\eta=n^{3\fb/13 - 2/3} (t_c-t)^{1/6}$. Then using \eqref{e:f/fcusp}, we can check that $E(t)+\kappa+\ri\eta\in \mathscr D_t^{\rm F}\cup \mathscr D_t(\fr)$. Thus, for $t\leq \sigma$, we have
\begin{align}\label{e:mtbound2}
    \Big| m_t \big( E(t)+\kappa+\ri \eta \big) - m_t^* \big(E(t)+\kappa+\ri \eta \big) \Big| \lesssim\frac{n^{2 \fd}}{n\sqrt{\eta(\kappa+\eta)}}.
\end{align}
If $\big[ E(t), E'(t) \big]$ is a void region, then \eqref{e:mtcubic} implies that
\begin{align}\label{e:cuspmtbb}
-\Im \Big[ m^*_t \big( E(t)+\kappa+\ri \eta \big) \Big]\lesssim \frac{\eta}{(t_c-t)^{1/4}\sqrt{\kappa+\eta}}.
\end{align}
By our choice that $\kappa=n^{\fb - 2/3} (t_c-t)^{1/6}$ with $\fb\geq 6\fd$, and $\eta=n^{3\fb/13 - 2/3}\fb(t_c-t)^{1/6}$,  \eqref{e:mtbound2} and \eqref{e:cuspmtbb} together imply that  $-\Im[ m_t(E(t)+\kappa+\ri \eta)]\ll 1/n\eta$. Through the same reasoning as above, we conclude that there is no particle on $\big[ E(t)+\tau(E(t),t), E'(t)-\tau(E'(t), t) \big]$. Similarly, if $\big[ E(t), E'(t) \big]$ is a saturated region, by considering $\pi+\Im[m_t(E(t)-\kappa+\ri \eta)]$, we will have that $[E(t)+\tau(E(t),t), E'(t)-\tau(E'(t), t)]$ is fully packed. This finishes the proof of \eqref{e:Htbound}.
\end{proof}

\subsection{Preliminary Results}\label{s:PreR}
In this section, we collect some preliminary estimates, which will be used in Sections \ref{s:DEq} and \ref{s:OptimalRigidity}. We can reorganize the complex Burgers equation \eqref{e:newBB} as
\begin{align}\label{e:newBB2}
\del_t m^*_t(z)+\del_t \log \tilde g^*_t(z)+\del_z \log \big(\cB^*_t(z) \big)=0,\quad \cB_t^*(z)=\big( f_t^*(z)+1 \big) \big( z-\fa(t) \big).
\end{align}
Since $\del_t m^*_t(z)= \OO(1/z^2)$ as $z\rightarrow \infty$, we can do a contour integral on both sides of \eqref{e:newBB2} to get rid of $\del_t m^*_t(z)$, and express $\del_t \log \tilde g^*_t(z)$ as 
\begin{align}\label{e:dmtta}
\del_t \log \tilde g^*_t(z)&=-\frac{1}{2\pi \ri}\oint_{\omega+}\frac{\del_w \log \cB_t^*(w)\rd w}{w-z}
=-\frac{1}{2\pi \ri}\oint_{\omega+}\frac{ \log \cB_t^*(w)\rd w}{(w-z)^2}.
\end{align}
where the contour $\omega+\subseteq \mathscr D_t(\fr)$, which encloses $[ \mathfrak{a}(t), \mathfrak{b}(t)]$ and $z$.  The second statement of \Cref{p:fdecompose} implies that for $x>\fb(t)$ or  $x< \fa(t)$,  $\cB_t^*(x) = \big( f_t^*(x)+1 \big) \big(x-\fa(t) \big)>0$. Moreover, we also have $\Im \big[ f_t^*(z)+1 \big]<0$ for $\Imaginary z>0$, and $\Im \big[f_t^*(z)+1 \big]>0$ for $\Imaginary z<0$. Therefore, $\cB_t^*(z)\in \bC\setminus (-\infty,0]$  on $\mathscr U_t\setminus[\fa,\fb]$, and $\log \cB(z)$ is well defined on $\mathscr U_t\setminus[\fa(t),\fb(t)]$. In what follows, we take the branch of the logarithm to be so that $\Imaginary \log u \in [-\pi, \pi)$.

For $m_t(z)$, we recall the empirical density $\rho(x;\bmx_t)$, and corresponding height function $H_t(x)$ from \eqref{e:defHt}. By taking $\beta=H_t$,  the variational problem \eqref{e:varWt} can be encoded by the complex slope $f(z;H_t,t)$. Thanks to \Cref{p:fdecompose}, we have the following decomposition 
\begin{align}\label{e:ftdda}
f(z;H_t,t)=e^{m(z;H_t,t)}g(z;H_t,t)=e^{\tilde m_t(z)}\tilde g(z;H_t,t),
\end{align}
where by our construction, $m(z;H_t,t)=m_t(z)$; $ \log \tilde g(z;H_t,t)=\log g(z;H_t,t)+\log(z-\fa(t))-\log(\fb(t)-z)$ is analytic in a neighborhood of $[\fa(t),\fb(t)]$, and $\tilde m_t(z)=m_t(z)-\log \big(z-\fa(t) \big)+\log \big( \fb(t)-z \big)$.
Similarly to \eqref{e:newBB2}, we also define
\begin{align}\label{e:defBtt}
\cB_t(z)=(f(z;H_t,t)+1)(z-\fa(t))=e^{m_t(z)}\varphi^+(z;H_t,t)+\varphi^-(z;H_t,t),
\end{align}
where $\varphi^+(z;H_t,t), \varphi^-(z;H_t,t)$ are defined in \eqref{e:defvarphi}. 
For simplicity of notations, if the context is clear, we will simply write  
\begin{align}\label{e:}
f_t(z)=f(z;H_t,t),\quad  g_t(z)= g(z;H_t,t),\quad \tilde g_t(z)= \tilde g(z;H_t,t).
\end{align}

\noindent Observe that $f_t(z)$ and $g_t(z)$ defined above are random depending on the particle configuration $\bmx_t$ through $H_t$, and that $f_t(z)$ does not necessarily satisfy the complex Burgers equation (unlike $f_t^* (z)$).

In the following proposition, we collect some estimates of $f_t(z)$ and $g_t(z)$, which will be used later to derive a dynamical equation for $m_t(z)$.
\begin{prop}\label{p:gfbound}
Adopt the assumptions of \Cref{p:rigidityBB}, and recall the control parameter $\fM(t)$ from \eqref{e:deftz2}. For  $0\leq t\leq \sigma$ as defined in \eqref{stoptime2}, we have the following. 
\begin{enumerate}
\item 
For $z$ in the neighborhood $\tilde{\mathscr U}_t$ of $[ \mathfrak{a}(t), \mathfrak{b}(t)]$, we have
\begin{align}\begin{split}\label{e:gfdidd}
&\big| \del^k_z\log g_t(z)-\del^k_z\log g^*_t(z) \big|
\lesssim \frac{\fM(t)}{n}, \quad 0\leq k\leq 2,\\
&  \log f_t(z)-\log  f^*_t(z)
=\Delta_t(z)+\OO\left(\frac{\fM(t)}{n}\right),
\end{split}\end{align}
it follows that $\big| \tilde g_t(z) \big|, \big| \del_z \tilde g_t(z) \big|, \big| \del^2_z \tilde g_t(z) \big| \asymp 1$ in the  neighborhood $\tilde{\mathscr U}_t$ of $[ \mathfrak{a}(t), \mathfrak{b}(t)]$.  

\item For $z \in {\mathscr D}_t(\fr)\cup{\mathscr D}_t^{\rm L}\cup{\mathscr D}_t^{\rm F}$, the difference of the Stieltjes transform satisfies
\beq\label{e:replacebound}
\Delta_t(z)\lesssim n^{ -\fd}\left|\frac{\Im f^*_{t}(z)}{f^*_t(z)}\right|\leq n^{-\fd};
\eeq
 the difference of the complex slopes satisfies
\beq\label{e:replacep2}
\left|\frac{f_t(z)}{f_t(z)+1}-\frac{f^*_t(z)}{f^*_t(z)+1}\right|\lesssim -n^{-\fd}\Im\left[\frac{f_t^*(z)}{f_t^*(z)+1}\right] \lesssim n^{-\fd},
\eeq
and it follows that
\beq\label{e:replacep3}
-\Im\left[\frac{f_t(z)}{f_t(z)+1}\right]\lesssim-\Im\left[\frac{f_t^*(z)}{f_t^*(z)+1}\right]\lesssim 1.
\eeq

\item The function $f_t$ defines a negative measure 
 $\Im \big[ f_t(x+0\ri)/(f_t(x+0\ri)+1) \big]$ on $[\fa(t),\fb(t)]$, whose support
\begin{align}\label{e:defItft}
I_t=\supp \bigg( \displaystyle\frac{\Im \big[ f_t(x+0\ri) \big]}{f_t(x+0\ri)+1} \bigg),
\end{align}
is contained in $I_t^+$,
\begin{align}\label{e:extremepp2}
 I_t\subseteq I^+_t.
\end{align}
\item For $k \geq 1$, the $k$-th derivative of $f_t(z)/ \big( f_t(z)+1 \big)$ satisfies
\begin{align}
\label{e:derf/f+1}
\left|\del^k_z \frac{f_t(z)}{f_t(z)+1}\right|
\lesssim \frac{1}{\Im[z]\dist(z,I_t)^{k-1}} \cdot \bigg| \Imaginary \displaystyle\frac{f_t (z)}{f_t (z) + 1} \bigg|.
\end{align} 
If we further assume that $z\in {\mathscr D}_t(\fr)\cup{\mathscr D}_t^{\rm L}\cup{\mathscr D}_t^{\rm F}$, then 
\begin{align}\label{e:imbb}
\frac{1}{\Im z} \cdot \bigg| \Imaginary \displaystyle\frac{f_t (z)}{f_t (z) + 1}\bigg| \lesssim \frac{1}{\dist(z,I_t^+)}.
\end{align}
\end{enumerate}

\end{prop}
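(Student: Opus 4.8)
The plan is to derive all four statements from \Cref{p:fdecompose} and the fourth statement of \Cref{p:gap}. Fix $t\in[0,\sigma]\cap\bZ_n$. The fourth statement of \Cref{p:gap} gives $|H_t(x)-H_t^*(x)|\le n^{3\fd-1}=\oo(1)$ uniformly in $x$, so $d(\del_x H_t,\del_x H_t^*)\le\fc$ for $n$ large, and \Cref{p:fdecompose} applies with $\beta=H_t$; this justifies the decomposition \eqref{e:ftdda}, with $m(z;H_t,t)=m_t(z)$, $f(z;H_t^*,t)=f_t^*(z)$, $g(z;H_t^*,t)=g_t^*(z)$. For the first statement I would apply the perturbation bound \eqref{e:lngsbound} to the pair $\beta=H_t$, $H_t^*$: since $m(w;H_t,t)-m(w;H_t^*,t)=\Delta_t(w)$, its right-hand side is $\oint_\omega|\Delta_t(w)|\,|\rd w|$, and I am free to take the contour $\omega\subset\mathscr{D}_t(\fr)$ enclosing $[\fa(t),\fb(t)]$, on which $|\Delta_t(w)|\le\fM(t)/n$ by the definition of $\sigma$; as $\omega$ has bounded length this yields the first bound of \eqref{e:gfdidd}, and the second follows from $\log f_t-\log f_t^*=\Delta_t+(\log g_t-\log g_t^*)$. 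Finally $|\tilde g_t|,|\del_z\tilde g_t|,|\del_z^2\tilde g_t|\asymp1$ holds because $\log\tilde g_t-\log\tilde g_t^*=\log g_t-\log g_t^*$ and its first two derivatives are $\OO(\fM(t)/n)=\oo(1)$, while $\tilde g_t^*$ is of size $\asymp1$ with bounded derivatives (see the discussion after \eqref{e:decompft*2}).

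The second statement is bookkeeping with the spectral-domain definitions. On $\mathscr{D}_t^{\rm L}$ the defining inequality rearranges to $(n\Im z)^{-1}\le 2n^{-2\fd}|\Im f_t^*/f_t^*|$, so the $\sigma$-bound $|\Delta_t(z)|\le Mn^\fd(n\Im z)^{-1}$ gives \eqref{e:replacebound}; on $\mathscr{D}_t^{\rm F}$ one applies the two sub-inequalities $(n\Im z)^2|\Im f_t^*/f_t^*|\ge n^{2\fd}$ and $n\sqrt{\dist(z,I_t^*)|\Im z|}\,|\Im f_t^*/f_t^*|\ge n^{2\fd}$ to the two terms of the corresponding $\sigma$-bound; on $\mathscr{D}_t(\fr)$, which stays away from the arctic boundary, $|\Im f_t^*/f_t^*|\asymp1$ and $\fM(t)/n\ll n^{-\fd}$. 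Since $|\Im f_t^*/f_t^*|=-\sin(\arg^*f_t^*)\le1$, this also gives the last inequality in \eqref{e:replacebound}. For \eqref{e:replacep2}, writing $\frac{f_t}{f_t+1}-\frac{f_t^*}{f_t^*+1}=\frac1{f_t^*+1}-\frac1{f_t+1}$ and substituting $f_t=f_t^*\exp(\Delta_t+\OO(\fM(t)/n))$ from the first statement, a Taylor expansion produces the leading term $\frac{f_t^*}{(f_t^*+1)^2}\big(\Delta_t+\OO(\fM(t)/n)\big)$; using $|\Delta_t|\lesssim n^{-\fd}|\Im f_t^*|/|f_t^*|$ and $\fM(t)/n\lesssim n^{-\fd}|\Im f_t^*/f_t^*|$ on each domain, and $|f_t^*+1|\gtrsim1$ from \eqref{e:ft+1b2}, this is $\lesssim n^{-\fd}|\Im f_t^*|/|f_t^*+1|^2=-n^{-\fd}\Im[f_t^*/(f_t^*+1)]$. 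Then \eqref{e:replacep3} follows from the triangle inequality together with $-\Im[f_t^*/(f_t^*+1)]\lesssim1$, again by \eqref{e:ft+1b2}.

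For the third statement, for $x$ in the interior of $(\fa(t),\fb(t))$ one has $g_t(x)>0$, hence $\Im\log g_t(x+0\ri)=0$ and $\arg f_t(x+0\ri)=\Im m_t(x+0\ri)=-\pi\rho(x;\bmx_t)\in[-\pi,0]$ almost everywhere, so $\Im[f_t(x+0\ri)/(f_t(x+0\ri)+1)]=\Im f_t(x+0\ri)/|f_t(x+0\ri)+1|^2\le0$; for $x\notin[\fa(t),\fb(t)]$ the second statement of \Cref{p:fdecompose} makes $f_t(x+0\ri)$ real so the imaginary part vanishes. Hence $\Im[f_t(\cdot+0\ri)/(f_t(\cdot+0\ri)+1)]$ is a negative measure on $[\fa(t),\fb(t)]$, with support $I_t$ as in \eqref{e:defItft}. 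The containment \eqref{e:extremepp2} uses the fourth statement of \Cref{p:gap}: on $[\fa(t),\fb(t)]\setminus I_t^+$ we have $\rho(x;\bmx_t)=\rho_t^*(x)\in\{0,1\}$, so on its void (resp.\ saturated) sub-intervals $\Im m_t(x+0\ri)$ is $0$ (resp.\ $-\pi$), forcing $f_t(x+0\ri)$ real and $\Im[f_t/(f_t+1)]=0$ there.

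The fourth statement carries the main technical load. Put $F_t(z):=f_t(z)/(f_t(z)+1)$; by the second statement (and \Cref{p:fdecompose} in the frozen region) $|f_t+1|\gtrsim1$, so $F_t$ is analytic on $\mathscr{U}_t$, maps $\bH^+$ into $\overline{\bH^-}$, and by the third statement extends analytically across $\bR\setminus I_t$ with real boundary values there and boundary measure $\nu_t\ge0$ supported on $I_t$. Consequently $F_t$ has the Stieltjes form $F_t(z)=c_t+\int_{I_t}(z-x)^{-1}\rd\nu_t(x)$ on the relevant region, and differentiating $k$ times and bounding $|z-x|\ge\dist(z,I_t)$ in $k-1$ of the factors,
\begin{align*}
|\del_z^kF_t(z)|\le k!\int_{I_t}\frac{\rd\nu_t(x)}{|z-x|^{k+1}}\le\frac{k!}{\dist(z,I_t)^{k-1}}\int_{I_t}\frac{\rd\nu_t(x)}{|z-x|^2}=\frac{k!}{\dist(z,I_t)^{k-1}}\cdot\frac{|\Im F_t(z)|}{\Im z},
\end{align*}
which is \eqref{e:derf/f+1}. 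I expect this to be the delicate step: since $f_t$ is a priori defined only on the bounded set $\mathscr{U}_t$ rather than on all of $\bH^+$, the argument above should be replaced either by a Cauchy estimate of $\del_z^kF_t$ on a disk of radius $\asymp\dist(z,I_t)$ about $z$ (which lies in $\mathscr{U}_t$ together with its reflection across $\bR\setminus I_t$), controlling $|F_t|$ on that disk through $|\Im F_t(z)|/\Im z$, or by working directly from $\log f_t=m_t+\log g_t$, using that $m_t$ is a genuine Stieltjes transform of $\rho(\cdot;\bmx_t)$ and that $\log g_t$ has bounded derivatives near $[\fa(t),\fb(t)]$ by the first statement. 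Finally \eqref{e:imbb} follows from \eqref{e:replacep3} once one verifies $-\Im[f_t^*(z)/(f_t^*(z)+1)]\lesssim\Im z/\dist(z,I_t^+)$ for $z$ in the spectral domain; this reduces, via the square-root/cube-root expansions of \Cref{p:ftzbehave} near the edge, near a tangency, and near the cusp, together with the distance lower bounds of the first two statements of \Cref{p:gap} and the definition \eqref{e:disf} of $\tau$, to the elementary inequality $(\kappa-\tau)_++\eta\lesssim\sqrt{\kappa+\eta}$, valid on the bounded domain.
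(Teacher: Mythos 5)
Your overall plan matches the paper for parts (1) and (2), and your diagnosis of the subtlety in (4) is the correct one; the remaining issues are local.

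For the first two statements, the paper does exactly what you describe: apply \eqref{e:lngsbound} with a contour $\omega\subset\mathscr D_t(\fr)$ on which $|\Delta_t(w)|\le\fM(t)/n$ by the definition of $\sigma$, then deduce $|\tilde g_t|,|\del_z\tilde g_t|,|\del_z^2\tilde g_t|\asymp1$ from $|\tilde g_t^*|\asymp1$; and prove \eqref{e:replacep2} by expanding $\frac{f_t}{f_t+1}-\frac{f_t^*}{f_t^*+1}$ to first order, using $|\Delta_t|+\OO(\fM(t)/n)\lesssim n^{-\fd}|\Im f_t^*/f_t^*|$ and $|f_t^*+1|\gtrsim1$, with \eqref{e:replacep3} a triangle-inequality corollary. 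For the first half of (4), the step you flagged as delicate is resolved in the paper precisely via the representation \eqref{e:f/f+1}: by Schwarz reflection, $\frac{f_t}{f_t+1}$ minus the Cauchy transform of its boundary measure on $I_t$ is real analytic in a neighborhood of $I_t$, and it is $\OO(1)$ there by the maximum principle (not a constant $c_t$, as you wrote). The resulting $\OO(1)$ corrections in your derivative estimate and in $|\Im F_t(z)|/\Im z$ are then absorbed thanks to the lower bound \eqref{e:imratio0}, which gives $|\Im[f_t/(f_t+1)]|/\Im z\gtrsim1$; your computation omits those $\OO(1)$ terms but the mechanism for killing them is exactly this lower bound.

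In statement (3) there is a small genuine gap: after establishing that $f_t(x+0\ri)\in\mathbb R$ on $[\fa(t),\fb(t)]\setminus I_t^+$, you assert $\Im[f_t/(f_t+1)]=0$ there, but on a saturated interval $f_t$ is negative and could a priori equal $-1$, giving a pole of $f_t/(f_t+1)$ and hence a point mass in the boundary measure. The paper excludes this by showing $|f_t^*(x)|^{-1}|f_t(x)-f_t^*(x)|=\oo(1)$ on $[\fa(t),\fb(t)]\setminus I_t^+$ (the same argument as in the fourth statement of \Cref{p:gap}) together with $|f_t^*+1|\gtrsim1$ from \eqref{e:ft+1b2}. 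You should add this step.

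Your proposed route to \eqref{e:imbb} is a genuinely different proof. You would use \eqref{e:replacep3} to pass to $f_t^*$ and then verify $-\Im[f_t^*(z)/(f_t^*(z)+1)]\lesssim\Im z/\dist(z,I_t^+)$ pointwise on each spectral domain from the square-root and cube-root expansions of \Cref{p:ftzbehave} together with the distance bounds of \Cref{p:gap}; this does work on the bounded spectral domain, but it requires a case analysis over bulk, edge, tangency, and cusp regimes. The paper instead stays with the random $f_t$ and reuses \eqref{e:f/f+1}: taking imaginary parts, dividing by $\Im z$, and, when $\dist(z,I_t)\geq2\Im z$, replacing $z$ by $z'=\Re z+\ri\dist(z,I_t)$ in the Poisson-type kernel using $|x-z|\gtrsim|x-z'|$. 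That argument needs none of the quantitative expansions of $f_t^*$ and in fact yields the stronger inequality with $\dist(z,I_t)$ in place of $\dist(z,I_t^+)$.
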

\begin{proof}
For $0\leq t\leq \sigma$, \eqref{e:Htbound} implies that the two profiles $H_t(x)$ and $H_t^*(x)$ are close. In particular, $d(\del_x H_t(x), \del_x H_t^*(x))\leq \fc$, and the assumptions in  \Cref{p:fdecompose} holds.
For the difference $\log g_t-\log g^*_t$, \eqref{e:lngsbound} gives for $z\in \tilde{\mathscr U}_t$
\begin{align*}
\big|\del_z^k\log g_t(z)-\del_z^k\log g^*_t(z) \big| \lesssim
\displaystyle\oint_{\omega} \big| m_t(w)-m^*_t(w) \big|  |\rd w|\lesssim \frac{\fM(t)}{n},
\end{align*}
where $\omega\subseteq \mathscr D_t(\fr)$ is a contour enclosing $\big[ \fa(s),\fb(s) \big]$, and we used the fact that $|m_t(w)-m^*_t(w)|\leq \fM(t)/n$ on  $\mathscr D_t(\fr) $. Since $|\del_z^k\tilde g_t^*(z)|\asymp 1$, we conclude that $|\del_z^k\tilde g_t(z)|\asymp 1$ in the  neighborhood $\tilde{\mathscr U}_t$ of $[\fa(t),\fb(t)]$ for $0\leq k\leq 2$. 
Thanks to the decomposition \eqref{e:gszmut}, the difference $\log f_t-\log f^*_t$ is given by
\begin{align*}\begin{split}
\log f_t(z)-\log  f^*_t(z)&= \big( m_t(z)- m^*_t(z) \big)+\log g_t(z)-\log g^*_t(z)\\
&= \big( m_t(z)- m^*_t(z) \big)+\OO\left(\frac{\fM(t)}{n}\right)
=\Delta_t(z)+\OO\left(\frac{\fM(t)}{n}\right).
\end{split}\end{align*}
This finishes the proof of \eqref{e:gfdidd}.

The estimate \eqref{e:replacebound} follows from the definition of the spectral domains ${\mathscr D}_t^{\rm L}, {\mathscr D}_t^{\rm F}$ from \eqref{e:bulkdomain},  \eqref{e:edgedomain1} and \eqref{e:edgedomain2}, and of the stopping time $\sigma$ from \eqref{stoptime2}.
The estimate \eqref{e:replacep2} follows from \eqref{e:gfdidd}, \eqref{e:replacebound} and \eqref{e:ft+1b2},
\begin{flalign*}
	\begin{aligned} 
\left|\frac{f_t(z)}{f_t(z)+1}-\frac{f^*_t(z)}{f^*_t(z)+1}\right|&\lesssim \Bigg( \big| \Delta_t (z) \big| + \mathcal{O} \bigg( \displaystyle\frac{\mathfrak{M} (t)}{n} \bigg) \Bigg) \frac{\big| f_t^* (z) \big|}{\big|f_t^*(z)+1 \big|^2} \\
 &\leq \frac{n^{-\fd} | \Im[f_t^*(z)] |}{|f_t^*(z)+1|^2}  = -n^{-\fd}\Im\left[\frac{f_t^*(z)}{f_t^*(z)+1}\right] \lesssim n^{-\fd}.
\end{aligned} 
\end{flalign*}

For $\Imaginary z\geq 0$, $\Im \big[f_t(z)/(f_t(z)+1) \big] =\Im [f_t(z)]/ \big| f_t(z)+1 \big|^2\leq 0$; therefore, $\Im \big[f_t(x+0\ri)/(f_t(x+0\ri)+1) \big]$ defines a negative measure on $[\fa(t),\fb(t)]$. Thanks to \eqref{e:Htbound} and the decomposition $\log f_t(z)=m_t(z)+\log g_t(z)$,  for $x\in[\fa(t),\fb(t)]\setminus I_t^+$ we have $m_t(x)\in \bR$ and thus $f_t(x)\in \bR$. By the third estimate of \eqref{e:gfdidd}, we have $\big| f_t^* (x) \big|^{-1} \big| f_t(x)-f_t^*(x) \big| = \big| m_t(x)-m_t^*(x) \big|+\OO \big( \fM(t)/n \big)$ for $x\in[\fa(t),\fb(t)]\setminus I_t^+$. 
Moreover, through essentially the same argument as for the fourth statement of \Cref{p:gap}, we have $ \big| m_t(x)-m_t^*(x) \big|=\oo(1)$ for $x\in[\fa(t),\fb(t)]\setminus I_t^+$. We conclude that $\big| f_t^* (x) \big|^{-1} \big| f_t(x)-f_t^*(x) \big| =\oo(1)$ for $x\in[\fa(t),\fb(t)]\setminus I_t^+$. By our assumption, $f_t^*(x)$ is bounded away from $-1$; see \eqref{e:ft+1b2}. We conclude that $f_t(x)\neq -1$ for $x\in[\fa(t),\fb(t)]\setminus I_t^+$. Therefore, on $x\in[\fa(t),\fb(t)]\setminus I_t^+$, $f_t(x+0\ri)/ \big( f_t(x+0\ri)+1 \big)\in \bR$ and does not have any poles. Therefore, its imaginary part vanishes outside $I_t^+$, i.e. $\Im \big[ f_t(x+0\ri)/(f_t(x+0\ri)+1) \big]=0$, and the claim \eqref{e:extremepp2} follows.

We can recover the singular part of  $f_t(z)/ \big( f_t(z)+1 \big)$ from the measure $\Im \big [f_t(x+\ri0)/(f_t(x+\ri0)+1) \big]$. In particular, since for  almost every $E \in \mathbb{R}$ we have
\begin{flalign*}
	\Imaginary \bigg[ \displaystyle\frac{f_t (E)}{f_t (E) + 1} \bigg] = \displaystyle\lim_{\eta \rightarrow 0} \Imaginary \bigg[ \displaystyle\frac{1}{\pi} \displaystyle\int_{I_t} \displaystyle\frac{\Im \big[ f_t (x + \ri0) / (f_t (x + \ri0) + 1) \big] \mathrm{d} x}{x - E - \mathrm{i} \eta} \bigg],
\end{flalign*}

\noindent the Schwarz reflection principle implies that
\begin{align*}
\frac{f_t(z)}{f_t(z)+1}-\frac{1}{\pi}\int_{I_t}\frac{\Im \big[ f_t(x+\ri0)/(f_t(x+\ri0)+1) \big] \rd x}{x-z},
\end{align*}
is a real analytic function in $z$ on a neighborhood of $I_t$. Observe that, when $z$ is bounded away from $I_t$, this analytic function is bounded. Thus, by the maximum principle, it is uniformly bounded in a neighborhood of $I_t$, and we get
\begin{align}\label{e:f/f+1}
\frac{f_t(z)}{f_t(z)+1}=\frac{1}{\pi}\int_{I_t}\frac{\Imaginary \big[ f_t(x+\ri0)/(f_t(x+\ri0)+1) \big]\rd x}{x-z}+\OO(1),
\end{align}
where the $\OO(1)$ error and its derivatives are uniformly bounded in a neighborhood of $I_t$. 
 Using \eqref{e:f/f+1}, we have the following bounds on the derivatives of $f_t(z)/(f_t(z)+1)$, for any $k\geq 1$
\begin{align}
	\label{ff1derivative} 
	\begin{split}
 \left|\del^k_z \frac{f_t(z)}{f_t(z)+1}\right|
&=\left| \displaystyle\frac{1}{\pi} \int_{I_t}\frac{k!\Im \big[f_t(x+\ri0)/(f_t(x+\ri0)+1) \big]\rd x}{(x-z)^{k+1}}\right|+\OO(1)\\
&\lesssim \frac{1}{\dist(z,I_t)^{k-1}}\left|\int_{I_t}\frac{\Im \big[f_t(x+\ri0)/(f_t(x+\ri0)+1) \big]\rd x}{|x-z|^2}\right|+\OO(1)\\
& \lesssim \frac{1}{\dist(z,I_t)^{k-1}}\left( \frac{\big| \Im[f_t(z)/(f_t(z)+1)] \big|}{\Im[z]}+\OO(1)\right)+\OO(1)
\lesssim \frac{\big| \Im[f_t(z)/(f_t(z)+1)] \big|}{\Im[z]\dist(z,I_t)^{k-1}},
\end{split}\end{align}
where we used \eqref{e:imratio0} that $| \Im[f_t(z)/(f_t(z)+1)]|/\Im[z]\gtrsim 1$ for the last inequality. 

If $\dist(z, I_t)\leq 2\Im z$, then \eqref{e:imbb} follows from \eqref{e:replacep3}. Otherwise if $\dist(z,I_t)\geq 2\Im z$, we will use \eqref{e:f/f+1}. Since the $\OO(1)$ error is a real analytic function, its imaginary part is bounded by $\OO(\Im z)$. By taking imaginary part, and dividing by $\Im z$, we get
\begin{align}\begin{split}\label{e:Imf/z}
\displaystyle\frac{1}{\Imaginary z} \cdot \bigg| \Imaginary \frac{f_t(z)}{f_t (z) + 1} \bigg|
&\lesssim \int_{I_t} \frac{-\Im \big[f_t(x+\ri0)/(f_t(x+\ri0)+1) \big] \rd x}{|x-z|^2}+\OO(1).
\end{split}\end{align}
If $\dist(z,I_t)\geq 2\Im z$, then for any $x\in I_t$, $|\Re z -x|\gtrsim \dist(z,I_t)$. 
Denoting $z'=\Re z +\ri \dist(z,I_t)$, we have $|x-z|^2=|\Re z-x|^2+\Im[z]^2\gtrsim |\Re z-x|^2+\Im[z']^2=|x-z'|^2$. Moreover, we also have that
\begin{align*}\begin{split}
\frac{1}{\Im z} \cdot \bigg| \Imaginary \displaystyle\frac{f_t (z)}{f_t (z) + 1} \bigg| 
&\lesssim \int_{I_t} \frac{-\Im \big[f_t(x+\ri0)/(f_t(x+\ri0)+1) \big]\rd x}{|x-z'|^2}+\OO(1)\\
&\lesssim \frac{1}{\Imaginary z'} \bigg| \Imaginary \displaystyle\frac{f_t(z')}{f_t(z') + 1} \bigg|+\OO(1)
\lesssim \frac{1}{\Im z'} \lesssim \frac{1}{\dist(z, I_t)}.
\end{split}\end{align*}
This finishes the proof of \eqref{e:imbb}.
\end{proof}

We define  the following lattice on the domain ${\mathscr D}_0(\fr)\cup {\mathscr D}_0^{\rm L}\cup {\mathscr D}_0^{\rm F}$,
\beq\label{def:L}
\mathscr L = \mathscr{L}_t \deq \big\{ z_t^{-1}(E+\ri \eta)\in {\mathscr D}_0(\fr)\cup {\mathscr D}_0^{\rm L}\cup  {\mathscr D}_0^{\rm F}: E\in \bZ/ n^3, \eta\in \bZ/n^3 \big\}.
\eeq
From the characteristic flow \eqref{e:ccff}, as a function of $u\in {\mathscr D}_0(\fr)\cup {\mathscr D}_0^{\rm L} \cup {\mathscr D}_0^{\rm F}$, we have that $z_t(u)$ is Lipschitz with Lipschitz constant at most $\OO(n)$. Indeed, using \eqref{zlineart}, \eqref{e:derf/f+1} and \eqref{e:imbb} we have
\begin{align*}
\big| \del_u z_t(u) \big| \lesssim 1+t\left|\del_u \frac{f_0^*(u)}{f_0^*(u)+1}\right|\lesssim 
1+\frac{t}{\Im u} \cdot \bigg| \Imaginary \displaystyle\frac{f_0^* (u)}{f_0^* (u) + 1} \bigg| \lesssim
\frac{1}{\dist(u, I_t^*)}\lesssim n.
\end{align*}
 Thus the image of the lattice ${\mathscr L}$  gives a $\OO(1/n^2)$ mesh of $z_t \big({\mathscr D}_0(\fr)\cup {\mathscr D}_0^{\rm L}\cup{\mathscr D}^{\rm F}_0 \big)$. In particular we have: for any $0\leq t\leq 1$ and $w\in z_t \big( {\mathscr D}_0(\fr)\cup {\mathscr D}_0^{\rm L}\cup{\mathscr D}^{\rm F}_0 \big)$, there exists some lattice point $u = u(w) \in \mathscr L$ as in \eqref{def:L}, such that
\begin{align}\label{e:approxw}
\big| z_t(u)-w \big| \lesssim 1/n^2.
\end{align}

In the following proposition we collect some estimates on various integrals, which we will use later.

\begin{prop}\label{p:integral}
Adopt the assumptions of \Cref{p:rigidityBB} and fix $0\leq t\leq \sigma$ (recall from \eqref{stoptime2}). If $z_t=z_t(u)\in \mathscr D_t^{\rm L}$, then
\begin{align}
\int_0^{t} \bigg| \Imaginary \displaystyle\frac{f_s^* (z_s)}{f_s^* (z_s) + 1} \bigg| \cdot \frac{\mathrm{d} s}{\Im[z_{s}]^{k+1} }
 \lesssim
 \frac{(\log n)^{\bm1(k=0)}}{\Im[z_t]^k}.\label{e:integral0}
\end{align}
If $z_t=z_t(u)\in \mathscr D_t^{\rm F}$, then
\begin{align}
 &\int_0^{t} \bigg| \Imaginary \displaystyle\frac{f_s^* (z_s)}{f_s^* (z_s) + 1} \bigg| \cdot \displaystyle\frac{\mathrm{d} s}{ \sqrt{\Im[z_{s}] \dist(z_s, I^*_s)}}
 \lesssim
 \frac{\log n\sqrt{\Im[z_{t}]}}{\sqrt{\dist(z_{t}, I^*_{t})}};\label{e:integral1}\\
&\int_0^t \bigg| \Imaginary \displaystyle\frac{f_s^* (z_s)}{f_s^* (z_s) + 1} \bigg| \cdot \displaystyle\frac{\rd s}{\Im[z_s]\dist(z_s,I^+_s)}
\lesssim 
\frac{\log n}{\sqrt{\Im[z_{t}]\dist(z_{t}, I^*_{t})}};\label{e:integral2}\\
&\int_0^{t} \bigg| \Imaginary \displaystyle\frac{f_s^* (z_s)}{f_s^* (z_s) + 1} \bigg| \cdot \displaystyle\frac{\mathrm{d} s}{\Im[z_{s}] \dist(z_s, I^+_s)^2}
 \lesssim
 \frac{1}{\Im[z_{t}]\dist(z_{t}, I^*_{t})}.
\label{e:integral3}
\end{align}
\end{prop}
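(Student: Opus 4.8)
The plan is to deduce all four estimates from one device: integration along the characteristic. Since $z_s = z_s(u)$ solves $\partial_s z_s = f_s^*(z_s)/(f_s^*(z_s)+1)$ with $\Im[f_s^*(z_s)] \le 0$, one has $\partial_s \Im[z_s] = \Im[f_s^*(z_s)/(f_s^*(z_s)+1)] \le 0$, so that $\big|\Im[f_s^*(z_s)/(f_s^*(z_s)+1)]\big|\,\rd s = -\,\rd(\Im z_s)$. Hence every integral $\int_0^t \big|\Im[f_s^*(z_s)/(f_s^*(z_s)+1)]\big|\,G(z_s,s)\,\rd s$ becomes, once $G$ is re-expressed as a function of $\eta := \Im z_s$ along the characteristic, an elementary one-dimensional integral. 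For \eqref{e:integral0} this already closes the argument: $\int_{\Im z_t}^{\Im z_0} \eta^{-k-1}\,\rd \eta \le k^{-1}(\Im z_t)^{-k}$ if $k \ge 1$, and equals $\log(\Im z_0/\Im z_t) = \OO(\log n)$ if $k = 0$, because $\Im z_0 \lesssim 1$ (the spectral domains lie in the bounded set $\mathscr U_0$) whereas $z_t \in \mathscr D_t^{\rm L}$ forces $\Im z_t \gtrsim n^{2\fd - 1}$ (as $|\Im f_t^* / f_t^*| \le 1$).

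For \eqref{e:integral1}--\eqref{e:integral3} I would first record the behavior of $\dist(z_s, I_s^*)$ and $\dist(z_s, I_s^+)$ along the characteristic, which are exactly the quantities already analyzed in the proof of \Cref{p:gap}. Away from tangency and cusp locations, writing $z_s = E(s) \mp \kappa_s + \ri \eta_s$ with $\kappa_s \ge 0$, one has $\dist(z_s, I_s^*) \asymp \kappa_s + \eta_s$; the two-sided forms of \eqref{e:diff2}--\eqref{e:diff2b} give $\partial_s(\kappa_s + \eta_s) \asymp -\sqrt{\kappa_s+\eta_s}$ and $\partial_s \eta_s \asymp -\eta_s/\sqrt{\kappa_s+\eta_s}$, so $\sqrt{\dist(z_s,I_s^*)} \asymp \sqrt{\dist(z_t,I_t^*)} + (t-s)$ and (via \Cref{p:gap} itself) $\dist(z_s, I_s^+) \gtrsim \eta_s + \sqrt{\dist(z_t,I_t^*)}\,(t-s) + (t-s)^2$. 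Near the tangency locations $(E_i(t_i),t_i)$ and the cusp $(E_c,t_c)$ I would run the same scheme with the expansions \eqref{e:mtbehave1copy}, \eqref{e:mtbehave3copy}, \eqref{e:mtcubic} and the ODEs \eqref{e:diff3a}--\eqref{e:diff3c}, which inject the extra $|t-t_i|$ and $(t_c-s)^{1/4}$ weights; far from every edge all relevant quantities are comparable to constants and the bounds are immediate.

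Granting these flow estimates, each remaining bound is a one-variable computation. For \eqref{e:integral1} the integrand is $\asymp \sqrt{\Im z_s}/\dist(z_s,I_s^*)$ (using $\big|\Im[f_s^*(z_s)/(f_s^*(z_s)+1)]\big| \asymp \Im z_s/\sqrt{\dist(z_s,I_s^*)}$); substituting $\dist(z_s,I_s^*) \asymp (\sqrt{\dist(z_t,I_t^*)}+(t-s))^2$, integrating by parts in $s$ against $\partial_s\big[(\sqrt{\dist(z_t,I_t^*)}+(t-s))^{-1}\big]$, and absorbing the resulting copy of the same integral (note $|\partial_s\eta_s|/\sqrt{\eta_s} \asymp \sqrt{\eta_s}/\sqrt{\dist(z_s,I_s^*)}$) produces $\lesssim \sqrt{\Im z_t}/\sqrt{\dist(z_t,I_t^*)}$, with the $\log n$ needed only when the amplification constant is borderline. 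For \eqref{e:integral2} and \eqref{e:integral3} I would instead keep $\eta = \Im z_s$ as variable, use $\dist(z_s,I_s^+) \gtrsim \eta_s + \sqrt{\dist(z_t,I_t^*)}(t-s)$, and split the time range at the instant where $\eta_s \asymp \sqrt{\dist(z_t,I_t^*)}(t-s)$: on the part nearer to $t$ one has $\dist(z_s,I_s^+) \gtrsim \eta_s$, so the integral collapses to $\int \eta^{-1-j}\,\rd\eta$ ($j=1$ for \eqref{e:integral2} combined with \eqref{e:integral1}, $j=2$ for \eqref{e:integral3}), while on the far part $\dist(z_s,I_s^+) \gtrsim \sqrt{\dist(z_t,I_t^*)}(t-s)$ and one integrates directly in $s$. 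The tangency and cusp regimes go the same way with the altered weights. I would prove \eqref{e:integral3} first, then \eqref{e:integral2}, and handle \eqref{e:integral1} and the $k\ge1$ case of \eqref{e:integral0} separately.

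The hard part will not be any single calculation but the uniformity over regimes: the estimate must be carried consistently through the bulk, the two tangency neighborhoods, the cusp neighborhood, and the deep-frozen region, verifying at each stage --- via the ``stop at scale $\fc$'' device from the proof of \Cref{p:gap} --- that the pertinent local expansion of $f_r^*$ holds along the whole segment $r \mapsto z_r$, $s \le r \le t$. Inside the edge regime the genuinely subtle point is that $\Im z_s$ and $\dist(z_s, I_s^*)$ grow at comparable but not exactly equal rates along the characteristic, so the integrals in \eqref{e:integral1}--\eqref{e:integral2} are only marginally convergent; the logarithmic factors on their right-hand sides are precisely the cost of that near-divergence, and the self-improving integration by parts (equivalently, a dyadic decomposition of the $\eta$-range) is what turns it into the stated bound.
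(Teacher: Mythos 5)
Your proposal is essentially correct and uses the same ingredients as the paper: the characteristic-flow identity for \eqref{e:integral0}, and the gap estimates from \Cref{p:gap} together with the local square-root (or $(t_c-s)^{1/4}$-weighted) behavior of the complex slope for the remaining bounds. The paper likewise proves only \eqref{e:integral3} in detail and declares \eqref{e:integral1}--\eqref{e:integral2} analogous, and likewise splits the edge analysis into the ordinary and cusp regimes.

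Where your route diverges from the paper's is in the mechanism for handling the integrand in \eqref{e:integral1}--\eqref{e:integral3}. You keep $\bigl|\Im[f_s^*(z_s)/(f_s^*(z_s)+1)]\bigr|$ as a genuine function of $s$, re-express it through the local expansion in $\Im z_s$ and $\dist(z_s,I_s^*)$, and then propose an integration by parts / dyadic decomposition to close the loop. The paper instead leans on the cleaner fact that $f_s^*(z_s) = f_0^*(u)$ is \emph{constant} along the characteristic, so $\bigl|\Im[f_s^*(z_s)/(f_s^*(z_s)+1)]\bigr|$ can simply be replaced by its terminal value $\bigl|\Im[f_t^*(z_t)/(f_t^*(z_t)+1)]\bigr|$ inside the integral (this is exactly \eqref{e:sqbb} and \eqref{e:1/4squre}). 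Once that substitution is made, the monotonicity $\kappa_s + \eta_s \ge \kappa_t + \eta_t$ from \eqref{e:diff2c} (or \eqref{e:kskt} near the cusp) plus the gap lower bounds \eqref{e:gap1}--\eqref{e:gap2} collapse the integral into a single elementary $ds$-integral — no integration by parts and no bootstrap are needed. Your approach is valid and would land the same bounds, but it re-derives the comparability that constancy along characteristics hands you for free, so you pay with a more elaborate calculation (and you correctly sense that the logarithmic factors sit at the borderline-convergent edge of your scheme). You also rightly flag the regime-by-regime uniformity as the real labor; the ``stop at scale $\fc$'' device is indeed how the paper keeps the local expansions valid along the whole backward segment of the characteristic.
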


\begin{proof}
	
We recall from the characteristic flow \eqref{e:ccff}, $\big| \Im[f^*_s(z_s)/(f^*_s(z_s)+1)] \big| =-\del_s \Im z_s$. We begin by establishing \eqref{e:integral0}. For $k=0$, we have
\begin{align*}
\int_0^{t} \bigg| \Imaginary \displaystyle\frac{f_s^* (z_s)}{f_s^* (z_s) + 1} \bigg| \cdot \displaystyle\frac{\mathrm{d} s}{\Im z_{s}}
=\int_0^{t}\frac{-\del_s\Im[z_s]}{\Im[z_{s}] }\rd s
=\log \frac{\Imaginary z_0}{\Im z_t}\lesssim \log n,
\end{align*}

\noindent where in the last bound we used the fact that $n^{-1} \ll \Imaginary z_s \ll n$ for each $s \in [0, t]$. For $k\geq 1$, we have
\begin{align}
	\label{integralkf}
\int_0^{t} \bigg| \Imaginary \displaystyle\frac{f_s^* (z_s)}{f_s^* (z_s) + 1} \bigg| \cdot \displaystyle\frac{\mathrm{d} s}{\Im[z_{s}]^{k+1} }
=\int_0^{t}\frac{-\del_s\Im[z_s]}{\Im[z_{s}]^{k+1} }\rd s
\lesssim  \frac{1}{\Im[z_t]^k}.
\end{align}
This finishes the proof of \eqref{e:integral0}.

The proofs of \eqref{e:integral1}, \eqref{e:integral2} and \eqref{e:integral3} are all very similar, and therefore we only establish the third. We must consider two cases, whether $z_t$ is in the domain \eqref{e:edgedomain1} or in the domain \eqref{e:edgedomain2}. For $z_t$ in the domain \eqref{e:edgedomain1}, we will use the square root behavior \eqref{e:squareeq} to bound $\big| \Im[f^*_s(z_s)/(f^*_s(z_s)+1)] \big|$, and \eqref{e:gap1} to lower bound $\dist(z_s, I_s^+)$. For $z_t$ in the domain \eqref{e:edgedomain2}, we use the square root behavior \eqref{e:cubiceq} to bound $\big| \Im[f^*_s(z_s)/(f^*_s(z_s)+1)] \big|$, and \eqref{e:gap2} to lower bound $\dist(z_s, I_s^+)$.

For $z_t$ in the domain \eqref{e:edgedomain1}, we only prove \eqref{e:integral3} when $z_t$ is closer to the left boundary $E_1 (t)$, since the proof is entirely analogous when it is closer to the right boundary $E_2 (t)$. Thus, we denote $z_t=E(t)-\kappa_t+\ri \eta_t$ where $E(t)=E_1(t)$ and $\kappa_t > 0$. Let $z_s=E(s)-\kappa_s+\eta_s$ for any $0\leq s\leq t$. Fix small $\fc>0$ and let $r=\sup\{0\leq s\leq t: \dist(z_s, E(s))\geq \fc\}$. We decompose the integral \eqref{e:integral3} into two integrals corresponding to $0\leq s\leq r$ and $r\leq s\leq t$. For $0\leq s\leq r$, we have $\dist(z_s, E_s)\gtrsim 1$. By the same argument as for \eqref{e:integral0}, we have 
\begin{align}\label{e:intbb1}
\int_0^{r} \bigg| \Imaginary \displaystyle\frac{f_s^* (z_s)}{f_s^* (z_s) + 1} \bigg| \cdot \displaystyle\frac{\mathrm{d} s}{\Im[z_{s}] \dist(z_s, I^+_s)^2}
 \lesssim 1\lesssim 
 \frac{1}{\Im[z_{t}]\dist(z_{t}, I^*_{t})}.
\end{align}
For $r\leq s\leq t$, we have $\dist(z_s, E_s)\leq \fc$, then the square root behavior \eqref{e:squareeq} gives 
\begin{align}\label{e:sqbb}
\frac{\eta_s}{\sqrt{\kappa_s+\eta_s}}\asymp
\bigg| \Im  \displaystyle\frac{f^*_s(z_s)}{f^*_s(z_s)+1} \bigg| = \bigg| \Im  \displaystyle\frac{f^*_{t}(z_{t})}{f^*_{t}(z_{t})+1} \bigg|\asymp \frac{\eta_{t}}{\sqrt{\kappa_{t}+\eta_{t}}},
\end{align}
 We can bound the integral \eqref{e:integral3} as
\begin{align}\begin{split}\label{e:intbb2}
\int_r^{t} \bigg| \Imaginary \displaystyle\frac{f_s^* (z_s)}{f_s^* (z_s) + 1} \bigg|\cdot \displaystyle\frac{\mathrm{d} s}{\Im[z_{s}] \dist(z_s, I^+_s)^2}
& \lesssim\int_r^{t} \bigg| \Imaginary \displaystyle\frac{f_s^* (z_s)}{f_s^* (z_s) + 1} \bigg| \cdot \frac{\mathrm{d} s}{\eta_s \big( (\kappa_s+\eta_s)^{1/2}(t-s)+ \eta_{s} \big)^2}\\
&\lesssim
\int_{r}^{t} \frac{ \eta_{s}/{\sqrt{\kappa_{s}+\eta_s}}}{\eta_{s}\big( (\kappa_s+\eta_s)^{1/2}(t-s)+ \eta_{s} \big)^2}\rd s \\
& \lesssim \int_{r}^{t} \frac{\rd s}{(\kappa_{s}+\eta_s)^{3/2} \big(t-s+\eta_s/(\kappa_s+\eta_s)^{1/2} \big)^2}\\
&\leq  \int_{r}^{t} \frac{\rd s}{(\kappa_{t}+\eta_{t})^{3/2} \big( t-s+\eta_{t}/(\kappa_{t}+\eta_{t})^{1/2} \big)^2}
\leq   \frac{1}{\eta_{t} (\kappa_{t}+\eta_{t})},
\end{split}
\end{align} 
where in the first line we used \eqref{e:gap1}; in the last two lines we used 
\eqref{e:sqbb}
and $\kappa_s+\eta_s \geq \kappa_{t}+\eta_{t}$ from \eqref{e:diff2c}. The claim \eqref{e:integral3} follows from combining \eqref{e:intbb1} and \eqref{e:intbb2}. 

For the second case when $z_t$ is in the domain \eqref{e:edgedomain2} we again only analyze the integral when $z_t$ is closer to $E'_{1}(t)$ (as the proof is entirely analogous when $z_t$ is closer to $E_2' (t)$). We denote $E(t)=E'_{1}(t)$. Since $E_2'(t)-E_1'(t)\lesssim (t_c-t)^{3/2}$, we have $z_t=E(t)+\kappa_t+\ri \eta_t$ with $0<\kappa_t\lesssim (t_c-t)^{3/2}$. If $\eta_t\gtrsim(t_c-t)^{3/2}\gtrsim \kappa_t$, then \eqref{integralkf} gives \eqref{e:integral3}. Thus, we further restrict ourselves to the case $\eta_t\lesssim(t_c-t)^{3/2}$. Let $z_s=E(s)+\kappa_s+\ri\eta_s$ for any $0\leq s\leq t$. Fix small $\fc>0$ and let $r=\sup\{0\leq s\leq t: \Im[z_s]\geq \fc (t_c-s)^{3/2}\}$.  We decompose the integral \eqref{e:integral3} into two integrals corresponding to $0\leq s\leq r$ and $r\leq s\leq t$. For $0\leq s\leq r$, by the same argument as for \eqref{e:integral0}, we have 
\begin{align}\label{e:intbb3}
\int_0^{r} \bigg| \Imaginary \displaystyle\frac{f_s^* (z_s)}{f_s^* (z_s) + 1} \bigg| \cdot \displaystyle\frac{\mathrm{d} s}{\Im[z_{s}] \dist(z_s, I^+_s)^2}
 \lesssim \frac{1}{\Im[z_r]^2}\lesssim 
 \frac{1}{\Im[z_{t}]\dist(z_{t}, I^*_{t})}.
\end{align}
For $r\leq s\leq t$, the square root behavior \eqref{e:cubiceq} gives 
\begin{align}\begin{split}\label{e:1/4squre}
\phantom{{}={}}\frac{\eta_s}{(t_c-s)^{1/4}\sqrt{\kappa_s+\eta_s}} & \asymp \bigg| \Im \displaystyle\frac{f^*_s(z_s)}{f^*_s(z_s)+1} \bigg| =\bigg| \Imaginary \displaystyle\frac{f^*_{t}(z_{t})}{f^*_{t}(z_{t})+1} \bigg| \asymp \frac{\eta_{t}}{(t_c-t)^{1/4}\sqrt{\kappa_{t}+\eta_{t}}}.
\end{split}\end{align}
 We can bound the integral \eqref{e:integral3} as, 
\begin{align}\begin{split}\label{e:edgevariance2}
		\displaystyle\int_r^{t} & \bigg| \Imaginary \displaystyle\frac{f_s^* (z_s)}{f_s^* (z_s) + 1} \bigg| \cdot \displaystyle\frac{\rd s}{\eta_{s}\dist(z_s, I^+_s)^2} \\
& \lesssim \int_r^{t} \bigg| \Imaginary \displaystyle\frac{f_s^* (z_s)}{f_s^* (z_s) + 1} \bigg| \cdot \displaystyle\frac{\mathrm{d} s}{\eta_{s}\big( (\kappa_s+\eta_s)^{1/2}(t-s)/(t_c-s)^{1/4}+ \eta_{s} \big)^2}\\
&\lesssim
 \int_{r}^{t} \frac{ \eta_{s}/(t_c-s)^{1/4}{\sqrt{\kappa_{s}+\eta_s}}}{\eta_{s} \big( (\kappa_s+\eta_s)^{1/2}(t-s)/(t_c-s)^{1/4}+ \eta_s \big)^2}\rd s \\
&\lesssim  \int_{r}^{t} \frac{\rd s}{(t_c-s)^{3/4}(\kappa_{s}+\eta_s)^{3/2} \big( (t-s)/(t_c-s)^{1/2}+\eta_s/(t_c-s)^{1/4}(\kappa_s+\eta_s)^{1/2} \big)^2}\\
& \lesssim \frac{1}{(\kappa_{t}+\eta_{t})^{3/2}} \int_{r}^{t} \frac{\rd s}{(t_c-s)^{3/4} \big( (t-s)/(t_c-s)^{1/2}+\eta_{t}/(t_c-t)^{1/4}(\kappa_{t}+\eta_{t})^{1/2} \big)^2} \lesssim   \frac{1}{\eta_{t} (\kappa_{t}+\eta_{t})},
\end{split}
\end{align} 
where in the first bound we used  \eqref{e:gap2}, and in the second and fourth bounds we used that \eqref{e:1/4squre} and $\kappa_s+\eta_s \geq \kappa_{t}+\eta_{t}$ from \eqref{e:kskt}. To verify the last inequality of \eqref{e:edgevariance2}, we must show that the integral in the fourth line of \eqref{e:edgevariance2} is  bounded by $\sqrt{\kappa_{t}+\eta_{t}}/\eta_{t}$. There are two cases to consider, namely, if $\eta_{t}/ \sqrt{\kappa_{t}+\eta_{t}}$ is less than $(t_c-t)^{3/4}$ or is greater than $(t_c-t)^{3/4}$. In the first case, let $y=t-s$. We divide the integral into three integrals according to the dominant term in the denominator, so that
\begin{align*}\begin{split}
\int_{0}^{t} & \frac{\rd y}{(y+t_c-t)^{3/4} \big( y/(y+t_c-t)^{1/2}+\eta_{t}/(t_c-t)^{1/4}(\kappa_{t}+\eta_{t})^{1/2} \big)^2} \\
&\leq
\int_0^{(t_c-t)^{1/4}\eta_{t}/\sqrt{\kappa_{t}+\eta_{t}}} \frac{\rd y}{(t_c-t)^{1/4}\eta^2_{t}/(\kappa_{t}+\eta_{t})}
+
\int_{t_c-t}^{t}\frac{\rd y}{y^{7/4}} +\int_{(t_c-t)^{1/4}\eta_{t}/\sqrt{\kappa_{t}+\eta_{t}}}^{t_c-t}\frac{(t_c-t)^{1/4}\rd y}{y^2} \\
& \lesssim \frac{\sqrt{\kappa_{t}+\eta_{t}}}{\eta_{t}}.
\end{split}\end{align*}

\noindent The proof in the second case $\eta_{t}/ \sqrt{\kappa_{t}+\eta_{t}}\geq(t_c-t)^{3/4}$ is very similar and thus omitted.
\end{proof}

\subsection{Dynamical Equation}\label{s:DEq}

In this section we derive difference equations for $\Delta_t(z)=m_t(z)-m_t^*(z)$. We recall $\tilde m_t^*(z)=m_t^*(z)-\log \big(z-\fa(t) \big)+\log \big( \fb(t)-z \big)$. By plugging the characteristic flow \eqref{e:ccff} in \eqref{e:decompft*2}, we get
\begin{align}\label{e:floweq}
	\log f_t^*(z_t)=\tilde m_t^*(z_t)+\log \tilde g_t^*(z_t).
\end{align} 

To get the difference equation of $\tilde m_t^*(z_t)$, we recall the expression of $\del_t\log \tilde g_t^*(z_t)$ from \eqref{e:dmtta}. We can take one more time derivative, 
\begin{align}\begin{split}\label{e:dttgt}
		\del_t^2 \log \tilde g^*_t(z)
		&=-\frac{1}{2\pi \ri}\oint_{\omega+}\frac{\del_t \log \cB_t^*(w)\rd w}{(w-z)^2} \\
		& =-\frac{1}{2\pi \ri}\oint_{\omega+} \left(\frac{\del_t f_t^*(w)}{f_t^*(w)+1}-\frac{1}{w-\fa(t)}\right)\frac{\rd w}{(w-z)^2}\\
		&=\frac{1}{2\pi \ri}\oint_{\omega+} \frac{\del_w f_t^*(w)f_t^*(w)}{(f_t^*(w)+1)^2}\frac{\rd w}{(w-z)^2}
		=\frac{1}{2\pi \ri}\oint_{\omega+} \del_w\left(\frac{ f_t^*(w)}{f_t^*(w)+1}\right) \frac{f_t^*(w)\rd w}{(w-z)^2},
\end{split}\end{align}
where the contour $\omega+\subseteq \mathscr D_t(\fr)$, which encloses $[ \mathfrak{a}(t), \mathfrak{b}(t)]$ and $z$. For $w\in \mathscr D_t(\fr)$, we have $\big| \del_w (f^*_t/(f^*_t+1)) \big| \lesssim 1$ (which holds by \eqref{e:derf/f+1} and \eqref{e:imbb}), and $\big| f_t^*(w) \big|\lesssim 1$. Thus we conclude that $\big| \del_t^2 \log \tilde g^*_t(z) \big| \lesssim 1$. 

We can obtain the following difference equation of $\tilde m_t^*$ by taking the difference of \eqref{e:floweq} at times $t$ and $t+1/n$,
\begin{align}\begin{split}\label{e:meq}
		\phantom{{}={}}\tilde m_{t+1/n}^*(z_{t+1/n})-\tilde m_{t}^*(z_{t})
		& =-\log \tilde g_{t+1/n}^*(z_{t+1/n})+\log \tilde g_t^*(z_t)\\
		&=-\log \tilde g_{t+1/n}^*(z_{t+1/n})+\log \tilde g_t^*(z_{t+1/n})-\log \tilde g_{t}^*(z_{t+1/n})+\log \tilde g_t^*(z_t)\\
		&=-\frac{(\del_t\log \tilde g_t^*)(z_{t+1/n})}{n}-\del_z \tilde g_t^*(z_t)(z_{t+1/n}-z_t)+\OO\left(\frac{1}{n^2}\right),
\end{split}\end{align}
where we used the bound $|\del_t^2\log \tilde g_t^*|\lesssim 1$ from the discussion after \eqref{e:dttgt}, $|\del_z^2\log \tilde g_t^*|\lesssim 1$ from the analyticity of $\tilde g_t^*$, and  \eqref{e:ft+1b2} so $|z_{t+1/n}-z_t|= |f^*(z_t)/n(f_t^*(z_t)+1)|\lesssim 1/n$. By plugging  \eqref{e:dmtta} into \eqref{e:meq}, we get that 
\begin{align}\begin{split}\label{e:Deltamt}
\tilde m_{t+1/n}^*(z_{t+1/n})-\tilde m_{t}^*(z_{t})
=\frac{1}{2n\pi \ri}\oint_{\omega+}\frac{\del_w \log \cB_t^*(w)\rd w}{w-z_{t+1/n}}-\del_z \tilde g_t^*(z_t)(z_{t+1/n}-z_t)+\OO\left(\frac{1}{n^2}\right).
\end{split}\end{align}

For the transition probability \eqref{e:transitDE}, we can decompose the time difference of $m_{t+1/n}(z)-m_t(z)$ as
\begin{align}\begin{split}\label{e:dm}
		m_{t+1/n}(z)- m_t(z)&=\int_{z - 1/n}^z \left(\sum_{i=1}^{m}\frac{1}{u-x_i(t+1/n)}-\sum_{i=1}^{m}\frac{1}{u-x_i(t)}\right)\rd u\\
		&=\Delta M_t(z) + \int_{z-1/n}^z\cL_t \big( m_t(u) \big) \rd u,
\end{split}\end{align}
where $\Delta M_t(z)$ is a martingale difference,
\begin{align*}\begin{split}
		\Delta M_t(z)
		&=\int_{z-1/n}^z \Bigg( \sum_{i=1}^{m}\frac{1}{u-x_i(t+1/n)} - \mathbb{E} \bigg[ \displaystyle\sum_{i=1}^{m}\frac{1}{u-x_i(t+1/n)} \bigg| \bmx_t \bigg] \Bigg) \rd u\\
		&=\int_{z-1/n}^z \Bigg( \sum_{i=1}^{m}\frac{1}{u-x_i(t+1/n)}- \sum_{\bme\in\{0,1\}^{m}} a_t(\bme;\bmx_t)\sum_{i=1}^{m}\frac{1}{u-x_i(t)-e_i/n} \Bigg)\rd u,
\end{split}\end{align*}
and  the drift term is given by
\begin{align}\label{e:keyt}
	\begin{aligned}
		\cL_t \big( m_t (u) \big)
		& = \mathbb{E} \Bigg[ \displaystyle\sum_{i = 1}^m \bigg( \displaystyle\frac{1}{u - x_i (t + 1/n)} - \displaystyle\frac{1}{u - x_i (t)} \bigg)\bigg| \bmx_t  \Bigg] \\
		&=\sum_{\bme\in\{0,1\}^{m}} a_t(\bme;\bmx_t)\left(\sum_{i=1}^{m}\frac{1}{u-x_i(t)-e_i/n}-\frac{1}{u-x_i(t)}\right),
	\end{aligned}
\end{align}
where $a_t(\bme;\bmx)$ is as defined in \eqref{e:transitDE}.

Transition probabilities in the form of \eqref{e:transitDE} can be analyzed using dynamical loop equations. The following proposition gives us the leading order term of the drift term $\cL_t(m_t(z))$, and bounds on the moments of the martingale term $\Delta M_t(z)$. 
We postpone the proof of \Cref{p:improve} to Sections \ref{s:loopeq} and \ref{s:improveEE}  below.
\begin{prop}\label{p:improve}
	Under the assumptions of \Cref{p:rigidityBB},  for $0\leq t\leq \sigma$, uniformly for any $z \in {\mathscr D}_t(\fr)\cup{\mathscr D}_t^{\rm L}\cup{\mathscr D}_t^{\rm F}$, we have 
	\begin{align}\begin{split}\label{e:improve1}
			\bE\left[\sum_{i = 1}^m \frac{1}{z-x_i(t + 1/n)}-\frac{1}{z-x_i(t)}\right]
			&=\frac{1}{2\pi \ri}\oint_{\omega}\frac{\log \cB_t(w)\rd w}{(w-z)^2}+\OO\left(\frac{|\Im[f_t(z)/(f_t(z)+1)]|}{n\Im[z] \dist(z,I_t)}\right),
	\end{split}\end{align}
	where $\cB_t$ is from \eqref{e:defBtt}, the expectation is with respect to the transition probability \eqref{e:transitDE}, and the contour $\omega$ encloses  $[\fa(t),\fb(t)]$ but not $z$. Moreover,
	\begin{align}
	\begin{split}\label{e:improve2}
		&\phantom{{}={}}\bE\left|\sum_{i = 1}^m \frac{1}{z-x_i(t)-e_i/n}-\frac{1}{z-x_i(t)}-\bE\left[\sum_{i = 1}^m \frac{1}{z-x_i(t)-e_i/n}-\frac{1}{z-x_i(t)}\right]\right|^{2p}\\
		& \qquad \qquad = \OO\left(\left(\frac{\Im[f_t(z)/(f_t(z)+1)]}{n\Im[z] \dist(z,I_t)^{2}}\right)^{p}+\frac{|\Im[f_t(z)/(f_t(z)+1)]|}{n^{2p-1}\Im[z] \dist(z,I_t)^{4p-2}}\right).
	\end{split}\end{align}
\end{prop}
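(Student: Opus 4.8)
The plan is to analyze the transition probability \eqref{e:transitDE} via the dynamical loop equation formalism of \cite{HFRTNRW}. First I would set up the loop equation: for a measure of the product form \eqref{e:defLtnew}, one obtains an identity asserting that a certain explicit contour integral, built from $\prod_i \phi^+(x_i;\beta,t)^{e_i}\phi^-(x_i;\beta,t)^{1-e_i}$ and the Vandermonde ratio $V(\bmx+\bme/n)/V(\bmx)$, has vanishing expectation when integrated against $a_t(\bme;\bmx)$. Concretely, the one-point loop equation should say that $\bE\big[\sum_i \big(\tfrac{1}{z-x_i(t)-e_i/n}-\tfrac{1}{z-x_i(t)}\big)\big]$ is, up to $\OO(1/n)$ normalization errors from the $\kappa$ term and the $\psi^+$, $\fb(t)-1/n-z$ correction factors in \eqref{e:defvphit}, equal to a contour integral of $\log$ of the ``effective slope'' $\cB_t(w)=e^{m_t(w)}\varphi^+(w;H_t,t)+\varphi^-(w;H_t,t)$ from \eqref{e:defBtt}. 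This is exactly the discrete analogue of the continuum identity \eqref{e:dmtta}; the key point is that $\cB_t$ here is the \emph{random} slope built from the empirical $m_t$, not $m_t^*$. I would carry this out by the method of \cite[Section 9]{NRWLT} (or \cite[Section 9]{HFRTNRW}): expand the recursion \eqref{e:recurss}, use the Feynman--Kac representation, and extract the leading term. The error in \eqref{e:improve1}, namely $\OO\big(|\Im[f_t(z)/(f_t(z)+1)]| / (n\Im[z]\dist(z,I_t))\big)$, should come from two sources: the $\OO(1/n)$ multiplicative error in \eqref{e:defLtnew}, and the fact that the contour integral must be taken at distance $\asymp \dist(z,I_t)$ from the support, where the integrand derivative bound from \eqref{e:derf/f+1}, combined with \eqref{e:imbb}, produces exactly this size.

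For the martingale bound \eqref{e:improve2}, I would compute the $2p$-th centered moment of the linear statistic $\sum_i \big(\tfrac{1}{z-x_i(t)-e_i/n}-\tfrac{1}{z-x_i(t)}\big)$ under the single-step measure $a_t(\cdot;\bmx_t)$. Writing each summand as $\tfrac{e_i/n}{(z-x_i(t)-e_i/n)(z-x_i(t))}$, this is $\tfrac1n\sum_i \tfrac{e_i}{(z-x_i(t))^2}+\OO(1/n^2\cdot\text{stuff})$, so the fluctuations are governed by $\tfrac1n\sum_i (e_i - \bE e_i)\cdot g(x_i(t))$ with $g(x)=1/(z-x)^2$. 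The plan is to control the covariance structure of the Bernoulli-type increments $(e_i)$ under \eqref{e:defLtnew}: since the measure is, to leading order, a determinantal/log-gas-type conditional measure, the $e_i$'s are weakly negatively dependent, and the variance of $\tfrac1n\sum_i e_i g(x_i(t))$ is of order $\tfrac1{n^2}\sum_i |g(x_i(t))|^2 \Var(e_i)$ plus lower-order covariance corrections. Using $\sum_i |g(x_i(t))|^2 \lesssim \tfrac1{\Im z}\int \tfrac{\rho(x;\bmx_t)\,\rd x}{|z-x|^3}\lesssim \tfrac{1}{\Im z\,\dist(z,I_t)^2}\big|\Im\tfrac{f_t(z)}{f_t(z)+1}\big|$ (via the representation \eqref{e:f/f+1} and the estimate \eqref{ff1derivative}), I get the main term $\big(\tfrac{\Im[f_t(z)/(f_t(z)+1)]}{n\Im[z]\dist(z,I_t)^2}\big)^p$ after raising to the $p$-th power and applying a Marcinkiewicz--Zygmund / Rosenthal-type inequality. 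The second error term $\tfrac{|\Im[f_t(z)/(f_t(z)+1)]|}{n^{2p-1}\Im[z]\dist(z,I_t)^{4p-2}}$ is the ``diagonal'' contribution: a single increment $e_i$ of size $\OO(1/n)$ contributing $\tfrac1n\cdot\tfrac{1}{\dist(z,I_t)^2}$, raised to the $2p$-th power but weighted by only one factor of the number of relevant particles $\asymp n\Im z\cdot|\Im\tfrac{f_t}{f_t+1}|/\dist(z,I_t)$... wait, I would recount the powers carefully: it is $\tfrac{1}{n^{2p}}\cdot \#\{\text{effective }i\}\cdot \tfrac{1}{\dist(z,I_t)^{4p}}$, and $\#\{\text{effective }i\}\lesssim n\cdot\tfrac{|\Im[f_t/(f_t+1)]|}{\Im z\,\dist(z,I_t)^{...}}$ — the exact bookkeeping is the routine part I would push through using \eqref{e:imbb}.

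The main obstacle I anticipate is establishing the loop equation and the moment bound with the \emph{random} slope $f_t$ (equivalently with $\dist(z,I_t)$, $I_t\subseteq I_t^+$) rather than the deterministic $f_t^*$; in particular, the proof of \eqref{e:improve2} requires a quantitative control of the local correlation decay of the increments $(e_i)$ that is uniform over all admissible configurations $\bmx_t$ (i.e., for all $t\leq\sigma$), and near the tangency and cusp locations where $\big|\Im\tfrac{f_t}{f_t+1}\big|$ degenerates one must use the structural estimates of \Cref{p:ftzbehave} and \Cref{p:gfbound} to ensure the bounds remain of the stated form. A secondary technical difficulty is that the contour $\omega$ in \eqref{e:improve1} must be chosen inside ${\mathscr D}_t(\fr)\cup{\mathscr D}_t^{\rm L}\cup{\mathscr D}_t^{\rm F}$ but enclosing $[\fa(t),\fb(t)]$, which near the singular boundary points forces $\omega$ close to the arctic curve; the bounds \eqref{e:replacebound}--\eqref{e:replacep3} and \eqref{ff1derivative} on $f_t$ along such a contour are what make the error terms come out correctly, and verifying this uniformly is where most of the care goes. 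I will defer the full argument to Sections \ref{s:loopeq} and \ref{s:improveEE}.
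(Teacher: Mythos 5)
Your outline captures the right high-level object (dynamical loop equations) and the $p=1$ variance calculation does land on the correct scale, but the detailed route you describe differs substantially from the paper's and has a genuine gap at its core.

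The paper does not work directly with the measure $\mathbb{P}$ of \eqref{e:transitDE}. It first introduces the simpler measure $\mathbb{Q}$ of \eqref{e:defL2}, which drops the $\kappa$ and $\mathcal{O}(1/n)$ exponential corrections, proves the analogous estimate (Proposition \ref{p:improve2}) for $\mathbb{Q}$, and only at the very end transfers to $\mathbb{P}$ via the change of measure \eqref{e:changem} using Cauchy--Schwarz and a separate variance bound \eqref{e:kvar} on $\exp(\sum e_ie_j\kappa(x_i,x_j)/n^2)$. You propose to absorb the $\kappa$ term into the loop equation directly, but that destroys the algebraic structure Lemma \ref{l:loopeq} needs (the product form $\prod_i\phi^\pm(x_i)^{e_i}$): the quadratic form in the $e_i$'s is not covered by the statement of the loop equation, so this is not just a detail you can ``extract'' at the end.

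For the moment bound \eqref{e:improve2}, your plan is to control the covariance structure of the $e_i$'s directly, argue weak negative dependence, and apply a Rosenthal-type inequality. This is a genuinely different argument from the paper's. The paper obtains all $2p$ moments by bounding joint cumulants of the Stieltjes-transform increment at $z,\bar z$ using the deformed weights $a_\bme^{(\bmv,\bms)}$ with the multiplicative kernel $\psi^{(\bmv,\bms)}$ in \eqref{e:defpsi}; the $(r+1)$-th cumulant is $\partial_z\partial_{v_1}\cdots\partial_{v_r}\log\mathcal{A}^{(\bmv,\bms)}$ specialized at $v_j\in\{v,\bar v\}$, and these derivatives are controlled by the bootstrap of Propositions \ref{c:boost25}. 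Your route would only close if you could prove, uniformly over all $\bmx_t$ with $t\le\sigma$, a quantitative negative-dependence or correlation-decay statement for the $e_i$'s under $\mathbb{Q}$ strong enough to run Rosenthal with the correct rate at the edge and cusp — and nothing in the paper or the cited sources gives this directly for log-gas transition kernels. You acknowledge this is the main obstacle, but a proof sketch needs at least a concrete substitute; the bootstrap on $\mathcal{D}_z^{(k)}(\log\mathcal{A}^{(\bmv,\bms)}-b)$ at progressively smaller distances to the support is precisely the mechanism the paper uses to supply it, and there is no shortcut around some version of this multi-scale argument.

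Finally, on \eqref{e:improve1}: you attribute the error to ``the $\mathcal{O}(1/n)$ multiplicative error and the fact that the contour must be taken at distance $\asymp\dist(z,I_t)$.'' This is not where the sharp error comes from in the paper. The contour $\omega$ in \eqref{e:improve1} can and does stay bounded away from $[\fa(t),\fb(t)]$; the size of the error is instead controlled by bounds on $\mathcal{E}^{(\bmv,\bms)}(w)$ at the special point $w=z$ (and its conjugate), which again comes out of the bootstrap (Propositions \ref{c:boost2}, giving $\Phi_1(v)\asymp|\Im[f/(f+1)]|/(n\Im[z]\dist(z,I))$). Your mechanism would naively produce an error of order $1/n$ uniformly, which is too coarse near the edge (where $\Im[z]\asymp n^{-2/3}$).
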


\begin{rem}
	We recall the complex Burgers equation from \eqref{e:newBB2}. From the expression \eqref{e:dmtta}, $\del_t \log \tilde g_t^*(z)$ is analytic in a neighborhood of $[\fa(t),\fb(t)]$. We can do a contour integral on both sides of \eqref{e:newBB2} to get rid of $\del_t \log \tilde g_t^*(z)$ on both sides of \eqref{e:newBB2}
	\begin{align}\label{e:newBB2c}
		\del_t m_t^*(z)=-\frac{1}{2\pi\ri}\oint_{\omega}\frac{\del_t m_t^*(w)\rd w}{w-z}
		=\frac{1}{2\pi\ri}\oint_{\omega}\frac{\del_w \cB_t^*(w)\rd w}{w-z}=\frac{1}{2\pi\ri}\oint_{\omega}\frac{ \cB_t^*(w)\rd w}{(w-z)^2},
	\end{align}
	where the contour $\omega$ encloses  $[\fa(t),\fb(t)]$ but not $z$. 
	The equation \eqref{e:improve1} can therefore be viewed as the discrete analog of the complex Burgers equation \eqref{e:newBB2c}.
\end{rem}

We next use \Cref{p:improve} to approximate the difference $\Delta_{t + 1/n} (z_{t + 1/n}) - \Delta_t (z_t)$, through the following proposition.

\begin{prop}\label{p:dynamic}
	Under the assumptions of \Cref{p:rigidityBB}, for $0\leq t\leq \sigma$ (recall from \eqref{stoptime2}), uniformly for any $z_t=z_t(u) \in {\mathscr D}_t(\fr)\cup{\mathscr D}_t^{\rm L}\cup{\mathscr D}_t^{\rm F}$, we have the following equation for the difference of the Stieltjes transforms
	\begin{align}\begin{split}\label{e:mainEq}
			\Delta_{t+1/n}(z_{t+1/n})-\Delta_{t}(z_{t})
			&=\Delta M_t(z_{t+1/n})+\frac{\del_z f_t (z_t)}{n}\frac{e^{\log f^*_t (z_t) -\log  f_t (z_t)}-1}{ \big(f_t (z_t) +1 \big) \big(f^*_t (z_t)+1 \big)}\\
			& \qquad +\OO\left(\frac{\fM(t)}{n^2}+\frac{\Im [f^*_t(z_t)/(f^*_t(z_t)+1)]}{n^2\Im[z_t]\dist(z_t,I^+_t)}\right),
	\end{split}\end{align}
where the control parameter $\fM(t)$ is from \eqref{e:deftz2}. 
\end{prop}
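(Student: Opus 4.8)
\textbf{Proof proposal for Proposition \ref{p:dynamic}.}

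The plan is to combine the two difference equations already in hand: equation \eqref{e:Deltamt} for the continuum quantity $\tilde m_t^*(z_t)$ along the characteristic, and equation \eqref{e:dm} (together with \Cref{p:improve}) for the particle quantity $m_t(z)$. Subtracting them and simplifying will produce \eqref{e:mainEq}. First I would rewrite both sides in terms of the tilded Stieltjes transforms. Since $\Delta_t(z) = m_t(z) - m_t^*(z) = \tilde m_t(z) - \tilde m_t^*(z)$ (the $\log(z-\fa(t))$ and $\log(\fb(t)-z)$ corrections cancel), it suffices to control $\big(\tilde m_{t+1/n}(z_{t+1/n}) - \tilde m_t(z_t)\big) - \big(\tilde m_{t+1/n}^*(z_{t+1/n}) - \tilde m_t^*(z_t)\big)$. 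For the $*$-part I use \eqref{e:Deltamt} verbatim. For the particle part, I evaluate \eqref{e:dm} at the two consecutive characteristic points $z_t$ and $z_{t+1/n}$; the integral $\int_{z-1/n}^z \cL_t(m_t(u))\,\rd u$ is, to leading order, $\tfrac1n \cL_t(m_t(z))$ up to an $\OO(1/n^2)$ error coming from the derivative bound \eqref{e:derf/f+1}, and then \eqref{e:improve1} of \Cref{p:improve} replaces $\bE[\cL_t]$ by the contour integral $\tfrac{1}{2\pi\ri}\oint_\omega \log\cB_t(w)(w-z)^{-2}\rd w$ plus the stated error term. The martingale increment $\Delta M_t(z_{t+1/n})$ is carried along unchanged.

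The heart of the computation is then to match the two contour integrals: the continuum one involves $\log \cB_t^*(w)$ with $\cB_t^*(w) = (f_t^*(w)+1)(w-\fa(t))$, while the discrete one involves $\log \cB_t(w)$ with $\cB_t(w) = (f_t(w)+1)(w-\fa(t))$. Their difference is $\tfrac{1}{2\pi\ri}\oint_\omega \log\frac{f_t(w)+1}{f_t^*(w)+1}\,\frac{\rd w}{(w-z)^2}$. Here I would use the decomposition $\log f_t - \log f_t^* = \Delta_t + \OO(\fM(t)/n)$ from \eqref{e:gfdidd}, write $\log(f_t+1) - \log(f_t^*+1) = \int$ along a segment, i.e. express the difference via $\frac{f_t^*}{f_t^*+1}\cdot\big(e^{\log f_t - \log f_t^*}-1\big)$-type identities, and then evaluate the contour integral by residues: since $\omega$ encloses $[\fa(t),\fb(t)]$ but not $z_{t+1/n}$, and the integrand (after subtracting the analytic parts, using that $\log \tilde g_t - \log \tilde g_t^*$ is analytic near $[\fa,\fb]$ by \eqref{e:gfdidd}) is analytic inside $\omega$ except that we have deformed to pick up the pole at $w = z$, I get $-\del_z\big(\log\frac{f_t(z)+1}{f_t^*(z)+1}\big)$ evaluated at $z = z_t$. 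Differentiating and using $\del_z \log(f_t+1) = \frac{\del_z f_t}{f_t+1}$ gives the term $\frac{\del_z f_t(z_t)}{n}\cdot\frac{e^{\log f_t^* - \log f_t} - 1}{(f_t+1)(f_t^*+1)}$; the remaining pieces involving $\del_z f_t^*$ cancel against the corresponding $-\del_z \tilde g_t^*(z_t)(z_{t+1/n}-z_t)$ terms that appear in both \eqref{e:Deltamt} and \eqref{e:meq}, since those are exactly the continuum transport terms. I also need the contribution $-\del_z\tilde g_t(z_t)(z_{t+1/n}-z_t)$ vs.\ $-\del_z\tilde g_t^*(z_t)(z_{t+1/n}-z_t)$ from expanding $\tilde m_t$ along the characteristic; their difference is $\OO(\fM(t)/n^2)$ by \eqref{e:gfdidd}, which feeds the error term.

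The bookkeeping of error terms is routine but needs care: I must track (i) the $\OO(1/n^2)$ Taylor-expansion errors, controlled by $|\del_z^2 \log\tilde g_t|\lesssim 1$ and $|z_{t+1/n}-z_t|\lesssim 1/n$ from \eqref{e:ft+1b2}; (ii) the $\OO(\fM(t)/n^2)$ errors from replacing $\tilde g_t$ by $\tilde g_t^*$ and from the $\OO(\fM(t)/n)$ term in \eqref{e:gfdidd} contributing at the next order; and (iii) the $\OO\big(\frac{|\Im[f_t/(f_t+1)]|}{n\Im z\,\dist(z,I_t)}\big)$ error from \eqref{e:improve1}, divided by $n$, which after using $I_t\subseteq I_t^+$ from \eqref{e:extremepp2} and then \eqref{e:replacep2}–\eqref{e:replacep3} to pass from $f_t$ to $f_t^*$ becomes $\OO\big(\frac{\Im[f_t^*(z_t)/(f_t^*(z_t)+1)]}{n^2\Im[z_t]\,\dist(z_t,I_t^+)}\big)$, as in the statement. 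The main obstacle I anticipate is the contour-deformation/residue step for the difference of the two $\log\cB$ integrals: one must justify that all the genuinely analytic parts integrate to zero (using analyticity of $\log\tilde g_t - \log\tilde g_t^*$ near $[\fa(t),\fb(t)]$ and the decay of $\del_t m_t^*$ at infinity, exactly as in the derivation of \eqref{e:dmtta}), and that the only surviving contribution is the single residue at $w=z_t$ producing precisely the advertised nonlinear term—being careful that the contour $\omega$ in \eqref{e:improve1} excludes $z$ whereas $\omega+$ in \eqref{e:Deltamt} includes $z_{t+1/n}$, so the discrepancy between the two conventions is itself a residue that must be accounted for and is what converts $\oint$ into the explicit $\del_z$ term.
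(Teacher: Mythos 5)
Your overall plan — use \eqref{e:Deltamt} for the continuum quantity, feed \eqref{e:improve1} into \eqref{e:dm} for the particle quantity, decompose both increments along characteristics via $\tilde m = \log f - \log\tilde g$, and subtract — is indeed the paper's plan. But there is a genuine misidentification of where the leading nonlinear term comes from, and as stated the proposal's algebra does not produce it.

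You claim the leading term arises from deforming $\oint_\omega \log\frac{f_t(w)+1}{f_t^*(w)+1}\,\frac{\rd w}{(w-z)^2}$ to pick up the residue at $w = z_t$, and that this residue equals $\frac{\del_z f_t(z_t)}{n}\cdot\frac{e^{\log f_t^* - \log f_t}-1}{(f_t+1)(f_t^*+1)}$. That is not what the residue is: $\mathrm{Res}_{w=z}\frac{\log\frac{f_t(w)+1}{f_t^*(w)+1}}{(w-z)^2} = \frac{\del_z f_t}{f_t+1} - \frac{\del_z f_t^*}{f_t^*+1}$, which has no small factor of $f_t^* - f_t$ and contains $\del_z f_t^*$. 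You propose that the $\del_z f_t^*$ piece cancels against the $-\del_z\log\tilde g_t^*(z_t)(z_{t+1/n}-z_t)$ transport term, but that cancellation would require $\del_z\log\tilde g_t^* = \del_z\log f_t^*$, i.e.\ $\del_z\tilde m_t^* = 0$, which is false — indeed $\del_z\log\tilde g_t^*$ is uniformly bounded while $\del_z f_t^*$ blows up near edges, so they cannot cancel. In the paper the whole difference $\frac{1}{2n\pi\ri}\oint_{\omega+}(\log\cB_t - \log\cB_t^*)(w)\frac{\rd w}{(w-z)^2}$ is simply bounded by $\OO(\fM(t)/n^2)$ (this is \eqref{e:t2}); it is an error term, not the source of the nonlinear leading contribution.

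The real mechanism is different. When you extract the residue from the \emph{particle} drift (deforming the $\omega$ of \eqref{e:improve1}, which excludes $[z-1/n,z]$, to a fixed bounded-away $\omega+$), you get the \emph{exact} discrete increment $-[\log\cB_t(z) - \log\cB_t(z-1/n)]$, which after converting to the tilded transform (using $\fa(t+1/n)=\fa(t)+1/n$) becomes $-\log(f_t(z_{t+1/n})+1)+\log(f_t(z_{t+1/n}-1/n)+1)$. This term has \emph{no} $*$-counterpart, because the complex Burgers equation is a PDE and \eqref{e:Deltamt} contains no such jump-residue. Likewise, the transport $\tilde m_t(z_{t+1/n})-\tilde m_t(z_t)$ produces the exact $\log f_t(z_{t+1/n})-\log f_t(z_t)$, again with \emph{no} $*$-counterpart because $f_t^*$ is transported exactly along its characteristics. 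The nonlinear term in \eqref{e:mainEq} is what survives when these two $f_t$-only terms are Taylor-expanded \emph{together} around $z_t$: reorganizing as $\log\frac{f_t(z_{t+1/n})}{f_t(z_{t+1/n})+1}-\log\frac{f_t(z_t)}{f_t(z_t)+1}-\log(f_t(z_t)+1)+\log(f_t(z_{t+1/n}-1/n)+1)$, and using $z_{t+1/n}-z_t = \frac{1}{n}\frac{f_t^*(z_t)}{f_t^*(z_t)+1}$, the first-order piece is exactly $\frac{\del_z f_t(z_t)}{n}\cdot\frac{f_t^*/f_t - 1}{(f_t+1)(f_t^*+1)}$. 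If $f_t = f_t^*$ this vanishes; the small factor $f_t^* - f_t$ is produced precisely because the characteristic flow uses $f_t^*$ while the drift uses $f_t$. Your proposal never writes the $\log f_t(z_{t+1/n})-\log f_t(z_t)$ piece explicitly and so cannot produce this cancellation.

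A secondary but real issue: you approximate $\int_{z-1/n}^z \cL_t(m_t(u))\,\rd u \approx \frac1n\cL_t(m_t(z)) + \OO(1/n^2)$ directly on the contour $\omega$ of \eqref{e:improve1}. That $\omega$ encloses $[\fa(t),\fb(t)]$ but excludes the segment $[z-1/n,z]$, so for $z$ near the support it threads between them, and the $u$-derivative of $\oint_\omega\log\cB_t(w)\,(w-u)^{-2}\rd w$ can be large. The paper's extraction of the exact residue $-[\log\cB_t(z)-\log\cB_t(z-1/n)]$ is exactly what moves the remaining integral to a fixed $\omega+\subseteq\mathscr D_t(\fr)$ where derivatives \emph{are} uniformly bounded, justifying the $\OO(1/n^2)$; you would need this step. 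Finally, the second-order Taylor control must be done in terms of $f_t/(f_t+1)$ (which has bounded higher derivatives near $\fa(t)$ by \eqref{e:derf/f+1}), not $\log f_t$ directly; this is why the paper's reorganization above is not merely cosmetic. Your error bookkeeping items (ii) and (iii) are correct and match the paper.
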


\begin{proof}

	\Cref{p:improve} gives us that
	\begin{align}\begin{split}\label{e:dm0}
			&\phantom{{}={}} \int_{z-1/n}^{z}\cL_t \big( m_t(u) \big) \rd u=\frac{1}{2\pi \ri} \int_{z-1/n}^{z}\oint_{\omega}\frac{\log \cB_t(w)\rd w\rd u}{(w-u)^2} 
			+\OO\left(\frac{|\Im[f_t(z)/(f_t(z)+1)]|}{n^2\Im[z]\dist(z, I_t)}\right),
	\end{split}\end{align}
	where the contour $\omega$ encloses $[\fa(t), \fb(t)]$ but not any number on the segment $[z - 1/n, z]$. For the contour integral on the right side of \eqref{e:dm0}, we can rewrite it as
	\begin{align}\begin{split}\label{e:simp}
			\phantom{{}={}}\frac{1}{2\pi \ri} & \int_{z-1/n}^{z}\oint_{\omega}\frac{\log \cB_t(w)\rd w\rd u}{(w-u)^2} \\
			& =-\int_{z-1/n}^{z}\del_u \log \cB_t(u)\rd u +\frac{1}{2\pi \ri} \int_{z-1/n}^{z}\oint_{\omega+}\frac{\log \cB_t(w)\rd w\rd u}{(w-u)^2}\\
			&=- \big( \log \cB_t(z)-\log \cB_t(z-1/n) \big)+\frac{1}{2n\pi \ri} \oint_{\omega+}\frac{\log \cB_t(w)\rd w}{(w-z)^2}+\OO\left(\frac{1}{n^2}\right)\\
			&=- \Big(\log \big(f_t(z)+1 \big) \big(z-\fa(t) \big)-\log \big( f_t(z-1/n)+1 \big) \big(z-1/n-\fa(t) \big) \Big)\\
			& \qquad +\frac{1}{2n\pi \ri} \oint_{\omega+}\frac{\log \cB_t(w)\rd w}{(w-z)^2}+\OO\left(\frac{1}{n^2}\right),
	\end{split}\end{align}
	where the contour $\omega+\subseteq \mathscr D_t(\fr) $ encloses $[\fa(t), \fb(t)]$ and $[z - 1/n, z]$. Here, we have used the fact that the derivatives of $\log \mathcal{B}_t (w)$ are uniformly bounded for $w \in \omega+$, which holds since $\mathcal{B}_t (w) = \big( f_t (w) + 1 \big) \big( w - \mathfrak{a} (t) \big)$ and $\omega+$ is bounded away from $\big[ \mathfrak{a} (t), \mathfrak{b} (t) \big]$. 
	
	Recall $\tilde m_t(z)=m_t(z)-\log \big(z-\fa(t) \big)+\log \big( \fb(t)-z \big)$. By plugging \eqref{e:dm0} and \eqref{e:simp} into \eqref{e:dm}, and noticing that $\fa(t+1/n)=\fa(t)+1/n$ and $\mathfrak{b} (t + 1/n) = \mathfrak{b} (t)$, we get 
	\begin{align}\begin{split}\label{e:dmsimp}
			\phantom{{}={}}\tilde m_{t+1/n}(z)- \tilde m_t(z)
			& =m_{t+1/n}(z) -\log \big( z-\fa(t+1/n) \big)-  m_t(z)+\log \big( z-\fa(t) \big)\\
			&=\Delta M_t(z) - \Big( \log \big(f_t(z)+1 \big) - \log \big(f_t(z-1/n)+1 \big) \Big)\\
			& \qquad +\frac{1}{2n\pi \ri} \oint_{\omega+}\frac{\log \cB_t(w)\rd w}{(w-z)^2}+\OO\left(\frac{|\Im[f_t(z)/(f_t(z)+1)]|}{n^2\Im[z]\dist(z, I_t)}\right).
	\end{split}\end{align}
	We decompose the difference equation of $\tilde m_t(z_t)$ as
	\begin{align}\begin{split}\label{e:Deltatmt2}
			&\phantom{{}={}}
			\tilde m_{t+1/n}(z_{t+1/n})-\tilde m_{t}(z_t) =
			\big( \tilde m_{t+1/n}(z_{t+1/n})- \tilde m_{t}(z_{t+1/n}) \big)
			+ \big( \tilde m_{t}(z_{t+1/n})- \tilde m_{t}(z_{t}) \big).
	\end{split}\end{align}
	For the second term on the righthand side of \eqref{e:Deltatmt2}, thanks to \eqref{e:ftdda} 
	\begin{align}\label{e:Deltamt3}\begin{split}
			\tilde m_{t}(z_{t+1/n})- \tilde m_{t}(z_{t})&=\log f_t(z_{t+1/n})-\log f_t(z_t)- \big( \log\tilde g_t(z_{t+1/n})-\log\tilde g_t(z_t) \big)\\
			&=\log f_t(z_{t+1/n})-\log f_t(z_t)-\del_z\log\tilde g_t(z_{t})(z_{t+1/n}-z_t)+\OO\left(\frac{1}{n^2}\right),
	\end{split}\end{align}
	where we used $|\del_z^2\log \tilde g_t|\lesssim 1$ from the first statement of \Cref{p:gfbound}, and  as a consequence of \eqref{e:ft+1b2}: $|z_{t+1/n}-z_t|= |f^*(z_t)/n(f_t^*(z_t)+1)|\lesssim 1/n$.
	By plugging \eqref{e:dmsimp} and \eqref{e:Deltamt3} into \eqref{e:Deltatmt2}, we obtain the following difference equation of $\tilde m_t(z_t)$,
	\begin{align}\begin{split}\label{e:dmsimp2}
			&\phantom{{}={}}\tilde m_{t+1/n}(z_{t+1/n})- \tilde m_t(z_t)\\
			& \qquad =\Delta M_t(z_{t+1/n}) +\log f_t(z_{t+1/n})-\log f_t(z_t)-\log \big( f_t(z_{t+1/n})+1 \big) + \log \big( f_t(z_{t+1/n}-1/n) + 1 \big) \\
			& \qquad \quad +\frac{1}{2n\pi \ri} \oint_{\omega+}\frac{\log \cB_t(w)\rd w}{(w-z_{t+1/n})^2}-\del_z\log\tilde g_t(z_{t})(z_{t+1/n}-z_t)+\OO\left(\frac{|\Im[f_t(z_t)/(f_t(z_t)+1)]|}{n^2\Im[z_t]\dist(z_t, I_t)}\right),
	\end{split}\end{align}
	where by \eqref{e:imratio0} the error term is at least of order $1/n^2$. Finally by taking difference of \eqref{e:Deltamt} and \eqref{e:dmsimp2}, we get the difference equation of $\Delta_t(z_t)=m_t(z_t)-m_t^*(z_t)=\tilde m_t(z_t)-\tilde m_t^*(z_t)$,
	\begin{align}\begin{split}\label{e:Deltaeq4}
			&\phantom{{}={}}\Delta_{t+1/n}(z_{t+1/n})-\Delta_t(z_t)=\Delta M_t(z_{t+1/n})+\OO\left(\frac{|\Im[f_t(z_t)/(f_t(z_t)+1)]|}{n^2\Im[z_t]\dist(z_t, I_t)}\right)\\
			& + \log f_t(z_{t+1/n})-\log f_t(z_t)-\log \big(f_t(z_{t+1/n})+1 \big)+\log \big(f_t(z_{t+1/n}-1/n)+1 \big) \\
			&+\frac{1}{2n\pi \ri} \oint_{\omega+}\frac{(\log \cB_t(w)-\log \cB^*_t(w))\rd w}{(w-z_{t+1/n})^2}-(\del_z\log\tilde g_t(z_{t})-\del_z\log\tilde g^*_t(z_{t}))(z_{t+1/n}-z_t).
	\end{split}\end{align}
	In the following we estimate \eqref{e:Deltaeq4} term by term. By Taylor expansion around $z_t$, the first order term for the second line in \eqref{e:Deltaeq4} is given by
	\begin{align}\begin{split}\label{e:t1}
			&\phantom{{}={}}\del_z \log f_t(z_t) (z_{t+1/n}-z_t)-\frac{\del_z \log (f_t(z_t)+1)}{n}\\
			& \qquad =\frac{\del_z f_t(z_t)}{n f_t(z_t)}\frac{f_t^*(z_t)}{f_t^*(z_t)+1}-\frac{1}{n}\frac{\del_z f_t(z_t)}{f_t(z_t)+1}
			=\frac{\del_z f_t(z_t)}{n}\frac{f_t^*(z_t)/f_t(z_t)-1}{( f_t(z_t)+1)(f^*_t(z_t)+1)},
	\end{split}\end{align}
	where we used the equality $z_{t+1/n}-z_t=f_t^*(z_t)/n(f_t^*(z_t)+1)$.
	
	Let us show that the next order term for the second line in \eqref{e:Deltaeq4} is negligible and can be absorbed in the error term. To that end, first observe that $z_t$ is bounded away from either $\mathfrak{b} (t)$ or $\mathfrak{a} (t)$ (since $\mathfrak{b} (t) - \mathfrak{a} (t)$ is of order $1$). We only consider the former case, when $z_t$ is bounded away from $\fb(t)$, as the analysis for the latter is very similar. Then, $f_t(z_t)$ is bounded away from $0$; combining with \eqref{e:ft+1b2}, we deduce $\big| f_t(z_t)/(f_t(z_t)+1) \big|\asymp 1$. To cancel the possible singularity at $\fa(t)$, we reorganize the second line in \eqref{e:Deltaeq4} as
	\begin{align*}
		\log \frac{f_t(z_{t+1/n})}{f_t(z_{t+1/n})+1}-\log \frac{f_t(z_t)}{f_t(z_t)+1}-\log \big( f_t(z_t)+1 \big)+\log \big( f_t(z_{t+1/n}-1/n)+ 1 \big).
	\end{align*}
	The second order term by a Taylor expansion around $z_t$ is
	\begin{align}\begin{split}\label{e:tt2}
			&\phantom{{}={}}\del_z^2 \left.\log\frac{f_t(z)}{f_t(z)+1}\right|_{z=z_t}\frac{(z_{t+1/n}-z_t)^2}{2}+\del^2_z \log \big( f_t(z)+1 \big)|_{z=z_t}\frac{(z_{t+1/n}-z_t-1/n)^2}{2}\\
			& \qquad =\del_z^2 \left.\log\frac{f_t(z)}{f_t(z)+1}\right|_{z=z_t}\frac{f_t^*(z_t)^2}{2n^2 \big( f_t^*(z_t)+1 \big)^2}+\frac{\del^2_z \log (f_t(z)+1)|_{z=z_t}}{2 n^2 \big( f_t^*(z_t) + 1 \big)^2}.
	\end{split}\end{align}
	For the first term on the right side of \eqref{e:tt2}, we have
	\begin{align}\begin{split}\label{e:tt3}
			\left|\del_z^2 \left.\log\frac{f_t(z)}{f_t(z)+1}\right|_{z=z_t}\frac{f_t^*(z_t)^2}{2n^2 \big( f_t^*(z_t)+1 \big)^2}\right| 
			& \lesssim \frac{1}{n^2} \left(\left|\del^2_z\left. \frac{f_t(z)}{f_t(z)+1}\right|_{z=z_t}\right|+\left|\del_z\left. \frac{f_t(z)}{f_t(z)+1}\right|_{z=z_t}\right|^2\right)\\
			&\lesssim \frac{|\Im[f_t(z_t)/(f_t(z_t)+1)]|}{n^2\Im[z_t]\dist(z_t, I_t)}+ \frac{|\Im[f_t(z_t)/(f_t(z_t)+1)]|^2}{n^2\Im[z_t]^2} \\
			& \lesssim \frac{|\Im[f_t(z_t)/(f_t(z_t)+1)]|}{n^2\Im[z_t]\dist(z_t, I_t)},
	\end{split}\end{align}
	where we used that $\big| f^*_t(z_t)/(f^*_t(z_t)+1) \big|, \big| f_t (z_t) / (f_t (z_t) + 1) \big| \asymp 1$ in the first inequality, \eqref{e:derf/f+1} in the second inequality, and \eqref{e:imbb} for the last inequality. For the second term on the righthand side of \eqref{e:tt2}, we have
	\begin{align}\begin{split}\label{e:tt4}
			\phantom{{}={}}\frac{\big| \del^2_z \log (f_t(z)+1)|_{z=z_t} \big|}{2n^2 \big|f_t^*(z_t)+1 \big|^2}
			& \lesssim \frac{\big| \del^2_z \log (f_t(z)+1)|_{z=z_t} \big|}{2n^2 \big|f_t(z_t)+1 \big|^2}\\
			&\lesssim \frac{1}{n^2} \left(\frac{1}{ \big| f_t(z_t)+1 \big|}\left|\del^2_z\left. \frac{f_t(z)}{f_t(z)+1}\right|_{z=z_t}\right|+\left|\del_z\left. \frac{f_t(z)}{f_t(z)+1}\right|_{z=z_t}\right|^2\right) \\
			& \lesssim \frac{|\Im[f_t(z_t)/(f_t(z_t)+1)|]}{n^2\Im[z_t]\dist(z_t, I_t)},
	\end{split}\end{align}
	where we used \eqref{e:gfdidd} in the first inequality, and \eqref{e:derf/f+1} in the second inequality. 
	Both error terms \eqref{e:tt3} and \eqref{e:tt4} are in the same form as the error term in \eqref{e:Deltaeq4}, and can thus be absorbed into it.

	For the first term in the last line in \eqref{e:Deltaeq4}, the contour $\omega+\subseteq \mathscr D_t(\fr) $, and $w\in \omega+$ is bounded away from $[\fa(t), \fb(t)]$.  Thus, since $t \leq \sigma$ we have $|m_t(w)-m_t^*(w)|\leq \fM(t)/n$ from \eqref{stoptime2}. It thus follows from \eqref{e:gfdidd} that $|\log f_t(w)-\log f_t^*(w)|\lesssim \fM(t)/n$, and 
	\begin{align*}
		\big|\log\cB_t(w)-\log \cB_t^*(w) \big|
		&= \left|\log\left(1+(e^{\log f_t(w)-\log f_t^*(w)}-1)\frac{f_t^*(w)}{f_t^*(w)+1}\right)\right|\lesssim \frac{\fM(t)}{n}.
	\end{align*}
	
	\noindent Hence,
	\begin{align}\label{e:t2}
		\left|\frac{1}{2n\pi \ri} \oint_{\omega+}\frac{\big( \log \cB_t(w)-\log \cB^*_t(w) \big)\rd w}{(w-z_{t+1/n})^2}\right|\lesssim  \frac{\fM(t)}{n^2}.
	\end{align}
	For the last term in the last line in \eqref{e:Deltaeq4}, using \eqref{e:gfdidd} and $|z_{t+1/n}-z_t|\lesssim 1/n$, we have
	\begin{align}\label{e:t3}
		\Big| \big(\del_z\log\tilde g_t(z_{t})-\del_z\log\tilde g^*_t(z_{t}) \big)(z_{t+1/n}-z_t) \Big|\lesssim \frac{\fM(t)}{n^2}.
	\end{align}
	For the error term in \eqref{e:Deltaeq4},  thanks to \eqref{e:extremepp2}, for $0\leq t\leq \sigma$, it holds $I_t\subset I_t^+$, and thus $\dist(z_t, I^+_t)\leq \dist(z_t, I_t)$. Using \eqref{e:replacep3}, we can rewrite the error as
	\begin{align}\label{e:t4}
		\frac{|\Im[f_t(z_t)/(f_t(z_t)+1)]|}{n^2\Im[z_t]\dist(z_t, I_t)}=
		\OO\left(\frac{|\Im[f^*_t(z_t)/(f^*_t(z_t)+1)]|}{n^2\Im[z_t]\dist(z_t, I^+_t)}\right).
	\end{align}
	The claim \eqref{e:mainEq} follows from combining \eqref{e:t1}, \eqref{e:t2}, \eqref{e:t3} and \eqref{e:t4}.
\end{proof}

\subsection{Optimal Rigidity Estimates}\label{s:OptimalRigidity}

In this section, we analyze \eqref{e:mainEq} and prove \Cref{p:rigidityBB}. By abbreviating $f_s = f_s (z_s)$ and $f_s^* = f_s^* (z_s)$, and then summing over \eqref{e:mainEq} from time $0$ to $t\wedge \sigma$, we get
\begin{align}\begin{split}\label{e:mainEq2}
		\Delta_{t\wedge\sigma}(z_{t\wedge\sigma})&=\Delta_{0}(z_{0})
		+\sum_{s\in [0,t\wedge \sigma)\cap \bZ_n}\Delta M_s(z_{s+1/n})+\sum_{s\in [0,t\wedge \sigma)\cap \bZ_n}\frac{\del_z f_s}{n}\frac{e^{\log f^*_s-\log  f_s}-1}{( f_s+1)(f^*_s+1)}\\
		& \qquad +\sum_{s\in [0,t\wedge \sigma)\cap \bZ_n}\OO\left(\frac{\fM(s)}{n^2}+\frac{|\Im[f^*_s(z_s)/(f^*_s(z_s)+1)]|}{n^2\Im[z_s]\dist(z_s,I^+_s)}\right).
\end{split}\end{align}

The following proposition bounds the martingale terms in \eqref{e:mainEq2}.\begin{prop}\label{c:error}
	Under the assumptions of \Cref{p:rigidityBB},  there exists an event $\cW$  with overwhelming probability on which the following holds for every  $u \in \mathscr L$.
	If $z_t=z_t(u)\in {\mathscr D}_t(\fr)\cup \mathscr D_t^{\rm L}$, then
	\begin{align}\label{eq:estet1}
		\left|\sum_{s\in [0,t\wedge\sigma)\cap \bZ_n}\Delta M_s(z_{s+1/n})\right|\lesssim \frac{ n^{\fd/4}}{n\Im[z_{t\wedge \sigma}]}.
	\end{align}
	If $z_t\in {\mathscr D}_t^{\rm F}$ then
	\begin{align}\label{eq:estet}
		\left|\sum_{s\in [0,t\wedge\sigma)\cap \bZ_n} \Delta M_s(z_{s+1/n})\right|\lesssim n^{\fd/4}\left(\frac{1}{n\sqrt{\Im[z_{t\wedge \sigma}] \dist(z_{t\wedge \sigma}, I^*_{t\wedge \sigma})}}+\frac{1}{(n\Im[z_{t\wedge \sigma}])^2}\right).
	\end{align}
\end{prop}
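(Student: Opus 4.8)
The plan is to establish \Cref{c:error} by treating the sum $\sum_{s \in [0, t\wedge\sigma)\cap\bZ_n} \Delta M_s(z_{s+1/n})$ as a martingale (in the filtration generated by $\bmx_0, \bmx_{1/n}, \ldots$) and applying a Burkholder--Davis--Gundy or Azuma-type inequality together with the moment bound \eqref{e:improve2} from \Cref{p:improve}. The first step is to fix a lattice point $u \in \mathscr L$, note that $z_t(u)$ is deterministic given $u$ (it follows the characteristic flow, which depends only on $f^*$, not on the random configuration), and by the third statement of \Cref{p:gap} the whole trajectory $z_s(u)$, $s \le t$, stays in the relevant spectral domain, so that all the estimates of \Cref{p:gfbound} and \Cref{p:improve} are available along the trajectory up to the stopping time $\sigma$. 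I would then compute the predictable quadratic variation: by \eqref{e:improve2} with $p = 1$, each increment $\Delta M_s(z_{s+1/n})$ has conditional second moment of order $\frac{|\Im[f_s/(f_s+1)]|}{n \Im[z_s] \dist(z_s, I_s)^2} + \frac{|\Im[f_s/(f_s+1)]|}{n \Im[z_s] \dist(z_s, I_s)^2}$ (the second term in \eqref{e:improve2} being lower order for $p=1$ when $\dist(z_s, I_s) \gg n^{-1}$, which holds in the spectral domains); using \eqref{e:replacep3} to replace $f_s$ by $f_s^*$ and \eqref{e:extremepp2} so that $\dist(z_s, I_s) \ge \dist(z_s, I_s^+)$, this is bounded by $\frac{|\Im[f_s^*/(f_s^*+1)]|}{n \Im[z_s] \dist(z_s, I_s^+)^2}$.

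Summing over $s$ and converting $\sum_s (1/n)(\cdots)$ to an integral, the accumulated quadratic variation is controlled by $\frac1n \int_0^t \frac{|\Im[f_s^*/(f_s^*+1)]|}{\Im[z_s]\dist(z_s, I_s^+)^2}\, ds$, which is exactly the integral estimated in \eqref{e:integral3} of \Cref{p:integral} for $z_t \in \mathscr D_t^{\rm F}$, giving a bound of order $\frac{1}{n\, \Im[z_t]\dist(z_t, I_t^*)}$; squaring the desired right side of \eqref{eq:estet} gives a comparable quantity, so the quadratic variation has the right size. For $z_t \in \mathscr D_t^{\rm L}$, one instead uses \eqref{e:derf/f+1} with $k=1$ and \eqref{e:imbb}, together with \eqref{e:integral0}, to bound the quadratic variation by $\frac{\log n}{(n \Im[z_t])^2}$, matching the square of the right side of \eqref{eq:estet1} up to the $n^{\fd/4}$ factor. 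Then I would apply a martingale concentration inequality --- since each increment is also bounded almost surely (one gets an $L^\infty$ bound on $\Delta M_s$ of order $\frac{1}{n \Im[z_s]\dist(z_s, I_s)}$, or more carefully from higher moments \eqref{e:improve2} with large $p$) --- to conclude that the sum is, with overwhelming probability, at most $n^{\fd/4}$ times the square root of the quadratic variation bound. Taking a union bound over the polynomially many (in fact $\OO(n^{6})$, hence still beaten by the $n^{-D}$) lattice points $u \in \mathscr L$ produces the event $\cW$.

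The main obstacle I anticipate is handling the stopping time $\sigma$ and the fact that the bounds in \eqref{e:improve2} are stated in terms of the \emph{random} quantities $f_t, I_t$, whereas the target estimates \eqref{eq:estet1}--\eqref{eq:estet} and the integral bounds of \Cref{p:integral} are phrased in terms of the deterministic $f_t^*, I_t^*$: one must carefully invoke \eqref{e:replacep3} and \eqref{e:extremepp2} (valid precisely because $t \le \sigma$) at every step to pass between them, and keep track that stopping at $\sigma$ only helps (the stopped sum is still a martingale). A secondary technical point is that \Cref{p:integral} controls the integral along a continuous characteristic $z_s(u)$, while the martingale sum is a Riemann sum over $s \in \bZ_n$; the discretization error is $\OO(1/n)$ per step and is dominated since $z_s$ varies by $\OO(1/n)$ and the integrands are, by \eqref{e:derf/f+1}, Lipschitz at scale $\dist(z_s, I_s^+)$, which is bounded below on the spectral domains --- so this only costs another harmless factor. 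Finally, one must verify that the $L^\infty$ increment bound is small enough that the Azuma/Freedman bound's exponential tail indeed gives overwhelming probability at the scale $n^{\fd/4}\sqrt{\text{(q.v.)}}$; this follows because $\dist(z_s, I_s^+) \gtrsim n^{-1+10\fd}$ throughout, making each increment at most $n^{-\fd}$ times the relevant scale, so the Bernstein-type condition is comfortably satisfied.
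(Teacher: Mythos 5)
Your overall scaffolding (reduce to a martingale sum along a deterministic characteristic, bound the predictable quadratic variation via the $p=1$ case of \eqref{e:improve2}, integrate along the characteristic using \Cref{p:integral}, then invoke a martingale concentration inequality and take a union bound over $\mathscr L$) matches the paper's, and the quadratic-variation part of your plan is essentially correct. The genuine divergence is in the concentration inequality you propose. The paper applies the Burkholder--Rosenthal inequality and then Markov with a large $p$, so that the relevant quantity is
\[
\max\Bigg\{\bE\Big[\Big(\textstyle\sum_s\bE[|\Delta M_s|^2\,|\,\bmx_s]\Big)^p\Big],\ \textstyle\sum_s\bE\big[|\Delta M_s|^{2p}\big]\Bigg\},
\]
and the second entry, after inserting \eqref{e:improve2} and applying the integral estimate \eqref{e:integral0} with $k=4p-2$, produces the term $\big(n\Im[z_{t\wedge\sigma}]\big)^{-(4p-2)/p}\to\big(n\Im[z_{t\wedge\sigma}]\big)^{-2}$ appearing in \eqref{eq:estet}. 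Your Freedman/Azuma route, by contrast, trades the sum of high moments for an almost-sure bound $a$ on a single increment, yielding a bound of the shape $n^{\fd/4}(\sqrt{V}+a)$.

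This is where there is a real gap. Your stated a.s. bound $|\Delta M_s|\lesssim\frac{1}{n\Im[z_s]\dist(z_s,I_s)}$ is not correct: the deterministic bound coming from $|\Delta M_s|\lesssim n^{-2}\sum_i|z-x_i|^{-2}$ is of order $\frac{1}{n\,\dist(z_s,I_s)^2}$ (or, alternatively, $\frac{1}{n\Im z_s}$), and neither of these is $\lesssim \frac{1}{(n\Im z_s)^2}$ in general --- for $z_s$ in $\mathscr D_s^{\rm F}$ the distances and $\Im z_s$ can both be polynomially small, and, e.g., when $\dist\lesssim\sqrt{n}\,\Im z$ one has $\frac{1}{n\dist^2}\gg\frac{1}{(n\Im z)^2}$. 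If you instead plug the weaker Freedman output $\frac{n^{\fd/4}}{n\,\dist(z_{t\wedge\sigma},I_{t\wedge\sigma})^2}$ into the downstream Gr\"onwall step and the stopping-time threshold \eqref{stoptime2}, the rigidity comparison in the fourth part of \Cref{p:gap} (see \eqref{e:mtbound} onward) no longer closes for arbitrarily small $\fd$: the offending term is not dominated by $\frac{1}{n\eta}$ at the chosen $(\kappa,\eta)$. The ``stochastic'' $L^\infty$ bound implied by \eqref{e:improve2} with large $p$ is $\approx\frac{1}{n^2\dist^2}$, which \emph{would} suffice since $\dist\geq\Im z$, but promoting it to an almost-sure input for Freedman/Azuma requires a truncation-and-recentering argument (so the truncated increments remain martingale differences), and showing the excursion event is negligible itself requires taking $p$ arbitrarily large --- at which point your route essentially reproduces the paper's Rosenthal--Markov argument rather than providing an independent shortcut. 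So: the quadratic-variation side of your proposal is fine, the second term in \eqref{eq:estet} is the missing piece, and the fix requires either adopting the paper's Burkholder--Rosenthal/high-moment framework or carrying out the truncation in full.
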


\begin{proof}
	In the rest of the proof, we simply write $\Delta M_s=\Delta M_s(z_{s+1/n})$. For any time  $0\leq t\leq \ft$ such that $z_t\in \mathscr D_t(\fr)\cup \mathscr D_t^{\rm L}\cup \mathscr D_t^{\rm F}$, the Burkholder--Rosenthal inequality gives
	\begin{align}\label{e:Ros}
		\bE\left|\sup_{r\leq t}\sum_{s\in [0,r\wedge\sigma)\cap \bZ_n} \Delta M_s\right|^{2p}
		\leq \fC_p \max\left\{ \bE \Bigg[ \bigg(\sum_{s\in [0,t\wedge \sigma)\cap \bZ_n}\bE \big[ | \Delta M_s|^2 \big| \bmx_s \big] \bigg)^p \Bigg],\sum_{s\in [0,t\wedge \sigma)\cap \bZ_n}\bE \big[ |\Delta M_s|^{2p} \big] \right\}
	\end{align}
	To bound the first term in \eqref{e:Ros2}, thanks to \eqref{e:improve2} in \Cref{p:improve} by taking $p=1$,
	\begin{align}\label{e:bfirst}
		\sum_{s\in [0,t\wedge \sigma)\cap \bZ_n}\bE \big[ |\Delta M_s|^2 \big| \bmx_{s} \big]&
		\lesssim\frac{1}{n^3}\sum_{s\in [0,t\wedge \sigma)\cap \bZ_n} \frac{|\Im[f_s(z_s)/(f_s(z_s)+1)]|}{\Im[z_s]\dist(z_s, I_s)^2}.
	\end{align}
	For the second term in \eqref{e:Ros}, thanks to \eqref{e:improve2}
	\begin{align}\label{e:bsecond}
		\bE \big[ |\Delta M_s|^{2p} \big| \bmx_{s} \big] \lesssim 
		\left(\left(\frac{1}{n^3} \frac{|\Im[f_s(z_s)/(f_s(z_s)+1)]|}{\Im[z_s]\dist(z_s, I_s)^2}\right)^p +\frac{|\Im[f_s(z_s)/(f_s(z_s)+1)]|}{n^{4p-1}\Im[z_s] \dist(z_s,I_s)^{4p-2}}\right),
	\end{align}
	
	\noindent Observe that the sum over $s$ of the first term on the right side of \eqref{e:bsecond} can be bounded by the $p$-th moment of \eqref{e:bfirst}. Thus, \eqref{e:Ros} implies
	\begin{align}\begin{split}\label{e:Ros2}
			\eqref{e:Ros}
			&\lesssim \bE\left[\left(\frac{1}{n^3}\sum_{s\in [0,t\wedge \sigma)\cap \bZ_n} \frac{|\Im[f_s(z_s)/(f_s(z_s)+1)]|}{\Im[z_s]\dist(z_s, I_s)^2}\right)^p\right]\\
			& \qquad + \sum_{s\in [0,t\wedge\sigma)\cap \bZ_n}\bE\left[\frac{|\Im[f_s(z_s)/(f_s(z_s)+1)]|}{n^{4p-1}\Im[z_s] \dist(z_s,I_s)^{4p-2}}\right].
	\end{split}\end{align}
	
	For the first term on the righthand side of \eqref{e:Ros2}, 
	since we always have that $|z_s-x|\geq \Im z_s$ for $x\in I_s$,   
	it follows that 
	\begin{align}\begin{split}\label{e:bbterma}
			\frac{1}{n^3}\sum_{s\in [0,t\wedge \sigma)\cap \bZ_n} \frac{|\Im[f_s(z_s)/(f_s(z_s)+1)]|}{\Im[z_s]\dist(z_s, I_s)^2} & \lesssim \frac{1}{n^3}\sum_{s\in [0,t\wedge \sigma)\cap \bZ_n}  \frac{|\Im[ f^*_s(z_s)/( f^*_s(z_s)+1)]|}{\Im[z_s]^3}\\
			&   \lesssim  
			\frac{1}{n^2}\int_0^{t\wedge\sigma}\frac{|\Im[ f^*_s(z_s)/( f^*_s(z_s)+1)]|}{\Im[z_s]^3} \mathrm{d} s 
			\lesssim \frac{1}{n^2\Im[z_{t\wedge \sigma}]^2},
		\end{split}
	\end{align} 
	where we used \eqref{e:replacep3} for the first inequality, and \eqref{e:integral0} for the last inequality. 
	
	For the second term in \eqref{e:Ros2}, we have
	\begin{align}\begin{split}\label{e:netas}
			\sum_{s\in [0,t\wedge \sigma)\cap \bZ_n}\bE\left[\frac{|\Im[f(z_s)/(f(z_s)+1)]|}{n^{4p-1}\Im[z_s] \dist(z_s,I_s)^{4p-2}}\right] & \lesssim  \frac{1}{n^{4p-1}}\sum_{s\in [0,t\wedge \sigma)\cap \bZ_n}  \frac{|\Im[ f^*_s(z_s)/( f^*_s(z_s)+1)]|}{\Im[z_s]^{4p-1}}\\
			&\lesssim \frac{1}{n^{4p-2}}\int_0^{t\wedge\sigma}\frac{|\Im[ f^*_s(z_s)/( f^*_s(z_s)+1)]|}{\Im[z_s]^{4p-1}} \\
			& \leq \frac{1}{n^{4p-2}\Im[z_{t\wedge \sigma}]^{4p-2}},
	\end{split}\end{align}
	where we used \eqref{e:integral0} for the last inequality. By plugging \eqref{e:bbterma} and \eqref{e:netas} into \eqref{e:Ros2}, we get
	\begin{align*}
		\bE\left|\sup_{r\leq t}\sum_{s\in [0,r\wedge\sigma)\cap \bZ_n} \Delta M_s\right|^{2p}
		\leq \fC_p\left( \frac{1}{n^{2p}\Im[z_{t\wedge \sigma}]^{2p}}+\frac{1}{n^{4p-2}\Im[z_{t\wedge \sigma}]^{4p-2}}\right).
	\end{align*}
	Therefore, by taking $p$ large in Rosenthal's inequality \eqref{e:Ros2}, for any $u\in {\mathscr L}$ and $z_t=z_t(u)\in {\mathscr D}_t(\fr)\cup \mathscr D_t^{\rm L}$, the following holds with overwhelming probability, i.e., $1-n^{-D}$ for any constant $D > 1$ (assuming $n$ is sufficiently large),
	\begin{align}\begin{split}\label{e:continuityarg}
			\sup_{r\leq t}\left|\sum_{s\in [0, r\wedge \sigma)\cap \bZ_n} \Delta M_s\right|\lesssim  \frac{n^{\fd/4}}{n\Im[z_{t\wedge\sigma}]}.
		\end{split}
	\end{align}
	
	If $z_t\in \mathscr D_t^{\rm F}$, we have
	\begin{align}\begin{split}\label{e:edgevariance}
			\frac{1}{n^3}  \sum_{s\in [0,t\wedge \sigma)\cap \bZ_n}\frac{| \Im[f_s(z_s)/( f_s(z_s)+1)]|}{\eta_s \dist(z_s,  I_s)^2}
			&  \lesssim  
			\frac{1}{n^3}\sum_{s\in [0,t\wedge \sigma)\cap \bZ_n} \frac{|\Im[ f^*_s(z_s)/( f^*_s(z_s)+1)]|}{\eta_s \dist(z_s,  I^+_s)^2}\\
			&\lesssim \frac{1}{n^2}\int_0^{t\wedge \sigma}\frac{|\Im[ f^*_s(z_s)/( f^*_s(z_s)+1)]|\rd s}{\eta_s \dist(z_s,  I^+_s)^2}
			\lesssim  \frac{1} {n^{2}} \frac{1}{\eta_{t \wedge \sigma} (\kappa_{t \wedge \sigma}+\eta_{t \wedge \sigma})},
	\end{split}\end{align}
	where we have denoted $z_s = \kappa_s + \mathrm{i} \eta_s$, and have used  \eqref{e:replacep3} and \eqref{e:extremepp2} for the first inequality, and \eqref{e:integral3} for the last inequality. Therefore, by plugging \eqref{e:netas} and \eqref{e:edgevariance} into \eqref{e:Ros2}, we get
	\begin{align}\label{e:Ros3}
		\bE\left|\sup_{r\leq t}\sum_{s\in [0,r\wedge\sigma)\cap \bZ_n} \Delta M_s\right|^{2p}
		\leq \fC_p\left( \frac{1}{n^{2p} \big( \Im[z_{t\wedge \sigma}]\dist(z_{t\wedge \sigma}, I_{t\wedge \sigma}^*) \big)^{p}}+\frac{1}{n^{4p-2}\Im[z_{t\wedge \sigma}]^{4p-2}}\right).
	\end{align}
	By taking $p$ large in \eqref{e:Ros3},
	for any $u\in {\mathscr L}$ and $z_t(u)\in \mathscr D_t^{\rm F}$, we deduce that, with overwhelming probability,
	\begin{align}\begin{split}\label{eq:2term}
			\sup_{r\leq t}\left|\sum_{s\in [0,r\wedge\sigma)\cap \bZ_n} \Delta M_s\right|\leq n^{\fd/4}\left(\frac{ 1}{n \sqrt{\Im[z_{t\wedge \sigma}]\dist(z_{t\wedge \sigma}, I_{t\wedge \sigma}^*)}}+\frac{1}{ \big( n\Im[z_{t\wedge \sigma}] \big)^2}\right).
		\end{split}
	\end{align}
	We define $\cW$ to be the set  on which \eqref{e:continuityarg} and \eqref{eq:2term} hold for  any $u\in \mathscr L$ and any time $0\leq t\leq \ft$ such that $z_t=z_t(u)\in {\mathscr D}_t(\fr)\cup \mathscr D_t^{\rm L}\cup \mathscr D_t^{\rm F}$.
	The above discussions  imply that  $\cW$ holds with overwhelming probability, thereby establishing the proposition.
\end{proof}

Now we can establish \Cref{p:rigidityBB}. 

\begin{proof}[Proof of \Cref{p:rigidityBB}]
	We can now start analyzing the difference equation \eqref{e:mainEq2} for $\Delta_t(z_t)$. By the third statement of \Cref{p:gap}, for any lattice point $u\in {\mathscr L} = \mathscr{L}_t$ with $z_t(u)\in \mathscr D_t(\fr)$, we have that for any $0\leq s\leq t$  that $z_s=z_s(z)\in \mathscr D_s(\fr)$. As a consequence,  \eqref{e:gfdidd} and our construction of the stopping time \eqref{stoptime2} imply that $\big| \log f_s (z_s) -\log  f^*_s (z_s) \big|\lesssim \fM(s)/ n$ for $s\leq \sigma$, and so
	\begin{align}\label{e:sectt}
		\left|\frac{\del_z f_s}{n}\frac{e^{\log f^*_s-\log  f_s}-1}{( f_s+1)(f^*_s+1)}\right|
		=\Bigg|\frac{\del_z f_s (e^{\log f^*_s (z_s)-\log  f_s (z_s)}-1)}{n (f_s +1)^2} \bigg(1+\frac{f_s^* (z_s) (e^{\log f_s (z_s) -\log  f^*_s (z_s)}-1)}{f_s^* (z_s)+1} \bigg)  \Bigg|
		\lesssim \frac{\fM(s)}{n^2},
	\end{align}
	where we used \eqref{e:ft+1b2} and the bound $\big| \del_z (f_s/(f_s+1)) \big|\lesssim 1/\dist(z, I_s^+)\lesssim 1$ (which holds by \eqref{e:derf/f+1} and \eqref{e:imbb}). By \Cref{c:error}, on the event $\cW$, we also have 
	\begin{align}\label{e:Deltafar}
		\left|\sum_{s\in [0,t\wedge \sigma)\cap \bZ_n}\Delta M_s(z_{s+1/n})\right|\lesssim n^{\fd/4 - 1}.
	\end{align}
	Under our assumption \eqref{e:boundaryc0} for $z_0=u \in {\mathscr D}_0(\fr)\cup{\mathscr D}_0^{\rm L}\cup{\mathscr D}_0^{\rm F}$, we have
	\begin{align}\label{e:boundarycm2}
		\big| m_0^*(u)-m_0(u) \big|\leq \int_{-\infty}^{\infty} \frac{\big| H^*_0(x)-H_0(x) \big|\rd x}{|u-x|^2}\lesssim \frac{n^{\fd/3 - 1}}{\dist(u, I_0^*)}.
	\end{align}
	In particular, for $z_0\in {\mathscr D}_0(\fr)$,
	\eqref{e:boundarycm2} gives $\big| \Delta_0(z_0) \big|\lesssim n^{ \fd/3 - 1}$.
	By plugging \eqref{e:sectt} and \eqref{e:Deltafar} in \eqref{e:mainEq2}, we obtain
	\begin{align}\label{e:Deltafar2}
		\big|\Delta_{t\wedge\sigma}(z_{t\wedge\sigma}) \big|\lesssim n^{ \fd/3 - 1}+\sum_{s\in [0,t\wedge \sigma)\cap \bZ_n}\frac{\fM(s)}{n^2}\lesssim n^{ \fd/3 - 1}+\int_0^{t\wedge \sigma}\frac{\fM(s)\rd s}{n}.
	\end{align}
	We recall from \eqref{e:deftz2}, that $\fM(t)=M e^{M t}n^{ \fd/3}$, so we can further simplify the right side of \eqref{e:Deltafar2} and conclude that on the event $\cW$,
	\begin{align*}
		\big| \Delta_{t\wedge\sigma}(z_{t\wedge\sigma})\big|\lesssim 
		(1+e^{M (t\wedge \sigma)})n^{ \fd/3 - 1}, \qquad \text{so that} \qquad \big| \Delta_{t \wedge \sigma} (z_{t \wedge \sigma} ) \big| \leq \frac{M}{2} e^{M (t\wedge \sigma)}n^{ \fd/3- 1},
	\end{align*}
	provided $M$ is large enough. By our construction of the lattice set $\mathscr L$, \eqref{e:approxw} implies that for any $w\in {\mathscr D}_{t\wedge\sigma}(\fr)$, there exists some $u\in {\mathscr L}$ such that $z_{t\wedge\sigma}(u)\in {\mathscr D}_{t\wedge\sigma}(\fr)$, and $\big| z_{t\wedge\sigma}(u)-w \big|\lesssim 1/n^2$. Moreover, on the domain $\mathscr D_{t\wedge \sigma}(\fr)$, both $ m_{t\wedge \sigma}$ and $m^*_{t\wedge\sigma}$ are Lipschitz with Lipschitz constant $\OO(1)$. Therefore 
	\begin{align}\begin{split}\label{e:useLip}
			\big| m_{t\wedge\sigma}(w)-m^*_{t\wedge\sigma}(w) \big| & \leq 
			\Big|m_{t\wedge\sigma} \big(z_{t\wedge\sigma}(u) \big)-m^*_{t\wedge\sigma} \big(z_{t\wedge\sigma}(u) \big) \Big|\\
			& \qquad +
			\Big| m_{t\wedge\sigma}(w)- m_{t\wedge\sigma} \big(z_{t\wedge\sigma}(u) \big) \Big|+
			\Big| m^*_{t\wedge\sigma}(w)-m^*_{t\wedge\sigma} \big(z_{t\wedge\sigma}(u) \big) \Big|\\
			&\leq \frac{M}{2} e^{M (t\wedge \sigma)} n^{ \fd/3 - 1}+\OO\left(\frac{1}{n^2}\right) < M e^{M (t\wedge \sigma)}n^{\fd/3 - 1}=\frac{\fM(t\wedge \sigma)}{n}.
	\end{split}\end{align}

	In the following we analyze \eqref{e:mainEq2} for $z_t$ close to the real interval $\big[ \mathfrak{a} (t), \mathfrak{b} (t) \big]$. For the last error term in \eqref{e:mainEq2}, 
	\begin{align}\label{e:errorMt}
		\sum_{s\in [0,t\wedge \sigma)\cap \bZ_n}\frac{\fM(s)}{n^2}\lesssim \int_0^{t\wedge \sigma}\frac{\fM(s)\rd s}{n}
		=e^{M (t\wedge \sigma)}n^{ \fd/3 - 1}\ll n^{\fd/2-1}.
	\end{align}
	Since we always have that $\dist(z_s, I_s^+)\geq \Im z_s$, we can bound the error term, as in \eqref{e:bbterma} 
	\begin{align}\begin{split}\label{e:errort1}
			\phantom{{}={}}\sum_{s\in [0,t\wedge \sigma)\cap \bZ_n}\frac{|\Im[f^*_s(z_s)/(f^*_s(z_s)+1)]|}{n^2\Im[z_s]\dist(z_s,I^+_s)}
			& \lesssim 
			\frac{1}{n}\int_0^{t\wedge\sigma}\frac{|\Im[ f^*_s(z_s)/( f^*_s(z_s)+1)]|\rd s}{\Im[z_s]^2}
			\lesssim
			\frac{1}{n\Im[z_{t\wedge \sigma}]},
		\end{split}
	\end{align} 
	
	\noindent where we have used \eqref{e:integral0} for the last inequality. For $z_t\in \mathscr D_t^{\rm F}$, 
	we can bound the error term using \eqref{e:integral2}
	\begin{align}\begin{split}\label{e:errort2}
			\sum_{s\in [0,t\wedge \sigma)\cap \bZ_n}\frac{|\Im[f^*_s(z_s)/(f^*_s(z_s)+1)]|}{n^2\Im[z_s]\dist(z_s,I^+_s)}
			&\lesssim 
			\frac{1}{n}\int_0^{t\wedge\sigma}\frac{|\Im[ f^*_s(z_s)/( f^*_s(z_s)+1)]|\rd s}{\Im[z_s]\dist(z_s, I_s^+)}\\
			&\lesssim\frac{\log n}{n\sqrt{\Im[z_{t\wedge \sigma}]\dist(z_{t\wedge \sigma}, I^*_{t\wedge \sigma})}}.
		\end{split}
	\end{align}

	Therefore, for $u\in {\mathscr L}$ with $z_t(u)\in {\mathscr D}_t^{\rm L}$, by \eqref{eq:estet1}, \eqref{e:errorMt} and \eqref{e:errort1} we can write \eqref{e:mainEq2} as 
	\begin{align}\label{e:deltaee1}
		\big| \Delta_{t\wedge\sigma}(z_{t\wedge\sigma}) \big|
		\leq \big|\Delta_{0}(z_{0}) \big|+\frac{1}{n}\sum_{s\in [0,t\wedge \sigma)\cap \bZ_n}\left|\del_z  f_s\frac{e^{\log f^*_s-\log  f_s}-1}{( f_s+1)(f^*_s+1)}\right|+\OO\left(\frac{n^{ \fd/2 - 1}}{\Im[z_{t\wedge\sigma}] }\right).
	\end{align}
	Similarly, for $u\in {\mathscr L}$ with $z_t(u)\in {\mathscr D}_t^{\rm F}$, using \eqref{eq:estet}, \eqref{e:errorMt} and \eqref{e:errort2} we can write \eqref{e:mainEq2} as 
	\begin{align}\begin{split}\label{e:deltaee2}
			\big|\Delta_{t\wedge\sigma}(z_{t\wedge\sigma}) \big|
			\leq \big| \Delta_{0}(z_{0}) \big|&+\frac{1}{n}\sum_{s\in [0,t\wedge \sigma)\cap \bZ_n}\left|\del_z  f_s\frac{e^{\log f^*_s-\log  f_s}-1}{( f_s+1)(f^*_s+1)}\right|\\
			&+\OO\left(\frac{n^{ \fd/2}}{n\sqrt{\Im[z_{t\wedge\sigma}] \dist(z_{t\wedge\sigma}, I^*_{t\wedge\sigma})}}+\frac{n^{\fd/2}}{(n\Im[z_{t\wedge\sigma}])^2}\right).
	\end{split}\end{align}
	For the summation in \eqref{e:deltaee1} and \eqref{e:deltaee2}, we slightly rewrite the summand as,
	\begin{align*}\begin{split}
			\left|\del_z  f_s (z_s) \frac{e^{\log f^*_s (z_s) -\log  f_s (z_s)}-1}{ \big( f_s (z_s) +1 \big) \big(f^*_s (z_s) +1 \big)}\right| & = \Bigg|\frac{\del_z f_s (z_s)}{\big( f_s (z_s)+1 \big)^2} \bigg(1+\frac{f_s^* (z_s) (e^{\log f_s (z_s) -\log  f^*_s (z_s)}-1)}{f_s^* (z_s)+1} \bigg) \\
			& \qquad \times \frac{e^{\log f^*_s (z_s)-\log  f_s (z_s)}-1}{\log f_s (z_s) -\log  f^*_s (z_s)} \Bigg| \big| \log f_s (z_s) -\log  f^*_s (z_s) \big|.
	\end{split}\end{align*}
	We denote the first factor on the right side by $\alpha_s$,
	\begin{align}\label{e:defbetas}
		\alpha_s:=\Bigg|\frac{\del_z f_s}{(f_s+1)^2} \bigg(1+\frac{f_s^*(e^{\log f_s-\log  f^*_s}-1)}{f_s^*+1} \bigg)\frac{e^{\log f^*_s-\log  f_s}-1}{\log f_s-\log  f^*_s} \Bigg|,
	\end{align}
	where we omitted the dependence on $z_s$.
	For $0\leq s< t\wedge \sigma$, by \eqref{e:gfdidd} and \eqref{e:replacebound} we have $|\log f^*_s-\log  f_s|\lesssim n^{-\fd}$, and by \eqref{e:ft+1b2} we have $|f_s^*/(f_s^*+1)|\lesssim 1$. So,
	\begin{align}\begin{split}\label{e:aterm}
			\alpha_s = \left(1+\frac{\OO(1)}{n^{{\fd}}}\right)\left|\frac{\del_z  f_s(z_s)}{( f_s(z_s)+1)^2}\right|
			& = \left(1+\frac{\OO(1)}{n^{{\fd}}}\right)\left|\left.\del_z  \left(\frac{f_s}{f_s+1}\right)\right|_{z=z_s}\right|\\
			& \lesssim  \left(1+\frac{\OO(1)}{n^{{\fd}}}\right)\frac{|\Im[  f_s(z_s)/( f_s(z_s)+1)]|}{\Im z_s}\\
			& \lesssim \left(1+\frac{\OO(1)}{n^{{\fd}}}\right)\frac{|\Im[ f^*_s(z_s)/(1+f^*_s(z_s))]|}{\Im z_s}
			= \left(1+\frac{\OO(1)}{n^{{\fd}}}\right)\frac{-\del_s \Im z_s}{\Im z_s},
	\end{split}\end{align}
	where in the second line we used \eqref{e:derf/f+1}, in the third line we used \eqref{e:replacep2}, and in the last equation we used the equality $\partial_s z_s = f_s^* (z_s) / \big( f_s^* (z_s) + 1 \big)$. With $\alpha_s$, we can by \eqref{e:gfdidd} rewrite \eqref{e:deltaee1} as
	\begin{align*}\begin{split}
			\big| \Delta_{t\wedge \sigma}(z_{t\wedge \sigma}) \big|
			\leq \frac{1}{n}\sum_{s\in [0,t\wedge \sigma)\cap \bZ_n}\alpha_s(z_s) \big|\Delta_s(z_s) \big|
			+\OO\left( \big| \Delta_0(z_0) \big|+\frac{n^{\fd/2}}{n\Im[z_{t\wedge\sigma}]}\right).
		\end{split}
	\end{align*}
	
	\noindent By discrete Gr{\" o}nwall's inequality \cite{holte2009discrete}, this implies the estimate
	\begin{align}\begin{split}\label{e:midgronwall0}
			\left|\Delta_{t\wedge\sigma}(z_{t\wedge\sigma})\right|
			&\lesssim \big| \Delta_0(z_0) \big|+\frac{n^{\fd/2}}{n\Im[z_{t\wedge\sigma}]} +
			\frac{1}{n}\sum_{s\in [0,t\wedge \sigma)\cap \bZ_n}\alpha_s\prod_{\tau\in (s, t\wedge \sigma)\cap \bZ_n}\left(1+\frac{\alpha_\tau}{n}\right) \cdot \left(|\Delta_0(z_0)|+\frac{n^{\fd/2}}{n\Im[z_{s}]} \right)\\
			&\lesssim \big| \Delta_0(z_0) \big|+\frac{n^{\fd/2}}{n\Im[z_{t\wedge\sigma}]} +\int_0^{t\wedge\sigma}\alpha_se^{\int_s^{t\wedge\sigma} \alpha_\tau\rd \tau}\left(|\Delta_0(z_0)|+\frac{n^{\fd/2}}{n\Im[z_{s}]} \right)\rd s.
		\end{split}
	\end{align}
	For the integral of $\alpha_\tau$, we have
	\begin{align}\label{e:betaint}
		e^{\int_s^{t\wedge\sigma} \alpha_\tau\rd \tau}
		\leq e^{\left(1+\frac{\OO(1)}{n^{\fd}}\right)\log \left(\frac{\Im[z_{s}]}{\Im[z_{t\wedge\sigma}]}\right)}
		= \left(\frac{\Im[z_{s}]}{\Im[z_{t\wedge\sigma}]}\right)^{1+\frac{\OO(1)}{n^{\fd}}}
		\leq \OO(1)  \frac{\Im[z_s]}{\Im[z_{t\wedge\sigma}]}.
	\end{align}
	
	\noindent In the last inequality, we used that by our construction of ${\mathscr D}_t^{\rm L}$,  we have $\Im[z_{s}]/\Im[z_{t\wedge\sigma}]=\OO(n)$.
	Combining the above inequality \eqref{e:betaint} with \eqref{e:aterm} we can bound the last term in \eqref{e:midgronwall0} by
	\begin{align}\begin{split}\label{e:term20}
			\phantom{{}={}}\int_0^{t\wedge\sigma}\alpha_s e^{\int_s^{t\wedge\sigma} \alpha_\tau\rd \tau}\left(|\Delta_0(z_0)|+\frac{n^{\fd/2}}{n\Im[z_s]} \right)\rd s
			& \lesssim
			\int_0^{t\wedge\sigma}\frac{-\del_s\Im[ z_s]}{\Im[z_{t\wedge\sigma}]}\left(|\Delta_0(z_0)|+\frac{n^{\fd/2}}{n \Im[z_{s}] }\right)\rd s\\
			&\lesssim 
			\frac{n^{ \fd/2}}{n\Im[z_{t\wedge \sigma}]}\int_0^{t\wedge\sigma}\frac{-\del_s\Im[ z_s]}{\Im[z_{s}] }\rd s
			+\frac{\big| \Delta_0(z_0) \big|\Im[z_0]}{\Im[z_{t\wedge \sigma}]}\\
			&\lesssim 
			\frac{n^{ \fd/2}}{n\Im[z_{t\wedge \sigma}]}
			\log\left(\frac{\Im[z_0]}{\Im[z_{t\wedge \sigma}]}\right)+\frac{\big| \Delta_0(z_0) \big|\Im[z_0]}{\Im[z_{t\wedge \sigma}]}\\
			&\lesssim\frac{(\log n)n^{\fd/2}+n^{ \fd/3}}{n\Im[z_{t\wedge \sigma}]}
			\lesssim \frac{n^{ \fd - 1}}{\Im[z_{t\wedge \sigma}]},
	\end{split}\end{align} 
	where in the last line we used \eqref{e:boundarycm2}, which implies $|\Delta_0(z_0)|\lesssim n^{\mathfrak{d}/3 - 1} / \Im[z_0]$. It follows by combining  \eqref{e:midgronwall0} and \eqref{e:term20} that
	\begin{align}\label{e:Deltabb}
		\left|\Delta_{t\wedge\sigma}(z_{t\wedge\sigma})\right|
		\leq  \frac{Mn^{\fd - 1}}{2\Im[z_{t\wedge \sigma}]},
	\end{align}
	provided we take $M$ large enough. 
	
	For $u\in {\mathscr L}$ with $z_t=z_t(u)\in {\mathscr D}_t^{\rm F}$, similarly to \eqref{e:midgronwall0}, we deduce from \eqref{e:deltaee2} that
	\begin{align}\begin{split}\label{e:midgronwall}
			\big|\Delta_{t\wedge\sigma}(z_{t\wedge\sigma}) \big|
			&\lesssim \big| \Delta_0 (z_0) \big|+\frac{n^{ \fd/2}}{n\sqrt{\Im[z_{t\wedge\sigma}] \dist(z_{t\wedge\sigma}, I^*_{t\wedge\sigma})}}+\frac{n^{\fd/2}}{\big( n\Im[z_{t\wedge\sigma}] \big)^2}\\
			& \qquad +\int_0^{t\wedge\sigma}\alpha_se^{\int_s^{t\wedge\sigma} \alpha_\tau\rd \tau}\left( \big| \Delta_0 (z_0) \big|+\frac{n^{ {\fd/2}}}{n\sqrt{\Im[z_{s}] \dist(z_s, I^*_s)}}+\frac{n^{\fd/2}}{\big( n\Im[z_s] \big)^2}\right)\rd s.
		\end{split}
	\end{align}
	Thanks to \eqref{e:aterm} and \eqref{e:betaint}, we can bound the last term in \eqref{e:midgronwall} by
	\begin{align}\begin{split}\label{e:term2}
			\phantom{{}={}}\int_0^{t\wedge\sigma}\alpha_s & e^{\int_s^{t\wedge\sigma} \alpha_\tau\rd \tau}\left( \big| \Delta_0 (z_0) \big|+\frac{n^{ {\fd/2}}}{n\sqrt{\Im[z_{s}] \dist(z_s, I^*_s)}}+\frac{n^{\fd/2}}{ \big( n\Im[z_{s}] \big)^2}\right)\rd s\\
			&\lesssim \int_0^{t\wedge\sigma}\frac{-\del_s\Im[ z_s]}{\Im[z_{t\wedge\sigma}]}\left( \big| \Delta_0 (z_0) \big|+\frac{n^{ {\fd/2}}}{n\sqrt{\Im[z_{s}] \dist(z_s, I^*_s)}}+\frac{n^{\fd/2}}{\big( n\Im[z_{s}] \big)^2}\right)\rd s\\
			&\lesssim 
			\frac{\big| \Delta_0(z_0) \big|\Im[z_0]}{\Im[z_{t\wedge \sigma}]}+\frac{n^{ {\fd/2}}}{n\Im[z_{t\wedge \sigma}]}\int_0^{t\wedge\sigma}\frac{-\del_s\Im[ z_s]}{\sqrt{\Im[z_{s}] \dist(z_s, I^*_s)}}\rd s +\frac{n^{\fd/2}}{\big( n\Im[z_{t\wedge \sigma}] \big)^2},
	\end{split}\end{align}
	
	\noindent where in the last bound we again used the fact that $\Imaginary z_s$ is nonincreasing in $s$. To bound the first term on the right side of \eqref{e:term2} first suppose that $z_t$ is in the domain \eqref{e:edgedomain1}, say $z_t=E_1(t)-\kappa_t-\ri\eta_t$ (or $z_t = E_2 (t) + \kappa_t + \mathrm{i} \eta_t$) with $\kappa_t>0$. Then set $z_s=E_1(s)-\kappa_s-\ri\eta_s$ (or $z_s = E_2 (s) + \kappa_s + \mathrm{i} \eta_s$, respectively), for any $0\leq s\leq t$. Using the square root behavior \eqref{e:squareeq}, we find 
	\begin{align*}
		\eta_0=\eta_{t\wedge \sigma}-(t\wedge\sigma)\Im\left[\frac{f^*(z_{t\wedge\sigma})}{f^*(z_{t\wedge\sigma})+1}\right]\lesssim \eta_{t\wedge\sigma}+(t\wedge\sigma) \frac{\eta_{t\wedge\sigma}}{\sqrt{\kappa_{t\wedge\sigma}+\eta_{t\wedge\sigma}}}.
	\end{align*}
	
	\noindent Thus, by \eqref{e:boundarycm2} and \eqref{e:diff2c}, the first term can be bounded by
	\begin{align}\begin{split}\label{e:term22c}
			\frac{\big| \Delta_0(z_0) \big| \Im[z_0]}{\Im[z_{t\wedge \sigma}]}
			\lesssim \frac{\eta_0 n^{\fd/3 - 1}}{\eta_{t\wedge \sigma}(\kappa_0+\eta_0)}
			& \lesssim \frac{n^{\fd/3 - 1} \big(\eta_{t\wedge\sigma}+(t\wedge\sigma) \eta_{t\wedge\sigma}/\sqrt{\kappa_{t\wedge\sigma}+\eta_{t\wedge\sigma}} \big)}{\eta_{t\wedge \sigma} \big(\kappa_{t\wedge\sigma}+\eta_{t\wedge\sigma}+(t\wedge \sigma)\sqrt{\kappa_{t\wedge\sigma}+\eta_{t\wedge\sigma}} \big)}\\
			&= \frac{n^{\fd/3 - 1}}{\kappa_{t\wedge\sigma}+\eta_{t\wedge\sigma}}
			\lesssim \frac{n^{\fd/3 - 1}}{\sqrt{\Im[z_{t\wedge\sigma}]\dist(z_{t\wedge\sigma}, I^*_{t\wedge\sigma})}},
	\end{split}\end{align}
	where we remark that for the last term in the first line, there is an exact cancelation between the numerator and the denominator.
	We have the same estimate if $z_t$ is in the domain \eqref{e:edgedomain2}, by using \eqref{e:kskt} and the square root behavior \eqref{e:cubiceq}:
	\begin{align}\begin{split}\label{e:term22}
			\frac{\big| \Delta_0(z_0) \big| \Im[z_0]}{\Im[z_{t\wedge \sigma}]}
			\lesssim \frac{\eta_0 n^{\fd/3 - 1}}{\eta_{t\wedge \sigma}(\kappa_0+\eta_0)}
			& \lesssim \frac{n^{\fd/3 - 1} \big( \eta_{t\wedge\sigma}+(t\wedge\sigma) \eta_{t\wedge\sigma}/(t_c-t\wedge \sigma)^{1/4}\sqrt{\kappa_{t\wedge\sigma}+\eta_{t\wedge\sigma}} \big)}{\eta_{t\wedge \sigma} \big(\kappa_{t\wedge\sigma}+\eta_{t\wedge\sigma}+(t\wedge \sigma)\sqrt{\kappa_{t\wedge\sigma}+\eta_{t\wedge\sigma}}/(t_c-t\wedge \sigma)^{1/4} \big)}\\
			&= \frac{n^{\fd/3 - 1}}{\kappa_{t\wedge\sigma}+\eta_{t\wedge\sigma}}
			\lesssim \frac{n^{\fd/3 - 1}}{\sqrt{\Im[z_{t\wedge\sigma}]\dist(z_{t\wedge\sigma}, I^*_{t\wedge\sigma})}}. 
	\end{split}\end{align}
	
	We can bound the integral in \eqref{e:term2} using \eqref{e:integral1},
	\begin{align}\begin{split}\label{e:term3}
			&\int_0^{t\wedge\sigma}\frac{-\del_s\Im[ z_s]}{\sqrt{\Im[z_{s}] \dist(z_s, I^*_s)}}\rd s
			\lesssim
			\frac{\log n\sqrt{\Im[z_{t\wedge\sigma}]}}{\sqrt{\dist(z_{t\wedge\sigma}, I^*_{t\wedge\sigma})}}.
	\end{split}\end{align} 
	
	It follows by combining \eqref{e:midgronwall}, \eqref{e:term2}, \eqref{e:term22}, \eqref{e:term22c} and \eqref{e:term3}, and using the bound \eqref{e:boundarycm2} of $\Delta_0(z_0)$ that
	\begin{align}\label{e:Deltabb2}
		\big| \Delta_{t\wedge\sigma}(z_{t\wedge\sigma}) \big|
		\lesssim   \frac{n^{\fd}}{n\sqrt{\Im[z_{t\wedge \sigma}]\dist(z_{t\wedge\sigma}, I^*_{t\wedge\sigma})}}+\frac{n^{\fd/2}}{\big( n\Im[z_{t\wedge \sigma}] \big)^2}.
	\end{align}

	On the domain ${\mathscr D}_{t\wedge \sigma}^{\rm L}\cup {\mathscr D}_{t\wedge \sigma}^{\rm F}$, we again have that both $ m_{t\wedge \sigma}$ and $m^*_{t\wedge\sigma}$ are Lipschitz with Lipschitz constant at most $\OO(n)$. Similarly to \eqref{e:useLip}, we can approximate $w\in {\mathscr D}_{t\wedge \sigma}^{\rm L}\cup {\mathscr D}_{t\wedge \sigma}^{\rm F}$ by the image $z_{t\wedge \sigma}(u)$ of some lattice point $u\in {\mathscr L}$, and deduce from \eqref{e:Deltabb} and \eqref{e:Deltabb2} that, on the event $\cW$ as defined in \Cref{c:error}, we have
	\begin{align}\begin{split}\label{e:finals}
			& \big|m_{t\wedge\sigma}(w)-m^*_{t\wedge\sigma}(w) \big| <  \frac{M n^{ \fd }}{n\Im[z_{t\wedge \sigma}]},\\
			& \big|m_{t\wedge\sigma}(w)-m^*_{t\wedge\sigma}(w) \big| < \frac{Mn^{ \fd}}{n\sqrt{\Im[z_{t\wedge \sigma}]\dist(z_{t\wedge \sigma}, I_{t\wedge \sigma}) }}+\frac{M n^\fd}{\big( n\Im[z_{t\wedge \sigma}] \big)^2},
	\end{split}\end{align}
	uniformly for $w\in {\mathscr D}_{t\wedge \sigma}^{\rm L}$ and $w\in {\mathscr D}_{t\wedge \sigma}^{\rm F}$. Comparing  \eqref{e:useLip}, \eqref{e:finals} with the definition of $\sigma$ in \eqref{stoptime2}, we conclude that $\sigma=\ft$ on the event $\cW$. Since $\cW$ holds with overwhelming probability, this together with the fourth statement in \Cref{p:gap} finish the proof of 
	\Cref{p:rigidityBB}.
\end{proof}

\begin{rem}\label{r:trapzoid1}
	
	Here, we briefly comment on the modifications needed to address the first case in the second statement of \Cref{xhh}, namely, when the north boundary $\del_{\rm no}(\fD)$ of $\fD$ is packed; this corresponds to random lozenge tilings of (single-sided) trapezoid domains, or equivalently families of non-intersecting Bernoulli bridges with tightly packed ending data, i.e., $\bmx_\ft=(\fa(\ft), \fa(\ft)+1/n, \fa(\ft)+2/n,\cdots, \fb(\ft)-1/n)$. Then, the arctic curve will be tangent to the north boundary of $\fD$ at $(E,\ft)$, and $f^*_\ft(E)=-1$, see Figure \ref{f:trapezoid}. 
	For such non-intersecting Bernoulli bridges, the transition probability is explicitly (we no longer need to consider a weighted version that approximates it) given by
	\begin{align}\label{e:tdynamic}
		\bP(\bmx_{t+1/n}=\bmx+\bme/n|\bmx_t=\bmx)=\frac{1}{Z_t(\bmx)}\frac{V(\bmx+\bme/n)}{V(\bmx)}\prod_{i=1}^m \big( \fb(t)-1/n-x_i \big)^{e_i} \big(x_i-\fa(t) \big)^{1-e_i}.
	\end{align}
	Moreover, in this setting, the solution to the variational principle is also explicit. For any height function $\beta(x;\bmx)$ corresponding to the particle configuration $\bmx=(x_1, x_2,\cdots, x_m)\subseteq [\fa(t),\fb(t)]$, the complex slope satisfies the algebraic equation:
	\begin{align*}
		f_r(x;\beta,t)=-\frac{\big( x-r-\fb(t) \big)f_r(x;\beta,t)+ x-\fb(t)}{\big( x-r-\fa(t) \big)f_r(x;\beta,t)+x-\fa(t)}
		\prod_{i=1}^m \frac{(x-t-x_i)f_r(x;\beta,t)+x-x_i}{(x-r-x_i-1/n)f_r(x;\beta,t)+x-x_i-1/n};
	\end{align*}
	see \cite[Proposition 2.5]{ARS}. The complex slope $f_t(x;\beta,t)$ can be extended to $f(z;\beta,t)$ with $z\in \bC$,
	\begin{align*}
		f(z;\beta,t)=\frac{\fb(t)-z}{z-\fa(t)}\prod_{i=1}^m \frac{z-x_i}{z-x_i-1/n}=\frac{\fb(t)-z}{z-\fa(t)}e^{m(z;\beta,t)}.
	\end{align*}
	This is the case when $g(z;\beta,t)=\big(\fb(t)-z \big) / \big( z-\fa(t) \big)$ in \eqref{e:gszmut}.
	
	For $\ft-t\gtrsim 1$, $f^*_t(x)$ bounded away from $-1$, the dynamics \eqref{e:tdynamic} can be analyzed in the same way as the weighted Bernoulli bridge \eqref{e:defLtnew}. For $\ft-t\ll1$, the behavior of the limiting complex slope $f_t^*(z)$ can be analyzed as in \Cref{p:ftzbehave}, and we have $E_2(t)-E, E-E_1(t)\asymp \sqrt{\ft-t}$. To analyze the dynamics \eqref{e:tdynamic}, we can restrict the spectral domains $\mathscr D_t^{\rm L}$ and $\mathscr D_t^{\rm F}$ from \eqref{e:bulkdomain} and \eqref{e:edgedomain1} to a radius $\fC\sqrt{\ft-t}$ neighborhood of $E$, for some large constant $\fC>0$. See Figure \ref{f:trapezoid}.
	Inside this spectral domain, $\big| f_t^*(z) + 1 \big|$ is nonzero but its size will depend on $t$: $|f^*_t(z)+1|\asymp \sqrt{\ft-t}$. Moreover, when $z$ is close to the spectral edge, say $z=E_1(t)-\kappa+\ri \eta$, $f_t^*(z), m_t^*(z)$ have square root behavior with constants depending on $t$,
	\begin{align*}
		-\sin(\Im[m_t^*(z)])
		\asymp -\sin(\Im[\log f_t^*(z)])=-\Im[f_t^*(z)] \asymp
		\left\{
		\begin{array}{cc}
			(\ft-t)^{1/4}\frac{\eta}{\sqrt{|\kappa|+\eta}},  & \kappa\geq 0,\\
			(\ft-t)^{1/4}\sqrt{|\kappa|+\eta}, & \kappa \leq 0.
		\end{array}
		\right.
	\end{align*}
	Similarly to \eqref{e:defI_t+1}, we define the distance function $\tau(E(t),t)=
	n^{-2/3+6\fd}(\ft-t)^{-1/6}$, and the enlarged intervals  
	\begin{align}\label{e:enla}
		I_t^+ = \Big[ E_1  (t) - \tau \big( E_1 (t), t \big), E_2 (t) + \tau \big(E_2 (t), t \big) \Big].
	\end{align}
	With the notations defined above, \Cref{p:gap} holds with \eqref{e:gap1} replaced by $\dist(z_s, I_s^+)\gtrsim \Im z_s+\sqrt{\dist(z_s, I_s^*)}(t-s)(\ft-s)^{1/4}$; And Propositions \ref{p:gfbound}, \ref{p:integral}, \ref{p:improve}, \ref{p:dynamic} and \ref{c:error} also hold, except now we have $\big| \Im[f_t^*(z)/(f_t^*(z)+1)] \big|\lesssim 1/\sqrt{\ft-t}$, and $g(z;\beta,t)= \big( \fb(t)-z \big) / \big( z-\fa(t) \big)$ (independent of $\beta$).  Using them as input, we get \Cref{p:rigidityBB} with $I_t^+$ given by the enlarged intervals \eqref{e:enla}.

\end{rem}

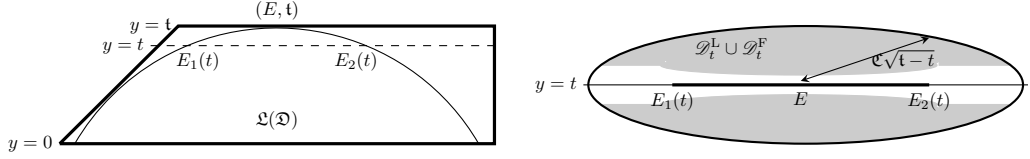
\begin{figure}
	
	\begin{center}		
		
		\begin{tikzpicture}[
			>=stealth,
			auto,
			style={
				scale = .52
			}
			]
			
			\draw[black, very thick] (-5.5, 0) node[left, scale = .7]{$y = 0$}-- (5.5, 0) -- (5.5, 3) -- (-2.5, 3) node[left, scale = .7]{$y = \ft$} -- (-5.5, 0);
			\draw[black, dashed](-3.2, 2.5) node[left,scale=0.7]{$y=t$}--(5.5, 2.5);
			\draw[black] (5.1,0) arc (30:150:5.9);
			
			\draw[](0, 3) node[above, scale=0.7]{$(E,\ft)$};
			\draw[](-2, 2.5) node[below, scale=0.7]{$E_1(t)$};
			\draw[](2, 2.5) node[below, scale=0.7]{$E_2(t)$};
			\draw[](0, 1) node[below, scale=0.7]{$\fL(\fD)$};

			\draw[white, fill=gray!40!white] (13.375,1.5) ellipse (5.5 and 1.5);
			
			\draw[white, fill=white] (17.8,1.5) ellipse (1.4 and 0.8625);
			\draw[white, fill=white] (8.8,1.5) ellipse (1.4 and 0.8625);
			
			\draw[white, fill=white] (9.8,1.0) rectangle ++(6.9,1);

			\draw[gray!40!white, fill=gray!40!white] (8,2) rectangle ++(1.8,0.5);
			\draw[gray!40!white, fill=gray!40!white] (8,0.5) rectangle ++(1.8,0.5);
			\draw[gray!40!white, fill=gray!40!white] (13.2,2) ellipse (3.5 and 0.25);
			\draw[gray!40!white, fill=gray!40!white] (13.2,1) ellipse (3.5 and 0.25);

			\draw[gray!40!white, fill=gray!40!white] (16.75,2) rectangle ++(1.8,0.5);
			\draw[gray!40!white, fill=gray!40!white] (16.75,0.5) rectangle ++(1.8,0.5);

			\draw[] (11.5, 2)  node[above, scale=0.7]{$\mathscr D^{\rm L}_t\cup \mathscr D^{\rm F}_t$};

			\fill[white,even odd rule] (13.375,1.5) ellipse (5.5 and 1.5) (13.375,1.5) ellipse (6 and 2.5);
			\draw[black, thick] (13.375,1.5) ellipse (5.5 and 1.5);
			
			\draw  (13.25, 1.6) edge[<->] node[right,scale=0.7] {$\fC\sqrt{\ft-t}$} (16.5, 2.7);

			\draw[black] (7.75, 1.5) node[left, scale = .7]{$y = t$}-- (19, 1.5);
			\draw[black, very thick] (10, 1.5)node[below, scale=0.7]{$E_1(t)$} -- (16.5, 1.5)node[below, scale=0.7]{$E_2(t)$};
			\draw[black, very thick] (13.25, 1.5)node[below, scale=0.7]{$E$};

		\end{tikzpicture}
		
	\end{center}
	
	\caption{\label{f:trapezoid} Shown to the left  is a liquid region corresponds to lozenge tiling of a (single-sided) trapezoid domain. The arctic curve is tangent to the north boundary at $(E,\ft)$. Shown to the right is the spectral domain restricted to a radius $\fC\sqrt{\ft-t}$ neighborhood of $E$.}
	
\end{figure}

\section{Dynamical Loop Equation}
\label{s:loopeq}

In this section we introduce a general family of transition probabilities $\bP$, which includes those of weighted non-intersecting Bernoulli bridge as a special case.  In Section \ref{ProofEquation1},
We first construct another transition probability $\bQ$, which consists of the leading order terms in $\bP$, and state \Cref{p:improve2}, which is an analogous version of \Cref{p:improve} for $\bP$. In Section \ref{EquationsEstimate} we introduce the dynamical loop equation, which will be used to study the transition probability $\bP$. In Section \ref{s:LRE} we state a weaker version of \Cref{p:improve2}, which is optimal in the liquid region but suboptimal in the frozen region. And its proof is given in Section \ref{ProofEquation}.

\subsection{General Transition Probability}

\label{ProofEquation1}

The transition probability of weighted non-intersecting Bernoulli bridge as in \eqref{e:defLtnew} is in the following form.
Fix a particle configuration $\bmx=(x_1, x_2, \cdots, x_m)\in \bZ_n^m$ with $\fa\leq x_1<x_m\leq \fb-1/n$. For any $\bme=\{0,1\}^m$, the transition probability  is given by
\begin{align}\label{e:defL}
	\bP(\bme)=\frac{1}{Z(\bmx)}\frac{V(\bmx+\bme/n)}{V(\bmx)}\prod_{i=1}^m\phi^+(x_i)^{e_i}\phi^-(x_i)^{1-e_i} \cdot \exp \Bigg( \sum_{1\leq i,j\leq m}\frac{e_ie_j}{n^2}\kappa(x_i,x_j)+\OO(1/n) \Bigg).
\end{align}
We denote the empirical measure of $\bmx$ and its Stieltjes transform by
\begin{align*}
	\rho(x;\bmx)=\sum_{i=1}^m \bm1 \big( x\in [x_i, x_i+1/n] \big),\quad m(z)=\int_{-\infty}^{\infty} \frac{\rho(x;\bmx)\rd x}{z-x}.
\end{align*}

We will study more general transition probabilities of the form \eqref{e:defL} satisfying the following assumption.
\begin{assumption}
	
	\label{a:mz}
	
	Suppose there exists a neighborhood $\Lambda$ of $[\fa,\fb]$ such that the transition probability \eqref{e:defL} satisfies the following three assumptions.
	\begin{enumerate}
		\item The functions $\phi^+ (z)$ and $\phi^- (z)$ have the decompositions
		\begin{align}\label{e:defphi+}
			\phi^+(z)=(\fb-1/n-z)\tilde g(z)e^{\frac{1}{n}\psi(z)},\quad
			\phi^-(z)=z-\fa,
		\end{align}
		where $\tilde g(z)$ and $\psi(z)$ are analytic and uniformly bounded on $\Lambda$. Moreover, $\tilde g(z)$ satisfies  $\tilde g(\bar z)=\overline{\tilde g(z)}$; it does not have zeros or poles in $\Lambda$; and $\tilde g(x)>0$ for $x\in \Lambda\cap \bR$.
		Both $\psi(z)$ and $\kappa(z,w)$ are analytic for $z,w\in \Lambda$, and they map reals to reals.
		
		\item  The complex slope defined by
		\begin{align}\label{e:deffznew}
			f(z)=e^{m(z)}\tilde g(z)\frac{\fb-z}{z-\fa}, 
		\end{align}
		satisfies $\Im f(z)<0$ for $\Imaginary z> 0$. We further assume that $f(x)+1\in (0,1)$ for  $x>\fb$, and $f(x)+1\in (-\infty,0)$ for  $x< \fa$, and that the following quantity is uniformly bounded away from $0$,
		\begin{align}\label{e:imratio}
			\displaystyle\frac{1}{|\Imaginary z|} \cdot \bigg| \Imaginary \frac{f(z)}{f(z)+1} \bigg| \gtrsim 1,\quad \text{for $z\in \Lambda$}.
		\end{align}
		
		\item There exist small constants $\fc,\fr>0$,  and a domain $\mathscr D\subset \big\{z\in \Lambda: \dist(z,[\fa', \fb'])\geq n^{\fc-1} \big\}$, where  $[\fa',\fb']:=[\fa-\fr,\fb+\fr]$, containing the annulus $\big\{ z\in\bC: \fr\leq \dist(z,[\fa',\fb'])\leq 2\fr \big\}$ (see Figure \ref{f:domainD}) such that the following are uniformly bounded
		\begin{align}\label{e:fbound}
			\big| f(z)+1 \big|\gtrsim 1, \quad \left|\frac{f(z)}{f(z)+1}\right|\lesssim 1,
			\qquad \text{for $z\in \mathscr{D}$}.
		\end{align}
		Moreover, $\Im[f(x+0\ri)/(f(x+0\ri)+1)]$ defines a negative measure on $\bR$. We denote its support by $I$, which satisfies
		\begin{align}\label{e:defI}
			I=\supp \Bigg(\Im \bigg[ \displaystyle\frac{f(x+0\ri)}{(f(x+\ri)+1)} \bigg] \Bigg)\subseteq [\fa',\fb'].
		\end{align}
		
	\end{enumerate}
	
\end{assumption}

\begin{figure}
	
	\begin{center}		
		
		\begin{tikzpicture}[
			>=stealth,
			auto,
			style={
				scale = .52
			}
			]

			\draw[black] (0,0) ellipse (5 and 1.8 );	
			\draw[black] (0,0) ellipse (3.2 and 0.2 );		
			\fill[gray!40!white,even odd rule] (0,0) ellipse (5 and 1.8) (0,0) ellipse (3.2 and 0.2);

			\fill[gray!80!white,even odd rule] (0,0) ellipse (3.5 and 0.6) (0,0) ellipse (4.5 and 1.2);
			
			\draw[black,dashed] (-5.5,0) -- (5.5,0);
			\draw[black] (-3,0) -- (3,0);
			\draw[] (-3,0) node[below,scale=.7]{$\fa'$};
			\draw[] (3,0) node[below,scale=.7]{$\fb'$};
			\draw[black, very  thick] (-2,0) -- (2,0);
			\draw[] (-2,0) node[below,scale=.7]{$\fa$};
			\draw[] (2,0) node[below,scale=.7]{$\fb$};
			\draw[] (0,-1.2) node[below,scale=.7]{$\mathscr D$};
			
			\fill[gray!40!white,even odd rule] (12,0) ellipse (3.5 and 0.6) (12,0) ellipse (5.2 and 2);
			\draw[black] (12,0) ellipse (3.5 and 0.6 );
			
			\draw[black,dashed] (6.5,0) -- (17.5,0);
			\draw[black] (9,0) -- (15,0);
			\draw[] (9,0) node[below,scale=.7]{$\fa'$};
			\draw[] (15,0) node[below,scale=.7]{$\fb'$};
			\draw[black, very  thick] (10,0) -- (14,0);
			\draw[] (10,0) node[below,scale=.7]{$\fa$};
			\draw[] (14,0) node[below,scale=.7]{$\fb$};
			\draw[] (12,-1.2) node[below,scale=.7]{$\mathscr D(\fr)$};
			
		\end{tikzpicture}
		
	\end{center}
	
	\caption{\label{f:domainD} Shown to the left the interval $[\fa', \fb']=[\fa-\fr, \fb+\fr]$ and the domain $\mathscr D$, which contains the annulus $\{z\in\bC: \fr\leq \dist(z,[\fa',\fb'])\leq 2\fr\}$. Shown to the right is the domain $\mathscr D(\fr)=\{z\in\bC:  \dist(z,[\fa',\fb'])\geq \fr\}$.	}
\end{figure} 

\noindent Throughout the remainder of this paper, we adopt \Cref{a:mz}. Let us define the domains for any $r>0$,
\begin{align}\label{e:defDr}
	\mathscr D(r):= \Big\{z\in\bC:\dist \big(z,[\fa',\fb'] \big)\geq r \Big\}.
\end{align} 
We will mainly use $\mathscr D(\fr)$ and $\mathscr D(2\fr)$. Thanks to the third statement of \Cref{a:mz}, we have $\mathscr D\cap \mathscr D(2\fr)\neq\emptyset$. By \Cref{p:formula} and equation \eqref{e:defvarphi}, the transition probability of weighted non-intersecting Bernoulli bridge as in \eqref{e:defLtnew} satisfies \Cref{a:mz} with $\beta=\beta(x;\bmx)$
\begin{align*}
	\phi^\pm(z)=\phi^\pm(z;\beta,t), \quad \kappa(z,w)=\kappa(z,w;\beta,t),\quad \tilde g(z)=\tilde g(z;\beta,t),\quad f(z)=f(z;\beta,t),
\end{align*}
for $t\leq \sigma$ (recall from \eqref{stoptime2}).

We can take the domains
\begin{align*}
	\Lambda=\mathscr U_t, \quad \mathscr D={\mathscr D}_t(\fr)\cup{\mathscr D}_t^{\rm L}\cup{\mathscr D}_t^{\rm F},
\end{align*}
from Section \ref{s:De}.
The signs of $f(x) + 1$ for $x < \mathfrak{a}$ and $x > \mathfrak{b}$, as well as the lower bound \eqref{e:imratio}, follow from the second statement of  \Cref{p:fdecompose}. 
Also, thanks to the third statement of \Cref{p:fdecompose}, we have the decomposition \eqref{e:defphi+}, and that $\tilde g(z;\beta,t)$ is real analytic and positive on $\Lambda \cap \bR$.
The estimate \eqref{e:fbound} follows from \eqref{e:ft+1b2} and \eqref{e:replacep2}. Thanks to the third statement of \Cref{p:gfbound}, $\Im[f(x+0\ri)/(f(x+0\ri)+1)]$ defines a negative measure on $\bR$, and \eqref{e:defI} holds. 

We need more notation; as in \eqref{e:defvarphi} and \eqref{e:defBtt}, we let
\begin{align*}
	\varphi^+(z)=(\fb-z)\tilde g(z),\quad
	\varphi^-(z)=z-\fa,
\end{align*}
and
\begin{align}\label{e:B}
	\cB(z)&=\cG(z)\varphi^+(z)+\varphi^-(z)= \big( f(z)+1 \big) (z-\fa), \quad \quad \cG(z)=\prod_{i = 1}^m \left(1+\frac{1/n}{z-x_i-1/n}\right)=e^{m(z)}.
\end{align}
The same discussion as after \eqref{e:dmtta} gives that $\log \cB(z)$ (where the branch cut for the logarithm is $\mathbb{R}_{< 0}$) is well defined on $\Lambda\setminus[\fa,\fb]$, and so
\begin{align}\label{e:aB0}
	\frac{1}{2\pi \ri}\oint_{\omega}\del_z \log \cB(z)\rd z=0,
\end{align}
where $\omega\subseteq \Lambda$ is any contour enclosing $[\fa, \fb]$. 
In what follows, we take the branch of the logarithm to be so that $\Imaginary \log u \in [-\pi, \pi)$.  We have the following decomposition of $\log \cB(z)$ as
\begin{align}\label{e:defbz}
	\log \cB(z)=b^+(z)-b(z),\quad b(z)=\frac{1}{2\pi \ri}\oint_{\omega} \frac{\log \cB(w)\rd w}{w-z},\quad b^+(z)=\frac{1}{2\pi \ri}\oint_{\omega+} \frac{\log \cB(w)\rd w}{w-z},
\end{align}
where $\omega\subseteq \Lambda$ is any contour enclosing $[\fa, \fb]$ but not $z$, and $\omega+\subseteq \Lambda$ is any contour enclosing $[\fa, \fb]$ and $z$. 

From the construction, $b(z)$ is analytic in a neighborhood of $\infty$, and we have that $\lim_{z\rightarrow \infty}b(z)=0$; we will abbreviate this by simply writing $b(\infty)=0$. For $b^+(z)$ it is analytic and uniformly bounded in a neighborhood of $[\fa,\fb]$.

\begin{rem}
	The functions $f(z)$, $\tilde g(z)$, and $\cB(z), b^+(z)$ are only defined on the domain $\Lambda$; we will typically use them for $z\in \mathscr D$. The functions $m(z)$, $\cG(z)$, and $b(z)$ are defined for any $z\in \mathbb{C} \setminus [\mathfrak{a}, \mathfrak{b}]$; we will typically use them for $z \in \mathscr{D} \cup \mathscr{D} (\mathfrak{r})$. 
\end{rem}
In the following lemma, we collect some estimates  which will be used repeatedly throughout the remainder of this section.
\begin{lem}\label{l:fbbound}
	Adopting \Cref{a:mz}, for any $z\in \mathscr D$ we have
	\begin{align*}
		\big| \del_z \log f(z) \big|, \left|\del_z \frac{f(z)}{f(z)+1}\right|,  \big| \del_z \log \cB(z) \big|, \big| \del_z \log \cG(z) \big|, \big| \del_z b(z) \big|\lesssim  \frac{1}{\dist(z,[\fa',\fb'])}.
	\end{align*}
	The last two estimates also hold for $z\in \mathscr D(\fr)$.
\end{lem}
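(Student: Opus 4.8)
The plan is to prove \Cref{l:fbbound} by reducing all five derivative bounds to pointwise estimates on the functions $\log f$, $f/(f+1)$, $\log\cB$, $\log\cG = m$, and $b$ on a slightly enlarged domain, and then applying Cauchy's integral formula. First I would observe that for $z \in \mathscr D$, the distance $\dist(z,[\fa',\fb'])$ is comparable to $\dist(z',[\fa',\fb'])$ for all $z'$ in the disk of radius $\tfrac12 \dist(z,[\fa',\fb'])$ around $z$; by \Cref{a:mz}(3) such a disk stays inside a neighborhood where all the relevant functions are analytic (after possibly shrinking $\fr$ and $\fc$, which is harmless). Cauchy's formula then gives, for any analytic $F$,
\begin{align*}
\big|\del_z F(z)\big| \lesssim \frac{1}{\dist(z,[\fa',\fb'])} \sup_{|w-z| \leq \frac12\dist(z,[\fa',\fb'])} |F(w)|,
\end{align*}
so it suffices to bound each of $\log f$, $f/(f+1)$, $\log\cB$, $m$, $b$ by $\OO(1)$ on (a slight fattening of) $\mathscr D$.

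Next I would dispatch these sup bounds one by one. For $f/(f+1)$ the bound is exactly the second inequality of \eqref{e:fbound}. For $\log\cB = \log\big((f(z)+1)(z-\fa)\big)$: on $\mathscr D$ we have $|f(z)+1| \gtrsim 1$ from \eqref{e:fbound}, and also $|f(z)+1| = |f(z)/(f(z)+1)|^{-1}\cdot|f(z)|$; since $f(z)/(f(z)+1)$ is bounded, $f(z)$ is bounded, hence $|f(z)+1|$ is bounded above, so $\log|f(z)+1| = \OO(1)$; and $\log|z-\fa| = \OO(1)$ since $z$ ranges over a bounded domain bounded away from $\fa$ (one should note $\mathscr D$ is bounded because it avoids a neighborhood of $\infty$ — here I would use that $\mathscr D \subseteq \{z : \dist(z,[\fa',\fb']) \leq C\}$ for some constant, which follows since $\mathscr D$ only contains the annulus $\{\fr \leq \dist \leq 2\fr\}$ at large scale together with pieces near $[\fa',\fb']$; if $\mathscr D$ were genuinely unbounded one uses instead that $\log\cB(z) = m(z) + \log\varphi^+(z) + O(\ldots)$ and $m(z) \to 0$). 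The argument of $\log\cB$ is controlled since, by \Cref{a:mz}(2)–(3), $\cB(z) \in \bC \setminus (-\infty,0]$ on $\Lambda \setminus [\fa,\fb]$, so $\Im\log\cB \in (-\pi,\pi)$ is automatically bounded. For $m(z) = \log\cG(z)$ on $\mathscr D$: writing $m(z) = \log f(z) - \log\tilde g(z) - \log(\fb-z) + \log(z-\fa)$, the terms $\log\tilde g$, $\log(\fb-z)$, $\log(z-\fa)$ are all bounded on the bounded region $\mathscr D$ bounded away from $\fa,\fb$ (using $|\tilde g| \asymp 1$ from \Cref{a:mz}(1)), and $\log f$ is bounded exactly as above. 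Finally $b(z) = \frac{1}{2\pi\ri}\oint_\omega \frac{\log\cB(w)\,\rd w}{w-z}$: deforming the contour $\omega$ to one lying in the annulus $\{\fr \leq \dist(w,[\fa',\fb']) \leq 2\fr\} \subseteq \mathscr D$ where $\log\cB$ is bounded, we get $|b(z)| = \OO(1)$ for $z$ of distance $\geq \fr$ from $[\fa',\fb']$ — in particular on $\mathscr D(\fr)$ — and on the remaining part of $\mathscr D$ (within distance $\fr$) one uses instead the representation $b(z) = \log\cB(z) - b^+(z)$ with $b^+$ bounded near $[\fa,\fb]$ by the last sentence before the lemma, and $\log\cB$ bounded as shown; the same splitting of cases handles both $\del_z b$ claims.

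The last bound, on $\del_z\log f$, then follows by the same Cauchy estimate applied to $\log f$, whose boundedness on $\mathscr D$ was established in the course of bounding $m$. Collecting, every listed quantity is $\OO(1)$ on a fattening of $\mathscr D$ (respectively $\mathscr D(\fr)$ for the last two), so the Cauchy estimate yields the claimed $1/\dist(z,[\fa',\fb'])$ bounds. I expect the only mildly delicate point to be the bookkeeping around whether $\mathscr D$ is bounded and how the two representations $b = \log\cB - b^+$ versus the contour integral cover all of $\mathscr D \cup \mathscr D(\fr)$; the analytic content is entirely contained in \Cref{a:mz} and the remarks immediately preceding the lemma, so no genuinely new estimate is needed — it is a matter of assembling \eqref{e:fbound}, the decomposition \eqref{e:defphi+}, and the branch-cut statement for $\log\cB$ correctly.
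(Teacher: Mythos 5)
Your overall strategy---reduce to sup bounds on the functions and then apply Cauchy's integral formula---has a genuine gap: the claimed sup bounds are false. The domain $\mathscr D$ is only required to satisfy $\dist(z,[\fa',\fb'])\geq n^{\fc-1}$, so it contains points within distance $n^{\fc-1}$ of $[\fa,\fb]$. At such a point $z=E+\ri\eta$ with $E\in[\fa,\fb]$ and $\eta\asymp n^{\fc-1}$, the Stieltjes transform $m(z)=\int\rho(x)/(z-x)\,\rd x$ can be of size $\asymp\log(1/\eta)\asymp\log n$ (indeed $|\Re\,m(z)|\sim\log(1/\eta)$ when $\rho$ is nonvanishing near $E$), and by \eqref{e:deffznew} the same growth is inherited by $\log f(z)$ and, in the situation where particles accumulate near $\fa$, by $\log\cB(z)$ as well. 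So the Cauchy estimate on a disk of radius $\frac12\dist(z,[\fa',\fb'])$ yields only $|\del_z m|\lesssim(\log n)/\dist(z,[\fa',\fb'])$, weaker than the stated bound by a logarithmic factor. The lemma is sharp without the $\log n$, and that sharpness is exactly what the paper needs downstream (for instance in the summation arguments leading to $\Phi_1$). A secondary error: your step ``since $f(z)/(f(z)+1)$ is bounded, $f(z)$ is bounded'' is wrong---as $f\to\infty$ one has $f/(f+1)\to 1$, and indeed $f$ has a pole at $\fa$, so $f$ is not bounded on $\mathscr D$.

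The paper avoids all of this by bounding the derivatives directly rather than the functions. The Stieltjes-transform structure gives
\begin{align*}
|\del_z m(z)| = \left|\int_\fa^\fb\frac{\rho(x)\,\rd x}{(z-x)^2}\right|\leq\int_\fa^\fb\frac{\rd x}{|z-x|^2}\lesssim\frac{1}{\dist(z,[\fa,\fb])},
\end{align*}
using only $0\leq\rho\leq 1$, with no logarithmic loss. Combined with $|\del_z\log\tilde g|\lesssim 1$ from analyticity and $|1/(z-\fa)|+|1/(z-\fb)|\lesssim 1/\dist(z,[\fa,\fb])$, the decomposition \eqref{e:deffznew} gives $|\del_z\log f|\lesssim 1/\dist(z,[\fa',\fb'])$ directly; $\del_z\log\cB$ and $\del_z\log\cG$ follow by the same computation, $\del_z\big(f/(f+1)\big)$ via the Stieltjes representation \eqref{e:f/f+1}, and $\del_z b$ via $b=\log\cB-b^+$ with $b^+$ analytic and bounded near $[\fa,\fb]$. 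Your Cauchy-estimate idea could in principle be repaired by replacing sup bounds with oscillation bounds $\sup_{|w-z|\leq\eta/2}|m(w)-m(z)|\lesssim 1$, but establishing that oscillation bound again requires the same direct Stieltjes-kernel estimate, so nothing is saved.
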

\begin{proof}
	Since 
	\begin{flalign*}
		\big| \partial_z m (z) \big| \lesssim \displaystyle\frac{1}{\dist \big( z, [\fa, \fb] \big)}; \qquad \big| \partial_z \log \tilde{g} (z) \big| \lesssim 1; \qquad \bigg| \displaystyle\frac{1}{z - \mathfrak{b}} \bigg| + \bigg| \displaystyle\frac{1}{z - \mathfrak{a}} \bigg| \lesssim \displaystyle\frac{1}{\dist \big( z, [\fa, \fb] \big)},
	\end{flalign*}
	
	\noindent our assumption \eqref{e:deffznew} gives
	\begin{align*}
		\big| \del_{z} \log f(z) \big|= \left|\del_z  m(z)+\del_z\log \tilde g(z)+\frac{1}{z-\fb}-\frac{1}{z-\fa}\right|\lesssim \frac{1}{\dist(z,[\fa,\fb])}\leq \frac{1}{\dist(z,[\fa',\fb'])}.
	\end{align*}
	Since $\cB(z)= \big( f(z)+1 \big)(z-\fa)$ and $\cG(z)=e^{m(z)}$, the same argument (together with \eqref{e:fbound}) gives the bound for $|\del_z \log \cB(z)|$ and $|\del_z \log \cG(z)|$. The bound for $\del_z \big( f(z)/(f(z)+1) \big)$ follows along the same reasoning as the ones used to deduce \eqref{e:derf/f+1} and \eqref{e:imbb}, and is thus omitted. By taking the contour $\omega+$ in \eqref{e:defbz} bounded away from $[\fa',\fb']$, we have $\big| \del_z b(z) \big|= \big| \del_z \log \cB(z) \big|+\OO(1)\lesssim 1/\dist \big( z,[\fa',\fb'] \big)$.
\end{proof}

To study the transition probability \eqref{e:defL}, we introduce a new transition probability
\begin{align}\label{e:defL2}
	\bQ(\bme)=a_\bme=\frac{1}{\tilde Z(\bmx)}\frac{V(\bmx+\bme/n)}{V(\bmx)}\prod_{i=1}^m\phi^+(x_i)^{e_i}\phi^-(x_i)^{1-e_i}.
\end{align}
Then we can write $\bP$ as a change of measure from $\bQ$, namely,
\begin{align}\label{e:changem}
	\bE_\bP[\cdot]=\frac{\bE_\bQ \Bigg[\cdot \exp \Bigg(\displaystyle\sum_{1\leq i,j\leq m} \displaystyle\frac{e_ie_j}{n^2} \kappa(x_i,x_j)+\OO(1/n) \bigg) \Bigg]}{\bE_\bQ \Bigg[ \exp \bigg(\displaystyle\sum_{1\leq i,j \le m\leq m} \displaystyle\frac{e_ie_j}{n^2} \kappa(x_i,x_j)+\OO(1/n) \bigg) \Bigg]}.
\end{align}
We denote the empirical measure of $\bmx+\bme$ by
\begin{align}\label{e:defrho}
	\rho(x;\bmx+\bme/n)=\sum_{i=1}^m \bm1(x\in [x_i+e_i/n, x_i+e_i/n+1/n]).
\end{align}
The main goal of this and next sections is to understand the difference of the empirical measures $\rho(x;\bmx+\bme/n)$ and $\rho(\bmx)$  under the transition probability \eqref{e:defL}, and prove \Cref{p:improve}. 
We will first prove the following proposition for the transition probability $\bQ$ as defined in \eqref{e:defL2}. Then \Cref{p:improve} will be a consequence of it.
\begin{prop}\label{p:improve2}
	Adopting \Cref{a:mz}, for any $z\in \mathscr D$, we have
	\begin{align}\begin{split}\label{e:improve1c}
			\bE_\bQ\left[\sum_{i = 1}^m \frac{1}{z-x_i-e_i/n}-\frac{1}{z-x_i}\right]
			&=\frac{1}{2\pi \ri}\oint_{\omega}\frac{\log \cB(w)\rd w}{(w-z)^2}+\OO\left(\frac{|\Im[f(z)/(f(z)+1)]|}{n\Im[z] \dist(z,I)}\right),
	\end{split}\end{align}
	where the contour $\omega\subseteq \Lambda$ encloses  $[\fa,\fb]$, but not $z$. Moreover, for any integer $p\geq 1$,
	\begin{align}\begin{split}\label{e:improve2c}
			\phantom{{}={}}\bE_\bQ & \left|\sum_{i = 1}^m \frac{1}{z-x_i-e_i/n}-\frac{1}{z-x_i}-\bE_\bQ\left[\sum_{i = 1}^m \frac{1}{z-x_i-e_i/n}-\frac{1}{z-x_i}\right]\right|^{2p}\\
			&= \OO\left(\left(\frac{|\Im[f(z)/(f(z)+1)]|}{n\Im[z] \dist(z,I)^{2}}\right)^{p}+\frac{|\Im[f(z)/(f(z)+1)]|}{n^{2p-1}\Im[z] \dist(z,I)^{4p-2}}\right).
	\end{split}\end{align}
\end{prop}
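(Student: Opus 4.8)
\textbf{Proof proposal for \Cref{p:improve2}.}

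The plan is to analyze the transition probability $\bQ$ from \eqref{e:defL2}, which is precisely the ``determinantal'' part of $\bP$, via a \emph{dynamical loop equation}. The key observation is that $\bQ$ is a Schur-type measure on $\bme\in\{0,1\}^m$: the weight factorizes as $\frac{V(\bmx+\bme/n)}{V(\bmx)}\prod_i\phi^+(x_i)^{e_i}\phi^-(x_i)^{1-e_i}$, so under $\bQ$ the particle system $\bmx+\bme/n$ is still a discrete orthogonal-polynomial-type ensemble. For such ensembles one has an exact identity (the Nekrasov/loop equation of \cite{Nekrasov, Nek_PS, ADE}): for suitable observables the function
\begin{align*}
R(z)=\phi^-(z)\,\bE_\bQ\!\left[\prod_{i=1}^m\frac{z-x_i-1/n}{z-x_i}\right]+\phi^+(z)\,\bE_\bQ\!\left[\prod_{i=1}^m\frac{z-x_i-e_i/n-1/n}{z-x_i-1/n}\cdot\frac{z-x_i}{z-x_i-e_i/n}\right]
\end{align*}
(up to the correct bookkeeping of shifts) extends analytically across $[\fa,\fb]$ and agrees there with a combination of $\phi^\pm$ and $\cG$. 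Rewriting this in terms of $\cG(z)=e^{m(z)}$ and the function $\cB(z)=\cG(z)\varphi^+(z)+\varphi^-(z)$ from \eqref{e:B}, the loop equation becomes, schematically, $\bE_\bQ[\cB_{\bmx+\bme/n}(z)]=\cB(z)\cdot(1+\text{error})$, where the error is controlled by the fluctuation of $\cG$. First I would make this precise, extracting from it an \emph{exact} expression for $\bE_\bQ\big[\sum_i(\frac1{z-x_i-e_i/n}-\frac1{z-x_i})\big]$ as a contour integral of $\log$ of a ratio of these analytic functions.

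Second, I would linearize. Writing $m_{\bme}(z)=\int\frac{\rho(x;\bmx+\bme/n)}{z-x}\rd x$, the quantity $m_{\bme}(z)-m(z)$ is of order $1/n$ pointwise (each particle moves by at most $1/n$), so one Taylor-expands the loop equation to first order. The leading term produces exactly $\frac{1}{2\pi\ri}\oint_\omega\frac{\log\cB(w)\rd w}{(w-z)^2}$ — this is the discrete complex-Burgers drift, and matches the computation in the Remark after \Cref{p:dynamic} (equation \eqref{e:newBB2c}). The second-order (self-energy) term is where the error in \eqref{e:improve1c} comes from: it is a double sum $\sum_{i,j}(\cdots)$ which, after using that $\bE_\bQ[e_i]$ and the covariances $\Cov_\bQ(e_i,e_j)$ are governed by $\Im[f/(f+1)]$ on the real line, is bounded by $\frac{|\Im[f(z)/(f(z)+1)]|}{n\,\Im[z]\,\dist(z,I)}$ via \eqref{e:imbb}-type estimates. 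The derivative bounds from \Cref{l:fbbound} give control of all the contour integrals. Here one passes from $\bQ$ to $\bP$ using the change of measure \eqref{e:changem}: since $\sum_{i,j}\frac{e_ie_j}{n^2}\kappa(x_i,x_j)+\OO(1/n)=\OO(1)$ and $\kappa$ is analytic and bounded, the Radon–Nikodym factor only affects lower-order terms, and a standard concentration bound (second moment of $\sum_i e_i/n$) transfers both \eqref{e:improve1c} and \eqref{e:improve2c} from $\bQ$ to $\bP$; this is the content of deducing \Cref{p:improve} from \Cref{p:improve2}.

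Third, for the variance bound \eqref{e:improve2c} I would run the higher-moment version of the same loop-equation machinery: either iterate the Nekrasov identity with several insertion points (giving exact formulas for joint cumulants of $\sum_i\frac{1}{z_k-x_i-e_i/n}$) or, more robustly, use a concentration/martingale argument on the Schur measure $\bQ$ — the function $\bme\mapsto\sum_i(\frac1{z-x_i-e_i/n}-\frac1{z-x_i})$ is a sum of weakly dependent bounded terms, and the negative-association/log-concavity structure of $\bQ$ yields sub-Gaussian tails with variance proxy $\frac{|\Im[f(z)/(f(z)+1)]|}{n\,\Im[z]\,\dist(z,I)^2}$. The two terms on the right of \eqref{e:improve2c} arise from splitting according to whether particles are within distance $\Im[z]$ of $z$ (the ``bulk'' $\frac{1}{n\Im z\dist(z,I)^2}$ scale) or the deterministic tail contribution; the factor $\dist(z,I)^{4p-2}$ in the second term is the crude bound $|z-x_i|^{-2}\lesssim\dist(z,I)^{-2}$ applied $2p$ times with the correct counting.

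\textbf{Main obstacle.} The hard part will be establishing the loop equation itself with error terms sharp enough — in particular, tracking the \emph{self-energy} correction so that its size is exactly $\frac{|\Im[f(z)/(f(z)+1)]|}{n\Im[z]\dist(z,I)}$ rather than something larger like $\frac{1}{n\dist(z,I)^2}$. This requires using the measure-theoretic input \eqref{e:imratio}/\eqref{e:fbound} from \Cref{a:mz} to convert ``distance to the spectrum'' into ``$\Im z$ times density'', and it is exactly this gain that makes the final rigidity in \Cref{p:rigidityBB} optimal down to the edge and near the tangency/cusp singularities. Controlling the loop equation uniformly as $z$ approaches the frozen region (where $\dist(z,I)$ stays $\asymp 1$ but $\Im z\to 0$) versus the liquid edge requires care, and this is why \Cref{s:LRE} first proves a weaker version optimal only in the liquid region before the frozen-region estimate is bootstrapped in \Cref{ProofEquation}.
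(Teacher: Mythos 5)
Your overall scheme — analyze $\bQ$ via a loop/Nekrasov-type equation, extract the drift $\frac{1}{2\pi\ri}\oint_\omega\frac{\log\cB(w)\rd w}{(w-z)^2}$ as the leading term, bootstrap a weak estimate into the sharp one near the edge — matches the paper's architecture, and you even correctly flag that the multi-scale bootstrap in Sections~\ref{s:LRE}--\ref{ProofEquation} is the essential input. Your ``first option'' for the variance (iterate the loop equation with insertion points to get joint cumulants) is also exactly what the paper does, with the deformed weights $a_\bme^{(\bmv,\bms)}$ and the $\partial_{V_J}\log\cA^{(\bmv,\bms)}$ machinery of Section~\ref{s:2state}. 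However, there are two genuine gaps.

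First, your description of \eqref{e:improve1c} as ``linearize, then the self-energy term is bounded by \eqref{e:imbb}-type estimates'' won't close as stated. The error term $\cE^{(\bmv,\bms)}$ in \eqref{e:defcE} involves a ratio $\cA^{(\bmv,\bms)}(z+1/n)/\cA^{(\bmv,\bms)}(z)$, so to bound the first moment at the scale $\frac{|\Im f/(f+1)|}{n\Im z\,\dist(z,I)}$ you must simultaneously control \emph{all} the derivatives $\cD_z^{(k)}(\log\cA^{(\bmv,\bms)}-b)$ for $k$ up to order $\sim 1/\fc$ (see \eqref{e:defq}); a plain second-order Taylor expansion is circular. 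The paper resolves this with the control parameters $\Phi_k(z)$ of \eqref{e:Phic} and a genuine bootstrap/continuity argument (\Cref{c:boost2}): assume a factor $\log n$ worse bound at all orders $k\leq 2q$, feed it back through the loop equation, and improve. Your proposal acknowledges the bootstrap but treats it as confined to the transition from liquid to frozen region; in fact it is also needed, in a refined form, to get the sharp first-moment constant.

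Second, your ``more robust'' alternative for \eqref{e:improve2c} — negative association / log-concavity of $\bQ$ giving sub-Gaussian tails — would not produce the correct bound and should be dropped. The right-hand side of \eqref{e:improve2c} has a second term $\sim n^{-(2p-1)}\dist(z,I)^{-(4p-2)}$, which for $p\geq 2$ dominates the first ``Gaussian'' term $\sim n^{-p}$ when $z$ is very close to the edge. That heavy-tail term is intrinsic to the discrete nature of the dynamics (a single particle jumping within distance $\sim\dist(z,I)$ of $\Re z$ changes the Stieltjes transform by $\sim\dist(z,I)^{-2}$, with probability $\sim|\Im f/(f+1)|/(n\Im z)$) and a sub-Gaussian concentration bound hides it. The insertion-point route (Proposition~\ref{c:boost25}, with the two-parameter control functions $\Phi_{k,\ell}$ of \eqref{e:Phicl}) is designed precisely to track that tail; the moment-cumulant expansion \eqref{e:eqform3} then splits naturally into the Gaussian piece (many pair cumulants) and the heavy-tail piece (one long cumulant), giving the two-term form. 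So your ``first option'' is correct and your ``second option'' is a dead end.
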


\subsection{Dynamical Loop Equation}
\label{EquationsEstimate}

Our analysis of the transition probability \eqref{e:defL2} is based on the following dynamical loop equation from \cite{NRWLT}. 
\begin{lem}[{\cite[Lemma 7.2]{NRWLT}}]\label{l:loopeq}
	
	Assume that the functions $\phi^\pm$ from the transition probability $\bQ$ are analytic in a neighborhood of $[\fa,\fb]$. Further assume that the particle configuration $\bmx=(x_1, x_2, \cdots, x_m)\in \bZ_n^m$ satisfies $\fa\leq x_1<x_m\leq \fb-1/n$. Then the following function is analytic in a neighborhood of $[\fa,\fb]$,
	\begin{align}\label{e:sum1}
		\bE_\bQ\left[ \prod_{i=1}^m\frac{z-x_i+(1-e_i)/n}{z-x_i} \phi^+(z)+\prod_{i=1}^m\frac{z-x_i-e_i/n}{z-x_i}\phi^-(z)\right].
	\end{align}
\end{lem}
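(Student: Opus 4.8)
The plan is to verify the claimed analyticity by rewriting the expectation in \eqref{e:sum1} as a single rational function in $z$ whose only candidate poles are at the particle locations $x_1,\dots,x_m$, and then showing that the residue at each $x_j$ vanishes. First I would expand $\bE_\bQ$ explicitly: from the definition \eqref{e:defL2}, for each $\bme\in\{0,1\}^m$ the weight $a_\bme$ is proportional to $\frac{V(\bmx+\bme/n)}{V(\bmx)}\prod_i \phi^+(x_i)^{e_i}\phi^-(x_i)^{1-e_i}$. Plugging this into \eqref{e:sum1} and multiplying through by the common denominator $\prod_i(z-x_i)$, the quantity $\prod_i(z-x_i)\cdot\eqref{e:sum1}$ becomes, up to the normalization $\tilde Z(\bmx)^{-1}$, a sum over $\bme$ of the product of: (i) a Vandermonde ratio, (ii) $\prod_i\phi^+(x_i)^{e_i}\phi^-(x_i)^{1-e_i}$, and (iii) either $\phi^+(z)\prod_{i:e_i=1}(z-x_i+1/n)\prod_{i:e_i=0}(z-x_i)$ or $\phi^-(z)\prod_{i:e_i=0}(z-x_i-1/n)\prod_{i:e_i=1}(z-x_i)$. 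Since $\phi^\pm$ are analytic near $[\fa,\fb]$ by hypothesis, (iii) is analytic in $z$; so $\prod_i(z-x_i)\cdot\eqref{e:sum1}$ is manifestly analytic, and it remains to check that \eqref{e:sum1} itself has no pole at any $z=x_j$, i.e.\ that the residue of the full sum at $x_j$ is zero.

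The key step is the residue cancellation at a fixed $x_j$. I would split the sum over $\bme$ according to the value $e_j\in\{0,1\}$, pairing each configuration $\bme$ with $e_j=0$ with the configuration $\bme'$ obtained by setting $e_j=1$ (and keeping all other coordinates equal). For the term with $e_j=0$, the factor contributing a pole at $x_j$ comes only from the second summand in \eqref{e:sum1}, via $\prod_i \frac{z-x_i-e_i/n}{z-x_i}$; its residue at $x_j$ is a product involving $\phi^-(x_j)$, the other particles, and the Vandermonde weight of $\bme$. For the paired term with $e_j=1$, the pole at $x_j$ comes only from the first summand, via $\prod_i\frac{z-x_i+(1-e_i)/n}{z-x_i}$; its residue involves $\phi^+(x_j)$, the other particles, and the Vandermonde weight of $\bme'$. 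The point is that the ratio of Vandermonde factors $V(\bmx+\bme'/n)/V(\bmx+\bme/n)$ — since $\bme'$ and $\bme$ differ only by moving particle $j$ from $x_j$ to $x_j+1/n$ — produces exactly the factor needed to make the two residues cancel, once one also accounts for the $\phi^+(x_j)$ versus $\phi^-(x_j)$ weights that are built into $a_{\bme'}/a_\bme$. Concretely, $a_{\bme'}/a_\bme = \frac{\phi^+(x_j)}{\phi^-(x_j)}\cdot\frac{V(\bmx+\bme'/n)}{V(\bmx+\bme/n)}$, and this is precisely the reciprocal of the ratio of the two residues, so they sum to zero. Summing over all pairs gives vanishing total residue at $x_j$, and since $j$ was arbitrary, \eqref{e:sum1} is analytic near $[\fa,\fb]$.

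I would also need the mild observation that the denominators $z-x_i-1/n$ appearing after shifting indices do not create spurious poles inside the relevant neighborhood: these are handled because $\bmx+\bme/n$ remains an admissible (strictly increasing) configuration with entries in $[\fa,\fb]$ — here the hypothesis $\fa\le x_1<x_m\le\fb-1/n$ guarantees that $x_m+1/n\le \fb$ so all shifted particles stay in range, and the non-intersecting constraint is preserved under Bernoulli steps. One should be slightly careful that when $e_j=1$ and $e_{j+1}=0$ a collision $x_j+1/n=x_{j+1}$ could occur; such configurations have Vandermonde weight zero and drop out of both $\bQ$ and the sum, so they cause no difficulty. Finally, since this lemma is quoted verbatim from \cite[Lemma 7.2]{NRWLT}, the cleanest write-up is to reduce to that reference after checking that our $\phi^\pm$ satisfy its hypotheses; I would state the residue-pairing computation above as the proof sketch and cite \cite{NRWLT} for the bookkeeping details.

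\textbf{Main obstacle.} The only genuinely delicate point is tracking signs and index-shift conventions in the Vandermonde ratio $V(\bmx+\bme'/n)/V(\bmx+\bme/n)$ so that the two residues cancel exactly rather than merely up to a sign; this is a finite, deterministic algebraic identity, but it is the step where an error would most easily creep in, and it is the reason the cleanest route is to invoke \cite[Lemma 7.2]{NRWLT} directly once the hypotheses are checked.
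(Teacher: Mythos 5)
Your overall strategy — clear denominators, observe that the only candidate poles are at the $x_j$, and then cancel residues by pairing each $\boldsymbol{e}$ having $e_j=0$ with the $\boldsymbol{e}'$ obtained by flipping $e_j$ to $1$ — is the right one and is exactly how Lemma~7.2 of \cite{NRWLT} is proved (the paper itself only cites that reference). However, you have the identification of which summand carries the pole exactly backwards, and this is not a cosmetic slip: carried through literally, your formulas would fail to cancel.

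Look at the $i=j$ factor in each product. In the first product, $\frac{z-x_j+(1-e_j)/n}{z-x_j}$ equals $1$ when $e_j=1$ (the numerator is $z-x_j$) and has a simple pole when $e_j=0$. In the second product, $\frac{z-x_j-e_j/n}{z-x_j}$ equals $1$ when $e_j=0$ and has a simple pole when $e_j=1$. So the configuration with $e_j=0$ contributes a pole through the \emph{first} summand, with residue proportional to $\phi^+(x_j)$ (and residue coefficient $+1/n$ from the factor $\frac{z-x_j+1/n}{z-x_j}$), while the paired configuration with $e_j=1$ contributes a pole through the \emph{second} summand, with residue proportional to $\phi^-(x_j)$ (and coefficient $-1/n$ from $\frac{z-x_j-1/n}{z-x_j}$). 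You assigned $\phi^-(x_j)$ to the $e_j=0$ residue and $\phi^+(x_j)$ to the $e_j=1$ residue. Since $a_{\boldsymbol{e}'}/a_{\boldsymbol{e}} = \frac{\phi^+(x_j)}{\phi^-(x_j)}\cdot\frac{V(\boldsymbol{x}+\boldsymbol{e}'/n)}{V(\boldsymbol{x}+\boldsymbol{e}/n)}$ is fixed by the definition of $a_{\boldsymbol{e}}$, the $\phi^+/\phi^-$ ratio in this weight must be matched by a $\phi^+/\phi^-$ ratio (not its reciprocal) in the ratio of residues, which happens precisely under the corrected assignment. Under your assignment the $\phi$-ratios go the wrong way and the two residues would add instead of cancel. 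Once corrected, the Vandermonde identity $\frac{V(\boldsymbol{x}+\boldsymbol{e}'/n)}{V(\boldsymbol{x}+\boldsymbol{e}/n)} = \prod_{i\neq j}\frac{x_j-x_i+(1-e_i)/n}{x_j-x_i-e_i/n}$ does the rest, and the opposite signs $\pm1/n$ of the two residues supply the cancellation. One last small point: your worry about ``denominators $z-x_i-1/n$'' is unfounded — \eqref{e:sum1} has denominators only of the form $z-x_i$, so there is nothing further to check beyond the residues at the $x_j$.
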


\begin{rem}\label{r:polesloc}
	If $\phi^\pm$ is meromorphic in a neighborhood $\Lambda$ of $[\fa,\fb]$ and does not have poles on the interval $[\fa,\fb]$, then \Cref{l:loopeq} implies that all possible poles of \eqref{e:sum1} are given by those of $\phi^\pm$.
\end{rem}

Fix any large positive integer $R\geq 1$. We compute observables of the form
\begin{align}\label{e:term0}
	\sum_\bme a_\bme   \prod_{j=1}^r \left(\prod_{i = 1}^m \frac{v_j-x_i-e_i/n}{v_j-x_i}\right)^{s_j} ,\quad \sum_\bme a_\bme   \prod_{j=1}^r \left(\prod_{i = 1}^m \frac{v_j-x_i-e_i/n}{v_j-x_i}\right)^{s_j}\prod_{i=1}^m \frac{z-x_i-e_i/n}{z-x_i},
\end{align}
for either
\begin{align}\label{e:term1}
	0\leq r\leq 2R-1, \quad \bmv=(v_1,v_2,\cdots,v_r), \quad \bms=(s_1,s_2,\cdots, s_r),\quad v_j\in \mathscr D\cup \mathscr D(\fr),\quad s_j=\pm 1,
\end{align}
or 
\begin{align}\label{e:term2a}
	r=2R, \quad \bmv=(v_1,v_2,\cdots,v_r), \quad \bms=(s_1,s_2,\cdots, s_r),\quad v_{2j-1}=\overline v_{2j}\in \mathscr D\cup \mathscr D(\fr),\quad s_j=-1.
\end{align}
For any $r\leq 2R$, to compute \eqref{e:sum1} with vectors $\bmv, \bms$ as described in \eqref{e:term1} or \eqref{e:term2a}, we define the deformed weights 
\begin{align}\label{e:defpsi}
	a_\bme^{(\bmv,\bms)}=a_\bme \prod_{i=1}^m\psi^{(\bmv,\bms)}(x_i)^{1-e_i},\quad
	\psi^{(\bmv,\bms)}(z)= \displaystyle\prod_{j = 1}^r \bigg( \displaystyle\frac{v_j - z - 1/n}{v_j - z} \bigg)^{-s_j}.
\end{align}

\noindent To understand the transition probability \eqref{e:defL2} on mesoscopic scale, we need to take $v_j$ close to $[\fa',\fb']$, where the function $\psi^{(\bmv,\bms)}(z)$ is singular. In fact, $\psi^{(\bmv,\bms)}(z)$ is meromorphic with possible poles at $v_j$ if $s_j=-1$, or $v_j-1/n$ if $s_j=1$ for $1\leq j\leq r$. 
For these deformed weights as in \eqref{e:defpsi}, we can absorb $\psi^{(\bmv,\bms)}(z)$ into $\phi^-(z)$, and then \Cref{l:loopeq} and Remark \ref{r:polesloc} imply that the following quantity is analytic in $\Lambda \setminus \{v_j-(s_j+1)/2n\}_{1\leq j\leq r}$,
\begin{align}\begin{split}\label{e:ABC2}
		\cC^{(\bmv,\bms)}(z)&=\sum_{\bme}a^{(\bmv,\bms)}_\bme \left(\prod_{i=1}^m\frac{z-x_i+(1-e_i)/n}{z-x_i} \phi^+(z)+\prod_{i=1}^m\frac{z-x_i-e_i/n}{z-x_i}\phi^-(z)\psi^{(\bmv,\bms)}(z) \right).
\end{split}\end{align}

We will use \eqref{e:ABC2} to analyze the following quantity, which encodes the information of $\rho(x;\bmx+\bme/n)-\rho(x;\bmx)$,
\begin{align}\label{e:defA}
	\cA^{(\bmv,\bms)}(z)&=\sum_{\bme}a^{(\bmv,\bms)}_\bme
	\prod_{i = 1}^m \frac{z-x_i-e_i/n}{z-x_i}=\bE_\bQ\left[\prod_{i=1}^m\psi^{(\bmv,\bms)}(x_i)^{1-e_i}\prod_{i = 1}^m \frac{z-x_i-e_i/n}{z-x_i}\right].
\end{align}

\noindent We may alternatively write $\cA^{(\bmv,\bms)}(z)$ as 
\begin{align}
	\label{ga}
	\cA^{(\bmv,\bms)}(z)=\prod_{j=1}^r\cG(v_j)^{s_j}\bE_\bQ\left[\prod_{j=1}^r\left(\prod_{i=1}^m\frac{v_j-x_i-e_i/n}{v_j-x_i}\right)^{s_j}\prod_{i = 1}^m \frac{z-x_i-e_i/n}{z-x_i}\right],
\end{align}
which are quantities in \eqref{e:term0} up to some factors of $\cG$. By sending $z\rightarrow \infty$, we have
\begin{align*}
	\cA^{(\bmv,\bms)}(\infty)=\prod_{j=1}^r\cG(v_j)^{s_j}\bE_\bQ\left[\prod_{j=1}^r\left(\prod_{i=1}^m\frac{v_j-x_i-e_i/n}{v_j-x_i}\right)^{s_j}\right].
\end{align*}

By taking $r=1$, $v_1=v$, and $s_1=-1$, $\cA^{(\bmv,\bms)}(z)$ becomes
\begin{align*}
	\cA^{(\bmv,\bms)}(z)=\cG(v)^{-1}\bE_\bQ\left[\prod_{i=1}^m\frac{v-x_i}{v-x_i-e_i/n}\prod_{i=1}^m\frac{z-x_i-e_i/n}{z-x_i}\right],
\end{align*}
and
the derivative $\del_z \log\cA^{(\bmv,\bms)}$ after specializing at $z=v$ gives the Stieltjes transform of $\rho(x;\bmx+\bme/n)-\rho(x;\bmx)$,
\begin{align}\label{e:dzlogA}
	\del_z \log\cA^{(\bmv,\bms)}|_{z=v}=\bE_\bQ\left[\sum_{i = 1}^m \frac{1}{v-x_i-e_i/n}-\sum_{i = 1}^m \frac{1}{v-x_i}\right].
\end{align}
More generally, taking $r=2r'+1$, $s_1=s_2=\cdots=s_{r'}=1$ and $s_{r'+1}=s_{r'+2}=\cdots s_{r}=-1$ the derivative 
\begin{align*}
	\del_z \del_{v_1}\del_{v_2}\cdots \del_{v_{r'}} \log\cA^{(\bmv,\bms)}
	= \del_z \del_{v_1}\del_{v_2}\cdots \del_{v_{r'}} \log\bE_\bQ\left[\prod_{j=1}^r\left(\prod_{i=1}^m\frac{v_j-x_i-e_i/n}{v_j-x_i}\right)^{s_j}\prod_{i = 1}^m \frac{z-x_i-e_i/n}{z-x_i}\right]
\end{align*} 
after specializing at $z=v_r=v, v_j=v_{j+r'}\in \{v,\overline v\}$ for $1\leq j\leq r'$, gives the $(r'+1)$-th joint cumulants of 
\begin{align*}
	\sum_{i = 1}^m \frac{1}{v-x_i-e_i/n}-\sum_{i = 1}^m \frac{1}{v-x_i}, \quad \sum_{i = 1}^m \frac{1}{\overline v-x_i-e_i/n}-\sum_{i = 1}^m \frac{1}{\overline v-x_i}.
\end{align*}

Using the definition \eqref{e:defphi+}, we rewrite the first part of $\cC^{(\bmv,\bms)}(z)$ in \eqref{e:ABC2} as
\begin{align*}\begin{split}
		\phantom{{}={}}\sum_{\bme} & a^{(\bmv,\bms)}_\bme \prod_{i=1}^m\frac{z-x_i+(1-e_i)/n}{z-x_i}\phi^+(z)\\
		& =\cG(z+1/n)\phi^+(z)
		\sum_{\bme}a^{(\bmv,\bms)}_\bme \prod_{i=1}^m\frac{z+1/n-x_i-e_i/n}{z+1/n-x_i}\\
		&=\cG(z+1/n)\phi^+(z)
		\cA^{(\bmv,\bms)}(z+1/n)=\cG(z+1/n)\varphi^+(z+1/n)
		\cA^{(\bmv,\bms)}(z+1/n) \frac{\tilde g(z)}{\tilde g(z+1/n)} e^{\frac{1}{n}\psi(z)}.
\end{split}\end{align*}
The second part of $\cC^{(\bmv,\bms)}(z)$ is given by
\begin{align*}
	\sum_{\bme}a^{(\bmv,\bms)}_\bme \prod_{i=1}^m\frac{z-x_i-e_i/n}{z-x_i}\phi^-(z)\psi^{(\bmv,\bms)}(z) 
	=\cA^{(\bmv,\bms)} (z) \varphi^-(z)\psi^{(\bmv,\bms)}(z). 
\end{align*}
Then in this way we rewrite $\cC^{(\bmv,\bms)}$ as 
\begin{align}\label{e:ABCprecise}
	\cC^{(\bmv,\bms)}(z)=\cA^{(\bmv,\bms)}(z)\cB(z)\left(1+\cE^{(\bmv,\bms)}(z)\right)
\end{align}
where the complex slope $f(z)$ is as in \eqref{e:deffznew}, and 
\begin{align}\begin{split}\label{e:defcE}
		\phantom{{}={}}\cE^{(\bmv,\bms)}(z)& =
		\cE^{(\bmv,\bms)}_0(z)
		+\frac{\psi^{(\bmv,\bms)}(z)-1}{f(z)+1},\\
		\phantom{{}={}}\cE^{(\bmv,\bms)}_0(z)&=\left(\frac{f(z+1/n)}{f(z+1/n)+1} \frac{\cA^{(\bmv,\bms)}(z+1/n)\cB(z+1/n)}{\cA^{(\bmv,\bms)}(z)\cB(z)}\frac{\tilde g(z)}{\tilde g(z+1/n)}e^{\frac{1}{n}\psi(z)}-\frac{f(z)}{f(z)+1}\right).
\end{split}\end{align}

\subsection{Liquid Region Estimates}\label{s:LRE}

The following proposition gives a weak bound on $\big| \del_z(\log \cA^{(\bmv,\bms)}(z)-b(z)) \big|$. It will eventually coincide with the required bound \eqref{e:improve1c} in the liquid region. However, the estimate is suboptimal if $\dist(z,I)\geq \Im z$, and is therefore not sufficient in the frozen region.  In Section \ref{s:improveEE}, we will bootstrap it to obtain optimal estimates in the frozen region.

\begin{prop}\label{p:firstod}
	
	Adopt \Cref{a:mz}, and fix an integer $R>1$. Then for $z\in \mathscr D\cup \mathscr D(\fr)$, and $(\bmv, \bms)=\big( (v_1,v_2,\cdots, v_r), (s_1,s_2,\cdots, s_r) \big)$ as in \eqref{e:term1} or \eqref{e:term2a}, we have
	\begin{align}\begin{split}\label{e:firstod}
			\del_z \big( \log \cA^{(\bmv,\bms)}(z)-b(z) \big)=\frac{1}{2\pi \ri}\oint_{\omega}\frac{\log \big( 1+\cE^{(\bmv,\bms)}(w) \big) \rd w}{(w-z)^2},\quad
			b(z)=\frac{1}{2\pi \ri}\oint_{\omega} \frac{\log \cB(w)\rd w}{w-z}
	\end{split}\end{align}
	where the contour $\omega\subseteq\mathscr D$ encloses  $[\fa',\fb']$ but not $\{z\}\cup \big\{v_j-(s_j+1)/2n \big\}_{1\leq j\leq r}$. Let $1/n\ll \eta \leq \fr$. Then uniformly for $w\in \mathscr D$ satisfying $\dist \big(w, [\fa',\fb']\cup\{v_1,v_2,\cdots, v_r\} \big)\geq \eta $, the error term is bounded by 
	\begin{align}\label{e:errorbb}
		\big| \cE^{(\bmv,\bms)}(w) \big| \lesssim \frac{1}{n\eta}.
	\end{align}
\end{prop}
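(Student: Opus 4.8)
The plan is to derive \eqref{e:firstod} from the analyticity of the loop-equation observable $\cC^{(\bmv,\bms)}(z)$ and the factorization \eqref{e:ABCprecise}, and then to prove the error bound \eqref{e:errorbb} by estimating the three constituent pieces of $\cE^{(\bmv,\bms)}$ in \eqref{e:defcE} one at a time. First I would establish the integral representation. Taking logarithmic derivatives in \eqref{e:ABCprecise} gives $\del_z \log \cC^{(\bmv,\bms)}(z) = \del_z \log \cA^{(\bmv,\bms)}(z) + \del_z \log \cB(z) + \del_z \log(1+\cE^{(\bmv,\bms)}(z))$. By \Cref{l:loopeq} and Remark \ref{r:polesloc}, $\cC^{(\bmv,\bms)}$ is analytic on $\Lambda$ away from the points $\{v_j-(s_j+1)/2n\}$ (poles of $\psi^{(\bmv,\bms)}$ through $\phi^-$) and has no zeros or poles coming from the particle locations; combined with the fact that $\cA^{(\bmv,\bms)}$ is analytic near $\infty$ with $\cA^{(\bmv,\bms)}(\infty)=\cA^{(\bmv,\bms)}(\infty)$ a nonzero constant, one sees that $\del_z\log \cC^{(\bmv,\bms)}(z) - \del_z \log \cA^{(\bmv,\bms)}(\infty)$ decays like $1/z^2$. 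A contour integral of $\del_z\log\cC^{(\bmv,\bms)}(w)/(w-z)$ around a contour $\omega\subseteq \mathscr D$ enclosing $[\fa',\fb']$ but excluding $z$ and the poles of $\psi^{(\bmv,\bms)}$ therefore vanishes (as in \eqref{e:aB0} and \eqref{e:defbz}); subtracting the analogous identities for $\del_z\log\cB$ (which produces $\del_z b$ via \eqref{e:defbz}) yields \eqref{e:firstod}. Here one uses that $\del_z\log\cA^{(\bmv,\bms)}$ has at most the same singularities as the integrand and that $\log(1+\cE^{(\bmv,\bms)})$ is well-defined on $\omega$ — this requires $|\cE^{(\bmv,\bms)}|<1$ there, which will follow from \eqref{e:errorbb} once $\omega$ is chosen so that $\dist(\omega, [\fa',\fb']\cup\{v_j\})\gtrsim \fr$.

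Next I would prove the bound \eqref{e:errorbb}. Decompose $\cE^{(\bmv,\bms)}=\cE^{(\bmv,\bms)}_0 + (\psi^{(\bmv,\bms)}(w)-1)/(f(w)+1)$. For the second term, $\psi^{(\bmv,\bms)}(w)-1 = \prod_j \big(1 - \tfrac{1/n}{v_j - w}\big)^{-s_j} - 1$; since $w$ is at distance $\geq \eta$ from each $v_j$ and from $[\fa',\fb']$, each factor is $1+\OO(1/(n\eta))$, so the product is $1+\OO(r/(n\eta)) = 1+\OO(1/(n\eta))$ with $r\leq 2R$ bounded, and dividing by $f(w)+1$ (which is $\gtrsim 1$ on $\mathscr D$ by \eqref{e:fbound}) preserves this. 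For $\cE^{(\bmv,\bms)}_0$, the idea is that it is an ``$O(1/n)$ discrete derivative'': it measures the change of $\tfrac{f}{f+1}\cdot \tfrac{\cA^{(\bmv,\bms)}\cB}{\cA^{(\bmv,\bms)}\cB}\cdot\tfrac{\tilde g}{\tilde g}e^{\psi/n}$ under $z\mapsto z+1/n$. Writing everything multiplicatively, $\cE^{(\bmv,\bms)}_0(w) = \tfrac{f(w)}{f(w)+1}\big[\exp\big(\tfrac1n\int_0^1 \del_z\log\big(\tfrac{f}{f+1}\tfrac{\cA^{(\bmv,\bms)}\cB}{(\cdot)}\tfrac{\tilde g}{(\cdot)}\big)(w+s/n)\,ds + \tfrac1n\psi(w)\big) - 1\big]\cdot\tfrac{f(w)+1}{f(w)}$, up to the obvious bookkeeping. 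So it suffices to bound the relevant logarithmic derivatives at points $w+s/n$, $s\in[0,1]$: by \Cref{l:fbbound} (and its analogue for $\del_z\log\cA^{(\bmv,\bms)}$, which we would record separately, or deduce from \eqref{e:firstod} itself in a bootstrap if necessary, though here the cleaner route is to note $\del_z\log\cA^{(\bmv,\bms)}$ is controlled away from $[\fa',\fb']\cup\{v_j\}$ by a Cauchy-estimate argument just like \Cref{l:fbbound}) each such derivative is $\OO(1/\dist(w, [\fa',\fb']\cup\{v_j\})) = \OO(1/\eta)$, so the argument of the exponential is $\OO(1/(n\eta))$, hence $\cE^{(\bmv,\bms)}_0(w) = \OO(1/(n\eta))$ using $|f/(f+1)|\lesssim 1$ from \eqref{e:fbound}.

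There is one subtlety I would be careful about: $\del_z\log\cA^{(\bmv,\bms)}$ itself appears inside $\cE^{(\bmv,\bms)}_0$, so the estimate for $\cE^{(\bmv,\bms)}_0$ is not purely in terms of ``known'' quantities. The clean resolution is to establish a direct a priori bound $\big|\del_z\log\cA^{(\bmv,\bms)}(z)\big|\lesssim 1/\dist(z,[\fa',\fb']\cup\{v_1,\dots,v_r\})$ for $z$ in the relevant region, proved exactly as \Cref{l:fbbound}: $\log\cA^{(\bmv,\bms)}(z)$ is, up to the constant $\log\cA^{(\bmv,\bms)}(\infty)$, a Stieltjes-type transform of a signed measure supported in $[\fa,\fb]$ shifted by at most $1/n$, together with the factors $\prod_j(\cdots)^{s_j}$ whose only singularities are at $v_j, v_j-1/n$; differentiating and bounding termwise gives the claim. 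Feeding this into the computation above closes the loop without circularity, since $\cE^{(\bmv,\bms)}_0(w)$ only ever needs $\del_z\log\cA^{(\bmv,\bms)}$ evaluated at points at distance $\gtrsim\eta$ from the singular set. \textbf{The main obstacle} I anticipate is organizing this bootstrap-free bookkeeping cleanly — in particular verifying that the contour $\omega$ in \eqref{e:firstod} can simultaneously be taken inside $\mathscr D$, enclose $[\fa',\fb']$, avoid $z$ and all the poles $v_j-(s_j+1)/2n$, and lie at distance $\gtrsim\fr$ from $[\fa',\fb']\cup\{v_j\}$ so that \eqref{e:errorbb} makes $\log(1+\cE^{(\bmv,\bms)})$ well-defined on it; this is where \Cref{a:mz}(3)'s requirement that $\mathscr D$ contain the annulus $\{\fr\leq\dist(z,[\fa',\fb'])\leq 2\fr\}$ is essential, and checking that the $v_j$ (which may themselves lie in $\mathscr D(\fr)$, hence outside this annulus) do not obstruct the choice needs a small case analysis.
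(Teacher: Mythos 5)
Your proposal diverges from the paper's argument in a way that creates a genuine gap. The paper deduces \Cref{p:firstod} by an iterative bootstrap (Propositions~\ref{c:boost0} and \ref{c:boost}): starting from the regime $\dist(z,[\fa',\fb'])\geq 2\fr$ where everything is under control, one improves the scale by a factor $(1-1/2K)$ at each step, carrying along two-sided a~priori bounds on $|\cA^{(\bmv,\bms)}(z)e^{-b(z)-\sum_j s_j(b(v_j)+\log\cG(v_j))}|$. Your plan replaces this by a direct estimate on $\del_z\log\cA^{(\bmv,\bms)}$, claiming it can be proved ``exactly as \Cref{l:fbbound}'' because $\log\cA^{(\bmv,\bms)}$ is ``a Stieltjes-type transform of a signed measure.''

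That claim is where the argument breaks. What admits a Stieltjes decomposition is $\cA^{(\bmv,\bms)}$ itself, via \eqref{e:Aexp}, not $\log\cA^{(\bmv,\bms)}$. To bound $\del_z\log\cA^{(\bmv,\bms)}=\del_z\cA^{(\bmv,\bms)}/\cA^{(\bmv,\bms)}$ you need both an upper bound on the numerator and a lower bound on the denominator. For $(\bmv,\bms)$ as in \eqref{e:term2a} the deformed weights $a_{\bme}^{(\bmv,\bms)}$ are real and positive, and the paper does carry out exactly the ``termwise'' estimate you have in mind (\eqref{e:derzb}–\eqref{e:dervlnA}). But for $(\bmv,\bms)$ as in \eqref{e:term1} the weights are genuinely complex; then $\sum_{\bme}|a_{\bme}^{(\bmv,\bms)}|$ is not normalized, the ``residue measure'' is not a signed measure with total mass $\OO(1)$, and — most importantly — there is no a~priori lower bound on $|\cA^{(\bmv,\bms)}|$ near $[\fa',\fb']$. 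The paper's Proposition~\ref{p:continuity} deals with this by a Cauchy--Schwarz device that reduces the complex-weight case to the positive-weight case at slightly larger distance, and that reduction requires exactly the a~priori bounds \eqref{e:ap1}–\eqref{e:ap2} which are produced by the bootstrap. Your proposal is therefore circular: the derivative bound you want to establish ``directly'' is available only as an output of the bootstrap you are trying to avoid.

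A second, related gap is in your contour-integral derivation of \eqref{e:firstod}. To pass from ``$\cC^{(\bmv,\bms)}$ is analytic'' to ``$\del_z\log\cC^{(\bmv,\bms)}$ has vanishing contour integral'' you must show $\cC^{(\bmv,\bms)}$ does not vanish in the annulus; otherwise the integral $\frac{1}{2\pi\ri}\oint_\omega \del_w\log\cC^{(\bmv,\bms)}(w)\,\rd w$ counts zeros. The paper proves this in \eqref{e:aC0}–\eqref{e:aC01} by showing $\cA^{(\bmv,\bms)}$ is nonvanishing there — again via the a~priori bounds — and then concluding the zero count is $0$. Your sketch omits this step entirely. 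Both gaps have the same source: the complex-weighted observables $\cA^{(\bmv,\bms)}$ cannot be controlled by a single-pass estimate, and the bootstrap is not a cosmetic reorganization but the mechanism that makes these quantities tractable down to scale $\eta\gg 1/n$.
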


We will deduce \Cref{p:firstod} as a consequence of the following two propositions. \Cref{c:boost0} gives estimates of $\cA^{(\bmv,\bms)}(z)$ when $z, v_1,v_2,\cdots, v_r$ are bounded away from $[\fa',\fb']$.  \Cref{c:boost} states that if we have some (weak) a priori estimate on $\cA^{(\bmv, \bms)}(z)$ with $\dist \big( \{z,v_1,v_2,\cdots, v_r\}, [\fa',\fb'] \big)\geq \eta$, then we can obtain an improved estimate on $\cA^{(\bmv, \bms)}(z)$ in the larger domain $\dist \big(\{z,v_1,v_2,\cdots, v_r\}, [\fa',\fb'] \big)\geq (1-1/2K)\eta$, for some large $K\geq 0$ (which will be chosen later). Its proof, based on the discrete loop equations, will appear in \Cref{ProofEquation} below.

\begin{prop}\label{c:boost0}
	Adopt \Cref{a:mz}, and fix an integer $R> 1$.  Let $(\bmv,\bms)$ be as in \eqref{e:term1} or \eqref{e:term2a}, with  $\dist \big(\{z,v_1,v_2,\cdots, v_r\}, [\fa',\fb'] \big) \geq 2\fr$. The following estimates hold
		\begin{flalign}\label{e:cAbb0}
			\cA^{(\bmv, \bms)}(z)e^{-b(z)-\sum_{j=1}^r s_j (b(v_j)+\log \cG(v_j))} & =1+\OO\left(\frac{1}{n}\right),
		\end{flalign} 
		
		\noindent and 
		\begin{flalign}
			\label{e:cAbb1}
			\cA^{(\bmv, \bms)}(\infty )e^{-\sum_{j=1}^r s_j (b(v_j)+\log\cG(v_j))} & =1+\OO\left(\frac{1}{n}\right).
		\end{flalign}
	
\end{prop}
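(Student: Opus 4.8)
This is the ``unperturbed'' base case of the bootstrap, in which $z$ and all of $v_1,\dots,v_r$ lie at macroscopic distance $\geq 2\fr$ from the interval $[\fa',\fb']$ that contains the particles $x_i$ as well as the support $I$. The plan is to combine the loop--equation factorization \eqref{e:ABCprecise}, namely $\cC^{(\bmv,\bms)}(z)=\cA^{(\bmv,\bms)}(z)\,\cB(z)\,\big(1+\cE^{(\bmv,\bms)}(z)\big)$, with the analyticity of $\cC^{(\bmv,\bms)}$ supplied by \Cref{l:loopeq} and \Cref{r:polesloc} (its only poles in $\Lambda$ being at the $r$ points $v_j-(s_j+1)/(2n)$, which here are $\geq 2\fr$ away from $[\fa',\fb']$), together with the exact boundary value $\cA^{(\bmv,\bms)}(\infty)=\prod_{j}\cG(v_j)^{s_j}\,\bE_\bQ\big[\prod_j(\prod_i\tfrac{v_j-x_i-e_i/n}{v_j-x_i})^{s_j}\big]$ read off from \eqref{e:defA} and \eqref{ga}.

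First I would record the crude perturbative estimates that hold simply because everything is far from $[\fa',\fb']$: for $w\in\{z,v_1,\dots,v_r\}$ every factor $\tfrac{w-x_i-e_i/n}{w-x_i}=1+\OO(1/(n\fr))$ and every factor of $\psi^{(\bmv,\bms)}(x_i)$ is $1+\OO(1/(n\fr))$, so that summing logarithms over the $m\lesssim n$ particles (using $\sum_i|w-x_i|^{-k}\lesssim n$ for $k\geq 1$) shows the random products $\prod_i\tfrac{w-x_i-e_i/n}{w-x_i}$ and $\prod_i\psi^{(\bmv,\bms)}(x_i)^{1-e_i}$ have modulus bounded above by a constant, uniformly in $\bme$ and in the admissible configurations. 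This gives $|\cA^{(\bmv,\bms)}(z)|\lesssim 1$, and differentiating the defining products twice and reusing the same bounds gives $|\del_z\cA^{(\bmv,\bms)}(z)|,|\del_z^2\cA^{(\bmv,\bms)}(z)|\lesssim 1$ on $\mathscr D\cap\mathscr D(2\fr)$. Once we also have a matching lower bound $|\cA^{(\bmv,\bms)}(z)|\gtrsim 1$ (see the last paragraph), the formula \eqref{e:defcE} for $\cE^{(\bmv,\bms)}$---using that $f$, $\tilde g$, $\cB$ are smooth and nonvanishing there (so that $\cB(z+1/n)/\cB(z)=1+\OO(1/n)$ and $f(z+1/n)/(f(z+1/n)+1)=f(z)/(f(z)+1)+\OO(1/n)$), that $\cA^{(\bmv,\bms)}(z+1/n)/\cA^{(\bmv,\bms)}(z)=1+\OO(1/n)$, and that $\psi^{(\bmv,\bms)}(z)-1=\OO(1/n)$---yields $\cE^{(\bmv,\bms)}(z)=\OO(1/n)$ uniformly on $\mathscr D\cap\mathscr D(2\fr)$.

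The leading term is then identified as follows. Taking $\del_z\log$ of the factorization and inserting $\cE^{(\bmv,\bms)}=\OO(1/n)$ gives $\del_z\log\cA^{(\bmv,\bms)}(z)=\del_z\log\cC^{(\bmv,\bms)}(z)-\del_z\log\cB(z)+\OO(1/n)$ off $[\fa',\fb']$; since $\cC^{(\bmv,\bms)}$ is analytic there and $\log\cB=b^+-b$ with $b^+$ analytic near $[\fa',\fb']$, a Cauchy/contour argument identical to the one behind \eqref{e:aB0}--\eqref{e:defbz} identifies the singular part of $\del_z\log\cA^{(\bmv,\bms)}$ as exactly $\del_z b(z)$, so $\del_z\log\cA^{(\bmv,\bms)}(z)=\del_z b(z)+\OO(1/n)$. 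Integrating from $z=\infty$ and matching boundary values (recall $b(\infty)=0$) gives $\cA^{(\bmv,\bms)}(z)\,e^{-b(z)}=\cA^{(\bmv,\bms)}(\infty)\,(1+\OO(1/n))$, which reduces \eqref{e:cAbb0} to \eqref{e:cAbb1}. To evaluate $\cA^{(\bmv,\bms)}(\infty)$ I would use that the $r$ exponentials $\prod_i\tfrac{v_j-x_i-e_i/n}{v_j-x_i}=\exp\!\big(-\tfrac1n\sum_i\tfrac{e_i}{v_j-x_i}+\OO(1/n)\big)$ asymptotically decorrelate under $\bQ$: the linear statistics $\sum_i\tfrac{e_i}{v_j-x_i}$ have $\OO(1)$ covariances (in this off-support regime this follows from a second use of the dynamical loop equation, i.e.\ \Cref{l:loopeq} with $r=2$), so all joint cumulants past the first contribute $\OO(1/n)$ and $\bE_\bQ[\prod_j(\prod_i\tfrac{v_j-x_i-e_i/n}{v_j-x_i})^{s_j}]=\prod_j\bE_\bQ[(\prod_i\tfrac{v_j-x_i-e_i/n}{v_j-x_i})^{s_j}](1+\OO(1/n))$; each factor with $s_j=1$ equals $\cA^{(\emptyset)}(v_j)=e^{b(v_j)}(1+\OO(1/n))$ by the $r=0$ case just proved, each with $s_j=-1$ is handled the same way after one further $1/n$-expansion, and therefore $\cA^{(\bmv,\bms)}(\infty)=e^{\sum_j s_j(b(v_j)+\log\cG(v_j))}(1+\OO(1/n))$, establishing \eqref{e:cAbb1} and hence \eqref{e:cAbb0}.

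The main obstacle is the remaining input in the second and fourth paragraphs: the lower bound $|\cA^{(\bmv,\bms)}(z)|\gtrsim 1$ and the $\OO(1)$ covariance bound for the off-support linear statistics, both of which require some control of the fluctuations of $\bQ$ rather than a pure $1/n$ Taylor expansion. The cleanest route is to prove \Cref{c:boost0} simultaneously with the off-support part of \Cref{p:improve2} (the regime $\dist(z,[\fa',\fb'])\geq 2\fr$): there the moment bound \eqref{e:improve2c} degenerates to a variance estimate that the second-order loop equation delivers directly, and it in turn furnishes both the lower bound and the decorrelation used above, so that the two statements close on each other. Everything else---propagation from $z=\infty$, the contour manipulations via $\log\cB=b^+-b$, and the $1/n$ expansions of $f$, $\tilde g$, $\cB$, $\psi^{(\bmv,\bms)}$---is routine and contained in \Cref{a:mz}.
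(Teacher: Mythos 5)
The overall skeleton of your first two paragraphs is close to the paper's: establish $\cE^{(\bmv,\bms)} = \OO(1/n)$ on a contour at distance $\asymp\fr$ from $[\fa',\fb']$, then use analyticity of $\cC^{(\bmv,\bms)}$ to pass from the contour to the limit at $\infty$. However, the paper works \emph{additively}: it rewrites \eqref{e:ABCprecise} as $\cC^{(\bmv,\bms)}(w)e^{-b^+(w)}=\cA^{(\bmv,\bms)}(w)e^{-b(w)}+\OO(1/n)$, notes the left side is analytic inside a contour $\omega$, and integrates $\oint_\omega\frac{\cdot\,\rd w}{w-z}$ directly (picking up a residue at $z$ and a contribution from $\infty$) to obtain $\cA^{(\bmv,\bms)}(z)e^{-b(z)}=\cA^{(\bmv,\bms)}(\infty)+\OO(1/n)$. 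This never requires a lower bound $|\cA^{(\bmv,\bms)}(z)|\gtrsim 1$, whereas your $\del_z\log$ route does. That lower bound is what you flag as an obstacle, and in fact, in the paper it comes out \emph{a posteriori} once the induction on $r$ establishes $\cA^{(\bmv,\bms)}(\infty)\asymp 1$ --- so trying to invoke it beforehand creates a circularity.

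The more serious gap is your treatment of $\cA^{(\bmv,\bms)}(\infty)$. The decorrelation / cumulant argument you sketch is not used in the paper and, as you yourself note, would require a variance estimate that in the paper's logical order comes \emph{after} \Cref{c:boost0} (this proposition is the $\eta=2\fr$ base case that initializes the bootstrap of \Cref{c:boost}, which in turn yields \Cref{p:firstod} and only then \Cref{p:improve2}). Proposing to ``prove them simultaneously'' would upend the bootstrap structure and is not justified. What you are missing is an elementary algebraic telescoping identity visible directly in \eqref{ga}: specializing $z=v_{r+1}$ in $\cA^{(\bmv,\bms)}(z)$ for an $(\bmv,\bms)$ of length $r$ produces, up to a factor of $\cG(v_{r+1})^{\pm1}$, exactly $\cA^{(\bmv',\bms')}(\infty)$ for the length-$(r+1)$ pair $(\bmv',\bms')$ obtained by appending $(v_{r+1},\pm1)$ --- either the new factor in the $\bE_\bQ$ product is newly created ($s_{r+1}=1$) or it exactly cancels an existing one ($s_{r+1}=-1$). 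Combined with the already-proved $\cA^{(\bmv,\bms)}(v_{r+1})e^{-b(v_{r+1})}=\cA^{(\bmv,\bms)}(\infty)+\OO(1/n)$, this gives a clean one-step recursion in $r$, with base case $\cA^{(\emptyset)}(\infty)=1$. No fluctuation control, covariance bound, or decorrelation is needed at all.
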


\begin{prop}\label{c:boost}
	
	Adopt \Cref{a:mz}, and fix an integer $R> 1$. Further assume that there exists a parameter $n^{\fc-1}\leq \eta\leq 2\fr$, and a constant $\mathfrak{C} > 1$ (independent of $\eta$) such that, for any   $z, v_1,v_2,\cdots, v_r\in \mathscr D\cup \mathscr D(\fr)$ satisfying $\dist \big( \{z,v_1,v_2,\cdots, v_r\}, [\fa',\fb'] \big) \geq \eta$, the following holds. If $(\bmv, \bms)$ is as in \eqref{e:term1} then
	\begin{align}\label{e:ap1}
		\fC^{-1} \leq \Big| \cA^{(\bmv, \bms)}(\infty)e^{-\sum_{j=1}^r s_j (b(v_j)+\log \cG(v_j))} \Big| \leq \mathfrak{C}; \qquad  \mathfrak{C}^{-1} \leq \Big|\cA^{(\bmv, \bms)}(z)e^{-b(z)-\sum_{j=1}^r s_j (b(v_j)+\log \cG(v_j))} \Big|\leq \fC,
	\end{align}
	and if $(\bmv, \bms)$ is as in \eqref{e:term2a} then
	\begin{align}\label{e:ap2}
		\fC^{-1} \leq \Big|\cA^{(\bmv, \bms)}(\infty)e^{-\sum_{j=1}^r s_j (b(v_j)+\log \cG(v_j))}\Big| \leq \fC.
	\end{align}
	Then there is a constant $K = K (R, \mathfrak{C}) > 1$ such that the following holds.
	\begin{enumerate}
		\item
		Let $(\bmv,\bms)$ be as in \eqref{e:term1} or \eqref{e:term2a}, and let $w\in \mathscr D$ satisfy $\dist \big(\{w,v_1,v_2,\cdots, v_r\}, [\fa',\fb'] \big) \geq (1-1/K)\eta$. If $\dist \big( w, \{v_1,v_2,\cdots, v_r\} \big) \geq \eta/2K$, then the error term $\cE^{(\bmv,\bms)}(w)$ from \eqref{e:defcE} satisfies
		\begin{align}\label{e:cebb}
			\big| \cE^{(\bmv,\bms)}(w) \big|\lesssim \frac{1}{n \eta}.
		\end{align}
		\item Let $(\bmv,\bms)$ be as in \eqref{e:term1} or \eqref{e:term2a}, and let $z\in \mathscr D\cup \mathscr D(\fr)$ satisfy $\dist \big( \{z,v_1,v_2,\cdots, v_r\}, [\fa',\fb'] \big)\geq (1-1/2K)\eta$. Then,
		\begin{align}\begin{split}\label{e:firstoda}
				\del_z \log \cA^{(\bmv,\bms)}(z)
				&=\frac{1}{2\pi \ri}\oint_{\omega} \log \cB(w)\frac{\rd w}{(w-z)^2}+\frac{1}{2\pi \ri}\oint_{\omega}\log \big(1+\cE^{(\bmv,\bms)}(w) \big)\frac{\rd w}{(w-z)^2}\\
				&=\frac{1}{2\pi \ri}\oint_{\omega} \log \cB(w)\frac{\rd w}{(w-z)^2}+\OO\left(\frac{1}{n\eta^2}\right).
		\end{split}\end{align}
		Here, the contour $\omega$ encloses $[\fa',\fb']$ but not $\{z,v_1,v_2,\cdots, v_r\}$; is contained inside the region $\big\{ w\in \mathscr D: \dist(w, [\fa',\fb'])\geq (1-1/K)\eta \big\}$; and satisfies $\dist \big(\omega, \{z,v_1, v_2,\cdots, v_r\} \big) \geq \eta / 2K$. We additionally have the improved estimates  
		\begin{flalign}\label{e:toap1}
			\cA^{(\bmv, \bms)}(z)e^{-b(z)-\sum_{j=1}^r s_j (b(v_j)+\log \cG(v_j))} & =1+\OO\left(\frac{1}{n\eta}\right),
		\end{flalign} 
		
		\noindent and 
		\begin{flalign}
			\label{e:toap2}
			\cA^{(\bmv, \bms)}(\infty )e^{-\sum_{j=1}^r s_j (b(v_j)+\log\cG(v_j))} & =1+\OO\left(\frac{1}{n\eta}\right).
		\end{flalign}
	\end{enumerate}
	
\end{prop}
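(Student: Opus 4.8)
\textbf{Proof plan for Proposition \ref{c:boost}.}

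The plan is to run a bootstrap (continuity) argument in the parameter $\eta$, using the dynamical loop equation \eqref{e:ABC2}--\eqref{e:ABCprecise} together with the a priori hypotheses \eqref{e:ap1}--\eqref{e:ap2}. First I would establish part (1). Fix $(\bmv,\bms)$ and a point $w\in\mathscr D$ with $\dist(\{w,v_1,\dots,v_r\},[\fa',\fb'])\ge(1-1/K)\eta$ and $\dist(w,\{v_1,\dots,v_r\})\ge\eta/2K$. The key identity is \eqref{e:ABCprecise}: since $\cC^{(\bmv,\bms)}$ is analytic in $\Lambda\setminus\{v_j-(s_j+1)/2n\}$ (by \Cref{l:loopeq} and Remark \ref{r:polesloc}), while $\cA^{(\bmv,\bms)}(z)\cB(z)$ has explicit, controlled zeros/poles inside $[\fa',\fb']$, one can extract $1+\cE^{(\bmv,\bms)}(z)=\cC^{(\bmv,\bms)}(z)/(\cA^{(\bmv,\bms)}(z)\cB(z))$ and bound it by a contour integral. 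Concretely, from \eqref{e:defcE}, $\cE^{(\bmv,\bms)}(w)$ is a sum of $\cE_0^{(\bmv,\bms)}(w)$ and $(\psi^{(\bmv,\bms)}(w)-1)/(f(w)+1)$. The second term is $\OO(1/n\eta)$ since $\psi^{(\bmv,\bms)}(w)-1=\OO(1/n\eta)$ when $w$ is at distance $\gtrsim\eta$ from all $v_j$ (by \eqref{e:defpsi}) and $|f(w)+1|\gtrsim1$ on $\mathscr D$ by \eqref{e:fbound}. For the first term, Taylor-expanding $\cA^{(\bmv,\bms)}(w+1/n)/\cA^{(\bmv,\bms)}(w)$, $\cB(w+1/n)/\cB(w)$, $\tilde g(w)/\tilde g(w+1/n)$, and $e^{\psi(w)/n}$ in $1/n$, and using the a priori bounds \eqref{e:ap1}--\eqref{e:ap2} to control $\del_w\log\cA^{(\bmv,\bms)}$ via a Cauchy estimate on a contour at distance $\gtrsim\eta$ (so each logarithmic derivative is $\OO(1/\eta)$ there), together with \Cref{l:fbbound} for $\del_w\log\cB$, $\del_w\log\tilde g$ being $\OO(1/\eta)$ resp.\ $\OO(1)$, one obtains $|\cE_0^{(\bmv,\bms)}(w)|\lesssim1/n\eta$. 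This gives \eqref{e:cebb}.

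Next I would prove the first displayed identity in \eqref{e:firstoda}. Since $\cC^{(\bmv,\bms)}$ is analytic away from the points $v_j-(s_j+1)/2n$, which lie within $\OO(1/n)$ of $[\fa',\fb']$, and since $\log\cB(z)$ has a well-defined branch off $[\fa,\fb]$ (the analogue of \eqref{e:aB0}), taking $\del_z\log$ of both sides of \eqref{e:ABCprecise} and then doing a contour integral against $1/(w-z)^2$ over a contour $\omega$ enclosing $[\fa',\fb']$ but neither $z$ nor the $v_j$ (and lying inside $\{w:\dist(w,[\fa',\fb'])\ge(1-1/K)\eta\}$ at distance $\ge\eta/2K$ from $z$ and all $v_j$) isolates $\del_z\log\cA^{(\bmv,\bms)}(z)$ as in \eqref{e:firstod}. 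The analyticity of $\log\cC^{(\bmv,\bms)}$ inside $\omega$ (modulo the controlled contributions, which are the ones producing $b(z)$) is what makes the residue computation work; this is exactly the mechanism already used around \eqref{e:dmtta} and in \Cref{p:firstod}. Then $|\cE^{(\bmv,\bms)}(w)|\lesssim1/n\eta$ on $\omega$ from part (1) gives $\log(1+\cE^{(\bmv,\bms)}(w))=\OO(1/n\eta)$, and since $|w-z|\gtrsim\eta/2K$ on $\omega$, the second contour integral is $\OO(1/n\eta^2)$, yielding the second line of \eqref{e:firstoda}.

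Finally, for the improved estimates \eqref{e:toap1}--\eqref{e:toap2}: integrate the first line of \eqref{e:firstoda} from $z=\infty$ inward (all along paths staying at distance $\gtrsim(1-1/2K)\eta$ from $[\fa',\fb']$ and $\ge\eta/2K$ from the $v_j$) to recover $\log\cA^{(\bmv,\bms)}(z)$ up to the constant $\cA^{(\bmv,\bms)}(\infty)$; the $\log\cB$ term integrates to $b(z)$ (using $b(\infty)=0$), and the $\cE$ term integrates to $\OO(1/n\eta)$. To pin down $\cA^{(\bmv,\bms)}(\infty)$, use the analogous relation at $z=\infty$: sending $z\to\infty$ in \eqref{e:ABCprecise}, comparing with $\cA^{(\bmv,\bms)}(\infty)$ and $\prod_j\cG(v_j)^{s_j}$, and using the factorization structure of $\psi^{(\bmv,\bms)}$, one gets a self-consistent equation forcing $\cA^{(\bmv,\bms)}(\infty)e^{-\sum s_j(b(v_j)+\log\cG(v_j))}=1+\OO(1/n\eta)$, which is \eqref{e:toap2}; plugging back gives \eqref{e:toap1}. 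One must be slightly careful that the $v_j$ themselves may have crept into the region $\dist(\cdot,[\fa',\fb'])\in[(1-1/K)\eta,\eta)$ — here the deformed-weight trick means $\psi^{(\bmv,\bms)}$ is still only $\OO(1/n\eta)$-close to $1$ at the relevant evaluation points, and the choice of $K=K(R,\fC)$ large (depending on the number $R$ of spectral parameters and the a priori constant $\fC$) absorbs the finitely many geometric factors.

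\textbf{Main obstacle.} The delicate point is the bookkeeping in part (1): showing $\cE_0^{(\bmv,\bms)}(w)=\OO(1/n\eta)$ requires that the logarithmic derivative $\del_w\log\cA^{(\bmv,\bms)}$ be controlled at scale $1/\eta$ uniformly, which in turn relies on the a priori two-sided bounds \eqref{e:ap1}--\eqref{e:ap2} holding on a slightly larger contour than where we want the conclusion — this is precisely why one loses a factor $(1-1/K)$ versus $(1-1/2K)$ at each step, and why the argument must be set up as a finite iteration rather than a single estimate. Keeping track of which $v_j$ are "close" versus "far" (so that $\psi^{(\bmv,\bms)}$ and its contribution to $\cC^{(\bmv,\bms)}$'s pole structure are handled correctly via Remark \ref{r:polesloc}) is the part where the constants genuinely depend on $R$.
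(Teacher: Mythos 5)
Your overall scheme tracks the paper's: bound the error $\cE^{(\bmv,\bms)}$ by controlling logarithmic derivatives of $\cA^{(\bmv,\bms)}$, then use analyticity of $\cC^{(\bmv,\bms)}$ from the loop equation plus a contour deformation to extract $\del_z\log\cA^{(\bmv,\bms)}$ and finally integrate back. However, there is a genuine gap in the central step of part (1). You propose to deduce $|\del_w\log\cA^{(\bmv,\bms)}|\lesssim 1/\eta$ "via a Cauchy estimate" from the a priori bounds \eqref{e:ap1}--\eqref{e:ap2}. Those hypotheses only control the \emph{modulus} of $\cA^{(\bmv,\bms)}e^{-b-\sum s_j(b(v_j)+\log\cG(v_j))}$, i.e.\ only $\Re\bigl[\log\cA^{(\bmv,\bms)}-b(z)-\sum s_j(\cdot)\bigr]$, not the full logarithm: the imaginary part (winding) is \emph{not} a priori bounded, and a plain Cauchy integral over a contour where you only know $|\cdot|\in[\fC^{-1},\fC]$ does not give a bound on the derivative of the logarithm. (A Borel--Carath\'eodory argument could in principle be used here, since $\cA^{(\bmv,\bms)}\ne 0$ on the a priori region, but that is a different and more delicate tool than a Cauchy estimate and would need to be justified.) The paper instead proves the derivative bound through a dedicated intermediate step (\Cref{p:continuity}), which does not use a Cauchy estimate at all: in the case \eqref{e:term2a} it exploits the fact that the deformed weights $a_\bme^{(\bmv,\bms)}$ are real and nonnegative, so that the residue decomposition \eqref{e:Aexp} gives $|\del_z\cA^{(\bmv,\bms)}|\lesssim\Im[\cA^{(\bmv,\bms)}]/\Im z$ directly (and a separate elementary argument handles $z$ near the real axis but outside $[\fa',\fb']$); the general case \eqref{e:term1} is then reduced to \eqref{e:term2a} via a Cauchy--Schwarz decomposition, with the a priori bounds controlling the denominators. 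Without some such replacement, your estimate of $\cE_0^{(\bmv,\bms)}$ --- and hence all of part (1) and the $\OO(1/n\eta^2)$ term in \eqref{e:firstoda} --- is not established.

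A second, more minor, gap is in part (2): to write $\del_z\log\cC^{(\bmv,\bms)}$ as an analytic function and carry out the residue calculation, one must first show that $\cC^{(\bmv,\bms)}$ is zero-free in the relevant region. You gesture at "the analyticity of $\log\cC^{(\bmv,\bms)}$ inside $\omega$", but you do not explain how this is established. The paper proves it by an explicit argument-principle count: writing $\del_w\log\cC=\del_w\log\cA+\del_w\log\cB+\del_w\log(1+\cE)$, noting $\oint\del_w\log\cA=\oint\del_w\log\cB=0$ (the first because $\cA^{(\bmv,\bms)}$ has equal numbers of zeros and poles inside, the second by \eqref{e:aB0}), and then using the part-(1) bound $|\cE^{(\bmv,\bms)}(w)|\lesssim 1/n\eta$ on the contour to see that $\oint\del_w\log(1+\cE)$ --- which equals the integer number of zeros of $\cC^{(\bmv,\bms)}$ inside --- must be $0$. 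This step is not merely cosmetic; the whole extraction of $\del_z\log\cA^{(\bmv,\bms)}$ hinges on it. Finally, your description of how \eqref{e:toap2} is pinned down ("a self-consistent equation forcing $\cdots$") is vaguer than the paper's treatment, which uses an induction on $r$ (adding one $v_j$ at a time, exactly parallel to the proof of \Cref{c:boost0}), but this part of your plan is at least a correct sketch.
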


Given above propositions, we can quickly establish \Cref{p:firstod}.

\begin{proof}[Proof of \Cref{p:firstod}]
	
	It follows from \Cref{c:boost0} and the definitions \eqref{e:B} and \eqref{e:defbz} that 
	\begin{flalign*}
		\big| \mathcal{A}^{(\boldsymbol{v}, \boldsymbol{s})} (z) \big|, \big| \mathcal{A}^{(\boldsymbol{v}, \boldsymbol{s})} (\infty) \big| \asymp 1; \qquad \displaystyle\max_{1\leq j\leq r} \Big\{ \big| b(v_j) \big|, \big| \log \mathcal{G} (v_j) \big| \Big\} = \mathcal{O} \left(1\right),
	\end{flalign*}
	
	\noindent whenever $\dist \big( \{ z, v_1, v_2, \ldots , v_r \}, [\fa', \fb'] \big) \geq 2\fr$.
	In particular, if we take $\eta_0=2\fr$, then there exists a constant $\mathfrak{C} > 2$ such that \eqref{e:ap1} holds if $\dist \big( \{ z, v_1, v_2, \ldots , v_r \}, [\fa',\fb'] \big) \geq \eta_0$. Letting $K = K (R, \mathfrak{C}) > 1$ denote the constant guaranteed by \Cref{c:boost}, define the sequence $\eta_0, \eta_1, \ldots $ by 
	\begin{align}
		\eta_{i+1}= \bigg( 1- \displaystyle\frac{1}{2K} \bigg) \eta_i,\quad i\geq 0.
	\end{align}
	
	We show by induction on $i$ that the proposition holds if $\eta = \eta_i$ whenever $\eta_i \geq n^{\fc-1}$ (meaning $i = \mathcal{O} (\log n)$). To that end, recall from the above that \Cref{c:boost} applies for $\eta = \eta_0$. Its second statement yields \eqref{e:firstod}, and its first statement yields \eqref{e:errorbb}, where the $\eta$ there is equal to $(1 - 1/2K) \eta_0 = \eta_1$ here. This verifies the proposition if $\eta = \eta_1$.
	
	Moreover, the second statement of \Cref{c:boost} implies, for $(\bmv, \bms)$ as in \eqref{e:term1} or \eqref{e:term2a}, that we have the improved bounds
	\begin{align}\begin{split}\label{e:toap2b}
			&\cA^{(\bmv, \bms)}(\infty )e^{-\sum_{j=1}^r s_j (b(v_j)+\cG(v_j))}=1+\OO\left(\frac{1}{n\eta}\right),\\
			&\cA^{(\bmv, \bms)}(z)e^{-b(z)-\sum_{j=1}^r s_j (b(v_j)+\log \cG(v_j)))}=1+\OO\left(\frac{1}{n\eta}\right),
	\end{split}\end{align}
	
	\noindent whenever $\dist \big( \{z,v_1,v_2,\cdots, v_r\}, [\fa',\fb'] \big)\geq (1-1/2K)\eta=\eta_1$. In particular, this gives \eqref{e:ap1} and \eqref{e:ap2} if $\eta=\eta_1$ (with the $\mathfrak{C}$ there equal to the $\mathfrak{C} > 2$ here). Through the above argument, we see that if \eqref{e:ap1} and \eqref{e:ap2} hold for $\eta=\eta_i$, then \Cref{c:boost} implies that \eqref{e:firstod}, \eqref{e:errorbb}, \eqref{e:ap1}, and \eqref{e:ap2} all hold for $\eta=\eta_{i+1}$. After repeating this $\OO(\log n)$ times, we conclude that \eqref{e:firstod} and \eqref{e:errorbb} hold for any $z, v_1, v_2,\cdots, v_r \in \mathscr{D} \cup \mathscr{D} (\mathfrak{r})$. This finishes the proof of \Cref{p:firstod}.
\end{proof}

\begin{proof}[Proof of \Cref{c:boost0}]
For $w\in\mathscr D$, we can rewrite the decomposition \eqref{e:ABCprecise} as
\begin{align}\label{e:Cvs}
\cC^{(\bmv,\bms)}(w)=\cA^{(\bmv,\bms)}(w)\cB(w)+\cA^{(\bmv,\bms)}(w)\cB(w)\cE^{(\bmv,\bms)}(w),
\end{align}
where 
\begin{align}\begin{split}\label{e:error}
&\cA^{(\bmv,\bms)}(w)\cB(w)\cE^{(\bmv,\bms)}(w)
=\cA^{(\bmv,\bms)} (w) \varphi^-(w)(\psi^{(\bmv,\bms)}(w)-1)\\
&+\cG(w+1/n)\varphi^+(w+1/n)
		\cA^{(\bmv,\bms)}(w+1/n) \frac{\tilde g(w)}{\tilde g(w+1/n)} e^{\frac{1}{n}\psi(w)}
		-\cG(w)\varphi^+(w)
		\cA^{(\bmv,\bms)}(w).
\end{split}\end{align}
For $w\in\mathscr D$ bounded away from $[\fa',\fb']$ and $\{v_1,v_2,\cdots, v_r\}$, namely $\dist(w, [\fa',\fb']\cup \{v_1,v_2,\cdots, v_r\})\geq \fr/2$, we have $|\cA^{(\bmv,\bms)}(w)|, |\cB(w)|\lesssim 1$, $|\psi^{(\bmv,\bms)}(w)-1|\lesssim 1/n$ (recall from \eqref{e:defpsi}), and  $|\del_w \log \cG(w)|\lesssim 1$ (from \Cref{l:fbbound}). Moreover,  from the definition \eqref{e:defA}, the difference $|\cA^{(\bmv,\bms)}(w+1/n) -\cA^{(\bmv,\bms)}(w)|$ is bounded as
\begin{align}
 \sum_{\bme}|a_\bme^{(\bmv,\bms)}|\left|\prod_{i = 1}^m \frac{w-x_i-e_i/n}{w-x_i}\right|\left|\prod_{i = 1}^m \frac{w+1/n-x_i-e_i/n}{w+1/n-x_i} \frac{w-x_i}{w-x_i-e_i/n}-1\right|
 \lesssim 1/n.
\end{align}
where the implicit constant depends on $\fr$.
The discussions above imply that the error term \eqref{e:error} is bounded by $\OO(1/n)$.

We recall the decomposition of $\log \cB(w)$ from \eqref{e:defbz}, and  multiply $\exp(-b^+(w))$ on both sides of \eqref{e:Cvs}, 
\begin{align}\label{e:ABCorg}
\cC^{(\bmv,\bms)}(w)e^{-b^+(w)}=\cA^{(\bmv,\bms)}(w)e^{-b(w)}+\OO(1/n),
\end{align}
where we used $|b^+(w)|=\OO(1)$ for $\dist(w, [\fa',\fb'])\leq 2\fr$. Since the lefthand side of \eqref{e:ABCorg} is analytic for $\dist(w, [\fa',\fb'])\leq 2\fr$, we can use a contour integral to get rid of it. For any $z$ with $\dist(z, [\fa',\fb'])\geq 2\fr$, we have
\begin{align}\begin{split}\label{e:getridC}
0&=\frac{1}{2\pi\ri}\oint_{\omega}\frac{\cC^{(\bmv,\bms)}(w)e^{-b^+(w)}\rd w}{w-z}=\frac{1}{2\pi\ri}\oint_{\omega}\frac{(\cA^{(\bmv,\bms)}(w)e^{-b(w)}+\OO(1/n))\rd w}{w-z}\\
&=\frac{1}{2\pi\ri}\oint_{\omega}\frac{\cA^{(\bmv,\bms)}(w)e^{-b(w)}\rd w}{w-z}+\OO(1/n)
=\frac{1}{2\pi\ri}\oint_{\omega_+}\frac{\cA^{(\bmv,\bms)}(w)e^{-b(w)}\rd w}{w-z}-\cA^{(\bmv,\bms)}(z)e^{-b(z)}+\OO(1/n)
\end{split}\end{align}
where the contour $\omega$ encloses $[\fa',\fb']$ but not $\{z,v_1,\cdots, v_r\}$, and the contour $\omega_+$ encloses $[\fa',\fb']$ and $z$. For the last equality, we used that $\cA^{(\bmv,\bms)}(w)$ and $b(w)$ are analytic outside the contour $\omega$. 

As $w$ approaches $\infty$, we have $ \cA^{(\bmv,\bms)}(w)= \cA^{(\bmv,\bms)}(\infty)+\OO(1/w)$ and $b(w)=\OO(1/w)$. We can further deform the contour $\omega+$ to infinity, and the first term on the righthand side of \eqref{e:getridC} can be computed
\begin{align}\label{e:getridC2}
\frac{1}{2\pi\ri}\oint_{\omega_+}\frac{\cA^{(\bmv,\bms)}(w)e^{-b(w)}\rd w}{w-z}=\frac{1}{2\pi\ri}\oint_{\omega_+}\frac{(\cA^{(\bmv,\bms)}(\infty)+\OO(1/w)) \rd w}{w-z}=\cA^{(\bmv,\bms)}(\infty).
\end{align}
It follows by plugging \eqref{e:getridC2} into \eqref{e:getridC}, we get for any  $\dist (z, [\fa',\fb'] ) \geq 2\fr$,
\begin{align}\label{e:yao00}
 \cA^{(\bmv,\bms)}(z)e^{-b(z)}= \cA^{(\bmv,\bms)}(\infty)+\OO(1/n).
 \end{align}

Thanks to the relation \eqref{e:yao00}, \eqref{e:cAbb1} implies \eqref{e:cAbb0}. Thus we only need to prove \eqref{e:cAbb1}. We prove \eqref{e:cAbb1} by induction on $r$. If $r=0$, then \eqref{e:cAbb1} holds trivially, since $\cA(\infty)=1$. We assume \eqref{e:cAbb1} holds for $r$ and prove it for $r+1$.
	If  $s_j=-1$ for all $1\leq j\leq r+1$, let $\hat\bmv=(v_1, v_2,\cdots, v_r)$ and $\hat\bms=(s_1, s_2,\cdots, s_r)$, then \eqref{ga} and \eqref{e:yao00} together give
	\begin{align*}\begin{split}
			\log \cA^{(\hat\bmv, \hat\bms)} (\infty ) -\log \cG(v_{r+1})- \log \cA^{(\bmv, \bms)}(\infty)  = \log \cA^{(\bmv, \bms)} (v_{r+1} )- \log \cA^{(\bmv, \bms)}(\infty) =b(v_{r+1})+\OO(1/n).
	\end{split}\end{align*}
	Thus it follows by rearranging that we have
	\begin{align*}\begin{split}
			\log \cA^{(\bmv, \bms)}(\infty) &=\log \cA^{(\hat\bmv, \hat\bms)} (\infty ) -\log \cG(v_{r+1})-b(v_{r+1})+\OO(1/n)\\
			&=\log \cA^{(\hat\bmv, \hat\bms)} (\infty ) +s_{r+1} \big( \log \cG(v_{r+1})+b(v_{r+1}) \big)+\OO(1/n)\\
			&=\sum_{j=1}^{r+1} s_j \big( \log \cG(v_{j})+b(v_{j}) \big)+\OO(1/n),
	\end{split}\end{align*}
	where we used the induction hypothesis in the last step. 
	
	Next suppose some $s_j=1$; we may assume (after permuting the $s_j$ if necessary) that $s_{r+1}=1$. Letting $\hat\bmv=(v_1, v_2,\cdots, v_r)$ and $\hat\bms=(s_1, s_2,\cdots, s_r)$, we have by similar reasoning that 
	\begin{align*}\begin{split}
			\log \cA^{(\hat \bmv, \hat \bms)} (v_{r+1} )- \log \cA^{(\hat \bmv, \hat\bms)}(\infty)
			&= \log \cA^{( \bmv,  \bms)} (\infty )-\log \cG(v_{r+1})- \log \cA^{(\hat \bmv, \hat\bms)} (\infty) =b(v_{r+1})+\OO(1/n).
	\end{split}\end{align*}
	
	\noindent By rearranging and the induction hypothesis, it again follows that  
	\begin{align*}
		\log \cA^{(\bmv, \bms)}(\infty)=\sum_{j=1}^{r+1} s_{j} \big( \log \cG(v_{j})+b(v_{j}) \big)+\OO(1/n).
	\end{align*}
	
	\noindent Hence, \eqref{e:cAbb1} holds for any $(\bmv, \bms)$ in \eqref{e:term1} or \eqref{e:term2a}. This finishes the proof of \Cref{c:boost0}.

\end{proof}

\subsection{Proof of \Cref{c:boost}}

\label{ProofEquation}
Before proving \Cref{c:boost0} and \Cref{c:boost}, we first show the following proposition, which gives estimates on the derivatives of $\log \cA^{(\bmv,\bms)}(z)$.
\begin{prop}\label{p:continuity}
	Adopt the notation and assumptions of \Cref{c:boost}. There is a universal constant $K = K (R, \mathfrak{C}) > 1$ (independent of $\eta$) such that the following holds. Let $z,v_1,v_2,\cdots, v_r\in \mathscr D\cup \mathscr D(\fr)$ be complex numbers such that  $\dist \big( \{z,v_1,v_2,\cdots, v_r\}, [\fa',\fb'] \big) \geq (1-1/K)\eta$. If $(\bmv, \bms)$ is as in \eqref{e:term1} then
	\begin{align}\label{e:derbb1}
		\big| \del_z \log\cA^{(\bmv, \bms)}(z) \big|, \big|\del_{v_j} \log\cA^{(\bmv, \bms)}(z) \big|,
		\big| \del_{v_j} \log\cA^{(\bmv, \bms)}(\infty) \big| \lesssim 1/\eta,\quad 1\leq j\leq r,
	\end{align}
	and if $(\bmv, \bms)$ is as in \eqref{e:term2a} then
	\begin{align}\label{e:derbb2}
		\big| \del_z \log\cA^{(\bmv, \bms)}(z) \big|, \big| \del_{v_j} \log\cA^{(\bmv, \bms)}(\infty)\big| \lesssim 1/\eta,\quad 1\leq j\leq r,
	\end{align}
	
	\noindent where the implicit constants depend on $R$ and $\mathfrak{C}$ (but not $K$). In particular, for $\dist \big( \{z,v_1,v_2,\cdots, v_r\}, [\fa',\fb'] \big)\geq (1-1/K)\eta$, we have the following. If $(\bmv, \bms)$ is as in \eqref{e:term1}, then
	\begin{align}\begin{split}
			\label{a1} 
			& (2 \fC)^{-1} \leq \Big| \cA^{(\bmv, \bms)}(\infty)e^{-\sum_{j=1}^r s_j (b(v_j)+\log \cG(v_j))} \Big| \leq 2 \mathfrak{C}; \\
			&  (2\mathfrak{C})^{-1} \leq \Big|\cA^{(\bmv, \bms)}(z)e^{-b(z)-\sum_{j=1}^r s_j (b(v_j)+\log \cG(v_j))} \Big|\leq 2 \fC,
	\end{split}\end{align}
	and if $(\bmv, \bms)$ is as in \eqref{e:term2a} then
	\begin{align}
		\label{a2}
		(2\fC)^{-1} \leq \Big|\cA^{(\bmv, \bms)}(\infty)e^{-\sum_{j=1}^r s_j (b(v_j)+\log \cG(v_j))}\Big| \leq 2 \fC.
	\end{align}
	
\end{prop}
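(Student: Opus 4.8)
The strategy is to derive \eqref{e:derbb1}--\eqref{e:derbb2} from the decomposition \eqref{e:ABCprecise}, using the \emph{a priori} hypotheses \eqref{e:ap1}--\eqref{e:ap2} as input and running a contour-integral/bootstrap argument entirely on the region where $z,v_1,\dots,v_r$ stay at distance $\gtrsim \eta$ from $[\fa',\fb']$. Once the derivative bounds are in hand, \eqref{a1}--\eqref{a2} follow by integrating: starting from a base point where $\dist(\cdot,[\fa',\fb'])\geq 2\fr$, where \Cref{c:boost0} already gives the ratios equal to $1+\OO(1/n)$ (in particular within the bands $\fC^{-1},\fC$), we move $z$ (and then each $v_j$) along a path of length $\OO(\fr)$ staying in $\{\dist(\cdot,[\fa',\fb'])\geq (1-1/K)\eta\}$, picking up a total variation $\OO(\fr/\eta)$ in $\log$. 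That alone is too large, so instead the point is that \eqref{e:derbb1} combined with the known value of $b(z),b(v_j),\log\cG(v_j)$ (whose derivatives are $\OO(1/\dist(\cdot,[\fa',\fb']))=\OO(1/\eta)$ by \Cref{l:fbbound}) shows $\del_z\log\big(\cA^{(\bmv,\bms)}(z)e^{-b(z)-\sum_j s_j(b(v_j)+\log\cG(v_j))}\big)$ is not merely $\OO(1/\eta)$ but in fact the error term one extracts from \eqref{e:firstoda}-type manipulations; integrating \emph{that} smaller quantity from the base point keeps the ratio inside $[(2\fC)^{-1},2\fC]$ provided $K$ is large enough in terms of $R,\fC$. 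Concretely I would fix the doubling of the constant (from $\fC$ to $2\fC$) and then choose $K$ so that the accumulated error over a path from the base region is below $\log 2$.

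\textbf{Key steps, in order.}
First, I would establish a Cauchy-integral representation for $\del_z\log\cA^{(\bmv,\bms)}(z)$ analogous to \eqref{e:firstod}: since $\cC^{(\bmv,\bms)}(z)$ in \eqref{e:ABC2} is analytic in $\Lambda\setminus\{v_j-(s_j+1)/2n\}$, and \eqref{e:ABCprecise} reads $\cC^{(\bmv,\bms)}=\cA^{(\bmv,\bms)}\cB(1+\cE^{(\bmv,\bms)})$, taking $\del_z\log$ and contour-integrating against $1/(w-z)$ over a contour $\omega$ enclosing $[\fa',\fb']$ but avoiding $z$ and all $v_j$ removes the analytic piece $\del_z\log\cC^{(\bmv,\bms)}$, leaving $\del_z\log\cA^{(\bmv,\bms)}(z)=\del_z b(z)+\frac{1}{2\pi\ri}\oint_\omega \log(1+\cE^{(\bmv,\bms)}(w))\,\rd w/(w-z)^2$. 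Second, I bound $\cE^{(\bmv,\bms)}(w)$ on such a contour: from \eqref{e:defcE}, $\cE^{(\bmv,\bms)}_0(w)$ is a discrete $1/n$-difference of $\log$ of $f/(f+1)\cdot\cA\cB\tilde g^{-1}$-type quantities, each of whose $z$-derivative is $\OO(1/\eta)$ by \Cref{l:fbbound} together with the a priori bound \eqref{e:ap1} on $\cA^{(\bmv,\bms)}$ (which lets me also control $\cA^{(\bmv,\bms)}(w+1/n)/\cA^{(\bmv,\bms)}(w)=1+\OO(1/n\eta)$ via the same representation applied recursively, or more simply via the mean-value/derivative bound), so $\cE_0^{(\bmv,\bms)}(w)=\OO(1/n\eta)$; the term $(\psi^{(\bmv,\bms)}(w)-1)/(f(w)+1)$ is $\OO(1/n\,\dist(w,\{v_j\}))=\OO(K/n\eta)$ since $w$ stays at distance $\gtrsim\eta/2K$ from the $v_j$ on the chosen contour, and $|f(w)+1|\gtrsim 1$ by \eqref{e:fbound}. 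This gives \eqref{e:cebb} with the stated $K$-dependent constant, hence \eqref{e:errorbb}-type control, hence the $\OO(1/n\eta^2)$ bound in \eqref{e:firstoda} by pulling the contour to distance $\asymp\eta$ from $z$. Third, the derivative bound $|\del_z\log\cA^{(\bmv,\bms)}(z)|\lesssim 1/\eta$ in \eqref{e:derbb1} follows from $|\del_z b(z)|\lesssim 1/\eta$ (\Cref{l:fbbound}) and $|\frac{1}{2\pi\ri}\oint_\omega\log(1+\cE)\,\rd w/(w-z)^2|\lesssim |\omega|\cdot \OO(1/n\eta)\cdot \eta^{-2}\lesssim 1/(n\eta^2)\ll 1/\eta$. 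The $v_j$-derivatives are handled the same way after noting that differentiating in $v_j$ only changes $\psi^{(\bmv,\bms)}$ and hence multiplies the $\cE$-error by an extra $\OO(1/\dist(\cdot,v_j))=\OO(K/\eta)$ without changing its $1/n$ smallness; for $\cA^{(\bmv,\bms)}(\infty)$ one uses the $z\to\infty$ limit of \eqref{ga} together with the finite-$z$ bounds. Fourth and last, I integrate these derivative bounds from the base region $\dist(\cdot,[\fa',\fb'])\geq 2\fr$ (where \Cref{c:boost0} gives the ratios $=1+\OO(1/n)$) along paths confined to $\{\dist(\cdot,[\fa',\fb'])\geq(1-1/K)\eta\}$, using that the \emph{centered} derivative $\del\log\big(\cA^{(\bmv,\bms)}e^{-b-\sum s_j(b(v_j)+\log\cG(v_j))}\big)$ is the contour-integral error term $\OO(1/n\eta^2)$ (not $\OO(1/\eta)$), so the accumulated deviation over a length-$\OO(\fr)$ path is $\OO(\fr/n\eta^2)=\oo(1)$, keeping the exponential inside $[(2\fC)^{-1},2\fC]$; this is where $K$ gets pinned down, ensuring the path can be chosen legitimately inside the enlarged region and the contours $\omega$ exist with the stated separation properties.

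\textbf{Main obstacle.}
The delicate point is the circularity in controlling $\cE^{(\bmv,\bms)}_0(w)$: its definition \eqref{e:defcE} involves the ratio $\cA^{(\bmv,\bms)}(w+1/n)/\cA^{(\bmv,\bms)}(w)$, so bounding $\cE$ needs a priori control of $\cA$ on a slightly shifted and slightly enlarged region, while the sharp control of $\cA$ is exactly what the contour integral of $\log(1+\cE)$ is supposed to deliver. The resolution — and the reason the proposition is phrased as a one-step improvement from scale $\eta$ to scale $(1-1/2K)\eta$ with a fixed large $K$ — is that the a priori hypotheses \eqref{e:ap1}--\eqref{e:ap2} at scale $\eta$ already pin $\cA^{(\bmv,\bms)}$ to within a multiplicative constant $\fC$ on $\{\dist\geq\eta\}$, which is enough to get $\cE=\OO(1/n\eta)$ on a contour at distance $\asymp(1-1/K)\eta$, which in turn upgrades $\cA$ to $1+\OO(1/n\eta)$ on $\{\dist\geq(1-1/2K)\eta\}$; one must only check that the loss of region ($1/K$ and $1/2K$ shrinkages) and the constant-doubling ($\fC\mapsto 2\fC$) close up consistently, which fixes $K=K(R,\fC)$. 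A secondary bookkeeping nuisance is that the contour $\omega$ must simultaneously enclose $[\fa',\fb']$, avoid $z$ and all $r\leq 2R$ points $v_j$, stay in $\mathscr D$, and keep distance $\geq\eta/2K$ from $\{z,v_1,\dots,v_r\}$; existence of such a contour is where the hypothesis that $\mathscr D$ contains the annulus $\{\fr\leq\dist(\cdot,[\fa',\fb'])\leq 2\fr\}$ and the assumption $\eta\leq 2\fr$ are used, and one picks $K$ large enough (depending on $R$) that $2R$ excluded balls of radius $\eta/2K$ cannot block a circle of radius $\asymp(1-1/K)\eta$.
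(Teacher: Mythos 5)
Your proposal runs into a genuine circularity that the paper is specifically designed to avoid, and it therefore does not close. The key technical difficulty is that Proposition~\ref{p:continuity} is exactly the lemma that lets \Cref{c:boost} get started: the bound $|\cE^{(\bmv,\bms)}_0(w)|\lesssim 1/n\eta$ (which your plan needs on the contour $\omega$) requires showing $\cA^{(\bmv,\bms)}(w+1/n)/\cA^{(\bmv,\bms)}(w)=1+\OO(1/n\eta)$. You propose to get this ``via the mean-value/derivative bound,'' i.e.\ from $|\del_w\log\cA^{(\bmv,\bms)}(w)|\lesssim 1/\eta$ --- but that is literally the conclusion \eqref{e:derbb1} we are trying to prove. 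The a priori hypotheses \eqref{e:ap1}--\eqref{e:ap2} only pin $\cA^{(\bmv,\bms)}$ up to a \emph{bounded multiplicative factor} $\fC$, so alone they give $\cA^{(\bmv,\bms)}(w+1/n)/\cA^{(\bmv,\bms)}(w)=\OO(1)$, not $1+\OO(1/n\eta)$. You acknowledge the circularity in your ``main obstacle'' paragraph, but the proposed resolution --- that the a priori bound at scale $\eta$ is ``enough'' --- is not enough: a Cauchy/Borel--Carath\'eodory-type estimate from the order-one bound would need the bound on a disk of radius $\asymp\eta$, which leaves the a priori domain $\{\dist\geq\eta\}$ as soon as $z$ has $\dist(z,[\fa',\fb'])\lesssim\eta$.

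There is a second, compounding gap in the final step: you propose to integrate from a base point at $\dist\geq 2\fr$ along a path of length $\OO(\fr)$. With the derivative bound $\lesssim 1/\eta$, the accumulated variation is $\OO(\fr/\eta)$, which is large; your fix is to use the finer ``centered derivative'' bound $\OO(1/n\eta^2)$, but that bound comes from \eqref{e:firstoda}, which again presupposes the $\cE$-control, hence the derivative bound.

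What actually breaks the circle in the paper is a direct, loop-equation-free argument. For $(\bmv,\bms)$ as in \eqref{e:term2a} the deformed weights $a_\bme^{(\bmv,\bms)}$ are real and positive; the residue expansion \eqref{e:Aexp} then makes $\cA^{(\bmv,\bms)}(z)$ a positive combination of Stieltjes-type rational functions with nonpositive residues, and the identities \eqref{e:derzb}--\eqref{e:dervlnA} (or \eqref{e:derzb2}, \eqref{e:lowRe}, \eqref{e:norm}, \eqref{e:arg} when $\Im z$ is small) yield $|\del_z\log\cA^{(\bmv,\bms)}(z)|\lesssim 1/\eta$ by pure positivity. For the remaining derivatives one rewrites everything in the form \eqref{e:dervv}, shifts the arguments $v_j$ to nearby points $v_j'$ with $\dist(v_j',[\fa',\fb'])\geq\eta$ using \eqref{e:changez}, paying only a benign factor $e^{\OO(1/K)}$ (not a derivative bound!), and then uses the a priori bound \eqref{e:v'v} together with a Cauchy--Schwarz step \eqref{e:tobbb}--\eqref{e:tobbb1} to reduce the error term to the positive case $\eqref{e:term2a}$. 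Once the derivative bounds are in hand, \eqref{a1}--\eqref{a2} follow by integrating over the \emph{short} path of length $\leq\eta/K$ from $v_j'$ to $v_j$, picking up $\OO(1/K)<\log 2$ for $K$ large. This shifting-plus-positivity-plus-Cauchy--Schwarz device is the missing idea in your proposal.
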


\begin{proof}

	For $(\bmv, \bms)$ as in  \eqref{e:term2a}, the weights $a_{\bme}^{(\bmv,\bms)}$ in \eqref{e:defpsi} are real and positive. We can rewrite $\cA^{(\bmv,\bms)}(z)$ in terms of its residue decomposition
	\begin{align}\label{e:Aexp}
		\prod_{i = 1}^m \frac{z-x_i-e_i/n}{z-x_i}=1+\sum_{i = 1}^m \frac{\Res_{z=x_i}}{z-x_i},
	\end{align}
	
	\noindent where its residue at $x_i$ is given by
	\begin{align*}
		\Res_{z=x_i}
		:=-\frac{e_i}{n}\prod_{j: j\neq i}\frac{x_i-x_j-e_j/n}{x_i-x_j}
		\leq 0,
	\end{align*}
	is nonpositive. 
	We assume $\Im z \geq 0$, as the case $\Im z \leq 0$ is entirely analogous.
	Our assumption that $\dist \big( z,[\fa',\fb'] \big)\geq (1-1/K)\eta$ implies that  either $|\Imaginary z| \geq\min\{(\fr n/m), 1\}\eta/10$ or $|\Imaginary z| \leq\min\{(\fr n/m), 1\}\eta/10$, $\Re z \not\in [\fa',\fb']$ and $\dist \big( \Re z, [\fa',\fb'] \big) \geq\eta / 2$. We begin by considering the first case. Using \eqref{e:Aexp}, we have the  estimate
	\begin{align}\label{e:derzb}
		\left|\del_z\prod_{i = 1}^m \frac{z-x_i-e_i/n}{z-x_i}\right|\lesssim 
		-\sum_{i=1}^m \frac{\Res_{z=x_i}}{|z-x_i|^2}
		=\frac{1}{\Imaginary z}\Im\left[\prod_{i = 1}^m \frac{z-x_i-e_i/n}{z-x_i}\right],
	\end{align}
	
	\noindent and it follows that
	\begin{align}\label{e:dervlnA}
		\left|\del_z \log \cA^{(\bmv,\bms)}(z)\right|=\left|\frac{\del_z \cA^{(\bmv,\bms)}(z) }{\cA^{(\bmv,\bms)}(z) }\right|
		\lesssim \frac{\Im \big[\cA^{(\bmv,\bms)}(z) \big]}{\big| \cA^{(\bmv,\bms)}(z) \big| \Imaginary z}\lesssim \frac{1}{\eta}.
	\end{align}

	For the second case we recall that $[\fa',\fb']=[\fa-\fr, \fb+\fr]$, and $x_i\in [\fa,\fb]$. Thus we have $|z-x_i|\geq |\Re[z]-x_i|\geq \fr+\eta/2\gtrsim 1+\eta$, and
	\begin{align}\label{e:derzb2}
		\left|\del_z\prod_{i = 1}^m\frac{z-x_i-e_i/n}{z-x_i}\right|\lesssim-\sum_{i=1}^m \frac{\Res_{z=x_i}}{|z-x_i|^2}  \lesssim -\frac{1}{(1+\eta)^2}\sum_{i=1}^m \Res_{z=x_i}			 \lesssim \frac{1}{(1+\eta)^2}.
	\end{align}
	where in the last inequality, we used that from \eqref{e:Aexp}, by comparing the coefficient of $1/z$, the total residual is given by $0\leq \sum_{i=1}^m e_i/n\leq m/n\lesssim 1$.
	In the following we show that 
	\begin{align}\label{e:lowRe}
		\Re\left[\prod_{i = 1}^m \frac{z-x_i-e_i/n}{z-x_i}\right]
		\gtrsim 1.
	\end{align}
	Recalling that $a_\bme^{(\bmv,\bms)}$ are real and nonnegative, then \eqref{e:derzb2} and \eqref{e:lowRe} together gives
	\begin{align*}
		\left|\del_z \log \cA^{(\bmv,\bms)}(z)\right|=\left|\frac{\del_z \cA^{(\bmv,\bms)}(z) }{\cA^{(\bmv,\bms)}(z) }\right|
		\lesssim \frac{\sum_{\bme}a_\bme^{(\bmv,\bms)}\left|\del_z\prod_{i = 1}^m\frac{z-x_i-e_i/n}{z-x_i}\right|}{\sum_{\bme}a_\bme^{(\bmv,\bms)} \Re\left[\prod_{i = 1}^m \frac{z-x_i-e_i/n}{z-x_i}\right]}
		\lesssim \frac{1}{(1+\eta)^2}\lesssim \frac{1}{\eta}. 
	\end{align*}
	To prove \eqref{e:lowRe}, we can rewrite  it as,
	\begin{align}\label{e:norm}
		\left|\prod_{i = 1}^m \frac{z-x_i-e_i/n}{z-x_i}\right|
		\geq \prod_{i = 1}^m\left(1-\frac{1}{n|z-x_i|}\right)\geq \prod_{i = 1}^m\left(1-\frac{1}{n\fr}\right)\gtrsim 1.
	\end{align}
	and its argument is bounded by 
	\begin{align}\begin{split}\label{e:arg}
			0&\leq \sum_{i=1}^m \arg  \frac{z-x_i-e_i/n}{z-x_i}
			=\sum_{i=1}^m \Im\left[\log \frac{z-x_i-e_i/n}{z-x_i}\right]\\
			&=\sum_{i=1}^m \Im\left[-\frac{e_i}{n(z-x_i)}+\OO\left(\sum_{i=1}^n \frac{1}{n^2|z-x_i|^2}\right)\right]\leq \sum_{i=1}^m \frac{\Im[z]}{n|z-x_i|^2}+\OO\left(\frac{1}{n}\right)\\
			&\leq \frac{m}{n}\frac{\Im[z]}{\fr \dist(\Re[z], [\fa', \fb'])}+\OO\left(\frac{1}{n}\right)
			\leq 
			\frac{\min\{(\fr n/m), 1\}\eta/10}{(\fr n/m)(\eta/2)}+\OO\left(\frac{1}{n}\right)\leq 1/2,
	\end{split}\end{align}
	where we used $|z-x_i|\geq \fr$ and $|z-x_i|\geq \dist(\Re[z], [\fa',\fb'])$.
	The claim \eqref{e:lowRe} follows from combining \eqref{e:norm} and \eqref{e:arg}. Thus in both cases we have $\left|\del_z \log \cA^{(\bmv,\bms)}(z)\right|\lesssim 1/\eta$.
		
	For $\del_{v_j}\log \cA^{(\bmv,\bms)}(\infty)$ and \eqref{e:derbb1}, we recall the expression of $\cA^{(\bmv,\bms)}(z)$ from \eqref{ga}. Then $\del_{v_j}\log \cA^{(\bmv,\bms)}(\infty)$ in \eqref{e:derbb2}, or $\del_z \log \cA^{(\bmv,\bms)}(z),\del_{v_j} \log \cA^{(\bmv,\bms)}(z), \del_{v_j}\log \cA^{(\bmv,\bms)}(\infty)$  as in \eqref{e:derbb1} are all of the following form
	\begin{align}\label{e:dervv}
		\del_{v_k}\log \bE_\bQ\left[\prod_{j=1}^r\left(\prod_{i=1}^m\frac{v_j-x_i-e_i/n}{ v_j-x_i}\right)^{s_j}
		\right],\quad 0\leq r\leq 2R, \quad  s_j=\pm 1,\quad  v_j\in\mathscr D\cup \mathscr D(\fr),
	\end{align}
	and $\dist \big(  v_j, [\fa',\fb'] \big) \geq (1-1/K)\eta$,  with possibly an extra $\del_{v_k}\log \cG( v_k)$ term. Thanks to \Cref{l:fbbound}, the derivative $|\del_{v_k}\log \cG( v_k)|\lesssim 1/\eta$. In the following we show that \eqref{e:dervv} is also bounded by $1/\eta$.

	For complex numbers $\dist \big( \{v_1,v_2,\cdots, v_r\}, [\fa',\fb'] \big)\geq (1-1/K)\eta$, we take $ v_j' \in \mathscr{D} \cup \mathscr{D} (\mathfrak{r})$ such that  $\dist \big( v'_j, [\fa',\fb'] \big)\geq \eta$, with $ |v_j'-v_j|\leq \eta/K$ for $1\leq j\leq r$. Then,  we have for any $1\leq j\leq r$,
	\begin{flalign}\label{e:changez}
		\frac{\prod_{i = 1}^m \frac{v_j-x_i-e_i/n}{v_j-x_i}}{ \prod_{i = 1}^m \frac{v_j'-x_i-e_i/n}{v_j'-x_i}}
		=\prod_{i = 1}^m e^{\OO\left(\frac{|v_j'-v_j|}{n|v_j'-x_i|^2}\right)} = \exp \Bigg( \mathcal{O} \bigg( \displaystyle\frac{\eta}{nK} \bigg) \displaystyle\sum_{i = 1}^m \displaystyle\frac{1}{|v_j' - x_i|^2} \Bigg) = e^{\mathcal{O}(m / nK)} = e^{\OO(1/K)}.
	\end{flalign}
	Moreover, our assumptions \eqref{e:ap1} and \eqref{e:ap2} gives that for any $0\leq \hat r\leq 2R$ the following holds
	\begin{align}\label{e:v'v}
		\fC^{-1}\leq \left|\bE_\bQ\left[\prod_{j=1}^{\hat r}\left(\prod_{i=1}^m\frac{\hat v_j-x_i-e_i/n}{ \hat v_j-x_i}\right)^{\hat s_j}
		\right] e^{-\sum_{j=1}^{\hat r} \hat s_j b(\hat v_j)}\right|\leq \fC,\quad \hat s_j=\pm 1,\quad  \hat v_j\in\mathscr D\cup \mathscr D(\fr),
	\end{align}
	where $\dist(\hat v_j, [\fa',\fb'])\geq \eta$.
	
	We may rewrite  \eqref{e:dervv} as
	\begin{align}\label{e:dAv0}
		\eqref{e:dervv}=\frac{\del_{v_k}\bE_\bQ\left[\prod_{j=1}^r\left(\prod_{i=1}^m\frac{v_j-x_i-e_i/n}{ v_j-x_i}\right)^{s_j}
			\right] e^{-\sum_{j=1}^r s_j b(v'_j)}}{\bE_\bQ\left[\prod_{j=1}^r\left(\prod_{i=1}^m\frac{v_j-x_i-e_i/n}{ v_j-x_i}\right)^{s_j}
			\right]  e^{-\sum_{j=1}^r s_j b(v'_j)}}.
	\end{align}

	\noindent Let us verify that the denominator in \eqref{e:dAv0} is bounded above and below by constants (that is, it is of order $1$). To that end, observe that 
	\begin{align}\begin{split}\label{e:avs}
			&\phantom{{}={}}\bE_\bQ\left[\prod_{j=1}^r\left(\prod_{i=1}^m\frac{v_j-x_i-e_i/n}{ v_j-x_i}\right)^{s_j}
			\right]  e^{-\sum_{j=1}^r s_j b(v'_j)}\\
			& \qquad \qquad =\sum_\bme a_\bme e^{\OO(1/K)}\prod_{j=1}^r\left(\prod_{i=1}^m\frac{v'_j-x_i-e_i/n}{ v'_j-x_i}\right)^{s_j} e^{-\sum_{j=1}^r s_j b(v'_j)},
	\end{split}\end{align}
	
	\noindent where we used \eqref{e:changez} to replace $ v_1,v_2,\cdots, v_r$ by $v_1', v_2', \cdots, v_r'$, which gives an extra factor $e^{\OO(1/K)}$. The leading order term in \eqref{e:avs} will be given by \eqref{e:v'v} with $\hat v_j=v'_j$ and $\hat s_j=s_j$, and it is bounded below by $\mathfrak{C}^{-1}$ and above by $\mathfrak{C}$. 
	
	We show the remainder is of order $\OO(1/K)$. Observe that this remainder is bounded by a constant multiple of
	\begin{align}\label{e:tobbb}
		\frac{1}{K}\sum_\bme a_\bme \prod_{j=1}^r\left|\prod_{i=1}^m\frac{v'_j-x_i-e_i/n}{ v'_j-x_i}\right|^{s_j} e^{-\sum_{j=1}^r s_j\Re b(v'_j)}.
	\end{align}
	
	\noindent Letting $r'=\ceil{r/2}$, the Cauchy--Schwarz inequality gives 
	\begin{align}\begin{split}\label{e:tobbb1}
			\eqref{e:tobbb}
			&\leq \frac{1}{K}\sum_\bme a_\bme \prod_{j=1}^{r'}\left(\prod_{i=1}^m\frac{v'_j-x_i-e_i/n}{ v'_j-x_i}\right)^{s_j} \left(\prod_{i=1}^m\frac{\bar v'_j-x_i-e_i/n}{ \bar v'_j-x_i}\right)^{s_j} e^{-\sum_{j=1}^r (s_j b(v'_j)+s_j b(\bar v'_j))}\\
			& \qquad +\frac{1}{K}\sum_\bme a_\bme \prod_{j=r'+1}^{r}\left(\prod_{i=1}^m\frac{v'_j-x_i-e_i/n}{ v'_j-x_i}\right)^{s_j} \left(\prod_{i=1}^m\frac{\bar v'_j-x_i-e_i/n}{ \bar v'_j-x_i}\right)^{s_j} e^{-\sum_{j=r'+1}^r (s_j b(v'_j)+s_j b(\bar v'_j))}.
	\end{split}\end{align} 
	
	\noindent The first term in \eqref{e:tobbb1} is in the form \eqref{e:v'v} with $\hat r=2\ceil{r/2}\leq 2R$, 
	$\hat \bmv=(v_1', \bar v'_1, v_2', \bar v_2',\cdots,v_{r'}', \bar v_{r'}')$ and $\hat \bms=(s_1,s_1,\cdots, s_{r'}, s_{r'})$. It is bounded by $\fC/K$. Similarly, the second term in \eqref{e:tobbb1} is also of form \eqref{e:v'v}, and is bounded by $\fC/K$.
	By taking $K$ large enough, \eqref{e:avs} is then bounded below by $\mathfrak{C}^{-1} - \mathcal{O} (\fC/K) \geq (2 \mathfrak{C})^{-1}$ and above by $\mathfrak{C} + \mathcal{O} (\fC/K) \leq 2 \mathfrak{C}$. 
	
	For the numerator in \eqref{e:dAv0}, by the same argument we have
	\begin{align}\begin{split}\label{e:dzA}
			&\phantom{{}={}} \left|\del_{v_k}\bE_\bQ\left[\prod_{j=1}^r\left(\prod_{i=1}^m\frac{v_j-x_i-e_i/n}{ v_j-x_i}\right)^{s_j}
			\right] e^{-\sum_{j=1}^r s_j (b(v'_j)}\right|\\
			&=
			\left|\sum_\bme a_\bme\left(\sum_{i = 1}^m\frac{1}{v_k-x_i-e_i/n}-\sum_{i = 1}^m\frac{1}{v_k-x_i}\right)\prod_{j=1}^r\left(\prod_{i=1}^m\frac{v_j-x_i-e_i/n}{ v_j-x_i}\right)^{s_j}
			e^{-\sum_{j=1}^r s_j (b(v'_j)}
			\right|\\
			&\lesssim \frac{1}{\dist \big(v_k,[\fa',\fb'] \big)}\sum_\bme a_\bme \prod_{j=1}^r\left|\prod_{i=1}^m\frac{v'_j-x_i-e_i/n}{ v'_j-x_i}\right|^{s_j} e^{-\sum_{j=1}^r s_j\Re b(v'_j)}\\
			&\lesssim \frac{1}{\dist \big(v_k,[\fa',\fb'] \big)}\lesssim \frac{1}{\eta}.
	\end{split}\end{align}
	We conclude from \eqref{e:avs} and \eqref{e:dzA} that $|\eqref{e:dervv}|\lesssim 1/\eta$. This finishes the proof of \eqref{e:derbb1} and \eqref{e:derbb2}.

	The bounds \eqref{a1} and \eqref{a2} will follow as consequences of \eqref{e:derbb1} and \eqref{e:derbb2} by taking $K$ large enough. Let us verify this for the second estimate in \eqref{a1}, on $|\cA^{(\bmv, \bms)}(z)e^{-b(z)-\sum_{j=1}^r s_j (b(v_j)+\log \cG(v_j))}|$; the other bounds can be checked the same way. We take $z',v_1', v_2', \ldots , v_r' \in \mathscr{D} \cup \mathscr{D} (\mathfrak{r})$ such that $|z-z'|, |v_j'-v_j|\leq \eta/K$ and $\dist \big( z', [\fa',\fb'] \big), \dist \big(v'_j, [\fa',\fb'] \big) \geq \eta$ for $1\leq j\leq r$. Then,
	\begin{align*}\begin{split}
			\left|\log \cA^{(\bmv, \bms)}(z)-\log \cA^{(\bmv, \bms)}(z')\right|
			=\left|\int^{z}_{z'}\del_u \log \cA^{(\bmv, \bms)}(u)\rd u\right| & =\OO\left(\frac{1}{K}\right),\\
			\big |\log b(w)-\log b(w') \big|
			=\left|\int^{w}_{w'}\del_u \log b(u)\rd u\right| & = \mathcal{O} \bigg( \displaystyle\frac{1}{K} \bigg), \\
			\big|\log \cG(w)-\log \cG(w') \big|
			=\left|\int^{w}_{w'}\del_u \log \cG(u)\rd u\right| & =\OO\left(\frac{1}{K}\right),
	\end{split}\end{align*}
	for $w=z, v_1,v_2,\cdots, v_r$, where we used \eqref{e:derbb1}, \eqref{e:derbb2} and \Cref{l:fbbound} to bound the derivative. Then, by taking $K$ sufficiently large, we find 
	\begin{align*}
		\frac{1}{2\fC} \leq \displaystyle\frac{1}{\mathfrak{C}} + \mathcal{O} \bigg( \displaystyle\frac{1}{K} \bigg) \leq |\cA^{(\bmv, \bms)}(z)e^{-b(z)-\sum_{j=1}^r s_j (b(v_j)+\log \cG(v_j))}|\leq \mathfrak{C} + \mathcal{O} \bigg( \displaystyle\frac{1}{K} \bigg) \leq  2\fC,
	\end{align*}
	
	\noindent establishing the proposition.
\end{proof}

Now we can establish   \Cref{c:boost}.

\begin{proof}[Proof of \Cref{c:boost}]
	
	With the estimates \eqref{e:derbb1} and \eqref{e:derbb2}  as input, we can analyze the error term \eqref{e:defcE} and prove \eqref{e:cebb}.
	We recall $\cE^{(\bmv,\bms)}_0(w)$ from \eqref{e:defcE}, and rewrite it as
	\begin{align}\label{e:cc}
		\frac{f(w+1/n)}{f(w+1/n)+1} \left(\frac{\cA^{(\bmv,\bms)}(w+1/n)\cB(w+1/n)\tilde g(w)}{\cA^{(\bmv,\bms)}(w)\cB(w)\tilde g(w+1/n)}e^{\frac{1}{n}\psi(w)}-1\right) + \left(\frac{f(w+1/n)}{f(w+1/n)+1}-\frac{f(w)}{f(w)+1}\right).
	\end{align}
	For the first term on the right side of \eqref{e:cc}, using \eqref{e:derbb1} and \eqref{e:derbb2}, we have
	\begin{align}\begin{split}\label{e:setm}
			& \phantom{{}={}}\frac{{\cA^{(\bmv,\bms)}(w+1/n)\cB(w+1/n)}}{{\cA^{(\bmv,\bms)}(w)\cB(w)}}\frac{\tilde g(w)}{\tilde g(w+1/n)}e^{\frac{1}{n}\psi(w)}\\
			&\qquad = \exp \Bigg( \int_{w}^{w+1/n}\del_u \big( \log \cA^{(\bmv,\bms)}(u)+\log \cB(u) \big)
			\rd u \Bigg) \left(1+\OO\left(\frac{1}{n}\right)\right)
			=1+\OO\left(\frac{1}{n\eta}\right),
	\end{split}\end{align}
	for  $\dist \big(w,[\fa',\fb'] \big)\geq (1-1/K)\eta$, where we also used the bound $\big|\del_u \log \cB(u) \big|\lesssim 1/\eta$ from \Cref{l:fbbound}. For the second term on the right side of \eqref{e:cc}, using \Cref{l:fbbound},  we have
	\begin{align}\label{e:sec}
		\frac{f(w+1/n)}{f(w+1/n)+1}-\frac{f(w)}{f(w)+1}
		=\int_{w}^{w+1/n}\del_u \left(\frac{f(u)}{f(u)+1}\right)
		\rd u\lesssim \frac{1}{n\dist \big(w,[\fa',\fb'] \big)}.
	\end{align}
	
	\noindent For the last term on the right side of \eqref{e:defcE}, since $|f(w)+1|\gtrsim 1$ we have
	\begin{align}\label{e:lastt}
		\left| \frac{\psi^{(\bmv,\bms)}(w)-1}{f(w)+1}\right|\lesssim \frac{1}{n\eta},
	\end{align}
	
	\noindent for $\dist \big(w,[\fa',\fb'] \big)\geq (1-1/K)\eta$ and $\dist \big(w,\{v_1, v_2, \cdots, v_r\} \big)\geq \eta/2K$. 
	Combining \eqref{e:setm}, \eqref{e:sec} and \eqref{e:lastt}, we deduce the bound on $\cE^{(\bmv, \bms)}(w)$,
	\begin{align*}
		\big| \cE^{(\bmv, \bms)}(w) \big|\lesssim \frac{1}{n\eta},
	\end{align*}
	provided $\dist \big(w,[\fa',\fb'] \big)\geq (1-1/K)\eta$ and $\dist \big(w,\{v_1, v_2, \cdots, v_r\} \big)\geq \eta/2K$.
	This finishes the proof of \eqref{e:cebb}.

	It therefore remains to establish the second statement of the proposition. To that end, we can rewrite \eqref{e:ABCprecise} in the following linear form
	\begin{align}\label{e:dzC0}
		\frac{\del_z \cC^{(\bmv,\bms)}(z)}{\cC^{(\bmv,\bms)}(z)}=\frac{\del_z \cA^{(\bmv,\bms)}(z)}{\cA^{(\bmv,\bms)}(z)}+\frac{\del_z\cB(z)}{\cB(z)}+\frac{\del_z\cE^{(\bmv,\bms)}(z)}{1+\cE^{(\bmv,\bms)}(z)}.
	\end{align}
	In the following we first prove that $\del_z \cC^{(\bmv,\bms)}(z)/\cC^{(\bmv,\bms)}(z)$ is analytic for $z$ inside 
	\begin{align}\label{e:dom0}
		\Lambda_0 = \bigg( \mathscr D\cup \Big\{z:\dist \big(z,[\fa',\fb'] \big)\leq \fr \Big\} \bigg) \cap \Big\{z \in \Lambda : \dist \big(z, \{v_1, v_2,\cdots, v_r\} \big) \geq \eta/(2K) \Big\}\subseteq \Lambda.
	\end{align}
	The poles of $\del_z \cC^{(\bmv,\bms)}(z)/\cC^{(\bmv,\bms)}(z)$ are the zeros and poles of $\cC^{(\bmv,\bms)}(z)$. Thanks to the loop equation \eqref{e:sum1}, we know that $\cC^{(\bmv,\bms)}(z)$ is analytic and does not have poles in $\Lambda\setminus \big\{v_j-(s_j+1)/2n \big\}_{1\leq j\leq r}$. So, we only need to show that $\cC^{(\bmv,\bms)}(z)$ does not have zeros inside $\Lambda_0$.
	We recall the expression \eqref{e:ABCprecise},
	\begin{align}\label{e:ABCprecise0}
		\cC^{(\bmv,\bms)}(z)=\cA^{(\bmv,\bms)}(z)\cB(z)\left(1+\cE^{(\bmv,\bms)}(z)\right).
	\end{align}
	For $z\in \Lambda\setminus[\fa',\fb']$, by our \Cref{a:mz}, $\cB(z)\neq 0$. 
	
	The estimate \eqref{e:cebb} implies that  $1+\cE^{(\bmv,\bms)}(w)=1+\OO(1/n\eta)\neq 0$ for  $w\in \Lambda_0\cap \mathscr D$ and $\dist \big(w, [\mathfrak{a}', \mathfrak{b}'] \big) \geq(1 - 1/K) \eta$.
	Next we  show that $\mathcal{A}^{(\boldsymbol{v}, \boldsymbol{s})} (w)$ is nonzero when $w \in \mathscr D\cup \mathscr D(\fr)$ and $\dist \big(w, [\mathfrak{a}', \mathfrak{b}'] \big) \geq(1 - 1/K) \eta$. By \Cref{p:continuity}, \eqref{a1} holds for $z\in \mathscr D\cup \mathscr D(\fr)$ with $\dist \big(z, [\fa',\fb'] \big)\geq (1-1/K)\eta$. Thus $\cA^{(\bmv,\bms)}(w)\neq 0$ for $(\bmv,\bms)$ as in \eqref{e:term1}. For $(\bmv,\bms)$ as in \eqref{e:term2a}, the weights $a_\bme^{(\bmv,\bms)}$ are positive, the decomposition \eqref{e:Aexp} implies that $\Im \big[\cA^{(\bmv,\bms)}(w) \big]>0$ for $\Imaginary w>0$. For $\Imaginary w=0$ and $\dist \big(w, [\fa',\fb'] \big)\geq (1-1/K)\eta$, since each $x_i\in [\fa, \fb-1/n]$, we have that 
	\begin{align*}
		\prod_{i=1}^m \frac{w-x_i-e_i/n}{w-x_i}>0.
	\end{align*}
	
	\noindent So, the definition \eqref{e:defA} of $\mathcal{A}^{(\boldsymbol{v}, \boldsymbol{s})}$ and the positivity of the $a_{\boldsymbol{e}}^{(\boldsymbol{v}, \boldsymbol{s})}$, together imply that $\cA^{(\bmv,\bms)}(w)>0$. Thus on $\big\{w\in \mathscr{D} \cup \mathscr{D} (\mathfrak{r}): \dist(w, [\fa',\fb'])\geq (1-1/K)\eta \big\}$, we have $\cA^{(\bmv,\bms)}(w)\neq 0$. We conclude from \eqref{e:ABCprecise0} that $\cC^{(\bmv,\bms)}(w)\neq 0$ on $\Lambda_0 \cap \big\{ w\in \mathscr D: \dist(w, [\fa',\fb'])\geq (1-1/K)\eta \big\}$.
	
	For $\big\{w\in \Lambda_0: \dist(w, [\fa',\fb']) < (1-1/K)\eta \big\}$, we take a contour $\omega$ inside the region $\big\{ w\in \Lambda_0: \dist(w, [\fa',\fb'])\geq (1-1/K)\eta \big\}$, which encloses $[\fa',\fb']$ but not $v_1,v_2,\cdots, v_r$, and whose distance from the set $\{ v_1, v_2, \ldots , v_r \}$ is at least $\eta/(2K)$. We may count the zeros of $\cC^{(\bmv,\bms)}(w)$ inside $\omega$ by 
	\begin{align}\label{e:aC0}
		\frac{1}{2\pi\ri}\oint_{\omega}\frac{\del_w \cC^{(\bmv,\bms)}(w)}{\cC^{(\bmv,\bms)}(w)}\rd w=\sum_{p: \cC^{(\bmv,\bms)}(p)=0\atop\text{$p$ inside the contour}}1.
	\end{align}

	We recall the definition of $\cA^{(\bmv,\bms)}(w)$ from \eqref{e:defA} and the decomposition \eqref{e:Aexp},
	\begin{align}
		\label{asum}
		\cA^{(\bmv,\bms)}(w)=\sum_\bme a_\bme^{(\bmv,\bms)} \left(1+\sum_{i = 1}^m \frac{\Res_{w=x_i}}{w-x_i}\right).
	\end{align}
	It has the same number of zeros and poles, and we have shown that it does not have zeros or poles inside the region $\big\{ w\in \mathscr D\cup \mathscr D(\fr): \dist(w, [\fa',\fb'])\geq (1-1/K)\eta \big\}$. Therefore, $\cA^{(\bmv,\bms)}(z)$ has the same number of zeros and poles inside the contour $\omega$, so it follows that
	\begin{align}\label{e:aA0}
		\frac{1}{2\pi\ri}\oint_{\omega}\frac{\del_w \cA^{(\bmv,\bms)}(w)}{\cA^{(\bmv,\bms)}(w)}\rd w=0.
	\end{align}
	
	\noindent We perform the same contour integral on both sides of \eqref{e:dzC0}, we get
	\begin{align}\begin{split}\label{e:aC01}
			\sum_{p: \cC^{(\bmv,\bms)}(p)=0\atop\text{$p$ inside the contour}}1 =\frac{1}{2\pi\ri}\oint_{\omega}\frac{\del_w \cC^{(\bmv,\bms)}(w)}{\cC^{(\bmv,\bms)}(w)}\rd w & =\frac{1}{2\pi \ri}\oint_{\omega}\left(\frac{\del_w \cA^{(\bmv,\bms)}(w)}{\cA^{(\bmv,\bms)}(w)}+\frac{\del_w \cB(w)}{\cB(w)}+\frac{\del_w  \cE^{(\bmv,\bms)}(w)}{1+ \cE^{(\bmv,\bms)}(w)}\right)\rd w\\
			&=\frac{1}{2\pi \ri}\oint_{\omega}\del_w  \log \big( 1+\cE^{(\bmv,\bms)}(w) \big)\rd w,
	\end{split}\end{align}
	where in the last equality, we used \eqref{e:aB0} and \eqref{e:aA0}. Thanks to \eqref{e:cebb}, along the contour $\omega$, $|\cE^{(\bmv,\bms)}(w)|\lesssim 1/n\eta\ll 1$. Since the left side of \eqref{e:aC01} is an integer, we conclude that the above integral is $0$ and  $\del_w \cC^{(\bmv,\bms)}(w)/\cC^{(\bmv,\bms)}(w)$ is analytic for $w \in \Lambda_0$.

	From the discussion above, $\del_z\cC^{(\bmv,\bms)}(z)/\cC^{(\bmv,\bms)}(z)$ is analytic inside the contour $\omega$, we can use a contour integral to get rid of $\del_z\cC^{(\bmv,\bms)}(z)/\cC^{(\bmv,\bms)}(z)$ and recover $\del_z \cA^{(\bmv,\bms)}(z)/\cA^{(\bmv,\bms)}(z)$,
	\begin{align}
		\label{aintegralbc} 
		\begin{split}
			\frac{\del_z \cA^{(\bmv,\bms)}(z)}{\cA^{(\bmv,\bms)}(z)}=-\frac{1}{2\pi \ri}\oint_{\omega}\frac{\del_w \cA^{(\bmv,\bms)}(w)}{\cA^{(\bmv,\bms)}(w)}\frac{\rd w}{w-z} & = \frac{1}{2\pi \ri}\oint_{\omega}\left(\frac{\del_w \cB(w)}{\cB(w)}+\frac{\del_w \cE^{(\bmv,\bms)}}{1+\cE^{(\bmv,\bms)}}-\frac{\del_w \cC^{(\bmv,\bms)}(w)}{\cC^{(\bmv,\bms)}(w)}\right)\frac{\rd w}{w-z}\\
			&=\frac{1}{2\pi \ri}\oint_{\omega}\frac{\del_w \cB(w)}{\cB(w)}\frac{\rd w}{w-z}+\frac{1}{2\pi\ri }\oint_{\omega} \log \big( 1+\cE^{(\bmv,\bms)}(w) \big)\frac{\rd w}{(w-z)^2},
		\end{split}
	\end{align}
	where the contour $\omega$ encloses $[\fa',\fb']$ but not $\{z,v_1,v_2,\cdots, v_r\}$, and is at distance at least $\eta / 2K$ from $z$. Here, we have used the fact (which follows from \eqref{asum}) that $\big| \partial_w \mathcal{A}^{(\boldsymbol{v}, \boldsymbol{s})} (w) / \mathcal{A}^{(\boldsymbol{v}, \boldsymbol{s})} (w) \big|$ tends to $0$ as $w$ tends to $\infty$. Along the contour $\omega$, $\big| \cE^{(\bmv,\bms)}(w) \big| \lesssim 1/n\eta$, so we conclude that
	\begin{align}\label{e:firstodahi}
		\frac{\del_z \cA^{(\bmv,\bms)}(z)}{\cA^{(\bmv,\bms)}(z)}
		=\frac{1}{2\pi \ri}\oint_{\omega}\frac{\log \cB(w)\rd w}{(w-z)^2}+\OO\left(\frac{1}{n\eta^2}\right).
	\end{align}
	This finishes the proof of \eqref{e:firstoda}.

	By first integrating \eqref{aintegralbc} from $z_2$ to $z_1$, and then applying \eqref{e:cebb}, we get
	\begin{align}\begin{split}\label{e:z1z2}
			\phantom{{}={}}\log \cA^{(\bmv,\bms)} (z_1)-\log \cA^{(\bmv,\bms)}(z_2) &=\frac{1}{2\pi \ri}\oint_{\omega} \Big( \log \cB(w)+\log \big(1+\cE^{(\bmv,\bms)}(w) \big) \Big)\left(\frac{1}{w- z_1}-\frac{1}{w-z_2}\right)\rd w\\
			&=b(z_1)-b(z_2)+\OO\left(\frac{1}{n\eta}\right).
	\end{split}\end{align}
	\noindent We can also take $z_2=\infty$ in \eqref{e:z1z2} to obtain
	\begin{align}\label{e:zinf}
		&\log \cA^{(\bmv,\bms)}(z)-\log \cA^{(\bmv,\bms)}(\infty)=b(z)+\OO\left(\frac{1}{n\eta}\right).
	\end{align}

	Thanks to the relation \eqref{e:zinf}, \eqref{e:toap2} implies \eqref{e:toap1}. Thus we only need to prove \eqref{e:toap2}. The relation \eqref{e:zinf} is almost the same as \eqref{e:yao00}, but with an error depending on $\eta$. 
The same induction argument after 	\eqref{e:yao00} establishes \eqref{e:toap2}, and \Cref{c:boost}. 

\end{proof}

\section{Improved Edge Estimates}\label{s:improveEE}
In this section, we strengthen the weak estimate \Cref{p:firstod} through bootstrapping, and conclude the proof of Propositions \ref{p:improve} and \ref{p:improve2}. We collect some notations, and miscellaneous derivative bounds in Section \ref{EstimateE}. We prove the first statement \eqref{e:improve1c} of \Cref{p:improve2} in Section \ref{s:1state}, and the second statement \eqref{e:improve2c} in Section \ref{s:2state}. Finally, in Section \ref{s:finalproof}, we prove \Cref{p:improve} using \Cref{p:improve2} as input.

\subsection{Notations and Preliminary Estimates}

\label{EstimateE}
In this section, we collect some notations, and miscellaneous derivative bounds, which will be used to prove \Cref{p:improve2} in Sections \ref{s:1state} and \ref{s:2state}.

To prove \eqref{e:improve1c}, we take $r=1$, $s_1=-1$, and $v_1=v$ in \eqref{e:firstod},
\begin{align}\begin{split}\label{e:firstodc}
		\del_z \big(\log \cA^{(\bmv,\bms)}(z)-b(z) \big)
		&=\frac{1}{2\pi \ri}\oint_{\omega}\log \big(1+\cE^{(\bmv,\bms)}(w) \big) \frac{\rd w}{(w-z)^2},
\end{split}\end{align}
where the contour $\omega\subseteq \mathscr D$ encloses $[\fa',\fb']$ but not $z$, $\overline{z}$, or $v$. We recall from \eqref{e:dzlogA} that \eqref{e:improve1c} is equivalent to
\begin{align}\label{e:eqform}
	\bigg| \del_z \big(\log \cA^{(\bmv,\bms)}(z)-b(z) \big) \Big|_{z=v} \bigg|\lesssim \frac{\big| \Im[f(v)/(f(v)+1)] \big|}{n\Im[v] \dist(v,I)}.
\end{align}

By the definition \eqref{e:firstod}, $b(z)= \OO(1/z)$ as $z\rightarrow \infty$. Thus, 
by integrating \eqref{e:firstodc} from $z$ to $\infty$, we get
\begin{align}\label{e:nodelz}
	\log \cA^{(\bmv,\bms)}(z)-\log \cA^{(\bmv,\bms)}(\infty)-b(z)
	&=\frac{1}{2\pi \ri}\oint_{\omega}\log \big( 1+\cE^{(\bmv,\bms)}(w) \big)\frac{\rd w}{w-z}.
\end{align}

\noindent By taking a contour integral on both sides of \eqref{e:nodelz}, we get for any $k_1+k_2\geq 1$ that 
\begin{align}\begin{split}\label{e:toest}
		\frac{1}{2\pi\ri}\oint_{\omega'} \frac{\big( \log \cA^{(\bmv,\bms)}(w)-b(w) \big)\rd w}{(w-z)^{k_1}(w-\overline z)^{k_2}}
		&=\frac{1}{(2\pi \ri)^2}\oint_{w\in \omega}\oint_{w'\in \omega'}\frac{\log \big(1+\cE^{(\bmv,\bms)}(w) \big)\rd w\rd w'}{(w-w')(w'-z)^{k_1}(w'-\overline z)^{k_2}}\\
		&=-\frac{1}{2\pi \ri}\oint_{\omega}\frac{\log \big(1+\cE^{(\bmv,\bms)}(w) \big)\rd w}{(w-z)^{k_1}(w-\overline z)^{k_2}},
\end{split}\end{align}
where the contour $\omega'\subseteq \mathscr D$ encloses $\omega$ but not $z,\overline{z}$, or $v,\overline v$. In the following, we analyze \eqref{e:toest}, which is  more general than \eqref{e:firstodc}. In particular, by taking $k_1=2$ and $k_2=0$ in \eqref{e:toest}, we recover \eqref{e:firstodc}, where we are using the facts that $\log \mathcal{A}^{(\boldsymbol{v}, \boldsymbol{s})} (\infty)$ is bounded (in a way dependent on the $v_j$, by \eqref{e:ap2}); that $b(\infty) = 0$; and that $\log \mathcal{A}^{(\boldsymbol{v}, \boldsymbol{s})} (z) - b(z)$ has no poles outside of $\omega'$.

We recall the spectral region $\mathscr D(\fr)= \big\{z\in \mathscr D: \dist(z, [\fa',\fb'])\geq \fr \big\}$. Let $v\in \mathscr D\setminus \mathscr D(\fr)$. We will later take either $z \in \{ v, \overline v \}$, or $z\in \mathscr{D}\cap \mathscr{D} (\mathfrak{r})$ in \eqref{e:firstodc}. We divide them into three cases. 
If $z \in \{ v,\overline v \}$, then we deform the contour $\omega$ in \eqref{e:toest} to see that its right side equals
\begin{align}\label{e:toest1}
	\frac{1}{2\pi \ri}\oint_{\omega_{v,\overline v}}\frac{\log \big(1+\cE^{(\bmv,\bms)}(w) \big) \rd w}{(w-z)^{k_1}(w-\overline z)^{k_2}}-\frac{1}{2\pi \ri}\oint_{\omega+}\frac{\log \big( 1+\cE^{(\bmv,\bms)}(w) \big) \rd w}{(w-z)^{k_1}(w-\overline z)^{k_2}}.
\end{align}
If $z\in \mathscr D(\fr)\setminus \mathscr D(2\fr)$ we deform the contour $\omega$ in \eqref{e:toest} to
\begin{align}\label{e:toest2}
	\frac{1}{2\pi \ri}\oint_{\omega_{v,\overline v}}\frac{\log \big(1+\cE^{(\bmv,\bms)}(w) \big)\rd w}{(w-z)^{k_1}(w-\overline z)^{k_2}}+\frac{1}{2\pi \ri}\oint_{\omega_{z,\overline z}}\frac{\log \big(1+\cE^{(\bmv,\bms)}(w) \big)\rd w}{(w-z)^{k_1}(w-\overline z)^{k_2}}-\frac{1}{2\pi \ri}\oint_{\omega+}\frac{\log \big(1+\cE^{(\bmv,\bms)}(w) \big)\rd w}{(w-z)^{k_1}(w-\overline z)^{k_2}}.
\end{align}

\noindent If $z \in \mathscr{D} (2 \mathfrak{r})$, 
we deform the contour $\omega$ in \eqref{e:toest} to
\begin{align}\label{e:toest33}
	\frac{1}{2\pi \ri}\oint_{\omega_{v,\overline v}}\frac{\log \big(1+\cE^{(\bmv,\bms)}(w) \big)\rd w}{(w-z)^{k_1}(w-\overline z)^{k_2}}-\frac{1}{2\pi \ri}\oint_{\widehat\omega+}\frac{\log \big(1+\cE^{(\bmv,\bms)}(w) \big)\rd w}{(w-z)^{k_1}(w-\overline z)^{k_2}}.
\end{align}
Here, in all three cases, the contour $\omega_{v,\overline v}$ encloses $v$ and $\overline v$; the contour $\omega_{z,\overline z}$ encloses $z$ and $\overline z$; the contour $\omega+\subseteq \mathscr D\cap \mathscr D(2\fr)$ encloses $[\fa',\fb']$ and $v,\overline v, z,\overline z$;
and  the contour $\widehat \omega+\subseteq \mathscr D(\fr)\setminus \mathscr D(2\fr)$ encloses $[\fa',\fb']$ and $v,\overline v$, but not $z,\overline z$.  See Figure \ref{f:contour2}.

\begin{figure}
	
	\begin{center}		
		
		\begin{tikzpicture}[
			>=stealth,
			auto,
			style={
				scale = .52
			}
			]

			\draw[black, thick] (0,0) ellipse (5 and 1.8 );
			\draw[black, thick] (-2,0.6) circle (0.4 );
			\draw[black, fill=black](-2,0.6) circle (0.05 );
			\draw[] (-1.9,0.6)node[left, scale = .7]{$v$};
			\draw[black, thick] (-2,-0.6) circle (0.4 );
			\draw[black, fill=black](-2,-0.6) circle (0.05 );
			\draw(-1.9,-0.6)node[left, scale = .7]{$\bar v$};
			
			\draw[black] (-5.5,0) -- (5.5,0);
			\draw[black, very thick] (-3,0) -- (3,0);
			
			\draw[] (-1.6,-0.6)  node[right, scale = .7]{$\omega_{v,\bar v}$};
			\draw[] (-1.6,0.6)  node[right, scale = .7]{$\omega_{v,\bar v}$};
			\draw[] (0,1.8)  node[below, scale = .7]{$\widehat\omega+$};
			
			\draw[black, thick] (12,0) ellipse (5 and 1.8 );
			\draw[black, thick] (10,0.6) circle (0.4 );
			\draw[black, fill=black](10,0.6) circle (0.05 );
			\draw[] (10.1,0.6)node[left, scale = .7]{$v$};
			\draw[black, thick] (10,-0.6) circle (0.4 );
			\draw[black, fill=black](10,-0.6) circle (0.05 );
			\draw(10.1,-0.6)node[left, scale = .7]{$\bar v$};
			
			\draw[] (10.4,-0.6)  node[right, scale = .7]{$\omega_{v,\bar v}$};
			\draw[] (10.4,0.6)  node[right, scale = .7]{$\omega_{v,\bar v}$};
			
			\draw[black] (6.5,0) -- (17.5,0);
			\draw[black, very thick] (9,0) -- (15,0);

			\draw[] (12,1.8)  node[below, scale = .7]{$\omega+$};
			
			\draw[black, thick] (13,1.3) circle (0.4 );
			\draw[black, fill=black](13,1.3) circle (0.05 );
			\draw[] (13.1,1.3)node[left, scale = .7]{$z$};
			\draw[black, thick] (13,-1.3) circle (0.4 );
			\draw[black, fill=black](13,-1.3) circle (0.05 );
			\draw(13.1,-1.3)node[left, scale = .7]{$\bar z$};
			
			\draw[] (13.4,-1.3)  node[right, scale = .7]{$\omega_{z,\bar z}$};
			\draw[] (13.4,1.3)  node[right, scale = .7]{$\omega_{z,\bar z}$};

		\end{tikzpicture}
		
	\end{center}
	
	\caption{\label{f:contour2} Shown to the left is the case $z\in \mathscr D (2\fr)$, where the contour $\omega_{v,\overline v}$ consists of two small circles enclosing $v,\overline v$ respectively, and the contour $\widehat \omega+\subseteq\mathscr D(\fr)\setminus\mathscr D(2\fr)$ encloses $[\fa',\fb']$ and $v,\overline v$ but not $z$.
		Shown to the right is the case $z\in \mathscr D (\fr)\setminus\mathscr D (2\fr)$ or $z=v,\bar v$, where the contour $\omega_{v,\overline v}$ encloses $v,\overline v$, the contour $\omega_{z,\overline z}$ encloses $z,\overline z$, and the contour $\omega+\subseteq\mathscr D(2\fr)$ encloses $[\fa',\fb']$ and $v,\overline v, z,\overline z$.}		
\end{figure}

We recall $\cE_0^{(\bmv,\bms)}(z)$ from \eqref{e:defcE},
\begin{align*}\begin{split}
		&\cE^{(\bmv,\bms)}(z)=
		\cE^{(\bmv,\bms)}_0(z)
		+\frac{\psi^{(\bmv,\bms)}(z)-1}{f(z)+1},\\
		&\cE^{(\bmv,\bms)}_0(z)=\left(\frac{f(z+1/n)}{f(z+1/n)+1} \frac{\cA^{(\bmv,\bms)}(z+1/n)\cB(z+1/n)}{\cA^{(\bmv,\bms)}(z)\cB(z)}\frac{\tilde g(z)}{\tilde g(z+1/n)}e^{\frac{1}{n}\psi(z)}-\frac{f(z)}{f(z)+1}\right),
\end{split}\end{align*}

\noindent and $b(z)$ from \eqref{e:defbz},
\begin{align*}
	b(z)=\frac{1}{2\pi \ri}\oint_{\omega} \frac{\log \cB(w)\rd w}{w-z}
	=-\log\cB(z)+\frac{1}{2\pi \ri}\oint_{\omega+} \frac{\log \cB(w)\rd w}{w-z},
\end{align*}
where the new contour $\omega+\subseteq \mathscr D\cap \mathscr D(\fr)$  encloses both $[\fa',\fb']$ and $z$. We introduce
\begin{align*}
	\hat g(z)=\tilde g(z) \exp \Bigg( -{\frac{1}{2\pi \ri}\oint_{\omega+} \frac{\log \cB(w)\rd w}{w-z}} \Bigg),
\end{align*}
where the contour $\omega+\subseteq \mathscr D\cap \mathscr D(\fr)$  encloses both $[\fa',\fb']$ and $z$. Similar to $\tilde g(z)$ (by \Cref{a:mz}),  the function $\hat g(z)$ is analytic in $\Lambda$ and does not have zeros or poles in $\Lambda$. Moreover, we have
\begin{align}\label{e:dhgbound}
	\big| \del_z\log \hat g(z) \big|\leq \big| \del_z \log \tilde g(z) \big|+\frac{1}{2\pi}\left|\oint_{\omega+}\frac{\log \cB(w)\rd w}{(w-z)^2}\right|\lesssim 1,\quad \left|\frac{\hat g(z)}{\hat g(z+1/n)}-1\right|\lesssim \frac{1}{n},
\end{align}

\noindent by taking the contour $\omega+$ to be bounded away from $z$, together with the fact that $\mathcal{B} (w) = \big( f(w) + 1 \big) (w - \mathfrak{a})$ is bounded above and below for $w$ bounded away from $[\fa',\fb']$.

With $\hat g(z)$, we can rewrite $\cE^{(\bmv,\bms)}_0(z)$ as
\begin{align}\begin{split}\label{e:newcE0}
		\frac{f(z+1/n)}{f(z+1/n)+1} \left(\frac{\cA^{(\bmv,\bms)}(z+1/n)e^{-b(z+1/n)}}{\cA^{(\bmv,\bms)}(z)e^{-b(z)}}\frac{\hat g(z)}{\hat g(z+1/n)}e^{\frac{1}{n}\psi(z)}-1\right)+\frac{f(z+1/n)}{f(z+1/n)+1}-\frac{f(z)}{f(z)+1}.
\end{split}\end{align}

To estimate \eqref{e:toest1}, \eqref{e:toest2} and \eqref{e:toest33}, for any function $h$, which is analytic in  small neighborhoods of $z,\overline z$, and $k\geq 0$, we let $\cD_z^{(k)}h$ denote the set of values of the form
\begin{align}\label{e:defDz}
	\frac{1}{2\pi\ri}\oint_{\omega_{z,\overline z}}\frac{h(w)}{(w-z)^{k_1}(w-\overline z)^{k_2}} \mathrm{d} w, \quad  k_1+k_2=k+1,
\end{align}
where the contour $\omega_{z,\overline z}$ encloses $z,\overline z$. Observe that the expression \eqref{e:defDz}  contains the first $k$ derivatives of $h$ (up to some constant) at $z, \overline z$, by taking $k_1=k+1, k_2=0$ or $k_1=0, k_2=k+1$.
Although $\cD_z^{(k)}h$ is a set of numbers, with a slight abuse of notation, later we will simply treat $\cD_z^{(k)}h$ as a number,  which can be any number in $\cD_z^{(k)}h$.
We consider the product of two functions,
\begin{align*}
	\frac{1}{2\pi\ri}\oint_{\omega_{z,\overline z}}\frac{h_1(w)h_2(w)}{(w-z)^{k_1}(w-\overline z)^{k_2}} \mathrm{d}w.
\end{align*}
If $k_2=0$ (and, similarly, if $k_1=0$), by the chain rule we have that 
\begin{align*}
	&\phantom{{}={}}\frac{(k_1-1)!}{2\pi\ri}\oint_{\omega_{z, \overline{z}}} \frac{h_1(w)h_2(w)}{(w-z)^{k_1}}  \mathrm{d}w
	=\del_z^{k_1-1} \big( h_1(z) h_2(z) \big),
\end{align*}
is a linear combination of 
\begin{align*}
	\frac{1}{2\pi\ri}\oint_{\omega_{z, \overline{z}}} \frac{h_1(w)}{(w-z)^{k_1-a}} \mathrm{d}w \cdot \frac{1}{2\pi\ri}\oint_{\omega_{z, \overline{z}}} \frac{h_2(w)}{(w-z)^{a+1}} \mathrm{d} w,\quad 0\leq a< k_1.
\end{align*}
More generally, for $k_1, k_2\geq 1$ we have
\begin{align*}\begin{split}
		(k_1 - 1)! (k_2 - 1)! & \frac{1}{2\pi\ri}\oint_{\omega_{z, \overline{z}}} \frac{h_1(w)h_2(w)}{(w-z)^{k_1}(w-\overline z)^{k_2}} \mathrm{d}w \\
		& =\del_z^{k_1-1}\del_{\overline z}^{k_2-1}\left(\frac{h_1(z)h_2(z)-h_1(\overline z)h_2(\overline z)}{z-\overline z}\right)\\
		&=\del_z^{k_1-1}\del_{\overline z}^{k_2-1}\left(\frac{h_1(z)-h_1(\overline z)}{z-\overline z}h_2(z)+\frac{h_2(z)-h_2(\overline z)}{z-\overline z}h_1(\overline z)\right),
\end{split}\end{align*}
which can be written as a linear combination of terms in the form
\begin{align*}
	\frac{1}{2\pi\ri}\oint\frac{h_1(w) \mathrm{d}w}{(w-z)^{k_1-a}(w-\overline z)^{k_2}} \cdot \frac{1}{2\pi\ri}\oint\frac{h_2(w) \mathrm{d} w}{(w-z)^{a+1}}, \quad 0\leq a< k_1,
\end{align*}
and
\begin{align*}
	\frac{1}{2\pi\ri}\oint\frac{h_2 (w) \mathrm{d} w}{(w-z)^{k_1}(w-\overline z)^{k_2-a}} \cdot \frac{1}{2\pi\ri}\oint\frac{h_1(w) \mathrm{d} w}{(w-\overline z)^{a+1}}, \quad 0\leq a< k_2.
\end{align*}
Therefore, $\cD_z^{(k)}(h_1h_2)$ is a linear combination of $\cD_z^{(k_1)}(h_1)\cD_z^{(k_2)}(h_2)$, with $k_1+k_2= k$ (and uniformly bounded coefficients). We write this by
\begin{align}\label{e:lincomb}
	\cD_z^{(k)}(h_1h_2)\in \text{LinComb}_{k_1+k_2= k} \cD_z^{(k_1)}(h_1)\cD_z^{(k_2)}(h_2).
\end{align}
If $h(w)$ is uniformly bounded (that is, if $\big| h(w) \big| = \mathcal{O} (1)$) for $|w-z|, |w-\bar z|\lesssim \eta$, we can take the contour $\omega_{z,\overline z}$ in \eqref{e:defDz} as two small circles centered around $z, \bar z$ with radius of order $\eta$. Then we have the trivial bound for any element of $\cD_z^{(k)}h$, given by
\begin{align}\label{e:Dzhbb}
	|\cD_z^{(k)}h|=\left|\frac{1}{2\pi\ri}\oint_{\omega_{z,\overline z}}\frac{h(w) \mathrm{d} w}{(w-z)^{k_1}(w-\overline z)^{k_2}}\right|\lesssim \oint_{\omega_{z,\overline z}} \frac{\mathrm{d} w}{\eta^{k_1+k_2}}
	\lesssim \frac{1}{\eta^k}, \qquad \text{for $k_1+k_2=k+1$},
\end{align}
which will be used repeatedly in the remaining of this section.

For the complex slope $f(z)$, we recall from \eqref{e:defI} that $\Im \big[ f(x+0\ri)/(f(x+\ri)+1) \big]$ defines a negative measure on $\bR$ with support $I$. Thanks to \eqref{e:fbound}, as in the fourth statement of \Cref{p:gfbound} we have for any $z\in \mathscr D$ and $k\geq 1$ that
\begin{align}\label{e:ftest}
	\left|\frac{f(z)}{f(z)+1}\right|\lesssim1,\quad  \left|\cD_z^{(k)}\frac{f}{f+1}\right|\lesssim \frac{|\Im[f(z)/f(z)+1]|}{\Im[z] \dist(z,I)^{k-1}},\quad \frac{|\Im[f(z)/f(z)+1]|}{\Imaginary z }\lesssim \frac{1}{\dist(z,I)}.
\end{align}
We remark that the second estimate in \eqref{e:ftest} is slightly more general than \eqref{e:derf/f+1}, but it can be proven essentially the same way as in \eqref{ff1derivative}, by
\begin{align*}\begin{split}
		\left|\cD^{(k)}_z \frac{f}{f+1}\right|
		& =\left| \displaystyle\frac{1}{\pi} \int_{I}\frac{k! \Im \big[f(x+\ri0)/(f(x+\ri0)+1) \big]\rd x}{(x-z)^{k_1}(x-\bar z)^{k_2}}\right|+\OO(1) \\
		& \lesssim \Bigg| \displaystyle\frac{1}{\dist (z, I)^{k_1 + k_2 - 2}} \displaystyle\int_I \displaystyle\frac{k! \Im \big[ f(x + \mathrm{i} 0) / (f (x + \mathrm{i} 0) + 1) \big] \mathrm{d} x}{|x - z|^2} \Bigg| + \mathcal{O} (1) \\
		& \lesssim \frac{\big| \Im[f(z)/(f(z)+1)] \big|}{\Im[z]\dist(z,I)^{k-1}},
\end{split}\end{align*}
where $k_1+k_2=k+1$.

The last two relations in \eqref{e:ftest} together with  \eqref{e:lincomb} implies for any $k\geq 1$ and $j\geq 0$,
\begin{align}\begin{split}\label{e:secddd}
		\cD_z^{(k)}\left(\frac{f}{f+1}\right)^{j}&\lesssim \sum \left|\prod_{u=1}^j \cD_z^{(\tilde k_u)}\left(\frac{f}{f+1}\right)\right| \\
		& \lesssim \sum \left(\frac{|\Im[f(z)/f(z)+1]|}{\Im[z] }\right)^j\frac{1}{\dist(z, I)^{\sum_{u=1}^j (k_u-1)}}\\
		&\lesssim \sum \frac{|\Im[f(z)/f(z)+1]|}{\Im[z] } \frac{1}{\dist(z, I)^{j-1}}\frac{1}{\dist(z, I)^{k-j}}
		\lesssim  \frac{|\Im[f(z)/f(z)+1]|}{\Im[z] \dist(z,I)^{k-1}},
\end{split}\end{align}
where the sum is over index sets $(\tilde{k}_1, \tilde{k}_2, \ldots , \tilde{k}_j)$ such that $\sum_{u = 1}^j \tilde k_u=k$; and in the second line we used the last estimate in \eqref{e:ftest}.

\subsection{Proof of Statement \eqref{e:improve1}}\label{s:1state}

We recall from our \Cref{a:mz} that $\mathscr D\subseteq \big\{ z\in \Lambda: \dist(z,[\fa',\fb'])\geq n^{\fc-1} \big\}$. We fix a large constant 
\begin{align}\label{e:defq}
	q=4\ceil{R/\fc}.
\end{align}
For any $z\in \mathscr D$, we define the control parameters
\begin{align}\label{e:Phic}
	\begin{aligned}
		\Phi_k(z)= \frac{|\Im[f(z)/(f(z)+1)]|}{n\Im[z] \dist(z,I)^k}+\frac{1}{\dist \big(z,[\fa',\fb'] \big)^q}\left(\frac{\log n}{n}\right)^{q-k}, \qquad & \text{for $0\leq k\leq q$}; \\
		\Phi_k (v) = \dist \big( z, [\fa',\fb'] \big)^{-k}, \qquad & \text{for $k > q$}.
	\end{aligned}
\end{align}

\noindent Since $q \geq 4/\fc$, for $k=1$, we have
\begin{align}\label{e:Phi1}
	\Phi_1(z)= \frac{\Im[f(z)/(f(z)+1)]|}{n\Im[z]\dist(z,I)} +\OO\left(n\left(\frac{\log n}{n^\fc}\right)^q\right)\asymp \frac{\Im[f(z)/(f(z)+1)]|}{n\Im[z]\dist(z,I)},
\end{align}
where we used the lower bound $|\Im[f(z)/(f(z)+1)]|/\Im[z]\gtrsim 1$ from \eqref{e:imratio}. 

For $z\in \mathscr D$, $\Phi_k(z)\lesssim \dist \big(z,[\fa',\fb'] \big)^{-k}$. Thus for $k+k'>q$, we have 
$\Phi_{k}(z)\Phi_{k'}(z)\lesssim \dist \big(z,[\fa',\fb'] \big)^{-k - k'}=\Phi_{k+k'}(z)$. For $k+k'\leq q$, 
\begin{align*}\begin{split}
		\Phi_{k}(z)\Phi_{k'}(z)
		&\lesssim \frac{|\Im[f(z)/(f(z)+1)]|}{n\Im[z] \dist(z,I)^k}\frac{|\Im[f(z)/ (f(z)+1)]|}{n\Im[z] \dist(z,I)^{k'}}
		+\Phi_k(z)\frac{1}{\dist \big( z,[\fa',\fb'] \big)^q}\left(\frac{\log n}{n}\right)^{q-{k'}}\\
		& \qquad +\Phi_{k'}(z)\frac{1}{\dist \big(z,[\fa',\fb'] \big)^q}\left(\frac{\log n}{n}\right)^{q-{k}}\\
		&\lesssim\frac{|\Im[f(z)/(f(z)+1)]|}{n\Im[z] \dist(z,I)^{k+k'}}+\frac{1}{\dist \big(z,[\fa',\fb'] \big)^q}\left(\frac{\log n}{n}\right)^{q-k-k'}
		= \Phi_{k+k'}(z),
\end{split}\end{align*}
where for the last line, we used  $\Phi_k(z)\lesssim \dist \big(z,[\fa',\fb'] \big)^{-k} \ll (n/\log n)^k$. Thus $\Phi_k(z)$ satisfies
\begin{align}\label{e:phikk'}
	\Phi_{k}(z)\Phi_{k'}(z)\lesssim \Phi_{k+k'}(z).
\end{align}

The statement \eqref{e:eqform} will eventually be a consequence of the following proposition, which states that if we have some (weak) a priori estimate on $\cA^{(\bmv, \bms)}(z)$, then the dynamical loop equation can be used to obtain an improved estimate on $\cA^{(\bmv, \bms)}(z)$. 
\begin{prop}\label{c:boost2}
	
	Adopt \Cref{a:mz}. Suppose that $r=1$, $s_1=-1$, and $v_1=v\in \mathscr D\setminus \mathscr D(\fr)$; let $z$ be a complex number with either $z \in \{ v, \overline{v} \}$ or $z \in \mathscr{D}\cap \mathscr{D} (\mathfrak{r})$. Further assume for any $1\leq k\leq 2q$ (as defined in \eqref{e:defq}) we have 
	\begin{align}\label{e:hyp1}
		\Big|\cD_z^{(k)} \big( \log \cA^{(\bmv,\bms)} (w) - b(w) \big) \Big|\leq   \log n \cdot \Phi_k(v).
	\end{align}
	
	\noindent Then, we have the improved estimate
	\begin{align}\label{e:conc2}
		\Big| \cD_z^{(k)} \big( \log \cA^{(\bmv,\bms)} (w) - b(w) \big) \Big|\leq \fC \Phi_k(v),\quad 1\leq k\leq 2q,
	\end{align}
	
	\noindent for some constant $\fC = \mathfrak{C} (R) > 1$.
	
\end{prop}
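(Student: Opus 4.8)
\textbf{Proof proposal for Proposition \ref{c:boost2}.}

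The plan is to feed the a priori bound \eqref{e:hyp1} into the contour representation \eqref{e:toest} (via its deformations \eqref{e:toest1}, \eqref{e:toest2}, \eqref{e:toest33}) and bootstrap a factor of $\log n$ away. The starting point is to express $\mathcal{D}_z^{(k)}\big(\log\mathcal{A}^{(\bmv,\bms)}-b\big)$ through the right side of \eqref{e:toest}, which is a contour integral of $\log\big(1+\mathcal{E}^{(\bmv,\bms)}(w)\big)$ against a kernel with $k_1+k_2 = k+1$; expanding the logarithm and using $\big|\mathcal{E}^{(\bmv,\bms)}\big|\lesssim 1/n\eta \ll 1$ from \Cref{p:firstod} (applicable because the contours stay at distance $\gtrsim\eta$ from $[\fa',\fb']$ and from the singularities), it suffices to estimate $\mathcal{D}_\bullet^{(j)}\mathcal{E}^{(\bmv,\bms)}$ for $0\le j\le k$ on the relevant contours. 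So the core of the argument is a bound of the form $\big|\mathcal{D}_w^{(j)}\mathcal{E}^{(\bmv,\bms)}\big|\lesssim \Phi_{j+1}(v)$ (or $\lesssim n^{-1}\eta^{-j-1}$ in the bounded region), with the gain over the hypothesis coming from the explicit $1/n$ in every term of $\mathcal{E}^{(\bmv,\bms)}$.

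Concretely, I would decompose $\mathcal{E}^{(\bmv,\bms)} = \mathcal{E}_0^{(\bmv,\bms)} + \big(\psi^{(\bmv,\bms)}-1\big)/(f+1)$ as in \eqref{e:defcE}, using the rewriting \eqref{e:newcE0} of $\mathcal{E}_0^{(\bmv,\bms)}$ in terms of $\hat g$. The second piece is elementary: $\psi^{(\bmv,\bms)}(w)-1 = \OO(1/n|w-v|)$ with all derivatives controlled, and $1/(f+1)$ has bounded $\mathcal{D}$-derivatives by \eqref{e:fbound}, so its contribution to $\mathcal{D}_z^{(k)}\mathcal{E}^{(\bmv,\bms)}$ near $z\in\{v,\bar v\}$ is $\OO(\Phi_{k+1}(v))$ and is negligible elsewhere. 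For $\mathcal{E}_0^{(\bmv,\bms)}$, I would write it as $\frac{f(w+1/n)}{f(w+1/n)+1}\big(e^{D(w)}-1\big) + \big(\frac{f(w+1/n)}{f(w+1/n)+1}-\frac{f(w)}{f(w)+1}\big)$ where $D(w) = \int_w^{w+1/n}\partial_u\big(\log\mathcal{A}^{(\bmv,\bms)}(u)-b(u)\big)\,du + \OO(1/n)$ (the $\OO(1/n)$ absorbing $\log(\hat g(w)/\hat g(w+1/n))$, $\frac{1}{n}\psi(w)$, and the $b$-compensation). Applying $\mathcal{D}_w^{(j)}$ and using the Leibniz-type rule \eqref{e:lincomb}, the product structure \eqref{e:secddd}, and the hypothesis \eqref{e:hyp1} to control $\mathcal{D}^{(j')}D$ by $\frac{1}{n}\cdot \log n\cdot\Phi_{j'+1}(v)\cdot \frac{1}{n}$-type quantities — here one crucially uses that $D$ itself carries a factor $1/n$ so that $\log n\cdot\Phi_{j'+1}(v)/n$ is much smaller than $\Phi_{j'+2}(v)$ once we also pick up the $1/n$ from the integration length — together with \eqref{e:phikk'} to collapse products of $\Phi$'s, yields $\big|\mathcal{D}_w^{(j)}\mathcal{E}_0^{(\bmv,\bms)}\big|\lesssim \Phi_{j+1}(v)$; the second difference term is handled directly by $\mathcal{D}_w^{(j)}\big(\frac{f(w+1/n)}{f(w+1/n)+1}-\frac{f(w)}{f(w)+1}\big) = \int_w^{w+1/n}\mathcal{D}_w^{(j)}\partial_u\frac{f(u)}{f(u)+1}\,du$ and \eqref{e:ftest}. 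Plugging these into \eqref{e:toest} and doing the outer contour integral (bounding the kernel by $\eta^{-(k_1+k_2)}$ on $\omega_{v,\bar v}$, $\omega_{z,\bar z}$, and by the tame $\dist(\cdot,[\fa',\fb'])$-powers on $\omega+$ or $\widehat\omega+$) reproduces $\Phi_k(v)$ without the $\log n$, giving \eqref{e:conc2}. The $q$-dependent tail term $\dist(z,[\fa',\fb'])^{-q}(\log n/n)^{q-k}$ in the definition \eqref{e:Phic} of $\Phi_k$ is designed precisely so that the finitely many self-improvement iterations (there are $\OO(\log n)$ of them, but each loses only a constant, and the choice $q=4\ceil{R/\fc}$ makes the accumulated error on $\mathscr D$ tiny) close up.

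The main obstacle I anticipate is bookkeeping the interplay between the three spectral regimes for $z$ (namely $z\in\{v,\bar v\}$, $z\in\mathscr D(\fr)\setminus\mathscr D(2\fr)$, $z\in\mathscr D(2\fr)$) and the corresponding contour deformations \eqref{e:toest1}–\eqref{e:toest33}: in each case the contribution of the contours $\omega+$ or $\widehat\omega+$ (far from $[\fa',\fb']$) must be shown to be dominated by the $\dist^{-q}(\log n/n)^{q-k}$ term, while the contribution of the small contours $\omega_{v,\bar v}$ and $\omega_{z,\bar z}$ must produce exactly $\Phi_{k}(v)$. A second delicate point is verifying that, when $z\in\{v,\bar v\}$, the derivatives of $\mathcal{A}^{(\bmv,\bms)}$ entering $D(w)$ for $w$ near $v$ are themselves controlled — this is where the hypothesis \eqref{e:hyp1} is invoked at orders up to $2q$ rather than just $k$, since differentiating the product $\mathcal{E}_0^{(\bmv,\bms)}$ a total of $j\le k$ times and expanding $e^{D}-1$ to high order forces us to control $\mathcal{D}^{(j')}D$ for $j'$ possibly as large as $2q$. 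Everything else is a routine (if lengthy) chain of Cauchy-estimate manipulations using \eqref{e:lincomb}, \eqref{e:secddd}, \eqref{e:ftest}, and \eqref{e:phikk'}, together with $\big|\mathcal{E}^{(\bmv,\bms)}\big|\ll 1$ to linearize the logarithm.
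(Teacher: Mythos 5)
Your proposal follows essentially the same route as the paper's own proof: express $\mathcal{D}_z^{(k)}(\log\mathcal{A}^{(\bmv,\bms)}-b)$ via the contour formula \eqref{e:toest}, Taylor-expand $\log(1+\mathcal{E}^{(\bmv,\bms)})$, establish a derivative bound on $\mathcal{E}_0^{(\bmv,\bms)}$ (the paper's \eqref{e:DcE0}, $\lvert\mathcal{D}_v^{(\hat k)}\mathcal{E}_0^{(\bmv,\bms)}\rvert\lesssim\Phi_{\hat k}(v)$) by exploiting the $1/n$ increment hidden in $\mathcal{A}(w+1/n)/\mathcal{A}(w)$ together with the hypothesis \eqref{e:hyp1}, and use $n\dist(v,I)\gtrsim n^{\mathfrak{c}}\gg\log n$ to absorb the logarithm. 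That is precisely the mechanism the paper uses (compare \eqref{e:dlogA}, \eqref{e:DvklogA2}), and your anticipation that \eqref{e:hyp1} is needed up to order $2q$ because high Taylor orders and the Leibniz rule push the derivative count up is also correct.

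Two small slips in the sketch: the $\dist^{-q}(\log n/n)^{q-k}$ tail in $\Phi_k$ is used to absorb the \emph{remainder} of the Taylor expansion of $\log(1+\mathcal{E})$ (the $\mathcal{O}((n\eta)^{-(q+1)})$ term in \eqref{e:ttsa}), not the contribution of the far contour; in the paper the $\omega+$ contribution is controlled directly by $\max_{\omega+}\lvert\mathcal{E}^{(\bmv,\bms)}\rvert\lesssim\Phi_1(v)\lesssim\Phi_k(v)$ using monotonicity of $\Phi_k$ in $k$. Also, the comparison at the end of your third sentence should read ``much smaller than $\Phi_{j'}(v)$'' rather than $\Phi_{j'+2}(v)$ — you lose an order from the $1/n$ and the extra $\Phi$-index, you don't gain one. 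Neither affects the validity of the plan.
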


\begin{proof}
	
	In the following we prove \eqref{e:conc2} for $z=v$; the proofs in the other cases when $z=\overline v$ or $z\in \mathscr D\cap \mathscr D(\fr)$ follow from essentially the same argument. We denote $\eta=\dist \big(v, [\fa',\fb'] \big)$. Thanks to \eqref{e:zinf}, for any $z\in \mathscr D$ with $\dist \big(z,[\fa',\fb'] \big) \gtrsim \eta$, we have that
	\begin{align}
		\label{e:logAbb}
		\left|\log \cA^{(\bmv,\bms)} (z)-\log \cA^{(\bmv,\bms)} (\infty) -b (z)\right|\lesssim \frac{1}{n\eta}\lesssim1.
	\end{align}
	
	For $q\leq k\leq 2q$, \eqref{e:conc2} follows from \eqref{e:logAbb} and the trivial bound \eqref{e:Dzhbb},
	\begin{align*}
		\Big| \cD_v^{(k)} \big(\log \cA^{(\bmv,\bms)} (w) -b (w) \big) \Big|
		&\lesssim \frac{1}{\eta^k} = \Phi_k(v).
	\end{align*}
	
	\noindent For $k\leq q$, we recall that from  \eqref{e:toest1}, $\cD_v^{(k)} \big(\log \cA^{(\bmv,\bms)} (w) + b (w) \big)$ is a collection of values in the form,
	\begin{align}\label{e:toest1copy}
		\frac{1}{2\pi \ri}\oint_{\omega_{v,\overline v}}\frac{\log \big( 1+\cE^{(\bmv,\bms)}(w) \big)\rd w}{(w-v)^{k_1}(w-\overline v)^{k_2}}-\frac{1}{2\pi \ri}\oint_{\omega+}\frac{\log \big( 1+\cE^{(\bmv,\bms)}(w) \big) \rd w}{(w-v)^{k_1}(w-\overline v)^{k_2}}, \quad k_1+k_2=k+1,
	\end{align}
	where the contour $\omega_{v,\overline v}$ encloses $v,\overline v$, and the contour $\omega+\subseteq\mathscr D(\fr)$ encloses $[\fa',\fb']$ and $v,\overline v$.
	
	From our construction, $\cE_0^{(\bmv,\bms)}(w)$  is analytic for $w\in \mathscr D$, since all the denominators in \eqref{e:newcE0} are bounded away from $0$. However, 
	$\psi^{(\bmv,\bms)}(w)-1=1/n(w-v)$ has a pole at $v$. Along $\big\{ w \in \mathbb{C} : |w-v| = \eta / 3 \big\}$ and $\big\{ w \in \mathbb{C} : |w-\overline v| = \eta / 3 \big\}$, \eqref{e:errorbb} gives $\big| \cE^{(\bmv,\bms)}(w) \big| \lesssim 1/n\eta$. So, we can expand $\log \big( 1+\cE^{(\bmv,\bms)}(w) \big)$  as
	\begin{align}\begin{split}\label{e:ttsa}
			\sum_{j=1}^q & \frac{(-1)^{j-1} \big( \cE^{(\bmv,\bms)}(w) \big)^j}{j}+\OO\left(\frac{1}{(n\eta)^{q+1}}\right)\\
			&=\sum_{j_1+j_2= 1}^q\frac{(-1)^{j_1+j_2-1}}{j_1+j_2}{j_1+j_2\choose j_1}
			\big( \cE_0^{(\bmv,\bms)}(w) \big)^{j_1}\left(\frac{\psi^{(\bmv,\bms)}-1}{f(w)+1}\right)^{j_2}+\OO\left(\frac{1}{(n\eta)^{q+1}}\right)\\
			&=\sum_{j_1+j_2= 1}^q\frac{(-1)^{j_1+j_2-1}}{j_1+j_2}{j_1+j_2\choose j_1} \big( \cE_0^{(\bmv,\bms)}(w) \big)^{j_1}\left(\frac{1}{f(w)+1}\right)^{j_2}\frac{1}{n^{j_2}(w-v)^{j_2}}+\OO\left(\frac{1}{(n\eta)^{q+1}}\right).
	\end{split}\end{align}
	Thanks to the trivial bound \eqref{e:Dzhbb}, the contribution from the error term $\OO((n\eta)^{-(q+1)})$ is bounded by $\OO((n\eta)^{-(q+1)}\eta^{-k})$, which is smaller than $\Phi_k(v)$.
	By plugging \eqref{e:ttsa} into the first term in \eqref{e:toest1copy}, we get expressions involving linear combinations (with bounded coefficients) of quantities  of the form
	\begin{align}\begin{split}\label{e:tt1}
			\phantom{{}={}}\frac{1}{2\pi\ri}  \oint_{\omega_{v,\overline v}} & \big( \cE_0^{(\bmv,\bms)}(w) \big)^{j_1}\left(\frac{1}{f(w)+1}\right)^{j_2}\frac{\mathrm{d} w}{n^{j_2}(w-v)^{k_1 + j_2}(w-\overline v)^{k_2}}\\
			& \qquad \qquad \in \frac{1}{n^{j_2}}\cD_v^{(k_1+k_2+j_2-1)}\left( \big(\cE_0^{(\bmv,\bms)}(w) \big)^{j_1}\left(\frac{1}{f(w)+1}\right)^{j_2}\right),
	\end{split}\end{align}
	where $1\leq j_1+j_2\leq q$. 
	Thanks to \eqref{e:lincomb}, \eqref{e:tt1} decomposes to sum of products involving
	\begin{align}\label{e:tt1copy}
		\frac{1}{n^{j_2}}\cD_v^{( \hat k_1)} \big( \cE_0^{(\bmv,\bms)}(w) \big)^{j_1}\cD_v^{(\hat k_2)}\left(\frac{1}{f(w)+1}\right)^{j_2}\quad \hat k_1+\hat k_2=k_1+k_2+j_2-1=k+j_2.
	\end{align}

	In the following we show that under our hypothesis \eqref{e:hyp1}, for $0\leq \hat k\leq 2q$,
	\begin{align}\label{e:DcE0}
		\big| \cD_v^{(\hat k)}(\cE_0^{(\bmv, \bms)}) \big|
		&\lesssim \Phi_{\hat k}(v),
	\end{align}
	
	\noindent which together with \eqref{e:phikk'} and \eqref{e:lincomb}, would imply
	\begin{align}\label{e:secdd}
		\cD_v^{(\hat k_1)}\left(\cE_0^{(\bmv,\bms)}\right)^{j_1}
		\lesssim \sum \left|\prod_{u = 1}^{j_1} \cD_v^{(\tilde k_u)}\left(\cE_0^{(\bmv,\bms)}\right)\right|\lesssim \prod_{u = 1}^{j_1} \Phi_{\tilde k_u}(v)\lesssim  \Phi_{\hat k_1}(v),\quad j_1\geq 1,
	\end{align}
	where the sum is over index sets $(\tilde{k}_1, \tilde{k}_2, \ldots , \tilde{k}_{j_1})$ such that $\sum_{u = 1}^{j_1}\tilde k_u=\hat k_1$.
	
	For $q\leq {\hat k}\leq 2q$, \eqref{e:DcE0} follows from the trivial bound \eqref{e:Dzhbb},
	$
	\cD_v^{({\hat k})}(\cE_0^{(\bmv, \bms)})
	\lesssim \eta^{-\hat k}=\Phi_{\hat k}(v)$.
	For ${\hat k}\leq q$, we recall the expression of $\cE_0^{(\bmv, \bms)}$ from \eqref{e:newcE0}. For the last term in \eqref{e:newcE0} we have
	\begin{align}\label{e:firsttf}
		\left|\cD_v^{({\hat k})}\left(\frac{f(w+1/n)}{f(w+1/n)+1}-\frac{f(w)}{f(w)+1}\right)\right|
		\lesssim \frac{|\Im[f(v)/f(v)+1]|}{n\Im[v] \dist(v, I)^{\hat k}}\leq \Phi_{\hat k}(v),\quad {\hat k}\geq 0,
	\end{align}
	where we used \eqref{e:ftest}.
	Next we need to estimate the first term in \eqref{e:newcE0}
	\begin{align*}
		\cD_v^{({\hat k})} \Bigg( \frac{f(w+1/n)}{f(w+1/n)+1} \left(\frac{\cA^{(\bmv,\bms)}(w+1/n)e^{-b(w+1/n)}}{\cA^{(\bmv,\bms)}(w)e^{-b(w)}}\frac{\hat g(w)}{\hat g(w+1/n)}e^{\frac{1}{n}\psi(w)}-1\right) \Bigg),
	\end{align*}
	which, thanks to \eqref{e:lincomb}, is a linear combination of
	\begin{align}\label{e:linear}
		\cD_v^{(\hat k-\hat k')}\left(\frac{f(w+1/n)}{f(w+1/n)+1}\right) \cD_v^{(\hat k')}\left(\frac{\cA^{(\bmv,\bms)}(w+1/n)e^{-b(w+1/n)}}{\cA^{(\bmv,\bms)}(w)e^{-b(w)}}\frac{\hat g(w)}{\hat g(w+1/n)}e^{\frac{1}{n}\psi(w)}-1\right).
	\end{align}
	If $\hat k'=0$, we can use \eqref{e:dhgbound} to bound the second term of \eqref{e:linear} as
	\begin{align}\begin{split}\label{e:k'=0i}
			\frac{\cA^{(\bmv,\bms)}(v+1/n)e^{-b(v+1/n)}}{\cA^{(\bmv,\bms)}(v)e^{-b(v)}}\frac{\hat g(v)}{\hat g(v+1/n)}e^{\frac{1}{n}\psi(v)}-1
			=\frac{\cA^{(\bmv,\bms)}(v+1/n)e^{-b(v+1/n)}}{\cA^{(\bmv,\bms)}(v)e^{-b(v)}}-1+\OO\left(\frac{1}{n}\right).
	\end{split}\end{align}
	For the first term on the right side of \eqref{e:k'=0i}, by a Taylor expansion, using \eqref{e:hyp1}, we have
	\begin{align}\begin{split}\label{e:dlogA}
			\Bigg| \log \bigg(  \frac{\cA^{(\bmv,\bms)}(v+1/n)e^{-b(v+1/n)}}{\cA^{(\bmv,\bms)}(v)e^{-b(v)}}\bigg)\Bigg|
			& \lesssim 
			\left|  \sum_{j=1}^{q}\frac{\del_v^j \big( \log\cA^{(\bmv, \bms)}(v)-b(v) \big)}{n^j}+\OO\left(\frac{1}{(n\eta)^{q+1}}\right)\right|\\
			&\lesssim \sum_{j=1}^{q}n^{-j} \log n \cdot \Phi_{j}(v) +\frac{1}{(n\eta)^{q+1}} \\
			& \lesssim \frac{ \log n  |\Im[f(v)/(f(v)+1) ]|}{n^2\Im[v]\dist(v, I)} +\OO\left(\frac{1}{n}\right)
			\lesssim \frac{\big| \Im[f(v)/(f(v)+1)] \big|}{n\Im v}\ll1 ,
	\end{split}\end{align}
	where we used $\Phi_j(v)/n^j$ is decreasing in $j$, $q\geq 1/\fc$ thus $1/(n\eta)^{q+1}\leq 1/n$,  and the bound $n\dist(v,I)\geq n^{\fc}\gg  \log n$. By plugging \eqref{e:dlogA} into \eqref{e:k'=0i}, we get
	\begin{align}\label{e:k'=0}
		\frac{\cA^{(\bmv,\bms)}(v+1/n)e^{-b(v+1/n)}}{\cA^{(\bmv,\bms)}(v)e^{-b(v)}}\frac{\hat g(v)}{\hat g(v+1/n)}e^{\frac{1}{n}\psi(v)}-1
		=\OO\left(\frac{|\Im[f(v)/(f(v)+1)]|}{n\Im[v]}\right).
	\end{align}
	
	\noindent Then for $\hat k'=0$, by plugging \eqref{e:k'=0} into \eqref{e:linear}, and using \eqref{e:ftest}, we have if $\hat k\geq 1$,
	\begin{align}\label{e:8110}
		|\eqref{e:linear}|\lesssim  \frac{|\Im[f(v)/f(v)+1]|}{\Im[v] \dist(v,I)^{\hat k-1}} \left(\frac{|\Im[f(v)/(f(v)+1)]|}{n\Im[v]}\right)\leq \Phi_{\hat k}(v),
	\end{align}
	or $\hat k=0$,
	\begin{align}\label{e:81102}
		|\eqref{e:linear}|\lesssim   \left(\frac{|\Im[f(v)/(f(v)+1)]|}{n\Im[v]}\right)\leq \Phi_{0}(v).
	\end{align}

	For $\hat k'\geq 1$, using \eqref{e:lincomb} again, the second term on the right side of \eqref{e:linear} is a linear combination of quantities of the form
	\begin{align}\begin{split}\label{e:llc}
			&\phantom{{}={}}\left|\cD_v^{(\hat k'-\hat k'')}\left(\frac{\hat g(w)}{\hat g(w+1/n)}e^{\frac{1}{n}\psi(w)}\right) \cD_v^{(\hat k'')}\left(\frac{\cA^{(\bmv,\bms)}(w+1/n)e^{-b(w+1/n)}}{\cA^{(\bmv,\bms)}(w)e^{-b(w)}}\right)\right|\\
			& \qquad \qquad \qquad \qquad \qquad \lesssim \left(\frac{1}{n}\right)^{\bm1(\hat k'-\hat k''\geq 1)}\left|\cD_v^{(\hat k'')}\left(\frac{\cA^{(\bmv,\bms)}(w+1/n)e^{-b(w+1/n)}}{\cA^{(\bmv,\bms)}(w)e^{-b(w)}}\right)\right|,
	\end{split}\end{align}
	where we used \eqref{e:dhgbound}, as well as the analyticity of $\hat{g}$ and $\psi$ on $\Lambda$ (enabling the use of \eqref{e:Dzhbb} with $\eta$ of order $1$), to bound the first term.
	
	If $\hat k''=0$, we can bound the last term on the right side of \eqref{e:llc} by $\OO(1)$ using \eqref{e:dlogA}.  By plugging \eqref{e:llc} into \eqref{e:linear} and using \eqref{e:ftest}, the sum of such terms with $\hat k''=0$ is bounded 
	\begin{align}\label{e:8111}
		\frac{\big| \Im[f(v)/(f(v)+1)] \big|}{\Im[v] \dist(v,I)^{\hat k-\hat k'-1}} \left(\frac{1}{n}\right)^{\bm1(\hat k'-\hat k''\geq 1)}\lesssim \Phi_{\hat k}(v). 
	\end{align}
	
	To estimate the second term on the right side of \eqref{e:llc} with $1\leq \hat k''\leq q$, we first show that
	\begin{align}\label{e:DvklogA}
		\left|\cD_v^{(\hat k'')} \log \left(\frac{\cA^{(\bmv,\bms)}(w+1/n)e^{-b(w+1/n)}}{\cA^{(\bmv,\bms)}(w)e^{-b(w)}}\right)\right|\lesssim \Phi_{\hat k''}(v).
	\end{align}
	By a Taylor expansion, we have
	\begin{align}\begin{split}\label{e:DvklogA2}
			\phantom{{}={}}\Bigg|\cD_v^{(\hat k'')} \log \bigg( & \frac{\cA^{(\bmv,\bms)}(w+1/n)e^{-b(w+1/n)}}{\cA^{(\bmv,\bms)}(w)e^{-b(w)}}\bigg)\Bigg|\\
			&\lesssim 
			\left|\cD_v^{({\hat k''})} \left( \sum_{j=1}^{q-{\hat k''}}\frac{\del_w^j \big( \log\cA^{(\bmv, \bms)}(w)-b(w) \big)}{n^j}+\OO\left(\frac{1}{(n\eta)^{q-{\hat k''}+1}}\right)\right)\right|\\
			&\lesssim  \sum_{j=1}^{q-{\hat k''}} n^{-j} \Big| \cD_v^{({\hat k''}+j)} \big(\log\cA^{(\bmv, \bms)}(w)-b(w) \big) \Big| +\left|\cD_v^{({\hat k''})} \OO\left(\frac{1}{(n\eta)^{q-{\hat k''}+1}}\right)\right|\\
			&\lesssim \sum_{j=1}^{q-{\hat k''}}n^{-j} \log n \cdot \Phi_{{\hat k''}+j}(v) +\frac{1}{n\eta}\frac{1}{\eta^{q}}\frac{1}{n^{q-{\hat k''}}}\lesssim \Phi_{\hat k''}(v),
	\end{split}\end{align}
	where in the third inequality we used our assumption \eqref{e:hyp1} to bound the sum, and the trivial bound \eqref{e:Dzhbb} for the last term. In the last line, we used that $n \dist(v,I)\geq n^{\fc}\gg  \log n $, so $ n^{-j} \log n \cdot \Phi_{{\hat k''}+j}(v) \lesssim \Phi_{\hat k''}(v)$ for $j\geq 1$.
	Thanks to \eqref{e:lincomb} and the relation \eqref{e:phikk'}, \eqref{e:DvklogA2} implies that 
	\begin{align}\label{e:DvklogA3}
		\Bigg|\cD_v^{({\hat k''})} \bigg( \log \Big(\frac{\cA^{(\bmv,\bms)}(w+1/n)e^{-b(w+1/n)}}{\cA^{(\bmv,\bms)}(w)e^{-b(w)}}\Big) \bigg)^j \Bigg|\lesssim \Phi_{\hat k''}(v),\quad j\geq 1.
	\end{align}
	Using \eqref{e:DvklogA3} as input, we get the following bound for the second term on the right side of \eqref{e:llc}, 
	\begin{align}\begin{split}\label{e:llc2}
			&\phantom{{}={}} \left|\cD_v^{(\hat k'')} \left(\frac{\cA^{(\bmv,\bms)}(w+1/n)e^{-b(w+1/n)}}{\cA^{(\bmv,\bms)}(w)e^{-b(w)}}\right)\right|\\
			&  \qquad  =\left|\cD_v^{(\hat k'')}\Bigg(1+\frac{1}{j!}\sum_{j=1}^{q-\hat k''} \bigg( \log \Big(\frac{\cA^{(\bmv,\bms)}(w+1/n)e^{-b(w+1/n)}}{\cA^{(\bmv,\bms)}(w)e^{-b(w)}} \Big) \bigg)^j+\OO \bigg( \frac{1}{(n\eta)^{q-\hat k''+1}}\bigg) \Bigg)\right|
			\lesssim \Phi_{\hat k''}(v), 
	\end{split}\end{align}
	
	\noindent for $1\leq \hat{k}'' \leq q$. By plugging \eqref{e:llc2} and \eqref{e:llc} into \eqref{e:linear}, the sum of such terms with $\hat k''\geq 1$ is bounded by
	\begin{align}\begin{split}\label{e:8112}
			&\sum_{\hat k''=1}^{\hat k'}\frac{\big| \Im[f(v)/(f(v)+1)]|}{\Im[v] \dist(v,I)^{\hat k-\hat k'-1}} \left(\frac{1}{n}\right)^{\bm1(\hat k'-\hat k''\geq 1)}\Phi_{\hat k''}(v)
			\lesssim \frac{\big| \Im[f(v)/(f(v)+1)] \big|}{\Im[v] \dist(v,I)^{\hat k-\hat k'-1}} \Phi_{\hat k'}(v)\\
			&\lesssim \frac{1}{\dist(v,I)^{\hat k-\hat k'}}\left( \frac{\big| \Im[f(v)/(f(v)+1)] \big|}{n\Im[v]\dist(v,I)^{\hat k'}}+\frac{1}{\eta^q}\left(\frac{\log n}{n}\right)^{q-\hat k'}\right)	\leq  \Phi_{\hat k}(v),
	\end{split}\end{align}
	where we used  that $\Phi_{\hat k''}(v)$ is nondecreasing in $\hat k''$ in the first line; in the second line we used the last estimate in \eqref{e:ftest}. 
	
	We conclude from \eqref{e:8110}, \eqref{e:81102}, \eqref{e:8111} and \eqref{e:8112} that for $0\leq \hat k\leq q$,
	\begin{align}\begin{split}\label{e:secondt}
			&\phantom{{}={}}\Bigg|\cD_v^{(\hat k)} \bigg( \frac{f(z+1/n)}{f(z+1/n)+1} \Big( \frac{\cA^{(\bmv,\bms)}(z+1/n)e^{-b(z+1/n)}}{\cA^{(\bmv,\bms)}(z)e^{-b(z)}}\frac{\hat g(z)}{\hat g(z+1/n)}e^{\frac{1}{n}\psi(z)}-1 \Big) \bigg) \Bigg| \lesssim \Phi_{\hat k}(v).
	\end{split}\end{align}
	The claim \eqref{e:DcE0} follows from combining \eqref{e:firsttf} and \eqref{e:secondt}.
	
	Now we can use \eqref{e:secddd} and \eqref{e:secdd} to upper bound \eqref{e:tt1copy}. There are several cases,
	If $\hat k_1=0$, then $\hat k_2=k+j_2$. We may assume that $j_2\geq 1$, for otherwise the second factor in \eqref{e:tt1copy} vanishes. In this case, \eqref{e:tt1copy} becomes
	\begin{align*}
		|\eqref{e:tt1copy}|\lesssim \frac{1}{n^{j_2}}\cD_v^{(k+j_2)}\left(\left(\frac{1}{f(w)+1}\right)^{j_2}\right) \lesssim \frac{\Im[f(v)/f(v)+1]}{n^{j_2}\Im[v]\dist(v, I)^{k+j_2-1}}\lesssim \Phi_{k}(v).
	\end{align*}
	If $\hat k_2=0$, then $\hat k_1=k+j_2$. We may assume that $j_1\geq 1$ (for otherwise the first factor in \eqref{e:tt1copy} vanishes). In this case, \eqref{e:tt1copy} becomes
	\begin{align*}
		|\eqref{e:tt1copy}|\lesssim \frac{1}{n^{j_2}}\cD_v^{k+j_2}\left((\cE_0^{(\bmv,\bms)}(w))^{j_1}\right)\lesssim \frac{\Phi_{k+j_2}(v)}{n^{j_2}}\lesssim \Phi_{k}(v).
	\end{align*}
	If $\hat k_1,\hat k_2\geq 1$, then we may assume that $j_1,j_2\geq 1$, and  \eqref{e:tt1copy} becomes
	\begin{align*}
		|\eqref{e:tt1copy}|\lesssim \Phi_{\hat k_1}(v)\frac{\Im[f(v)/f(v)+1]}{n^{j_2}\Im[v]\dist(v, I)^{\hat k_2-1}} \lesssim n^{-(j_2-1)} \Phi_{\hat{k}_1} (v) \Phi_{\hat{k}_2-1} (v) \lesssim n^{-(j_2-1)} \Phi_{k + j_2-1} (v) \lesssim \Phi_{k}(v).
	\end{align*}
	It follows that the first term in \eqref{e:toest1copy} is bounded by 
	\begin{align}\label{e:fafat}
		\left|\frac{1}{2\pi \ri}\oint_{\omega_{v,\overline v}}\frac{\log \big(1+\cE^{(\bmv,\bms)}(w) \big) \rd w}{(w-v)^{k_1}(w-\overline v)^{k_2}}\right|\lesssim \Phi_k(v).
	\end{align}
	For the second term in \eqref{e:toest1copy}, we may deform $\omega+$ so that it is bounded away from $v$. Then, any $w\in\omega+$ is bounded away from $v$, so \eqref{e:defcE} gives
	\begin{align*}
		\left|\frac{\psi^{(\bmv,\bms)}(w)-1}{f(w)+1}\right|\lesssim \frac{1}{n}, \qquad \text{meaning} \qquad  \big| \cE^{(\bmv,\bms)}(w) \big| =
		\big| \cE^{(\bmv,\bms)}_0(w) \big|
		+\OO\left(\frac{1}{n}\right).
	\end{align*}
	Thus it follows
	\begin{align}\label{e:fafat1}
		\left|\frac{1}{2\pi \ri}\oint_{\omega+}\frac{\log \big( 1+\cE^{(\bmv,\bms)}(w) \big)\rd w}{(w-v)^{k_1}(w-\overline v)^{k_2}}\right|\lesssim \max_{w\in \omega+} \big| \cE^{(\bmv,\bms)}(w) \big|\lesssim \frac{1}{n}+\max_{w\in \omega+} \big| \cE_0^{(\bmv,\bms)}(w) \big|.
	\end{align}
	
	\noindent For the last term on the right side, recall the expression of $\cE_0^{(\bmv,\bms)}(w)$ from \eqref{e:newcE0}
	\begin{align}\begin{split}\label{e:fafat2}
			|\cE_0^{(\bmv,\bms)}(w)|&\lesssim \left|\frac{\cA^{(\bmv,\bms)}(w+1/n)e^{-b(w+1/n)}}{\cA^{(\bmv,\bms)}(w)e^{-b(w)}}\frac{\hat g(w)}{\hat g(w+1/n)}e^{\frac{1}{n}\psi(w)}-1\right|+\left|\frac{f(w+1/n)}{f(w+1/n)+1}-\frac{f(w)}{f(w)+1}\right|\\
			&\lesssim  \frac{|\Im[f(v)/(f(v)+1)]|}{n\Im[v]\dist(v,I)}+\frac{1}{n}\lesssim\Phi_1(v)\lesssim \Phi_k(v),
	\end{split}\end{align}
	where in the first line, the first term on the right side is bounded as in  \eqref{e:k'=0}, the second term is bounded by using \eqref{e:ftest} and the fact that $w$ is bounded away from $[\fa',\fb']$; for the second line, we used \eqref{e:imratio}. We conclude from plugging \eqref{e:fafat}, \eqref{e:fafat1} and \eqref{e:fafat2} into \eqref{e:toest1} that
	\begin{align*}
		\Big| \cD_v^{(k)} \big(\log \cA^{(\bmv,\bms)} (w) - b(w) \big) \Big|=|\eqref{e:toest1}|\lesssim \Phi_k(v).
	\end{align*}
	This finishes the proof of \Cref{c:boost2}, by taking $\fC$ larger than the above implicit constant.
\end{proof}

Now we can establish the first statement of \Cref{p:improve2}, given by equation \eqref{e:improve1c}. 

\begin{proof}[Proof of \eqref{e:improve1c}]
	We recall from \eqref{e:eqform} and \eqref{e:Phi1} that \eqref{e:improve1c} is equivalent to 
	\begin{flalign*} 
		\Big|\cD_v^{(1)} \big( \log \cA^{(\bmv,\bms)} (w) -b (w) \big) \Big| \lesssim \Phi_1(v),
	\end{flalign*} 
	
	\noindent  where $\Phi_1(v)$ is explicitly given by \eqref{e:Phi1}.
	Thanks to \Cref{c:boost2}, a weak estimate of the form,
	\begin{flalign*} 
		\Big| \cD_v^{(k)} \big( \log \cA^{(\bmv,\bms)} (w) - b(w) \big) \Big|\leq  \log n \cdot \Phi_k(v), \qquad \text{for $v\in \mathscr D$},
	\end{flalign*}
	
	\noindent for $k \in \qq{1, 2q}$, implies the improved estimate 
	\begin{flalign*} 
		\Big | \cD_v^{(k)} \big(\log \cA^{(\bmv,\bms)} (w) - b(w) )\big) \Big|\leq \fC \Phi_k(v), \quad 1\leq k\leq 2q.
	\end{flalign*} 
	
	\noindent For $v\in \mathscr D\cap \mathscr D(\fr)$ then $\dist(v,[\fa',\fb'])\geq \fr$, \Cref{p:firstod} gives that
	\begin{align*}
		&\left|\frac{1}{2\pi\ri}\oint_\omega\frac{ \big( \log \cA^{(\bmv,\bms)}(w)-b(w) \big)\rd w}{(w-z)^{k_1}(w-\overline z)^{k_2}}\right|
		=\left|\frac{1}{2\pi \ri}\oint_{\omega}\frac{\log \big( 1+\cE^{(\bmv,\bms)}(w) \big)\rd w}{(w-z)^{k_1}(w-\overline z)^{k_2}}\right|\lesssim \frac{1}{n},
	\end{align*}
	
	\noindent so that $\big| \cD_z^{(k)}(\log \cA^{(\bmv,\bms)} - b) \big| \lesssim \Phi_k(v)$ for $z=v,\overline v$ or $z\in \mathscr D(\fr)$. 
	In particular, if we take $\eta_0=\fr$, then \eqref{e:conc2} holds for $v\in \mathscr D$ with $\dist \big( v,[\fa',\fb'] \big) \geq \eta_0$. We define the sequence 
	\begin{align*}
		\eta_{i+1}=  \eta_i-n^{-3q},\quad i\geq 0.
	\end{align*}
	We show by induction on $i$ that the claim \eqref{e:improve1c} holds for $v\in \mathscr D\cap\{w:\dist(w,[\fa',\fb'])\geq \eta_i\}$, whenever $\eta_i \gg 1/n$ (meaning $i = \mathcal{O} (n^{3q})$). Thanks to \eqref{e:logAbb} and the trivial bound \eqref{e:Dzhbb}, we have for any $z,v\in \mathscr D$, and $1\leq k\leq 2q$,
	\begin{align*}
		\big| \del_v\cD_z^{(k)}(\log \cA^{(\bmv,\bms)}-b) \big|\lesssim n^{k+1}\leq n^{2q+1}.
	\end{align*}
	In particular $\cD_z^{(k)}(\log \cA^{(\bmv,\bms)}-b)$ is Lipschitz in $v$ with Lipschitz constant at most $n^{2q+1}$. Therefore if \eqref{e:conc2} holds for $v\in \mathscr D$ with $\dist \big( v,[\fa',\fb'] \big) \geq \eta_i$, then \eqref{e:hyp1} holds for $v\in \mathscr D$ with $\dist \big( v,[\fa',\fb'] \big)\geq \eta_{i+1}$. And \Cref{c:boost2} implies that \eqref{e:conc2} also holds for $v\in \mathscr D$ with $\dist \big( v,[\fa',\fb'] \big)\geq \eta_{i+1}$. In this way, by repeatedly using \Cref{c:boost2}, we conclude that $\big| \cD_z^{(k)}(\log \cA^{(\bmv,\bms)}- b) \big|\lesssim \Phi_k(v)$ holds for any $v\in \mathscr D$ and $z\in \{v,\overline v\}$ or $z\in\mathscr D(\fr)$. 
\end{proof}

\subsection{Proof of Statement \eqref{e:improve2c}}\label{s:2state}
We next proceed to prove the second statement of \Cref{p:improve2}, given by \eqref{e:improve2c}. To that end, we take  $R=4p$, $s_1=s_2=\cdots=s_{2p-1}=1$, $s_{2p}=s_{2p+1}=\cdots=s_{4p-2}=-1$,  and $s_{4p-1}=-1, v_{4p-1}=-v$.
For any $0\leq r\leq 2p-1$, the derivative $\del_z \del_{v_1}\del_{v_2}\cdots \del_{v_{r}} \log\cA^{(\bmv,\bms)}$ after specializing at $z=v, v_j=v_{j+2p-1}\in \{v,\overline v\}$, gives the $(r+1)$st joint cumulants of 
\begin{align}\label{e:vcc}
	\sum_{i = 1}^m \frac{1}{v-x_i-e_i/n}-\sum_{i = 1}^m \frac{1}{v-x_i}, \quad \sum_{i = 1}^m \frac{1}{\overline v-x_i-e_i/n}-\sum_{i = 1}^m \frac{1}{\overline v-x_i}.
\end{align}
For any subset $J\subseteq \qq{1,2p-1}$, we denote 
$\del_{V_J}\log\cA^{(\bmv, \bms)}=(\prod_{j\in J}\del_{v_j})\log\cA^{(\bmv, \bms)}$.
In order to establish \eqref{e:improve2c}, it suffices to show that for any subset\footnote{Here, $J$ does not contain repeated elements (for otherwise the associated cumulant below is equal to $0$).} $J\subseteq\qq{1,2p-1}$ with $0\leq |J|=r\leq 2p-1$ we have 
\begin{align}\label{e:eqform2}
	\left|\del_z \del_{V_J} \log\cA^{(\bmv,\bms)}(w)|_{z=v, v_j=v_{j+2p-1}\in \{v,\overline v\}}\right|\lesssim \frac{|\Im[f(v)/(f(v)+1)]|}{n^{r}\Im[v] \dist(v,I)^{2r}}.
\end{align}

\noindent Indeed, assuming \eqref{e:eqform2}, we deduce that the $(r+1)$-th joint cumulants of \eqref{e:vcc} are bounded by 
\begin{align*}
	\displaystyle\frac{\big| \Im[f(v)/(f(v)+1)] \big|}{n^{r}\Im[v] \dist(v,I)^{2r}}.
\end{align*}

\noindent Then, the left side of \eqref{e:improve2c} is a combination of the first $(2p)$-th cumulants of \eqref{e:vcc}
\begin{align}\begin{split}\label{e:eqform3}
		\phantom{{}={}}\bE_\bQ & \left|\sum_{i = 1}^m \frac{1}{z-x_i-e_i/n}-\frac{1}{z-x_i}-\bE_\bQ\left[\sum_{i = 1}^m \frac{1}{z-x_i-e_i/n}-\frac{1}{z-x_i}\right]\right|^{2p}\\
		&\lesssim \sum_{\sum (r_u+1)=2p} \prod_u \frac{\big| \Im[f(z)/(f(z)+1)] \big|}{n^{r_u}\Im[z] \dist(z,I)^{2r_u}}\\
		&\lesssim \left(\frac{|\Im[f(z)/(f(z)+1)]|}{n\Im[z] \dist(z,I)^{2}}\right)^{p}+\frac{|\Im[f(z)/(f(z)+1)]|}{n^{2p-1}\Im[z] \dist(z,I)^{4p-2}},
\end{split}\end{align}
where the sum is over positive integer sequences $(r_1, r_2, \ldots , r_k)$ such that $\sum_{u = 1}^k (r_u+1)=2p$. To get the last line, we used the fact that the largest term in the sum is either obtained when $k=p$, $r_1=r_2=\cdots=r_p=1$ or when $k=1$, $r_1=2p-1$.  

Set $R = 4p$, and then define the parameter $q>1$ as in \eqref{e:defq}. Further denote the control parameters 
by setting
\begin{align}\label{e:Phicl}
	\begin{aligned} 
		\Phi_{k, 0} (z) = \Phi_k (z), \qquad & \text{for $k \geq 0$}; \\
		\Phi_{k,\ell}(z)= \frac{\big| \Im[f(z)/(f(z)+1)] \big|}{n^{\ell}\Im[z] \dist(z,I)^{k+2\ell-1}}+\frac{1}{\dist \big( z,[\fa',\fb'] \big)^q}\left(\frac{\log n}{n}\right)^{q-k-\ell}, \qquad & \text{for $\ell \geq 1$ and $q \leq  k + \ell$}; \\
		\Phi_{k, \ell} (z) = \dist \big( z, [\fa',\fb'] \big)^{-k - \ell}, \qquad & \text{for $\ell \geq  1$ and $k + \ell > q$}.
	\end{aligned}
\end{align}

\noindent Since $q \geq 4R/\fc\geq 16 p/\fc$, for $k=1$ and $\ell\leq 2p-1$, we have
\begin{align}\label{e:Phikl}
	\Phi_{1,\ell}(z)= \frac{\big| \Im[f(z)/(f(z)+1)] \big|}{n^\ell\Im[z]\dist(z,I)^{2\ell}} +\OO\left(n^{2p}\left(\frac{\log n}{n^\fc}\right)^q\right)\asymp \frac{\big| \Im[f(z)/(f(z)+1)] \big|}{n^\ell\Im[z]\dist(z,I)^{2\ell}},
\end{align}
where we used the lower bound $\big| \Im[f(z)/(f(z)+1)] \big| /\Im[z]\gtrsim 1$ from \eqref{e:imratio}. 
Moreover,  similarly to \eqref{e:phikk'}, one can check that $\Phi_{k,\ell}(z)$ satisfies
\begin{align}\label{e:phikk'l}
	\Phi_{k,\ell}(z)\Phi_{k',\ell'}(z)\lesssim \Phi_{k+k', \ell+\ell'}(z).
\end{align}

The statement \eqref{e:eqform2} will be deduced as a consequence of the following two propositions, which state that if we have some (weak) a priori estimates on $\del_{V_J}\log\cA^{(\bmv, \bms)}(z)$, then the dynamical loop equation can be used to obtain improved estimates on $\del_{V_J}\log\cA^{(\bmv, \bms)}(z)$. 
\begin{prop}\label{c:boost25}

	Take  $s_1=s_2=\cdots=s_{2p-1}=1$, $s_{2p}=s_{2p+1}=\cdots=s_{4p-2}=-1$,  and $s_{4p-1}=-1, v_{4p-1}=-v$, for some $v \in \mathbb{C}$. The following two statements hold.
	
	\begin{enumerate} 
	\item Suppose $v\in \mathscr D(\fr)$. Assume for any $z\in \mathscr D(\fr)$ and $J \subseteq \qq{1, 2p-1}$ with $1 \leq k + |J| \leq 2q$ that
	\begin{align}\label{e:hyp1025}
		\bigg| \cD_z^{(k)} \Big( \del_{V_J} \big( \log \cA^{(\bmv,\bms)} (w) -b (w) \big) \Big) \Big|_{ v_j=v_{j+2p-1}\in \{v,\overline v\}} \bigg| \lesssim \frac{ \log n }{n^{1\vee |J|}}.
	\end{align}
	
	\noindent Then we have the improved estimate
	\begin{align}\label{e:conc2l25}
		\bigg| \cD_z^{(k)} \Big( \del_{V_J} \big(\log \cA^{(\bmv,\bms)} (w) - b (w) \big) \Big) \Big|_{ v_j=v_{j+2p-1} \in \{v,\overline v\}} \bigg| \leq\frac{\fC}{n^{1\vee |J|}},
	\end{align}
	
	\noindent for some constant $\mathfrak{C} = \mathfrak{C} (R) > 1$.
	
	\item Suppose $v\in \mathscr D$. Assume for any $z=v,\overline v$ or $z\in \mathscr D(\fr)$ $J\subseteq \qq{1, 2p-1}$ with $1 \leq k + |J| \leq 2q$ that
	\begin{align}\label{e:hyp10}
		\bigg| \cD_z^{(k)} \Big( \del_{V_J} \big(\log \cA^{(\bmv,\bms)} (w) - b(w) \big) \Big) \Big|_{ v_j=v_{j+2p-1}\in \{v,\overline v\}} \bigg| \leq  \log n \cdot \Phi_{k, |J|}(v).
	\end{align}
	
	\noindent Then we have the improved estimate
	\begin{align}\label{e:conc2l}
		\bigg| \cD_z^{(k)} \Big( \del_{V_J}(\log \cA^{(\bmv,\bms)} (w) -  b(w) \big) \Big) \Big|_{ v_j=v_{j+2p-1}\in \{v,\overline v\}} \bigg| \leq\fC\Phi_{k, |J|}(v),
	\end{align}
	
	\noindent for some constant $\fC = \mathfrak{C} (R) > 1$.
	\end{enumerate} 
\end{prop}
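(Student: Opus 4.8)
The plan is to prove Proposition~\ref{c:boost25} by a bootstrapping argument entirely parallel to the one used in the proof of \Cref{c:boost2}, the only difference being that we must now track the additional differentiations $\del_{V_J}$ applied to $\log\cA^{(\bmv,\bms)}(w)-b(w)$, and that the control parameters are now the two-index family $\Phi_{k,\ell}$ from \eqref{e:Phicl}. The two statements (for $v\in\mathscr D(\fr)$ and for $v\in\mathscr D$) are proved the same way; I will describe the second one, which is the harder (mesoscopic) case.

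First I would record the basic input. From \Cref{p:firstod} and the identity \eqref{e:toest} (applied to the differentiated expression), $\cD_z^{(k)}(\del_{V_J}(\log\cA^{(\bmv,\bms)}-b))$ is, up to contour deformations as in \eqref{e:toest1}--\eqref{e:toest33}, a sum of integrals of $\del_{V_J}\log(1+\cE^{(\bmv,\bms)}(w))$ against kernels $(w-z)^{-k_1}(w-\bar z)^{-k_2}$ with $k_1+k_2=k+1$. Expanding the logarithm into a finite Taylor series of length $q$ (as in \eqref{e:ttsa}) and distributing $\del_{V_J}$, one gets a linear combination of products of derivatives $\cD^{(\cdot)}$ of $\cE_0^{(\bmv,\bms)}$, of $(f+1)^{-1}$, and of the poles coming from $\psi^{(\bmv,\bms)}-1=\sum_{j}1/n(w-v_j)$; the presence of the factors $v_j\in\{v,\bar v\}$ (with $s_j=\pm1$) means each $\del_{v_j}$ acting on such a pole lowers the power of $1/n$ available but raises the order of the singularity, which is precisely what the index $\ell=|J|$ in $\Phi_{k,\ell}$ is designed to absorb. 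The multiplicativity \eqref{e:phikk'l} and the Leibniz-type decomposition \eqref{e:lincomb} let these products be bounded term by term, exactly as the single-index estimates \eqref{e:secddd}, \eqref{e:secdd} were used in \Cref{c:boost2}.

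The key step is the analogue of \eqref{e:DcE0}: under the hypothesis \eqref{e:hyp10}, one must show $|\cD_v^{(\hat k)}(\del_{V_J}\cE_0^{(\bmv,\bms)})|\lesssim n^{-(1\vee|J|-|J|')}\Phi_{\hat k,|J|'}(v)$ for the relevant splittings of $J$, and likewise for powers $(\cE_0^{(\bmv,\bms)})^{j_1}$. This is obtained by Taylor-expanding $\cA^{(\bmv,\bms)}(w+1/n)e^{-b(w+1/n)}/\cA^{(\bmv,\bms)}(w)e^{-b(w)}$ in $1/n$ as in \eqref{e:dlogA}--\eqref{e:DvklogA2}, applying $\del_{V_J}$ inside, and invoking the a priori bound \eqref{e:hyp10} on $\cD_v^{(\hat k+j)}(\del_{V_{J'}}(\log\cA^{(\bmv,\bms)}-b))$; the smallness $n\dist(v,I)\geq n^{\fc}\gg\log n$ is what kills the extra $\log n$ factors in the induction, just as before. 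One then feeds this into the expansion of $\cD_v^{(k)}(\del_{V_J}(\log\cA^{(\bmv,\bms)}-b))$ and checks case-by-case (on whether $\hat k_1$ or $\hat k_2$ vanishes, and whether the pole powers $j_2$ exceed $1$) that every term is $\lesssim n^{-(1\vee|J|)}$-times-$\Phi_{k,|J|}(v)$ — these are the same three cases appearing after \eqref{e:secdd} in the proof of \Cref{c:boost2}, now with the second index tracked. Finally, the continuity/Lipschitz argument at the end of the proof of \eqref{e:improve1c} (using the a priori bound \eqref{e:logAbb}, differentiated, to get a crude $n^{O(q)}$ Lipschitz constant) propagates the estimate from $\dist(v,[\fa',\fb'])\geq\fr$ inward along a mesh $\eta_{i+1}=\eta_i-n^{-3q}$, converting the weak hypothesis \eqref{e:hyp10} into itself on the larger domain and thereby closing the bootstrap.

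The main obstacle I anticipate is purely bookkeeping: organizing the Leibniz expansion of $\del_{V_J}$ applied to products of $\cE_0^{(\bmv,\bms)}$, of $(f+1)^{-1}$, and of the explicit poles, and verifying that every resulting monomial in the $\Phi_{k,\ell}$'s collapses, via \eqref{e:phikk'l}, to the single target $\Phi_{k,|J|}(v)$ (with the correct compensating powers of $1/n$ when a $\del_{v_j}$ hits something other than a pole). There is no genuinely new analytic difficulty beyond what \Cref{c:boost2} already handled for $|J|=0$; the work is in confirming that the combinatorial exponents match, in particular that differentiating the pole $1/n(w-v_j)$ exactly $\ell$ times produces a factor comparable to $\dist(v,I)^{-2\ell}n^{-\ell}$ relative to the undifferentiated estimate, which is the content built into the definition \eqref{e:Phicl} of $\Phi_{k,\ell}$. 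Once \eqref{e:conc2l} is established, specializing to $k=1$, $z=v$, and the particular sign vector with $J=\qq{1,r}$ yields \eqref{e:eqform2} via \eqref{e:Phikl}, and hence \eqref{e:improve2c} through the cumulant-to-moment bound \eqref{e:eqform3}.
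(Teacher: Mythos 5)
Your proposal follows essentially the same line as the paper's proof: differentiate the contour identity \eqref{e:toest} by $\del_{V_J}$, expand $\log(1+\cE^{(\bmv,\bms)})$ to finite order, distribute the $\del_{V_J}$ via Leibniz across powers of $\cE_0^{(\bmv,\bms)}$ and of the explicit poles $(\psi^{(\bmv,\bms)}-1)/(f+1)$, establish the analogue of \eqref{e:DcE0} for $\del_{V_{\hat J}}\cE_0^{(\bmv,\bms)}$ using the Taylor expansion in $1/n$ of $\cA(w+1/n)e^{-b(w+1/n)}/\cA(w)e^{-b(w)}$ and the a priori hypothesis, and then combine everything by the two-index multiplicativity \eqref{e:phikk'l}. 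That is exactly the structure of the paper's argument leading to \eqref{e:DcE0l} and the case analysis in \eqref{e:inomega}--\eqref{e:outomega}, and your observation that the definition of $\Phi_{k,\ell}$ is engineered so that each differentiation of a pole term trades a power of $1/n$ against a factor of $\dist(v,I)^{-2}$ is precisely the right accounting.

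Two small imprecisions worth flagging, neither fatal. First, your statement of the key bound, $|\cD_v^{(\hat k)}(\del_{V_J}\cE_0^{(\bmv,\bms)})|\lesssim n^{-(1\vee|J|-|J|')}\Phi_{\hat k,|J|'}(v)$, does not reduce to the paper's \eqref{e:DcE0l} (which is simply $\lesssim\Phi_{\hat k,|\hat J|}(v)$) under any natural choice of $|J|'$; the dependence of $\Phi_{k,\ell}$ on $\ell$ is through $n^{-\ell}\dist(v,I)^{-(k+2\ell-1)}$, so scaling the $\ell$-index is not equivalent to pulling out a raw power of $1/n$. Second, the continuity/Lipschitz argument along the mesh $\eta_{i+1}=\eta_i-n^{-3q}$ does not belong to the proof of Proposition~\ref{c:boost25} itself (that proposition is a fixed-domain self-improvement from $\log n\cdot\Phi_{k,|J|}$ to $\fC\,\Phi_{k,|J|}$); it is used afterwards, in the deduction of \eqref{e:improve2c}, to propagate the conclusion of the proposition inward from $\mathscr D(\fr)$. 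Folding the two together blurs what the proposition actually asserts, though your end-to-end narrative is otherwise accurate.
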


Let us mention that the first statement of \Cref{c:boost25} is nearly a special case of the second statement, except that in the former we allow $\dist (v, I)$ to grow with $n$. As such, these two parts of the proposition are very closely related; the only reason why we separated their statements is that the definition of $\Phi_{k, |J|} (v)$ involves the function $f(v)$, whose domain is $\Lambda$ and is thus not defined for general $v \in \mathscr{D} (\mathfrak{r})$.

\begin{proof}[Proof of \Cref{c:boost25}]
	
	Here, we only establish the second statement of \Cref{c:boost25}. The proof of the first statement is entirely analogous, by replacing every appearance of $\Phi_{k, |J|} (v)$ below with the quantity $1/n^{1 \vee |J|}$. 
	
	In the following we prove \eqref{e:conc2l} for $z=v$; the proofs in the other cases that $z=\overline v$ or $z\in \mathscr D(\fr)$ follow from essentially the same argument. To simplify notation, we take $|J|=r\leq 2q-1$ and $J=\{1,2,\cdots,r\}$; we also denote $\eta=\dist \big( v, [\fa',\fb'] \big)$.
	For $q\leq k+r\leq 2q$, using \eqref{e:logAbb}, the trivial bound \eqref{e:Dzhbb} implies 
	\begin{align}\label{e:hyp30}
		\left|\cD^{(k)}_z\del_{v_1}\del_{v_2}\cdots\del_{v_r}\big( \log \cA^{(\bmv,\bms)} (w) - b(w) \big) \big|_{z=v, v_i=v_{j+2p-1}\in \{v,\overline v\}}\right|\lesssim \frac{1}{\eta^{k+r}}=\Phi_{k,r}(v).
	\end{align}
	
	It therefore suffices to consider the case $1 \leq k+r\leq q$, in which case the proof will be similar to that of \Cref{c:boost2}. Specifically, by taking derivative with respect to $v_1, v_2, \cdots, v_r$ on both sides of \eqref{e:toest}, then specializing $z=v$ and $v_i\in\{v, \overline v\}$, we get
	\begin{align}\label{e:toest3}
		\frac{1}{2\pi\ri}\oint_\omega\frac{\del_{v_1}\del_{v_2}\cdots\del_{v_r} \big( \log \cA^{(\bmv,\bms)}(w)-b(w) \big)}{(w-v)^{k_1}(w-\overline v)^{k_2}}
		=-\frac{1}{2\pi \ri}\oint_{\omega}\frac{\del_{v_1}\del_{v_2}\cdots\del_{v_r}\log \big(1+\cE^{(\bmv,\bms)}(w) \big) \rd w}{(w-v)^{k_1}(w-\overline v)^{k_2}}.
	\end{align}
	
	\noindent By expanding $\log \big(1+\cE^{(\bmv, \bms)}(w) \big)$ as in \eqref{e:ttsa}, we must estimate
	\begin{align}\begin{split}\label{e:toestterm}
			\frac{1}{2\pi\ri}\oint_\omega & \del_{v_1}\del_{v_2}\cdots\del_{v_r}\left( \big( \cE_0^{(\bmv,\bms)}(w) \big)^{j_1}\left(\frac{\psi^{(\bmv, \bms)}(w)-1}{f(w)+1}\right)^{j_2}\frac{1}{(w-v)^{k_1}(w-\overline v)^{k_2}}\right)\rd w\\
			&=
			\frac{1}{2\pi \ri}\oint_{\omega}\sum \prod_{u=1}^{j_1} \del_{V_{J_u}} \big( \cE_0^{(\bmv,\bms)}(w) \big)\prod_{u=1}^{j_2}\del_{V_{J'_u}} \big( \psi^{(\bmv, \bms)}(w)-1 \big) \frac{1}{\big( f(w)+1 \big)^{j_2}}\frac{\rd w}{(w-v)^{k_1}(w-\overline v)^{k_2}},
	\end{split}\end{align}
	where the sum is over all index sets $J_1, J_2, \ldots , J_{j_1}, J_1', J_2', \ldots , J_{j_2}'$ such that $\bigcup_{u=1}^{j_1} J_u \cup \bigcup_{u=1}^{j_2} J'_u=\qq{1,r}$, and $j_1+j_2\leq q$. 
	Thanks to the trivial bound \eqref{e:Dzhbb}, the contribution from the error term in \eqref{e:ttsa} is at most $\big| \cE^{(\bmv,\bms)}(w) \big|^{q+1} \eta^{-k-r}=\OO\big( (n\eta)^{-q-1}\eta^{-k-r} \big)$, which is smaller than $\Phi_{k,r}(v)$ for $\eta \geq n^{\mathfrak{c} - 1}$ and $q \geq 4R / \mathfrak{c}$. 
	 We recall the expression of $\psi^{(\bmv, \bms)}(w)$ from \eqref{e:defpsi},
	\begin{align*}
		\psi^{(\bmv,\bms)}(w)-1=  \displaystyle\prod_{j = 1}^{4p - 1} \bigg( 1 - \displaystyle\frac{1}{n (v_j - w)} \bigg)^{-s_j} - 1.
	\end{align*}
	
	\noindent If $|w-v_j|\gtrsim \eta$, then $\big| 1/n(v_j-w) \big|\lesssim 1/n\eta\ll1$, so $\psi^{(\bmv,\bms)}(w)-1$ can be Taylor expanded as a sum of terms in the form
	\begin{align}\label{e:jinJ}
		\prod_{j\in L}\frac{1}{n(w-v_j)}, 
	\end{align}
	where $L$ is a multi-set of $\qq{1,4p-1}$, with $1\leq |L|\leq q$, plus an error $\OO((n\eta)^{-(q+1)})$. Again, thanks to the trivial bound \eqref{e:Dzhbb}, the contribution from such error term  in \eqref{e:toestterm} is $\OO((n\eta)^{-(q+1)}\eta^{-(k+r)})$, which is smaller than $\Phi_{k,r}(v)$.  The derivative of \eqref{e:jinJ} with respect to $V_{J'_u}$ is nonzero only if $J'_u\subseteq L$. If this is the case, then
	\begin{align}\label{e:jinJ2}
		\del_{V_{J'_u}}\prod_{j\in L}\frac{1}{n(w-v_j)}=\prod_{j\in J'_u} \frac{\text{multiplicity of $j$ in $L$}}{n(w-v_j)^2}\prod_{j\in L\setminus J'_u}\frac{1}{n(w-v_j)},
	\end{align}
	where $L\setminus J'_u$ is the multi-set obtained from $L$ by removing a copy of each element in $J_u'$.
	
	After specializing $v_j \in \{v,\overline v\}$, the right side of \eqref{e:jinJ2} is in the form
	\begin{align*}
		\frac{1}{n^{\ell_{1u}+\ell_{2u}-|J_u'|}(w-v)^{\ell_{1u}}(w-\overline v)^{\ell_{2u}}}, \quad  2 |J'_u|\leq \ell_{1u}+\ell_{2u}\leq q+|J'_u|.
	\end{align*}	
	Therefore after specializing $v_j \in \{v,\overline v\}$, the leading order term of
	$\prod_{u=1}^{j_2}\del_{V_{J'_u}} \big( \psi^{(\bmv, \bms)}(w)-1 \big)$ is a linear combination (with bounded coefficients) of terms of the form
	\begin{align}\label{e:afterexp}
		\frac{1}{n^{\ell_1+\ell_2-\sum_u |J'_u|}(w-v)^{\ell_1}(w-\overline v)^{\ell_2}}, \quad  2\sum_u |J'_u|\leq \ell_1+\ell_2\leq j_2q+\sum_{u=1}^{j_2} |J_u'|,
	\end{align}
	where $\ell_1=\sum_u \ell_{1u}$ and $\ell_2=\sum_u \ell_{2u}$.
	By plugging \eqref{e:afterexp} into \eqref{e:toestterm}, we get
	\begin{align}\begin{split}\label{e:twoterm}
			\frac{1}{2\pi \ri} & \displaystyle\frac{1}{n^{\ell_1+\ell_2-\sum_u |J'_u|}} \oint_{\omega}\prod_{u=1}^{j_1} \del_{V_{J_u}} \big( \cE_0^{(\bmv,\bms)}(w) \big) \frac{1}{\big( f(w)+1 \big)^{j_2}}\frac{\rd w}{(w-v)^{k_1+\ell_1}(w-\overline v)^{k_2+\ell_2}}\\
			&=
			-\frac{1}{2\pi \ri} \displaystyle\frac{1}{n^{\ell_1+\ell_2-\sum_u |J'_u|}}\oint_{\omega_{v,\overline v}}\prod_{u=1}^{j_1} \del_{V_{J_u}} \big( \cE_0^{(\bmv,\bms)}(w) \big)\frac{1}{\big( f(w)+1 \big)^{j_2}}\frac{\rd w}{(w-v)^{k_1+\ell_1}(w-\overline v)^{k_2+\ell_2}}\\
			& \qquad +\frac{1}{2\pi \ri} \displaystyle\frac{1}{n^{\ell_1+\ell_2-\sum_u |J'_u|}}\oint_{\omega+}\prod_{u=1}^{j_1} \del_{V_{J_u}} \big(\cE_0^{(\bmv,\bms)}(w) \big) \frac{1}{\big( f(w)+1 \big)^{j_2}}\frac{\rd w}{(w-v)^{k_1+\ell_1}(w-\overline v)^{k_2+\ell_2}},
	\end{split}\end{align}
	
	\noindent where the contour $\omega_{v,\overline v}$ encloses $v,\overline v$, and the contour $\omega+\subseteq\mathscr D(2\fr)$ encloses $[\fa',\fb']$ and $v,\overline v$. See Figure \ref{f:contour2}.
	Thanks to \eqref{e:lincomb}, the first term on the right side of \eqref{e:twoterm} decomposes to sum of terms of the form
	\begin{align}\label{e:tt1copy2}
		\frac{1}{n^{\ell_1+\ell_2-\sum_u |J'_u|}}\cD_v^{( \hat k_1)}\left(\prod_{u=1}^{j_1}\del_{V_{J_u}}(\cE_0^{(\bmv,\bms)}(w))\right)\cD_v^{(\hat k_2)}\left(\frac{1}{f(w)+1}\right)^{j_2}, 
	\end{align}
	where $\hat k_1+\hat k_2=k_1+k_2+\ell_1+\ell_2-1=k+\ell_1+\ell_2$.

	In the following we show that under our hypothesis \eqref{e:hyp10}, for $0\leq \hat  k+|\hat J|\leq 2q$,
	\begin{align}\label{e:DcE0l}
		\left|\cD_v^{(\hat k)}(\del_{V_{\hat J}}\cE_0^{(\bmv, \bms)})\right|
		&\lesssim \Phi_{\hat k,|\hat J|}(v),\quad \hat J\subseteq\qq{1,r}.
	\end{align}
	Together with  \eqref{e:lincomb} and \eqref{e:phikk'l}, \eqref{e:DcE0l} would imply for $j_1\geq 1$ that
	\begin{align}\label{e:secddl}
		\cD_v^{( \hat k_1)}\left(\prod_{u=1}^{j_1}\del_{V_{J_u}} \big(\cE_0^{(\bmv,\bms)}(w) \big) \right) \lesssim\sum \prod_{u=1}^{j_1} \left|\cD_v^{(\tilde k_u)}\left(\del_{V_{J_u}}\cE_0^{(\bmv,\bms)}\right)\right|\lesssim \prod_{u=1}^{j_1} \Phi_{\tilde k_u,|J_u|}(v)\lesssim  \Phi_{\hat k,\sum_u |J_u|}(v),
	\end{align}
	where the sum is over index sets $(\tilde{k}_1, \tilde{k}_2, \ldots , \tilde{k}_{j_1})$ such that $\sum_{u=1}^{j_1}\tilde k_u=\hat k$.
	
	If $\hat J=\emptyset$, \eqref{e:DcE0l} is \eqref{e:DcE0}. In the following we assume that $|\hat J|\geq 1$. For $q\leq \hat k+|\hat J|\leq 2q$, \eqref{e:DcE0l} follows from the trivial bound
	$
	\left|\cD_v^{(\hat k)}(\del_{V_{\hat J}}\cE_0^{(\bmv, \bms)})\right|
	\lesssim 1/\eta^{\hat k+|\hat J|}=\Phi_{\hat k, |\hat J|}
	$.
	For $\hat k+|\hat J|\leq q$, we recall the expression of $\cE_0^{(\bmv, \bms)}$ from \eqref{e:newcE0}, and we need to estimate 
	\begin{align*}
		\cD_v^{(\hat k)}\left(\frac{f(w+1/n)}{f(w+1/n)+1}\frac{\hat g(w)}{\hat g(w+1/n)}e^{\frac{1}{n}\psi(w)} \del_{V_{\hat J}}\left(\frac{\cA^{(\bmv,\bms)}(w+1/n)e^{-b(w+1/n)}}{\cA^{(\bmv,\bms)}(w)e^{-b(w)}}\right)\right),
	\end{align*}
	which, thanks to \eqref{e:lincomb}, is a linear combination of
	\begin{align}\label{e:linearl}
		\cD_v^{(\hat k-\hat k')}\left(\frac{f(w+1/n)}{f(w+1/n)+1}\frac{\hat g(w)}{\hat g(w+1/n)}e^{\frac{1}{n}\psi(w)}\right) \cD_v^{(\hat k')}\del_{V_{\hat J}}\left(\frac{\cA^{(\bmv,\bms)}(w+1/n)e^{-b(w+1/n)}}{\cA^{(\bmv,\bms)}(w)e^{-b(w)}}\right).
	\end{align}
	
	\noindent By essentially the same argument as for \eqref{e:DvklogA3}, we have
	\begin{align}\label{e:DvklogA3l}
		\cD_v^{(\hat k')}\del_{V_{\hat J}}\left(\log \left(\frac{\cA^{(\bmv,\bms)}(w+1/n)e^{-b(w+1/n)}}{\cA^{(\bmv,\bms)}(w)e^{-b(w)}}\right)\right)^j\lesssim \Phi_{\hat k', |\hat J|}(v),\quad \hat k'\geq 0.
	\end{align}
	Using \eqref{e:DvklogA3l} as input, we get the following bound for the second term on the right side of \eqref{e:linearl}, 
	\begin{align}\begin{split}\label{e:linearl2}
			\phantom{{}={}}& \left|\cD_v^{(\hat k')}\del_{V_{\hat J}}\left(\frac{\cA^{(\bmv,\bms)}(w+1/n)e^{-b(w+1/n)}}{\cA^{(\bmv,\bms)}(w)e^{-b(w)}}\right)\right|\\
			& \qquad =\left|\cD_v^{(\hat k')}\left(\frac{1}{j!}\sum_{j=1}^{q-\hat k'} \del_{V_{\hat J}} \left(\log\left(\frac{\cA^{(\bmv,\bms)}(w+1/n)e^{-b(w+1/n)}}{\cA^{(\bmv,\bms)}(w)e^{-b(w)}}\right)\right)^j+\OO\left(\frac{1}{(n\eta)^{q-\hat k'+1}\eta^{|\hat J|}}\right)\right)\right|
			\lesssim \Phi_{\hat k', |\hat J|}(v),
	\end{split}\end{align}
	for any $\hat k'\geq 0$.
	
	By plugging \eqref{e:linearl2} into \eqref{e:linearl}, if $\hat k'=\hat k$, we have
	$
	|\eqref{e:linearl}|\lesssim \Phi_{\hat k, |\hat J|}(v)
	$.
	For $k'\leq \hat k-1$, using \eqref{e:dhgbound} and \eqref{e:ftest},  we can bound the first term in \eqref{e:linear} as
	\begin{align}\label{e:fttfirst}
		\left|\cD_v^{(\hat k-\hat k')}\left(\frac{f(w+1/n)}{f(w+1/n)+1}\frac{\hat g(w)}{\hat g(w+1/n)}e^{\frac{1}{n}\psi(w)}\right)\right|\lesssim \frac{\big| \Im[f(v)/(f(v)+1)] \big|}{\Im[v] \dist(v,I)^{\hat k-\hat k'-1}}.
	\end{align}
	By plugging \eqref{e:linearl2} and \eqref{e:fttfirst} into  \eqref{e:linearl},  the sum of terms with $\hat k'\leq \hat k-1$ is bounded by
	\begin{align*}
		\sum_{\hat k'=0}^{\hat k-1}\frac{\big| \Im[f(v)/ (f(v)+1)] \big|}{\Im[v] \dist(v,I)^{\hat k-\hat k'-1}}\Phi_{\hat k', |\hat J|}(v)\lesssim \Phi_{\hat k, |\hat J|}(v),
	\end{align*}
	
	\noindent where we have used the last inequality in \eqref{e:ftest}. This finishes the proof of \eqref{e:DcE0l}.

	Now we can use \eqref{e:secddd} and \eqref{e:secddl} to upper bound \eqref{e:tt1copy2}.  There are two cases, namely, either $\hat k_2=0$ or $\hat k_2\geq 1$:
	\begin{align}\begin{split}\label{e:inomega}
			|\eqref{e:tt1copy2}|&\lesssim \frac{\Phi_{\hat k_1, \sum_{u}|J_u|}(v)}{n^{\ell_1+\ell_2-\sum_u |J_u'|}} \left(\bm1(\hat k_2= 0)+\frac{\big| \Im[f(v)/(f(v)+1)] \big|\bm1(\hat k_2\geq 1)}{\Im[v]\dist(v, I)^{\hat k_2-1}}\right)\\
			&\lesssim \frac{\Phi_{\hat k_1+\hat k_2, \sum_{u}|J_u|}(v)}{n^{\ell_1+\ell_2-\sum_u |J_u'|}}
			=\frac{\Phi_{k+\ell_1+\ell_2, \sum_{u}|J_u|}(v)}{n^{\ell_1+\ell_2-\sum_u |J_u'|}}
			\leq \frac{\Phi_{k+2\sum_u |J'_u|, \sum_{u}|J_u|}(v)}{n^{\sum_u |J_u'|}}
			\lesssim \Phi_{k, r}(v),
	\end{split}\end{align}
	where for the second to last inequality we used that $\ell_1+\ell_2\geq 2\sum_u |J'_u|$ from \eqref{e:afterexp}, and $\Phi_{k,\ell}(v)/n^k$ is deceasing in $k$: for the last inequality, we used $\sum_u |J_u'|+\sum_u |J_u|=r$. For the second term in \eqref{e:twoterm}, since $w\in\omega+$ is bounded away from $v$, we have 
	\begin{align}\begin{split}\label{e:outomega}
			&\left|\frac{1}{2\pi \ri} \displaystyle\frac{1}{n^{\ell_1+\ell_2-\sum_u |J'_u|}}\oint_{\omega+}\prod_{u=1}^{j_1} \del_{V_{J_u}} \big( \cE_0^{(\bmv,\bms)}(w) \big)\frac{1}{\big( f(w)+1 \big)^{j_2}}\frac{\rd w}{(w-v)^{k_1+\ell_1}(w-\overline v)^{k_2+\ell_2}}\right|\\
			& \qquad\lesssim \max_{w\in \omega+} \bigg| \frac{\prod_{u=1}^{j_1}   \del_{V_{J_u}} \big(\cE_0^{(\bmv,\bms)}(w) \big) }{ n^{\ell_1+\ell_2-\sum_u |J'_u|}} \bigg|\lesssim \frac{\Phi_{0, \sum_u|J_u|}(v)}{ n^{\ell_1+\ell_2-\sum_u |J'_u|}}
			\lesssim \frac{\Phi_{0, \sum_u|J_u|}(v)}{ n^{\sum_u |J'_u|}}\lesssim \Phi_{k,r}(v),
	\end{split}\end{align}
	where we used \eqref{e:DcE0l} and $\ell_1+\ell_2\geq 2\sum_u |J'_u|$ from \eqref{e:afterexp} in the second line.
	We conclude from plugging \eqref{e:inomega} and \eqref{e:outomega}  into \eqref{e:twoterm} 
	\begin{align*}
		\Big| \cD_z^{(k)}\del_{v_1}\del_{v_2}\cdots \del_{v_r} \big( \log \cA^{(\bmv,\bms)} - b (w) \big) \big|_{z=v, v_j=v_{j+2p-1}\in \{v,\bar v\}} \Big|=|\eqref{e:toest3}|\lesssim \Phi_{k,r}(v).
	\end{align*}
	This finishes the proof of \Cref{c:boost2}.
\end{proof}

Now we can establish the second statement of \Cref{p:improve2}, given by \eqref{e:improve2c}.

\begin{proof}[Proof of \eqref{e:improve2c}]
	We recall from \eqref{e:eqform2} and \eqref{e:eqform3} that \eqref{e:improve2c} follows from the estimates of the derivatives of $\log \cA^{(\bmv,\bms)} - b$:
	$\big| \cD_z^{(1)}\del_{V_J}\log (\cA^{(\bmv,\bms)} - b (w) ) |_{z=v, v_j=v_{j+2p-1}\in \{v,\bar v\}} \big| \lesssim \Phi_{1,{|J|}}(v)$, for any $J\subseteq \qq{1,2p-1}$, where $\Phi_{1,|J|}(v)$ is explicitly given in \eqref{e:Phikl}.
	Thanks to Propositions \ref{c:boost25}, if we have some weak estimates for $\big|\cD_z^{(k)}\del_{V_J}(\log \cA^{(\bmv,\bms)} (w) - b (w) )|_{v_j=v_{j+2p-1}\in\{v,\overline v\}} \big|$, then it implies better estimates.
	Next we show that \eqref{e:hyp1025} holds for $\dist \big(\{z, v\},[\fa',\fb'] \big)\asymp n$. In fact, for $J=\emptyset$, \eqref{e:hyp1025} follows from \Cref{p:firstod}. For $|J|\geq 1$, thanks to \eqref{e:logAbb}, and the trivial bound \eqref{e:Dzhbb} (with $\eta\asymp n$), we have 
	\begin{flalign*} 
		\Big| \cD_z^{(k)}\del_{V_J} \big(\log \cA^{(\bmv,\bms)} (w) - b (w) \big) \big|_{v_j=v_{j+2p-1}\in \{v,\bar v\}} \Big|
	\lesssim \eta^{-k - |J|}\lesssim \displaystyle\frac{1}{n^{1\vee |J|}}.
	\end{flalign*} 
	
	\noindent Thus, \eqref{e:hyp1025} holds for $\dist \big( \{z,v\},[\fa',\fb'] \big) \asymp n$.

	Then a continuity argument quickly implies that 
	\begin{flalign*} 
		\Big| \cD_z^{(k)}\del_{V_J} \big( \log \cA^{(\bmv,\bms)} (w) - b(w) \big) \big|_{ v_j=v_{j+2p-1}\in\{v,\overline v\}} \Big| \lesssim \displaystyle\frac{1}{n^{1\vee |J|}}, \quad \text{holds for any $z,v\in \mathscr D(\fr)$};
	\end{flalign*} 
	
	\noindent we omit further details, since they are very similar to the one implemented in the proof of  \eqref{e:improve1c} (by repeatedly using the first statement of \Cref{c:boost25}). In particular, for any $v\in \mathscr D(\fr)$, the assumption \eqref{e:hyp10} in \Cref{c:boost25} holds. Then, by a continuity argument that is again very similar to the one used in the proof of \eqref{e:improve1c} (by repeated use of the second statement of \Cref{c:boost25}), we conclude that 
	\begin{flalign*} 
		\Big| \cD_z^{(k)}\del_{V_J} \big(\log \cA^{(\bmv,\bms)} (w) - b(w) \big) \big|_{v_j=v_{j+2p-1}\in\{v,\overline v\}} \Big|\lesssim \Phi_{k, |J|}(v), \quad \text{holds for any $v\in \mathscr D$ and $z\in \{v,\bar v\}\cup \mathscr D(\fr)$}. 
	\end{flalign*} 
	
	\noindent This finishes the proof of \eqref{e:improve2c}.
\end{proof}

\subsection{Proof of \Cref{p:improve}}\label{s:finalproof}
Finally, in Section \ref{s:finalproof}, we prove \Cref{p:improve} using \Cref{p:improve2} as input.
\begin{proof}[Proof of \Cref{p:improve}]
	We recall the relation between $\bP$ and $\bQ$ from \eqref{e:changem}, and $b(z)$ from \eqref{e:defbz}. Then 
	\begin{flalign*}
		\del_z b(z) = \displaystyle\frac{1}{2 \pi \mathrm{i}} \displaystyle\oint_{\omega} \displaystyle\frac{\log \mathcal{B} (w) dw}{(w - z)^2},
	\end{flalign*}
	
	\noindent where the contour $\omega \subseteq \Lambda$ contains $[\mathfrak{a}, \mathfrak{b}]$, but not $z$. The transition probability \eqref{e:defLtnew} is a special case of $\bP$, and so \Cref{p:improve} follows if we show
	\begin{align}\begin{split}\label{e:improve1P}
			&\phantom{{}={}}\bE_\bP\left[\sum_{i = 1}^m \frac{1}{z-x_i-e_i/n}-\frac{1}{z-x_i}\right]=\del_zb(z)+\OO\left(\frac{|\Im[f(z)/(f(z)+1)]|}{n\Im[z] \dist(z,I)}\right).
	\end{split}\end{align}
	and 
	\begin{align}\begin{split}\label{e:improve2P}
			&\phantom{{}={}}\bE_\bP\left|\sum_{i = 1}^m \frac{1}{z-x_i-e_i/n}-\frac{1}{z-x_i}-\bE_\bP\left[\sum_{i = 1}^m \frac{1}{z-x_i-e_i/n}-\frac{1}{z-x_i}\right]\right|^{2p}\\
			& \qquad \qquad \qquad \qquad \qquad \lesssim \left(\frac{|\Im[f(z)/(f(z)+1)]|}{n\Im[z] \dist(z,I)^{2}}\right)^{p}+\frac{|\Im[f(z)/(f(z)+1)]|}{n^{2p-1}\Im[z] \dist(z,I)^{4p-2}}.
	\end{split}\end{align}
	
	We notice the trivial bound for the exponent involved in $\mathbb{P}$, given by
	\begin{align}\label{e:kbb}
		\left|\sum_{1\leq i,j\leq m}\frac{e_ie_j}{n^2}\kappa(x_i,x_j)+\OO(1/n)\right|\lesssim 1.
	\end{align} 
	Then, by \eqref{e:improve1c} and \eqref{e:changem}, \eqref{e:improve1P} is equivalent to 
	\begin{align}\begin{split}\label{e:smallg}
			\phantom{{}={}}\bE_\bQ & \Bigg[ \bigg(\sum_{i = 1}^m \frac{1}{z-x_i-e_i/n}-\frac{1}{z-x_i}-\del_z b(z) \bigg) \\ 
			&  \quad \times \bigg(e^{\sum_{i,j}\frac{e_ie_j}{n^2}\kappa(x_i,x_j)+\OO(1/n)}-\bE_\bQ\left[e^{\sum_{i,j}\frac{e_ie_j}{n^2}\kappa(x_i,x_j) + \mathcal{O} (1/n)}\right] \bigg) \Bigg] = \OO\left(\frac{|\Im[f(z)/(f(z)+1)]|}{n\Im[z] \dist(z,I)}\right).
	\end{split}\end{align}	
	
	\noindent We will prove \eqref{e:smallg} by the Cauchy--Schwarz inequality. In the following, we estimate the variances of the first and second term in \eqref{e:smallg}.
	
	We notice that $\big| \Im[f(z)/(f(z)+1)]/(n\Imaginary z) \big| \ll1$ for $z \in \mathscr{D}$. We can replace the inner expectation in \eqref{e:improve2c} by $\del_z b(z)$, and the error is negligible:
	\begin{align}\begin{split}\label{e:improve2w}
			&\phantom{{}={}}\bE_\bQ\left|\sum_{i = 1}^m \frac{1}{z-x_i-e_i/n}-\frac{1}{z-x_i}-\del_z b(z)\right|^{2p} \\
			& \qquad \qquad \lesssim \left|\bE_\bQ\left[\sum_{i = 1}^m \frac{1}{z-x_i-e_i/n}-\frac{1}{z-x_i}\right]-\del_z  b(z)\right|^{2p} \\
			& \qquad \qquad \qquad \qquad  + 
			\bE_\bQ\left|\sum_{i = 1}^m \frac{1}{z-x_i-e_i/n}-\frac{1}{z-x_i}-\bE_\bQ\left[\sum_{i = 1}^m \frac{1}{z-x_i-e_i/n}-\frac{1}{z-x_i}\right]\right|^{2p}
			\\
			& \qquad \qquad \lesssim\left(\frac{|\Im[f(z)/(f(z)+1)]|}{n\Im[z] \dist(z,I)^{2}}\right)^{p}+\frac{|\Im[f(z)/(f(z)+1)]|}{n^{2p-1}\Im[z] \dist(z,I)^{4p-2}}.
	\end{split}\end{align}
	In particular, by taking $p=1$, we have
	\begin{align}\begin{split}\label{e:improvep=1}
			\bE_\bQ\left|\sum_{i = 1}^m \frac{1}{z-x_i-e_i/n}-\frac{1}{z-x_i}-\del_z  b(z)\right|^{2}
			\lesssim\frac{|\Im[f(z)/(f(z)+1)]|}{n\Im[z] \dist(z,I)^{2}}.
	\end{split}\end{align}
	
	For the second term in \eqref{e:smallg}, we recall that $\kappa(z,w)$ is analytic in a neighborhood of $[\fa,\fb]$. The Laplace transform of $\sum_{ij}e_ie_j \kappa(x_i,x_j)/n^2$ can be computed as an integral of its expectation under deformation of $\bQ$ as
	\begin{align*}
		\bE_{\mathbb{Q}} \left[ \exp \Bigg( t \sum_{1 \le i, j \le m}\frac{e_ie_j}{n^2} \kappa(x_i,x_j) \Bigg) \right]
		= 1 + \int_0^t \bE_{\mathbb{Q}} \left[\sum_{1 \leq i, j \leq n}\frac{e_ie_j}{n^2} \kappa(x_i,x_j) \exp \Bigg( \theta \sum_{1 \le i,j \le m}\frac{e_ie_j}{n^2} \kappa(x_i,x_j) \Bigg) \right]\rd \theta.
	\end{align*} 
	This can be analyzed the same way as in \cite[Section 8.2]{NRWLT}, using the loop equation of the deformed measure, and it gives
	\begin{align}\label{e:kvar}
		\bE_\bQ\left| \exp \bigg( \sum_{1 \le i, j \le m} \frac{e_ie_j}{n^2}\kappa(x_i,x_j) \bigg) -\bE_\bQ \Bigg[ \exp \bigg( \sum_{1 \le i, j \le m}\frac{e_ie_j}{n^2}\kappa(x_i,x_j) \bigg) \Bigg] \right|^2\lesssim \frac{1}{n}.
	\end{align}
	Then using \eqref{e:kbb}, we conclude from \eqref{e:kvar} that
	\begin{align}\label{e:kvar2}
		\bE_\bQ\left| \exp \bigg( \sum_{1 \le i, j \le m} \frac{e_ie_j}{n^2}\kappa(x_i,x_j)+\OO(1/n) \bigg) -\bE_\bQ \Bigg[ \exp \bigg( \sum_{ij}\frac{e_ie_j}{n^2}\kappa(x_i,x_j) \bigg) \Bigg] \right|^2\lesssim \frac{1}{n}.
	\end{align}

	\noindent Using \eqref{e:improvep=1} and \eqref{e:kvar2}, \eqref{e:smallg} follows from the Cauchy-Schwarz inequality,
	\begin{align*}
		|\eqref{e:smallg}|\lesssim \sqrt{\frac{|\Im[f(z)/(f(z)+1)]|}{n\Im[z] \dist(z,I)^{2}}}\frac{1}{\sqrt n}=\OO\left(\frac{|\Im[f(z)/(f(z)+1)]|}{n\Im[z] \dist(z,I)}\right).
	\end{align*}
	This finishes the proof of \eqref{e:improve1P}.
	
	To prove \eqref{e:improve2P}, similarly to in \eqref{e:improve2w}, using \eqref{e:improve1P} as input, we can replace the inner expectation in \eqref{e:improve2P} by $\del_z b(z)$. In this way, \eqref{e:improve2P} is equivalent to
	\begin{align}\label{e:improve2ww}
		\bE_{ \bP}\left|\sum_{i = 1}^m \frac{1}{z-x_i-e_i/n}-\frac{1}{z-x_i}-\del_z b(z)\right|^{2p}
		\lesssim \left(\frac{|\Im[f(z)/(f(z)+1)]|}{n\Im[z] \dist(z,I)^{2}}\right)^{p}+\frac{|\Im[f(z)/(f(z)+1)]|}{n^{2p-1}\Im[z] \dist(z,I)^{4p-2}}.
	\end{align}	
	
	\noindent Thanks to the relation \eqref{e:changem} between $\bP$ and $\bQ$, and the trivial bound \eqref{e:kbb}, we conclude \eqref{e:improve2ww} from \eqref{e:improve2w} and \eqref{e:kvar2}.
\end{proof}

\appendix

\section{Thin Slice of Polygonal Domains}

\label{s:thin}

In this section, we check that thin slices of polygonal domains with their (tilted) limiting continuum height functions satisfy Assumption \ref{xhh}.
Throughout this section, we recall the notation from \Cref{HeightLimit}. 
We will focus on polygonal domains $\mathfrak{P} \subset \mathbb{R}^2$, given as follows. 
 	
 	\begin{definition} 
 		
 	\label{p} 
 	
 	A subset $\mathfrak{P} \subset \mathbb{R}^2$ is \emph{polygonal} if it is simply-connected and its boundary $\partial \mathfrak{P}$ consists of a finite union of line segments, each of which is parallel to an axis of $\mathbb{T}$. The domain\footnote{We assume throughout that all vertices of $n \mathfrak{P}$ are in $\mathbb{Z}^2$.} $\mathsf{P} = \mathsf{P}_n = n \mathfrak{P} \subset \mathbb{T}$, is then tileable and is therefore associated with a (unique, up to global shift) boundary height function $\mathsf{h} = \mathsf{h}_n$. By translating $\mathfrak{P}$ if necessary, we will assume that $\mathfrak{P} \subset \mathbb{R} \times \mathbb{R}_{\geq 0}$ and that $(0, 0) \in \partial \mathfrak{P}$. Then by shifting $\mathsf{h}$ if necessary, we will further suppose that $\mathsf{h} (0, 0) = 0$. Under this notation, we set $h: \partial \mathfrak{P} \rightarrow \mathbb{R}$ by $h (u) = n^{-1} \mathsf{h} (nu)$ for each $u \in \partial \mathfrak{P}$. Moreover, we abbreviate $\Adm (\mathfrak{P}) = \Adm (\mathfrak{P}; h)$; $\mathfrak{L} (\mathfrak{P}) = \mathfrak{L} (\mathfrak{P}; h)$; and $\mathfrak{A} (\mathfrak{P}) = \mathfrak{A} (\mathfrak{P}; h)$, they do not depend on the above choice of global shift fixing $h$.
 	
 	\end{definition} 	 
	 
  	The following result from \cite{LSCE,DMCS} describes properties of the limit shape $H^*$ and complex slope $f_t (x)$ for polygonal domains $\fP$ as in \Cref{p}.
  	 	
	 \begin{prop}[{\cite{LSCE,DMCS}}]
	 
	 \label{pa1}
	 
	 Adopt the notation of \Cref{p}, and assume that the domain $\mathfrak{R} = \mathfrak{P}$ is polygonal with at least $6$ sides. Then following statements hold.
	 
	 \begin{enumerate}
	 	
	\item For any $(x, t) \in \mathfrak{P} \setminus \mathfrak{L} (\mathfrak{P})$, we have $\nabla H^* (x, t) \in \big\{ (0, 0), (1, 0), (1, -1) \big\}$. 
	 \item The arctic boundary $\mathfrak{A} = \mathfrak{A} (\mathfrak{P})$ is an algebraic curve, and its singularities are all either ordinary cusps or tacnodes.
	 \item Fix $(x_0, t_0) \in \overline{\mathfrak{L}}$. There exists a neighborhood $\mathfrak{U} \subset \mathbb{C}^2$ of $(x_0, t_0)$ and a real analytic function $Q_0 : \mathfrak{U} \rightarrow \mathbb{C}$ such that, for any $(x, t) \in \mathfrak{U} \cap \overline{\mathfrak{L}}$, we have
			\begin{flalign}
				\label{e:defQ0} 
				Q_0 \big( f_t (x) \big) = x \big( f_t (x) + 1 \big) - t f_t (x).
			\end{flalign}
			 There exists a nonzero rational function $Q$ such that, for any $(x,t)\in \overline{\mathfrak{L}}$, we have   
	 	\begin{flalign}\label{e:qfh}
	 		Q \bigg( f_t (x), x - \displaystyle\frac{t f_t (x)}{f_t (x) + 1} \bigg) = 0.
	 	\end{flalign}

 		\item For any $(x, t) \in \overline{\mathfrak{L}}$, $(x,t)\in \mathfrak{A}(\fP)$ if and only if $f_t(x)$ is a double root of \eqref{e:defQ0}.
 		
	 \end{enumerate}
	 
	 \end{prop}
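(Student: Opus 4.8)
The plan is to follow the complex-analytic approach of Kenyon--Okounkov \cite{LSCE}, refined by \cite{DMCS}, taking as inputs only \Cref{hzh}, \Cref{fequation}, and the piecewise-frozen structure of the boundary height function of a polygonal domain. Statement~(1) is the most elementary and is logically independent of the rest. By the structure of the surface tension, $\nabla H^*\in\partial\mathcal{T}$ on $\mathfrak{P}\setminus\mathfrak{L}(\mathfrak{P})$, where (in the convention of \Cref{FunctionWalks}) $\overline{\mathcal{T}}$ is the triangle with vertices $(0,0)$, $(1,0)$, $(1,-1)$. A purely combinatorial fact about tilings of polygons is that along every axis-parallel segment of $\partial\mathfrak{P}$ the boundary height function $h$ is linear with $\nabla h$ equal to one of these three vertices, each such segment being fully packed by a single lozenge type. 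A standard comparison and maximum-principle argument, e.g.\ using \Cref{heightcompare} against the two extreme admissible extensions, then propagates this to the interior, so every connected frozen component carries one of the three corner gradients; this gives~(1).

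For~(2) and~(3) I would solve the complex Burgers equation \eqref{ftx} by characteristics, exactly as in \eqref{e:ccff}--\eqref{zlineart}: $f$ is constant along the straight lines $z_t(u)=u+t\,f_0(u)/(f_0(u)+1)$, so the quantity $u:=z-t\,f_t(z)/(f_t(z)+1)$ is conserved along the flow. The essential structural input, and the only place the polygonal hypothesis is genuinely used, is that the boundary data of $f$ along $\partial\mathfrak{P}$ is algebraic: each frozen boundary segment pins $f$ to one of its three limiting real values, and matching the number of such constraints (one prescribed pole or zero of the pair of meromorphic functions $(f,u)$ per side of $\mathfrak{P}$) against the degree forces the common source curve of $f$ and $u$ to have genus zero. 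This yields a nonzero rational $Q$ with $Q\left(f_t(x),\,x-t\,f_t(x)/(f_t(x)+1)\right)=0$ on $\overline{\mathfrak{L}}$, which is \eqref{e:qfh}. Writing the relevant branch relation as $u=\mathsf R(f)$ with $\mathsf R$ rational, setting $Q_0(f):=(f+1)\mathsf R(f)$, and clearing the denominator in the identity $u(f+1)=x(f+1)-tf$ produces $Q_0(f_t(x))=x(f_t(x)+1)-t\,f_t(x)$; choosing the branch of $\mathsf R$ through a fixed $(x_0,t_0)\in\overline{\mathfrak{L}}$ makes $Q_0$ locally real analytic, giving \eqref{e:defQ0}.

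Algebraicity of $\mathfrak{A}(\mathfrak{P})$ then follows by elimination. For fixed $(x,t)$ put $G_{x,t}(f):=Q_0(f)-x(f+1)+tf$; inside $\mathfrak{L}(\mathfrak{P})$ the root $f_t(x)\in\mathbb{H}^-$ and its complex conjugate are both roots of $G_{x,t}$, and on $\mathfrak{A}(\mathfrak{P})$ they collide into a real double root, so the discriminant $\Res_f\!\left(G_{x,t},G_{x,t}'\right)$ vanishes there --- a polynomial equation in $(x,t)$, whence $\mathfrak{A}(\mathfrak{P})$ is algebraic. For the singularity classification I would appeal to the analysis of such discriminant loci (caustics) in \cite{DMCS}: cusps occur where the double root degenerates to a triple root ($G_{x,t}=G_{x,t}'=G_{x,t}''=0$), tacnodes where two distinct double roots coincide, and a parameter count rules out more degenerate local pictures. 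Statement~(4) is then immediate: a point $(x,t)\in\overline{\mathfrak{L}}$ lies on $\mathfrak{A}(\mathfrak{P})$ exactly when $f_t(x)$ is real, which by the characteristic description and the elimination above is equivalent to $f_t(x)$ being a double root of $G_{x,t}$, i.e.\ to $\left(f_t(x),\,x-t\,f_t(x)/(f_t(x)+1)\right)$ being a double root of $Q_0$ in the sense of \eqref{e:defQ0}.

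The hardest step is the algebraicity of the boundary data in~(2)--(3): proving that $(f,u)$ are rational functions of a single parameter is not a soft statement, and requires running the full Riemann-surface bookkeeping of \cite{LSCE}, namely the count of prescribed poles, zeros, and tangency conditions that pins down a genus-zero curve. The singularity classification borrowed from \cite{DMCS} is a secondary difficulty, relying on genuine algebraic-geometric input about the generic local structure of discriminant curves; once these two facts are in hand, statement~(1), statement~(4), and the elimination argument for~(2) are comparatively routine.
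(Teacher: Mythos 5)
Your strategy has the right overall shape — solve the Burgers equation by characteristics, exhibit $(f,z)$ as coordinates on a Riemann surface, and deduce an algebraic relation — but the route you take through the hardest step in~(3) is not the one the paper uses, and your sketches for~(1) and~(2) contain gaps that the paper avoids by citing.

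For~(3), you argue that the pair $(f,u)$ is parameterized by a genus-zero curve by counting boundary constraints against degree, and then write the functional relation as $u=\mathsf R(f)$ with $\mathsf R$ rational, setting $Q_0=(f+1)\mathsf R$. The paper does something quite different: it takes the image of $(f,z)$ over a connected component $\mathfrak L'$, \emph{glues} it to its complex conjugate along the arctic boundary (which maps to the real locus), and shows this yields a compact Riemann surface immersed in $\mathbb{CP}^2$ (using Lemma~\ref{l0} that $\mathfrak L'$ is simply connected). Then it invokes the general fact that any two meromorphic functions on a compact Riemann surface satisfy a polynomial relation, giving the rational $Q$ without ever computing the genus or claiming $f$ is a global uniformizer. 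Your degree/genus count is the hard combinatorial input of Kenyon--Okounkov and is considerably stronger than what is needed; as written, it is asserted rather than proved. Also, the paper's $Q_0$ is only a \emph{local} real analytic function obtained by Schwarz reflection across the arctic boundary (exploiting that $(x,t)\mapsto f_t(x)$ is locally injective near $\mathfrak{A}'$, which the paper gets from \cite[Theorem 5.1]{DMCS}); your $Q_0$ built from a global rational $\mathsf R$ presupposes the existence of $Q$ and would in any case be rational, not merely real analytic, which overshoots what the proposition claims and what your intermediate steps have established.

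For~(2), your discriminant argument applies $\Res_f(G_{x,t},G_{x,t}')$ to $G_{x,t}(f)=Q_0(f)-x(f+1)+tf$, but $Q_0$ is only locally real analytic, not polynomial, so this resultant does not \emph{a priori} yield a polynomial equation in $(x,t)$. To make the elimination argument work you would need the global rational $Q$ (whose existence is exactly the content of part~(3)), not the local $Q_0$. The paper sidesteps this entirely: statements~(1) and~(2), including the cusp/tacnode classification of the singularities, are cited directly from \cite[Theorems 1.2, 1.10]{DMCS}. Your sketch for~(1) — boundary gradients are one of the three extreme slopes, then a comparison/maximum-principle propagation — is plausible in outline but does not by itself rule out a frozen component meeting two different boundary segments carrying different extreme gradients; this requires the regularity/classification results of \cite{MCFARS,DMCS} that the paper appeals to. Statement~(4) you handle correctly and it is indeed routine once~(3) is in hand.
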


\begin{proof}
		
		\Cref{pa1} follows essentially from results in \cite{LSCE,DMCS}. Its first statement is \cite[Theorem 1.10]{DMCS}, and the second statement follows from \cite[Theorem 1.2, Theorem 1.10]{DMCS}. It follows from \cite[Theorem 1.5]{DMCS} that the complex slope $f_t(x)$ extends continuously to a real number on the arctic boundary.

For any connected component $\fL'\subset \fL(\fP)$ of the liquid region, with arctic boundary $\fA'$,
we consider the following map from the closure of the liquid region $\overline\fL'=\fL'\cup\fA'$ to $\mathbb{CP}^2$
\begin{align}\label{e:emb}
(x,t)\in \overline\fL\mapsto (f(x,t), z(x,t)):=\left(f_t(x), x-t\frac{f_t(x)}{f_t(x)+1}\right)\in \mathbb{CP}^2.
\end{align}

For $(x,t)\in \fL'$, the claim \eqref{e:defQ0} follows from \cite[Theorem 10.5]{RT}. Assuming $\fL'$ is simply connected (we will show it in \Cref{l0}), \cite[Theorem 5.1]{DMCS} gives a decomposition of the map $(x,t)\mapsto f(x,t)$.  It implies that for any $(x_0,t_0)\in \fA'$, there exists a small neighborhood $\fU$ of it, such that the map $(x,t)\mapsto f(x,t)$ is injective for $(x,t)\in \fU\cap \overline\fL'$. Thus the same argument as above, there exists a continuous map $Q_0$ such that $z(x,t)=Q_0(f(x,t))/(f(x,t)+1)$ for  $(x,t)\in \fU\cap \overline{\fL'}$, where $\fU$ is a small neighborhood of $(x_0,t_0)\in \fA'$, and $Q_0$ is analytic on $f(\fU\cap \fL')$. By Schwarz reflection principle, we can extends $Q_0$ to a real analytic function in a neighborhood of $f(x_0,t_0)$. This and the discussion above gives \eqref{e:defQ0}.

Since the image of the arctic boundary $\fA'$ under the map \eqref{e:emb} is real, we can glue the image of the map \eqref{e:emb} restricted to the liquid region $\fL'$, and its complex conjugate along the image of the arctic boundary to get an immersed Riemann surface in $\mathbb{CP}^2$:
$\{(f,z)=(f(x,t),z(x,t))\in \mathbb{CP}^2: (x,t)\in \overline\fL'\}\cup\{(f,z)=(\overline{f(x,t)},\overline{z(x,t)})\in \mathbb{CP}^2: (x,t)\in \overline\fL'\}$. Since $Q_0$ is real analytic, the local charts around any point $(f(x_0,t_0),z(x_0,t_0))$  for $(x_0,t_0)\in \fA'$ on the arctic boundary are given by $z=Q_0(f)/(f+1)$.
$f(x,t)$ with its complex conjugate  and $z(x,t)$ with its complex conjugate are two meromorphic functions on this Riemann surface. By \cite[Theorem 5.8.1]{jost2013compact},  there exists a rational function $Q'$ such that $Q'(f(x,t), z(x,t))=0$ for any $(x,t)\in \overline{\fL'}$, which gives \eqref{e:qfh} by taking product of $Q'$ corresponding to each connected component of $\fL(\fP)$.

The fourth statement follows from the facts that $(x, t) \in \mathfrak{A} (\mathfrak{P})$ if and only if $f_t (x) \in \mathbb{R}$, by \eqref{fh}, and that any root of \eqref{e:defQ0} is real if and only if it is a double root, as $Q_0$ is real anaytic (see also the discussion at the end of \cite[Section 1.6]{LSCE}).

\end{proof}

\begin{lem} 
	
	\label{l0}
	
	Adopting the notation of \Cref{pa1}, any connected component $\mathfrak{L}' \subseteq \mathfrak{L} (\mathfrak{P})$ is simply connected. 
	
\end{lem}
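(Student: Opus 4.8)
The plan is to argue by contradiction. Suppose some connected component $\mathfrak{L}' \subseteq \mathfrak{L}(\mathfrak{P})$ fails to be simply connected. Since $\mathfrak{L}'$ is a connected open subset of $\mathrm{int}\,\mathfrak{P}$, there is then a Jordan curve $\gamma \subset \mathfrak{L}'$ whose bounded complementary component $\Omega \subset \mathbb{R}^2$ is not contained in $\mathfrak{L}'$. First I would localize $\Omega$: because $\mathfrak{P}$ is bounded with connected boundary and $\gamma \subset \mathrm{int}\,\mathfrak{P}$, all of $\mathbb{R}^2 \setminus \mathfrak{P}$ — hence all of $\partial\mathfrak{P}$ — lies in the unbounded component of $\mathbb{R}^2 \setminus \gamma$, so $\overline{\Omega} \subset \mathrm{int}\,\mathfrak{P}$. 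Next, $\Omega$ must meet the frozen region $\mathfrak{P} \setminus \mathfrak{L}(\mathfrak{P})$, since otherwise $\Omega \cup \gamma$ would be a connected subset of $\mathfrak{L}$ containing $\gamma$, forcing $\gamma$ to bound a disk inside $\mathfrak{L}'$. Let $\mathfrak{F}$ be the connected component of $\mathrm{int}\big(\mathfrak{P} \setminus \mathfrak{L}(\mathfrak{P})\big)$ containing a frozen point of $\Omega$; because $\gamma$ lies in the open set $\mathfrak{L}'$ and hence has a liquid neighborhood, $\overline{\mathfrak{F}}$ is a compact subset of $\Omega$ disjoint from $\partial\mathfrak{P}$. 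By the first statement of \Cref{pa1} together with the standard structure theory for these maximizers (which is affine on each connected component of the frozen region; see \cite{DMCS}), $\nabla H^*$ is constant on $\mathfrak{F}$, equal to a vertex $v_0$ of $\overline{\mathcal{T}}$; write $\ell$ for the affine function with $\nabla\ell \equiv v_0$ and $\ell = H^*$ on $\mathfrak{F}$. It thus suffices to show the maximizer $H^*$ cannot be affine with vertex gradient on an open set whose closure lies in $\mathrm{int}\,\mathfrak{P}$ and is surrounded by the liquid region.

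The core is a first-variation argument using that $H^*|_{\Omega}$ maximizes $\mathcal{E}$ among admissible functions on $\Omega$ with boundary data $H^*|_{\gamma}$. I would build a competitor $H^* + \varepsilon\psi$ with $\psi$ supported in $\Omega$, nonnegative, nonconstant on $\mathfrak{F}$, and admissible, and show it has strictly larger entropy for small $\varepsilon > 0$. Splitting the entropy change over the liquid part $\Omega \cap \mathfrak{L}'$ and over $\mathfrak{F}$: on the liquid part $\sigma$ is smooth and $H^*$ solves the Euler--Lagrange equation $\mathrm{div}\big(\nabla\sigma(\nabla H^*)\big) = 0$ (equivalently, the complex slope satisfies the complex Burgers equation \eqref{ftx}), so the first-order term $\int \nabla\sigma(\nabla H^*)\cdot\nabla\psi = -\int \mathrm{div}\big(\nabla\sigma(\nabla H^*)\big)\psi = 0$ and the remaining contribution is $O(\varepsilon^2)$. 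On $\mathfrak{F}$, a direct expansion of the surface tension \eqref{sigmal} at the vertex $v_0$ gives $\sigma(v_0 + \delta) - \sigma(v_0) \asymp |\delta_1|\log\frac{1}{|\delta_1|} + |\delta_2|\log\frac{1}{|\delta_2|}$ for small $\delta = (\delta_1,\delta_2)$ pointing into $\overline{\mathcal{T}}$ from $v_0$, so $\sigma$ increases, superlinearly, as $\nabla H^*$ leaves the vertex into $\mathcal{T}$. Choosing $\psi$ so that $\nabla\psi$ genuinely enters the topological interior of $\mathcal{T}$ on a positive-measure subset of $\mathfrak{F}$, the gain on $\mathfrak{F}$ is of order $\varepsilon\log(1/\varepsilon)$, which beats the $O(\varepsilon^2)$ loss, so $\mathcal{E}(H^* + \varepsilon\psi) > \mathcal{E}(H^*)$, contradicting maximality. (An alternative route I would keep in reserve is a strong maximum principle for $w = H^* - \ell$: $w$ solves a uniformly elliptic equation on compact subsets of $\mathfrak{L}'$, is sign-definite near $\partial\mathfrak{F}$, and vanishes on $\overline{\mathfrak{F}}$, so the Hopf lemma forces $w \equiv 0$ on a neighborhood in $\mathfrak{L}'$, again contradicting $\nabla H^* \in \mathcal{T}$ there.)

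The main obstacle is the one-sided degeneracy of the constraint set $\overline{\mathcal{T}}$ at its vertices: an admissible perturbation must satisfy $\partial_x(\varepsilon\psi) \geq 0$ and $\partial_y(\varepsilon\psi) \leq 0$ wherever $\nabla H^*$ lies on the edges $\{\partial_x H^* = 0\}$ or $\{\partial_y H^* = 0\}$ of $\overline{\mathcal{T}}$, and on a set where this happens a compactly supported $\psi$ is forced to vanish. Hence a nonconstant admissible $\psi$ on $\mathfrak{F}$ exists only if the liquid region approaches $\partial\mathfrak{F}$ \emph{nondegenerately}, i.e. $\nabla H^*$ lies in the interior of $\mathcal{T}$ (strictly positive proportions of all three lozenge types) on a liquid neighborhood of $\partial\mathfrak{F}$, with no "semi-frozen" boundary arcs on which a coordinate of $\nabla H^*$ vanishes identically. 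This is exactly where I would use the second statement of \Cref{pa1}: $\mathfrak{A}(\mathfrak{P})$ is algebraic with only ordinary cusps and tacnodes, so $\partial\mathfrak{F}$ is a finite union of analytic arcs plus finitely many singular points. Away from the singular points, the complex Burgers equation gives $f$ a square-root expansion at $\partial\mathfrak{F}$ (the local analysis behind \Cref{p:ftzbehave}), so just inside $\mathfrak{L}'$ the slope $\nabla H^*$ sits strictly in the interior of $\mathcal{T}$, supplying the liquid buffer needed to route $\psi$; the cusp and tacnode points, where $f$ degenerates like a cube root or the arctic curve is tangent to a frozen direction, form a lower-dimensional set handled by the corresponding local models and can be avoided when choosing $\supp\psi$. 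Assembling these local pictures and executing the perturbation over a fixed small region inside $\Omega$ is the technical heart of the proof.
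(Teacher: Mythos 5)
Your proposal shares the paper's opening moves (argue by contradiction, find a Jordan curve in $\mathfrak{L}'$ enclosing a frozen component $\mathfrak{F}$ separated from $\partial\mathfrak{P}$), but there is a genuine gap at the step where you assert that $\nabla H^*$ is constant on $\mathfrak{F}$. The first statement of \Cref{pa1} only says $\nabla H^*(x,t) \in \{(0,0),(1,0),(1,-1)\}$ pointwise outside the liquid region; it gives no continuity of $\nabla H^*$ on a connected component of the frozen region, and a priori the gradient could jump between two different vertices of $\overline{\mathcal{T}}$ across a lower-dimensional interface lying inside $\mathfrak{F}$. The paper treats exactly this as a separate case: it invokes \cite[Theorem~1.3]{MCFARS} to show that any such discontinuity at a point $w \in \mathfrak{F}$ forces a segment from $w$ to $\partial\mathfrak{P}$ that stays entirely in the frozen region, which is what contradicts the separation by $\gamma$. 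You cite ``standard structure theory'' in \cite{DMCS} for constancy, but the paper's explicit case split indicates this is not automatic from the cited results; without an argument for the discontinuous case, the remainder of your proof is not grounded.

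On the case where $\nabla H^*$ is constant on $\mathfrak{F}$, your route is genuinely different from the paper's. The paper closes with a geometric argument: $H^*$ is constant on $\partial\mathfrak{F}$; the arctic curve is algebraic with only cusps and tacnodes, locally strictly convex at nonsingular points, and its tangent direction winds through $\mathbb{S}^1$; one then picks a tangency direction $\tau(\xi)$ with positive $x$- and negative $y$-component, pushes the tangent line slightly inward to produce a chord $[u,v] \subset \mathfrak{L}(\mathfrak{P})$, and uses $\nabla H^* \in \mathcal{T}$ on that chord to conclude $H^*(v) > H^*(u)$, contradicting constancy on $\partial\mathfrak{F}$. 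You instead propose a first-variation argument exploiting the $|\delta|\log(1/|\delta|)$ singularity of $\sigma$ at the vertex, against an $O(\varepsilon^2)$ cost on the liquid part. That mechanism is plausible in spirit but carries unresolved regularity burdens: integrating by parts on $\Omega\cap\mathfrak{L}'$ requires control of $\nabla\sigma(\nabla H^*)$ up to $\partial\mathfrak{F}$, where it blows up, and $\sigma$ is not uniformly $C^2$ near $\partial\overline{\mathcal{T}}$, so neither the vanishing of the first-order term nor the $O(\varepsilon^2)$ bound is immediate; the admissibility of $\psi$ near $\partial\mathfrak{F}$ (your one-sided cone constraint) also needs the detailed boundary behavior you correctly flag as ``the technical heart.'' The paper's geometric closing argument sidesteps all of these issues using only the qualitative facts already in \Cref{pa1}; I would recommend adopting it, together with the \cite{MCFARS}-based argument for the discontinuous case, rather than the variational route.
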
 

\begin{proof}
Assume to the contrary that  $\fL'$ is not simply connected. Then there exists a closed curve $\gamma\subseteq\fL'$, which is not homotopy equivalent to a point in $\fL'$. It follows that the curve $\gamma$ encloses a frozen region $\fF\subseteq \fP\setminus \overline{\fL(\fP)}$. From \Cref{p}, the polygonal domain $\fP$ is simply connected, so the curve $\gamma$ separates the frozen region $\fF$ from the boundary $\partial \fP \subset \overline{\fP}$; see Figure \ref{f:hole}. We denote the boundary of $\fF$ by $\fA' = \partial \mathfrak{F} \subseteq \fA(\fP)$.

	\begin{figure}
		
		\begin{center}		
			
			\begin{tikzpicture}[
				>=stealth,
				auto,
				style={
					scale = .52
				}
				]
				\draw[black,dashed] (4,0) circle (2);
				\draw[black, very thick] (0,-4)--(4,-4)--(8,0)--(8,4)--(4,4)--(0,0)--(0,-4);
				\draw[black, thick] (2.5, 0) arc (-90:0:1.5);
				\draw[black, thick] (2.5, 0) arc (90:0:1.5);
				\draw[black, thick] (5.5, 0) arc (-90:-180:1.5);
				\draw[black, thick] (5.5, 0) arc (90:180:1.5);
			
				\draw[black] (2, 2) node[above left, scale=1]{$\partial \fP$};
				\draw[black] (4, 0) node[ scale=1]{$\fF$};
				\draw[black] (6, 2) node[below left, scale=1]{$\gamma$};
		
				\draw[black, thick] (12, 0) arc (-90:0:4);
				\draw[black, thick] (12, 0) arc (90:0:4);
				\draw[black, thick] (20, 0) arc (-90:-180:4);
				\draw[black, thick] (20, 0) arc (90:180:4);
				
				\draw[black] (16, 0) node[scale=1]{$\fF$};
				\draw[black] (18, 2) node[below left, scale=1]{$\fA'$};
				\draw[black] (13.5, -2.5) node[scale=1]{$\fL(\fP)$};

				\filldraw[fill = black] (14.85,-1.15) circle [radius = .1] node[above right, scale = .8]{$\xi$};	
				\filldraw[fill = black] (14,-0.55) circle [radius = .1] node[above right, scale = .8]{$u$};	
				\filldraw[fill = black] (15.45,-2) circle [radius = .1] node[above right, scale = .8]{$v$};	
				\draw[black, dashed] (14,-0.55)-- (15.45,-2);
			\end{tikzpicture}
			
		\end{center}
		
		\caption{\label{f:hole} Shown to the left is the frozen region $\fF$, which is separated from the boundary $\partial \fP$ by a curve $\gamma\in \fL'$. Shown to the right are two points $u,v$ on the boundary $\fA'$ of $\fF$ such that $H^*(v)>H^*(u)$.}
		
	\end{figure}
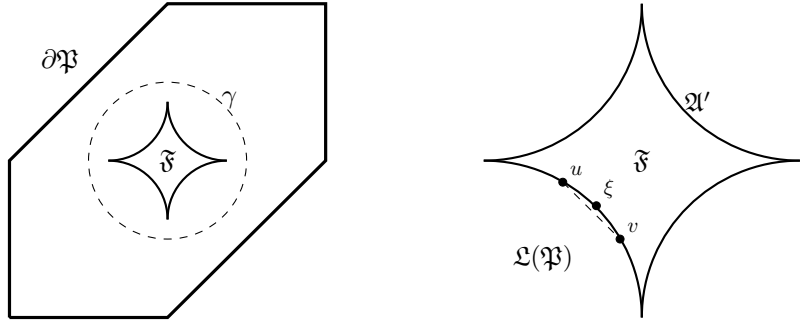

By the first part of \Cref{pa1}, for almost any $(x, t) \in \mathfrak{F}$ we have $\nabla H^*\in \big\{(0,0), (1,0),(1,-1) \big\}$. We consider two cases for the behavior of $\nabla H^*$ inside $\fF$; either $\nabla H^* |_{\mathfrak{F}}$ is continuous or discontinuous. If $\nabla H^*$ is discontinuous at some point $w \in \fF$, then \cite[Theorem 1.3]{MCFARS} implies that there exists a segment connecting $w$ and $\partial \fP$ staying outside  the liquid region, which contradicts the fact that $\gamma\in \fL'$ separates $\mathfrak F$ from $\partial \fP$. In the second case, $\nabla H^*$ is continuous inside $\fF$; thus, it takes a constant value in $\big\{ (0, 0), (1, 0), (1, -1) \big\}$. Without loss of generality, we may assume that $\nabla H^*\equiv (0,0)$ on $\fF$, so $H^* |_{\mathfrak{F}}$ is constant. By continuity, $H^*$ is the same constant on the boundary $\fA'$ of $\fF$. 

By the second part, the arctic boundary $\fA'$ is an algebraic curve with finitely many singularities, which are all either first order cusps or tacnodes; by \cite[Theorem 1.3(c)]{DMCS}, at every nonsingular point $\xi\in \fA'$, the boundary $\fA'$ is locally strictly convex; and, by \cite[Theorem 1.5, Theorem 1.10]{DMCS}, the unit tangent vector $\tau(\xi)$ to $\fA'$ at any $\xi \in \mathfrak{A}'$ depends continuously on $\xi$.  Therefore, when $\xi$ moves along $\fA'$, the tangent vector angle winds around the unit circle $\mathbb S^1$ at least once. Hence, there exists a nonsingular point $\xi \in \fA'$ such that the unit tangent vector $\tau(\xi)$ has positive $x$-coordinate and negative $y$-coordinate. 

Shift the tangent line at $\xi$ in the normal direction to $\mathfrak{A}'$, slightly into the liquid region, such that it intersects $\fA'$ at two points, $u$ and $v$. By the 
local strict convexity of $\mathfrak{A}'$ at $\xi$, the segment from $u$ to $v$ (which is parallel to $\tau(\xi)$) remains strictly inside the liquid region $\mathfrak{L} (\mathfrak{P})$; see Figure \ref{f:hole}. In the liquid region, $\nabla H^*$ stays inside strictly the triangle $\mathcal{T}$ from \eqref{t}. Since $v - u$ is parallel to $\tau(\xi)$, which has positive $x$-coordinate and negative $y$-coordinate, it follows that $\nabla H^* (z) \cdot \tau(\xi)>0$ for any $z \in \mathfrak{L} (\mathfrak{P})$. Thus,
\begin{align}
H^*(v)-H^*(u)=\int_0^1 \nabla H^* \big(\theta v+(1-\theta) u \big) \cdot(v-u)\rm d \theta>0,
\end{align}

\noindent which contradicts the fact that $H^*$ is constant on $\fA'$. Hence,  $\fL'$ is simply connected.
\end{proof}

	\begin{assumption} 
		
		\label{pa} 
		
		Under the notation of \Cref{p}, assume the following four properties hold. 
		\begin{enumerate} 
			\item The arctic boundary $\mathfrak{A} = \mathfrak{A} (\mathfrak{P})$ has no tacnode singularities. 
			\item No cusp singularity of $\mathfrak{A}$ is also a tangency location of $\mathfrak{A} (\mathfrak{P})$. 
			\item  There exists an axis $\ell$ of $\mathbb{T}$ such that any line connecting two distinct cusp singularities of $\mathfrak{A}$ is not parallel to $\ell$. 
			\item  Any intersection point between $\mathfrak{A}$ and $\partial \mathfrak{P}$ must be a tangency location of $\mathfrak{A}$.
		\end{enumerate}
	
	\end{assumption}

 It has been proven in \cite[Lemma 6.2]{AH2}, 
under \Cref{pa}, for any $u\in \overline{\fL(\fP)}$, there exists a trapezoid $\fD(u)\subseteq \fP\cap (\bR\times [\ft_1,\ft_2])$ in the form \eqref{d}, containing $u$ (flip $\fP$ if necessary), with liquid region $\fL$, and complex slope $f:\fL \mapsto \bH^-$ satisfying \ref{ftx}. For all $\ft_1\leq t\leq \ft_2$, we define slices of the liquid region (along the horizontal line $y = t$) by $I_t = \big\{ x : (x, t) \in \overline{\mathfrak{L}} \big\}$.
	Then the following constraints hold:

		\begin{enumerate} 
			\item The boundary height function $h$ is constant along both $\partial_{\ea} (\mathfrak{D})$ and $\partial_{\we} (\mathfrak{D})$.
			\item Either $\partial_{\north} (\mathfrak{D})$ is packed with respect to $h$ or there exists $\mathfrak{t}' > \mathfrak{t}_2-\ft_1$ such that $H^*$ admits an extension to time $\ft_1+\mathfrak{t}'$.
			\item There exists $\tilde{\mathfrak{t}} \in [\mathfrak{t}_1, \mathfrak{t}_2]$ such that the following holds. For $t \in [\mathfrak{t}_1, \tilde{\mathfrak{t}}]$, the set $I_t$ consists of one nonempty interval, and for $t \in (\tilde{\mathfrak{t}}, \mathfrak{t}_2]$, the set $I_t$ consists of two nonempty disjoint intervals.\footnote{Observe if $\tilde{\mathfrak{t}} = \mathfrak{t}_2$, then $I_t$ always only consists of one nonempty interval.}
			\item Any tangency location along $\partial \overline{\mathfrak{L}}$ is of the form $\min I_t$ or $\max I_t$, for some $t \in (\mathfrak{t}_1, \mathfrak{t}_2)$. At most one tangency location is of the form $\min I_t$, and at most one is of the form $\max I_t$. There tangency locations are on the west and east boundary of $\fD(u)$.
			\item There exists an algebraic curve $Q$ such that, for any $(x, t) \in \overline{\mathfrak{L}}$, we have 
			\begin{flalign}
		\label{e:complexss0}		Q \bigg( f_t (x), x - \displaystyle\frac{t f_t (x)}{f_t (x) + 1} \bigg) = 0.
			\end{flalign}
				
			\noindent Furthermore, the curve $Q$ ``approximately comes from a polygonal domain'' in the following sense. There exists a polygonal domain $\mathfrak{P}$ satisfying \Cref{pa} with liquid region $\mathfrak{L} (\mathfrak{P})$; a connected component $\mathfrak{L}_0 (\mathfrak{P}) \subseteq \mathfrak{L} (\mathfrak{P})$; and a real number $\alpha \in \mathbb{R}$ with $|\alpha - 1| < n^{-\delta}$ such that, if $Q_{\mathfrak{L}_0}$ is the algebraic curve associated with $\mathfrak{L}_0$ from \Cref{pa1}, then
			\begin{flalign}
			\label{e:complexss}	Q (u, v) = \displaystyle\frac{u + 1}{\alpha^{-1} u + 1} Q_{\mathfrak{L}_0} (\alpha^{-1} u, v).
			\end{flalign}
			
		\end{enumerate}

We remark that in \eqref{e:complexss}, if $\al=1$, then the height function $H^*$ on $\fD$ is the restriction of the limiting continuum height function on $\fP$. For $\al\neq 1$ and $|\alpha - 1| < n^{-\delta}$, $H^*$ is a slightly tilted version of that.

With a  slight abuse of notation, we denote the shifted version of $\fD, \fP, H^*, f$ by time $\ft_1$, still by $\fD, \fP, H^*, f$, and $\ft=\ft_2-\ft_1$. Then the trapezoid domain $\fD$ and limiting continuum height function $H^*$ satisfies the first two statements and Item (a),(b),(c) and (d) in the third statement in \Cref{xhh}. In particular, thanks to Assumption \ref{pa}, the cusp location on $\fA(\fD)$ (if it exists) is not a tangency location. 
We denote the extended trapezoid domain by $\fD'\subseteq \bR\times [0,\ft']$. Let $\fD^t=\fD'\cap (\bR\times [t,\ft'])$. And we still denote the extended complex slope  constructed using \eqref{e:complexss0} and \eqref{e:complexss} by $f$, and the extended liquid region $\tilde \fL=\fL(\fD')$. 
Then it follows from \Cref{pa1}, Item (c) and (d) in the third statement of \Cref{xhh} holds. Next we show that \eqref{e:injecthi} is a bijection to its image provided $\ft'$ is small enough.
\begin{prop}\label{p:injective}
There exists a constant $c = c (\fP) > 0$. If $\mathfrak{t}' < c$, then for any $t\in [0, \ft']$ the map 
	\begin{align}\label{e:injectcopy}
			\pi: (x,r)\in \fL(\fD)\cap \fD^t \mapsto x+(t-r)\frac{ f_r(x)}{ f_r(x)+1}\in \bH^+
			\end{align}
 is a bijection.
\end{prop}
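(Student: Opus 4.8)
The plan is to establish injectivity of $\pi$ by combining two facts: first, that the map $(x,r) \mapsto f_r(x)$ already separates points of $\fL(\fD')$ (this is essentially the content of the embedding discussion after \eqref{e:emb}, which is available here through \Cref{pa1} and the approximate-polygonal structure \eqref{e:complexss}); and second, that the projection to the $z$-coordinate is an open map with controllable fibers when the time window $\ft'$ is short. The key observation is that \eqref{e:injectcopy} is precisely the restriction of the projection map \eqref{e:proj0} (with the roles of $t$ and the running time $r$ as in \eqref{e:xrmap}) to the slice $\fD^t$. So I would first record that \eqref{e:injectcopy} is well-defined and continuous, with image in $\bH^+$ by the sign computation $\Im[z_r] = \Im x + (t-r)\Im[f_r(x)/(f_r(x)+1)] > 0$ for $r \le t$ (using $\Im[f_r(x)]<0$ inside the liquid region, so $\Im[f_r(x)/(f_r(x)+1)] = \Im[f_r(x)]/|f_r(x)+1|^2 < 0$, and $t - r \ge 0$), which is the same remark made right after \eqref{e:xrmap}.

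Next I would set up the injectivity argument via the characteristic flow. Suppose $\pi(x_1,r_1) = \pi(x_2,r_2) = z$ for two points in $\fL(\fD')\cap\fD^t$. Using \eqref{e:ccff}--\eqref{zlineart}, each point lies on a characteristic line through $z$; explicitly, the characteristic passing through $(x_j, r_j)$ with $f$-value $f_{r_j}(x_j)$ reaches the value $z$ at ``time'' $t$ along the linear trajectory $z_s(u) = u + (s - 0)f_0^*(u)/(f_0^*(u)+1)$, so $f_{r_1}(x_1)$ and $f_{r_2}(x_2)$ are both roots, counted in the Riemann-surface sense, of the algebraic equation \eqref{e:BQeq2} specialized at $(f, z)$ with the time parameter $t$. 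The content of Items (a)--(c) of the fifth statement of \Cref{xhh} — which hold here by \Cref{pa1}, \eqref{e:complexss0}, \eqref{e:complexss} — is exactly that, for $\beta = H_t^*$, this specialized equation has a unique root in $\bH^-$, equivalently that the projection to the $z$-coordinate is a bijection; I would invoke this for the ``base'' case and then transfer it to the short-time window. Concretely: the map $\pi$ in \eqref{e:injectcopy} factors as the map \eqref{e:xrmap} (which is a diffeomorphism onto its image, by Proposition 3 of \cite{LSCE} applied to the component $\fL_0(\fP)$ and its tilt) followed by the coordinate projection $(f,z)\mapsto z$; so injectivity of $\pi$ on $\fD^t$ is equivalent to the projection being injective on the image of $\fL(\fD')\cap\fD^t$ under \eqref{e:xrmap}, which is a subset of the Riemann surface \eqref{e:BQeq2}.

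The heart of the argument, and the step I expect to be the main obstacle, is showing that restricting to a short time window $\ft' < c$ forces this subset of the Riemann surface to be small enough that the projection to $z$ is injective on it. When $\ft' = 0$ the window is a single time slice and injectivity is automatic since $\pi(x,t) = x$; the claim is a perturbative statement around this. I would argue as follows: the projection $(f,z)\mapsto z$ on the Riemann surface \eqref{e:BQeq2} fails to be locally injective exactly at branch points, i.e. where $\partial_f[z - tf/(f+1)] = 0$ along the curve $Q(f, \cdot) = 0$; by \Cref{pa1}(4) and the structure of \eqref{e:complexss}, these correspond to arctic-boundary points, where $f$ is real. Inside $\fL(\fD')\cap\fD^t$ we have $\Im f < 0$, so the image avoids the branch locus; hence $\pi$ is a local diffeomorphism there. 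For global injectivity I would use a degree/monodromy argument: the boundary $\partial(\fL(\fD')\cap\fD^t)$ maps under $\pi$ to a curve in $\overline{\bH^+}$, and by the short-time assumption the deformation from $t = r$ is small enough (quantitatively, $|\pi(x,r) - x| \le (t - r)|f_r(x)/(f_r(x)+1)| \lesssim \ft'$, using the bound $|f/(f+1)|\lesssim 1$ from \eqref{e:ft+1b} and \eqref{e:ft+1b2}, which hold away from cusps/tangencies and are extended to their neighborhoods in \Cref{p:ftzbehave}) that this boundary curve remains simple — one must check this near the cusp and the two tangency locations separately, using the square-root and cube-root expansions \eqref{e:squareeq}, \eqref{e:cubiceq}, \eqref{e:mtsquare}, \eqref{e:mtcubic} of \Cref{p:ftzbehave} to see that the singular pieces of the boundary, rescaled by the appropriate powers of $\ft'$, do not develop self-intersections. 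Once the image boundary is a simple closed curve (or arc pair) and $\pi$ is an orientation-preserving local homeomorphism on the interior, the argument principle / Darboux's theorem forces $\pi$ to be a bijection onto the region it bounds. The delicate bookkeeping near the three singular points — ensuring the constant $c$ can be chosen uniformly — is where I would spend most of the effort; away from those points the claim is a routine consequence of $\Im f < 0$ and the Lipschitz bound on $f/(f+1)$.
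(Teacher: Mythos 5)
Your overall strategy — reduce to (i) local injectivity of $\pi$ on the interior and (ii) injectivity of $\pi$ on the boundary, then conclude by a degree argument — is the same as the paper's, where the degree argument is formalized as Meisters' global homeomorphism criterion (\Cref{t:hom}), and \Cref{l:bottombb} handles the south boundary and the arctic-curve portion of $\partial(\fL(\fD')\cap\fD^t)$ essentially for free. Two remarks on your framing before the main gap. First, you write that Items (a)--(c) of \Cref{xhh}(5) ``hold here by \Cref{pa1}, \eqref{e:complexss0}, \eqref{e:complexss},'' but Item (c) is precisely the statement \eqref{e:injecthi} that \Cref{p:injective} is supposed to deliver — the appendix derives (a) and (b) from those inputs and then invokes this proposition for (c). You do then fall back to the honest base case $\ft'=0$, but as written the earlier sentence is circular and should be dropped. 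Second, the Meisters route only needs $\pi$ to be locally one-to-one away from a discrete set, whereas the argument-principle route as you sketch it wants local injectivity throughout; the paper's choice of criterion is deliberate, so that one does not have to analyze the critical set $\{1+(t-r)\partial_x(f_r/(f_r+1))=0\}$ at all (compute the Jacobian of $(x,r)\mapsto(z,\bar z)$ and you will see this factor appear alongside $\Im[f_r/(f_r+1)]$). Your branch-point argument on the Riemann surface is in the right direction but does not by itself rule out critical points in the interior, and you would need to either supply that or switch to the weaker criterion.

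The genuine gap is that the boundary analysis, which is the entire content of the paper's proof, is left as a promissory note. After \Cref{l:bottombb} reduces the problem to the north boundary, the paper reduces further to showing the one-parameter deformation \eqref{e:boundary2}, namely $x\mapsto x-\delta\,f_{\ft'}(x)/(f_{\ft'}(x)+1)$ on the slice $(x,\ft')\in\fL(\fD')$, is injective for all $0\le\delta\le c(\fP)$, and then carries out a nontrivial three-way case analysis. The regime where the slice is within $\fc_0$ of a horizontal tangency location is handled by a second application of \Cref{t:hom} on a subregion of $\fL(\fP)$; the generic regime, where the putative collision interval is distance $\ge\fc$ from the arctic curve, gives a direct contradiction from the Lipschitz bound \eqref{e:derftCbb}; and the hard case, near a cusp $(E_c,t_c)$, requires constructing an explicit auxiliary contour $\mathscr C$ (a preimage of a circle $|w-E_c|=\fc'$ under the time-$t_c$ uniformization $f_{t_c}(w)=f_c+\sqrt[3]{C(w)(w-E_c)}$) and verifying via that cube-root normal form that the image of $\mathscr C$ under the deformation stays embedded, with constants quantitatively tracked through $\fc,\fc',\fC_0$ so that $c=c(\fP)$ can be chosen uniformly. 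Your sketch acknowledges that ``one must check this near the cusp and the two tangency locations separately'' and correctly identifies \Cref{p:ftzbehave} as the relevant local normal forms, but without carrying out even the cusp case a reader cannot extract the existence of the uniform constant $c(\fP)$, which is the actual claim being proved.
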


\subsection{Proof of Propositions \ref{p:injective}}\label{s:injection}
To show \eqref{e:injectcopy} is a bijection, we will use the following criterion for the global homemorphism from \cite[Theorem 1]{meisters1963locally}.
\begin{thm}\label{t:hom}
	Fix an integer $d \geq 2$. Let $X \subset \mathbb{R}^d$ denote a compact subset, whose boundary $\del X$ is an irreducible separating set of $\bR^d$ (A separating set $\Omega$ of $\bR^d$ is said to be an irreducible separating
set of $\bR^d$ provided no proper closed subset of $\Omega$ separates $\bR^d$). Let $f: X \rightarrow \mathbb{R}^d$ be a continuous mapping that is locally one-to-one on $X\setminus Z$, where $Z \subset X$ is such that $Z\cap X^\circ$ ($X^\circ$ is the interior of $X$) is a discrete set and $\del X \setminus Z$ is not empty. If $f|_{\del X}$ is one-to-one, then $f$ is a homeomorphism of $X$ onto $f(X)$.
\end{thm}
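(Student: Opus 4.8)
The idea is to apply the global-homeomorphism criterion \Cref{t:hom} with $d=2$ to the continuous extension of $\pi$ to the compact set $X:=\overline{\fL(\fD')\cap\fD^t}$, taking as the exceptional set $Z:=\fA(\fD')\cup\cS$, where $\cS$ is the set of critical points of $\pi$ in the interior of $X$. Granting the hypotheses of \Cref{t:hom}, $\pi$ is then a homeomorphism of $X$ onto $\pi(X)\subseteq\overline{\bH^+}$, and restricting to $\fL(\fD')\cap\fD^t$ gives the desired bijection onto its image. There are three things to verify: (i) $X$ is a closed topological disk, so $\partial X$ is an irreducible separating set of $\bR^2$; (ii) $\pi$ is continuous on $X$ and locally injective on $X\setminus Z$, with $\cS\cap X^\circ$ discrete; (iii) $\pi|_{\partial X}$ is injective. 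The thin-slice hypothesis $\ft'<c$ enters only in (iii).

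\textbf{Topology.} $X$ is compact since $\fD'$ is bounded. By \Cref{l0} the liquid region is simply connected, and by the slice structure recorded in \Cref{a:xhh} and \Cref{p:ftzbehave} ($I_r^*$ is a single interval for $r\ge t_c$ and a disjoint union of two intervals, of total gap $E_2'(r)-E_1'(r)\asymp(t_c-r)^{3/2}$, for $t\le r<t_c$, the two inner arctic branches meeting at the cusp $(E_c,t_c)$), the set $X$ is a closed topological disk. For $t<t_c$ the frozen ``gap'' lying between the two inner arcs is not a hole of $X$: it pinches to the cusp point above and opens onto $\{y<t\}\subseteq\bR^2\setminus X$ below, hence is connected to the unbounded complementary component. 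Its boundary $\partial X$ is the simple closed curve formed by the horizontal south segment(s) $\overline{I_t^*}\times\{t\}$, the outer arctic arcs $\{(E_1(r),r)\}$, $\{(E_2(r),r)\}$ (each possibly touching $\partial_{\we}\fD'$, resp.\ $\partial_{\ea}\fD'$, at the tangency point), the inner arctic arcs $\{(E_1'(r),r)\}$, $\{(E_2'(r),r)\}$ when $t<t_c$, and the horizontal north segment $\overline{I_{\ft'}^*}\times\{\ft'\}$; the north and south segments are nonempty and contained in $\partial X\setminus Z$.

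\textbf{Interior.} The complex slope $f_r(x)$ is real analytic on the liquid region, so $\pi$ is too, and it extends continuously to all of $X$ (continuity up to the arctic boundary is Item~(a) of the fifth statement of \Cref{xhh}). Writing $w=f_r(x)/(f_r(x)+1)$ and differentiating $\pi(x,r)=x+(t-r)w$, the complex Burgers equation \eqref{ftx} in the form $\partial_rf_r+\partial_xf_r\cdot w=0$ gives $\partial_r w=-w\partial_xw$, hence $\partial_r\pi=-w\,\partial_x\pi$; therefore $\det D\pi=-|\partial_x\pi|^2\Im w=|\partial_x\pi|^2\,\bigl|\Im\bigl(f_r(x)/(f_r(x)+1)\bigr)\bigr|$, which is positive off $\cS=\{\partial_x\pi=0\}$ since $\Im w<0$ in the liquid region. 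Moreover, as in the proofs of \Cref{l0} and \Cref{pa1}, the liquid region embeds (via $(x,r)\mapsto(f_r(x),\pi(x,r))$) into the Riemann surface \eqref{e:BQeq2}, on which $f$ is a local holomorphic coordinate and $\pi$ is a holomorphic function of $f$; being non-constant, its $f$-derivative is not identically zero on this connected surface, so $\cS$ is discrete in $X^\circ$. By the inverse function theorem $\pi$ is a local homeomorphism on $X^\circ\setminus\cS$; and near any point of the open north or south segment bounded away from $\fA(\fD')$ one has $\bigl|\Im\bigl(f/(f+1)\bigr)\bigr|\gtrsim1$, so $\pi$ is locally injective there too. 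Hence $\pi$ is locally injective on $X\setminus Z$.

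\textbf{Boundary injectivity, and the main obstacle.} On the south segment(s) and the arctic arcs $f_r(x)\in\bR$, so $\pi\in\bR$ there, while on the open north segment $\Im f_{\ft'}^*<0$ gives $\Im\pi>0$; it therefore suffices to show (a) $\pi$ is injective on the north segment, (b) $\pi$ is injective on the complementary part of $\partial X$ (whose image lies in $\bR$), and (c) the two images meet only at the common endpoints --- (c) being immediate from continuity, and (a) following from a short winding argument, using that $x\mapsto f_{\ft'}^*(x)$ is a local biholomorphism with negative imaginary part and $|\ft'-t|\le\ft'$. For (b) one parametrizes $\partial X$ minus the north segment starting from one endpoint of the north segment, down an outer arc, along the inner arcs around the cusp, and up the other outer arc; along this curve $\pi=x+(t-r)\,f_r(x)/(f_r(x)+1)$ with $|t-r|\le\ft'$, i.e.\ $\pi$ is an $O(\ft')$ perturbation of the (piecewise monotone) $x$-coordinate, and choosing $c=c(\fP)$ small --- exploiting the precise local behavior of $f_r^*$ from \Cref{p:ftzbehave}, namely $|E_i(r)-\fa(r)|$, $|E_i(r)-\fb(r)|\asymp|r-t_i|^2$ near the tangency locations and $E_2'(r)-E_1'(r)\asymp(t_c-r)^{3/2}$ near the cusp --- keeps $\pi$ injective on each piece and prevents the images of distinct pieces from overlapping. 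Step~(b) is the crux of the argument: it amounts to ruling out that the image boundary curve folds onto itself (for large $\ft'$ the projection does develop such a fold/caustic, so the smallness of the slice is genuinely needed), and it is most delicate precisely at the two tangency locations, where the arctic curve is nearly parallel to a side of $\fD'$ and the $x$-coordinate varies only quadratically in the time parameter, and near the cusp, where the two inner arctic branches are separated by only $\asymp(t_c-r)^{3/2}$.
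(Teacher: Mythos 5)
Your proposal does not address the statement you were asked to prove. The statement is \Cref{t:hom} itself, the abstract Meisters--Olech global homeomorphism criterion: a continuous map on a compact $X\subset\bR^d$ whose boundary is an irreducible separating set, locally one-to-one off an exceptional set $Z$ with $Z\cap X^\circ$ discrete, and one-to-one on $\partial X$, is a homeomorphism onto its image. What you have written is instead an argument for \Cref{p:injective} (the bijectivity of the projection $\pi$ on a thin slice of the liquid region), and its very first sentence \emph{invokes} \Cref{t:hom} as a black box. As a proof of \Cref{t:hom} this is circular; as a proof of anything, it establishes only the application, not the criterion. For the record, the paper does not prove \Cref{t:hom} either --- it is quoted verbatim from Meisters and Olech, \emph{Locally one-to-one mappings and a classical theorem on schlicht functions}, Theorem 1 --- so there is no internal proof to compare your verification steps (i)--(iii) against.

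A genuine proof of the stated criterion is a piece of degree theory / covering-space topology with no analytic input: one shows that $f(\partial X)$ separates $\bR^d$, that $f(X^\circ)$ lies in the union of the bounded components of $\bR^d\setminus f(\partial X)$, and that the number of preimages of a point is locally constant on each such component (local injectivity off the discrete set $Z\cap X^\circ$ makes $f$ a proper local homeomorphism there, so the fiber count can only change across $f(Z)$, which one rules out by a connectedness argument using the irreducibility of $\partial X$ and the injectivity of $f|_{\partial X}$); combining this with injectivity on the boundary forces the count to be $1$ everywhere, and a continuous bijection from a compact space is a homeomorphism onto its image. None of these steps appear in your write-up. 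Separately, the material you did write (the topological-disk structure of $\overline{\fL(\fD')\cap\fD^t}$, the Jacobian computation $\det D\pi=|\partial_x\pi|^2\,|\Im(f_r/(f_r+1))|$, and the boundary injectivity via the tangent-line interpretation and the smallness of $\ft'$) is a reasonable sketch of the paper's Appendix~B argument for \Cref{p:injective} and \Cref{l:bottombb}, but it answers a different question.
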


The map $\pi$ in \eqref{e:injectcopy} satisfies the assumptions of \Cref{t:hom} with $Z$ the critical point of $\pi$. It is a bijection, if it is one-to-one on the boundary. The  boundary of the liquid region $\fL(\fD')\cap \fD^t$ consists of three parts: the north boundary, the south boundary, and the arctic curve boundary
\begin{align}\label{e:fLct}
\big\{(x,\ft')\in  \overline{\fL(\fD')} \big\}\cup \big\{ (x,t)\in \overline{\fL(\fD')} \big\}\cup \overline{\big\{ (x,r)\in \del {\fL(\fD')}: r\in(t,\ft') \big\}}.
\end{align}

\begin{lem}\label{l:bottombb}
The  map \eqref{e:injectcopy} maps the south boundary and the arctic curve bijectively to an interval in $\bR$.
\end{lem}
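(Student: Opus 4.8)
The plan is to reduce the statement to a monotonicity claim. Write $\Gamma$ for the union of the south boundary $\{(x,t)\in\overline{\fL(\fD')}\}$ and the arctic part $\{(x,r)\in\del\overline{\fL(\fD')}:r\in(t,\ft')\}$ of $\del\big(\fL(\fD')\cap\fD^t\big)$. Since $\fL(\fD')$ is simply connected (\Cref{l0}) and $\fD^t$ cuts off its bottom, $\del\big(\fL(\fD')\cap\fD^t\big)$ is a Jordan curve, so $\Gamma$ is a single arc; tracing it, it runs from the left corner of the north boundary down the left arctic curve $r\mapsto E_1(r)$, across the south boundary, up the inner-left arctic curve $r\mapsto E_1'(r)$ to the cusp $(E_c,t_c)$, back down the inner-right curve $r\mapsto E_2'(r)$, across the south boundary again, and finally up the right arctic curve $r\mapsto E_2(r)$ to the right corner of the north boundary (when $t\ge t_c$ the two inner pieces and the middle south segment are absent). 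On the south boundary $\pi(x,t)=x$, so $\pi$ there is the identity onto the slice $\{x:(x,t)\in\overline{\fL(\fD')}\}$. As $\Gamma$ is connected and $\pi$ continuous, once I show $\pi|_\Gamma$ is strictly monotone with respect to the natural parametrization of $\Gamma$, injectivity follows, and $\pi(\Gamma)$ is then automatically a closed interval containing the south slice — exactly the assertion of the lemma.

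For the arctic pieces, I would parametrize a smooth arc by $r$, writing $E(r)$ for the relevant boundary function ($E_1,E_2,E_1'$ or $E_2'$), so that $f_r(E(r))\in\bR$ there. The tangent-slope identity \eqref{e:slope}--\eqref{e:edgeEqn} gives $E'(r)=f_r(E(r))/(f_r(E(r))+1)=:q(r)$, hence
\begin{align*}
	\frac{d}{dr}\,\pi(E(r),r)=E'(r)+(t-r)\,q'(r)-q(r)=(t-r)\,q'(r),
\end{align*}
and, integrating from a south-boundary endpoint where $r_0=t$ and $\pi=E(t)$,
\begin{align*}
	\pi(E(r),r)-E(t)=-\int_{t}^{r}\!\!\int_{u}^{r}q'(v)\,dv\,du .
\end{align*}
Thus the monotonicity of $\pi$ along each arc, as well as the side from which an arctic arc approaches a south-boundary endpoint, are governed entirely by the sign of $q'$; and since $q=1/s$ with $s=(f_r(E(r))+1)/f_r(E(r))$ the tangent slope of $\fA(\fD')$ and $\tfrac{dq}{df}=(f+1)^{-2}>0$, this sign coincides with that of the derivative of the tangent direction along the arc.

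Next I would show that the tangent direction of $\fA(\fD')$ turns strictly monotonically along each smooth arc, with the correct orientation. This I would extract from the algebraic description of $\fA(\fD')$ inherited from the polygon $\fP$ via \eqref{e:complexss0}--\eqref{e:complexss} and \Cref{pa1}, combined with the local strict convexity of the arctic boundary at nonsingular points (\cite[Theorem~1.3(c)]{DMCS}, already used in the proof of \Cref{l0}); the precise sign of $q'$ on each arc I would read off from the void/saturated data in \Cref{p:ftzbehave}, noting in particular that across the west tangency $(E_1(t_1),t_1)$ the quantity $q$ passes continuously and monotonically through $1$ (as $f_r(E_1(r))$ passes through $\infty$ and $\del_x H^*$ switches from $0$ to $1$) and across the east tangency $(E_2(t_2),t_2)$ through $0$, so these tangencies do not break monotonicity of $q$. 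The only delicate junction is the cusp: there the two inner arcs $E_1',E_2'$ share a tangent direction $q_c=f_{t_c}(E_c)/(f_{t_c}(E_c)+1)$, which is finite and nonzero since by \Cref{pa} (equivalently \Cref{a:xhh}) the cusp is not a tangency location, $\pi$ sends the cusp to $c(t)=E_c+(t-t_c)q_c$, and the square-root vanishing of $q_{1'}-q_c$ and $q_{2'}-q_c$ coming from the cusp asymptotics \Cref{p:ftzbehave}(3) forces $\pi(E_1'(r),r)$ and $\pi(E_2'(r),r)$ to approach $c(t)$ from opposite sides. Hence the inner-left arc maps bijectively onto $[E_1'(t),c(t)]$ and the inner-right arc onto $[c(t),E_2'(t)]$, so $c(t)\in(E_1'(t),E_2'(t))$ and the gap between the two components of the south slice is filled exactly once. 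Concatenating the monotone pieces then gives that $\pi|_\Gamma$ is strictly monotone, whence bijective onto the closed interval with endpoints $\pi(E_1(\ft'),\ft')$ and $\pi(E_2(\ft'),\ft')$. (The smallness of $\ft'$ enters only to guarantee that within the thin strip $\fD'$ the arctic curve has no singularities beyond the single cusp and the two tangencies inherited from $\fP$, so that $\Gamma$ decomposes into arcs exactly as above.)

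The main obstacle is the third step: converting the geometric input — strict convexity of $\fA(\fD')$ at smooth points together with the classification of its singularities — into the analytic statement that $q(r)=f_r(E(r))/(f_r(E(r))+1)$ is strictly monotone, with the correct sign, along every smooth arc. This forces one to exploit the equation $Q(f,z)=0$ carefully and to match the local behaviour of $f$ along the arctic boundary with the void/saturated and square-root/cube-root asymptotics of \Cref{p:ftzbehave}. The cusp junction, although $q$ itself is smooth there, also needs care: it is precisely the two-branch geometry at the cusp that produces the ``opposite sides'' behaviour making $\pi$ fill the gap rather than overlap, and this must be justified from \Cref{p:ftzbehave}(3).
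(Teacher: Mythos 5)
Your proof follows essentially the same route as the paper's: the paper likewise observes that $\pi(x,r)$ is the $x$-coordinate of the point where the tangent line to $\mathfrak A(\mathfrak D')$ at $(x,r)$ meets the horizontal line $y=t$, and then asserts that as one traverses the south boundary and the arctic curve counterclockwise the image $z$ moves monotonically (Figure~\ref{f:tangent}). Your identity $\frac{d}{dr}\pi(E(r),r)=(t-r)q'(r)$, which uses $E'(r)=q(r)$ from \eqref{e:edgeEqn}, is precisely the infinitesimal version of this tangent-line picture, and your analysis of the junctions — that $q$ passes continuously through $1$ at the west tangency and through $0$ at the east tangency, and that the two inner branches approach the cusp image $c(t)$ from opposite sides because $q'$ has opposite-signed $1/\sqrt{t_c-r}$ singularities there (by \Cref{p:ftzbehave}(3) and \eqref{e:chiEc}) — is a correct, more explicit version of what the paper leaves implicit. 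What you gain is a precise reduction of the lemma to a single sign condition on $q'$, and a clean explanation of why the inner arcs fill the gap between the two south-boundary components exactly once.

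The one point you flag as the main obstacle — showing $q'(r)$ keeps a fixed sign along each smooth arc, i.e. that the tangent direction of $\mathfrak A(\mathfrak D')$ sweeps monotonically — is indeed not closed in your write-up, but it is also not argued in the paper's proof, which simply asserts the monotone sweeping. Your suggested route (local strict convexity of $\mathfrak A$ at nonsingular points from \cite[Theorem 1.3(c)]{DMCS}, together with reading off orientations from the void/saturated classification in \Cref{p:ftzbehave}) is the natural way to fill it; a slightly more algebraic alternative, consistent with your setup, is to use $Q_1'(q(r))=-r$ from \eqref{chiq2} to get $q'(r)=-1/Q_1''(q(r))$ and note that $Q_1''$ has a fixed nonzero sign on each arc (vanishing only at the cusp, where it changes sign), which makes the sign of $(t-r)q'(r)=(r-t)/Q_1''(q(r))$ constant on each arc and flip exactly at the cusp. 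So the proposal is sound and is effectively the detailed version of the paper's geometric argument.
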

\begin{proof}

Any $(x,t)$ on the south boundary $\big\{ (x,t)\in\overline{\fL({\fD'})} \big\}$ is mapped to $z=x$ by \eqref{e:proj}.
At any point $(x,r)$ on the arctic boundary of $\fL(\fD')\cap \fD^t$ such that $(x,r)\in  \mathfrak{A} (\fD')$ with $r\in(t,\ft')$, the tangent vector to $(x,r)\in  \mathfrak{A} (\fD')$ has slope 
\begin{align}\label{e:slope2}
\frac{f_r(x)+1}{f_r(x)},
\end{align}
 by \eqref{e:slope}. 
$(x,r)$ is mapped to $(f,z)$ by \eqref{e:injectcopy} as given by
\begin{align}\label{e:deriv}
f=f_r(x),\quad \frac{r-t}{x-z}=\frac{f_r(x)+1}{f_r(x)}.
\end{align}
Comparing with \eqref{e:slope2}, the right side of the second expression in  \eqref{e:deriv} is the slope of the tangent vector of arctic curve at $(x,r)$. Thus $(z,t)$ is the intersection of the tangent line to the arctic curve at $(x,r)$ with the bottom boundary of $\fD^t$; See Figure \ref{f:tangent}.
When we move along the south boundary and the
arctic curve counterclockwise, its image $z\in \bR$ also moves in negative direction. The projection map \eqref{e:injectcopy} maps the south boundary and the arctic curve bijectively to an interval in $\bR$.

\end{proof}

\begin{figure}
		
		\begin{center}		
			
			\begin{tikzpicture}[
				>=stealth,
				auto,
				style={
					scale = .52
				}
				]
				
				\draw[black, very thick] (-4.9, 0) node[left, scale = .7]{$y = t$}-- (3.45, 0)--(3.45,3)--(-1.9,3)node[left, scale = .7]{$y = \ft'$}-- (-4.934877870428601, 0);
				\draw[](-4.9,0) node[below, scale=0.7]{$\fa(t)$};
				\draw[](3.45,0) node[below, scale=0.7]{$\fb(t)$};

				\draw[black,fill=black] (-2.65,2.25) circle (.1);

				\draw[black,fill=black] (3.45,1.5) circle (.1);
				\draw[black, thick] (-3.5, 0) arc (180:110:3.2);
				
				\draw[black,fill=black] (-3.2,1.4) circle (.1);
				\draw[black,fill=black] (-4,0) circle (.1);
				
				\draw[black] (-3.2,1.4)--(-4,0);
				\draw[] (-3.2,1.4) node[right, scale=0.7]{$(x,r)$};
				\draw[] (-4,0) node[below, scale=0.7]{$(z,t)$};

				\draw[black, thick] (3, 0) arc (-30:30:3);

				\draw[black, thick] (-1, 0) arc (-60:0:1.732);
				\draw[black, thick] (.75, 0) arc (240:180:1.732);
				
				\draw[] (0, 2)  node[scale=0.7]{$\mathfrak{L}(\fD')\cap \fD^t$};

				
%
%
%
%
%
				
%
%
%
%
%
%
%
%
%
%
%
%
%

			\end{tikzpicture}
			
		\end{center}
		
		\caption{\label{f:tangent} 
		Any point $(x,r)$ on the arctic curve is mapped to $(f,z)$ by \eqref{e:proj}, where $(z,t)$ is the intersection of the tangent to the arctic curve at $(x,r)$, and the bottom boundary of $\fD^t$ as shown in the left figure.}
		
	\end{figure}
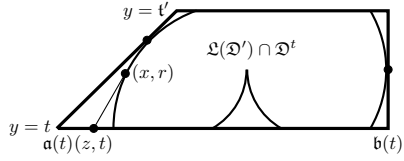

\begin{proof}[Proof of \Cref{p:injective}]
	By our assumption, the north boundary $\big\{ (x,\ft')\in \overline{\fL({\fD'})} \big\}$ is a single interval. It is mapped by \eqref{e:injectcopy} to a curve in the upper complex plane (namely, we have $\Im z>0$), except for at its two end points (which are mapped to the real line). 
Thanks to Theorem \ref{t:hom} and Lemma \ref{l:bottombb}, \Cref{p:injective} follows if we show that there exists a universal constant $c(\fP)>0$ such that, for any $0\leq \delta\leq c(\fP)$,
	\begin{align}\label{e:boundary2}
		(x,\ft')\in \fL(\fD') \mapsto x-\delta \frac{f_{\ft'}(x)}{f_{\ft'}(x)+1}\in \bH^+,
	\end{align}
	is a bijection onto its image.
	Thanks to the third statement of Proposition \ref{pa1},  locally around $\big( f_t(x), x \big)$, there exists  a real analytic function $Q_0$ in one variable such that,
	\begin{align}\label{e:q0fcopy2}
		Q_0 \big(f_t(x) \big)=x \big(f_t(x)+1 \big)-tf_t(x).
	\end{align}
	In the following, only consider the case $\alpha=1$, for the general case when $|\alpha-1|=\oo(1)$ is entirely analogous. There are two cases: the north boundary $\del_{\rm no} \fL(\fD')=\{(x,\ft')\in \fL(\fD')\}$ is close to a horizontal tangency location; or $\del_{\rm no} \fL(\fD')$ is bounded away from horizontal tangency locations. We discuss these cases separately. Fix  a small constant $\fc_0>0$.

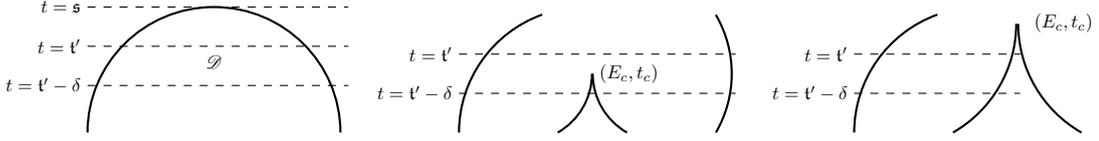
\begin{figure}
	
	\begin{center}		
		
		\begin{tikzpicture}[
			>=stealth,
			auto,
			style={
				scale = .52
			}
			]

			\draw[black, thick] (7, 0) arc (0:180:3.2);
			\draw[black, dashed] (0.6, 3.2) node[left, scale=0.7]{$t=\fs$} --(7.2, 3.2) ;
			\draw[black, dashed] (0.6, 2.2) node[left, scale=0.7]{$t=\ft'$} --(7.2, 2.2) ;
			
			\draw[black, dashed] (0.6, 1.2) node[left, scale=0.7]{$t=\ft'-\delta$} --(7.2, 1.2) ; 
			\draw[] (3.8, 1.8) node[scale=0.7]{$\mathscr D$};

			\draw[black, thick] (10, 0) arc (180:110:3.2);
			\draw[black, thick] (16.5, 0) arc (-30:30:3);
			\draw[black, thick] (12.5, 0) arc (-60:0:1.732);
			\draw[black, thick] (14.25, 0) arc (240:180:1.732);		
			\draw[black, dashed] (10, 2) node[left, scale=0.7]{$t=\ft'$} --(17, 2) ;
			\draw[black, dashed] (10, 1) node[left, scale=0.7]{$t=\ft'-\delta$} --(17, 1) ;
			\draw[] (14.3, 1.5)  node[scale=0.7]{$(E_c, t_c)$};

			\draw[black, thick] (20, 0) arc (180:110:3.2);

			\draw[black, thick] (22.5, 0) arc (-60:0:3.2);
			\draw[black, thick] (25.75, 0) arc (240:180:3.2);
			\draw[] (25.3, 2.8)  node[scale=0.7]{$(E_c, t_c)$};
			\draw[black, dashed] (20, 2) node[left, scale=0.7]{$t=\ft'$} --(24.2, 2) ;
			\draw[black, dashed] (20, 1) node[left, scale=0.7]{$t=\ft'-\delta$} --(24.2, 1) ;


		\end{tikzpicture}
		
	\end{center}
	
	\caption{\label{f:trapill} Shown to the left is the case that the north boundary $\del_{\rm no} \fL(\fD')=\{(x,\ft')\in \fL(\fD')\}$ is close to a horizontal tangency location. Shown to the middle is the case that $\del_{\rm no} \fL(\fD')$ is close to a cusp location $(E_c, t_c)$ and $\ft'>t_c$. Shown to the right is the case that $\del_{\rm no} \fL(\fD')$ is close to a cusp location $(E_c, t_c)$ and $\ft'<t_c$.
	}
\end{figure}

	First suppose that $\del_{\rm no} \fL(\fD')$ is close to a horizontal tangency location at time $\fs$ with distance less than $\fc_0$; see Figure \ref{f:trapill} (flip $\fD'$ if necessary). We can consider the liquid region $\mathscr D$ between times $\ft'-\delta$ and $\fs$, and the projection map
	\begin{align}\label{e:proj2}
		(x,t)\in \mathscr D \mapsto x+( \ft' - \delta - t) \frac{f_{t}(x)}{f_{t}(x)+1}\in \bH^+.
	\end{align}
	
	\noindent Similarly to Lemma \ref{l:bottombb}, \eqref{e:proj2} maps the boundary $\partial \mathscr D$ bijectively to $\bR\cup \{\infty\}$. To use \Cref{t:hom} we further compose \eqref{e:proj2} with a map $\bH^+\cup \{\infty\}$ to the unit disk $\bD$. Then by taking $X=\bD$ in \Cref{t:hom} we conclude that \eqref{e:proj2} is a bijection. By taking $t=\ft'$, we get that \eqref{e:boundary2} is a bijection.
	
	We next address the remaining two cases, namely, when $\del_{\rm no} \fL(\fD')$ is distance at least $\fc_0$ away from horizontal tangency locations. Then, there exists a universal constant $\fC_0>0$ depending on $\fc_0$ such that, for $(x,t)$ at distance $\fc_0$ away from any horizontal tangency location, we have
	\begin{align}\label{e:ftCbb}
		\big| f_t (x)+1 \big|\geq \fC_0^{-1}; \qquad \left|\frac{f_t (x)}{f_t (x)+1}\right|\leq \fC_0.
	\end{align}
	
	\noindent In what follows, we fix small constants $0<\fc<\fc'$, which will be chosen later (and will depend only on $\mathfrak{P}$).

	For $(x,t)$ at least distance $\fc$ away from the arctic curve, there is a universal constant $\fC$ (depending on $\fc$) such that
	\begin{align}\label{e:derftCbb}
		\left|\del_x \frac{f_t (x)}{f_t (x)+1}\right|\leq \fC.
	\end{align}
	
	\noindent Assume that there exist $(X, \ft')$ and $(X',\ft')$ with $X<X'$ such that 
	\begin{align}\label{e:eqXX}
		X-\delta \frac{f_{\ft'} (X)}{f_{\ft'}(X)+1}= X'-\delta \frac{f_{\ft'} (X')}{f_{\ft'}(X')+1},
	\end{align}
	then \eqref{e:ftCbb} gives that
	\begin{align}
		\label{xdistance}
		|X-X'|=\delta \left|\frac{f_{\ft'} (X)}{f_{\ft'}(X)+1}-\frac{f_{\ft'} (X')}{f_{\ft'}(X')+1}\right|
		\leq 2\delta \fC_0\leq \displaystyle\frac{\fc}{2},
	\end{align}
	provided we take $\delta \leq \fc/8\fC_0$. If the interval  $\big\{ (y,\ft'): X\leq y\leq X' \big\}$ is distance $\fc$ away from the arctic curve, then 
	\begin{align*}
		|X-X'|=\delta \left|\frac{f_{\ft'} (X)}{f_{\ft'}(X)+1}-\frac{f_{\ft'} (X')}{f_{\ft'}(X')+1}\right|
		\leq \delta |X-X'|\sup_{X\leq y\leq X'}\left|\del_y \frac{f_{\ft'} (y)}{f_{\ft'} (y)+1}\right|
		\leq \delta \fC|X-X'|<|X-X'|,
	\end{align*}
	which leads to a contradiction, provided we take $\delta \leq 1/2\fC$.

	It remains to consider the case when the interval  $\big\{ (y,\ft'): X\leq y\leq X' \big\}$ is at distance at most $\mathfrak{c}$ from the arctic curve.  Let us first consider the case that $\big\{ (y,\ft'): X\leq y\leq X' \big\}$ is at distance at most $\fc$ from a cusp location $(E_c, t_c)$ of the arctic boundary for $\mathfrak{P}$; see Figure \ref{f:trapill} (by our assumption, the cusp point upward). There are two cases, either $\ft'\geq t_c$ or $\ft'\leq t_c$. The proofs are essentially the same, so we will only discuss the former case.  We construct a curve $\mathscr C$ connecting two points on $t=\ft'-\delta$, such that $(X,\ft')$ and $(X',\ft')$ are inside the liquid region $\fL(\fP)$ between $\mathscr C$ and $t=\ft'-\delta$. We denote the liquid region between $\mathscr C$ and $t=\ft'-\delta$ by $\mathscr D$; see Figure \ref{f:injcontour}. We will show that the map
	\begin{align}\label{e:proj3}
		(x,t)\in \mathscr D\subseteq\fL(\fP) \mapsto x+(\ft'-\delta-t) \frac{f_{t} (x)}{f_{t}(x)+1}\in \bH^+,
	\end{align}
	is a bijection. By taking $t=\ft'$ and $x=X,X'$, this will rule out \eqref{e:eqXX}. By Lemma \ref{l:bottombb}, \eqref{e:proj3} maps the bottom boundary of $\mathscr D$ bijectively to an interval in $\bR$, and $\mathscr C$ to a curve in the upper half plane. Thanks to \Cref{t:hom}, the claim that \eqref{e:proj3} is a bijection would follow if we show that the map \eqref{e:proj3} restricted to $\mathscr C$ is one-to-one.
	
	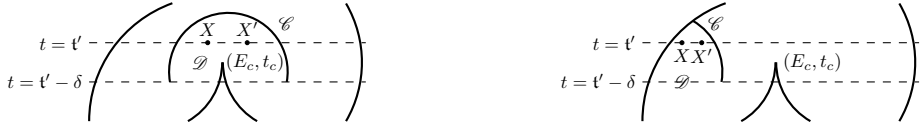
\begin{figure}
		
		\begin{center}		
			
			\begin{tikzpicture}[
				>=stealth,
				auto,
				style={
					scale = .52
				}
				]

				\draw[black, thick] (10, 0) arc (180:110:3.2);
				\draw[black, thick] (16.5, 0) arc (-30:30:3);
				\draw[black, thick] (12.5, 0) arc (-60:0:1.732);
				\draw[black, thick] (14.25, 0) arc (240:180:1.732);		
				\draw[black, dashed] (10, 2) node[left, scale=0.7]{$t=\ft'$} --(17, 2) ;
				\draw[black, dashed] (10, 1) node[left, scale=0.7]{$t=\ft'-\delta$} --(17, 1) ;
				\draw[] (14.2, 1.5)  node[scale=0.7]{$(E_c, t_c)$};
				\draw[] (12.8, 1.5)  node[scale=0.7]{$\mathscr D$};
				\draw[] (15, 2.5)  node[scale=0.7]{$\mathscr C$};
				
				\draw[black, thick] (15, 1) arc (-10:190:1.5);
				\draw[black, fill] (13, 2) node[above, scale=0.7]{$X$} circle (0.05);
				\draw[black, fill] (14, 2) node[above, scale=0.7]{$X'$} circle (0.05);

				\draw[black, thick] (24, 0) arc (180:110:3.2);
				\draw[black, thick] (30.5, 0) arc (-30:30:3);
				\draw[black, thick] (26.5, 0) arc (-60:0:1.732);
				\draw[black, thick] (28.25, 0) arc (240:180:1.732);		
				\draw[black, dashed] (24, 2) node[left, scale=0.7]{$t=\ft'$} --(31, 2) ;
				\draw[black, dashed] (24, 1) node[left, scale=0.7]{$t=\ft'-\delta$} --(31, 1) ;
				\draw[] (28.3, 1.5)  node[scale=0.7]{$(E_c, t_c)$};
				
				\draw[] (25, 1)  node[scale=0.7]{$\mathscr D$};
				\draw[] (25.5, 2.5)  node[ right, scale=0.7]{$\mathscr C$};
				\draw[black, thick] (26, 1) arc (-10:60:1.5);
				\draw[black, fill] (25, 2) node[below, scale=0.7]{$X$} circle (0.05);
				\draw[black, fill] (25.5, 2) node[below, scale=0.7]{$X'$} circle (0.05);

			\end{tikzpicture}
			
		\end{center}
		
		\caption{\label{f:injcontour} Shown to the left 
			is the curve $\mathscr C$ around a cusp location, connecting two points on $t=\ft'-\delta$, and $(X,\ft'), (X',\ft')$ are inside the liquid region between $\mathscr C$ and $t=\ft'-\delta$.
			Shown to the right is the curve $\mathscr C$ around a edge point $(E(\fs),\fs)$, connecting one point on $t=\ft'-\delta$ and one point on the arctic curve, and $(X,\ft'), (X',\ft')$ are inside the liquid region between $\mathscr C$ and $t=\ft'-\delta$.
			.
		}
	\end{figure}

	Let $f_c=f_{t_c} (E_c)$, and denote the map
	\begin{align}\label{e:mapfw}
		(x,t)\in \fL(\fP)\mapsto (f,w)=\left(f_t ( x ),  x +(t_c-t)\frac{   f_t ( x)}{ f_t ( x )+1}\right).
	\end{align}
	Here, $w=w(x,t)$ is a function of $x,t$; if the context is clear we will simply write it as $w$.
	We can interpret $f=f_{t_c}(w)$ as a function of $w$. We will soon see that after restriction to a fan-shaped neighborhood of $E_c$,  $f_{t_c}(w)$ is a single valued function.
	With the above notations, locally around $(f_c, E_c)$ we can rewrite \eqref{e:q0fcopy2} as
	\begin{align}\label{e:Q1f}
		Q_1(f):=\frac{Q_0(f)}{f+1}+\frac{t_c f}{f+1}=w.
	\end{align}
	Since $(E_c, t_c)$ is a cusp of the arctic boundary for $\mathfrak{P}$, it is characterized by the equations 
	\begin{flalign*} 
		Q_0(f_c)=E_c(f_c+1)-t_cf_c; \qquad Q_0'(f_c)=E_c-t_c; \qquad Q_0''(f_c)=0.
	\end{flalign*} 

	\noindent We also have that $Q_0'''(f_c)\asymp 1$, since $Q_0$ does not admit any quadruple roots by \Cref{pa1}. It  follows from \eqref{e:Q1f} (and the fact that no cusp is simultaneously a tangency location) that 
	\begin{flalign*} 
		Q_1(f_c)=E_c; \qquad Q_1'(f_c)=0; \qquad Q_1''(f_c)=0; \qquad |Q_0'''(f_c)|\asymp 1.
	\end{flalign*} 

	\noindent This, together with \eqref{e:Q1f}, implies that, locally around $E_c$, $f=f_{t_c}(w)$ has cube root behavior, namely, 
	\begin{flalign}
		\label{ftcw} 
		f_{t_c}(w)=f_c+\sqrt[3]{C(w)(w-E_c}),
	\end{flalign}  

	\noindent for some real analytic function $C(w)$. Let us discuss the choice for the cube root in \eqref{ftcw}; recall that in \eqref{e:mapfw}, we must have $\Im f = \Im[f_t (x)]<0$ and $\Im w =(t_c-t)\Im \big[f_t (x)/(f_t (x)+1) \big]>0$. Therefore, for $t>t_c$ in \eqref{e:mapfw} (here we used that the cusp points upward), its image corresponds to the branch of $f_{t_c}(w)$ in the lower half plane. Also $\Im[f_{t_c}(x)]=-\pi \del_x H_{t_c} (x)<0$ for $x\in (E_c-\varepsilon, E_c)\cup (E_c, E_c+\varepsilon)$ for some small $\varepsilon>0$. Thus the branch of $f_{t_c}(w)$ has to be the one in the lower half plane corresponding to the fan-shaped region with argument $(-2\pi/3, -\pi/3)$; See Figure \ref{f:branchcut}. Then $f_{t_c}(w)$ is a single valued function restricted to a neighborhood of $E_c$ after removing the branch cut.
	
	Next we consider the preimage of \eqref{e:mapfw} for $\big( f_{t_c} (w), w \big)$ with $w$ on the circle $w=E_c+\fc' e^{\ri \theta}$, where $\mathfrak{c}'$ is chosen so that $f(w)$ is single valued on this circle (and $\fc$ chosen to be smaller than $\fc'$). Since $\big| f_t (x)/ (f_t (x) + 1) \big| \leq \fC_0$ by \eqref{e:ftCbb}, we have
	\begin{align*}
		\fc'=|w-E_c|=\left|x-E_c+(t_c-t)\frac{f_t (x)}{f_t (x)+1}\right|\leq \fC_0 \big( |x-E_c|+|t_c-t| \big).
	\end{align*}
	Thus $(x,t)$ is bounded away from $(E_c,t_c)$, with $\dist \big( (x,t), (E_c,t_c) \big) \geq \fc'/ 2\fC_0$. Therefore, if we take $\fc\leq \fc'/ 4 \fC_0$, then by \eqref{xdistance} $(X,\ft')$ and $(X',\ft')$ will be inside the preimage curve. For $\Im[w]\leq -\fc'/2$, the corresponding $(x,t)$ satisfies
	\begin{align*}
		t_c-t =\frac{\Im[w]}{\Im[f/(f+1)]}=\frac{\Im[w]}{\Im[f]}|f+1|^2\geq \frac{\fc'}{2\fC_0^2 \Im[\sqrt[3]{C(w)\fc' e^{\ri \theta}}]}\geq \frac{\fc'}{C\fC_0^2},
	\end{align*}
	for some large constant $C$ depending on $C(w)$. If we take $\delta\leq c(\fP)\leq \fc'/C\fC_0^2$, then the preimage will intersect $t=\ft'-\delta$. This gives the desired curve $\mathscr C$; see Figure \ref{f:branchcut}.
	
	Next we consider the image of $\mathscr C$ under the map \eqref{e:proj3}
	\begin{align}\label{e:proj3copy}
		(x,t)\in \mathscr C \mapsto x+(\ft'-\delta-t) \frac{f_{t} (x)}{f_{t}(x)+1}
		=w-(t_c-\ft'+\delta)\frac{f_{t_c} (w)}{f_{t_c} (w)+1},
	\end{align}
	where $w=w(x,t)$ is from \eqref{e:mapfw}. For $(x,t)\in \mathscr C$, $w$ is on the circle $w=E_c+\fc' e^{\ri\theta}$. Since it is bounded away from $E_c$, namely $|w-E_c|=\fc'$, we have 
	\begin{align*}
		\left|\del_w \frac{f_{t_c} (w)}{f_{t_c} (w)+1}\right|
		\leq \frac{\del_wf_{t_c} (w)}{|f_{t_c} (w)+1|^2}
		\leq \fC_0^2 \big|\del_w \sqrt[3]{C(w)(w-E_c)} \big|\leq C\fC_0^2(\fc')^{-2/3},
	\end{align*}
	where the constant $C$ depends on $C(w)$.
	Therefore, if we take $|t_c-\ft'+\delta|\leq \fc+c(\fP)$ much smaller than $(\fc')^{2/3}/C\fC_0^2$ then, by the implicit function theorem, the map
	\begin{align*}
		w\mapsto w-(t_c-\ft'+\delta)\frac{f_{t_c} (w)}{f_{t_c} (w)+1},
	\end{align*}
	is a bijection from the curve $\big\{ w(x,t): (x,t)\in \mathscr C \big\}$ onto its image. We conclude that the map \eqref{e:proj3} is one-to-one after restriction to $\mathscr C$. Thus, the map \eqref{e:proj3} is a bijection.

	\begin{figure}
		
		\begin{center}		
			
			\begin{tikzpicture}[
				>=stealth,
				auto,
				style={
					scale = .52
				}
				]

				\draw[gray!40!white, fill] (4.875, 1.5) arc (0:180:1.5);
				\draw[gray!20!white, fill] (3.375, 1.5)--(4.875, 1.5) arc (0:-35:1.5)--(3.375, 1.5);
				\draw[gray!20!white, fill] (3.375, 1.5)--(1.875, 1.5) arc (180:215:1.5)--(3.375, 1.5);

				\draw[black, thick] (0, 0) arc (180:110:3.2);
				\draw[black, thick] (6.5, 0) arc (-30:30:3);
				\draw[black, thick] (2.5, 0) arc (-60:0:1.732);
				\draw[black, thick] (4.25, 0) arc (240:180:1.732);		
				\draw[black, dashed] (0, 2) node[left, scale=0.7]{$t=\ft'$} --(7, 2) ;
				\draw[black, dashed] (0, 1) node[left, scale=0.7]{$t=\ft'-\delta$} --(7, 1) ;			
				\draw[black, dashed] (0, 1.5) node[left, scale=0.7]{$t=t_c$} --(7, 1.5) ;	
				\draw[black, thick] (4.8,1) arc (-22:202:1.5);
				
				\draw[] (5, 2.5)  node[scale=0.7]{$\mathscr C$};
				
				\draw[] (3.5, 3.5)  node[scale=0.7]{$(x,t)$};

				\draw[] (9, 1.5)  node[scale=0.7]{$\mapsto$};

				\draw[gray!40!white, fill] (14.5, 1.5) arc (0:180:1.5);
				\draw[gray!20!white, fill] (13,1.5)--(13, 0) arc (-90:-180:1.5)--(13,1.5);
				\draw[gray!20!white, fill] (13,1.5)--(13, 0) arc (-90:0:1.5)--(13,1.5);
				
				\draw[black, thick] (13,1.5) circle(1.5);
				\draw[black, dashed] (11,1.5)--(15, 1.5);
				\draw[white, very thick] (13,1.5)--(13,-0.2);
				\draw[] (13, 3.5)  node[scale=0.7]{$w=w(x,t)=E_c+\fc' e^{\ri\theta}$};
				\draw[] (13.5, 1.5)  node[below, scale=0.7]{$E_c$};
				\draw[] (16.5, 1.5)  node[scale=0.7]{$\mapsto$};

				\draw[gray!40!white, fill] (20,1.5)--(20.75, 0.2) arc (-60:-120:1.5)--(20,1.5);
				\draw[gray!20!white, fill] (20,1.5)--(20.75, 0.2) arc (-60:-30:1.5)--(20,1.5);
				\draw[gray!20!white, fill] (20,1.5)--(19.25, 0.2) arc (-120:-150:1.5)--(20,1.5);
				
				\draw[gray!40!white, fill] (20,1.5)--(21.5, 1.5) arc (0:60:1.5)--(20,1.5);
				\draw[gray!40!white, fill] (20,1.5)--(18.5, 1.5) arc (180:120:1.5)--(20,1.5);
				\draw[black, dashed] (18,1.5)--(22, 1.5);
				\draw[] (20, 3.5)  node[scale=0.7]{$f_{t_c} (w)=f_c+\sqrt[3]{C(w)(w-E_c)}$};

			\end{tikzpicture}
			
		\end{center}
		
		\caption{\label{f:branchcut} For $t>t_c$ in \eqref{e:mapfw} (dark shaded area), its image satisfies $\Im[w]>0$ and $\Im[f_{t_c} (w)]<0$. It must be the branch of $f_{t_c}(w)$ in the lower half plane. After restriction to this branch, the preimage of the circle $w=E_c+\fc'e^{\ri\theta}$,  is bounded away from $(E_c,t_c)$ and intersects $t=\ft'-\delta$. The preimage between two intersections with $t=\ft'-\delta$ gives the desired curve $\mathscr C$. 				}
	\end{figure}
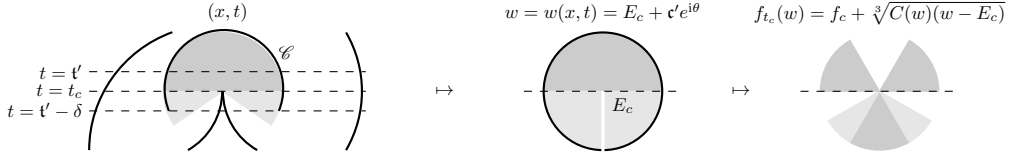

	Next we consider the case when $(X,\ft')$ and $(X',\ft')$ are close to a point $\big( E(\fs), \fs \big)$ (which is bounded away from a cusp location) on the arctic curve,  namely, $\dist \big( \big\{(y,\ft'): X\leq y\leq X' \big\}, (E(\fs),\fs) \big)\leq 2\fc$. Then we can construct a curve $\mathscr C$ connecting a point on the horizontal level $t=\ft'-\delta$ to a point on the arctic curve, such that  $(X,\ft')$ and $(X',\ft')$ are inside the liquid region $\fL(\fP)$ between $\mathscr C$ and $t=\ft'-\delta$. We denote this region by $\mathscr D$; see Figure \ref{f:injcontour}. We can consider the same map \eqref{e:mapfw}, by replacing the cusp location $(E_c,t_c)$ by $\big( E(\fs), \fs \big)$. Instead of cube root behavior \eqref{ftcw}, $f_{\fs} (w)$ has square root behavior $f_{\mathfrak{s}} (w) = f_{\mathfrak{s}} (w) + \sqrt{C(w) \big(w - E (\mathfrak{s}) \big)}$ in a neighborhood of $E(\fs)$. Then, by following essentially the same argument as used in the cusp case, we will have that the map \eqref{e:proj3} is also a bijection on $\mathscr D$.

\end{proof}

\section{Initial Estimates} \label{s:initialEs}
In this section we study the image of the map \eqref{e:proj0}, and prove Proposition \ref{p:fdecompose} for the special case $\beta=H_t^*$. This will serve as the initial estimates for the general case by a deformation argument.

By Item (e) in the third statement of our \Cref{xhh}, the projection map \eqref{e:proj0} is a bijection. Together with its complex conjugate, it maps two copies of the liquid region $\cL(\fD')\cap \fD^t$ gluing along the south boundary and arctic curve to a domain $\mathscr D_0$. We can interpret $f$ in \eqref{e:xrmap} as a function of $z\in \mathscr D_0$ as $f=f_t^*(z)$.
The north boundary $I_{\ft'}^*=[E_1(\ft'), E_2(\ft')]$ of $\cL(\fD')\cap \fD^t$ is a single interval,  it is mapped to a curve
\begin{align}\label{e:curve}
	\mathscr C^*(x):=x-(\ft'-t)\frac{f_{\ft'}^*(x)}{f_{\ft'}^*(x)+1}, \quad x\in I_{\ft'}^*.
\end{align}
We denote the contour $\mathscr C_0:= \big\{\mathscr C^*(x) \cup \overline{\mathscr C^*(x)}: x\in I_{\ft'}^* \big\}$, which is also the boundary curve of $\mathscr D_0$.

\begin{prop}\label{p:ftiinitial}
The contour $\mathscr C_0$ defined as in \eqref{e:curve} is an analytic Jordan curve. There exists a closed set  $\mathscr A_0$ containing an annulus neighborhood of $\mathscr C_0$ (shrinking $\ft'$ if necessary) such that the following holds for $z\in \mathscr A_0$
\begin{align}\label{e:ftiinitial}
	1/\fC\leq |f_t^*(z)|\leq \fC, \quad |f_t^*(z)+1|\geq 1/\fC,\quad |\del_z f_t^*(z)|\leq \fC
\end{align}
and $1+(\ft'-t)\del_z (f_t^*(z)/(f_t^*(z)+1))=0$ has two zeros (counting multiplicity) at $z=\mathscr  C^*(E_1(\ft')), \mathscr  C^*(E_1(\ft'))$.
\end{prop}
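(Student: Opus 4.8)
\emph{Proof strategy for \Cref{p:ftiinitial}.} The plan is to describe both $\mathscr C_0$ and the function $f^*_t$ near it through the characteristic map. Put $F(w) = f^*_{\ft'}(w)/(f^*_{\ft'}(w)+1)$ and $G(w) = w - (\ft'-t)F(w)$; by the complex Burgers equation and \eqref{zlineart}, $G$ is exactly the backward characteristic map from time $\ft'$ to time $t$. It is analytic on a complex neighborhood of $I^*_{\ft'}$ minus its endpoints (where $f^*_{\ft'}$, hence $F$, has square-root singularities by \eqref{e:squareeq}), it restricts to $\mathscr C^*$ on $I^*_{\ft'}$, and its inverse is the forward map $E(z)= z+(\ft'-t)f^*_t(z)/(f^*_t(z)+1)$, so that $f^*_t = f^*_{\ft'}\circ G^{-1}$ and
\begin{align*}
1+(\ft'-t)\,\del_z\!\left(\frac{f^*_t(z)}{f^*_t(z)+1}\right)\bigg|_{z=G(w)} \;=\; E'\big(G(w)\big)\;=\;\frac{1}{G'(w)} .
\end{align*}
First I would invoke Item (c) of the fifth statement of \Cref{xhh} (the projection \eqref{e:proj0} is a bijection of $\fL(\fD')\cap\fD^t$ onto $\mathscr D_0$ minus its boundary) together with Proposition 3 of \cite{LSCE} and the continuous extension of $f^*$ to $\del\fL(\fD')$ (Item (a), fifth statement of \Cref{xhh}), which glue the projection and its conjugate into a diffeomorphism of the double onto $\overline{\mathscr D_0}$; away from cusps and tangency locations it is real analytic. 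Since under \Cref{a:xhh} the only singularities of $\fA(\fD')$ are $(E_i(t_i),t_i)$ and $(E_c,t_c)$, all of which are avoided once $\ft'$ is taken close to $\ft$, the endpoints $E_1(\ft'),E_2(\ft')$ are regular points, so $\mathscr C_0=\del\mathscr D_0$ is an analytic Jordan curve and $f^*_t$ extends analytically to a neighborhood of $\mathscr C_0$, including across the two endpoints $\mathscr C^*(E_i(\ft'))$.

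The bounds \eqref{e:ftiinitial} then follow from compactness of $\mathscr C_0$: on $\mathscr C^*(I^*_{\ft'})$ one has $f^*_t = f^*_{\ft'}$, which lies in $\bH^-$ on the interior and in $\bR\setminus\{0\}$ at the endpoints (these are not vertical tangency locations), hence $f^*_t$ is continuous, bounded, and bounded away from $0$ there, giving $1/\fC\le|f^*_t(z)|\le\fC$; the bound $|f^*_t(z)+1|\ge1/\fC$ is \eqref{e:ft+1b2} (there is no horizontal tangency along $\fA(\fD')$); and $|\del_z f^*_t(z)|\le\fC$ is a Cauchy estimate on a slightly smaller neighborhood. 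One then takes $\mathscr A_0$ to be a thin enough closed annular neighborhood of $\mathscr C_0$ inside the domain of analyticity on which all four inequalities hold uniformly.

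For the final assertion I would use the displayed identity. Since $E=G^{-1}$ is the forward characteristic flow, it is injective and analytic on a neighborhood of $\overline{\mathscr D_0}$ minus the two endpoints, so $E'$ — equivalently $1+(\ft'-t)\del_z(f^*_t/(f^*_t+1))$ — is nonvanishing there; thus any zero in $\mathscr A_0$ must sit at $\mathscr C^*(E_1(\ft'))$ or $\mathscr C^*(E_2(\ft'))$. Near $E_i(\ft')$, \eqref{e:squareeq} gives $F(w) = F(E_i(\ft')) + b_i\sqrt{w-E_i(\ft')}+\OO(|w-E_i(\ft')|^{3/2})$ with $|b_i|\asymp1$, whence $G(w)-\mathscr C^*(E_i(\ft')) = -(\ft'-t)b_i\sqrt{w-E_i(\ft')}\,(1+\oo(1))$; inverting, $\sqrt{w-E_i(\ft')}\asymp z-\mathscr C^*(E_i(\ft'))$, so $1/G'(w)\asymp z-\mathscr C^*(E_i(\ft'))$ — a simple zero at each of the two points and no other zero in $\mathscr A_0$, i.e. exactly two zeros counting multiplicity. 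The same expansion is what makes $G^{-1}$, and hence $f^*_t=f^*_{\ft'}\circ G^{-1}$, analytic across $\mathscr C^*(E_i(\ft'))$, which is the ``unfolding'' used in the first step.

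The main obstacle is precisely this endpoint analysis: at $\mathscr C^*(E_i(\ft'))$ the parametrization $\mathscr C^*$ has infinite speed and $f^*_{\ft'}$ is only H\"older continuous of exponent $1/2$, so one must verify that passing to the $z$-variable unfolds the square root into a genuine analytic function and a genuine regular analytic arc, with no residual corner or higher-order cusp. This is exactly where one needs $E_i(\ft')$ to be a non-singular (in particular non-cusp) point of $\fA(\fD')$, which forces $\ft'$ to be taken close to $\ft$; checking that the neighborhood $\mathscr A_0$ on which analyticity and \eqref{e:ftiinitial} hold can be chosen uniformly in $t\in[0,\ft]$ is the other point that requires care.
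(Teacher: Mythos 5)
Your proof is correct and takes a genuinely different route from the paper's, substituting a geometric argument via the characteristic flow for the paper's algebraic one via $Q_0$. The paper works with the derivatives of $Q_0$ from \eqref{e:derQ}: it first characterizes $E_i(\ft')$ by $\del_f Q_0(f^*_{\ft'}(E_i(\ft'))) = E_i(\ft')-\ft'$ (and non-cusp by $\del^2_f Q_0 \neq 0$), then uses the characteristic relation $z = \mathscr C^*(x) = x + (t-\ft')\chi_t^*(z)$ to rewrite $\del_f Q_0(f_t^*(z))$ as $(x-\ft') + \tfrac{1}{f_t^*(z)+1}\big(\ft'-t + 1/\del_z\chi_t^*(z)\big)$ (equation \eqref{e:delfQ0}); the second summand vanishes exactly when $1+(\ft'-t)\del_z\chi_t^*(z)=0$, which therefore holds along $\mathscr C_0$ precisely at the two endpoints, with simple zeros since $\del^2_fQ_0\neq 0$. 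You instead observe that $1+(\ft'-t)\del_z\chi_t^*(z) = E'(z) = 1/G'(w)$, where $G(w)=w-(\ft'-t)\chi^*_{\ft'}(w)$ is the backward characteristic map; the zeros come from the $1/\sqrt{w-E_i(\ft')}$ blow-up of $G'$ at the endpoints (non-cusp $\Leftrightarrow$ $b_i\neq 0$), and nonvanishing elsewhere from injectivity of $E$. Your route makes the ``exactly two zeros in $\mathscr A_0$, not just on $\mathscr C_0$'' cleaner, since univalence of $E$ on a punctured neighborhood directly forbids further zeros, whereas the paper's argument works along $\mathscr C_0$ and then shrinks the annulus. The paper's route is more self-contained and does not need the global injectivity of $E$ beyond what is packaged into \eqref{e:delfQ0}. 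Two things you should tighten: (i) the unfolding at $\mathscr C^*(E_i(\ft'))$ is stated but not executed — one should verify, as the paper does in its last paragraph, that $\zeta\mapsto G(E_i(\ft')+\zeta^2)$ has nonzero derivative at $\zeta=0$, so that both the arc $\mathscr C_0$ and the extended $f_t^*$ are genuinely analytic across the crossing (and that $E''\neq 0$ there, which underlies the simple zero); (ii) injectivity and analyticity of $E$ on a two-sided neighborhood of $\mathscr C_0$ (not just on the interior of $\mathscr D_0$) relies on the ``shrink $\ft'$'' maneuver, which you invoke but do not carry out — that is where the paper makes analytic continuation across $\mathscr C_0$ explicit.
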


\begin{proof}
By shrinking $\ft'$ if necessary, we can extend $f^*_t(z)$ to a neighborhood of the contour $\mathscr C_0$. Moreover, we can also choose the  time slice $t=\ft'$ bounded away from tangency locations. Thus by taking $\mathscr A_0$ the closure of a sufficiently small annulus neighborhood of $\mathscr C_0$, \eqref{e:ftiinitial} holds.

Locally around $(f_r^*(x),x)$ for $(x,r)\in \fL(\fD')\cap \fD^t$ there exists an analytic function $Q_0$ such that
\begin{align}\label{e:introduceQ}
	Q_0(f_r^*(x))=x(f_r^*(x)+1)-rf_r^*(x).
\end{align}
Using \eqref{e:xrmap}, we can rewrite \eqref{e:introduceQ} in terms of $(f_t^*(z), z)$,
\begin{align}\label{e:introduceQ2}
	Q_0(f_t^*(z))=x(f_r^*(x)+1)-rf_r^*(x)=z(f_t^*(z)+1)-tf_t^*(z).
\end{align}
By Item (d) in the third statement of \Cref{xhh}, 
the two end points $E_1(\ft'), E_2(\ft')$ of the $I_{\ft'}^*$ are characterized by $\del_f Q_0(f_{\ft'}^*(E_i(\ft')))=E_i(\ft')-\ft'$, $i\in\{1,2\}$. Since they are not cusp locations, we have $\del^2_f Q_0(f_{\ft'}^*(E_i(\ft')))\neq 0$. Using the relation \eqref{e:introduceQ2}, we can write the derivatives of $Q_0$ in terms of the derivatives of $f_t^*(z)$. By taking derivatives with respect to $z$ on both sides of \eqref{e:introduceQ2}, and rearranging, we get
\begin{align}\label{e:derQ}
	\del_f Q_0(f_t^*(z))=z-t+\frac{f_t^*(z)+1}{\del_z f_t^*(z)},\quad 
	\del^2_f Q_0(f_t^*(z))=-\frac{(f_t^*(z)+1)^3}{(\del_z f_t^*(z))^3}\del_z^2\left(\frac{f_t^*(z)}{f_t^*(z)+1}\right).
\end{align}
For $z=\mathscr C^*(x)=x+(t-\ft')f_t^*(z)/(f_t^*(z)+1)$ for some $x\in I_\ft'^*$, we can rewrite the first relation in \eqref{e:derQ} as
\begin{align}\label{e:delfQ0}
	\del_f Q_0(f_t^*(z))=\del_f Q_0(f_{\ft'}^*(x))=(x-\ft')+\frac{1}{f_t^*(z)+1}\left(\ft'-t+\frac{1}{\del_z (f_t^*(z)/(f_t^*(z)+1)) }\right).
\end{align}
The second term on the righthand side of \eqref{e:delfQ0} vanishes only if $x$ is one of the end points $E_1(\ft'), E_2(\ft')$. 
Thus we conclude that along the contour $\mathscr C_0$, $1+(\ft'-t)\del_z (f_t^*(z)/(f_t^*(z)+1))=0$ only at $z=\mathscr  C^*(E_1(\ft')), \mathscr  C^*(E_1(\ft'))$. Since $(E_i(\ft'),\ft')$ are not cusp locations, the second relation in \eqref{e:derQ} implies that $\del_z^2(f_t^*(z)/(f_t^*(z)+1))\neq 0$ at these two points. By taking  $\mathscr A_0$ the closure of a sufficiently small annulus neighborhood of $\mathscr C_0$, $1+(\ft'-t)\del_z (f_t^*(z)/(f_t^*(z)+1))=0$ has two zeros (counting multiplicity) at $z=\mathscr  C^*(E_1(\ft')), \mathscr  C^*(E_1(\ft'))$.

Next we show the contour $\mathscr C_0$ is real analytic. 
In fact, from the defining relation \eqref{e:curve},  the contour $\mathscr C_0$ can also be constructed from $f_t^*(z)$ as 
$\{w\in \bC: \Im[w]+(\ft'-t)\Im[f_t^*(w)/(f_t^*(w)+1) ]=0\}$. For $z=\mathscr C^*(x)$ and $x\neq E_1(\ft'), E_2(\ft')$, we have $\del_z (z+(\ft'-t)(f_t^*(z)/(f_t^*(z)+1)))\neq 0$. Analytic implicit function theorem implies that $\{w\in \bC: \Im[w]+(\ft'-t)\Im[f_t^*(w)/(f_t^*(w)+1) ]=0\}$ is analytic around $w=z$. Next we discuss the case $z=\mathscr C^*(E_i(\ft'))$, with $i\in\{1,2\}$. By Taylor expansion around $z$, $f_t^*(w)/(f_t^*(w)+1)=f_t^*(z)/(f_t^*(z)+1)-(w-z)/(\ft'-t)+a_2(w-z)^2+a_3(w-z)^3\cdots$, where $a_2\neq 0$. Let $w=z+u+\ri v$. In a neighborhood of $z$, the contour $\mathscr C_0$ is characterized by 
\begin{align*}
	\frac{\Im[w]+(\ft'-t)\Im[f_t^*(w)/(f_t^*(w)+1) ]}{(\ft'-t)\Im[w]}=2a_2 u + a_3\frac{\Im[(u+\ri v)^2]}{v}+\cdots=0,
\end{align*}
where by diving $\Im[w]$, we remove the trivial solution that $w\in \bR$. Since $a_2\neq 0$, the derivative with respect to $u$ is nonzero. The analytic implicit function theorem gives that $\mathscr C_0$ is real analytic around $x=E_i(\ft')$.
\end{proof}

\begin{prop}
Proposition \ref{p:fdecompose} holds for $\beta=H_t^*$.
\end{prop}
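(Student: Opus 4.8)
The plan is to establish \Cref{p:fdecompose} in the special case $\beta = H_t^*$ by using the geometric information about the extended complex slope $f_t^*$ and the Riemann surface \eqref{e:BQeq2} that was collected in \Cref{s:CBE0}, together with the explicit analyticity of the boundary curve $\mathscr C_0$ just proven in \Cref{p:ftiinitial}. The key point is that, in this special case, essentially everything is already known: Item (c) of the fifth statement of \Cref{xhh} gives that the projection map \eqref{e:proj0} is a bijection, so the domain $\mathscr U_t$ is well-defined; \Cref{p:ftzbehave} gives the quantitative behavior of $f_t^*$ near the arctic boundary, cusp, and tangency locations; and the discussion around \eqref{e:decompft*}--\eqref{e:decompft*2} already records the decomposition $\log f_t^*(z) = m_t^*(z) + \log g_t^*(z) = \tilde m_t^*(z) + \log \tilde g_t^*(z)$. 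So the task reduces to checking that this decomposition has all the properties claimed in statements \eqref{11}--\eqref{44} of \Cref{p:fdecompose}.

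First I would verify statement \eqref{11}: by the bijectivity of \eqref{e:proj0} (and gluing with its complex conjugate along the arctic curve), the two copies of $\fL(\fD')\cap \fD^t$ map to $\mathscr U_t$; the north boundary maps to the curve $\mathscr C_0$ of \Cref{p:ftiinitial}, which lies in $\bH^+$ away from its two endpoints, while the south boundary and arctic curve map bijectively to a real interval — this is exactly the content of \Cref{p:ftiinitial} combined with the sign information $\Im[f_r^*(x)]\le 0$, $\Im[f_r^*(x)/(f_r^*(x)+1)]\le 0$ from \eqref{e:xrmap}. For statement \eqref{22}, I would use that for $x < \fa(t)$ we have $f_t^*(x)+1 \in (-\infty,0)$ (a saturated boundary, $\partial_x H^* = 1$) and for $x > \fb(t)$ we have $f_t^*(x)+1\in(0,1)$ (a void boundary, $\partial_x H^* = 0$); these follow from \eqref{fh} and the definitions of void/saturated regions. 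The lower bound \eqref{e:imratio0} on $|\Im[f_t^*(z)/(f_t^*(z)+1)]|/\Im z$ I would extract from the square-root/cube-root estimates of \Cref{p:ftzbehave}: near a generic arctic point \eqref{e:mtsquare} gives $-\sin(\Im\log f_t^*) \asymp \eta/\sqrt{|\kappa|+\eta}$ or $\sqrt{|\kappa|+\eta}$, near a tangency location \eqref{e:mtbehave1copy}/\eqref{e:mtbehave3copy} give the analogous bounds with the extra $|t-t_i|$ factors, and near the cusp \eqref{e:mtcubic} gives the $(t_c-t)^{1/4}$-modified version; dividing by $\eta$ and using $\Im[f_t^*(z)/(f_t^*(z)+1)] = \Im[f_t^*(z)]/|f_t^*(z)+1|^2$ together with $|f_t^*(z)+1|\gtrsim 1$ from \eqref{e:ft+1b} (or \eqref{e:ft+1b2} under \Cref{a:xhh}), one gets a lower bound bounded away from $0$ in each regime, and a compactness argument covers the region of $\mathscr U_t$ bounded away from $I_t^*$.

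For statement \eqref{33}, I would define $\tilde m_t^*$, $\tilde g_t^*$ exactly as in \eqref{e:deftgt} (with $\beta = H_t^*$): $\tilde m_t^* = m_t^* - \log(z-\fa(t)) + \log(\fb(t)-z)$ is manifestly the Stieltjes transform of the measure $\rho_t^* + \bm 1_{(-\infty,\fa(t)]} + \bm 1_{[\fb(t),\infty)}$ since $\rho_t^* \equiv 1$ on $[\fa(t)-\varepsilon,\fa(t)]$ (saturated) and $\rho_t^* \equiv 0$ on $[\fb(t),\fb(t)+\varepsilon]$ (void), so the logarithmic singularities of $m_t^*$ at the two endpoints exactly cancel. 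The analyticity, reality ($\tilde g_t^*(\bar z) = \overline{\tilde g_t^*(z)}$), positivity on $\tilde{\mathscr U}_t\cap\bR$, and nonvanishing of $\tilde g_t^*$ then follow from writing $\log\tilde g_t^* = \log f_t^* - \tilde m_t^*$ and observing that the right side extends continuously and analytically across the arctic curve by the Schwarz reflection principle (since $f_t^*$ extends continuously to a positive real number on the arctic curve, by Item (a) of the fifth statement of \Cref{xhh} and \Cref{p:ftzbehave}), has no branch points or zeros in a neighborhood (the square-root behavior of $f_t^*/(f_t^*+1)$ is absorbed into $\tilde m_t^*$), and is positive real on the real segment inside $I_t^*$ because $|f_t^*(x)|>0$ there; the boundedness $|\tilde g_t^*(z)|\asymp 1$ is then automatic on the compact set $\tilde{\mathscr U}_t$. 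The meromorphy of $g_t^* = \tilde g_t^*(z-\fa(t))/(\fb(t)-z)$ with a simple pole at $\fa(t)$ and simple zero at $\fb(t)$ is immediate from this formula. Statement \eqref{44} (the perturbation bound) is vacuous when $\beta = H_t^*$ since both sides refer to $H_t^*$; it is recorded here only for the later deformation argument. I expect the main obstacle to be statement \eqref{22}, specifically patching together the lower bound \eqref{e:imratio0} uniformly across the three different singular regimes (generic edge, tangency, cusp) and the ``deep bulk'' region — this requires carefully matching the scales in \Cref{p:ftzbehave}, checking that the bounds there are stated for $z$ on both sides of the arctic curve (which the Schwarz reflection built into $\mathscr U_t$ provides), and handling the transition zones between regimes; the rest is largely bookkeeping with the decomposition already set up in \eqref{e:decompft*}.
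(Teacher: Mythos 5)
Your overall plan has the right shape (statement (1) follows from Item~(c) of \Cref{xhh} plus \Cref{p:ftiinitial}, statement (4) is vacuous for $\beta = H_t^*$), but the routes you propose for statements (2) and (3) have real problems.

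For the lower bound \eqref{e:imratio0}, you propose to patch together the square-root/cube-root estimates from \Cref{p:ftzbehave} across the generic-edge, tangency, cusp, and bulk regimes. This is circular: the proof of \Cref{p:ftzbehave} (in Appendix~\ref{QEquation}) is written using the decomposition $\log f_t^*(z) = \tilde m_t^*(z) + \log\tilde g_t^*(z)$ together with the boundedness and real-analyticity of $\tilde g_t^*$, which are precisely the content of the third statement of \Cref{p:fdecompose} for $\beta = H_t^*$ — the proposition you are trying to prove. So you cannot use \Cref{p:ftzbehave} as an input here. And the detour is unnecessary: the map \eqref{e:xrmap} already gives the identity
\begin{flalign*}
\frac{\Im\big[ f_t^*(z)/(f_t^*(z)+1)\big]}{\Im z} = \frac{\Im\big[ f_r^*(x)/(f_r^*(x)+1)\big]}{(t-r)\,\Im\big[ f_r^*(x)/(f_r^*(x)+1)\big]} = \frac{1}{t-r},
\end{flalign*}
where $(x,r)\in\fL(\fD')\cap\fD^t$ is the preimage of $z$ under the projection. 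Since $r\in(t,\ft']$, the magnitude $\frac{1}{r-t}\ge\frac{1}{\ft'-t}\gtrsim 1$ uniformly — no quantitative square-root analysis, no matching across regimes, no compactness argument needed.

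For statement (3), the Schwarz-reflection/Morera idea is the right mechanism, but you skip the one substantive step, which is to verify that $\Im[\tilde m_t^*(x+0\ri)]$ and $\arg^* f_t^*(x+0\ri)$ agree for \emph{every} $x\in\tilde{\mathscr U}_t\cap\bR$, not just on $I_t^*$. On the liquid slice this is \eqref{e:fstar}, but on the frozen part $(\tilde{\mathscr U}_t\cap\bR)\setminus I_t^*$ — which in the $z$-coordinate is the image of the arctic-curve boundary of $\fL(\fD')\cap\fD^t$ — one must use the tangent-line identification \eqref{e:derivcopy} and run a case analysis over the lozenge type of the frozen region that the segment from $(z,t)$ to the arctic-curve point $(x,r)$ traverses (interpreting the regions to the left of $\del_{\rm we}$ and right of $\del_{\rm ea}$ as type-2 and type-3 frozen regions respectively), to conclude
\begin{flalign*}
\arg^* f_t^*(z) = -\pi\big(\del_x H_t^*(z) + \bm1_{(-\infty,\fa(t)]}(z) + \bm1_{(\fb(t),\infty]}(z)\big).
\end{flalign*}
Without this, "continuous up to and real on the boundary" is not established and Morera/Schwarz does not apply. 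Also note $f_t^*$ is not "a positive real number" on the arctic curve — it is real but its sign depends on the frozen-region type (e.g.\ it lies in $(-\infty,-1)$ on a saturated region), which is exactly why the careful case analysis is needed.
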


\begin{proof}
For $\beta=H_t^*$, the first statement in \Cref{p:fdecompose} follows from the third statement of \Cref{xhh}. 
$\tilde {\mathscr U}_t:=\tilde{\mathscr U}_t^{H_t^*}=\mathscr D_0$ from the discussion above \eqref{e:curve}, and 
$\mathscr U_t:=\mathscr U_t^{H_t^*}$ is the regime from $\mathscr D_0$ by removing the interval $I_t^*=\{x: (x,t)\in \cL(\fD')\}$. Moreover, $f_t^*(z)=f(z;H_t^*,t)$ is analytic on $\mathscr U_t$.

Next, we show that for $x\in \tilde {\mathscr U}_t\cap \bR$, 
	\begin{align}\label{e:extendslope}
		\arg^* f^*_t(x+0\ri)=-\pi (\del_x H^*_t(x)+\bm1_{(-\infty, \fa(t)]}(x)+\bm1_{( \fb(t),\infty]}(x)).
	\end{align}
	For south boundary of $\fL(\fD')\cap \fD^t$, $x\in I_t^*=\{x: (x,t)\in \cL(\fD')\}$, the argument of the complex slope $f^*_t(x)$ is given explicitly by the defining relation \eqref{fh},
	\begin{align}\label{e:argft}
		\arg^* f_t^*(x+0\ri)=\arg^* f^*_t(x)=-\pi \del_x H_t^*(x).
	\end{align}
	For $z\in (\tilde {\mathscr U}_t\cap \bR) \setminus I_t^*$, it corresponds to a point on the arctic curve $(x,r)\in \del \overline{\fL({\fD'})}$ with $r\in(t,\ft')$, given by \eqref{e:deriv}; See Figure \ref{f:tangent},
	\begin{align}\label{e:derivcopy}
		f^*_t(z)=f^*_r(x),\quad \frac{r-t}{x-z}=\frac{f^*_r(x)+1}{f^*_r(x)}.
	\end{align}
	Since the segment from $(z,t)$ to $(x,r)$ stays in the frozen region (we interpret the region on the left of $\del_{\rm we}(\fD^t)$ a frozen region consisting of lozenges of type $2$, and the region on the right of  $\del_{\rm es}(\fD^t)$ a frozen region consisting of lozenges of type $3$), there are several cases. 
	If the frozen region consists of type 1 lozenges, then $\del_x H^*_t(z)+\bm1_{(-\infty, \fa(t)]}(z)+\bm1_{( \fb(t),\infty]}(z)=0$ and $f^*_t(z)=f^*_r(x)\in [0,+\infty]$;
	If the frozen region consists of type 2 lozenges, then $\del_x H^*_t(z)+\bm1_{(-\infty, \fa(t)]}(z)+\bm1_{( \fb(t),\infty]}(z)=1$ and $f^*_t(z)=f^*_r(x)\in[-\infty, -1]$;
	If the frozen region consists of type 3 lozenges, then $\del_x H^*_t(z)+\bm1_{(-\infty, \fa(t)]}(z)+\bm1_{( \fb(t),\infty]}(z)=1$ and $f^*_t(z)=f^*_r(x)\in[-1,0]$.
	In all cases we have
	\begin{align}\label{e:argft2}
		\arg^* f^*_t(z)=\arg^* f^*_r(x)=-\pi (\del_x H^*_t(z)+\bm1_{(-\infty, \fa(t)]}(z)+\bm1_{( \fb(t),\infty]}(z)).
	\end{align}
	This finishes the proof of \eqref{e:extendslope}.

	The above discussion gives that for $x\in \mathscr U_t\cap \bR$ with $x<\fa(t)$, $f^*_t(x)\in (-\infty, -1)$ and for $x\in \mathscr U_t\cap \bR$ with $x>\fb(t)$, we have $f^*_t(x)\in (0,-1)$. 
	Thanks to \eqref{e:xrmap}, we have
\begin{align*}
\frac{|\Im[f^*_t(x)/(f^*_t(x)+1)]|}{|\Im x|}=\frac{1}{r-t}\geq \frac{1}{\ft'-t}\gtrsim 1.
\end{align*}
This gives the second statement of  \Cref{p:fdecompose}.

	We denote the Stieltjes transform of $\del_x H_t^*$ as $m_t^*(z)=m_t(z;H_t^*,t)$, and its modification $\tilde  m^*_t(z)$, which is the Stieltjes transform of the measure $\del_x H^*_t(x)+\bm1_{(-\infty, \fa(t)]}(x)+\bm1_{( \fb(t),\infty]}(x)$. Then for any $x\in \tilde {\mathscr U}_t\cap \bR$, \eqref{e:argft2} implies 
	\begin{align}\begin{split}\label{e:nojump}
			\lim_{\eta\rightarrow 0+}\Im[\tilde m^*_{ t }( x \pm\ri\eta)]=\mp\pi (\del_{x} H^*_t( x )+\bm1_{(-\infty, \fa(t)]}(x)+\bm1_{( \fb(t),\infty]}(x)) =\lim_{\eta\rightarrow 0+}\arg^* f^*_t(x).
	\end{split}\end{align}
	We take the ratio of $e^{\tilde m^*_{ t }(z)}$ and $ f_t(z;\beta, s)$,
	\begin{align}\label{e:gszmutcopy}
		f^*_t(z)=e^{\tilde m^*_{ t }(z)}\tilde g^*_ t (z)=e^{ m^*_{ t }(z)} g^*_ t (z),
	\end{align}
	then $\tilde g^*_ t (z)$ is well defined in the domain $\tilde {\mathscr U}_t$.  It is analytic on $\tilde {\mathscr U}_t\setminus [\fa(t), \fb(t)]$, and \eqref{e:nojump} implies that it is continuous on $[\fa(t),\fb(t)]$. Morera's theorem implies that $\tilde g^*_t(z)$ extends to an analytic function on $\tilde {\mathscr U}_t$. From the construction, $\tilde g^*_t(z)$ is real analytic, i.e. $\overline{\tilde g^*_t(z)}=\tilde g^*_t(\bar z)$, and is positive on $\tilde {\mathscr U}_t\cap \bR$. 
	Moreover, in ${\mathscr U}_t$, $f^*_t(z)$ and $e^{\tilde m^*_t(z)}$ does not have zeros or poles.
	Thus $\tilde g^*_t( z)$ does not have zeros or poles in ${\mathscr U}_t$. Using the defining relation \eqref{e:deftgt}, we conclude that $g^*_t(z)$ is a meromorphic function in the  domain $\tilde {\mathscr U}_t$. 
	It is positive on the interval $(\fa(t), \fb(t))$, has a simple pole at $\fa(t)$, has a simple zero at $\fb(t)$, and does not have other poles or zeros. This gives the third statement of  \Cref{p:fdecompose}.

	For $\beta=H_t^*$, the fourth statement of \Cref{p:fdecompose} is an empty statement.

\end{proof}

\section{Proof of Propositions  \ref{p:fdecompose}}
\label{s:proof}
In this Section we prove Proposition  \ref{p:fdecompose}.
 To study general boundary profile $\beta$, we interpolate $\beta$ with $H_t^*$. We collect facts about Shiffer kernels in Section \ref{s:shiffer}. In Section \ref{s:deformation}, we construct a deformation $f(z;\theta)$ parameterized by $0\leq \theta \leq 1$, starting from $f(z;0)=f_t^*(z)$ and show if the solution of the deformation satisfies certain properties, then $f(z;1)=f(z;\beta,t)$ as in Proposition \ref{p:fdecompose}, and it satisfies all the properties in Proposition \ref{p:fdecompose}.  Finally in
Section \ref{s:decomposition}, we analyze these deformation formulas, and conclude the proof of Proposition \ref{p:fdecompose}.

\subsection{Schiffer kernel}\label{s:shiffer}

Let $\mathscr D$ be a simply connected  domain in the complex plane $\bC$ with boundary given by an analytic curve $\mathscr C$. We further assume $\mathscr D$ is symmetric with respect to the real axis. 
Let $\phi(z)$ be the Riemann map from $\mathscr D\cap \bH^+$ to the upper half plane $\bH^+$.
By the Schwarz reflection principle, it extends to $\mathscr D$, satisfying $\phi(\bar z)=\overline{\phi(z)}$. The Green's function of $\mathscr D$ is given by
\begin{align*}
	G(z,z')=\log \big|\phi(z)-\phi(z') \big|.
\end{align*}
The Schiffer kernel $B(z,z')\rd z\rd z'$ from \cite{BE2018, schiffer1946hadamard} on $\mathscr D$ is given in terms of the Riemann map $\phi(z)$ or the derivatives of the Green's function by
\begin{align}\label{e:Schiffer}
	B(z,z')=\del_z \del_{z'}G(z,z')=\frac{\phi'(z)\phi'(z')}{\big( \phi(z)-\phi(z') \big)^2}. 
\end{align}
It is a symmetric meromorphic bilinear differential (namely, a meromorphic $1$-form in $z$ tensored by one in $z'$, satisfying $B(z,z')=B(z',z)$); it has  double poles on the diagonal, 
\begin{align}\label{e:defK}
	B(z,z')=\frac{1}{(z-z')^2}
	+ K(z,z'),
\end{align}
where $K(z,z')$ is holomorphic. 

Since the curve $\mathscr C$ is real analytic, we show that $\phi(z)$ and the Schiffer kernel $B(z,z')$ can be analytically extended to  the contour $\mathscr C$. For $z\in \mathscr C\cap \bH^+$, namely $\Im[z]>0$, this follows from that Riemann mapping function can be extended analytically to the boundary which contains an analytic arc. If $z$ is one of points where $\mathscr C\cap \bR$, then in a small neighborhood of $z$, we can parametrize $\mathscr C$ as $\gamma(t)$ for $t\in (-\varepsilon, \varepsilon)$, with $\gamma(0)=z$ and $\gamma(-t)=\overline{ \gamma(t)}$. $\gamma(t)$ is an analytic function on a small disk $\mathbb D$ centered around $0$. Then we can consider $\phi(\gamma(z))$, and extend it to $\mathbb D$ by Schwarz reflection principle; See Figure \ref{f:reflection}. Then locally around $z$,
\begin{align}\label{e:phiz}
	\phi(z+\Delta z)=\phi(z)+C(\Delta z)\Delta z^2,
\end{align}
for some analytic function $C(\Delta z)$. Using the relation \eqref{e:Schiffer}, we can also extend the Schiffer kernel to the contour $\mathscr C$, and $B(z, z')=0$ if $z$ is one of points where $\mathscr C\cap \bR$.

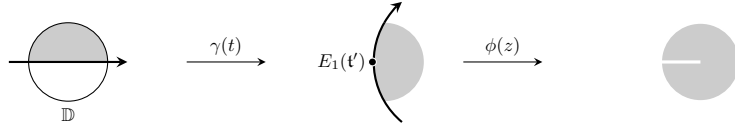
\begin{figure}
	
	\begin{center}		
		
		\begin{tikzpicture}[
			>=stealth,
			auto,
			style={
				scale = .52
			}
			]

			\fill[gray!40!white] (-3,1.5) arc (180:0:1) -- cycle;
			\draw[black] (-2,1.5) circle (1); 
			\draw[] (-2,0.5)  node[below, scale=0.7]{$\mathbb D$};
			\draw[-stealth, black, thick] (-3.5, 1.5)--(-0.5,1.5);
			
			\draw[-stealth, black] (1, 1.5)--(3,1.5);
			\draw[](2, 1.5) node[above, scale=0.7]{$\gamma(t)$};

			\fill[gray!40!white]  (6,0.5) arc (-90:90:1)--(6,2.5)arc (150:210:2);
			\draw[black, thick] (6,2.5) arc (150:230:2); 
			\draw[-stealth, black, thick] (6,2.5) arc (150:130:2);
			\draw[white, fill=black]  (5.7,1.5) circle (0.1);
			\draw[](5.7, 1.5) node[left, scale=0.7]{$E_1(\ft')$};
			

			\draw[-stealth, black] (8, 1.5)--(10,1.5);
			\draw[](9, 1.5) node[above, scale=0.7]{$\phi(z)$};

			\draw[white, thick, fill=gray!40!white] (14,1.5) circle (1); 
			\draw[white, very thick] (12,1.5)--(14,1.5);

		\end{tikzpicture}
		
	\end{center}
	
	\caption{\label{f:reflection}  
		We can extend $\phi(z)$ analytically to a neighborhood of $z$ where $\mathscr C$ intersect $\bR$, by the Schwarz reflection principle, such that $\phi(z+\Delta z)=\phi(z)+C(\Delta z)\Delta z^2,$.
	}
	
\end{figure}

\subsection{Deformation Formulas}\label{s:deformation}

To analyze $f(z;\beta,t)$ for boundary profile $\beta$ close to $H_t^*$, we interpolate $\beta$ with $H_t^*$,
\begin{align*}
	\beta^\theta=(1-\theta)\beta^0+\theta \beta^1, \quad 0\leq \theta\leq 1,\quad \beta^0=H_t^*,\quad \beta^1=\beta.
\end{align*}
We show that for boundary profile $\beta$ sufficiently close to $H_t^*$, $f(z;\beta,t)$ can be obtained by solving the following system of equations: Set $f(z;0)=f_t^*(z)$ and $\mathscr C(x;0)=\mathscr C^*(x)$ from \eqref{e:curve}. We construct their deformations as $f(z;\theta), \mathscr C(x;\theta)$ satisfying the following differential equations:
		\begin{align}\begin{split}\label{e:dthetaw}
				&\del_\theta \mathscr C(x;\theta)= F(z;\theta)\del_\theta \log f_s(z;\theta)|_{z=\mathscr C(x;\theta)}\\
				& F(z;\theta):=\frac{-(\ft'-t)f(z;\theta)/(f( z;\theta)+1) ^2)}{1+(\ft'-t)\del_z (f(z;\theta)/(f( z;\theta)+1))}.
		\end{split}\end{align}
		and
		\begin{align}\label{e:dlogf}
			\del_\theta \log f(z;\theta) =-\int_{\fa(t)}^{\fb(t)} B(z,y;\theta)( \beta^1(y)- \beta^0(y))\rd y,\quad z\in \mathscr D_\theta\setminus[\fa(t),\fb(t)].
		\end{align}
		where $\mathscr C_\theta=\{\mathscr C(x;\theta)\cup \overline{\mathscr C(x;\theta)}: x\in I_{\ft'}^*\}$, $\mathscr D_\theta$ is the region enclosed by $\mathscr C_\theta$, and $B(z,z';\theta)$ (recall from \eqref{e:Schiffer}) is the Schiffer kernel of the domain $\mathscr D_\theta$.
		We remark that if $\mathscr C_\theta$ is an analytic Jordan curve, and the righthand side of \eqref{e:dthetaw}, $F(z;\theta)\del_\theta \log f_s(z;\theta)$, is analytic in a neighborhood of $\mathscr C_\theta$, then the differential equations \eqref{e:dthetaw} and \eqref{e:dlogf} are well defined.

To understand the righthand side of \eqref{e:dthetaw},  we define a function
\begin{align}\label{e:defGzz}
	G(z,z';\theta):=\frac{B(z,z';\theta)}{1+(\ft'-t)\del_z (f(z;\theta)/(f(z;\theta)+1))},\quad z,z'\in \mathscr D_\theta\setminus[\fa(t),\fb(t)].
\end{align}
Then using \eqref{e:dlogf} and \eqref{e:defGzz}, for $z\in \mathscr C_\theta$, we have
\begin{align}\label{e:rewriteG}
	F(z;\theta) \del_\theta \log f(z;\theta)
	=\frac{(\ft'-t) f(z;\theta)}{(f(z;\theta)+1)^2}\int_{\fa(t)}^{\fb(t)} G(z,y;\theta)( \beta^1(y)- \beta^0(y))\rd y.
\end{align}
\begin{prop}\label{p:deform}
 If $\mathscr C_\theta$ is an analytic Jordan curve, and the righthand side of \eqref{e:dthetaw}, $F(z;\theta)\del_\theta \log f_s(z;\theta)$, is analytic on a closed set $\mathscr A_\theta$ containing an annulus neighborhood of $\mathscr C_\theta$, the derivative of the Schiffer kernel $B(z,z';\theta)$ with respect to $\theta$ is given by
		\begin{align}\begin{split}\label{e:bbcurve}
				\del_\theta B(z,z';\theta)
				&=-\frac{1}{2\pi\ri }\int_{\omega}  B(z,w;\theta)F(w;\theta)\del_\theta \log f(w;\theta) B(w,z';\theta) \rd w\\
				&-\bm1(\text{$z$ outside $\omega$}) \del_z(F(z;\theta)\del_\theta \log f(z;\theta) B(z,z';\theta))\\
				&-\bm1(\text{$z'$ outside $\omega$}) \del_{z'}(F(z';\theta) \del_\theta \log f_s(z';\theta)B(z,z';\theta)), \quad z,z'\in \mathscr D_\theta,
		\end{split}\end{align}
		where the contour $\omega\subseteq\mathscr A_\theta$ encloses $[\fa(t),\fb(t)]$. 
		The derivative of $G(z,z';\theta)$ with respect to $\theta$ is given by
		\begin{align}\begin{split}\label{e:derGzz}
		&\phantom{{}={}}\del_\theta G(z,z';\theta)
		=-\frac{1}{2\pi\ri }\int_{\omega}  G(z,w;\theta)F(w;\theta)\del_\theta \log f_s(w;\theta) B(w,z';\theta) \rd w\\
		&-\frac{(\ft'-t)f(z;\theta)}{(f(z;\theta)+1)^2}\del_z G(z,z';\theta)\int_{\fa(t)}^{\fb(t)} G(z,y;\theta)  ( \beta^1(y)- \beta^0(y))\rd y\\
		&-\bm1(\text{$z'$ outside $\omega$}) \del_{z'}(F(z';\theta) \del_\theta \log f_s(z';\theta)G(z,z';\theta))\\
\end{split}\end{align}
where the contour $\omega\subseteq\mathscr A_\theta$ encloses $[\fa(t),\fb(t)]$ but not $z$.
\end{prop}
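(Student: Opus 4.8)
\textbf{Proof proposal for Proposition \ref{p:deform}.}

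The plan is to differentiate the two defining relations \eqref{e:Schiffer} and \eqref{e:defGzz} with respect to the deformation parameter $\theta$, keeping careful track of two sources of $\theta$-dependence: the explicit dependence of $f(\cdot;\theta)$ through the differential equation \eqref{e:dlogf}, and the implicit dependence through the moving domain $\mathscr D_\theta$, whose boundary evolves by \eqref{e:dthetaw}. The hypothesis that $\mathscr C_\theta$ is an analytic Jordan curve and that $F(z;\theta)\del_\theta\log f(z;\theta)$ is analytic on an annular neighborhood $\mathscr A_\theta$ of $\mathscr C_\theta$ is exactly what is needed to make Hadamard-type variational formulas apply: it guarantees that the boundary motion is given by a holomorphic vector field near $\mathscr C_\theta$, so the classical Schiffer--Hadamard variational formula for the bilinear differential $B(z,z';\theta)$ holds.

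First I would establish \eqref{e:bbcurve}. Recall the Hadamard variational formula for the Schiffer kernel under a normal boundary displacement: if the boundary of $\mathscr D_\theta$ moves with velocity determined by the holomorphic field $V(z;\theta):=F(z;\theta)\,\del_\theta\log f(z;\theta)$ on $\mathscr C_\theta$, then $\del_\theta B(z,z';\theta) = -\frac{1}{2\pi\ri}\oint_{\mathscr C_\theta} B(z,w;\theta)\,V(w;\theta)\,B(w,z';\theta)\,\rd w$ for $z,z'$ in the interior. (This is the differential version of the composition/reproducing property of the Schiffer kernel; see \cite{BE2018, schiffer1946hadamard}.) The subtlety is that in \eqref{e:bbcurve} the contour $\omega$ is taken to enclose only $[\fa(t),\fb(t)]$, which is strictly inside $\mathscr C_\theta$, rather than $\mathscr C_\theta$ itself. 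To pass from the boundary contour to $\omega$, I would deform $\mathscr C_\theta$ inward to $\omega$; since $V(w;\theta)=F(w;\theta)\del_\theta\log f(w;\theta)$ is analytic on $\mathscr A_\theta$ but the integrand $B(z,w;\theta)V(w;\theta)B(w,z';\theta)$ picks up double poles at $w=z$ and $w=z'$ from the two Schiffer-kernel factors (via \eqref{e:defK}), each pole crossed during the deformation contributes a residue $\del_z\big(V(z;\theta)B(z,z';\theta)\big)$ or $\del_{z'}\big(V(z';\theta)B(z,z';\theta)\big)$. These residues appear precisely when $z$ (resp. $z'$) lies outside $\omega$ but inside $\mathscr C_\theta$, which is the meaning of the indicator functions in \eqref{e:bbcurve}. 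Keeping the branch of the logarithm and the orientation of $\omega$ fixed as in the excerpt, this residue bookkeeping yields \eqref{e:bbcurve} exactly.

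Second, \eqref{e:derGzz} follows from \eqref{e:bbcurve} by the quotient rule applied to the definition \eqref{e:defGzz}:
\begin{align*}
\del_\theta G(z,z';\theta) = \frac{\del_\theta B(z,z';\theta)}{1+(\ft'-t)\del_z\!\big(f/(f+1)\big)} - \frac{B(z,z';\theta)\,(\ft'-t)\,\del_z\del_\theta\!\big(f/(f+1)\big)}{\big(1+(\ft'-t)\del_z(f/(f+1))\big)^2}.
\end{align*}
For the first term I substitute \eqref{e:bbcurve}, absorb the denominator into the left factor of each occurrence of $B(z,w;\theta)$ to produce $G(z,w;\theta)$, and use \eqref{e:rewriteG} to rewrite $V(z;\theta)=F(z;\theta)\del_\theta\log f(z;\theta)$ as the integral $\frac{(\ft'-t)f(z;\theta)}{(f(z;\theta)+1)^2}\int G(z,y;\theta)(\beta^1(y)-\beta^0(y))\rd y$ appearing in the second line of \eqref{e:derGzz}. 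For the second term, I differentiate \eqref{e:dlogf} in $z$ to get $\del_z\del_\theta\log f(z;\theta) = -\int B(z,y;\theta)(\beta^1(y)-\beta^0(y))\rd y$, convert $\del_\theta(f/(f+1))$ to $\frac{f}{(f+1)^2}\del_\theta\log f$, and recognize that combining this with the prefactor reproduces (after one more use of \eqref{e:defGzz}) the $\del_z G(z,z';\theta)$ term in the second line of \eqref{e:derGzz}. The residue term at $z=z$ that would have appeared in the $B$-formula is cancelled by this quotient-rule correction — this is the analytic mechanism by which \eqref{e:derGzz} carries only a $z'$-residue indicator, not a $z$-residue one. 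I expect the main obstacle to be precisely this cancellation: verifying that the double-pole residue at $w=z$ coming from \eqref{e:bbcurve} is annihilated by the second term of the quotient rule requires expanding $B(z,w;\theta)$ and $G(z,w;\theta)$ near the diagonal using \eqref{e:defK} and matching the $\del_z$ of the denominator factor against the residue, which is a short but delicate local computation that must be done with the correct signs and orientation conventions.
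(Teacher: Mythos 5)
Your overall strategy matches the paper's: establish \eqref{e:bbcurve} via the Hadamard variational formula for the Schiffer kernel on the moving domain $\mathscr D_\theta$ (the paper cites \cite[Theorem~3.7]{huang2020edge} for the differentiated version) followed by a contour deformation from $\mathscr C_\theta$ to $\omega$ picking up residues at $w=z$ and $w=z'$, and then derive \eqref{e:derGzz} from the quotient rule applied to $G = B/D$ with $D = 1+(\ft'-t)\del_z(f/(f+1))$. For \eqref{e:bbcurve} your account is correct.

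For \eqref{e:derGzz} your accounting of which term goes where is wrong in a way that would actually block you. You claim the quotient-rule correction $-B(\ft'-t)\del_z\del_\theta(f/(f+1))/D^2 = -G(\ft'-t)\del_z\del_\theta(f/(f+1))/D$ both ``annihilates'' the $z$-residue from $\del_\theta B/D$ and ``reproduces the $\del_z G$ term'' in the second line of \eqref{e:derGzz}. Neither statement is correct. Write out the $z$-residue of $\del_\theta B/D$ explicitly: using $V(z)B(z,z') = -(\ft'-t)\del_\theta(f/(f+1))\,G(z,z')$, the residue $-\del_z\bigl(V(z)B(z,z')\bigr)/D$ equals, after Leibniz,
\begin{align*}
\frac{(\ft'-t)\del_z\del_\theta\bigl(f/(f+1)\bigr)\,G}{D} \;+\; \frac{(\ft'-t)\del_\theta\bigl(f/(f+1)\bigr)\,\del_z G}{D}.
\end{align*}
Only the first of these cancels against the quotient-rule correction; the second survives and, rewritten via \eqref{e:rewriteG} (which gives $\del_\theta(f/(f+1))/D = -(\ft'-t)^{-1}V(z)$, hence $= -\frac{f}{(f+1)^2}\int G(z,y)(\beta^1(y)-\beta^0(y))\rd y$), is exactly the second line of \eqref{e:derGzz}. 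So the $z$-residue is transformed, not killed, and the $\del_z G$ term comes from the residue rather than from the quotient-rule correction. Relatedly, your auxiliary formula $\del_z\del_\theta\log f(z;\theta) = -\int B(z,y;\theta)(\beta^1(y)-\beta^0(y))\rd y$ is missing the $\del_z$ on $B$; the correct relation has $\del_z B(z,y;\theta)$ in the integrand. Finally, your anticipated ``short but delicate local computation'' expanding $B$ and $G$ near the diagonal using \eqref{e:defK} is a red herring: the cancellation is purely algebraic (Leibniz plus the identity $VB = -(\ft'-t)\del_\theta(f/(f+1))G$) and requires no diagonal expansion, which is also how the paper handles it via \eqref{e:num2} and \eqref{e:deno}.
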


\begin{proof}
	We recall the defining relation \eqref{e:Schiffer} of the Schiffer kernel, given as the derivatives of the Green's function. The variational formula of Green's function has been studied in \cite{schiffer1946hadamard} under more general conditions. By taking derivatives of the variational formula of Green's function, see \cite[Theorem 3.7]{huang2020edge},  we get for $z,z'\in \mathscr D_\theta$,
	\begin{align*}\begin{split}
			&\phantom{{}={}}\del_\theta B(z,z';\theta)
			=\frac{1}{2\pi\ri }\int_{x\in I^*_{\ft'}} B(z,\mathscr C(x;\theta);\theta)\del_\theta \mathscr C(x;\theta) B(\mathscr C(x;\theta),z';\theta)\rd \mathscr C(x;\theta)\\
			&-\frac{1}{2\pi\ri }\int_{x\in I^*_{\ft'}} B(z,\overline{\mathscr C(x;\theta)};\theta)\del_\theta \overline{\mathscr C(x;\theta)} B(\overline{\mathscr C(x;\theta)},z';\theta)\rd \overline{\mathscr C(x;\theta)}\\
			&=-\frac{1}{2\pi\ri }\int_{w\in \mathscr C_\theta} B(z,w;\theta)F(w;\theta) \del_\theta \log f(w;\theta)B(w,z';\theta)\rd w,
	\end{split}\end{align*}
	where we used 
	\eqref{e:dthetaw} for the last line. By our assumption $F(w;\theta) \del_\theta \log f(w;\theta)$ is analytic for $w\in \mathscr A_\theta$. We can get the case when $z$ or $z'$ is outside the contour $\omega$, by deforming the contour $\mathscr C_\theta$. Since the Schiffer kernel $B(z,w;\theta)$ has a double pole at $w=z$, see \eqref{e:defK}, we have
	\begin{align*}\begin{split}
			&\phantom{{}={}}-\frac{1}{2\pi\ri }\int_{ \omega} B(z,w;\theta)F(w;\theta) \del_\theta \log f(w;\theta) B(w,z';\theta)\rd w\\
			&=-\frac{1}{2\pi\ri }\int_{ \omega-} B(z,w;\theta)F(w;\theta)\del_\theta \log f(w;\theta) B(w,z';\theta)\rd w-\del_z(F(z;\theta)\del_\theta \log f(z;\theta) B(z,z';\theta)),
	\end{split}\end{align*}
	where the contour $\omega$ encloses $[\fa(t),\fb(t)]$ and $z,z'$, and the contour $\omega-$ encloses $[\fa(t),\fb(t)]$ and $z'$ but not $z$. If we further deform the contour $\omega-$ to not enclose $z'$, we will have one more term on the righthand side $-\del_{z'}(F(z';\theta)\del_\theta \log f(z';\theta) B(z,z';\theta))$. This gives \eqref{e:bbcurve}.

The derivative of $G(z,z';\theta)$ as in \eqref{e:defGzz} consists of two terms. If the derivative hits the numerator, 
\begin{align}\begin{split}\label{e:num}
		&\phantom{{}={}}\frac{\del_\theta B(z,z';\theta)}{1+(\ft'-t)\del_z (f(z;\theta)/(f(z;\theta)+1))}=-\frac{1}{2\pi\ri }\int_{\omega}  G(z,w;\theta)F(w;\theta)\del_\theta \log f(w;\theta) B(w,z';\theta) \rd w\\
		&-\bm1(\text{$z'$ outside $\omega$}) \del_{z'}(F(z';\theta) \del_\theta \log f(z';\theta)G(z,z';\theta))- \frac{\del_z(F(z;\theta) \del_\theta \log f(z;\theta)B(z,z';\theta))}{1+(\ft'-t)\del_z (f(z;\theta)/(f(z;\theta)+1))},
\end{split}\end{align}
where $z$ is outside the contour $\omega$. We recall the expression of $F(z;\theta)$ from \eqref{e:dthetaw}, we can rewrite the last term on the righthand side of \eqref{e:num} as
\begin{align}\begin{split}\label{e:num2}
		&\frac{\del_z((\ft'-t) \del_\theta  (f(z;\theta)/(f(z;\theta)+1))G(z,z';\theta))}{1+(\ft'-t)\del_z (f(z;\theta)/(f(z;\theta)+1))}\\
		&=\frac{(\ft'-t) \del_z\del_\theta  (f(z;\theta)/(f(z;\theta)+1))G(z,z';\theta))}{1+(\ft'-t)\del_z (f(z;\theta)/(f(z;\theta)+1))}+\frac{(\ft'-t) \del_\theta  (f(z;\theta)/(f(z;\theta)+1))\del_z G(z,z';\theta)}{1+(\ft'-t)\del_z (f(z;\theta)/(f(z;\theta)+1))}.
\end{split}\end{align}
If the derivative with respect to $\theta$ hits the denominator of $G(z,z';\theta)$ in \eqref{e:defGzz} we get
\begin{align}\label{e:deno}
	-G(z,z';\theta) \frac{(\ft'-t)\del_\theta\del_z (f(z;\theta)/(f(z;\theta)+1))}{1+(\ft'-t)\del_z (f(z;\theta)/(f(z;\theta)+1))}.
\end{align}
It follows from combining \eqref{e:num}, \eqref{e:num2} and \eqref{e:deno}, and noticing the cancellation between \eqref{e:num2} and \eqref{e:deno}, we get 
\begin{align*}\begin{split}
		&\phantom{{}={}}\del_\theta G(z,z';\theta)
		=-\frac{1}{2\pi\ri }\int_{\omega}  G(z,w;\theta)F(w;\theta)\del_\theta \log f(w;\theta) B(w,z';\theta) \rd w\\
		&-\frac{(\ft'-t)f(z;\theta)}{(f(z;\theta)+1)^2}\del_z G(z,z';\theta)\int_{\fa(t)}^{\fb(t)} G(z,y;\theta)  ( \beta^1(y)- \beta^0(y))\rd y\\
		&-\bm1(\text{$z'$ outside $\omega$}) \del_{z'}(F(z';\theta) \del_\theta \log f_s(z';\theta)G(z,z';\theta))\\
\end{split}\end{align*}
where we used \eqref{e:rewriteG} for the last term in \eqref{e:num2}. 

\end{proof}

Next we show that if the solution of  \eqref{e:dthetaw}, \eqref{e:dlogf} exists up to $\theta=1$, the $f(z;1)=f(z;\beta,t)$ as in Proposition  \ref{p:fdecompose} and Proposition  \ref{p:fdecompose} holds.

\begin{prop}\label{p:fdec2}
We assume the solution of  \eqref{e:dthetaw}, \eqref{e:dlogf} exists up to $\theta=1$, and for 
$0\leq \theta\leq 1$, $\mathscr C_\theta$ is analytic Jordan curve enclosing the interval $[\fa(t), \fb(t)]$, and it holds $|\del_z^k K(z,z';\theta)|\lesssim 1$ for any $0\leq k\leq 2$, $z\in \mathscr D_\theta$, $z'$ in a small neighborhood of $[\fa(t),\fb(t)]$.  Then $f(z;1)=f(z;\beta,t)$, and Proposition  \ref{p:fdecompose} holds.
\end{prop}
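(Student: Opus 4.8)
The plan is to verify that the deformation $\theta \mapsto f(z;\theta)$ produced by \eqref{e:dthetaw} and \eqref{e:dlogf}, under the stated a priori hypotheses, stays in the class of complex slopes with all the structural properties demanded by \Cref{p:fdecompose}, and that its terminal value $f(z;1)$ is precisely the complex slope of the variational problem with boundary profile $\beta$. The argument has three ingredients: (i) identifying $f(z;\theta)$ as the complex slope $f(z;\beta^\theta,t)$ of the interpolated boundary datum; (ii) propagating the decomposition \eqref{e:gszmut}–\eqref{e:deftgt} along the flow; and (iii) propagating the quantitative bounds \eqref{e:imratio0} and \eqref{e:lngsbound}.

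First I would establish (i). Integrating \eqref{e:dlogf} in $\theta$ from $0$ to $\theta$ gives, for $z$ outside $[\fa(t),\fb(t)]$,
\begin{align*}
\log f(z;\theta) - \log f(z;0) = -\int_0^\theta \int_{\fa(t)}^{\fb(t)} B(z,y;\theta')\big(\beta^1(y)-\beta^0(y)\big)\, \mathrm{d}y\, \mathrm{d}\theta'.
\end{align*}
Using \eqref{e:defK} to split $B = (z-y)^{-2} + K$, the singular part integrates to $\partial_z \int (\beta^\theta(y))'/(z-y)\,\mathrm{d}y$ up to the $\theta$-independent reference term, which is exactly $\partial_z \big(m(z;\beta^\theta,t)-m(z;\beta^0,t)\big)$; the $K$-part is a correction analytic across $[\fa(t),\fb(t)]$. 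Meanwhile $\mathscr C_\theta$ is, by construction of \eqref{e:dthetaw}, the image of the north boundary $I_{\ft'}^*$ under the projection map \eqref{e:proj0} attached to $f(\cdot;\theta)$: the defining ODE for $\mathscr C(x;\theta)$ is obtained by differentiating the relation $\mathscr C(x;\theta) = x + (t-\ft')f_{\ft'}(x)/(f_{\ft'}(x)+1)$ in $\theta$, with $F(z;\theta)$ exactly the Jacobian factor. Hence $f(z;\theta)$ is realized as the extended complex slope on the liquid region bounded by $\mathscr C_\theta$, its south boundary carries the profile $\beta^\theta$ by construction of the reference datum $f(z;0)=f_t^*(z)$ and the fact that the flow \eqref{e:dlogf} shifts the Stieltjes transform exactly by $\beta^\theta - \beta^0$, and it solves the complex Burgers equation along each slice since that equation is preserved by the map \eqref{e:xrmap} (which is what pins $\mathscr C_\theta$ to be a characteristic-generated curve). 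Uniqueness of the maximizer of the variational problem, together with the hypothesis that the solution exists up to $\theta=1$, then forces $f(z;1)=f(z;\beta,t)$.

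For (ii) and (iii), I would run a single continuity/bootstrapping argument on $\theta$. The reference endpoint $\theta=0$ is handled by Appendix \ref{s:initialEs}: there $\tilde g_t^*$ is real analytic, zero- and pole-free, positive on the real slice, and $g_t^*$ is meromorphic with exactly a simple pole at $\fa(t)$ and simple zero at $\fb(t)$; also \eqref{e:imratio0} holds via the explicit identity $|\Im[f^*_r(x)/(f_r^*(x)+1)]|/|\Im x| = 1/(r-t) \geq 1/(\ft'-t)$ from \eqref{e:xrmap}. Now observe that the flow \eqref{e:dlogf} changes $\log f(z;\theta)$ only by adding the Stieltjes-type kernel integral; since the singular $(z-y)^{-2}$ piece contributes precisely to $m(z;\beta^\theta,t)$, the function $g(z;\theta)$ (and its tilde-version) evolves only through the analytic kernel $K(z,y;\theta)$ and the deformation of the contour $\mathscr C_\theta$. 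Using \Cref{p:deform}, in particular \eqref{e:bbcurve}, and the hypothesis $|\partial_z^k K(z,z';\theta)| \lesssim 1$ for $k \le 2$ and $z'$ near $[\fa(t),\fb(t)]$, one gets $|\partial_\theta \partial_z^k \log \tilde g(z;\theta)| \lesssim d(\partial_x\beta, \partial_x H_t^*) \lesssim \fc$ uniformly, hence by Grönwall $\tilde g(z;\theta)$ stays $\fc$-close to $\tilde g_t^*(z)$ in $C^2$, so it remains bounded above and below, zero-free, and real analytic; since $\beta^\theta(\fa(t))=0$, $\beta^\theta(\fb(t))=\fm$ for all $\theta$, the measure defining $\tilde m(z;\beta^\theta,t)$ is $\partial_x\beta^\theta + \mathbf 1_{(-\infty,\fa(t)]} + \mathbf 1_{[-\fb(t),\infty)}$, forcing $g(z;\theta)$ to have a simple pole at $\fa(t)$ and simple zero at $\fb(t)$ and nothing else, which is the content of item \eqref{33}. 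The perturbation estimate \eqref{e:lngsbound} of item \eqref{44} then follows by integrating the $\partial_\theta$-bound against the contour $\omega$; and \eqref{e:imratio0} of item \eqref{22} is preserved because $F(z;\theta)\partial_\theta \log f$ is, by hypothesis, analytic near $\mathscr C_\theta$, so $\Im[z]^{-1}\Im[f(z;\theta)/(f(z;\theta)+1)]$ varies continuously and cannot degenerate on the compact range $\theta \in [0,1]$. Items \eqref{11} and the sign statements in \eqref{22} are topological consequences of $\mathscr C_\theta$ being an analytic Jordan curve enclosing $[\fa(t),\fb(t)]$: the projection \eqref{e:proj} extends to the boundary exactly as in Appendix \ref{s:initialEs}, mapping the north boundary to $\mathscr C_\theta$ and the rest bijectively to a real interval.

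\textbf{Main obstacle.} The delicate point is (i) — carefully matching the abstractly-defined flow $f(z;\theta)$ with the variational complex slope $f(z;\beta^\theta,t)$, i.e. showing that the ODE system \eqref{e:dthetaw}–\eqref{e:dlogf} really is the correct evolution of the complex slope under linear interpolation of the boundary profile, rather than some spurious deformation. This requires knowing that Hadamard-type variation of the Green's function / Schiffer kernel (the source of \eqref{e:bbcurve}) correctly encodes the infinitesimal change of the conformal structure of the liquid region when the south boundary datum is varied, and that the characteristic structure (hence the complex Burgers equation) is maintained slice-by-slice throughout the flow. The bounds in (ii)–(iii), by contrast, are a more or less routine Grönwall bootstrap once the a priori hypotheses on $\mathscr C_\theta$ and $K$ are granted — which is exactly why they are granted as hypotheses here and verified separately.
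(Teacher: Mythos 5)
Your proposal correctly handles the easier parts — reading off items (3)–(4) of \Cref{p:fdecompose} from the Schiffer-kernel decomposition $B = (z-y)^{-2} + K$, and noting that with $|\partial_z^k K|\lesssim 1$ the estimate \eqref{e:lngsbound} follows by integrating in $\theta$. But it misses the mechanism the paper actually uses for the crux, and you flag that gap yourself in your final paragraph.

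The paper does not try to track $f(z;\theta)$ against $f(z;\beta^\theta,t)$ for intermediate $\theta$. Instead it proves and then invokes Lemma~\ref{l:construct}, a standalone recognition criterion: any $f$ analytic on $\mathscr D\setminus\bR$ with $f(\bar z)=\overline{f(z)}$, $\Im f<0$ on $\mathscr D\cap\bH^+$, and $\Im[z]+\tau\,\Im[f(z)/(f(z)+1)]=0$ on $\partial\mathscr D$ for some $\tau>0$, is automatically the complex slope of a tiling on a strip of width $\tau$ — with the boundary profile at time $0$ read off from $\arg^* f(x+\ri0)$. The proof of that lemma (Nevanlinna representation, a maximum principle for $\Im\chi/\Im z$, and a topological analysis of the level sets $\mathscr C_s$) is nontrivial and is where the Burgers structure is \emph{derived} rather than assumed. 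Your claim that the Burgers equation is ``preserved by the map \eqref{e:xrmap}'' is not an argument; \eqref{e:xrmap} presupposes the Burgers equation for $f^*$, so invoking it for the deformed $f(\cdot;\theta)$ is circular.

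Given Lemma~\ref{l:construct}, the verification that $f(z;1)$ meets its hypotheses is achieved by two conservation laws along the flow, neither of which appear in your sketch. The first is that \eqref{e:dthetaw}--\eqref{e:dlogf} conserve
\begin{align*}
\mathscr C(x;\theta)+(\ft'-t)\,\frac{f(\mathscr C(x;\theta);\theta)}{f(\mathscr C(x;\theta);\theta)+1}\,=\,x,
\end{align*}
which is a direct differentiation-in-$\theta$ computation and pins down exactly the boundary relation required by Lemma~\ref{l:construct}. The second is that $\Im[\log f(\mathscr C(x;\theta);\theta)]$ is conserved: the ratio $B(\mathscr C(x;\theta),y;\theta)/\bigl(1+(\ft'-t)\partial_z\chi\bigr)$ is real because it equals $\partial_x\bigl(\phi'(y)/(\phi(y)-\phi(\mathscr C(x;\theta)))\bigr)$ with the Riemann map $\phi$ real on the boundary. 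This shows $\Im[\log f(\mathscr C(x;1);1)]=-\pi H^*_{\ft'}(x)\in[-\pi,0]$; combined with the kernel decomposition \eqref{e:decomp2} (to handle the real slice) and the strong maximum principle it gives $\Im f(z;1)<0$ on $\mathscr D_1\cap\bH^+$. Once Lemma~\ref{l:construct} applies, the boundary profile at time $t$ is $\beta$ by construction of the $\theta$-flow, and uniqueness of the variational minimizer identifies $f(z;1)=f(z;\beta,t)$.

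A smaller issue: your Gr\"onwall bootstrap to re-establish bounds on $\tilde g$ is redundant here. \Cref{p:fdec2} is stated conditionally — the a priori control of $\mathscr C_\theta$ and of $K$ are among its hypotheses — and the quantitative bootstrap proving those hypotheses is done separately in Section~\ref{s:decomposition}. What \Cref{p:fdec2} needs from the hypotheses is only the single contour-integral estimate \eqref{e:lngs}, which you do more or less have.
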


We have the following lemma which gives general conditions for a function $f(z)$ to be the limiting complex slope of lozenge tilings on a strip.
\begin{lem}\label{l:construct}
Let $\mathscr D$ be a simply connected  domain in the complex plane $\bC$ with boundary given by an analytic curve $\mathscr C$. We further assume $\mathscr D$ is symmetric with respect to the real axis. There exists a function $f(z)$ the following holds
\begin{enumerate}
\item $f(z)$ is analytic on $\mathscr D\setminus \bR$, $f(\bar z)=\overline{f(z)}$ and $\Im[f(z)]<0$ for $z\in \mathscr D\cap \bH^+$.
\item Let $\chi(z)=f(z)/(f(z)+1)$. $f(z)$ and $\chi(z)$ extend analytically to the boundary curve $\mathscr C$. Moreover, there exists some $t>0$, such that $\Im[z]+t\Im[\chi(z)]=0$ for $z\in \mathscr C$. 
\end{enumerate}
Then we have
 $f(z)$ is the limiting complex slope of lozenge tiling on the strip $[0,t]$, with height function at time $0$ given by $ -\pi \del_x H_0(x)=\arg^* f(x+\ri 0)$ almost surely for $x\in \mathscr D\cap \bR$, and at time $t$ given by $-\pi\del_x H_t(x)|_{x=z+t\chi(z)}=\arg^* f(z)$ with $z\in \mathscr C\cap \bH^+$.
\end{lem}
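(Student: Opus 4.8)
\textbf{Proof proposal for \Cref{l:construct}.}

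The plan is to construct the height function $H$ directly from the data $(f, \mathscr D, t)$ and then verify that it is the limit shape (maximizer of $\mathcal E$) on the strip whose liquid region is carved out by the map $z \mapsto z + t\chi(z)$, with the indicated boundary data. First I would set up the candidate liquid region: let $\chi(z) = f(z)/(f(z)+1)$, and define the map
\[
\Psi : \mathscr D \cap \overline{\bH^+} \to \bC, \qquad \Psi(z) = z + t\chi(z).
\]
By hypothesis (ii), $\Psi$ sends $\mathscr C \cap \bH^+$ into $\bR$ (since $\Im[z + t\chi(z)] = 0$ there), and by hypothesis (i), $\Im[\chi(z)] = \Im[f(z)]/|f(z)+1|^2 < 0$ on $\mathscr D \cap \bH^+$, so $\Psi$ maps the interior into $\overline{\bH^+}$; combined with $\Psi$ being (generically) a local homeomorphism off the critical set, this identifies the image $\mathfrak L := \Psi(\mathscr D \cap \bH^+)$ together with its complex conjugate as a candidate liquid region sitting in a strip $\bR \times [0, t]$. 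The slice at time $s$ is obtained by restricting to those $z$ with $\Im[z] + s\Im[\chi(z)] = 0$ replaced appropriately; more precisely, one runs the characteristic flow backward, exactly as in \eqref{e:ccff}--\eqref{zlineart}, so that $f$ becomes a function $f_s(x)$ on each horizontal slice via $x = z_s(u)$, $u \in \mathscr D$.

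Next I would define the height function. On the liquid region, set $\nabla H$ by the relation \eqref{fh}, i.e. $\arg^* f = -\pi\, \partial_x H$ and $\arg^*(f+1) = \pi\, \partial_y H$; this is consistent because $f$ takes values in $\overline{\bH^-}$ there (hypothesis (i), after conjugating to match the $\overline{\bH^-}$ convention used elsewhere in the paper), and because $f$ solves the complex Burgers equation along characteristics — which one checks directly from $\Psi$ being the time-$t$ characteristic map, exactly as in the derivation of \eqref{e:Burgeq2}--\eqref{zlineart}. On the complement of $\mathfrak L$ in the strip, extend $\nabla H$ by continuity to the frozen values $\{(0,0),(1,0),(0,-1)\}$, using that $f$ extends continuously to real values $\in \{[0,\infty], [-\infty,-1], [-1,0]\}$ on $\mathscr C \cap \bR$ and along the arctic boundary (hypothesis (ii) gives the extension to $\mathscr C$, and the boundary of $\Psi(\mathscr D\cap\bH^+)$). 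Then one verifies: (a) $H$ is well-defined and $1$-Lipschitz with $\nabla H \in \overline{\mathcal T}$ a.e.; (b) $H|_{t=0}$ has $\partial_x H_0(x) = -\tfrac1\pi \arg^* f(x+\mathrm i 0)$, which is immediate from \eqref{fh} at $s=0$ since $\Psi$ restricts to the identity on the bottom slice ($z_0(u) = u$); (c) $H|_{t}$ satisfies $\partial_x H_t(x)|_{x = z + t\chi(z)} = -\tfrac1\pi\arg^* f(z)$, which follows because $\Psi(z) = z_t(z)$ by construction of the characteristic flow.

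The final step is to show that this $H$ is actually the \emph{maximizer} $H^*$ of the entropy functional $\mathcal E$ from \eqref{efunctionh} on the strip with the boundary data determined by $H|_{t=0}$, $H|_{t=\mathfrak t}$, and the (frozen, hence forced) data on the west and east boundaries — i.e. that $f$ is genuinely the \emph{limiting} complex slope, not just \emph{a} solution of the Euler--Lagrange/complex-Burgers system. Here I would invoke the variational characterization: the Euler--Lagrange equation for $\mathcal E$ is precisely the complex Burgers equation \eqref{ftx} (by Proposition \ref{fequation} / \cite{LSCE}), and by Lemma \ref{hzh} combined with Proposition 4.5 of \cite{MCFARS} the maximizer is the \emph{unique} admissible function whose complex slope solves \eqref{ftx} with the prescribed boundary data and maps into $\bH^-$. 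Since our constructed $H$ is admissible, has complex slope $f$ solving \eqref{ftx}, takes values in $\overline{\bH^-}$, and matches the boundary data, uniqueness forces $H = H^*$ and hence $f$ is the limit shape complex slope; this is exactly the content of the correspondence already used around \eqref{e:varW}--\eqref{e:limits} and cited from \cite[Theorem 8.3]{DMCS}.

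\textbf{Main obstacle.} The delicate point is verifying that $\Psi$ is a \emph{global} bijection from $\mathscr D\cap\bH^+$ (glued with its conjugate along $\mathscr C\cap\bR$ and the arctic image) onto a genuine simply-connected liquid region inside the strip — equivalently, that the constructed $H$ is single-valued and admissible with no self-overlaps. The hypotheses give local analyticity and the boundary behavior $\Im[z] + t\Im[\chi(z)] = 0$ on $\mathscr C$, but turning this into global injectivity requires a topological argument of the Meisters--Monk type used in Appendix \ref{s:injection} (Theorem \ref{t:hom}): one checks $\Psi$ is locally injective off a discrete critical set, that $\Psi|_{\partial(\mathscr D\cap\bH^+)}$ is injective (the bottom $\mathscr C\cap\bR$ maps identically, the arctic part maps to an interval as in Lemma \ref{l:bottombb}, and the top boundary $\mathscr C\cap\bH^+$ maps to a Jordan arc in $\bH^+$ by the real-analytic parametrization argument around \eqref{e:phiz}), and then concludes $\Psi$ is a homeomorphism onto its image. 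Once global injectivity is in hand, all the admissibility and boundary-value verifications are routine consequences of \eqref{fh} and the characteristic-flow bookkeeping, and the uniqueness step closes the argument.
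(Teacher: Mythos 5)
Your proposal has the right high-level skeleton (identify $\mathscr D$ with the doubled liquid region, read off a height function via \eqref{fh}, and close with the variational uniqueness of $H^*$), and you correctly single out global injectivity as the crux. But the core technical step is mishandled, and the paper's proof takes a genuinely different — and not easily replaceable — route.

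The map you define, $\Psi(z)=z+t\chi(z)$, is not the parametrization that realizes $\mathscr D\cap\bH^+$ as the liquid region. For $z\in\mathscr D\cap\bH^+$ one has $\Im[\Psi(z)]=\Im[z]+t\Im[\chi(z)]=(t-s(z))\Im[\chi(z)]$ with $s(z):=-\Im[z]/\Im[\chi(z)]$, so once one knows $s(z)\in(0,t)$ this lies in $\bH^-$, not $\overline{\bH^+}$ as you assert; and without that a priori bound the sign is not even determined. Moreover $\Psi$ does not restrict to the identity on the bottom: for $x\in I=\supp(\Im[\chi(x+\ri0)])$ one has $\Im[\Psi(x+\ri0)]=t\Im[\chi(x+\ri0)]<0$, so your step (b) fails as stated. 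Your boundary decomposition for the Meisters--Monk argument is also mismatched: $\partial(\mathscr D\cap\bH^+)$ is $(\mathscr D\cap\bR)\cup(\mathscr C\cap\overline{\bH^+})$, whereas you split it into ``$\mathscr C\cap\bR$'', ``the arctic part'', and ``$\mathscr C\cap\bH^+$ mapping to a Jordan arc in $\bH^+$'' — but hypothesis (ii) says $\mathscr C\cap\bH^+$ maps to $\bR$, and the ``arctic part'' is not an a priori piece of $\partial(\mathscr D\cap\bH^+)$ at all; it is the object being constructed.

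What the paper actually does is parametrize by $(x(z),s(z))$ with $x(z)=z+s(z)\chi(z)\in\bR$, and prove injectivity directly, without Meisters--Monk. The three ingredients, none of which appear in your proposal, are: (i) a strong maximum principle for the quantity $\Im[\chi(z)]/\Im[z]$ in $\mathscr D\setminus I$ (via an elliptic PDE for $v/y$ with $v=\Im\chi$), together with a Taylor-expansion argument ruling out local extrema on $(\mathscr D\cap\bR)\setminus I$, which pins down the boundary values $-1/t$ on $\mathscr C$ and $-\infty$ near $I$ and yields $s(z)\in(0,t)$ on the interior — exactly the a priori bound you would need even to determine the sign of $\Im[\Psi]$; (ii) a topological/connectedness argument showing that every critical point of $1+s\,\partial_z\chi$ lies on the real axis — a critical point $z_c\in\bH^+$, together with its conjugate, would split the level set $\mathscr C_s$ locally into four alternating sectors and thereby disconnect $\mathscr D_s^+$, which is impossible because each component of $\mathscr D_s^-$ must touch $I$; and (iii) a slice-by-slice verification that each arc $\mathscr C_s^k\cap(\bH^+\cup\bR)$ is mapped bijectively by $z\mapsto z+s\chi(z)$ onto an interval, and that these intervals are disjoint and ordered, using monotonicity of $z\mapsto z+s\chi(z)$ on the real gaps from $\chi'>-1/s$. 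Items (ii) and (iii) are precisely the things that in your plan are swept under ``$\Psi$ is locally injective off a discrete critical set'' and ``$\Psi|_{\partial X}$ is injective'': the critical set being real is a theorem, not a hypothesis, and the boundary injectivity under the minimal hypotheses of this lemma (no cusp/tangency structure is assumed) cannot be read off from Lemma \ref{l:bottombb} or the argument around \eqref{e:phiz}, both of which live in the Appendix~\ref{s:injection} setting where Assumptions~\ref{xhh} and~\ref{pa} already constrain the arctic curve. Once injectivity is in hand, the paper does not re-derive the variational characterization either; it cites \cite[Theorem 8.3 and Remark 8.6]{DMCS} to conclude directly that the constructed $H_s$ is the limiting height function, rather than arguing uniqueness of the Euler--Lagrange solution as you propose (which would require an additional admissibility/uniqueness lemma you have not supplied).
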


\begin{proof}[Proof of \Cref{p:fdec2}]
We check $f(z;1)$ satisfies the assumptions in Lemma \ref{l:construct}.
If the solution of  \eqref{e:dthetaw}, \eqref{e:dlogf} exists up to $\theta=1$, then $f(z,\theta)$ is analytic on $\mathscr D_\theta\setminus \bR$, extends analytically to the boundary curve $\mathscr C_\theta$, and $f(\overline z,\theta)=\overline{f(z,\theta)}$.

To check the second assumption in Lemma \ref{l:construct}, we compute
\begin{align*}\begin{split}
&\phantom{{}={}}\del_\theta \left(\mathscr C(x;\theta)+(\ft'-t)\frac{f(\mathscr C(x;\theta);\theta)}{f(\mathscr C(x;\theta);\theta)+1}\right)\\
&=\del_\theta \mathscr C(x;\theta)\left(1+(\ft'-t)\left.\del_z \frac{f(z;\theta)}{f(z;\theta)+1}\right|_{z=\mathscr C(x;\theta)} \right)
+(\ft'-t)\left.\del_\theta \frac{f(z;\theta)}{f(z;\theta)+1}\right|_{z=\mathscr C(x;\theta)} =0,
\end{split}\end{align*}
where we used \eqref{e:dthetaw} and \eqref{e:dlogf}.
Thus we have
\begin{align}\label{e:Cfix}
\mathscr C(x;\theta)+(\ft'-t)\frac{f(\mathscr C(x;\theta);\theta)}{f(\mathscr C(x;\theta);\theta)+1}=\mathscr C(x;0)+(\ft'-t)\frac{f(\mathscr C(x;0);0)}{f(\mathscr C(x;0);0)+1}=x.
\end{align}
where in the last inequality, we used the definition of $\mathscr C(x;0)=\mathscr C^*(x)$ from \eqref{e:curve}. By taking $\theta=1$, $f(z;\theta)$ satisfies the second assumption in Lemma \eqref{l:construct}.

Next we show that $\Im[ \log f(\mathscr C(x;\theta); \theta)]$ does not change.
\begin{align}\begin{split}\label{e:fftt}
&\phantom{{}={}}\del_\theta \log f(\mathscr C(x;\theta); \theta)
=\del_\theta \log f(z;\theta)|_{z=\mathscr C(x;\theta)}+\del_z \log f(z;\theta)|_{z=\mathscr C(x;\theta)}\del_\theta \mathscr C(x;\theta)\\
&=\frac{\del_\theta \log f(z;\theta)|_{z=\mathscr C(x;\theta)}}{1+(\ft'-t)\del_z(f(z;\theta)/(f(z;\theta)+1))|_{z=\mathscr C(x;\theta)}}
=\frac{-\int B(\mathscr C(x;\theta),y;\theta)(\beta^1(y)-\beta^0(y))\rd y}{1+(\ft'-t)\del_z(f(z;\theta)/(f(z;\theta)+1))|_{z=\mathscr C(x;\theta)} }.
\end{split}\end{align}
By taking derivative with respect to $x$ on both sides of \eqref{e:Cfix} and rearranging we have 
\begin{align*}
\del_x \mathscr C(x;\theta)=\frac{1}{1+(\ft'-t)\del_z(f(z;\theta)/(f(z;\theta)+1))|_{z=\mathscr C(x;\theta)}}.
\end{align*}
We recall the definition of Schifer kernel from \eqref{e:Schiffer},
\begin{align}\begin{split}\label{e:fftt2}
\frac{B(\mathscr C(x;\theta),y;\theta)}{1+(\ft'-t)\del_z(f(z;\theta)/(f(z;\theta)+1))|_{z=\mathscr C(x;\theta)}}
&=\frac{\phi'(\mathscr C(x;\theta))\phi'(y)}{(\phi(\mathscr C(x;\theta))-\phi(y;\theta))^2}\del_x \mathscr C(x;\theta)\\
&=\del_x \frac{\phi'(y;\theta)}{(\phi(y;\theta )-\phi(\mathscr C(x;\theta);\theta))}\in \bR,
\end{split}\end{align}
where $\phi(z;\theta)$ is the Riemann mapping from $\mathscr D_\theta\cap \bH^+$ to $\bH^+$, and we used that it maps the boundary curve to real, namely $\phi(\mathscr C(x;\theta);\theta),\phi(y;\theta)\in \bR$.
We conclude by plugging \eqref{e:fftt2} into \eqref{e:fftt} that 
\begin{align*}
\del_\theta \Im[ \log f(\mathscr C(x;\theta); \theta)]=0,
\end{align*}
and 
\begin{align*}
\Im[ \log f(\mathscr C(x;1); 1)]=\Im[ \log f(\mathscr C(x;0); 0)]=-\pi H^*_{\ft'}(x)\in [-\pi,0].
\end{align*}

Next we show that 
$
\lim_{z\in \bH^+\rightarrow \mathscr D_1\cap \bR} \Im[\log f(z;1)]\in[-\pi,0].
$
Then it follows that $\Im[f(z;1)]\leq 0$ on the boundary of $\mathscr D_1\cap \bH^+$, and $\Im[f(z;1)]<0$ for $z\in \mathscr D_1\cap \bH^+$ by the strong maximum principle of $\Im[f(z;1)]$. Thus $f(z;1)$ also satisfies the first assumption in Lemma \ref{l:construct}.

We recall the decomposition of the Schiffer kernel from \eqref{e:defK},		
\begin{align}\begin{split}\label{e:decomp}
\del_\theta \log f(z;\theta) 
&=-\int B(z,y;\theta)( \beta^1(y)- \beta^0(y))\rd y\\
&=-\int \left(\frac{1}{(z-y)^2}+K(z,y;\theta)\right)( \beta^1(y)- \beta^0(y))\rd y\\
&=-\int \left(K(z,y;\theta)\right)( \beta^1(y)- \beta^0(y))\rd y+\int\frac{\del_y \beta^1(y)-\del_y\beta^0(y)}{z-y}.
\end{split}\end{align}
We recall the decomposition of $f_t^*(z)=f(z;0)$ from \eqref{e:gszmutcopy}. By integrating \eqref{e:decomp} from $0$ to $1$, we get
\begin{align}\begin{split}\label{e:decomp2}
 \log f(z;1) 
 &=m(z;\beta,t)
+ \log g^*_t(z)-\int_0^1 \int \left(K(z,y;\theta)\right)( \beta^1(y)- \beta^0(y))\rd y\rd \theta\\
&=\tilde m(z;\beta,t)
+ \log \tilde g^*_t(z)-\int_0^1 \int \left(K(z,y;\theta)\right)( \beta^1(y)- \beta^0(y))\rd y\rd \theta,
\end{split}\end{align}
where $m(z;\beta,t),\tilde m(z;\beta,t)$ are Stieltjes transform of $\del_y\beta^1=\del_y\beta$, and $\del_y\beta+\bm1_{(-\infty, \fa(t)]}(y)+\bm1_{( \fb(t),\infty]}(y))$ respectively. The last two terms on the righthand side of \eqref{e:decomp2} are real analytic, and the density $\del_y\beta+\bm1_{(-\infty, \fa(t)]}(y)+\bm1_{( \fb(t),\infty]}(y))$ is bounded by $1$, 
we conclude that 
$
\lim_{z\in \bH^+\rightarrow \mathscr D_1\cap \bR} \Im[\log f(z;1)]\in[-\pi,0].
$

So far we have checked that $f(z;1)$ satisfies the assumptions of Lemma \ref{l:construct}, it is the limiting complex slope of lozenge tiling on the strip $[t, \ft']$ (by shifting time by $t$), with the height function at time $t$ given by $\beta$, and at time $\ft'$ given by $H_{\ft'}$. In particular $f(z;1)=f(z;\beta,t)$ as in Proposition \ref{p:fdecompose}. The first statement in Proposition \ref{p:fdecompose} follows by taking  $\tilde {\mathscr U}_t^\beta=\mathscr D_1$ and $\mathscr U_t^\beta=\mathscr D_1\setminus \{x:(x,t)\in \overline{\fL(\fD^t;\beta)}\}$. The second statement  in Proposition \ref{p:fdecompose} follows from the decomposition \eqref{e:decomp}, and the maximum principle of $\Im[f(z;1)/(f(z;1)+1)]/\Im[z]$. For the third statement in Proposition \ref{p:fdecompose}, we take
\begin{align}\begin{split}\label{e:chooseg}
& \log \tilde g(z;\beta,t)= \log \tilde g^*_t(z)-\int_0^1 \int \left(K(z,y;\theta)\right)( \beta(y)- H_t^*(y))\rd y\rd \theta\\
&\log g(z;\beta,t)= \log  g^*_t(z)-\int_0^1 \int \left(K(z,y;\theta)\right)( \beta(y)- H_t^*(y))\rd y\rd \theta,
\end{split}\end{align} 
as in \eqref{e:decomp2}.

Finally for the fourth statement in Proposition \ref{p:fdecompose}, we can rewrite \eqref{e:chooseg} as a contour integral
	\begin{align}\begin{split}\label{e:lngs}
			&\phantom{{}={}}\log g(z;\beta,t)  - \log g(z;H_t^*,t)
			=-\int_{0}^1\left(\int  K(z,y;\theta)( \beta(y)- H_t^*(y)) \rd y\right)\rd \theta\\
			&= \int_{0}^1\left(\int \int_{z'=\fa(t)}^ {z'=y} K(z,z';\theta)\rd y (\del_y \beta(y)- \del_yH_t^*(y)) \rd y\right)\rd \theta\\
			&=\int_{0}^1  \left(\frac{1}{2\pi\ri}\oint_\omega  \int_{z'=\fa(t)}^{z'=w} K(z,z';\theta)\rd y( m(w;\beta,t)- m_t^*(w)) \rd w\right)\rd \theta,
	\end{split}\end{align}
	where $\omega$ is a contour enclosing $[\fa(t),\fb(t)]$.
	The claim \eqref{e:lngsbound} follows from taking derivative with respect to $z$, then taking absolute value on both sides of \eqref{e:lngs}, and using that $|\del_z^k K(z,z';\theta)|\lesssim1$.
	\end{proof}

\begin{proof}[Proof of Lemma \ref{l:construct}]
One can check that $\chi(z)$ is analytic on $\mathscr D\setminus \bR$, $\chi(\bar z)=\overline{\chi(z)}$ and $\Im[\chi(z)]<0$ for $z\in \mathscr D\cap \bH^+$. By Nevanlinna representation, $\Im[\chi(x+\ri 0)]$ defines a negative measure $\mu$ on $I\subset \mathscr D\cap \bR$,
\begin{align}\label{e:dec}
\chi(z)=\int\frac{\rd \mu(x)}{z-x}+\xi(z),
\end{align}
where $\xi(z)$ is real analytic. In particular, $\chi(z)$ extends analytically to $\mathscr D\setminus I$. 

For any $z\in \mathscr D\setminus I$, we define
\begin{align}\label{e:recov}
s(z)=-\frac{\Im[z]}{\Im[\chi(z)]},\quad x(z)=z+s(z) \chi(z)\in \bR, \quad f_{s(z)}(x(z))=f(z),
\end{align}
we remark that for $z\in (\mathscr D\cap \bR)\setminus I$, we should interpret the first statement in \eqref{e:recov} as $s(z)=-1/\chi'(z)$.
Next we show that the map \eqref{e:recov} restricted to $\bH^+\cup \bR$, $z\mapsto (x(z), s(z))$ is an injection, thus it gives the desired limiting complex slope of lozenge tiling on the strip $[0,t]$. 

We first notice that $\Im[\chi(z)]/\Im[z]$ satisfies a strong maximum principle inside $\mathscr D\setminus I$. Let $z=x+\ri y$ and $v(x,y)=\Im[\chi(z)]$, then  $v(x,y)$ is harmonic and for $|y|\neq 0$, $v(x,y)/y$ satisfies the elliptic equation
\begin{align*}
\Delta \frac{v(x,y)}{y}= \frac{\Delta v(x,y)}{y}-2\left(\frac{\del_y v(x,y)}{y^2}-\frac{v(x,y)}{y^3}\right)=\frac{2}{y}\del_y \frac{ v(x,y)}{y}.
\end{align*} 
Thus the strong maximum principle holds for $|y|\neq 0$. Next we show that $z=x\in (\mathscr D\setminus I)\cap \bR$ is not a local extrema. We can directly Taylor expand $\chi(z)$ around $x$, $\chi(x+\Delta x+\Delta y\ri)=a_0+a_1(\Delta x+\Delta y\ri)+a_2(\Delta x+\Delta y \ri)^2+\cdots$ with $a_i\in \bR$. Say $a_k$ is the first nonvanishing coefficient with $k\geq 2$, then taking $\Delta x=\la \Delta y$
\begin{align*}\begin{split}
\frac{\Im[\chi(x+\Delta x+\Delta y\ri)]}{\Delta y}
&=a_1+\frac{a_k \Im [(\la \Delta y+ \Delta y\ri)^k]}{\Delta y}+\OO(|\Delta y|^k)\\
&=a_1+\Im [(\la+ \ri)^k]a_k\Delta y^{k-1}+\OO(|\Delta y|^k).
\end{split}\end{align*}
For $k\geq 2$, there are choices of $\la$ such that $\Im [(\la+ \ri)^k]$ is positive or negative. We conclude that $z=x\in (\mathscr D\cap \bR)\setminus I$ is not a local extrema.  

By our assumption for $z\in \mathscr C$, $\Im[\chi(z)]/\Im[z]=-1/t$ and for $z\rightarrow I$ (the support of $\mu$), the decomposition \eqref{e:dec} gives that $\Im[\chi(z)]/\Im[z]\rightarrow-\infty$. Thus by maximum principle we have $\Im[\chi(z)]/\Im[z]\in (-\infty, -1/t)$ for $z\in \mathscr D\setminus I$. In particular $s(z)\in (0,t)$ as in \eqref{e:recov} for any $z\in \mathscr D\setminus I$.

For any $0<s<t$, let $\mathscr C_s:=\{z\in \mathscr D\setminus I: s=s(z)\}$ be the boundary curve of the domains
\begin{align}\label{e:D_s+-}
\mathscr D_s^{-}:= \{z\in \mathscr D\setminus I: -\Im[z]/\Im[\chi(z)]<s\},\quad \mathscr D_s^{+}:= \{z\in \mathscr D\setminus I: -\Im[z]/\Im[\chi(z)]>s\}.
\end{align}
Thanks to the maximum principle, $\mathscr D_s^{+}$ is connected, in fact each point in $\mathscr D_s^{+}$ is connected to the boundary curve $\mathscr C$.
$\mathscr D_s^{-}$ may consist of many disconnected components, but each component of $\mathscr D_s^{-}$ contains part of $I$.

The set $\mathscr C_s$ is characterized by $\Im[z+s\chi(z)]=0$. The critical points are given by $1+s\del_z \chi(z)=0$, which consists of  isolated points. So $\mathscr C_s$ consists of piecewise analytic curves. Next we show the critical points are on the real axis. Otherwise, assume we have a critical point at $z_c$ with $\Im[z_c]>0$. By symmetric, $\bar z_c$ is also a critical point, such $\Im[z_c+s\chi(z_c)]=0$ and $1+s\del_z \chi(z_c)=0$. Then locally around $z_c$, we have $z+s\chi(z)=z_c+s\chi(z_c)+a_2(z-z_c)^2+a_3(z-z_c)^3+\cdots$, and $\Im[z+s\chi(z)]=\Im[a_2(z-z_c)^2+a_2(z-z_c)^2+\cdots]$. We take a small disk $\bD$ centered around $z_c$, the set $\mathscr C_s$ divides it into at least four pieces, on each piece, the sign of $s+\Im[z]/\Im[\chi(z)]$ alternates. The same statement holds locally around $\bar z_c$.  See Figure \ref{f:critical}. We recall $\mathscr D_s^{-}, \mathscr D_s^{+}$ from \eqref{e:D_s+-}. Since each connected component of $\mathscr D_s^{-}$ contains part of $I$, the two regions marked by ``$*$" in Figure \ref{f:critical} belong to the same connected component of $\mathscr D_s^{-}$, so do the two regions marked by ``$-$". They divide $\mathscr D_s^{+}$ into at least two disconnected components, which leads to a contradiction.

	\begin{figure}
		
		\begin{center}		
			
			\begin{tikzpicture}[
				>=stealth,
				auto,
				style={
					scale = .52
				}
				]

				\draw[black]  (8,1.5) ellipse (4 and 1.5 );
				
				\draw[black, dashed]  (7.56,1.5) ellipse (0.6 and 1.2 );
				\draw[black, dashed]  (3.8,1.5)--(12.2, 1.5); 
				\draw(6.7,0.7)--(7.3, 1.3);
				\draw(6.7,1.3)--(7.3, 0.7);
				\draw(6.7,2.3)--(7.3, 1.7);
				\draw(6.7,1.7)--(7.3, 2.3);
				\draw[] (7,2)node[above, scale=0.7]{$-$};
				\draw[] (7,2)node[below, scale=0.7]{$*$};
				\draw[] (7,2)node[left, scale=0.7]{$z_c$};
				\draw[] (7,1)node[above, scale=0.7]{$*$};
				\draw[] (7,1)node[below, scale=0.7]{$-$};
				\draw[] (7,1)node[left, scale=0.7]{$\bar z_c$};
				\draw[] (5,2.4) node[above, scale=0.7]{$\mathscr C$};
				\draw[] (5,2) node[scale=0.7]{$\mathscr D$};

				\draw[black]  (18,1.5) ellipse (4 and 1.5 );
				
				\draw[] (15,2.4) node[above, scale=0.7]{$\mathscr C$};
				\draw[] (15,2) node[scale=0.7]{$\mathscr D_s^+$};
				
				\draw[black,  fill=gray!40!white]  (17,1.5) ellipse (1 and 0.8 );
				\draw[black,  fill=gray!40!white]  (20,1.5) ellipse (0.8 and 0.7 );
				\draw[black, dashed]  (13.8,1.5)--(22.2, 1.5); 
				
				\draw[] (17,1.5) node[below, scale=0.7]{$\mathscr D_s^-$};
				\draw[] (20,1.5) node[below, scale=0.7]{$\mathscr D_s^-$};
				\draw[] (18.2,1.5) node[above, scale=0.7]{$\mathscr C^1_s$};
				\draw[] (21,1.5) node[above, scale=0.7]{$\mathscr C^2_s$};

				\draw[black, thick] (24, 0) arc (180:110:3.2);
				\draw[black, thick] (30.5, 0) arc (-30:30:3);
				\draw[black, thick] (26.5, 0) arc (-60:0:1.732);
				\draw[black, thick] (28.25, 0) arc (240:180:1.732);		
			
				\draw[black, dashed] (24, 1) node[left, scale=0.7]{$s$} --(31, 1) ;
				
					\draw[] (24.7,1) node[below, scale=0.7]{$x_1(s)$};
				\draw[] (26.5,1) node[below, scale=0.7]{$x'_1(s)$};
				\draw[] (28.3,1) node[below, scale=0.7]{$x_2(s)$};
				\draw[] (30.2,1) node[below, scale=0.7]{$x'_2(s)$};

			\end{tikzpicture}
			
		\end{center}
		
		\caption{\label{f:critical} The first plot illustrates that the critical points $z_c$ must be on the real axis. Shown to the second and third plots,  $\mathscr C_s:=\{z\in \mathscr D\setminus I: s=s(z)\}$ consists of contours, $\mathscr C_s^1, \mathscr C_s^2,\cdots, \mathscr C_s^r$, each surrounds a connected component of $\mathscr D_s^{-}$. $\mathscr C_s^k\cap \bH^+$ are mapped bijectively to disjoint intervals, corresponding the slice of liquid region at time $s$. }
	\end{figure}
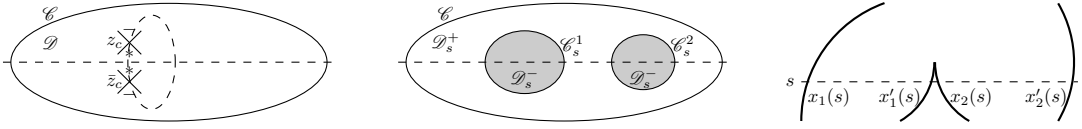

We conclude that all the critical points are on the real axis, and $\mathscr C_s:=\{z\in \mathscr D\setminus I: s=s(z)\}$ consists of contours, each surrounds a connected component of $\mathscr D_s^{-}$. These contours can only touch each other on the real axis. We order them from left to right as $\mathscr C_s^1, \mathscr C_s^2,\cdots, \mathscr C_s^r$. 
For any $z\in \mathscr C_s^k\cap \bH^+$, we parametrize $\mathscr C_s^k$ around $z$ as $\gamma: (-\varepsilon, \varepsilon)\mapsto  \mathscr C_s^k$ with $\gamma(0)=z$ in clockwise direction. Then the direction of $\gamma'(0)$ is given by $1/(1+s\del_z\chi(z))$, and $\gamma'(0)=c/(1+s\del_z\chi(z))$ with some $c>0$. We recall $x(z)=z+s\chi(z)$ from \eqref{e:recov}, $\del_\tau x(\gamma(\tau))|_{\tau=0}=c>0$. We conclude that $x(z)$ maps each arc $\mathscr C_s^k \cap (\bH^+\cup \bR)$ bijectively to an interval $[x(w_k(s)), x(w_k'(s))]=[x_k(s), x'_k(s)]$, where $w_k(s), w_k'(s)$ are the left and right end point of the arc $\mathscr C_s^k \cap (\bH^+\cup \bR)$. Next we show that $x'_k(s)\leq x_{k+1}(s)$. 
If $w_k'(s)=w_{k+1}(s)$, then $x'_k(s)= x_{k+1}(s)$. Otherwise, for any  $z\in (w_k'(s), w_{k+1}(s))\in \mathscr D_s^+$, the maximum principle gives that $\chi'(z)>-1/s$. It follows that the map  $z\mapsto z+s\chi(z)$ is  increasing,  since $(z+s\chi(z))'=1+s\chi'(z)>0$. We conclude that the map $z\mapsto z+s\chi(z)$ maps the interval $[w_k'(s), w_{k+1}(s)]$ bijectively to the interval $[x'_k(s), x_{k+1}(s)]$. In particular $x_{k+1}(s)>x'_k(s)$.

It follows the map $z\mapsto (s(z), x(z))$ in \eqref{e:recov} restricted to  $(\bH^+\cup \bR)$ is an injection, which gives a liquid region $\fL$, and the slice at time $s$ is given by the intervals $[x_1(s), x_1'(s)]\cup \cdots [x_r(s), x_r'(s)]$.  See Figure \ref{f:critical}. \eqref{e:recov} also gives a complex slope $f_s(x)$ for any $(x,s)\in \overline\fL$, characterized by
\begin{align}
f_s(x)=f\left(x-s\frac{f_s(x)}{f_s(x)+1}\right),
\end{align}
which is essentially \eqref{q0f}. Moreover, the complex slope $f_s(x)$ extends outside the liquid region: for any $x\in (x'_k(s), x_{k+1}(s))$, there exists some $z\in(w_k'(s), w_{k+1}(s))$ such that $x=z+s\chi(z)$. We can simply take $f_s(x)=f(z)$. 

The height function $H_s(x)$ is determined by $-\pi\del_x H_s(x)=\arg^*(f_s(x))$ and $\pi\del_s H_s(x)=\arg^*(f_s(x)+1)$ for $(x,s)\in \fL$. It also extends continuously outside the liquid region $\fL$.
It follows from \cite[Theorem 8.3 and Remark 8.6]{DMCS}, that $H_s(x)$ is the limiting height function of lozenge tiling on the strip $[0,t]$ with complex slope given by $f_s(x)$. Moreover, at time $0$ the height function is given by $\del_x H_0(x)=-\pi\arg^* f(x+\ri 0)$ almost surely for $x\in \mathscr D\cap \bR$, and at time $t$ given by $\del_x H_t(x)|_{x=z+t\chi(z)}=-\pi\arg^* f(z)$ with $z\in \mathscr C\cap \bH^+$.

\end{proof}

\subsection{Proof of \Cref{p:fdecompose}}\label{s:decomposition}

The system of differential equations for $\mathscr C(x;\theta)$ and $\log f(z;\theta)$  in \eqref{e:dthetaw} and \eqref{e:dlogf} can be analyzed using the ``method of majorant" originated from Cauchy. In the following we recall some notations from \cite[Section 2]{lax1953nonlinear}. A formal power series in $k$ veriables: $A=\sum_{i_r\geq0} a_{i_1i_2\cdots i_k}y_1^{i_1}\cdots y_k^{i_k}$ is said to majorize another one
$B=\sum_{0\leq i_r< \infty} b_{i_1i_2\cdots i_k}(x_1-\tilde x_1)^{i_1}\cdots (x_k-\tilde x_k)^{i_k}$ at $(\tilde x_1, \cdots, \tilde x_k)$ denoted by $B\subset_{(\tilde x_1, \cdots, \tilde x_k)}A$ if $|b_{i_1i_2\cdots i_k}|\leq a_{i_1i_2\cdots i_k}$ for all $0\leq i_1, i_2,\cdots, i_k$. The following rules will be used repeatedly without mentioning: 
(i) If $B_i\subset_{\tilde \bmx} A_i$, then $\sum B_i\subset_{\tilde \bmx} \sum A_i$, and $\prod B_i\subset_{\tilde \bmx} \prod A_i$;
(ii) Let $x_i(s_1, \cdots, s_\ell)$ and $y_i(t_1, \cdots, t_\ell)$, $1\leq i\leq k$, be power series with $x_i(s_1, \cdots, s_\ell)\subset_{\tilde \bms} y_i(t_1, \cdots, t_\ell)$, and $(x_1(\tilde \bms), \cdots x_k(\tilde \bms))=\tilde \bmx$. Let $A$ and $B$ be power series in $k$ variables satisfying $B\subset_{\tilde \bmx}A$. Then it holds that 
$B(x_1(s_1,\cdots, s_\ell), \cdots, x_k(s_1,\cdots, s_\ell))\subset_{\tilde s}A(y_1(t_1,\cdots, t_\ell), \cdots, y_k(t_1,\cdots, t_\ell))$.

The following power series play important roles in \cite[Section 2]{lax1953nonlinear},
\begin{align}\label{e:introf0}
	f^0(u)=\sum_{j\geq 0}\frac{u^j}{(j+1)^2},\quad f^1(u)=1+\sum_{j\geq 1}\frac{u^j}{j^3},\quad f^0(u)=\del_u f^1(u).
\end{align}
Any convergent power series is majorized by $Mf^0(ru)$ and  $Mf^1(ru)$, if $M$ and $r$ are chosen sufficiently large. The power series $f^0(u), f^1(u)$ satisfy
\begin{align}\label{e:multiply}
	\del_u f^0(u)f^0(u)\subset_0 4 \del_u f^0(u),\quad   f^0(u)f^0(u)\subset_0 4 f^0(u), \quad f^1(u)f^1(u)\subset_0 4 f^1(u). 
\end{align}
More importantly, composition also preserves them,
\begin{align}\label{e:composition}
	f^0(au f^1(bu))\subset_0 f^0(bu),\quad
	f^0(au f^0(bu))\subset_0 f^0(bu),
\end{align}
provided that $4a\leq b$. In the rest of this section, we will use $u,u', v, v'$ for formal variables.

Fix a small constant $\fr>0$. We recall that $\mathscr C_0=\{\mathscr C^*(x) \cup \overline{\mathscr C^*(x)}: x\in I_{\ft'}^*\}$ from \eqref{e:curve} and its annulus neighborhood of $\mathscr A_0$ from Proposition \ref{p:ftiinitial}. We denote $\mathscr C^0=\mathscr C_0$ and take more contours $\mathscr C^1,\mathscr C^2, \mathscr C^3\subseteq \mathscr A_0$, such that $\mathscr C^i$ encloses $\mathscr C_{i+1}$. They are distance at least $\fr$ bounded away from each other, and the interval $[\fa(t), \fb(t)]$.  See Figure \ref{f:cont}.

\begin{figure}
	
	\begin{center}		
		
		\begin{tikzpicture}[
			>=stealth,
			auto,
			style={
				scale = .52
			}
			]

			\draw[black] (-8, 0) node[left, scale = .7]{$y = s$}-- (8, 0);
			\draw[black, very thick] (-2, 0) node[below, scale=0.7]{$\fa(t)$} -- (2, 0)node[below, scale=0.7]{$\fb(t)$};

			\draw[black] (0,0) ellipse (7 and 3);
			\draw[black, dashed] (0,0) ellipse (7.2 and 3.2);
			\draw[black, dashed] (0,0) ellipse (6.8 and 2.8);
			\draw (5.8,0.6) node[right, scale=0.7]{$\mathscr C^{0-}$};
			\draw (6.7,1.2) node[right, scale=0.7]{$\mathscr C^{0+}$};

			\draw[black, dashed] (0,0) ellipse (6 and 2.5);
			
			\draw[black, dashed] (0,0) ellipse (5 and 2);
			\draw[black, dashed] (0,0) ellipse (4 and 1.5);
			
			\draw (0,4) node[below,scale=0.7]{$\mathscr C_0=\{\mathscr C^*(x) \cup \overline{\mathscr C^*(x)}: x\in I_{\ft'}^*\}$};
			\draw (-6,0.9) node[scale=0.7]{$\mathscr C^1$};
			\draw (-5,0.8) node[scale=0.7]{$\mathscr C^2$};
			\draw (-4,0.7) node[scale=0.7]{$\mathscr C^3$};
			

			%
			
		\end{tikzpicture}
		
	\end{center}
	
	\caption{\label{f:cont} We take contours $\mathscr C^0=\{\mathscr C^*(x) \cup \overline{\mathscr C^*(x)}: x\in I_{\ft'}^*\}$ (recall from \eqref{e:curve}) and $\mathscr C^1,\mathscr C^2, \mathscr C^3$ in an annulus neighborhood of $\mathscr C^0$, such that $\mathscr C^i$ encloses $\mathscr C^{i+1}$. They are distance at least $\fr$ bounded away from each other, and the interval $[\fa(t), \fb(t)]$. 
	We take $\mathscr C^{0\pm}$ to be in a radius $1/(10r)$ neighborhood of $\mathscr C^{0}$, and distance $1/(100r)$ away from $\mathscr C^{0}$, where $r$ is from \eqref{e:initial}. We define $\mathscr A^\pm$ be the closure of the annulus between the contours $\mathscr C^{0\pm}$ and $\mathscr C^{2}$. $\mathscr C_\theta$ will stay between $\mathscr C^{0\pm}$.
	}
	
\end{figure}

The equations \eqref{e:dlogf}, \eqref{e:bbcurve} and \eqref{e:derGzz} form a closed system for $\log f(z;\theta)$ with $z\in \mathscr C^i, 0\leq i\leq 2$, 
$B(z,z'; \theta)$ with $(z,z')\in \mathscr C^i\times \mathscr C^j, 0\leq i<j\leq 3$,
and $G(z,z';\theta)$ with $(z,z')\in \mathscr C^0\times \mathscr C^j, 1\leq j\leq 3$. More precisely, we can rewrite \eqref{e:dlogf}  as a contour integral
\begin{align}\label{e:dlogf2}
	\del_\theta \log f(z;\theta) =-\frac{1}{2\pi\ri}\oint_{\mathscr C^3} B(z,w;\theta) \int_{\fa(s)}^{\fb(s)} \frac{ \beta^1(x)- \beta^0(x)}{w-x}\rd x\rd w,\quad z\in \mathscr C^i,\quad 0\leq i\leq 2.
\end{align}
For $0\leq i<j\leq 3$, we take $0\leq k\leq 2$ such that $k\neq i,j$. Then for $(z,z')\in \mathscr C^i\times \mathscr C^j$, we can rewrite \eqref{e:bbcurve} as
\begin{align}\begin{split}\label{e:bbcurve2}
		\del_\theta B(z,z';\theta)
		&=-\frac{1}{2\pi\ri }\int_{\mathscr C^k}  B(z,w;\theta)F(w;\theta)\del_\theta \log f(w;\theta) B(w,z';\theta) \rd w\\
		&-\bm1(\text{$z$ outside ${\mathscr C^k}$}) \del_z(F(z;\theta) \del_\theta \log f(z;\theta)B(z,z';\theta))\\
		&-\bm1(\text{$z'$ outside ${\mathscr C^k}$}) \del_{z'}(F(z';\theta) \del_\theta \log f(z';\theta)B(z,z';\theta)).
\end{split}\end{align}
For $(z,z')\in \mathscr C^0\times \mathscr C^j$ with $1\leq j\leq 3$, we take $k\in\{1,2\}$ such that $k\neq j$. Then for $(z,z')\in \mathscr C^0\times \mathscr C^j$, we can rewrite \eqref{e:derGzz} as
\begin{align}\begin{split}\label{e:derGzzcopy}
		&\phantom{{}={}}\del_\theta G(z,z';\theta)
		=-\frac{1}{2\pi\ri }\int_{\mathscr C^k}  G(z,w;\theta)F(w;\theta)\del_\theta \log f(w;\theta) B(w,z';\theta) \rd w\\	
		&-\frac{(\ft'-t)}{2\pi\ri}\frac{f(z;\theta)}{(f(z;\theta)+1)^2}\del_z G(z,z';\theta)\oint_{\mathscr C^3} G(z,w;\theta) \int_{\fa(s)}^{\fb(s)} \frac{ \beta^1(x)- \beta^0(x)}{w-x}\rd x\rd w\\
		&-\bm1(\text{$z'$ outside ${\mathscr C^k}$}) \del_{z'}(F(z';\theta) \del_\theta \log f(z';\theta)G(z,z';\theta)),
\end{split}\end{align}
where we rewrite  the second term in  \eqref{e:derGzz}  as a contour integral. 

We notice that if $j=3$, then $z'\in \mathscr C^3$ is inside $\mathscr C^k$, and the last terms in \eqref{e:bbcurve2} and \eqref{e:derGzzcopy} vanish. Thus \eqref{e:bbcurve2} and \eqref{e:derGzzcopy} only involves $F(z;\theta)$ for $z\in \mathscr C^i$ with $0\leq i\leq 2$, which are functions of $f(z;\theta)$ for $z\in \mathscr C^i$ with $0\leq i\leq 2$.

We can use $f^0(u), f^1(u)$ introduced in \eqref{e:introf0} to majorize $\log f(z;0)$ and $B(z, z';0)$: there exist large constants $M, r>0$ such that
\begin{align}\label{e:initial}
	\log f(z;0) \subset_{\tilde z} \frac{M}{20 r^2} f^1(r u),\quad 
	B(z, z';0)\subset_{\tilde z, \tilde z'} \frac{M}{2} f^0(r u) f^0(r u').
\end{align}
for any $(\tilde z,\tilde z')\in \mathscr C^i\times \mathscr C^j$ with $0\leq i<j\leq 3$. 
We remark that $\tilde z, \tilde z'$ are bounded away from each other, namely $|\tilde z-\tilde z'|\geq \fr$, we are away from the double poles of $B(z, z';0)$ along the diagonal.

We recall $G(z,z';0)$ from \eqref{e:defGzz}. For $\theta=0$, $f(z;\theta)=f_t^*(z)$, from Proposition \ref{p:ftiinitial}, its denominator has only two zeros at $z=\mathscr C^*(E_1(\ft')), \mathscr C^*(E_2(\ft'))$. From the discussion after \eqref{e:phiz}, its numerator $B(z,z';0)$ also has two zeros at $z=\mathscr C^*(E_1(\ft')), \mathscr C^*(E_2(\ft'))$. They cancel out, and $G(z,z';0)$ is analytic for $(z,z')\in \mathscr C^0\times \mathscr C^i$ with $1\leq i\leq 3$.  The same as in \eqref{e:initial}, we can use  $f^0(u)$ to majorize $G(z,z';0)$ as in \eqref{e:initial} 
\begin{align}\label{e:initialG}
	G(z, z';0)\subset_{\tilde z, \tilde z'} \frac{M}{2} f^0(r u) f^0(r u'), 
\end{align}
for any $(\tilde z, \tilde z')\in \mathscr C^0\times \mathscr C^j$ with $1\leq j\leq 3$. 

To use the deformation formula Proposition \ref{p:deform}, we need that the righthand side of \eqref{e:dthetaw} is analytic in a neighborhood of $\mathscr C_\theta= \{\mathscr C(x;\theta)\cup \overline{\mathscr C(x;\theta)}: x\in I_{\ft'}^*\}$. Later we will see that $\mathscr C_\theta$ will stay close to the contour $\mathscr C^0$. For this we define two more contours $\mathscr C^{0+}$ and $\mathscr C^{0-}$, such that $\mathscr C^{0+}$ encloses $\mathscr C^{0}$, and 
$\mathscr C^{0}$ encloses $\mathscr C^{0-}$; See Figure \ref{f:cont}. Moreover, we take $\mathscr C^{0\pm}$ to be in a radius $1/(10r)$ neighborhood of $\mathscr C^{0}$, and distance $1/(100r)$ away from $\mathscr C^{0}$, where $r$ is from \eqref{e:initial} and \eqref{e:initialG}. We define $\mathscr A^\pm$ be the closure of the annulus between the contours $\mathscr C^{0\pm}$ and $\mathscr C^{2}$, and will take $\mathscr A_\theta=\mathscr A^+$ in Proposition \ref{p:deform}. 

Later, we will show that $G(z, z';\theta)$ is analytic for $(\tilde z, \tilde z')\in \mathscr C^0\times \mathscr C^j$ with $1\leq j\leq 3$, this will guarantee that the righthand side of \eqref{e:dthetaw} is analytic in a radius $\OO(1/r)$ neighborhood of $\mathscr C_0$. 
For $z$ between the two contours $ \mathscr C^{0-}, \mathscr C^2$, we will show that 
$1+(\ft'-t)\del_z (f(z;\theta)/(f(z;\theta)+1))$ is not $0$. 

Thanks to Proposition \ref{p:ftiinitial}, there exists a large constant $\fC$, 
	\begin{align}\label{e:ftiinitialcopy}
	1/\fC\leq |f(z;0)|\leq \fC, \quad |f(z;0)+1|\geq 1/\fC,\quad  |\del_z f(z;0)|\leq \fC, \quad z\in \mathscr C^i,\quad 0\leq i\leq 2,
	\end{align}
	and (by increasing $\fC$ if necessary), we also have
	\begin{align}\label{e:ftiinitialcopy2}
	|1+(\ft'-t)\del_z (f(z;0)/(f(z;0)+1))|\geq 1/\fC, \quad z\in \mathscr C^{1}\cup \mathscr C^2,
\end{align}
since $\mathscr C^1, \mathscr C^2$ are bounded away from $\mathscr C^0$.

We recall $F(z;\theta)$ from \eqref{e:dthetaw}, we can rewrite it as a function of $\log f(z;\theta)$ and $\del_z\log  f(z;\theta)$
\begin{align*}
	F(z;\theta)=\tilde F(\log f(z;\theta),\del_z\log  f(z;\theta)),\quad \tilde F(x,y)=\frac{-(\ft'-t)e^x/(1+e^x)^2}{1 +(\ft'-t)ye^x/(1+e^x)^2}.
\end{align*}
For any $(\tilde x, \tilde y)= (\log f(z;0),\del_z\log  f(z;0))$ with $z\in \mathscr C^2$, $\tilde F(x, y)$ is analytic in a small neighborhood of $(\tilde x, \tilde y)$. There exist small $\fb>0$ and large  $r'>0, M>0$ (we will take $r$ in \eqref{e:initial} much bigger than $r'$) such that
\begin{align}\label{e:Fexp}
	\tilde F(x,y)\subset_{\tilde x,\tilde y}M f^0(r'u) f^0(r'v),
\end{align}
for any $(\tilde x, \tilde y)$ satisfying $|\tilde x-\log f(\tilde z;0)|, |\tilde y-\del_z\log  f(\tilde z;0))|\leq \fb$ for some $\tilde z\in \mathscr C^i$, $i\in\{1,2\}$.

Next we analyze the system of equations \eqref{e:dlogf2},\eqref{e:bbcurve2} and \eqref{e:derGzzcopy}, show the assumptions of Proposition \ref{p:fdec2} hold, and Proposition \ref{p:fdecompose} follows. 

\begin{proof}[Proof of Propositon \ref{p:fdecompose}]
	We recall the contour $\mathscr C^0, \mathscr C^1,\mathscr C^2, \mathscr C^3, \mathscr C^{0\pm}$; see Figure \ref{f:cont}. Define $r_\theta=r e^\theta$ (where $r$ is from \eqref{e:initial} and \eqref{e:initialG}) for $0\leq \theta\leq 1$, and $\mathscr A^\pm$ the closure of the annulus between $\mathscr C^{0\pm}$ and $\mathscr C^2$. We prove that for $\fc$ small enough, the following holds for $0\leq\theta \leq 1$,
	\begin{enumerate}
		\item The righthand side of \eqref{e:dthetaw} is analytic on $\mathscr A^+$.
		\item For any $z\in \mathscr C^i$ with $0\leq i\leq 2$,
		\begin{align*}
		1/2\fC\leq |f(z;\theta)|\leq 2\fC, \quad |f(z;\theta)+1|\geq 1/2\fC, \quad
		|\del_zf(z;\theta)|\leq 2\fC, 
		\end{align*}
		and for any $z\in \mathscr C^1\cup \mathscr C^2$:
		\begin{align}\label{e:boundaryc}
			|1+(\ft'-t)\del_z (f(z;\theta)/(f(z;\theta)+1))|\geq 1/2\fC.
		\end{align}
		\item For $z\in \mathscr C^i$ with $0\leq i\leq 2$, it holds
		\begin{align}\begin{split}\label{e:fshift}
				|\log f(z;\theta)-\log f(z;0)|, 
				|\del_z \log f(z;\theta)-\del_z\log f(z;0)|
				\leq \fb,
		\end{split}\end{align}
		where $\fb$ is from \eqref{e:Fexp}.
		\item
		For any $(\tilde z,\tilde z')\in \mathscr C^i\times \mathscr C^j$ with $0\leq i<j\leq 3$,
		\begin{align}\label{e:fBexp}
			\log f(z;\theta) \subset_{\tilde z} (M/r^2_\theta) f^1(r_\theta u),\quad 
			B(z, z';\theta)\subset_{\tilde z, \tilde z'} M f^0(r_\theta u) f^0(r_\theta u'),
		\end{align}
		and for any $(\tilde z,\tilde z')\in \mathscr C^0\times \mathscr C^j$ with $1\leq j\leq 3$,
		\begin{align}\label{e:Gexp}
			G(z, z';\theta)\subset_{\tilde z, \tilde z'} M f^0(r_\theta u) f^0(r_\theta u').
		\end{align}
	\end{enumerate}
	In the following proof, $C$ will represent some constant which may differ from line to line.
	It may depend on  $r,r',M,\fb, \fr, \fC, (\ft'-t)$ and the length of the contours $\mathscr C^i$, but independent of $\fc$. 
	We denote $\sigma=\sigma(\beta)$ the first time any of the above conditions fails. 
	Then for $\theta\leq \sigma$ and $\tilde z\in \mathscr C^i$ with $0\leq i\leq 2$, we have from \eqref{e:fBexp} 
	\begin{align}\begin{split}\label{e:logfdiff}
			&\log f(z;\theta)-\log f(\tilde z;\theta)\subset_{\tilde z} (M/r_\theta)uf^1(r_\theta u),\quad 
			\del_z\log f(z;\theta)\subset_{\tilde z} (M/r_\theta)f^0(r_\theta u),\\ 
			&\del_z \log f(z;\theta)-\del_z\log f(\tilde z;\theta)\subset_{\tilde z}  M u f^0(r_\theta u).
	\end{split}\end{align} 
	For $\theta\leq \sigma$,  we have $|f(z;\theta)+1|\geq 1/2\fC$. Thus for $\tilde z\in \mathscr C^i$ with $0\leq i\leq 2$, \eqref{e:fBexp}  also implies that
	\begin{align}\label{e:frak}
		\frac{f(z;\theta)}{(f(z;\theta)+1)^2}\subset_{\tilde z} Cf^0(r_\theta u),
	\end{align}
	where we view the lefthand side as a function of $\log f(z;\theta)$, composed by $e^x/(e^x+1)^2$, and used \eqref{e:composition}; the constant $C$ depends on $\fC$. 
	
	It follows from plugging  \eqref{e:fBexp} into \eqref{e:dlogf2}, we get
	\begin{align}\begin{split}\label{e:deltheta2}
			&\phantom{{}={}}\del_\theta \log f(z;\theta)
			=-\frac{1}{2\pi\ri}\oint_{\mathscr C^3} B(z,w;\theta) \int_{\fa(s)}^{\fb(s)} \frac{ \beta^1(x)- \beta^0(x)}{w-x}\rd x\rd w\\
			&\subset_{\tilde z} M f^{(0)}(r_\theta u)\max_{w\in \mathscr C^3}\oint_{\mathscr C^3}\left|\int_{\fa(s)}^{\fb(s)} \log (w-x)(\del_x\beta^1(x)- \del_x\beta^0(x))\rd x\right|\rd w
			\subset_{0} \fc C  f^{(0)}(r_\theta u),
	\end{split}\end{align} 
	where for the last inequality, we used our assumption that $d(\del_x \beta^1, \del_x \beta^0)=d(\del_x \beta, \del_x H_s^*)\leq \fc$, and $w\in \mathscr C^3$ is distance $\fr$ away from $[\fa(t),\fb(t)]$. So $\log(w-x)$ as a function of $x$, is Lipschitz with Lipschitz constant $1/\fr$, and the integral is bounded by $\OO(\fc/\fr)$; the constant $C$ depends on $\fr,M$ and the contour $\mathscr C^3$.

	Thanks to \eqref{e:Fexp}, \eqref{e:fshift} and  \eqref{e:logfdiff}, for $\theta\leq \sigma$ and $\tilde z\in \mathscr C^1\cup \mathscr C^2$ we have 
	\begin{align*}\begin{split}
			&\phantom{{}={}}F(z;\theta)
			= \tilde F(\log f(z;\theta),\del_z\log  f(z;\theta))\\
			&\subset_{\tilde z} M f^0(r'(\log f(z;\theta)-\log f(\tilde z;\theta)))
			f^0(r'(\del_z\log  f(z;\theta)-\del_z\log  f(\tilde z;\theta)))\\
			&\subset_{\tilde z} M f^0(z r' (M/r_\theta)f^1(r_\theta u)) f^0(z r' M  f^0(r_\theta u)) 
	\end{split}\end{align*}
	Then \eqref{e:multiply} and \eqref{e:composition} implies that for $\theta\leq \sigma$ and $\tilde z\in \mathscr C^1\cup \mathscr C^2$
	\begin{align}\label{e:Fbound}
		F(z;\theta)
		\subset_{\tilde z} M f^0(r_\theta u),\quad F(z; \theta)\del_\theta \log f(z;\theta)\subset_{\tilde z} \fc C Mf^0(r_\theta u)f^0(r_\theta u)\subset_{0}4\fc C Mf^0(r_\theta u),
	\end{align}
	provided that  $r_\theta=r e^\theta\geq 4r'M$. 
	For $\tilde z\in \mathscr C^0$, we can write $F(z; \theta)\del_\theta \log f(z;\theta)$ in terms of $G(z;w;\theta)$ as in \eqref{e:rewriteG}:
\begin{align*}
	F(z;\theta) \del_\theta \log f(z;\theta)
	=\frac{1}{2\pi\ri}\frac{(\ft'-t) f(z;\theta)}{(f(z;\theta)+1)^2}	\oint_{\mathscr C^3} G(z,w;\theta) \int_{\fa(s)}^{\fb(s)} \frac{ \beta^1(x)- \beta^0(x)}{w-x}\rd x\rd w.
\end{align*}
	Using  \eqref{e:Gexp} and \eqref{e:frak}, the same as for \eqref{e:deltheta2}, we get for $\tilde z\in \mathscr C^0$,
	\begin{align}\label{e:Fbound2}
		F(z; \theta)\del_\theta \log f(z;\theta)\subset_{\tilde z}
		\fc C f^0(r_\theta u),
	\end{align}
	where $C$ depends on $\fC, \fr, (\ft'-t)$.

	Next can estimate $\del_\theta B(z,z';\theta)$ using the expression \eqref{e:bbcurve2}, for the first term on the righthand side of \eqref{e:bbcurve2}, using \eqref{e:multiply}, \eqref{e:fBexp},  \eqref{e:Fbound} and \eqref{e:Fbound2}, we get for any $(\tilde z, \tilde z')\in \mathscr C^i\times \mathscr C^j$ with $0\leq i<j\leq 3$,
	\begin{align*}\begin{split}
			&\phantom{{}={}}\frac{1}{2\pi\ri }\int_{\mathscr C^k}  B(z,w;\theta)F(w;\theta)\del_\theta \log f(w;\theta) B(w,z';\theta) \rd w\\
			&\subset_{\tilde z,\tilde z'}
			f^0(r_\theta u)  f^0(r_\theta u')\int_{\mathscr C^k}4\fc C M^3 |\rd w| 
			= 4\fc C M^3 |\mathscr C^k | f^0(r_\theta u)f^0(r_\theta u'),
	\end{split}\end{align*}
	where $|\mathscr C^k|$ is the length of the contour. For the second term on the righthand side of \eqref{e:bbcurve2}, similarly we have
	\begin{align*}\begin{split}
			&\phantom{{}={}}\bm1(\text{$z$ outside ${\mathscr C^k}$}) \del_z(F(z;\theta) \del_\theta \log f(z;\theta)B(z,z';\theta))\\
			&\subset_{\tilde z,\tilde z'} \del_u( 4 \fc C  f^{(0)}(r_\theta u)M^2 f^0(r_\theta u) f^0(r_\theta u'))=16\fc C M^2\del_u f^0(r_\theta u)  f^0(r_\theta u').
	\end{split}\end{align*}
	We have the same estimates for the last term on the righthand side of \eqref{e:bbcurve2}. Combining them together, 
	we have the following upper bound of $\del_\theta B(z,z';\theta)$  for any $(\tilde z, \tilde z')\in \mathscr C^i\times \mathscr C^j$ with $0\leq i<j\leq 3$,
	\begin{align}\begin{split}\label{e:bbcurve3}
			\del_\theta B(z,z';\theta)\subset_{\tilde z,\tilde z'} \fc C( f^0(r_\theta u)f^0(r_\theta u') +\del_ u f^0(r_\theta u) f^0(r_\theta u')+ f^0(r_\theta u) \del_ {u'}f^0(r_\theta u')  ),
	\end{split}\end{align}
	where $C$ depends on $\fr, \fC, M$ and the contours.
	
	By the same argument as for \eqref{e:bbcurve3}, we can analyze the differential equation \eqref{e:derGzzcopy} of $\del_\theta G(z,z';\theta)$ for any $(\tilde z, \tilde z')\in \mathscr C^0\times \mathscr C^j$ with $1\leq j\leq 3$,
	\begin{align}\label{e:dGbb0}
		\del_\theta G(z,z';\theta)\subset_{\tilde z,\tilde z'} \fc C( f^0(r_\theta u)f^0(r_\theta u') +\del_ u f^0(r_\theta u) f^0(r_\theta u')+ f^0(r_\theta u) \del_ {u'}f^0(r_\theta u')  ).
	\end{align}
	
	We recall that $r_\theta=r e^\theta$. By integrating \eqref{e:deltheta2} from $\theta=0$ to $\sigma$,  we get for any $\tilde z\in \mathscr C^i$ with $0\leq i\leq 2$,
	\begin{align}\begin{split}\label{e:logf2}
			\log f(z;\sigma)-\log f(\tilde z;0)
			&\subset_{\tilde z}
			\int_0^{\sigma} \fc C f^{(0)}(r_\theta u)\rd \theta
			= \fc C\int_0^{\sigma} \sum_{j\geq 0}\frac{e^{\theta j} (r u)^j}{(j+1)^2}\rd \theta\\
			&= \fc C\left(\sigma+ \sum_{j\geq 1}\frac{(e^{\sigma j}-1) (r u)^j}{j(j+1)^2}\rd \theta\right)
			\subset_{0} 2\fc Cf^1(r_{\sigma} u).
	\end{split}\end{align}
	Similarly by taking derivative on both sides of \eqref{e:deltheta2}, then integrating from $0$ to $\sigma$, for any $\tilde z\in \mathscr C^i$ with $0\leq i\leq 2$ we have
	\begin{align}\label{e:derlogf2}
		\del_z\log f(z;\sigma)-\del_z\log f(\tilde z;0)
		\subset_{\tilde z}2\fc C\del_u f^1(r_{\sigma} u)\subset_{0}2er\fc C f^0(r_{\sigma} u).
	\end{align}
	By integrating \eqref{e:bbcurve3} from $\theta=0$ to ${\sigma}$, we get for any $(\tilde z, \tilde z')\in \mathscr C^i\times \mathscr C^j$ with $0\leq i<j\leq 3$,
	\begin{align}\begin{split}\label{e:bbcurve4}
			&\phantom{{}={}}B(z,z';\sigma)- B(\tilde z,\tilde z';0)\\
			&\subset_{\tilde z,\tilde z'} 
			\fc C \int_0^{\sigma}(f^0(r_\theta u) f^0(r_\theta u')+ \del_ u f^0(r_\theta u) f^0(r_\theta u')+ f^0(r_\theta u) \del_ {u'}f^0(r_\theta u')  )\rd \theta\\
			&= 
			\fc C \int_0^{\sigma}\sum_{j,k\geq 0}\frac{e^{\theta(j+k)}(ru)^j (ru)^k+re^{\theta(j+k)}j(ru)^{j-1} (ru)^k+re^{\theta(j+k)}(ru)^{j} k(ru)^{k-1}}{(j+1)^2(k+1)^2}\rd \theta\\
			& \subset_{0,0}
			3r\fc C f^0(r_{\sigma} u) f^0(r_{\sigma} u').
	\end{split}\end{align}
	Similarly by integrating \eqref{e:dGbb0} from $\theta=0$ to ${\sigma}$, we get for any $(\tilde z, \tilde z')\in \mathscr C^0\times \mathscr C^i$ with $1\leq i\leq 3$,
	\begin{align}\begin{split}\label{e:dGbb1}
			G(z,z';\sigma)- G(\tilde z,\tilde z';0)
			\subset_{\tilde z, \tilde z'}
			3r\fc C f^0(r_{\sigma} u) f^0(r_{\sigma} u').
	\end{split}\end{align}

	The estimates \eqref{e:logf2} and \eqref{e:derlogf2} imply that for any $ z\in \mathscr C^i$ with $0\leq i\leq 2$, we have 
	\begin{align}\label{e:fzsmall}
		|\log f(z;\sigma)-\log f(z; 0)|\leq 2\fc C,\quad
		|\del_z \log f(z;\sigma)-\del_z \log f(z; 0)|\leq 2er\fc C.
	\end{align}
	Thus if we take $\fc$ sufficiently small, \eqref{e:fzsmall} together with the initial estimates \eqref{e:ftiinitialcopy} give that 
	\begin{align}\begin{split}\label{e:mid}
			&|f(z;\sigma)-f(z;0)|\leq 1/3\fC,\quad 
			|\del_z f(z;\sigma)-\del_z f(z;0)|\leq \fC/3,\\
			&\left|\frac{\del_z f(z;\sigma)}{(f(z;\sigma)+1)^2}-\frac{\del_z f(z;0)}{(f(z;0)+1)^2}\right|\leq \frac{1}{3(\ft'-t)\fC}.
	\end{split}\end{align}
	Thus \eqref{e:mid} and \eqref{e:ftiinitialcopy} give for any $z\in \mathscr C^i$ with $0\leq i\leq 2$, 
	\begin{align}\begin{split}\label{e:fullbb1}
			&1/2\fC< |f(z;\sigma)|< 2\fC, \quad |f(z;\sigma)+1|> 1/2\fC,\quad | \del_z \log f(z;\sigma)|< 2\fC.
	\end{split}\end{align}
	And \eqref{e:mid} and \eqref{e:ftiinitialcopy2} implies that for any $z\in \mathscr C^1\cup \mathscr C^2$ 
	\begin{align}\label{e:ftbb}
		|1+(\ft'-t)\del_z (f(z;\sigma)/(f(z;\sigma)+1))|> 1/2\fC.
	\end{align}

	We in fact have \eqref{e:ftbb} for any $\theta\leq \sigma$, this implies that as we deform from $\theta=0$ to $\theta =\sigma$, no zero of $1+(\ft'-t)\del_z (f(z;\theta)/(f(z;\theta)+1))$ enters $\mathscr A^-$ from its inner boundary curve $\mathscr C^2$. 
	Thanks to the estimate \eqref{e:Fbound2}, for any $\theta \leq \sigma$, $F(z; \theta)\del_\theta \log f(z;\theta)$ defines an analytic field in a radius $1/(2er)$ tube neighborhood of $\mathscr C^0$. 
	We recall the differential equation of $\mathscr C(x;\theta)$ from \eqref{e:dthetaw}, 
	then
	\begin{align}\label{e:delw}
		|\del_\theta \mathscr C(x;\theta)|=\left|F(\mathscr C(x;\theta); \theta)\del_\theta \log f(\mathscr C(x;\theta);\theta)\right|\leq \fc M C,
	\end{align}
	for any $\mathscr C(x;\theta)$ inside this radius $1/(2er)$ tube neighborhood of $\mathscr C^0$.
	By integrating both sides of \eqref{e:delw} from $\theta=0$ to $\theta\leq \sigma$, we get
	\begin{align}\label{e:fullbb0}
		|\mathscr C(x;\theta)-\mathscr C(x;0)|\leq \fc M C\leq 1/(200r), 
	\end{align}
	provided we take $\fc\leq \fr/200rMC$. We conclude that $\mathscr C_\theta=\{\mathscr C(x;\theta)\cup \overline{\mathscr C(x;\theta)}:x\in I_{\ft'}^*\}$ stays in a radius $1/(200r)$ tube neighborhood of $\mathscr C^0$. In particular it stays in between the contours $\mathscr C^{0+}$ and $\mathscr C^{0-}$ (recall from Figure \ref{f:cont}). 
	
	The above discussion also implies that no zero of $1+(\ft'-t)\del_z (f(z;\theta)/(f(z;\theta)+1))$ enters $\mathscr A^-$ from its outer boundary $\mathscr C^{0-}$.  In fact, from \eqref{e:Gexp}, $G(z,z';\theta)$ is analytic for $\dist(z, \tilde z)\leq 1/(2er)$, where $\tilde z\in \mathscr C^0$, and $z'\in \mathscr C^3$ . In particular, it is analytic for $z\in \mathscr C^{0-}$. We recall the expression of $G(z,z';\theta)$ from \eqref{e:defGzz}, for $z\in \mathscr C^{0-}, z'\in \mathscr C^3$, \eqref{e:fullbb0} implies that both $z,z'$ are inside the contour $\mathscr C_\theta$, and the Schiffer kernel $B(z,z';\theta)\neq 0$ from its definition \eqref{e:Schiffer}. Thus the denominator $1+(\ft'-t)\del_z (f(z;\theta)/(f(z;\theta)+1))$ in $G(z,z';\theta)$ is nonzero for $z\in \mathscr C^{0-}$, and $0\leq \theta\leq \sigma$.

	 We conclude that $1+(\ft'-t)\del_z (f(z;\theta)/(f(z;\theta)+1))\neq 0$ for any $\mathscr A^-$. Together with \eqref{e:Fbound2}, we conclude that the righthand side of \eqref{e:dthetaw} is analytic on $\mathscr A^+$ for $0\leq \theta \leq \sigma$.
	 
	 By comparing \eqref{e:logf2}, \eqref{e:bbcurve4}, \eqref{e:dGbb1} with \eqref{e:fBexp} and \eqref{e:Gexp}, and using the estimates of the initial data \eqref{e:initial}, \eqref{e:initialG} and the estimate \eqref{e:fzsmall}, if we take $2er\fc C \leq \fb/2$ and $3\fc C\leq M/(2e^2r^2)$, we have for any $(\tilde z,\tilde z')\in \mathscr C^i\times \mathscr C^j$ with $0\leq i<j\leq 3$,
	\begin{align}\begin{split}\label{e:fullbb2}
			&\log f(z;\sigma) \subset_{\tilde z} (M/2r^2_{{\sigma}}) f^1(r_{\sigma} u),\quad 
			B(z, z';\sigma)\subset_{\tilde z, \tilde z'} (M/2) f^0(r_{\sigma} u) f^0(r_{\sigma} u'),\\
			&|\log f(\tilde z;\sigma)-\log f(\tilde z;0)|, 
			|\del_z \log f(\tilde z;\sigma)-\del_z\log f(\tilde z;0)|
			\leq \fb/2. 
	\end{split}\end{align}
	and for $(\tilde z,\tilde z')\in \mathscr C^0\times \mathscr C^i$ with $1\leq i\leq 3$,
	\begin{align}\begin{split}\label{e:fullbb3}
			G(z, z';\sigma)\subset_{\tilde z, \tilde z'} (M/2) f^0(r_{\sigma} u) f^0(r_{\sigma} u').
	\end{split}\end{align}
	We conclude from  \eqref{e:fullbb1}, \eqref{e:ftbb} and \eqref{e:fullbb2} and \eqref{e:fullbb3} that $\sigma=1$, provided we take $\fc$ small enough.

	Therefore 
	if the boundary data $\beta$ is sufficiently close to $H_t^*(x)$ such that $d(\del_x \beta, H_t^*)\leq \fc$, then $\sigma=1$. The map as constructed from \eqref{e:delw} using the analytic field $F(z; \theta)\del_\theta \log f(z;\theta)$
	\begin{align*}
		x\in I_{\ft'}^*\mapsto \mathscr C(x;0)\mapsto \mathscr C(x; 1),
	\end{align*}
	gives an analytic Jordan curve. Moreover, $|\del_z^k B(z,z';\theta)|\lesssim 1$ uniformly for any $(z,z')\in \mathscr C^0\times \mathscr C^1$. 
	We recall the defining relation of $K(z,z';\theta)$ from \eqref{e:defK}. For $(z,z')\in \mathscr C^0\times \mathscr C^1$, we have $|z-z'|\geq \fr$, and 
	$|\del_z^k K(z,z';\theta)|\lesssim |\del_z^k B(z,z';\beta,s)|+1/|z-z'|^{2+k}\lesssim 1$. Since $\del_z^k K(z,z';\theta)$ is analytic for $z,z'$ inside the contours $\mathscr C^0$, by the maximal principle, we conclude that $|\del_z^k K(z,z';\theta)|\lesssim 1$ for any $0\leq k\leq 2$ and $z$ inside the contour $\mathscr C^0$, $z'$ inside the contour $\mathscr C^1$. 
	Thus the assumptions in Proposition \ref{p:fdec2} hold, and Proposition \ref{p:fdecompose} follows.

\end{proof}

\section{Proof of \Cref{p:ftzbehave}}

\label{QEquation}

In this section we establish  \Cref{p:ftzbehave}, which are quantitative properties of the complex Burgers equation that are used throughout this paper. 
Throughout, we without loss of generality restrict ourselves to $z \in \overline{\mathbb{H}^+}$ (namely, $\Im z \geq 0$), for the proof when $z \in \mathbb{H}^-$ follows from conjugation. Then, 
	\begin{flalign*} 
		\Im[f^*_t(z)]\leq 0; \qquad \arg^* \big(f^*_t(z) \big)=\Im \big[\log f_t^*(z) \big]; \qquad \sin \big(\Im [\log f_t^*(z)] \big)= \displaystyle\frac{\Im \big[ f_t^*(z) \big]}{\big|f_t^*(z) \big|}.
	\end{flalign*} 
	
	\noindent Recall the decompositions from \eqref{e:decompft*}  and \eqref{e:decompft*2}, given by
	\begin{align}\label{e:decompft*3}
		\log f_t^*(z)=m_t^*(z)+\log g_t^*(z),\quad 
		\log f_t^*(z)=\tilde m_t^*(z)+\log\tilde g_t^*(z),
	\end{align}
	where $\log\tilde g_t^*(z)$ is uniformly bounded and real analytic in a neighborhood of $[\fa(t),\fb(t)]$, and $\log g_t^*(z)= \log \tilde g_t^*(z)-\log \big(z-\fa(t) \big)+\log \big(\fb(t)-z \big)$. In particular, uniform boundedness and analyticity of  $\log \tilde{g}_t^* (z)$ imply that the derivative of $\log\tilde g_t^*(z)$ is also uniformly bounded. 
	Thus we have $\Im[\log \tilde g_t^*(z)]=\Im[\log \tilde g_t^*(\Re z)]+\OO(|z-\Re z|)=\OO(\Im z)$, where the last equality is from that $\log \tilde g_t^*(z)$ is real analytic. 	
	Together with \eqref{e:decompft*3}, we get 
	\begin{flalign} 
		\label{mtft}
		\Im \big[ \tilde m_t^*(z) \big]=\Im \big[\log f_t^*(z) \big]+\OO(\Im z).
	\end{flalign} 

	\noindent We will see that in all the cases the error term $\OO(\Im[z])$ is negligible.  
	
	Further recall from Item (d) of the third part of \Cref{xhh} that locally around $(x,r)$ (which was mapped to $(f_t^*(z),z)=(f_r^*(x), x+(t-r)f_r^*(x)/ (f_r^*(x)+1))$ under the map \eqref{e:xrmap}, there exists a real analytic function $Q_0$ in one variable such that
	\begin{align}\label{e:q0fcopy}
		Q_0 \big(f^*_r(x) \big)=Q_0 \big(f^*_t(z) \big)=x \big(f^*_r(x)+1 \big)-r f^*_r(x)=z\big(f^*_t(z)+1 \big)-t f^*_t(z).
	\end{align}
%
	The first statement \eqref{e:ft+1b2} of the proposition follows from \eqref{e:ft+1b}. 
	
	Next, let 
	\begin{flalign*} 
		\chi_t^*(z)= \displaystyle\frac{f^*_t(z)}{f^*_t(z)+1}; \qquad Q_1(u)=(1-u)Q_0 \bigg( \displaystyle\frac{u}{1 - u} \bigg).
	\end{flalign*} 

	\noindent We can rewrite \eqref{e:q0fcopy} as an equation of $\chi_t^*(z)$:
	\begin{align}
		\label{chiq1} 
		Q_1 \big(\chi^*_t(z) \big)=z-t \chi^*_t(z).
	\end{align}
	At point $(E(t),t)$ on the arctic curve $\fA(\fD')$, $f_t^*(E(t))$ is  a double root of $Q_0 \big(f^*_t(E(t)) \big)=E(t) \big(f^*_t(E(t))+1 \big)-t f^*_t(E(t))$ and are thus characterized by the equations
	\begin{align}\label{chiq2}
		Q_1 \Big(\chi^*_t \big( E(t) \big) \Big)=E(t)-t \chi^*_t \big(E(t) \big); \qquad Q_1' \Big( \chi^*_t \big( E(t) \big) \Big)=-t.
	\end{align}

	\noindent Recalling that $\big( E(t), t \big)$ is a cusp singularity of the arctic boundary if and only if $\chi_t^* (z)$ is a triple root of \eqref{chiq1} (and that there are no quadruple or higher roots of \eqref{chiq1}), we have that 
	\begin{flalign}
		\label{qchi0}
		\begin{aligned} 
		\bigg| Q_1'' \Big( \chi_t^* \big( E(t) \big) \Big) \bigg| \asymp 1, \qquad & \text{if $\big( E(t), t \big)$ is bounded away from a cusp}; \\
		\bigg| Q_1'' \Big( \chi_t^* \big( E(t) \big) \Big) \bigg| = 0 \quad \text{and} \quad \bigg| Q_1''' \Big( \chi_t^* \big( E(t) \big) \Big) \bigg| \asymp 1, \qquad & \text{if $\big( E(t), t \big) = \big( E(t_c), t_c \big)$ is a cusp}.
		\end{aligned} 
	\end{flalign}

	Now let us establish the three parts of \Cref{p:ftzbehave}.

	\begin{proof}[Proof of the first statement of \Cref{p:ftzbehave}]
	
	By the first statement of \eqref{qchi0}, if $\big( E(t), t \big)$ is bounded away from the cusp point, then $\chi_t^*(z)$ has square root behavior in a small neighborhood of $E(t)$, namely,
	\begin{align}\label{e:localfth}
		\chi^*_t(z)=\chi_t^*(E(t))+\sqrt{C_t(z) \big( z-E(t) \big)}, \qquad \text{where $C_t(z)$ is analytic for $\big| z-E(t) \big|\leq \fc$},
	\end{align} 

	\noindent where the square root has negative imaginary part for $\Imaginary z>0$. We further have $C_t(z)=\overline{C_t(z)}$ and
	\begin{align*}
		C_t(E(t))=\frac{2}{Q_1'' \big(\chi^*_t(E(t)) \big)}, \qquad \Big|C_t \big(E(t) \big) \Big| \asymp 1.
	\end{align*}

	\noindent This gives \eqref{e:squareeq}. For $(z,t)$ bounded  away from the tangency locations and cusp points, the claim \eqref{e:mtsquare} follows from \eqref{e:localfth} and \eqref{mtft}. This establishes the first statement of the proposition. 
	\end{proof}

	\begin{proof}[Proof of the second statement of \Cref{p:ftzbehave}]
	
	We next establish the second statement  concerning the behavior of the complex slope near the tangency locations. At the tangent point $\big( E_1(t_1),t_1 \big) = \big( \fa(t_1),t_1 \big)$ of slope $1$, the complex slope $f^*_{t_1} \big( E_1(t_1) \big)=\infty$, so that $\chi^*_{t_1} \big(E_1 \big(t_1) \big)=1$. Since no tangency location is also a cusp, we also have by \eqref{qchi0} that $-Q_1''(1)\asymp 1$. Hence, \eqref{e:localfth} reduces to
	\begin{align}\label{e:fatt1}
		\frac{f_{t_1}^*(z)}{f_{t_1}^*(z)+1}=1+\sqrt{C_{t_1}(z)\big( z-\fa(t_1) \big)},\qquad \text{for $\big| z-\fa(t_1) \big| \leq \fc$}.
	\end{align}
	
	\noindent  By Taylor expansion of $Q_1$ and $Q_1'$ from \eqref{chiq2}  around $\chi_{t_1}^*(E_1(t_1))=1$, we get
	\begin{align}\begin{split}\label{e:perturb}
			E_1(t)-E_1(t_1)
			& =t-t_1 -\frac{(t-t_1)^2}{2Q_1''(1)} + \mathcal{O} \big( |t - t_1|^3 \big),\\
			\chi_t^* \big( E_1(t) \big)-\chi_{t_1}^* \big( E_1(t_1) \big)
			& =\chi_t^* \big( E_1(t) \big)-1=-\frac{t-t_1}{Q_1''(1)}+ \mathcal{O} \big( |t - t_1|^2 \big).
	\end{split}\end{align}

	We recall that $\fa(t)=\fa(t_1)+ t-t_1=E_1(t_1)+ t-t_1$. Thus \eqref{e:perturb} implies that  $E_1(t)-\fa(t)\asymp |t_1-t|^2$, and together with \eqref{e:localfth}
	\begin{align}\label{e:f/f1}
		\frac{f_t^*(z)}{f_t^*(z)+1}=\chi_t^*(z)=1+\frac{t_1-t}{Q_1''(1)}+\sqrt{C_t(z) \big(z-E_1(t) \big)} + \mathcal{O} \big( |t_1 - t|^2 \big), \qquad \text{for $\big| z-E_1(t) \big|\leq \fc$}.
	\end{align}

	\noindent  We can use \eqref{e:f/f1} to solve for $f_t^*(z)$,
	\begin{align}\label{e:ftzexp}
		f_t^*(z)= \bigg( \frac{t-t_1}{Q_1''(1)}-\sqrt{C_t(z) \big(z-E_1(t) \big)} + \mathcal{O} \big( |t_1 - t|^2 \big) \bigg)^{-1} - 1.
	\end{align}
	
	For $0\leq t-t_1\leq \fc$ and $\big| z-E_1(t) \big|\leq \fc$, the right side of \eqref{e:ftzexp} is large, and so $\arg^* \big( f^*_t(z) \big)$ is of the same order as  $\arg^* \big( f_t^* (z) + 1 \big)$. This gives 
	\begin{align}\begin{split}\label{e:mtbeh3}
			\pi+\arg^* \big( f_t^*(z) \big) \asymp \pi + \arg^* \big( f_t^* (z) + 1 \big) 
			&\asymp\pi-\arg \left( \displaystyle\frac{t-t_1}{Q_1''(1)} - \sqrt{C_t (z) \big(z - E_1 (t) \big)} +\OO \big( |t_1-t|^2 \big) \right).
	\end{split}\end{align}
	
	\noindent We notice that $-Q_1''(1)\asymp 1$ and $(t-t_1)/Q_1''(1)+\OO(|t_1-t|^2)\asymp t_1 - t$ is real. Thus, denoting $z = E_1 (t) - \kappa + \mathrm{i}\eta$, for $\kappa\geq 0$, the denominator in \eqref{e:ftzexp} behaves like $t_t - t - \sqrt{\kappa+\eta}+\ri  \eta/\sqrt{\kappa+\eta}$ and, for $\kappa \leq 0$, it behaves like $t_1 - t -\eta/\sqrt{\kappa+\eta}+ \mathrm{i} \sqrt{\kappa+\eta}$. Thus, we can further simplify \eqref{e:mtbeh3} as
	\begin{align}\begin{split}\label{e:mtbehave3}
			\eqref{e:mtbeh3}\asymp 
			\left\{
			\begin{array}{cc}
				\frac{\eta}{\sqrt{|\kappa|+\eta}(\sqrt{|\kappa|+\eta}+|t-t_1|)}, & \kappa\geq 0,\\
				\frac{\sqrt{|\kappa|+\eta}}{|t-t_1|}\wedge 1 & \kappa \leq 0.
			\end{array}
			\right.
	\end{split}\end{align}

	\noindent By plugging \eqref{e:mtbehave3} into \eqref{e:decompft*2}, and recalling $\arg \big(\log \tilde g_t^*(z) \big) = \mathcal{O} (\eta)$, we conclude that
	\begin{align}\label{e:mtbehave4}
		\pi+\Im[\tilde m_t^*(z)]
		=\pi+\Im[\log f_t^*(z)]-\Im[\log \tilde g_t^*(z)]\asymp
		\left\{
		\begin{array}{cc}
			\frac{\eta}{\sqrt{|\kappa|+\eta}(\sqrt{|\kappa|+\eta}+|t-t_1|)}, & \kappa\geq 0,\\
			\frac{\sqrt{|\kappa|+\eta}}{|t-t_1|} \wedge 1& \kappa \leq 0.
		\end{array}
		\right.
	\end{align}
	
	\noindent The estimates \eqref{e:mtbehave3} and \eqref{e:mtbehave4} together implies \eqref{e:mtbehave3copy}. 	
	
	Next we assume that $0\leq t_1-t\leq \fc$ and $\big| z-E_1(t) \big|\leq \fc$. Since the line through $\big( \mathfrak{a} (t), t \big)$ with slope $1$ is tangent to the arctic boundary, \eqref{e:slope} implies that $\chi_t^* \big(\mathfrak{a} (t) \big) = 1$. Thus, \eqref{e:localfth} implies that
	\begin{align}\label{e:newft}
		f_t^*(z)=\Big( \sqrt{C_t \big( \fa(t) \big) \big(\fa(t)-E_1(t) \big)} -\sqrt{C_t(z) \big( z-E_1(t) \big)} \Big)^{-1} - 1.
	\end{align}
	
	\noindent Multiplying both sides of \eqref{e:newft} by $z-\fa(t)$ gives
	\begin{align}\begin{split}\label{e:mtbehave1}
			-\arg^* \Big( f_t^*(z) \big(z-\fa(t) \big) \Big)
			&=
			-\arg^*\left(\fa(t) - z +\frac{\sqrt{C_t \big(\fa(t) \big) \big(\fa(t)-E_1(t) \big)}+\sqrt{C_t(z) \big(z-E_1(t) \big)}}{-C_t \big(\fa(t) \big)+\OO \big( \big|z-\fa(t) \big| +|\fa(t)-E_1(t)|\big)}\right)\\
			&\asymp
			\left\{
			\begin{array}{cc}
				\frac{\eta}{\sqrt{|\kappa|+\eta}(\sqrt{|\kappa|+\eta}+|t-t_1|)}, & \kappa\geq 0,\\
				\frac{\sqrt{|\kappa|+\eta}}{t_1-t} \wedge 1& \kappa \leq 0,
			\end{array}
			\right.
	\end{split}\end{align}
	where we have again denoted $z=E_1(t)-\kappa+\ri \eta$, where in the first line we used  $C_t (\mathfrak{a}(t)) (\mathfrak{a}(t) - E_1) - C_t (z) (z - E_1) = (\mathfrak{a}(t) - z) (C_t(\mathfrak{a}(t))+\OO(|\fa(t)-E_1(t)|) + \mathcal{O} \big( |z - \mathfrak{a}(t)|^2 \big)$, and in the second line we used the square root behavior of $\sqrt{C_t(z)(z-E_1(t))}$. By rearranging \eqref{e:mtbehave1}, we conclude
	\begin{align}\label{e:mtbehave2}
		-\arg^* f_t^*(z)\asymp
		\left\{
		\begin{array}{cc}
			\frac{\eta}{\sqrt{|\kappa|+\eta}(\sqrt{|\kappa|+\eta}+|t-t_1|)} +\arg \big(z-\fa(t) \big), & \kappa\geq 0,\\
			\frac{\sqrt{|\kappa|+\eta}}{|t-t_1|} \wedge 1& \kappa \leq 0,
		\end{array}
		\right.
	\end{align}
	where for the second line, we used that for $\kappa\leq 0$ we have
	\begin{flalign*} 
		\arg \big(z-\fa(t) \big)\asymp \displaystyle\frac{\eta}{|\kappa|+ \big| E_1 (t) - \mathfrak{a} (t) \big| + \eta} \lesssim \displaystyle\frac{\eta}{|\kappa| + (t - t_1)^2 + \eta} \lesssim \displaystyle\frac{\sqrt{|\kappa|+\eta}}{|t-t_1|},
	\end{flalign*} 

	\noindent is smaller than the leading term. Moreover, 
	\begin{align}\label{e:mtbehave5}
		-\Im \big[m_t^*(z) \big]
		=\Im \big[\log g_t^*(z)(z-\fa(t)) \big]-\Im \big[\log f_t^*(z) \big(z-\fa(t) \big) \big] \asymp
		\left\{
		\begin{array}{cc}
			\frac{\eta}{\sqrt{|\kappa|+\eta}(\sqrt{|\kappa|+\eta}+|t-t_1|)}, & \kappa\geq 0,\\
			\frac{\sqrt{|\kappa|+\eta}}{|t-t_1|}\wedge 1 & \kappa \leq 0,
		\end{array}
		\right.
	\end{align}
	where we used that $\log g_t^*(z) \big(z-\fa(t) \big)$ is real analytic in a small neighborhood of $\fa(t)$, thus its imaginary part is of size $\OO(\Im z)$, which is small compared with the leading term.
	The estimates \eqref{e:mtbehave2} and \eqref{e:mtbehave5} together imply \eqref{e:mtbehave2copy} and \eqref{e:mtbehave1copy}.
	
	The proof is very similar close to the other tangency location $\big( E_2(t_2),t_2 \big) = \big( \fb(t_2),t_2 \big)$, so in this case we only state the analogous bounds. Here, the tangency location $\big( E_2 (t_2), t_2) \big)$ is of slope $\infty$, and so the tangent line through $\big( \mathfrak{b} (t), t \big)$ to the arctic boundary has slope $0$; thus, $\chi^*_{t_2}(\fb(t_2))=0$. We also again have the estimate $\fb(t)-E_2(t)\asymp |t_2-t|^2$. For $0\leq t-t_2\leq \fc$ and $|z-E_2(t)|\leq \fc$ we have
	\begin{align}\label{e:mtbehave6}
		\pi+\Im[\tilde m_t^*(z)]
		\asymp\pi+\arg^*(f_t^*(z))\asymp
		\left\{
		\begin{array}{cc}
			\frac{\eta}{\sqrt{|\kappa|+\eta}(\sqrt{|\kappa|+\eta}+|t-t_1|)}, & \kappa\geq 0,\\
			\frac{\sqrt{|\kappa|+\eta}}{|t-t_1|}\wedge 1, & \kappa \leq 0,
		\end{array}
		\right.
	\end{align}
	where $z=E_2(t)+\kappa+\ri \eta$. 
	For $0\leq t_2-t\leq \fc$ and $|z-E(t)|\leq \fc$, we have 
	\begin{align*}
		-\Im[m_t^*(z)]
		=\Im[\log g_t^*(z)/(\fb(t)-z)]-\Im[\log f_t^*(z)/(\fb(t)-z)]\asymp
		\left\{
		\begin{array}{cc}
			\frac{\eta}{\sqrt{|\kappa|+\eta}(\sqrt{|\kappa|+\eta}+|t-t_1|)}, & \kappa\geq 0,\\
			\frac{\sqrt{|\kappa|+\eta}}{|t-t_1|}\wedge 1,  & \kappa \leq 0,
		\end{array}
		\right.
	\end{align*}
	and 
	\begin{align*}
		-\arg^*(f_t^*(z))\asymp 
		\left\{
		\begin{array}{cc}
			\frac{\eta}{\sqrt{|\kappa|+\eta}(\sqrt{|\kappa|+\eta}+|t-t_1|)} +\arg(\fb(t)-z), & \kappa\geq 0,\\
			\frac{\sqrt{|\kappa|+\eta}}{|t-t_1|}\wedge 1,  & \kappa \leq 0.
		\end{array}
		\right.
	\end{align*}
	where $z=E_2(t)+\kappa+\ri \eta$. 
	\end{proof}

	\begin{proof}[Proof of the third statement of \Cref{p:ftzbehave}]
	
	The cusp point $(E_c,t_c)$ is characterized by the equations 
	\begin{align}\label{e:cuspbehave}
		Q_1 \big(\chi^*_{t_c}(E_c) \big)=E_c-t_c \chi^*_{t_c}(E_c);\qquad Q_1' \big(\chi^*_{t_c}(E_c) \big) =-t_c; \qquad Q_1''\big( \chi^*_{t_c}(E_c) \big)=0.
	\end{align}
	We recall from \eqref{qchi0}, $|Q_1'''\big( \chi^*_{t_c}(E_c) \big)|\asymp 1$. 
	In the following we use \eqref{e:cuspbehave} to study the complex slope $f_t^*(z)$ and $\chi_t^*(z)$ in a neighborhood of $(E_c,t_c)$.  For $t\leq t_c$, the arctic curves $\big( E_1'(t),t \big), \big(E_2'(t),t \big)$ are characterized by
	\begin{align}\label{e:cuspcurve}
		Q_1 \Big(\chi^*_t \big(E(t) \big) \Big)=E(t)-t \chi^*_t \big( E(t) \big),\quad Q_1' \Big(\chi^*_t \big( E(t) \big) \Big)=-t,\qquad \text{for $E(t)$ equal to either $E_1'(t)$ or $E_2'(t)$}.
	\end{align}

	Using \eqref{e:cuspbehave}, we can solve \eqref{e:cuspcurve}. By taking the difference between \eqref{e:cuspbehave} and \eqref{e:cuspcurve}, and expanding around $(E_c,t_c)$ we get  
	\begin{align}\begin{split}
		\label{tct}
		t_c-t= Q_1' \Big( \chi_t^* \big( E(t) \big) \Big) - Q_1' \big( \chi_{t_c}^* (E_c) \big) & = \frac{Q_t'''(\chi_{t_c}^*(E_c))}{2} \Big(\chi_t^* \big( E(t) \big)-\chi_{t_c}^*(E_c) \Big)^2 \\
		& \qquad +\OO \bigg( \Big|\chi_t^* \big(E(t) \big)-\chi_{t_c}^*(E_c) \Big|^3 \bigg).
	\end{split}\end{align} 

	\noindent Moreover, recalling $c(t) = E_c + (t - t_c) \chi_{t_c}^* (E_c)$, we have that
	\begin{flalign*}
		E(t)-c(t) & = E(t) - E_c + (t_c - t) \chi_{t_c}^* (E_c) \\
		& = t \chi_t^* \big( E(t) \big) - t \chi_{t_c}^* (E_c) + Q_1 \Big (\chi_t^* \big( E(t) \big) \Big) - Q_1 \big( \chi_{t_c}^* (E_c) \big) \\
		& = t \big( \chi_t^* \big( E(t) \big) - \chi_{t_c}^* (E_c) \big) + Q_1' \big( \chi_{t_c}^* (E_c) \big) \Big( \chi_t^* \big( E(t) \big) - \chi_{t_c}^* (E_c) \Big) \\
		& \qquad + \displaystyle\frac{Q_1'' \big( \chi_{t_c}^* (E_c) \big)}{2} \Big( \chi_t^* \big( E(t) \big) - \chi_{t_c}^* (E_c) \Big)^2 + \displaystyle\frac{Q_1''' \big( \chi_{t_c}^* (E_c) \big)}{6} \Big( \chi_t^* \big( E(t) \big) - \chi_{t_c}^* (E_c) \Big)^3 \\
		& \qquad + \mathcal{O} \bigg( \Big| \chi_t \big( E(t) \big) - \chi_{t_c} (E_c) \Big|^4 \bigg) 
	\end{flalign*}

	\noindent and thus 
	\begin{flalign}
		\label{etct}
		\begin{aligned}
		E(t) - c(t) 	& = (t-t_c) \Big(\chi_t^* \big( E(t)  \big)-\chi_{t_c}^*(E_c) \Big)+\frac{Q_t''' \big(\chi_{t_c}^*(E_c) \big)}{6} \Big( \chi_t^* \big(E(t) \big)-\chi_{t_c}^*(E_c) \Big)^3 \\
		& \qquad +\OO \bigg( \Big|\chi_t^* \big( E(t) \big)-\chi_{t_c}^*(E_c) \Big|^4 \bigg). 
		\end{aligned} 
	\end{flalign}

	\noindent It follows from \eqref{tct} that $\big| \chi_t^*(E(t))-\chi_{t_c}^*(E_c) \big|\asymp |t_c-t|^{1/2}$ and, more precisely, 
	\begin{align}\begin{split}\label{e:chiEc}
			\chi^*_t(E(t)) & =\chi^*_{t_c}(E_c)\pm \sqrt{\frac{2(t_c-t)}{Q'''_1 \big(\chi^*_{t_c}(E_c) \big)}}+\OO \big( |t_c-t| \big),\\
			E(t)-c(t)& =\mp \sqrt{\frac{8(t_c-t)^3}{9Q_1''' \big(\chi^*_t(E_c) \big)}}+\OO \big( |t_c-t|^2 \big),
	\end{split}\end{align}

	\noindent where the second equality follows from inserting the first into \eqref{etct}. We conclude that $E_2'(t)-E_1'(t)\asymp (t_c-t)^{3/2}$. In a small neighborhood of $E_1'(t)$, namely, $0\leq t_c-t\leq \fc$ and $\big| z-E_1'(t) \big|\leq \fc(t_c-t)^{3/2}$, we can solve the equation $Q_1 \big(\chi^*_{t}(z) \big)=z-t\chi_t^* \big( E(t) \big)$ by expanding around $\big( E_1'(t), t \big)$. This gives
	\begin{flalign*}
			\phantom{{}={}}z-E_1'(t) & = t \chi_t^* (z) - t \chi_t^* \big( E_1' (t) \big) + Q_1 \big( \chi_t^* (z) \big) - Q_1 \Big( \chi_t \big( E_1' (t) \big) \Big) \\
			& = t \chi_t^* (z) - t \chi_t^* \big( E_1' (t) \big) + Q_1' \Big( \chi_t^* \big( E_1' (t) \big) \Big) \Big( \chi_t^* (z) - \chi_t \big( E_1' (t) \big) \Big) \\
			& \qquad + \displaystyle\frac{Q'' \big( \chi_t^* \big( E_1' (t) \big) \big)}{2} \Big( \chi_t^* (z) - \chi_t^* \big( E_1' (t) \big) \Big)^2 + \mathcal{O} \bigg( \Big| \chi_t^* (z) - \chi_t^* \big( E_1 (t) \big) \Big|^3 \bigg) \\
			& =\frac{Q_1'' \big(\chi_{t}^*  \big(E_1'(t) \big) \big)}{2} \Big( \chi_t^*(z)-\chi_{t}^* \big(E_1'(t) \big) \Big)^2+\OO \bigg( \Big| \chi_t^*(z)-\chi_{t}^* \big(E_1'(t) \big) \Big|^3 \bigg). 
		\end{flalign*} 
	
		\noindent Thus, since $Q_1'' \big( \chi_{t_c}^* (E_c) \big) = 0$, we deduce by Taylor expansion and  \eqref{e:chiEc} that
		\begin{flalign*}
			z - E_1' (t) & = \bigg( \frac{Q_1''' \big( \chi_{t_c}^* (E_c) \big)}{2} \Big( \chi_t^* \big( E_1' (t) \big) - \chi_{t_c}^* (E_c) \Big) + \mathcal{O} \big( |t - t_c| \big) \bigg) \Big(\chi_t^*(z)-\chi_{t}^* \big(E_1'(t) \big) \Big)^2 \\
			& \qquad +\OO \bigg( \Big| \chi_t^*(z)-\chi_{t}^* \big( E_1'(t) \big) \Big|^3 \bigg)\\
			&=\sqrt{\frac{Q_1''' \big(\chi_{t_c}^*(E_c) \big)}{2} (t_c - t)} \cdot \Big(\chi_t^*(z)-\chi_t^* \big( E_1'(t) \big) \Big)^2 \\
			& \qquad +\OO \bigg( |t_c-t| \Big|\chi_t^*(z)-\chi_{t}^* \big(E_1'(t) \big) \Big|^2+ \Big|\chi_t^*(z)-\chi_{t}^* \big( E_1'(t) \big) \Big|^3 \bigg).
	\end{flalign*}
	
	By rearranging, we conclude that
	\begin{align}\label{e:blowcoeff}
		\chi^*_t(z)-\chi^*_t \big(E'_1(t) \big)=
		\bigg( \displaystyle\frac{2}{Q_1''' \big( \chi_{t_c}^* (E_c) \big)} \bigg)^{1/4}\frac{\sqrt{z-E'_1(t)}}{(t_c-t)^{1/4}} +\OO\left((t_c-t)^{1/4} \big| z-E_1'(t) \big|^{1/2}+\frac{\big| z-E_1'(t) \big|}{t_c-t}\right).
	\end{align}

	\noindent It therefore follows that 
	\begin{align}\label{e:mtcubiccopy}
		-\Im \big[f_t^*(z) \big]\asymp-\Im[\chi_t^*(z)] \asymp
		\left\{
		\begin{array}{cc}
			\frac{\eta}{\sqrt{|\kappa|+\eta}(t_c-t)^{1/4}}, & \kappa\geq 0,\\
			\frac{\sqrt{|\kappa|+\eta}}{(t_c-t)^{1/4}} & \kappa \leq 0,
		\end{array}
		\right.
	\end{align}
	where we have denoted $z=E_1(t)+\kappa+\ri\eta$, with $\big| z-E_1'(t) \big|\leq \fc(t_c-t)^{3/2}$. The claims \eqref{e:cubiceq} and \eqref{e:mtcubic} then follow from \eqref{e:blowcoeff}, \eqref{e:mtcubiccopy}, and the decomposition \eqref{e:decompft*3}.
	
	In the following we study $\chi_t^*(z)$ away from the two points $E_1'(t)$ and $E_2'(t)$. 
	By a Taylor expansion, using \eqref{e:cuspbehave}
	in a neighborhood of $(E_c,t_c)$, we obtain
	\begin{align}\begin{split}\label{e:cuspexp}
			z-E_c-(t-t_c)\chi^*_{t_c}(E_c)
			& =Q_1 \big(\chi_t^*(z) \big)+t\chi_t^*(z)-E_c-(t-t_c)\chi^*_{t_c}(E_c)\\
			&=(t-t_c) \big(\chi_t^*(z)-\chi_{t_c}^*(E_c) \big)+\frac{Q_1'''(\chi_{t_c}^*(E_c))}{6} \big(\chi_t^*(z)-\chi_{t_c}^*(E_c) \big)^3 \\
			& \qquad +\OO \Big( \big|  \chi_t^*(z)-\chi_{t_c}^*(E_c) \big|^4 \Big).
	\end{split}\end{align}
	If we plug in $z=c(t)=E_c+(t-t_c)\chi^*_{t_c}(E_c)$ in \eqref{e:cuspexp}, we get that 
	$
	Q_1 \big(\chi_t^* \big(c(t) \big) \big)=c(t)-t\chi_t^* \big(c(t) \big).
	$
	By comparing with \eqref{e:q0fcopy}, we conclude that $\chi_t^* \big(c(t) \big)=\chi_{t_c}^*(E_c)$, which by \eqref{e:cuspexp} gives  
	\begin{align}\begin{split}\label{e:cuspexp2}
			&\phantom{{}={}}z-c(t)
			=(t-t_c) \Big( \chi_t^*(z)-\chi_{t}^* \big(c(t) \big) \Big) + \frac{Q_1'''(\chi_{t_c}^*(E_c))}{6} \Big(\chi_t^*(z)-\chi_{t}^* \big( c(t) \big) \Big)^3+\OO \bigg( \Big|  \chi_t^*(z) -\chi_{t_c}^* \big(c(t) \big) \Big|^4 \bigg).
	\end{split}\end{align}

		By the first statement of \eqref{e:chiEc}, for $z$ in a small neighborhood of $c(t)$ satisfying $\dist \big(z, [E'_1(t), E'_2(t)] \big)\geq \fc(t_c-t)^{3/2}$ (equivalently, $\big|z - c(t) \big| \lesssim (t_c-t)^{3/2}$), the linear term on the right side of \eqref{e:cuspexp2} is the leading term, and so $\chi^*_t(z)-\chi_{t}^* \big( c(t) \big)$ behaves like $\big( z-c(t) \big)/(t-t_c)$. 
	For $z$ in the larger neighborhood of $c(t)$ satisfying $\fc(t_c-t)^{3/2}\leq \dist \big(z, [E'_1(t), E'_2(t)] \big)\leq \fc$ (equivalently, $\mathfrak{c} (t_c - t)^{3/2} \lesssim | z - c(t) | \lesssim 1$), the cubic term on the right side of \eqref{e:cuspexp2} dominates $
	\chi_t(z)-\chi_{t}^* \big( c(t) \big)$ behaves like $\big(z-c(t) \big)^{1/3}$, and we have
	\begin{align*}
		-\Imaginary \big[\chi_t^*(z) \big]\asymp -\Im\left[ \displaystyle\frac{f^*_t(z)}{f^*_t(z)+1} \right]\asymp -\Imaginary \big[f_t^*(z) \big]\asymp \big|z-c(t) \big|^{1/3}.
	\end{align*}
	This finishes the proof of \eqref{e:localft3}.
\end{proof}

\section{Solving for the Correction Term}\label{s:ansatz}

In this section we sketch the proof of \Cref{p:formula}, which closely follows \cite[Section 9]{NRWLT}.
For any time $s\in [0,\ft)\cap \bZ_n$, and any particle configuration $(x_1,x_2,\cdots,x_{m})\subseteq [\fa(s),\fb(s)]$, we make the following ansatz
\begin{align*}
	N_{s}(x_1, x_2, \cdots, x_m)=\frac{V(\bmx)e^{n^2Y_{s}(\bmx)}E_{s}(x_1, x_2, \cdots, x_m)}{\prod_{j=1}^m \Gamma_n(x_j-\fa(s)))\Gamma_n(\fb(s)-x_j-1/n)},
\end{align*}
where $\Gamma_n(k/n)=(k/n)((k-1)/n)\cdots (1/n)=k!/n^k$ and 
$\del_x \beta(x)=\rho(x;\bmx)$. Then from our construction \eqref{e:deftN}, we have
\begin{align*}
	E_\ft(x_1,x_2,\cdots, x_m)=1, \quad (x_1,x_2,\cdots, x_m)\in [\fa(\ft), \fb(\ft)].
\end{align*}

Then we can rewrite the recursion \eqref{e:recurss} as a recursion for $E_{s}(x_1,x_2,\cdots,x_m)$: for any time $s\in[0,\ft)\cap \bZ_{n}$, and $\bmx=(x_1,x_2,\cdots,x_m)\in [\fa(s),\fb(s)]$
\begin{align}\label{e:cEeq}
	E_{s}(\bmx)
	=\sum_{\bme\in\{0,1\}^m}a_s(\bme;\bmx) E_{(s+1/n)}(\bmx+\bme/n),
\end{align}
where
\begin{align}\label{e:defat}
	a_s(\bme;\bmx)=\frac{V(\bmx+\bme/n)}{V(\bmx)}\prod_{j=1}^m
	(x_j-\fa(s))^{1-e_j}
	(\fb(s)-x_j-1/n)^{e_j}e^{n^2(Y_{(s+1/n)}(\bmx+\bme/n)-Y_{s}(\bmx))}.
\end{align}

\noindent We denote the associated partition function by 
\begin{align*}
	\tilde Z_s(\bmx)\deq \sum_{\bme\in\{0,1\}^m}a_s(\bme;\bmx).
\end{align*}

The equation \eqref{e:cEeq} can be solved using the Feynman--Kac formula.  For any time $s\in[0,\ft)\cap \bZ_{n}$, to solve for $E_{s}(\bmx)$, we construct a Markov process $\{\bmx_t\}_{t\in[s, \ft]\cap \bZ_n}$ starting from the configuration $\bmx_s=\bmx$ with a time dependent generator given by 
\begin{align}\label{e:gener}
	\cL_{t} f(\bmx)=\sum_{\bme \in \{0,1\}^{m}}\frac{a_t(\bme;\bmx)}{\tilde Z_t(\bmx)}(f(\bmx+\bme/n)-f(\bmx)),
	\quad \bmx=(x_1, x_2, \cdots, x_{m})\in [\fa(t),\fb(t)].
\end{align}

We remark that the boundary constraints of the double-sided trapezoid domain $\fD'$ (recall from \eqref{e:defD'}) is encoded in the jump probabilities $a_t(\bme;\bmx)$. In particular, observe under this jump process that, if a particle $x_j$ is at $\fa(t)$, then it has to jump to the right, for otherwise $e_j=0$, implying $a_t(\bme;\bmx)=0$. Similarly, if a particle $x_j$ is at $\fb(t)-1/n$, then it has to stay, for otherwise $e_j=1$, implying $a_t(\bme;\bmx)=0$.
Thus, for non-intersecting Bernoulli random walks in the form \eqref{e:gener}, particles (the $x_j$) are constrained to remain inside $\fD'$.

Using the Markov process $\{\bmx_t\}_{t\in[s, \ft]\cap \bZ_n}$ defined above, $E_{s}$ can be solved using the Feynman-Kac formula.
\begin{prop}[{\cite[Proposition 9.3]{NRWLT}}]
	
	\label{p:FK}
	For any $s\in[0,\ft]\cap \bZ_n$ and $\bmx \subset [\fa(s), \fb(s)]$, we have
	\begin{align}\begin{split}\label{e:FK}
			E_{s}(\bmx)
			&=\bE\left[\left.\prod_{t\in[s,\ft]\cap \bZ_n}\tilde Z_t(\bmx_t)
			\right| \bmx_s=\bmx\right].
	\end{split}\end{align}
\end{prop}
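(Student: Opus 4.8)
The plan is to prove \eqref{e:FK} by downward induction on $s\in[0,\ft]\cap\bZ_n$ in increments of $1/n$, using the recursion \eqref{e:cEeq} together with the Markov property of the process $\{\bmx_t\}$ generated by \eqref{e:gener}. Before running the induction I would record the basic well-posedness facts that make the right side of \eqref{e:FK} meaningful. First, each weight $a_s(\bme;\bmx)$ in \eqref{e:defat} is nonnegative: the Vandermonde ratio $V(\bmx+\bme/n)/V(\bmx)$ is nonnegative on increasing configurations (it vanishes precisely when $\bmx+\bme/n$ has a repeated coordinate), and all remaining factors are manifestly nonnegative. Second, the partition function $\tilde Z_s(\bmx)=\sum_{\bme}a_s(\bme;\bmx)$ is strictly positive on any $\bmx\subseteq[\fa(s),\fb(s)]$; one way to see this is that $N_s>0$ by downward induction from $N_\ft>0$ via \eqref{e:recurss}, so $E_s>0$, whence \eqref{e:cEeq} forces at least one $a_s(\bme;\bmx)$ to be positive. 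Third, as noted after \eqref{e:gener}, $a_s(\bme;\bmx)$ vanishes for any move that would push a particle outside $\fD'$, so $\{\bmx_t\}$ stays in the relevant domain and all the factors $\tilde Z_t(\bmx_t)$ in \eqref{e:FK} are defined and positive. With this, $p_s(\bme;\bmx):=a_s(\bme;\bmx)/\tilde Z_s(\bmx)$ is a genuine one-step transition probability and $\{\bmx_t\}$ is a well-defined time-inhomogeneous Markov chain.

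For the base case $s=\ft$, the product in \eqref{e:FK} is over the times at which $\tilde Z_t$ is defined, namely $[s,\ft)\cap\bZ_n$, which is empty; hence it equals $1$, while $E_\ft(\bmx)=1$ by the definition \eqref{e:deftN} of $N_\ft$, so \eqref{e:FK} holds. For the inductive step, assume \eqref{e:FK} at time $s+1/n$. Rewriting \eqref{e:cEeq} as
\begin{align*}
E_s(\bmx)=\sum_{\bme\in\{0,1\}^m}a_s(\bme;\bmx)\,E_{s+1/n}(\bmx+\bme/n)=\tilde Z_s(\bmx)\sum_{\bme\in\{0,1\}^m}p_s(\bme;\bmx)\,E_{s+1/n}(\bmx+\bme/n),
\end{align*}
the sum on the right is exactly $\bE\big[E_{s+1/n}(\bmx_{s+1/n})\,\big|\,\bmx_s=\bmx\big]$, where $\bmx_{s+1/n}$ denotes one step of the chain from $\bmx$ under $\cL_s$. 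Inserting the inductive hypothesis for $E_{s+1/n}(\bmx_{s+1/n})$ and then applying the tower property/Markov property yields
\begin{align*}
E_s(\bmx)&=\tilde Z_s(\bmx)\,\bE\Bigg[\bE\bigg[\prod_{t\in[s+1/n,\ft)\cap\bZ_n}\tilde Z_t(\bmx_t)\,\bigg|\,\bmx_{s+1/n}\bigg]\,\bigg|\,\bmx_s=\bmx\Bigg]\\
&=\bE\Bigg[\tilde Z_s(\bmx_s)\prod_{t\in[s+1/n,\ft)\cap\bZ_n}\tilde Z_t(\bmx_t)\,\bigg|\,\bmx_s=\bmx\Bigg]=\bE\Bigg[\prod_{t\in[s,\ft)\cap\bZ_n}\tilde Z_t(\bmx_t)\,\bigg|\,\bmx_s=\bmx\Bigg],
\end{align*}
which is \eqref{e:FK} at time $s$, closing the induction.

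The only genuinely non-routine step — the one I would treat with the most care — is the verification that the weights $a_s(\bme;\bmx)$ are nonnegative and that $\tilde Z_s(\bmx)>0$ on the relevant configurations, since these are exactly what is needed for the jump process \eqref{e:gener} to be a bona fide Markov chain and for the formula \eqref{e:FK} not to be vacuous (the confinement to $\fD'$ being a byproduct of the same sign considerations). Everything else is the standard backward Feynman--Kac recursion. A minor bookkeeping point is the index range of the product in \eqref{e:FK}: since $\tilde Z_t$, $a_t$, and $\cL_t$ are defined only for $t\in[0,\ft)\cap\bZ_n$, the product is naturally read over $[s,\ft)\cap\bZ_n$, and with this reading the argument above is verbatim the one in \cite[Section~9]{NRWLT}.
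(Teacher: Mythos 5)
Your proof is correct, and since the paper itself does not supply a proof of Proposition~\ref{p:FK} (it is cited verbatim as \cite[Proposition~9.3]{NRWLT}), your backward-induction/Feynman--Kac argument from the recursion \eqref{e:cEeq} and the Markov property of \eqref{e:gener} is exactly the intended and standard derivation. Your side remark about the index range is also right: the product in \eqref{e:FK} should read over $[s,\ft)\cap\bZ_n$, since $\tilde Z_t$ and $a_t$ are only defined for $t<\ft$, and with that reading the base case $E_\ft\equiv 1$ matches the empty product; the paper's display has a harmless typo writing the closed interval $[s,\ft]$.
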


To use \Cref{p:FK} to solve for the correction terms $E_{s}$, we need to understand the Markov process $\{\bmx_t\}_{t\in [s,\ft]\cap \bZ_n}$ with generator $\cL_t$ as given in \eqref{e:gener}. For the weights $a_s(\bme,\bmx)$ as in \eqref{e:defat}, all the terms are explicit, except for $Y_{(s+1/n)}(\bmx+\bme/n)-Y_s(\bmx)$. We need to derive a $1/n$-expansion of $Y_{(s+1/n)}(\bmx+\bme/n)-Y_s(\bmx)$. 
We recall the variational problem from \eqref{e:varWt} 
\begin{align}\label{e:varWtcopy}
	W_ s ( \beta )=\sup_{ H\in \Adm({\fD^s};\beta)}\int_ s ^{\ft'}\int_\bR\sigma(\nabla  H)\rd x \rd t.
\end{align}
with minimizer  $H_t(x;\beta,s)$ and the complex slope $ f_t( x ;\beta, s)$. The complex slope can be extended to a neighborhood of $[\fa(s),\fb(s)]$. We also recall the decomposition $f(z;\beta,s)=e^{\tilde m(z;\beta,s)}\tilde g(z;\beta,s)$ from \eqref{e:gszmut}.

\begin{prop}[{\cite[Proposition 6.3]{NRWLT}}]
	
	\label{p:YYdif}
	There exists a universal constant $\fc>0$. For any boundary profile $\beta=\beta(x;\bmx)$ sufficiently close to $H_s^*$, i.e., $d(\del_x\beta (x;\bmx), \del_x H_s^*(x))\leq \fc$, the following holds
	\begin{align}\label{e:YYdif}
		n^2(Y_{(s+1/n)}(\bmx+\bme/n)-Y_{s}(\bmx))=\sum_{i = 1}^m e_i \chi(x_i;\beta,s)+\frac{1}{n^2}\sum_{1 \leq i,j \leq m} e_i e_j \kappa(x_i,x_j;\beta,s) +\zeta(\bmx,s)+\OO(1/n),
	\end{align}
	where the error $\OO(1/n)$ is uniform in $\beta$, and 
	\begin{align}\begin{split}\label{e:defka}
			\chi(x_i;\beta,s)&=
			\log \tilde g(x_i;\beta,s)+\frac{1}{n} \left(\del_s(\log  \tilde g(x_i;\beta,s))+\frac{1}{2}\del_z \log  \tilde g(z;\beta,s)\right)\\
			\kappa(x_i,x_j;\beta,s)&=-\frac{1}{2}K(x_i,x_j;\beta,s),
	\end{split} \end{align}
	the functions $ \tilde g_s(x;\beta,s), K(x,y;\beta,s)$ are as in \eqref{e:gszmut}, \eqref{e:defK}, and
	\begin{align}\label{e:defzeta}
		\zeta(\bmx,s)=(n\del_s Y_s(\bmx)+\frac{1}{2}\del_s^2 Y_s(\bmx)).
	\end{align}
\end{prop}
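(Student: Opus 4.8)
I would follow the argument for \cite[Proposition 6.3]{NRWLT}, adapted to the double-sided trapezoid domain $\fD^s$; the one ingredient specific to the present setting is the decomposition of \Cref{p:fdecompose}, which is available since $\beta$ lies within a $\fc$-neighborhood of $H^*_s$. The first step is to split
\[
n^2 \big( Y_{s+1/n}(\bmx + \bme/n) - Y_s(\bmx) \big) = n^2 \big( Y_{s+1/n}(\bmx) - Y_s(\bmx) \big) + n^2 \big( Y_{s+1/n}(\bmx + \bme/n) - Y_{s+1/n}(\bmx) \big).
\]
The time-shift term is handled by Taylor expansion in the step $1/n$: it equals $n \del_s Y_s(\bmx) + \tfrac12 \del_s^2 Y_s(\bmx) + \OO\big(\del_s^3 Y_s(\bmx)/n\big)$, and since $\beta$ is close to the smooth profile $H_s^*$ one has $\del_s^3 Y_s(\bmx) = \OO(1)$ uniformly, so this contributes $\zeta(\bmx,s) + \OO(1/n)$ with $\zeta$ as in \eqref{e:defzeta}. (Here $\del_s$ acts on $W_s(\beta)$ through both the minimizer and the domain $\fD^s$, and on the endpoints $\fa(s),\fb(s)$ entering the logarithmic integrals in \eqref{e:defYt+}; $\zeta$ collects all of this.)

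For the space-shift term I would fix $t = s+1/n$ and treat $Y_t$ as a functional of the profile, $\del_x\beta = \rho(\,\cdot\,;\bmx)$ as in \eqref{e:defbx}. Moving the $i$-th particle from $x_i$ to $x_i + e_i/n$ changes $\rho$ by a dipole of width and strength $\OO(1/n)$ near $x_i$, hence $\beta$ by a bump $\Delta_i\beta$ with $\int \Delta_i\beta = \OO(1/n^2)$, and a second-order functional Taylor expansion gives
\[
n^2\big(Y_t(\bmx+\bme/n) - Y_t(\bmx)\big) = n^2 \sum_i e_i \big\langle D Y_t(\beta), \Delta_i\beta \big\rangle + \frac{n^2}{2} \sum_{i,j} e_i e_j \big\langle D^2 Y_t(\beta) \Delta_i\beta, \Delta_j\beta\big\rangle + \OO(1/n),
\]
the remainder being a sum of $\OO(n)$ cubic terms of size $\OO(1/n^2)$ each. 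For the first variation, the envelope theorem (Euler--Lagrange) applied to \eqref{e:varWt} gives that $DW_t(\beta)$ on the south boundary equals, up to sign, $\del_{\del_y H}\sigma\big(\nabla H^*(x,t)\big)$, and a direct computation with the Lobachevsky function from \eqref{sigmal} identifies this with $\log|f(x;\beta,t)|$, the complex slope of the minimizer restricted to $y = t$. Writing $\log f = \tilde m + \log \tilde g$ via \eqref{e:gszmut} and \eqref{e:deftgt} (valid for $\beta$ near $H^*_s$ by \Cref{p:fdecompose}), the part $\tilde m$ --- the Stieltjes transform of $\del_x\beta + \1_{(-\infty,\fa(t)]} + \1_{[\fb(t),\infty)}$ --- is cancelled exactly by the first variations of the three logarithmic interaction terms in \eqref{e:defYt+}, which were chosen for this purpose, so $DY_t(\beta)(x) = \log\tilde g(x;\beta,t)$. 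Pairing against $\Delta_i\beta$ while Taylor expanding $\log\tilde g$ around $x_i$ (to capture the $\OO(1/n)$ displacement of the bump) and re-expanding $\tilde g(\,\cdot\,;\beta,s+1/n) = \tilde g(\,\cdot\,;\beta,s) + n^{-1}\del_s\tilde g(\,\cdot\,;\beta,s) + \OO(1/n^2)$ then yields $n^2 \langle DY_t(\beta),\Delta_i\beta\rangle = \chi(x_i;\beta,s) + \OO(1/n)$ with $\chi$ as in \eqref{e:defka}.

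For the second variation, the Hadamard-type variational formula for Green's functions (as in \cite{schiffer1946hadamard} and \cite[Theorem 3.7]{huang2020edge}) identifies $D^2 W_t(\beta)$ with the Bergman (Schiffer) kernel $B(z,z';\beta,t)$ of the liquid region, which by \eqref{e:Schiffer}, \eqref{e:defK} decomposes as $B = (z-z')^{-2} + K$. The second variations of the quadratic logarithmic terms in \eqref{e:defYt+} supply precisely the singular piece $(z-z')^{-2}$, with prefactor $-\tfrac12$ coming from the term $-\tfrac12\iint\log|x-y|\rho\rho$, so $D^2 Y_t(\beta)$ reduces to $-\tfrac12 K$; evaluating at the dipoles, the off-diagonal terms give $\frac{1}{n^2}\sum_{i\ne j} e_i e_j \kappa(x_i,x_j;\beta,s)$ with $\kappa = -\tfrac12 K$, the diagonal terms being $\OO(1/n)$ and absorbed into the error (and $K(x_i,x_i;\beta,s)$ is added back at cost $\OO(1/n)$). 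Uniformity in $\beta$ of all errors follows from the uniform bounds of \Cref{p:fdecompose} and \eqref{e:ft+1b} on a neighborhood of $[\fa(t),\fb(t)]$. I expect the main obstacle to be this last circle of ideas: establishing that perturbing the south-boundary data perturbs the value of \eqref{e:varWt} by $\log|f|$ at first order and by the Schiffer kernel of the liquid region at second order, uniformly for $\beta$ near $H^*_s$ --- this rests on the Euler--Lagrange structure together with the stability of the minimizer and of the liquid region under perturbations of $\beta$, which is exactly what \Cref{p:fdecompose} provides, making that proposition the key prerequisite.
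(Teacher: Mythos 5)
The paper does not prove this proposition in the text you were given; it is imported directly from \cite[Proposition 6.3]{NRWLT}, so there is no internal proof to compare against. Your sketch is nevertheless the natural route and is consistent with how the result is used in Appendix E: split into a time shift (yielding $\zeta$ after Taylor expansion in the step size) and a space shift treated as a second-order functional expansion of $Y_t$ in $\beta$, with the Stieltjes-transform piece $\tilde m$ of $DW_t = \log|f|$ cancelling against the explicit logarithmic corrections in \eqref{e:defYt+} to leave $\log\tilde g$, and the singular $(z-z')^{-2}$ part of the second variation cancelling against the Coulomb term to leave $-\tfrac12 K$. Your computation that $\partial_{\partial_y H}\sigma(\nabla H) = \log|f|$ via the Lobachevsky function checks out, and recognizing \Cref{p:fdecompose} as the essential stability input is the right structural observation.

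One imprecision worth flagging: you attribute the identity $D^2 W_t(\beta) = -B$ to ``the Hadamard-type variational formula,'' but Hadamard's formula gives the variation of a Green's/Schiffer kernel under a \emph{domain} perturbation, whereas what is being varied here is the boundary \emph{data} $\beta$ of the variational problem \eqref{e:varWt}. In the paper's framework the needed identity is the deformation equation \eqref{e:dlogf}, $\partial_\theta \log f(z;\theta) = -\int B(z,y;\theta)\,(\beta^1-\beta^0)(y)\,\rd y$, which is \emph{not} derived from Hadamard but rather posited as the defining ODE and justified a posteriori: one solves it (the majorant-series argument of Appendix \ref{s:decomposition}) and verifies via \Cref{l:construct} and \Cref{p:fdec2} that the resulting $f(z;1)$ is indeed the complex slope of the perturbed variational problem. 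Hadamard's formula \eqref{e:bbcurve} enters only as a subroutine to propagate $B(z,z';\theta)$ along the deformation. Since you already flag \Cref{p:fdecompose} as the key prerequisite, this is an imprecision of attribution rather than a gap, but a fully written proof would have to route through that deformation argument rather than cite Hadamard. Relatedly, the sign/factor bookkeeping (e.g.\ that the $\tfrac12$ in $\kappa = -\tfrac12 K$ comes from the $\tfrac12\langle D^2 Y_t\,\Delta\beta,\Delta\beta\rangle$ in the Taylor expansion, after the singular parts cancel exactly rather than add) deserves an explicit check, as the envelope-theorem sign and the Plemelj boundary values interact nontrivially.
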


Using \Cref{p:YYdif}, we can rewrite $a_s(\bme;\bmx)$ in the following form
\begin{align}\label{e:newat0}
	a_s(\bme;\bmx)&=\frac{V(\bmx+\bme/n)}{V(\bmx)}\prod_{j = 1}^m \phi^+(x_j;\beta,s)^{e_i}\phi^-(x_j;\beta,s)^{1-e_i}
	e^{\frac{1}{n^2}\sum_{i, j} e_i e_j \kappa(x_i,x_j;\beta,s) +\zeta(\bmx,s)+\OO(1/n)},
\end{align}
where $\beta=\beta(x;\bmx)$, and 
\begin{align*}
	\phi^+(x;\beta,s)= (\fb(s)-x-1/n)e^{ \chi(x;\beta,s)},\quad 
	\phi^-(x;\beta,s)= x-\fa(s).
\end{align*}

Instead of solving \eqref{e:FK} directly, we make another ansatz. 
The formula for the error term $E_s(\bmx)$ from \eqref{e:FK} is expressed in terms of the non-intersecting Bernoulli random walk $\{\bmx_t\}_{t\in[s,\ft)\cap \bZ_n}$ starting from $\bmx_s=\bmx$. Let $\beta=\beta(x;\bmx)$. We make the following ansatz that the leading order term of $E_s(\bmx)$ is given by the expression \eqref{e:FK} with $\bmx_t$ replaced by $H_t(x;\beta,s)$:
\begin{align}\begin{split}\label{e:ansatz2}
		&E_{s}(\bmx)=e^{F_s(\bmx)}E_s^{(1)}(\bmx),\quad F_s(\bmx)= \log\left(\prod_{t\in [s,\ft)\cap \bZ_n}\tilde Z_t(H_t(x;\beta,s))\right).
\end{split}\end{align}

The following proposition states that \eqref{e:ansatz2} is a good approximation.

\begin{prop}[{\cite[Proposition 9.3]{NRWLT}}]
	
	\label{p:Et1bb}
	
	There exists a constant $\mathfrak{c} > 0$, for any time $s\in [0,T)\cap \bZ_n$, and any particle configuration $(x_1,x_2,\cdots,x_{m})\in [\fa(s),\fb(s)]$ such that $\beta=\beta(x;\bmx)$ is sufficiently close to $H_s^*$, i.e., $d(\del_x\beta (x;\bmx), \del_x H_s^*(x))\leq \fc$, $E_s^{(1)}(\bmx)$ as in \eqref{e:ansatz2} satisfies
	\begin{align}\label{e:Et1bb}
		\frac{E_t^{(1)}(\bmx+\bme/n)}{E_t^{(1)}(\bmx)}=e^{\OO(1/n)}.
	\end{align}
\end{prop}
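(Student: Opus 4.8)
The plan is to follow \cite[Section 9]{NRWLT}, adapting the argument to the double-sided trapezoid domains $\fD^s$. Combining the Feynman--Kac formula of \Cref{p:FK} with the ansatz \eqref{e:ansatz2}, and writing $\bar x_r(\bmx) = H_r\big(\cdot;\beta(\cdot;\bmx),t\big)$ for the continuum trajectory of the variational problem \eqref{e:varWtcopy} started at time $t$ from (the continuum extension of) the profile of $\bmx$, one obtains
\begin{align}\label{e:propE1rep}
E_t^{(1)}(\bmx) = \bE\bigg[\prod_{r\in[t,\ft)\cap\bZ_n}\frac{\tilde Z_r(\bmx_r)}{\tilde Z_r\big(\bar x_r(\bmx)\big)}\,\bigg|\,\bmx_t=\bmx\bigg],
\end{align}
where $\{\bmx_r\}_{r\ge t}$ is the non-intersecting Bernoulli walk with generator \eqref{e:gener}. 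Hence
\[
\frac{E_t^{(1)}(\bmx+\bme/n)}{E_t^{(1)}(\bmx)} = \underbrace{\frac{\prod_r\tilde Z_r\big(\bar x_r(\bmx)\big)}{\prod_r\tilde Z_r\big(\bar x_r(\bmx+\bme/n)\big)}}_{\text{deterministic}}\cdot\underbrace{\frac{\bE^{(\bmx+\bme/n)}\big[\prod_r\tilde Z_r(\bmx_r)\big]}{\bE^{(\bmx)}\big[\prod_r\tilde Z_r(\bmx_r)\big]}}_{\text{probabilistic}},
\]
and it suffices to show each factor is $e^{\OO(1/n)}$, uniformly over the admissible $t,\bmx,\bme$.

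For the deterministic factor I would use that $\beta(\cdot;\bmx+\bme/n)$ and $\beta(\cdot;\bmx)$ differ by at most $1/n$ pointwise, so that by stability of the variational problem --- quantitatively, the perturbation estimate \eqref{e:lngsbound} of \Cref{p:fdecompose} for the complex slopes, together with the corresponding Lipschitz dependence of $W_r(\beta)$ on $\beta$ --- the trajectories satisfy $\bar x_r(\bmx+\bme/n)-\bar x_r(\bmx)=\OO(1/n)$ uniformly in $r\in[t,\ft)$; the bounds $|\tilde g(z;\beta,r)|\asymp 1$ near $[\fa(r),\fb(r)]$ from \Cref{p:fdecompose} guarantee no degeneration, including near the edges. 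The decisive input is the \emph{flatness} of $\tilde Z_r$ along the continuum trajectory: by the choice of $Y_s$ as the variational value in \eqref{e:defYt+} and the expansion \eqref{e:YYdif}, the weights \eqref{e:newat0} coincide with the Bernoulli-walk transition probabilities of \Cref{p:formula} up to a multiplicative factor that, together with its first configurational variation, is $\OO(1/n)$; this makes each factor of the deterministic product equal to $1+\OO(1/n^2)$ and hence the whole product $e^{\OO(1/n)}$.

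For the probabilistic factor I would couple the walks $\{\bmx_r^{(1)}\}$ started at $(\bmx,t)$ and $\{\bmx_r^{(2)}\}$ started at $(\bmx+\bme/n,t)$ via a monotonicity of the type in \Cref{comparewalks}, so that the total discrepancy $\sum_i(\bmx_i^{(2)}(r)-\bmx_i^{(1)}(r))\le\fm$ is non-increasing and the two walks remain close; then the $\OO(\sqrt{\log n/n})$ stochastic fluctuations around the common hydrodynamic limit largely cancel in the ratio of integrands, leaving only the $\OO(1/n)$ difference in initial conditions (propagated through the dynamics) and a second-order discrepancy term. Using again the flatness of $\tilde Z_r$ near the trajectory and the continuum concentration of the walk around its hydrodynamic limit (a law-of-large-numbers bound for \eqref{e:gener}, as in \cite[Section 9]{NRWLT}), the ratio of the coupled integrands is $e^{\OO(1/n)}$ with overwhelming probability; on the complementary event both integrands are bounded above and below while the probability is $n^{-D}$, so this contributes negligibly. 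Combining the two factors proves \Cref{p:Et1bb}.

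The main obstacle is turning this ``flatness'' heuristic into uniform quantitative bounds: one must show that an $\OO(1/n)$ change (in the weak metric) of the reference configuration changes $\log\tilde Z_r$ by $\OO(1/n^2)$, i.e.\ that the first configurational variation of $\log\tilde Z_r$ is $\OO(1/n)$, and that along coupled paths the second-order discrepancies accumulate to only $\OO(1/n)$ --- all of this \emph{uniformly}, including near the edges, cusp, and tangency locations where $\bar x_r$ and the associated complex slopes are singular in the precise manner quantified by \Cref{p:ftzbehave}. This is exactly where the cancellation of the leading drift built into the definition \eqref{e:defYt+}--\eqref{e:YYdif} of $Y_s$ must be combined with the perturbation bounds of \Cref{p:fdecompose} and the singularity estimates of \Cref{p:ftzbehave}; it constitutes the bulk of the proof and is the analogue of the bookkeeping carried out in \cite[Section 9]{NRWLT}.
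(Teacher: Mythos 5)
The paper does not actually prove \Cref{p:Et1bb}; it imports it verbatim as \cite[Proposition 9.3]{NRWLT} (see the attribution on the proposition and the sentence in Appendix \ref{s:ansatz}, ``Its proof is essentially the same as in [Section~9]{NRWLT}; we outline it in Appendix~\ref{s:ansatz} below''). So there is no internal proof to compare against --- what you have written is an attempt to reconstruct the argument of \cite[\S 9]{NRWLT}.

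As an outline, your decomposition of $E_t^{(1)}(\bmx+\bme/n)/E_t^{(1)}(\bmx)$ into a deterministic and a probabilistic factor is the correct way to organize the argument, and you correctly identify the central technical point: the product over $r\in[t,\ft)\cap\bZ_n$ contains $\asymp n$ factors, so a naive $e^{\OO(1/n)}$ bound per factor gives only $e^{\OO(1)}$; one must exhibit first-order cancellation so that the log-variation of each $\tilde Z_r$ along the continuum trajectory is $\OO(1/n^2)$, and this cancellation must be traced to the variational definition of $Y_s$ in \eqref{e:defYt+} combined with the expansion \eqref{e:YYdif}. You explicitly flag that this --- together with uniformity near the tangency/cusp singularities controlled by \Cref{p:ftzbehave} --- is ``the bulk of the proof.'' That acknowledgment is accurate, but it also means the proposal is a roadmap, not a proof: the statement that the first configurational variation of $\log\tilde Z_r$ is $\OO(1/n)$ is the crux and is asserted, not established.

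Two further gaps are worth naming concretely. First, for the probabilistic factor you invoke ``a monotonicity of the type in \Cref{comparewalks}'' to couple the two walks; but \Cref{comparewalks} is a statement about \emph{uniformly} random non-intersecting Bernoulli bridges with fixed entrance/exit data, whereas the process here is the weighted Markov chain with generator \eqref{e:gener} and nontrivial space--time-dependent drift. A monotone coupling for that chain needs to be proved (or imported from \cite{NRWLT}); it does not follow from \Cref{comparewalks}. Second, once coupled, the claim that stochastic fluctuations of size $\OO(\sqrt{\log n/n})$ ``largely cancel in the ratio of integrands'' again presupposes precisely the kind of second-order Taylor control on $\log\tilde Z_r$ --- uniform in $r$ and in the distance to the edges --- that is the missing quantitative input. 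So the proposal is a sound and faithful sketch of the structure one would expect from \cite[\S 9]{NRWLT}, but the two hard steps (uniform flatness of $\log\tilde Z_r$ to second order, and the monotone coupling for the drifted walk) are left open rather than resolved.
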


Given these results,  \Cref{p:formula} follows from plugging \eqref{e:newat0}, Propositions \ref{p:YYdif} and \ref{p:Et1bb} into \eqref{e:wNBB2}.
\begin{proof}[Proof of \Cref{p:formula}]
	The transition probability is given by
	\begin{align}\label{e:wNBB2}
		\bP(\bmx_{t+1/n}=\bmx+\bme/n|\bmx_t=\bmx)=\frac{N_{t+1/n}(\bmx+\bme/n)}{N_t(\bmx)}
		=a_t(\bme;\bmx)\frac{E_{t+1/n}(\bmx+\bme/n)}{E_{t}(\bmx)}.
	\end{align}
	Let $\beta=\beta(x;\bmx)$. 
	The quantity $F_s(\bmx)$ is a sum of $\log \tilde Z_t$. The formula of $\log \tilde Z_t$ for $t\in[s,\ft)\cap \bZ_n$ is explicitly given in \cite[Proposition 9.1]{NRWLT} up to error $\OO(1/n^2)$. It also gives the functional derivative $\del_\beta F_s(\beta)(z)$ of $F_s$, with respect to $\beta$ up to error $\OO(1/n)$,
	\begin{align}\label{e:fuct}
		F_s(\bmx+\bme/n)-F_s(\bmx)=(1/n)\sum_{i=1}^m e_i\del_\beta  F_s(\beta)(x_i)+\OO(1/n).
	\end{align} 
	Thanks to \eqref{e:newat0}, Propositions \ref{p:YYdif} and \ref{p:Et1bb}, we can rewrite \eqref{e:wNBB2} as
	\begin{align*}
		\frac{1}{Z_t(\bmx)}\frac{V(\bmx+\bme/n)}{V(\bmx)}\prod_{i=1}^m\phi^+(x_i;\beta,t)^{e_i}\phi^-(x_i;\beta,t)^{1-e_i}e^{\sum_{i,j} \frac{e_ie_j}{n^2}\kappa(x_i,x_j;\beta,t)+\OO(1/n)},
	\end{align*}
	where $Z_t(\bmx)=\tilde Z_t(\bmx)/e^{\zeta(\bmx,t)}$,
	\begin{align*}\begin{split}
			&\phi^+(z;\beta,t)=\phi^+(z;\beta,t)e^{\frac{1}{n}\del_\beta F_t(\beta)(z)}
			=\tilde g(z;\beta,t) (\fb(t)-1/n-z)e^{\frac{1}{n}\psi(z;\beta,t)},\\
			&\psi(z;\beta,t)=\frac{1}{2}\del_z \log  \tilde g(z;\beta,t)+\del_t(\log \tilde g(z;\beta,t))+ \del_\beta F_t(\beta)(z)+\OO(1/n),
	\end{split}\end{align*}
	$\del_\beta F_t(\beta)(z)$ is the functional derivative of $F_t$ with respect $\beta$ as in \eqref{e:fuct}.
	This finishes the proof of \Cref{p:formula}.
\end{proof}

\bibliography{TilingsConcentration.bib}
\bibliographystyle{alpha}

\end{document}